\theoremstyle{definition}
\newtheorem{defn}{Definition}[section]
\newtheorem{ex}[defn]{Example}
\newtheorem{sitn}[defn]{Situation}
\newtheorem{opbm}[defn]{Open problem}
\theoremstyle{plain}
\newtheorem{prop}[defn]{Proposition}
\newtheorem{cor}[defn]{Corollary}
\newtheorem{lemma}[defn]{Lemma}
\newtheorem{thm}[defn]{Theorem}
\newtheorem{lemdef}[defn]{Lemma and Definition}
\newtheorem{var}[defn]{Variant}
\theoremstyle{remark}
\newtheorem{rem}[defn]{Remark}
\newtheorem{constr}[defn]{Construction}
\newcommand{\remove}[1]{}
\gdef\notesoff{\gdef\note##1{}}
\newcommand{\kk}{\mathbf{k}}
\newcommand{\bAA}{\mathbb{A}}
\newcommand{\CC}{\mathbb{C}}
\newcommand{\GG}{\mathbb{G}}
\newcommand{\HH}{\mathbb{H}}
\newcommand{\NN}{\mathbb{N}}
\newcommand{\PP}{\mathbb{P}}
\newcommand{\QQ}{\mathbb{Q}}
\newcommand{\RR}{\mathbb{R}}
\newcommand{\TT}{\mathbb{T}}
\newcommand{\UU}{\mathbb{U}}
\newcommand{\ZZ}{\mathbb{Z}}
\newcommand{\A}{\mathcal{A}}
\newcommand{\B}{\mathcal{B}}
\newcommand{\C}{\mathcal{C}}
\newcommand{\D}{\mathcal{D}}
\newcommand{\E}{\mathcal{E}}
\newcommand{\F}{\mathcal{F}}
\newcommand{\G}{\mathcal{G}}
\newcommand{\cH}{\mathcal{H}}
\newcommand{\I}{\mathcal{I}}
\newcommand{\K}{\mathcal{K}}
\newcommand{\cL}{\mathcal{L}}
\newcommand{\M}{\mathcal{M}}
\newcommand{\N}{\mathcal{N}}
\newcommand{\cO}{\mathcal{O}}
\newcommand{\cP}{\mathcal{P}}
\newcommand{\Q}{\mathcal{Q}}
\newcommand{\R}{\mathcal{R}}
\newcommand{\cS}{\mathcal{S}}
\newcommand{\T}{\mathcal{T}}
\newcommand{\U}{\mathcal{U}}
\newcommand{\V}{\mathcal{V}}
\newcommand{\W}{\mathcal{W}}
\newcommand{\X}{\mathcal{X}}
\newcommand{\Z}{\mathcal{Z}}
\newcommand{\m}{\mathfrak{m}}
\newcommand{\bA}{\mathbf{A}}
\newcommand{\bC}{\mathbf{C}}
\newcommand{\TW}{\mathrm{TW}}
\newcommand{\Spec}{\mathrm{Spec}\ }
\newcommand{\hb}{\hbar}
\newcommand{\dl}{\partial}
\newcommand{\db}{\bar\partial}
\newcommand{\fpsu}{\llbracket u\rrbracket}
\newcommand{\fLsu}{(\hspace{-0.22em}(u)\hspace{-0.22em})}
\newcommand{\fpsh}{\llbracket \hbar\rrbracket}
\newcommand{\fLsh}{(\hspace{-0.22em}(\hbar)\hspace{-0.22em})}
\newcommand{\eps}{\varepsilon}
\newcommand{\Diff}{\D\hspace{-1pt}\mathit{iff}}
\newcommand{\bnull}{\mathbf{0}}
\newcommand{\lookup}{\colorbox{yellow}{Look it up!}}
\title{Global \\ logarithmic \\ deformation theory}
\author{Simon Felten}
\begin{document}

\frontmatter

\maketitle

\begin{abstract}
 A classical problem in algebraic geometry is to construct smooth algebraic varieties with prescribed properties. In the approach via smoothings, one first constructs a degenerate scheme with the prescribed properties, and then shows the existence of a smooth variety degenerating to this scheme. Logarithmic geometry has given important
 new impulses to the second step of this approach, which we explore in this book. Degenerations, in particular in the context of mirror symmetry, often enjoy similar formal properties as smooth morphisms once considered from the
logarithmic perspective. Logarithmic deformation theory has therefore become an effective tool for the construction of smoothings and the transfer of properties between smooth nearby fibers and the singular special fiber.

The strongest existence result for deformations in classical algebraic geometry is the Bogomolov--Tian--Todorov theorem for Calabi--Yau varieties. A logarithmic variant, once established, constructs log smooth deformations. However, the logarithmic Bogomolov--Tian--Todorov theorem has resisted efforts to its proof for a while. Finally, a method to prove it was discovered in 2019 by Chan, Leung, and Ma.

In this book, we explore this new approach to the logarithmic Bogomolov--Tian--Todorov theorem. We prove several variants of the abstract unobstructedness theorem, some of which are new and stronger than previously known results. We investigate its application to the global deformation theory of log smooth and mildly log singular spaces, obtaining unobstructedness results for log Calabi--Yau spaces, some log Fano spaces, and line bundles. Special care is taken to allow sufficiently mild log singularities, including all log singularities that appear in the Gross--Siebert construction of toric log Calabi--Yau mirror pairs.
\end{abstract}

\newpage

\chapter*{About this version}

This is a preprint of 
\begin{quote}
	Simon Felten. Global logarithmic deformation theory. \textit{Lecture Notes in Mathematics 2373}, Springer, 2025.
\end{quote}
which the publisher allows me to keep here. It does not reflect any editorial review or peer review on behalf of the publisher but is, with minor corrections, the version originally submitted. The final and greatly expanded version is available online at: 
\begin{quote}
	http://dx.doi.org/10.1007/978-3-031-98751-9
\end{quote}

\newpage

\chapter*{Preface}

We give a short and gentle introduction to the subject of this book here. A more comprehensive introduction with precise statements can be found in Chapter~\ref{intro-sec}.

\vspace{\baselineskip}

A classical problem in algebraic geometry is to construct smooth algebraic varieties with prescribed properties. The approach via smoothings of degenerate schemes aims to solve this problem in two steps. The first step is to construct a degenerate scheme with the prescribed property, for example the four coordinate hyperplanes $H = \{XYZW = 0\} \subset \PP^3$. The second step is to show that some smooth variety exists which degenerates to the already constructed one, for example any smooth quartic surface $E \subset \PP^3$.

Assume for a moment that we have the four coordinate hyperplanes $H$, but that we do not know a pencil of hyperplanes in $\PP^3$ to smooth it. Since $H$ is a normal crossing space with $\omega_H \cong \cO_H$, we might try to find a semistable degeneration to $H$, i.e., one with local models $z_1 \cdot ... \cdot z_r = t$, where $t$ is a parameter on the base. According to a classical result of Friedman in \cite{Friedman1983}, a necessary condition is that $H$ is $d$-semistable, meaning that $\T^1_H = \E xt^1(\Omega^1_H,\cO_H) \cong \cO_D$, where $D$ is the singular locus of $H$. This is, however, not the case. A direct computation shows that $\T^1_H|_{L_i} \cong \cO_{L_i}(4)$ for each of the six lines $L_i \cong \PP^1$ which form the singular locus $D$ of $H$. Nonetheless, we can modify $H$ so that it becomes $d$-semistable by choosing four points in the interior of each line $L_i$ and blowing them up in one of the two components meeting at the chosen point $p$. There are $24$ continuous moduli for choosing the points, and $2^{24}$ discrete choices which side to blow up. In other words, this procedure is very far from being unique, but we obtain a normal crossing modification $\tilde H$ of $H$ with $\T^1_{\tilde H} \cong \cO_{\tilde D}$. Additionally, we still have $\omega_{\tilde H} \cong \cO_{\tilde H}$, i.e., the resolution is crepant. Now $\tilde H$ is a $d$-semistable K3 surface of type III, and hence it admits a semistable smoothing by another classical result of Friedman.

We can find an explicit example of such a semistable smoothing as follows: We choose the Fermat quartic 
 $$E = \{X^4 + Y^4 + Z^4 + W^4 = 0\} \subset \PP^3$$
 as our smooth fiber so that the pencil defined by $H$ and $E$ has total space 
$$\X = \{T_0(X^4 + Y^4 + Z^4 + W^4) - T_1XYZW = 0\} \subset \PP^1 \times \PP^3.$$
The first projection defines a flat projective family $\varphi: \X \to \PP^1$.
We denote by $S \subseteq \PP^1$ some Zariski open neighborhood of $0 = [0:1]$ such that $H = \varphi^{-1}(0)$ is the only singular fiber of the restricted family $f: X \to S$, i.e., $X := \varphi^{-1}(S)$ and $f := \varphi|_X$. The set 
$$Z := D \cap \{X^4 + Y^4 + Z^4 + W^4 = 0\} \subset \PP^3 $$
consists of 24 points, four on each line. It is the singular locus of the total space $X$, and the family $f: X \to S$ is semistable outside $Z$. In each point $p \in Z$, each of the two components of $H$ meeting at $p$ is not a Cartier divisor but only a Weil divisor. By blowing up one component in the total space $X$, we resolve the singularities of $X$ and obtain a semistable degeneration $g: \tilde X \to S$. The 24 points of $Z$ can be resolved independently because $\pi: \tilde X \to X$ is an isomorphism in a punctured neighborhood of $p \in Z$.

Let us consider the semistable degeneration $g: \tilde X \to S$ from the perspective of logarithmic geometry. We endow the base $S$ with the compactifying (also called divisorial) log structure defined by $0 \in S$, and we endow the total space $\tilde X$ with the compactifying log structure defined by $\tilde H = g^{-1}(0) \subset \tilde X$. This turns the degeneration $g: \tilde X \to S$ into a log morphism $g: (\tilde X,\tilde H) \to (S,0)$ which enjoys, from the perspective of log geometry, the same formal properties as smooth families from the perspective of classical algebraic geometry. We say that $g: (\tilde X,\tilde H) \to (S,0)$ is \emph{log smooth}. 

Let $S_0 \subset S$ be the point $0 \in S$, considered as a closed subscheme. There is a log structure on $S_0$ which is obtained as a pull-back of the log structure on $(S,0)$. We say that $S_0$ is a \emph{log point}. We can pull back the log structure from $(\tilde X,\tilde H)$ to the fiber $\tilde H$ as well, thus constructing a log morphism $g_0: \tilde H \to S_0$. The property of being log smooth is stable under base change. In other words, the log morphism $g_0: \tilde H \to S_0$ is log smooth. Similarly, we can endow the thick points $S_k = \Spec \CC[t]/(t^{k + 1}) \subset S$ with a pull-back log structure, and we obtain log smooth morphisms $g_k: \tilde X_k \to S_k$ by base change. They are infinitesimal log smooth deformations of $g_0: \tilde H \to S_0$. Conversely, when we have enhanced the $d$-semistable K3 surface $\tilde H$ to a log smooth morphism $g_0: \tilde H \to S_0$, and when we have constructed a compatible system of infinitesimal log smooth deformations $g_k: \tilde X_k \to S_k$, it gives rise to a formal semistable smoothing of $\tilde H$, and hence also to a holomorphic semistable smoothing. We obtain a new perspective on the smoothing results of Friedman for semistable Calabi--Yau surfaces, and of Kawamata and Namikawa for semistable Calabi--Yau varieties of all dimensions: while both Friedman and Kawamata--Namikawa analyze the infinitesimal flat deformations, we could have instead analyzed the infinitesimal log smooth deformations. This is technically much more appealing due to the strong analogy between classical smooth deformations and log smooth deformations.

The classical Bogomolov--Tian--Todorov theorem states that infinitesimal flat deformations of a smooth and proper Calabi--Yau variety $Y$ are unobstructed. In other words, whenever we have a flat deformation $Y_B$ over $\Spec B$ and a surjection $B' \to B$ of Artin rings, there is a flat deformation $Y_{B'}$ over $\Spec B'$ extending $Y_B$. The logarithmic analog of this is exactly what we want. If the logarithmic Bogomolov--Tian--Todorov theorem is true, then we can lift the log smooth morphism $g_0: \tilde H \to S_0$ to any $S_k$ and obtain compatible log smooth deformations $g_k: \tilde X_k \to S_k$. This does not only recover the results of Friedman and Kawamata--Namikawa but strengthens them because it removes technical hypotheses which we do no longer need, and it applies to log smooth morphisms which are not semistable. A method that leads to the proof of the logarithmic Bogomolov--Tian--Todorov theorem was provided in 2019 by Chan, Leung, and Ma in \cite{ChanLeungMa2023}, building on earlier work of Kontsevich, Manetti, and others. The proof is considerably more difficult than of its classical counterpart, and it is one of the main subjects of this book.

\vspace{\baselineskip}

Let us go back to the family $g: \tilde X \to S$. We have obtained it as a birational modification of the---in terms of defining equations---simpler family $f: X \to S$ in order to force semistability. Just as $g: \tilde X \to S$, we can also turn $f: X \to S$ into a log morphism $f: (X,H) \to (S,0)$. This family is \emph{not} log smooth at the 24 points of $Z$. Thus, we cannot apply log smooth deformation theory. The log structure does not even admit a chart at $p \in Z$. Locally in the \'etale topology, each of the 24 singularities is equivalent to 
$$Y = \Spec \CC[x,y,z,t]/(xy - tz) \to \bAA^1_t,$$
where we endow $Y$ with the compactifying log structure defined by $Y_0 = \{t = 0\}$. Although this family is not log smooth, we can use it to recover a key feature of log smooth deformations of $g_0: \tilde H \to S_0$: they are locally rigid. Any two log smooth deformations of an affine log scheme are isomorphic. Let us say that a logarithmic deformation of $f_0: H \to S_0$ is \emph{divisorial} if it is log smooth outside $p \in Z$, and if it is \'etale equivalent to $Y \times_{\bAA^1} S_k \to S_k$ around $p \in Z$. It turns out that divisorial deformations are locally rigid as well. In the example, this is essentially because the log singularities are isolated, but a more sophisticated version works for certain non-isolated log singularities as well. 

Obviously, the log deformations $f_k: X_k \to S_k$ obtained by base change of $f: (X,H) \to (S,0)$ are divisorial because $f: X \to S$ is \'etale equivalent to $Y \to \bAA^1_t$. Thus, if the logarithmic Bogomolov--Tian--Todorov theorem applies also for divisorial deformations of $f_0: H \to S_0$, then we can show the existence of a compatible system of divisorial deformations $f_k: X_k \to S_k$ by applying the theorem. Simplifying the requirements of the method discovered by Chan, Leung, and Ma for the sake of exposition,\footnote{Strictly speaking, the list is not sufficient.} we need the following three ingredients in addition to the log Calabi--Yau property:
\begin{enumerate}[label=(\arabic*)]
 \item a log de Rham complex $\W^\bullet_{X_k/S_k}$ for every divisorial deformation $f_k: X_k \to S_k$ of $f_0: H \to S_0$ which commutes with base change along $S_{k + 1} \to S_k$;
 \item the Hodge--de Rham spectral sequence of $\W^\bullet_{X_0/S_0}$ must degenerate at $E_1$;
 \item the map $\HH^m(X_k,\W^\bullet_{X_k/S_k}) \to \HH^m(X_0,\W^\bullet_{X_0/S_0})$ in hypercohomology must be surjective.
\end{enumerate}
For $\W^\bullet_{X_k/S_k}$, we choose the direct image of the log de Rham complex $\Omega^\bullet_{U_k/S_k}$ on $U_k = X_k \setminus Z$. Then (1) is a direct computation in the local model $Y \to \bAA^1_t$, (2) can be proven with the method of Deligne--Illusie by using the local model to construct Frobenius lifts in positive characteristic, and (3) follows from a method originally discovered by Katz, combined with a heavy calculation. In short, divisorial deformations of $f_0: H \to S_0$ are indeed unobstructed.

\vspace{\baselineskip}

The original treatment of Chan--Leung--Ma shows that pairs of a deformation and a volume form are unobstructed, which is sufficient to construct a smoothing. We have strengthened the argument to find that indeed the deformations themselves are unobstructed. In particular, the hull in the sense of Schlessinger is smooth, i.e., it is of the form $\CC\llbracket t,s_1,...,s_r\rrbracket$ for $r = \mathrm{dim}\, H^1(X_0,\Theta^1_{X_0/S_0})$, with an additional parameter $t$ to account for the log structure. The parameters $s_1,...,s_r$ correspond to locally trivial log smooth (or divisorial) deformations over $S_\eps = \Spec \CC[s]/(s^2)$. The log scheme $S_\eps$ is different from $S_1$ in that it has a simpler log structure which does not come from the pull-back along $S_1 \subset (S,0)$ but from the pull-back along the projection $S_\eps \to S_0$. Unlike deformations in the direction of $S_1$, deformations in the direction of $S_\eps$ are not smoothings but preserve the singularities of the underlying space.

The log Hodge numbers are constant in divisorial deformations. Thus, we have $$h^1(X_0,\Theta^1_{H/S_0}) = h^1(X_0,\W^1_{H/S_0}) = h^1(E,\Omega^1_E) = 20$$
for the smooth Fermat quartic $E \subset \PP^3$ from above. For the underlying flat deformations of $H$, we have 
$$\mathrm{dim}\, Ext^1(\Omega^1_H,\cO_H) = 23,$$
which can be computed with Macaulay2. However, locally trivial flat deformations are classified by $H^1(H,\T^0_H)$, for which we have $\mathrm{dim}\, H^1(H,\T^0_H) = 1$. Thus, the log picture, where we have a $20$-dimensional locus of locally trivial deformations inside a $21$-dimensional space of all divisorial deformations, is very different from the classical picture, where we have a one-dimensional (tangent space to the) locus of locally trivial deformations inside a $23$-dimensional (tangent space of the) space of flat deformations. This difference is due to the difference in infinitesimal automorphisms of the respective structures.

For the semistable K3 surface $\tilde H$ of type III, the log picture is the same since it is a log smooth degeneration of the same general fiber. Friedman has elaborated on the flat picture: The base of the holomorphic semi-universal family is of the form $V_1 \cup V_2$, where $V_1 \subset Ext^1(\Omega^1_{\tilde H},\cO_{\tilde H}) =: \TT^1_{\tilde H}$ is a smooth divisor corresponding to locally trivial deformations, and $V_2 \subset \TT^1_{\tilde H}$ is a smooth subvariety of dimension $20$. Their intersection $V_1 \cap V_2$ has dimension $19$ and corresponds to locally trivial deformations which preserve $d$-semistability. By \cite{Stevens2002}, the dimension of $\TT^1_{\tilde H}$ is $23$ if $\tilde H$ is in $(-1)$-form, which is the case if we have blown up each component of $H$ in two points of each of the six lines in $D$. Then $\mathrm{dim}\, V_1 = 22$. In particular, both the $22$ moduli of locally trivial classical deformations and the $19$ moduli of locally trivial classical deformations which preserve $d$-semistability differ from the $20$ moduli of locally trivial log smooth deformations. The missing modulus from $19$ to $20$ corresponds to a $\CC^*$-scaling action of the log morphism, which is not seen on the underlying morphism of schemes or complex spaces. The difference between $20$ and $22$ comes from the fact that locally trivial log smooth deformations have less automorphisms than locally trivial classical deformations.

\vspace{\baselineskip}

It is well known that of the $20$ holomorphic moduli of a smooth projective K3 surface only $19$ correspond to projective deformations. In the algebraic setting, we want to know which formal families, say over $\Spec \CC\llbracket t\rrbracket$, are induced from an algebraic family $X \to \Spec \CC\llbracket t\rrbracket$ of schemes. In this book, we show that deformations of pairs of a proper log Calabi--Yau space with a line bundle are unobstructed as well. So let $\cL_0$ be a polarization on $H$. Any lift of $\cL_0$ to a divisorial deformation $f_k: X_k \to S_k$ will be ample again, and thus we can use the family $\{\cL_k\}_k$ to algebraize the formal family of divisorial deformations. We have an exact sequence 
$$0 \to \cO_H \to \Theta^1_{H/S_0}(\cL_0) \to \Theta^1_{H/S_0} \to 0,$$
and $H^1(H,\Theta^1_{H/S_0}(\cL_0))$ is the tangent space to the deformation functor of pairs. We obtain an exact sequence 
$$0 \to H^1(H,\Theta^1_{H/S_0}(\cL_0)) \to H^1(H,\Theta^1_{H/S_0}) \to H^2(H,\cO_H) \to H^2(\Theta^1_{H/S_0}(\cL_0)) \to 0.$$
The dimensions of the two middle terms are $20$ and $1$. Then the dimension on the left must be $19$, and on the right it must be $0$, essentially because these numbers are constant in the family, and not every infinitesimal deformation of the generic fiber, a smooth K3 surface, is algebraic. Thus, the hull of the deformations of pairs $(H,\cL_0)$ is $\CC\llbracket t,s_1,...,s_{19}\rrbracket$, which forms a smooth hypersurface inside the hull of the divisorial deformations of $H$. The argument also shows that for a projective Calabi--Yau threefold $Y$, deformations of $Y$ and of pairs $(Y,\cL_0)$ coincide; namely, $h^1(Y,\cO_Y) = h^2(Y,\cO_Y) = 0$ by definition.\footnote{This is no longer true for vector bundles, see Example~\ref{R-Thomas-ex-1}. Note also that this result is classical for smooth Calabi--Yau threefolds.}

\vspace{\baselineskip}

Above, we have constructed a log structure on $\tilde H$ and $H$ from a given family. In the case of $\tilde H$, we could also use the existence result of Friedman to obtain the family. However, to use logarithmic deformation theory to show the existence of a family, we have to construct a log structure on a given degenerate scheme $V$ without reference to a family. For the sake of exposition, let us assume that $V$ is a normal crossing space. In particular, when $W \subseteq V$ is a small enough open subset (possibly in the \'etale topology), then we can find a semistable degeneration to $W$. Now we can define a log structure on $W$ and a log morphism $W \to S_0$ to the log point as above. This structure of a log morphism is not unique since it depends on the chosen semistable degeneration, but any such structure has no non-trivial automorphisms. Thus, their isomorphism classes form a sheaf $\cL\cS_V$, whose sections over open subsets $W \subseteq V$ correspond to log morphisms $W \to S_0$ which are locally induced from some semistable degeneration. By construction, any such log morphism is log smooth.

The vector space $\TT^1_V = Ext^1(\Omega^1_V,\cO_V)$ classifies first-order flat deformations of $V$, and sections of the sheaf $\T^1_V = \E xt^1(\Omega^1_V,\cO_V)$ correspond to choices of local first-order flat deformations which are isomorphic on overlaps. On a normal crossing space $V$, the sheaf $\E xt^1(\Omega^1_V,\cO_V)$ is a line bundle on the double locus $D \subset V$ (and $0$ on $V \setminus D$). Given a section $s \in \Gamma(W,\cL\cS_V)$ for an affine open subset $W \subseteq V$, there is a unique log smooth deformation over $S_1$. Taking the underlying flat deformation defines a map $\eta: \cL\cS_V \to \T^1_V$ of sheaves of sets. It turns out that this map is injective and identifies $\cL\cS_V$ with the subsheaf $(\T^1_V)^* \subseteq \T^1_V$ of local generators of the line bundle $\T^1_V$ on $D$.

Recall that we have $\T^1_{\tilde H} \cong \cO_D$ on the semistable K3 surface $\tilde H$, which is the $d$-semistability condition. The global section $1 \in \cO_D$ gives rise to a global section $s \in \cL\cS_{\tilde H}$, which defines a global structure of a log smooth morphism $\tilde H \to S_0$. Any semistable degeneration to $\tilde H$ induces this section up to multiplication with $\CC^* = \Gamma(D,\cO_D^*)$. On $H$, we have $\T^1_H|_{L_i} \cong \cO_{L_i}(4)$ for each line $L_i$ in $D$. Thus, every global section of $\T^1_H$ has four zeroes on each line $L_i$, counted with multiplicities. These zeroes are precisely the 24 log singular points of the log morphism $H \to S_0$ constructed above when $s \in \Gamma(H \setminus Z,\cL\cS_H)$ is the corresponding section of $\cL\cS_H$ of the log structure outside the log singularities.

This construction can be applied in situations where no obvious degeneration to $V$ is available. For an example, let $P = \PP^3$, and let $D \subset P$ be a smooth quartic surface, i.e., an anti-canonical hypersurface. We obtain a normal crossing threefold $M = P \amalg_D P$ with two irreducible components by gluing two copies of $P = \PP^3$ along the closed subset $D \subset P$. We have $\omega_M \cong \cO_M$ and $\T^1_M \cong \N_{D/P} \otimes \N_{D/P} \cong \cO_D(8)$. By Bertini's theorem, we can find a section $s \in \Gamma(D,\cO_D(8))$ whose zero locus is a smooth hypersurface $Z \subset D$. This gives rise to a log smooth morphism $M \setminus Z \to S_0$ whose log singularities in $Z$ are given by $\bAA^1 \times Y_0 \to S_0$, where $Y_0$ is the central fiber of $\Spec \CC[x,y,z,t]/(xy - tz) \to \bAA^1_t$ from above. Divisorial deformations of $M \to S_0$ are unobstructed so that we obtain the existence of a formal smoothing.
 
\vspace{\baselineskip}

The original motivation to study global deformations of mildly log singular log morphisms $V \to S_0$ comes from mirror symmetry. To construct mirror pairs $(X,\check X)$ of smooth Calabi--Yau varieties, Gross and Siebert first construct mirror pairs $(X_0,\check X_0)$ of degenerate Calabi--Yau schemes by gluing toric varieties along closed toric subvarieties. A space $X_0$ arising from their construction admits a sheaf $\cL\cS_{X_0}$ classifying log smooth morphisms $X_0 \to S_0$. Unfortunately, most $X_0$ do not have a global section of $\cL\cS_{X_0}$ (just like $H$ from above), but nonetheless, $\cL\cS_{X_0}$ has a canonical log singular locus $Z \subset X_0$ and a canonical section $s \in \Gamma(X_0 \setminus Z, \cL\cS_{X_0})$. It turns out that any log morphism $X_0 \to S_0$ arising from their construction admits a well-behaved theory of divisorial deformations. Thus, one may attempt to construct a smoothing of $X_0$ through logarithmic deformation theory. However, at the time, no abstract theory to deduce the existence of deformations from conditions on the local deformation theory and the global geometry of $X_0$ was available. Instead, Gross and Siebert had to construct the gluings of the local divisorial deformations explicitly from a subtle combinatorial analysis of so-called scattering diagrams. Global logarithmic deformation theory in its present form shows the existence of a formal semi-universal family of divisorial deformations over a smooth base of the form $\CC\llbracket t,s_1,...,s_r\rrbracket$ from abstract principles. Furthermore, there is a formal semi-universal family of polarized divisorial deformations over a (different) smooth base as well.

\vspace{\baselineskip}

The crucial insight of Chan--Leung--Ma's method to prove the logarithmic Bogomolov--Tian--Todorov theorem is to replace logarithmic deformations with deformations of the induced Gerstenhaber algebra of polyvector fields. The deformed Gerstenhaber algebras admit acyclic resolutions which become all isomorphic upon forgetting the differential. Classifying deformations becomes equivalent to classifying differentials on a given bigraded Gerstenhaber algebra, and this in turn allows to classify deformations in terms of gauge equivalence classes of solutions of an extended Maurer--Cartan equation. In the Calabi--Yau case, the bigraded (or rather curved) Gerstenhaber algebra can be enhanced to a curved Batalin--Vilkovisky algebra, and this structure allows to conclude the existence of deformations.

\vspace{\baselineskip}

In this book, we explore thoroughly the new approach to the logarithmic Bogomolov--Tian--Todorov theorem and its applications in logarithmic geometry. In the first part, we study curved Gerstenhaber and Batalin--Vilkovisky algebras (and calculi), and we prove several variants of the abstract unobstructedness theorem for Maurer--Cartan solutions in these algebras, some of which are new and stronger than previously known results. These results may also be of independent interest outside logarithmic geometry. In the second part, we discuss how to pass from logarithmic geometry to curved Gerstenhaber and Batalin--Vilkovisky algebras. We investigate applications to the global deformation theory of log smooth and mildly log singular spaces, obtaining unobstructedness results for log Calabi--Yau spaces, some log Fano spaces, and line bundles on them. We take special care to allow sufficiently mild log singularities everywhere, including all log singularities that appear in the toric\footnote{We call the mirror construction of \cite{GrossSiebertI}, which we have discussed above, the \emph{toric} Gross--Siebert mirror construction to distinguish it from the \emph{intrinsic} mirror construction of \cite{GrossSiebertIntrinsic}, also by Gross and Siebert.} Gross--Siebert mirror construction. In short, we treat the second step of the construction of smooth varieties with prescribed properties---the passing from a degenerate scheme to a smooth variety---fairly completely.

We review and expand the elementary theory of toroidal crossing spaces in an appendix. They are a tool to construct (mildly singular) log structures on a given degenerate scheme so that we can apply logarithmic deformation theory to obtain (partial) smoothings. The first step of the construction of smooth varieties with prescribed properties, constructing degenerate schemes and the structure of toroidal crossing spaces on them, is left to other studies who may construct them according to the desired properties. An example of this is the construction of $X_0$ in the Gross--Siebert program.

\vspace{\baselineskip}

The original motivation for this book was threefold: First, we wanted to provide a completely algebraic and expanded account of the argument of Chan, Leung, and Ma in \cite{ChanLeungMa2023}. Secondly, we wanted to clarify the definitions of the structures employed by Chan--Leung--Ma, which yielded the quite lengthy definitions of $\Lambda$-linear curved Gerstenhaber and Batalin--Vilkovisky calculi as the correct concepts. Thirdly, we wanted to generalize the theorem to include also deformations of line bundles on log Calabi--Yau spaces, which allows in particular the passage from formal deformations to algebraic ones in the projective case. 

With the present book, we hope to make logarithmic deformation theory and in particular its use in the Gross--Siebert program more complete and more accessible, and to facilitate further research into degenerations of varieties,  compactification of their moduli, and classification of the components of moduli spaces. In particular, we want to popularize Gerstenhaber algebras and calculi as a tool to study algebraic varieties. They provide at least a deformation theory and a cohomology theory for algebraic varieties, and they are closely related to Hodge structures and the mirror symmetry $B$-model approach of Barannikov--Kontsevich as well.

\begin{flushright}
 Simon Felten, October 2023
\end{flushright}

\newpage

\tableofcontents

\listoffigures

\mainmatter


\chapter{Introduction}\label{intro-sec}\note{intro-sec}

We work over a field $\kk$ of characteristic $0$ throughout.

\section{Logarithmic geometry}

We work in the realm of logarithmic geometry throughout this book. Let us first give a quick overview for the occasional reader unacquainted with this branch of algebraic geometry. 

Logarithmic geometry, as pioneered by Fontaine and Illusie, first described by K.~Kato in \cite{kkatoFI}, and popularized by Tsuji in \cite{Tsuji1999} through the proof of the $C_{st}$-conjecture, replaces a scheme $X$ with a \emph{log scheme} $(X,\M_X,\alpha)$ where $\alpha: \M_X \to \cO_X$ is a morphism of sheaves of monoids on $X$, usually in the \'etale topology, such that $\alpha: \alpha^{-1}(\cO_X^*) \to \cO_X^*$ is an isomorphism. By adding a compatible map $\M_Y \to f_*\M_X$ of sheaves of monoids to a morphism $f: X \to Y$, log schemes form a category $\mathbf{LSch}$, which admits fiber products. There is a forgetful morphism $\mathbf{LSch} \to \mathbf{Sch}$ forgetting $\alpha: \M_X \to \cO_X$, and there is an embedding $\mathbf{Sch} \to \mathbf{LSch}$ by taking $\M_X = \cO_X^*$. There are also versions for analytic spaces, algebraic spaces, and stacks.

A \emph{log ring} is a map $\alpha: M \to R$ from a monoid $M$ to a ring $R$ such that $\alpha(m + m') = \alpha(m) \cdot \alpha(m')$ and $\alpha(0) = 1$. We have a spectrum construction which associates with every log ring a log scheme $\Spec(M \to R)$, whose underlying scheme is $\Spec R$.

When $f: X \to Y$ is a morphism of schemes and $(Y,\M_Y)$ is a log scheme, then we have a \emph{pull-back log structure} $f^*_{log}\M_Y$ on $X$ which turns $f: X \to Y$ into a morphism of log schemes. Such a morphism of log schemes is called \emph{strict}.

When $D \subseteq X$ is a closed subscheme, then the \emph{compactifying log structure} $\M_{(X,D)}$ on $X$ is given by 
$$\M_{(X,D)}(W) = \{f \in \cO_X(W) \ | \ f|_{W \setminus D}\ \mathrm{is} \ \mathrm{invertible}\}.$$
The name comes from considering $X$ a (partial) compactification of $U = X \setminus D$. When $D \subseteq X$ is a divisor, this log structure is also called the \emph{divisorial} log structure.

Every monoid $P$ admits a universal morphism $P \to P^{gp}$ to an abelian group. The monoid $P$ is called \emph{toric} if $P^{gp} \cong \ZZ^r$ for a finite number $r$, the map $P \to P^{gp}$ is injective, and, for any $p \in P^{gp}$, we have that $n \cdot p \in P$ for $n \geq 0$ implies $p \in P$. The toric monoids are precisely the monoids of the form $\sigma \cap M$ for a lattice $M \cong \ZZ^r$ and a rational polyhedral cone $\sigma \subseteq M_\RR$. A monoid $P$ is called \emph{sharp} if $0 \in P$ is the only invertible element. When $P$ is a sharp toric monoid, we write $A_P := \Spec (P \to \kk[P])$. This coincides with the affine toric variety $\Spec \kk[P]$ endowed with the compactifying log structure defined by the full toric boundary divisor $D_P$. A log scheme $X$ is \emph{fine and saturated} if it admits, locally in the \'etale topology, strict morphisms to $A_P$ for various sharp toric monoids $P$.\footnote{In general, the base ring $\kk$ has to be replaced with $\ZZ$.}

A morphism $f: X \to Y$ of log schemes is \emph{log smooth} if it is locally of finite type, both log schemes $X$ and $Y$ are fine and saturated,\footnote{Here, usually a weaker condition is given, but we can assume this throughout the book.} and the morphism satisfies a logarithmic variant of Grothendieck's geometric-functorial characterization of smoothness.

\section{Logarithmic deformation theory}

A \emph{log point} is a log scheme of the form $S_0 = \Spec(Q \to \kk)$ for a sharp toric monoid $Q$. Here, the log ring $Q \to \kk$ is given by $0 \mapsto 1$ and $q \mapsto 0$ for $q \not= 0$. The log point $S_0$ is a point $\Spec \kk$ endowed with a log structure determined by $Q$; more precisely, we have $\M_{S_0}(S_0) \cong Q \oplus \kk^*$. We obtain thickenings of $S_0$ as follows: Let $\kk\llbracket Q\rrbracket$ be the completion of the monoid ring $\kk[Q]$ in the maximal ideal $\kk[Q^+]$, where $Q^+ = Q \setminus \{0\}$. Then we denote the category of Artinian local $\kk\llbracket Q\rrbracket$-algebras with residue field $\kk$ by $\mathbf{Art}_Q$. Any $A \in \mathbf{Art}_Q$ gives rise to a log scheme $S_A := \Spec(Q \to A)$, where $Q \to A$ is given by the $\kk\llbracket Q\rrbracket$-algebra structure on $A$. Then $S_0 \to S_A$ is a strict closed immersion cut out by a nilpotent ideal in $S_A$.

In infinitesimal logarithmic deformation theory, we start with a log scheme $f_0: X_0 \to S_0$, and we want to classify all infinitesimal deformations $f_A: X_A \to S_A$ of a specific type. The archetypal situation, studied by F.~Kato in \cite{Kato1996}, is when $f_0: X_0 \to S_0$ is log smooth (and has the technical property of being saturated), and we wish to classify all infinitesimal deformations $f_A: X_A \to S_A$ which are log smooth. As it turns out, this situation is technically very similar to flat deformations of smooth schemes in that, on an affine open subset $W_0 \subseteq X_0$, there is, over any $S_A$, a log smooth deformation $W_A \to S_A$, which is unique up to non-unique isomorphism. The infinitesimal automorphisms are governed by the sheaf $\Theta^1_{X_0/S_0}$ of relative \emph{log derivations}, and then, for a first order thickening $B' \to B$ with kernel $I \subseteq B'$, liftings of a log smooth deformation $f: X_B \to S_B$ to $S_{B'}$ have automorphisms in $H^0(X_0,\Theta^1_{X_0/S_0}) \otimes I$, are classified by $H^1(X_0,\Theta^1_{X_0/S_0}) \otimes I$, and have obstructions in $H^2(X_0,\Theta^1_{X_0/S_0}) \otimes I$. Consistent with the usage of the expression in \cite{Gross2011}, we say that log smooth deformations are \emph{locally rigid}. Isomorphism classes of infinitesimal log smooth deformations form the \emph{log smooth deformation functor}
$$\mathrm{LD}_{X_0/S_0}: \enspace \mathbf{Art}_Q \to \mathbf{Set}.$$

While, in the classical setting, smooth deformations are always locally trivial, this is no longer true in the logarithmic setting. For $Q = \NN$, the most important case, consider, on the one hand side, the $\kk\llbracket t\rrbracket$-algebras $C_k = \kk[s]/(s^{k + 1})$ with $t \mapsto 0$, and on the other hand side the $\kk\llbracket t\rrbracket$-algebras $A_k = \kk[t]/(t^{k + 1})$ with $t \mapsto t$. Then we have $C_k \cong A_k$ as $\kk$-algebras but not as $\kk\llbracket t\rrbracket$-algebras. The simplest non-trivial example of a log smooth morphism is the central fiber $f_0: X_0 \to S_0$ of $\bAA^2 \to \bAA^1, \: t \mapsto xy$. Then the log smooth deformations over $C_k$ are trivial, but the log smooth deformations over $A_k$ are the thickenings of $\{xy = 0\}$ inside $\bAA^2 \to \bAA^1$. Since the general fiber of $\bAA^2 \to \bAA^1$ is smooth, the system of infinitesimal log smooth deformations $(f_k: X_k \to S_k)_k$ over $A_k$ forms a \emph{formal smoothing} of $X_0$.\footnote{See Definition~\ref{formal-smoothing-defn} for a precise definition.} In this specific example, this can be algebraized to an \emph{algebraic smoothing} over $\Spec \kk\llbracket t\rrbracket$ and even over $\bAA^1$. From this basic observation, we see that log smooth deformation theory is useful to study formal and sometimes algebraic smoothings of degenerate varieties $X_0$.

The most classical case of this is when $X_0$ is a $d$-semistable simple normal crossing space as defined by Friedman in \cite{Friedman1983}. In this situation, $X_0$ can be endowed with the structure of a log smooth log morphism $f_0: X_0 \to S_0$ for $Q = \NN$. Then log smooth deformations over $A_k$ are precisely the infinitesimal deformations which are coming from the local models $x_1 \cdot ... \cdot x_r = t$ of a semistable degeneration. If a log smooth lift exists to any order over $A_k$, then we obtain a semistable formal smoothing of $X_0$; it can often be algebraized to a semistable degeneration over $\Spec \kk\llbracket t\rrbracket$, or to a holomorphic family over a small disk in the analytic setup.

\vspace{\baselineskip}

From the perspective of the degenerate variety $X_0$ that we want to deform, the case of $d$-semistable normal crossing spaces is rather the exception than the rule. Many degenerate varieties $X_0$ do not admit a global structure of a log smooth morphism $f_0: X_0 \to S_0$, even if such a structure exists everywhere locally. This situation is our main object of study. To handle this situation, we have introduced in \cite{FFR2021} the notion of a \emph{generically log smooth family}. Given a fine and saturated log scheme $S$ as the base, such a structure essentially consists of a flat morphism $f: X \to S$ of schemes with reduced Cohen--Macaulay fibers\footnote{A saturated log smooth morphism has Cohen--Macaulay fibers, and we wish to retain that assumption even for our log singularities.} which is endowed with an open subset $j: U \to X$ on which a log structure is defined, log smooth and saturated over the base $S$. With this definition, we capture the situation that we cannot define a log smooth log structure globally on $X$. In a sense, $f: X \to S$ has \emph{log singularities} in the complement $Z := X \setminus U$, but we circumvent the question of the nature of the log structure in the singular points by just defining it around them.
Unlike the name \emph{generically} might suggest, we require that the \emph{log singular locus} $Z$ has relative codimension $\geq 2$ over $S$. This is in parallel with the fact that normal varieties are smooth in codimension $1$, and it is usually sufficient. A consequence of our requirement on the codimension and our Cohen--Macaulay assumption is that $j_*\cO_U = \cO_X$. In principle, this allows us to define a global log structure $\M_X := j_*\M_U$, but the precise nature of $\M_X$ remains unclear as of now, and it plays no role in our current analysis.

The following is a standard example of a generically log smooth family, given, for example, in the introduction of \cite{GrossSiebertI}. The central fiber $X_0$, a normal crossing space, acquires log singularities from the geometry of a smoothing. In fact, there is no way that $X_0$ can be the central fiber of a semistable degeneration since $X_0$ is not $d$-semistable.

\begin{figure}
 \begin{mdframed}
 \begin{center}
   \includegraphics[scale=0.8]{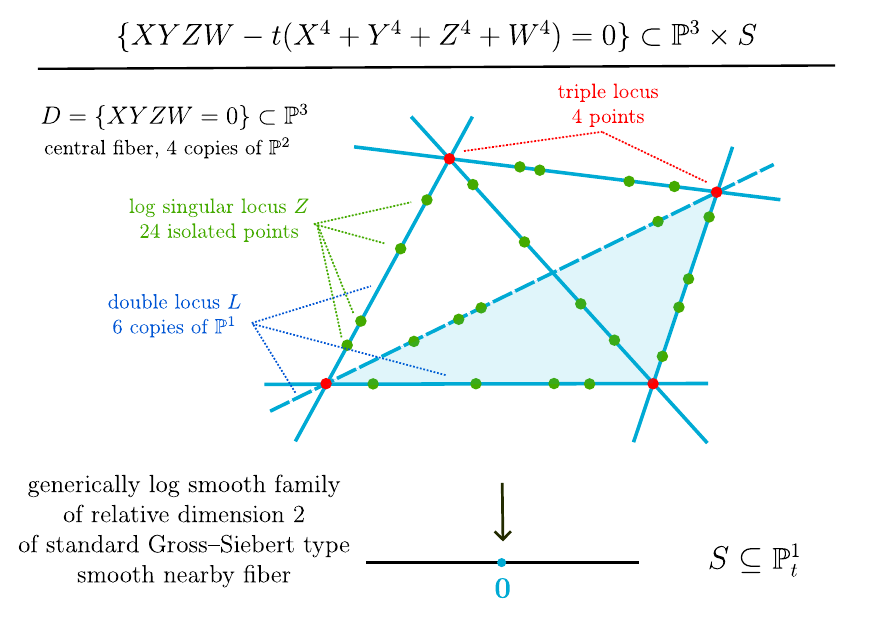}
 \end{center}
 \caption{The central fiber $D$ consists of four copies of $\PP^2$ intersecting in a union $L$ of six copies of $\PP^1$. On each $\PP^1$ there are $4$ singularities of $X$.}\label{intersectingPlanes}
 \end{mdframed}
 
\end{figure}

\begin{ex}\label{smooth-quartic-deg}\note{smooth-quartic-deg}
 We consider the degeneration of a smooth quartic surface 
 $$E = \{X^4 + Y^4 + Z^4 + W^4 = 0\} \subset \PP^3$$ into an arrangement of four planes $D := \{XYZW = 0\} \subset \PP^3$. The pencil defined by $D,E$ has total space 
$$\X = \{T_0(X^4 + Y^4 + Z^4 + W^4) - T_1XYZW = 0\} \subset \PP^1 \times \PP^3 $$
where $X, Y, Z, W$ are homogeneous coordinates of $\PP^3$, and $T_0, T_1$ are homogeneous coordinates of $\PP^1$. The space $\X$ is the blowup of $\PP^3$ in $D \cap E$ via the second projection, and the first projection defines a flat projective family $\varphi: \X \to \PP^1$.
We denote by $S \subseteq \PP^1$ some Zariski open neighborhood of $0 = [0:1]$ such that $D = \varphi^{-1}(0)$ is the only singular fiber of the restricted family, which we denote by $f: X \to S$, i.e., $X := \varphi^{-1}(S) \subseteq \X$. The singular fiber $D$ consists of four copies of $\PP^2$ intersecting in a union $L$ of six lines $\PP^1$ like the faces of a tetrahedron, exhibiting $D$ as four toric varieties $\PP^2$ glued along toric divisors. 
It is depicted in Figure~\ref{intersectingPlanes}. The set 
$$Z := L \cap \{X^4 + Y^4 + Z^4 + W^4 = 0\} \subset \PP^3 $$
consists of 24 points, four on each line. It is the singular locus of the total space $X$, i.e., $U = X \setminus Z$ is regular. The divisor 
$D|_U \subset U$ is a simple normal crossing divisor whereas $D \subset X$ is not a normal crossing divisor since $X$ is not regular in the points $z \in Z$. The log morphism $(X,D) \to (S,0)$ is log smooth outside $Z$ because there it is a semistable degeneration, but not log smooth in $Z$. We obtain a toric degeneration by base change to a (formal) neighborhood $\Spec \kk\llbracket t\rrbracket$ of $0 \in S$.
\end{ex}

Given a generically log smooth family $f_0: X_0 \to S_0$, the notion of an infinitesimal generically log smooth deformation makes sense and yields a deformation functor 
$$\mathrm{LD}_{X_0/S_0}^{gen}: \enspace \mathbf{Art}_Q \to \mathbf{Set}.$$
These deformations are, however, rather wild since this notion allows many flat deformations in the log singular locus $Z$, and there is no control locally on $X$ how a log smooth deformation of $U$ may ``twist'' around the log singular locus $Z$. The question of classifying these generically log smooth deformations is a \emph{local} question which remains, except for some special cases, widely open as of now. The tools that we discuss in this book are instead designed to study the passage from local deformations to global deformations. In order to apply them, we fix the local isomorphism type of those generically log smooth families which we wish to admit in our study. Our device to do so is a \emph{system of deformations $\D$}, which consists of an open cover $\V = \{V_\alpha\}_\alpha$ of $X_0$ and a choice of a generically log smooth deformation $V_{\alpha;A} \to S_A$ for each $V_\alpha$ and $A \in \mathbf{Art}_Q$, which serves as a \emph{local model} of those generically log smooth deformations which we wish to admit; the local models $V_{\alpha;A} \to S_A$ have to be compatible with base change and to be isomorphic on overlaps. In doing so, we mimic the situation of log smooth deformations; now the deformations \emph{of type $\D$} are \emph{locally rigid} in that any two generically log smooth deformations of type $\D$ are locally isomorphic---by the very definition of that notion.
We have a deformation functor 
$$\mathrm{LD}_{X_0/S_0}^\D: \enspace \mathbf{Art}_Q \to \mathbf{Set}$$
which classifies generically log smooth deformations of type $\D$; it behaves much like the log smooth deformation functor.

\begin{rem}
 The log singularities as we study them here are different from morphisms $f: X \to S$ between \emph{fine and saturated} log schemes which are just not log smooth. Such log singularities may be studied from the perspective of \emph{log flat deformations}, which are not the subject of this book. For fine and saturated log singularities, some form of resolution of singularities is known, see for example \cite{Quek2022}. These results do not apply to our situation. Our log singularities are, at least sometimes, not even (quasi-)coherent.
\end{rem}

When we want to use this deformation functor to construct global formal smoothings, then we need to lift infinitesimal deformations from order to order along $A_k = \kk\llbracket Q\rrbracket/\m_Q^{k + 1}$. In other words, we need some form of \emph{unobstructedness} for $\mathrm{LD}_{X_0/S_0}^\D$. In the classical case, there is a well-known example of that: the Bogomolov--Tian--Todorov theorem, stating that the smooth deformation functor
$$\mathrm{Def}_X: \enspace \mathbf{Art}_\kk \to \mathbf{Set}$$
is unobstructed if $X$ is a smooth and proper \emph{Calabi--Yau manifold}, meaning that $\omega_X \cong \cO_X$. If $f_0: X_0 \to S_0$ is a generically log smooth family of relative dimension $d$, then the \emph{log canonical sheaf} is 
$$\omega_{X_0/S_0} := j_*\omega_{U_0/S_0} = j_*\Omega^d_{U_0/S_0}$$
where $\Omega^d_{U_0/S_0}$ is the $d$-th piece of the \emph{log de Rham complex} $\Omega^\bullet_{U_0/S_0}$. This sheaf may not always be a line bundle, but if it is a line bundle and moreover, we have $\omega_{X_0/S_0} \cong \cO_{X_0}$, we say that $f_0: X_0 \to S_0$ is \emph{log Calabi--Yau}. In many cases, $X_0$ is a Gorenstein scheme, and the canonical bundle $\omega_{X_0}$ is isomorphic to the log canonical bundle $\omega_{X_0/S_0}$.

The classical Bogomolov--Tian--Todorov theorem suggests that $\mathrm{LD}_{X_0/S_0}^\D$ may be unobstructed if $f_0: X_0 \to S_0$ is log Calabi--Yau, suggesting some form of a \emph{logarithmic Bogomolov--Tian--Todorov theorem}. Historically, some results in this direction have been obtained. In \cite{Friedman1983}, Friedman shows that all semistable K3 surfaces which occur in Kulikov's classification of semistable degenerations of K3 surfaces indeed occur as the central fiber of some semistable degeneration. This article predates the advent of log geometry and uses only more classical methods. In \cite{KawamataNamikawa1994}, Kawamata and Namikawa give a generalization to higher-dimensional $d$-semistable normal crossing spaces under some cohomological conditions, this time using log geometry although they still rely on the analysis of a mixed Hodge structure as Friedman does. A proof of the actual logarithmic Bogomolov--Tian--Todorov theorem has not been possible before a new method was presented by Chan, Leung, and Ma in \cite{ChanLeungMa2023} in 2019 (year of the preprint); this is one of the main topics of this book.

\section{Examples of generically log smooth families}\label{examples-intro-sec}\note{examples-intro-sec}

Before we go on, we illustrate possible central fibers $f_0: X_0 \to S_0$ with a couple of examples. We have seen log singularities of the form in Example~\ref{xy-tz-example} below already in Example~\ref{smooth-quartic-deg} above. The reader may want to recall that by K.~Kato's toroidal characterization of log smoothness, every log smooth map is \'etale locally the base change of some toric morphism $A_\theta: A_P \times \bAA^r \to A_Q$ of toric varieties, where $A_P$ and $A_Q$ carry the divisorial log structure from the (full) toric boundary, and $\bAA^r$ carries the trivial log structure. We have defined in \cite[Defn.~4.1]{FFR2021} the notion of a \emph{log toroidal} family by taking the same local models on the level of schemes but allowing to take only a part of the toric boundary to define the log structure.\footnote{There are some subtleties concerning the log singular locus $Z$ and the map from $S$ to the base of the local model in the various definitions of log toroidal families. The reader may ignore them at this point. More information can be found in \cite{FFR2021,FeltenThesis} as well as in Section~\ref{elem-GS-type-sec}.} Log toroidal families form generically log smooth families which are not necessarily smooth, as the reader can see below. In Chapter~\ref{elem-GS-type-sec}, we define log toroidal families of \emph{Gross--Siebert type} by admitting only local models which arise from a specific construction which was first considered by Gross and Siebert in \cite{GrossSiebertII}. If the polytope $\Delta_+$ in the construction is an elementary respective a standard simplex, then the family is called log toroidal of \emph{elementary/standard} Gross--Siebert type. Not every generically log smooth family is log toroidal. Additional illustrated examples of generically log smooth families can be found in Chapter~\ref{gen-log-smooth-sec} and Chapter~\ref{elem-GS-type-sec}.

\begin{ex}\label{xy-t-example}\note{xy-t-example}
 Let $f: \bAA^2 \to \bAA^1, \: t\mapsto xy$, endowed with the divisorial log structures given by $t = 0$ on source and target. This map is log smooth of relative dimension $1$. The nearby fiber is smooth, as is the total space. The central fiber consists of two copies of $\bAA^1$, intersecting in a point. The ghost stalk at this point is $\NN^2$ while the ghost stalk at every other point is $\NN$.
\end{ex}

\begin{figure}
 \begin{mdframed}
  \begin{center}
   \includegraphics[scale=0.8]{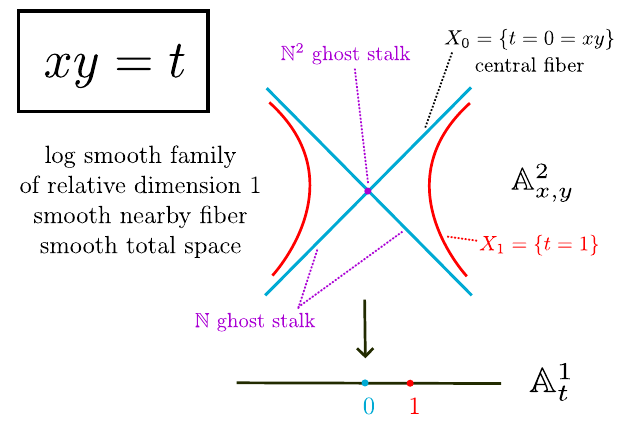}
  \end{center}
  \caption{Example~\ref{xy-t-example}}
 \end{mdframed}
\end{figure}

\begin{ex}\label{xy-tk-example}\note{xy-tk-example}
 Let $X = \Spec\kk[x,y,z]/(xy - z^k)$ and $f: X \to \bAA^1, \: t \mapsto z$, endowed with the divisorial log structure given by $t = 0$ on source and target. The total space is $\Spec \CC[P_k]$ for the monoid $P_k$ depicted in Figure~\ref{xy-tk-figure}, and the map $f: X \to \bAA^1$ is a toric morphism such that $f^{-1}(0)$ is the toric boundary of $X$. Hence $f$ is log smooth of relative dimension $1$. While the total space in the previous Example~\ref{xy-t-example} is smooth, now the total space has an $A_{k - 1}$-singularity. The stalk of the ghost sheaf $\overline\M_X$ at the intersection of the two components of the central fiber $X_0$ is a copy of the monoid $P_k$.
\end{ex}

\begin{figure}\label{xy-tk-figure}\note{xy-tk-figure}
 \begin{mdframed}
  \begin{center}
   \includegraphics[scale=0.8]{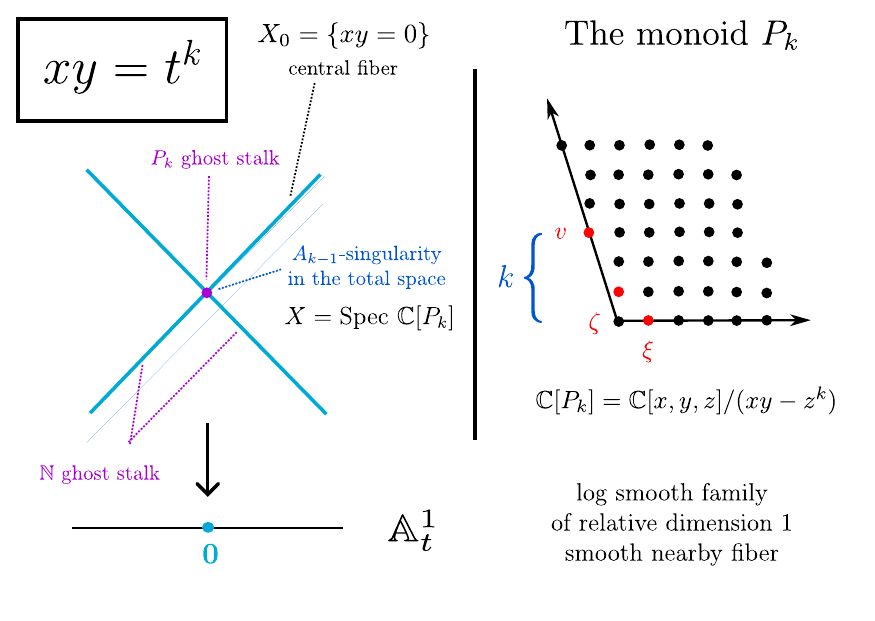}
  \end{center}
  \caption{Example~\ref{xy-tk-example}}
 \end{mdframed}
\end{figure}

\begin{ex}\label{xyz-t-example}\note{xyz-t-example}
 Let $X = \Spec \kk[x,y,z]$ and $f: X \to \bAA^1, \: t \mapsto xyz,$ endowed with the divisorial log structures given by $t = 0$ on source and target. This map is log smooth of relative dimension $2$. It is the semistable degeneration with three components intersecting in a single point. The stalk of the ghost sheaf $\overline\M_X$ at the triple intersection point is $\NN^3$. On the double locus, it is $\NN^2$, and in the interior of the irreducible components, it is $\NN$. The general fiber of the family is smooth.
\end{ex}

\begin{figure}
 \begin{mdframed}
  \begin{center}
   \includegraphics[scale=0.8]{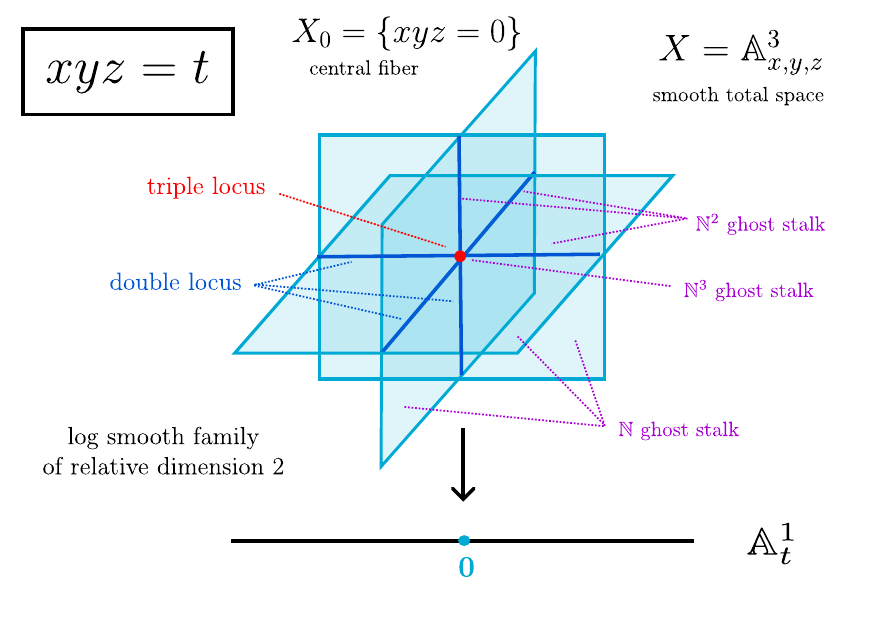}
  \end{center}
  \caption{Example~\ref{xyz-t-example}}
 \end{mdframed}
\end{figure}

\begin{ex}\label{xy-wk-zw-t-example}\note{xy-wk-zw-t-example}
 This is a generalization of the previous example. For $k \geq 1$, let 
 $$X = \Spec \kk[x,y,z,w]/(xy - w^k) = \Spec \kk[P_k \oplus \NN]$$
 and $f: X \to \bAA^1, \: t \mapsto zw,$ endowed with the divisorial log structures given by $t = 0$ on source and target. The total space has a $cA_{k - 1}$-singularity in the sense that we have an $A_{k - 1}$-singularity multiplied with $\bAA^1$. The central fiber $X_0$ has three irreducible components; two of them, $V_{yz} = \{x = w = 0\} = \Spec \kk[y,z]$ and $V_{xz} = \{y = w = 0\} = \Spec \kk[x,z]$, are smooth; the remaining component $V_{xy} = \{z = 0\} = \Spec \kk[x,y,w]/(xy - w^k)$ has an $A_{k - 1}$-singularity which lies in the triple locus of $X_0$. The singular locus of the total space is then $V_{yz} \cap V_{xz} = \Spec \kk[w]$. The stalk of the ghost sheaf $\overline\M_X$ at the single triple intersection point is $P_k \oplus \NN$. Along $V_{yz} \cap V_{xz}$, this is Example~\ref{xy-tk-example}; in particular, the ghost stalk is $P_k$. Along the other two lines of the double locus, this is $xy = t$, hence the stalk of the ghost sheaf $\overline\M_X$ is $\NN^2$.
\end{ex}

\begin{figure}
 \begin{mdframed}
  \begin{center}
   \includegraphics[scale=0.8]{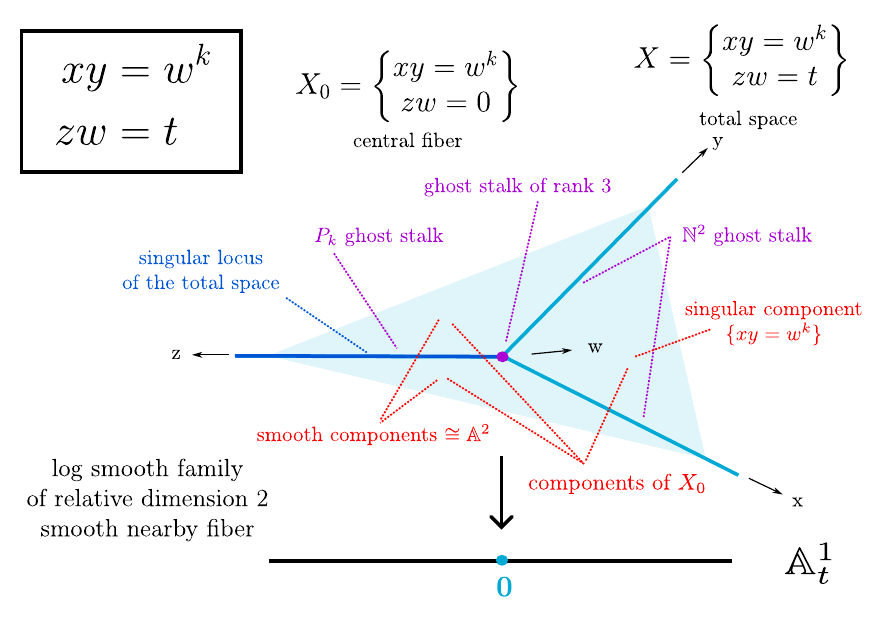}
  \end{center}
  \caption{Example~\ref{xy-wk-zw-t-example}}
 \end{mdframed}
\end{figure}

\begin{ex}\label{xy-tz-example}\note{xy-tz-example}
 Let $X = \Spec \kk[x,y,z,t]/(xy - zt)$, and let $f: X \to \bAA^1, \: t \mapsto t$, be endowed with the divisorial log structure defined by $t = 0$ on source and target. Then the central fiber has two smooth irreducible components. The log structure has an isolated singularity in $x = y = z = t = 0$, the $A_1$-singularity of the total space. This is the simplest example of a generically log smooth family which is not log smooth. When we consider $V = X_0$ as a toroidal crossing space (see below), then we have $\cL\cS_V \hookrightarrow \T^1_V = \kk[z]$, and the section defining the log structure under consideration is $s = z \in \cL\cS_V$. Outside $Z$, the ghost stalk is $\NN^2$ on the double locus $\{x = y = 0\}$ of the central fiber, but in $0$, the ghost stalk is only $\NN$. The sheaf of log differential forms $\Omega^1_{X/S}$, defined via the usual universal property, is not coherent. In fact, it is neither quasi-coherent nor of finite type. See \cite[Ex.~1.11]{GrossSiebertII} and the author's master thesis \cite[Thm.~4.7]{FeltenMasterThesis} for details. This shows that the log structure is \emph{not coherent} in $Z$, i.e., the map is not only not log smooth, but there is no chart for the log structure at all. This motivates both that we work with a coherent replacement $\W^1_{X/S}$ of $\Omega^1_{X/S}$ and that we ignore the log structure in $Z$ and just work with the log structure on the log smooth locus $U = X \setminus Z$.
 
 This family is log toroidal of standard Gross--Siebert type as defined in Definition~\ref{log-tor-GS-type}. To see this, we take $\tau = [0,1] \subseteq \RR^1$ in the construction in Section~\ref{local-model-GS-type-sec}; we set $q = 1$ and $\Delta_1 = \tau$. Then 
 $$\check\psi_0(n) = -\mathrm{inf}\{0,n\} = \check\psi_1(n),$$
 and we have 
 $$P = \{(n,a_0,a_1) \ | \ a_i \geq \check\psi_i(n), \ i = 1,2\}.$$
 This monoid is generated by 
 $$\bar x = (1,0,0),\quad \bar y = (-1,1,1),\quad \bar t = (0,1,0), \quad \bar z = (0,0,1),$$
 which induces the isomorphism $\kk[x,y,z,t]/(xy - zt) \cong \kk[P]$. Note also that this example is a special case of Example~\ref{codim-1-monoid}. That $f: X \to \bAA^1$ is of \emph{standard} Gross--Siebert type follows from Lemma~\ref{standard-simplex-gen-fib} since the general fiber of $f: X \to \bAA^1$ is smooth.
\end{ex}

\begin{figure}
 \begin{mdframed}
  \begin{center}
   \includegraphics[scale=0.8]{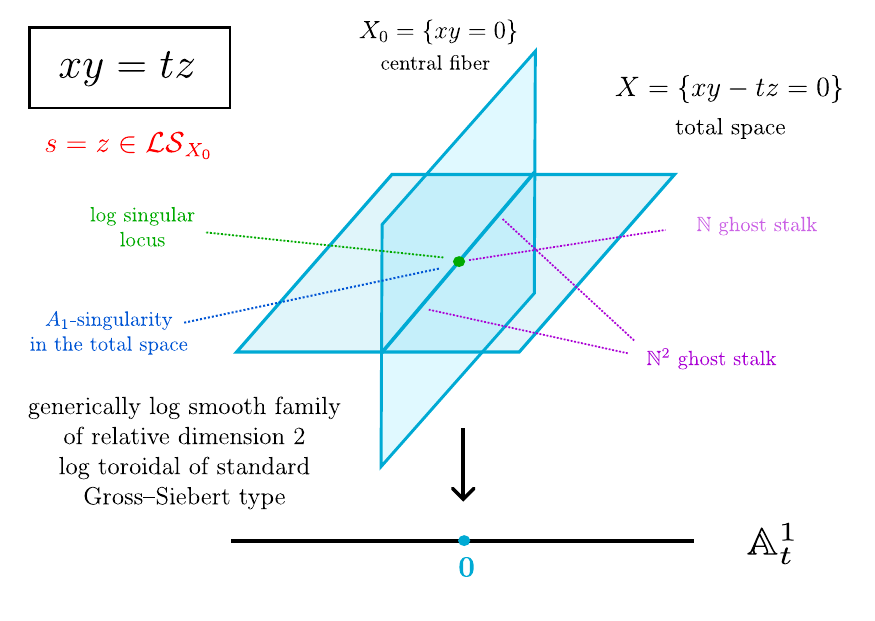}
  \end{center}
  \caption{Example~\ref{xy-tz-example}}
 \end{mdframed}
\end{figure}

\begin{ex}\label{xy-tz-inv-example}\note{xy-tz-inv-example}
 This example is just a central fiber $f_0: X_0 \to S_0$ without a deformation. As underlying scheme, we take $X_0 = V := \Spec \kk[x,y,z]/(xy)$ as in the previous example. This is a normal crossing space, and hence it has a canonical structure of a toroidal crossing space as explained in Example~\ref{normal-crossing-toroidal-crossing}. We take $z^{-1} \in \cL\cS_V$ to define a log structure outside $x = y = z = 0$. In plain terms, we take the log structure from $xy = tz^{-1}$ on $\{z \not= 0\}$, and we take the induced log structure from the base $S_0$ on both $\{x \not= 0\}$ and $\{y \not= 0\}$. This gives rise to a generically log smooth family with log singular locus a single point $Z = \{0\}$. There is no generically log smooth deformation of this family over $S_1 = \Spec(\NN \to \kk[t]/(t^2))$. Namely, a flat deformation over $S_1$ defines a class in $\mathrm{Ext}^1(\Omega^1_{X_0},\cO_{X_0}) = \kk[z]$ whose restriction to $\{z\not= 0\}$ is the underlying flat deformation of the log smooth deformation; this is $z^{-1} \in \kk[z]_z$, which does not extend to the whole of $X_0$.
\end{ex}

\begin{figure}
 \begin{mdframed}
  \begin{center}
   \includegraphics[scale=0.8]{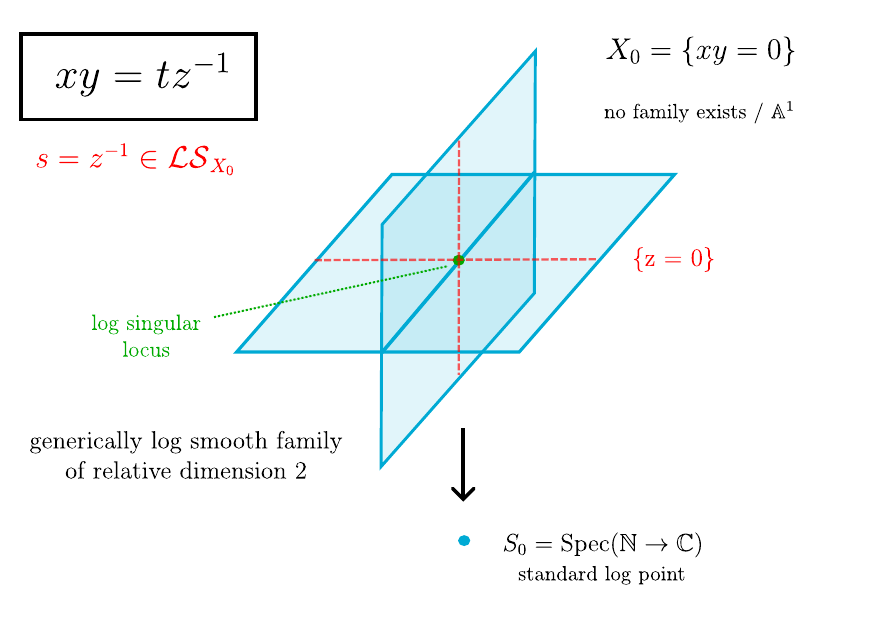}
  \end{center}
  \caption{Example~\ref{xy-tz-inv-example}}
 \end{mdframed}
\end{figure}

\begin{ex}\label{xy-tzw-example}\note{xy-tzw-example}
 Let 
 $$X = \Spec \kk[x,y,z,w,t]/(xy - tzw),$$
 and let $f: X \to \bAA^1, \: t \mapsto t$, be endowed with the divisorial log structures given by $t = 0$ on source and target. The central fiber $X_0 = \Spec \kk[x,y,z,w]/(xy)$ has two smooth irreducible components. The log singular locus $Z$ is given by $zw = 0$ inside the double locus $\bAA^2_{z,w}$ of $X_0$, i.e., it consists of two lines intersecting transversely in a point. Outside this point, the log singularity is of the form $xy = tz$ as in Example~\ref{xy-tz-example}. The family $f: X \to \bAA^1$ is identical with the family $L(1;1,1) \to \bAA^1$ in Example~\ref{codim-1-monoid}. Thus, it is log toroidal of Gross--Siebert type but not of elementary Gross--Siebert type since $\Delta_+$ is a square and hence not an elementary simplex. The general fiber has a $3$-dimensional $A_1$-singularity; a log toroidal family of elementary Gross--Siebert type of relative dimension $3$ would have a smooth general fiber since, in this case, singularities in the general fiber can occur only in codimension $4$.
\end{ex}

\begin{figure}
 \begin{mdframed}
  \begin{center}
   \includegraphics[scale=0.8]{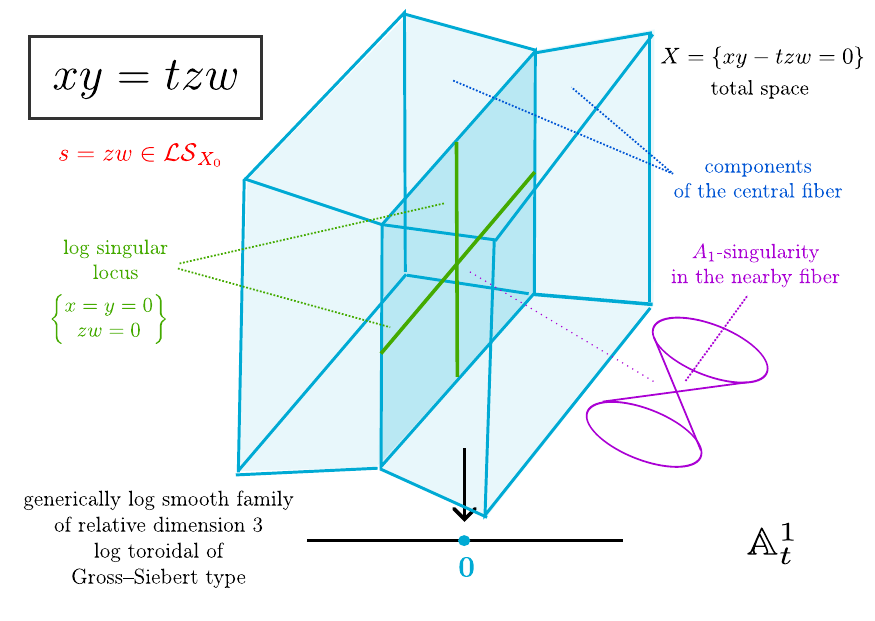}
  \end{center}
  \caption{Example~\ref{xy-tzw-example}}
 \end{mdframed}
\end{figure}
\begin{figure}
 \begin{mdframed}
  \begin{center}
   \includegraphics[scale=0.8]{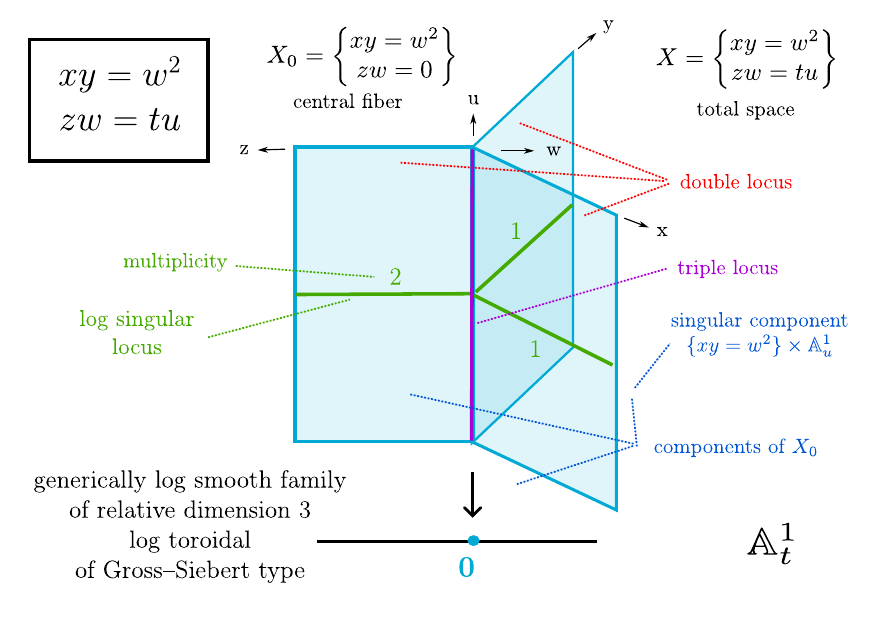}
  \end{center}
  \caption{Example~\ref{xy-w2-zw-tu-example}}
 \end{mdframed}
\end{figure}

\begin{ex}\label{xy-w2-zw-tu-example}\note{xy-w2-zw-tu-example}
 Let 
 $$X = \Spec \kk[x,y,z,w,u,t]/(xy - w^2, zw - tu),$$
 and let $f: X \to \bAA^1, \: t \mapsto t$, be endowed with the divisorial log structure defined by $t = 0$ on source and target. The general fiber of this map is a singular Gorenstein toric variety $\Spec \kk[x,y,z,w]/(xy - w^2)$, i.e., a $cA_1$-singularity. The central fiber is 
 $$X_0 = \Spec \kk[x,y,z,w,u]/(xy - w^2, zw),$$
 i.e., the product of the central fiber in Example~\ref{xy-wk-zw-t-example} with $\bAA^1_u$. In particular, the central fiber consists of three irreducible components of dimension $3$, any two of them intersecting in a stratum of dimension $2$, and all three intersecting in a stratum of dimension $1$. The log singular locus $Z$ is given by $u = 0$ in the double locus of $X_0$. More precisely, there is a single branch in each of the two components $V_x = \bAA^1_{u,x}$ and $V_y = \bAA^1_{u,y}$ of the double locus, and a double branch in $V_z = \bAA^1_{z,u}$. The multiplicity of this branch comes from the fact that the local model of the singularity is $xy = t^2z^2$; the $t^2$ comes from the monoid $P_2$ of the ghost sheaf of the morphism in Example~\ref{xy-wk-zw-t-example}, and the $z^2$ corresponds to the fact that the branch is a double line. 
 
 This family is log toroidal of Gross--Siebert type. To see this, we follow the construction in Section~\ref{local-model-GS-type-sec} with $\tau \subseteq \RR^2$ the so-called $(1,1,2)$-triangle given by the vertices $(0,0)$, $(2,0)$, and $(0,1)$. We set $q = 1$ and $\Delta_1 = \tau$. Then we have 
 $$\check\psi_0(n_1,n_2) = -\mathrm{inf}\{0,2n_1,n_2\} = \check\psi_1(n_1,n_2).$$
 This function is piece-wise linear on the dual fan of $\tau$. The monoid
 $$P = \{(n_1,n_2,a_0,a_1) \ | \ a_i \geq \check\psi_i(n_1,n_2),\ i = 1,2\}$$
 gives rise to a family $f: \bAA_P = \Spec \kk[P] \to \bAA^1$ by mapping $t \mapsto z^{(0,0,1,0)}$; this is a log toroidal family of Gross--Siebert type by definition. To see that $f: \bAA_P \to \bAA^1$ is isomorphic to $f: X \to \bAA^1$, note that
 \begin{center}
  \begin{tabular}{lll}
  $\bar x = (1,0,0,0)$, & $\bar y = (-1,-2,2,2)$ & $\bar z = (0,1,0,0)$, \\
  $\bar w = (0,-1,1,1)$, & $\bar t = (0,0,1,0)$, & $\bar u = (0,0,0,1)$, \\
 \end{tabular}
 \end{center}
 are elements of $P$. They satisfy the relations $\bar x + \bar y = 2 \cdot \bar w$ and $\bar z + \bar w = \bar t + \bar u$; thus, we obtain a ring homomorphism 
 $$\kk[x,y,z,w,u,t]/(xy - w^2, zw - tu) \to \kk[P]$$
 in the obvious way. A slightly cumbersome computation shows that the map is surjective; then $\bAA_P \subset X$ is a closed subscheme, and both are integral of the same dimension, i.e., $\bAA_P \cong X$. Both are endowed with the divisorial log structure coming from the central fiber, so they are isomorphic as log schemes as well. However, this family is not log toroidal of \emph{elementary} Gross--Siebert type because the nearby fiber has singularities in codimension $2$, which is not possible in a log toroidal family of elementary Gross--Siebert type by Lemma~\ref{elem-simplex-gen-fib}.
\end{ex}

\begin{figure}
 \begin{mdframed}
  \begin{center}
   \includegraphics[scale=0.8]{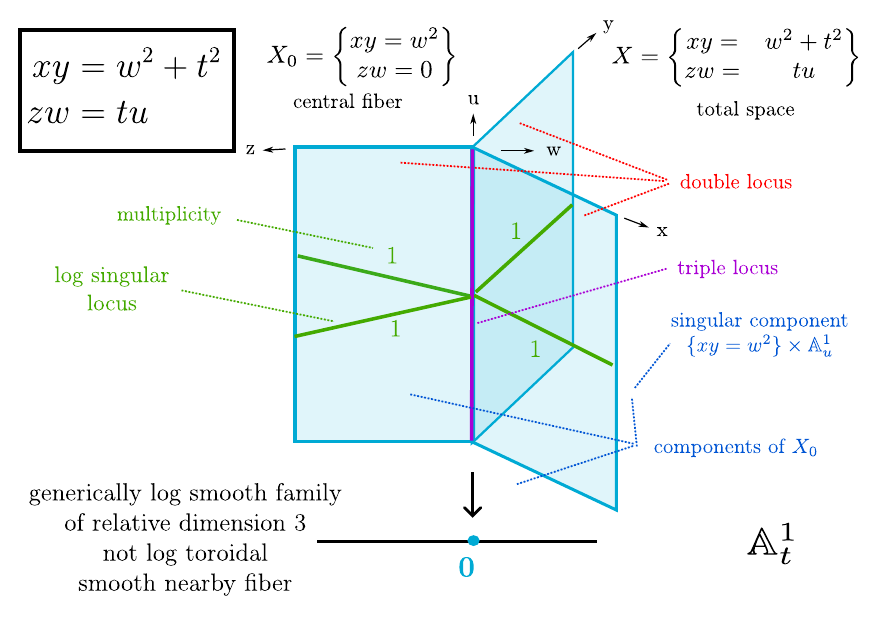}
  \end{center}
  \caption{Example~\ref{xy-w2-t2-zw-tu-example}}
 \end{mdframed}
\end{figure}

\begin{ex}\label{xy-w2-t2-zw-tu-example}\note{xy-w2-t2-zw-tu-example}
 Let 
 $$X = \Spec \kk[x,y,z,w,u,t]/(xy - w^2 - t^2, zw - tu),$$
 and let $f: X \to \bAA^1, \: t \mapsto t$, be endowed with the divisorial log structure given by $t = 0$ on source and target. This is a generically log smooth family of relative dimension $3$. It is very similar to the previous Example~\ref{xy-w2-zw-tu-example}, but now the double line in the log singular locus has been separated into two single lines such that the log singular locus $Z$ has now four branches. The general fiber is now smooth. This generically log smooth family is log toroidal with local model $xy = tz$ outside $\{0\}$, but \emph{not} log toroidal around the point $\{0\}$. In fact, it does not satisfy the base change property of Definition~\ref{bc-prop}, which is satisfied by all log toroidal families. More precisely, when we denote by $\W^1_{X/\bAA^1}$ the reflexive log differential forms of $f: X \to \bAA^1$, then $\W^1_{X/\bAA^1} \otimes \cO_{X_0}$ is not reflexive on $X_0$. Nonetheless, $\W^2_{X/\bAA^1}$ remains reflexive after pull-back to $X_0$. For the similar family in Example~\ref{base-change-violation-t3-w3}, also the pull-back of $\W^2$ is not reflexive.
\end{ex}

\section{Toroidal crossing spaces and the Gross--Siebert program}

The initial motivation to study deformations of spaces which are not everywhere log smooth comes from mirror symmetry. The idea to use pairs of dual reflexive polytopes as the backbone of mirror symmetry goes back to Batyrev's foundational work \cite{Batyrev1994}. In \cite{GrossSiebertI} and subsequent works, Gross and Siebert present a much refined and somewhat different variant of this idea. They start with an integral affine manifold $B$ with singularities, endowed with a polyhedral decomposition $\cP$, some form of polarization $\varphi$, and so-called open gluing data $s$. Given $(B,\cP,s)$, they construct two degenerate schemes $\check X_0(B,\cP,s)$ and $X_0(B,\cP,s)$ which are thought to be mirror pairs. The first one, referred to as the \emph{cone picture}, is glued from quasi-projective toric varieties $X_\tau$ corresponding to polyhedra $\tau \in \cP$. It is covered by open subsets corresponding to vertices $v \in \cP$. The second one, referred to as the \emph{fan picture}, is covered by open subsets corresponding to maximal cells $\sigma \in \cP$. A duality construction, the \emph{discrete Legendre transform}, associates with $(B,\cP,\varphi)$ a dual triple $(\check B,\check \cP,\check \varphi)$. At least in good cases, we then have $\check X_0(B,\cP,s) = X_0(\check B,\check \cP,\check s)$ and $\check X_0(\check B,\check \cP,\check s) = X_0(B,\cP,s)$ for appropriate open gluing data $s$ and $\check s$. The schemes $X_0(B,\cP,s)$ and $\check X_0(B,\cP,s)$ are reduced,  Gorenstein, and Calabi--Yau, but they have many irreducible components intersecting in a complicated way. 

The fundamental insight of Gross and Siebert is that actual mirror pairs of smooth Calabi--Yau manifolds should be obtained by smoothing both $X_0(B,\cP,s)$ and $\check X_0(B,\cP,s)$ to obtain some mirror pair $X(B,\cP,s)$ and $\check X(B,\cP,s)$. Since the two central fibers are Calabi--Yau, the existence of such a smoothing is plausible.

In order to construct a degeneration to $\check X_0 = \check X_0(B,\cP,s)$, one crucial step is to find a log structure on $\check X_0$. Putting this problem aside for a moment, it is much easier to define the \emph{ghost sheaf} of a log structure on $\check X_0$. The space $\check X_0$ admits an open cover by the toric boundaries $V(v)$ of some affine toric varieties $U(v)$, indexed by vertices $v \in \cP$. The affine toric varieties $U(v)$ are constructed from the affine structure on $B$ together with the polarization $\varphi$. While the log structures coming from $U(v)$ on $V(v)$ do not glue, their ghost sheaves do, giving rise to a global sheaf of monoids on $\check X_0$, which is usually denoted by $\cP$, unfortunately the same symbol as the polyhedral decomposition. Also the image of $1 \in \overline\M_{S_0}$ glues, giving rise to a global section $\bar\rho \in \cP$. In the language that we define in 
Chapter~\ref{toroidal-cr-sp-sec}, this is a \emph{toroidal crossing space}, but the concept really goes back to Gross and Siebert's work \cite{GrossSiebertI} as well as to \cite{SchroerSiebert2006}, which appeared in the same year. On a toroidal crossing space $(V,\cP,\bar\rho)$, there is a whole \emph{sheaf} $\cL\cS_V$ whose sections correspond to log smooth log structures on the respective open subset. Gross and Siebert have computed this sheaf explicitly on $\check X_0$. As it turns out, it usually has \emph{no global section}. In other words, there is no way of endowing $\check X_0$ with a global log structure which is everywhere similar to $U(v)$. The only thing one may do is to choose a closed subset $Z \subseteq X_0$, which will become the \emph{log singular locus}, and a section $s \in \Gamma(X_0 \setminus Z,\cL\cS_{\check X_0})$. In fact, there is a canonical way of choosing $Z \subseteq \check X_0$, and a canonical section $s \in \cL\cS_{\check X_0}$ on its complement, what Gross and Siebert call the \emph{normalized} section. In our language, that turns $\check X_0(B,\cP,s) \to S_0$ into a generically log smooth family, but the construction of $\check X_0(B,\cP,s)$ predates our language and is the original context where our notion is coming from. In the language of Gross--Siebert, $\check X_0(B,\cP,s)$ is now a \emph{toric log Calabi--Yau space}.

The next step is to define infinitesimal deformations of $\check X_0$ which lead up to the desired smoothing $\check X$. Since the family $\check X_0(B,\cP,s) \to S_0$ is not log smooth, there is no obvious choice, and special care must be taken. The first insight at this point is that the log singularities around $Z$ are not just any log singularities, but they admit very special local models in the form of taking a map $\NN \to P$ of sharp toric monoids, and then endowing the source $\Spec(\kk[P])$ not with the divisorial log structure of the full toric boundary, but with the divisorial log structure of the partial toric boundary $\{t = 0\}$. In \cite{FFR2021}, we introduced the name \emph{log toroidal family} for such generically log smooth families. Moreover, $\NN \to P$ is not arbitrary but of a very specific form; we call them \emph{log toroidal families of elementary Gross--Siebert type}, honoring their role as the local models of $\check X_0(B,\cP,s) \to S_0$.

While there are many generically log smooth deformations of $\check X_0 \to S_0$, any two of them which are both log toroidal of elementary Gross--Siebert type are locally isomorphic. This is the key insight to study deformations of $\check X_0 \to S_0$. Such deformations are called \emph{divisorial deformations} by Gross and Siebert in \cite{GrossSiebertII}. Thus, divisorial deformations form a system of deformations $\D$ in our sense. The original proofs of this fact can be found in \cite[\S 2]{GrossSiebertII}; in Chapter~\ref{elem-GS-type-sec}, we review this proof for the reader's convenience. Up to now, this is the most important source of examples of systems of deformations which are not log smooth, showing the usefulness of the concept.

As Gross and Siebert explain in \cite[Rem.~2.19]{GrossSiebertII}, they originally expected to apply some form of Bogomolov--Tian--Todorov theorem in their situation, thus obtaining the deformation to an irreducible and often smooth space, at least analytically. However, as the logarithmic Bogomolov--Tian--Todorov theorem was not available until 2019, they developed instead the fascinating but intricate method given in \cite{Gross2011}, which uses an explicit combinatorial device on $B$, the \emph{scattering diagram}, to track the comparison isomorphisms of the local deformations to conclude that a global deformation exists up to any order.

\vspace{\baselineskip}

As already mentioned above, we have abstracted from the analysis in \cite{GrossSiebertI} of $\check X_0(B,\cP,s)$ and its ghost sheaf $\cP$ the notion of a \emph{toroidal crossing space}. In its current form briefly defined in \cite{FFR2021}, it consists of a scheme $V$ together with a sheaf of monoids $\cP$ in the \'etale topology and a global section $\bar\rho \in \cP$ such that the triple $(V,\cP,\bar\rho)$ is locally isomorphic to what we obtain on the central fiber $V(\sigma) \to S_0$ of a morphism $U(\sigma) \to \bAA^1$ obtained from a Gorenstein toric monoid $P$ with Gorenstein degree $\bar\rho \in P$, see Construction~\ref{prototype-constr}. We discuss this notion in some detail in Chapter~\ref{toroidal-cr-sp-sec} since we haven't yet done so elsewhere. Even beyond the Gross--Siebert program, choosing a section $s \in \cL\cS_V(V \setminus Z)$ on a toroidal crossing space $(V,\cP,\bar\rho)$ is a rich source of generically log smooth families, not all of which admit nice local models as in the case of elementary Gross--Siebert type, for example the family in Example~\ref{xy-w2-t2-zw-tu-example}, which is not log toroidal for any local model of the form $A_{P,\F} \to A_Q$.\footnote{We will later consider this family as an \emph{enhanced} generically log smooth family to remedy the situation that the formation of the Gerstenhaber calculus does not commute with base change here.}

Singularities related to the one in Example~\ref{xy-w2-t2-zw-tu-example} occur in the following example, which we learned from Alessio Corti and sketch only roughly (details are to appear elsewhere).

\begin{ex}\label{Fano-3fold-exa}\note{Fano-3fold-exa}
 Let $\Delta$ be a reflexive polytope of dimension $3$. Then there is a canonical structure of a toroidal crossing space on the reducible toric scheme $X_0(\Delta)$ associated with the central subdivision of $\Delta$. The canonical toric degeneration from the Gorenstein toric Fano variety $X(\Delta)$ to $X_0(\Delta)$ induces a section $s_0 \in \cL\cS$. An analysis of $\cL\cS$ shows that we can find another section $s \in \cL\cS$ which is log toroidal of elementary Gross--Siebert type. More precisely, outside finitely many points, the log singularities are given by $xy = tz$. These simple branches of $Z$ meet in isolated log singular points in the strata of codimension $2$ to give rise to more complicated log singularities. A system of deformations $\D$ may be defined such that any formal deformation of type $\D$ constitutes a formal smoothing.
\end{ex}

This also illustrates one more method to construct systems of deformations $\D$: The family is log toroidal of elementary Gross--Siebert type outside finitely many points, so we can choose some local model $V_{\alpha;A}$ around the isolated very singular points which is log toroidal of elementary Gross--Siebert type. By local rigidity of deformations of elementary Gross--Siebert type, they must coincide on overlaps as soon as each $V_\alpha$ contains at most one very singular point.

\section{A curved Lie algebra controlling the deformation functor}

Let $X/\kk$ be a smooth variety. In classical infinitesimal smooth deformation theory, isomorphism classes of flat deformations are encoded in a \emph{deformation functor} 
$$\mathrm{Def}_X: \mathbf{Art}_\kk \to \mathbf{Set},$$
which is controlled by a dg Lie algebra $L_X^\bullet$, i.e., when we take gauge equivalence classes of Maurer--Cartan elements in $\m_A \otimes_\kk L_X^\bullet$, then we recover $\mathrm{Def}_X$. The most classical approach is complex analytic, using the Kodaira--Spencer dg Lie algebra as defined e.g.~in \cite[Defn.~8.3.1]{ManettiLieMethods2022}, but there is a purely algebraic approach as well, replacing the Dolbeault resolution with the Thom--Whitney resolution, as described in \cite[Ex.~2.1]{AlgebraicBTT2010} by Iacono and Manetti.

This is no longer true in logarithmic deformation theory, even in the log smooth case. Namely, when $L^\bullet_{X_0/S_0}$ is a $\kk\llbracket Q\rrbracket$-linear dg Lie algebra which controls the log smooth deformation functor $\mathrm{LD}_{X_0/S_0}$ for a log smooth morphism $f_0: X_0 \to S_0$, then $0 \in L^1_{X_0/S_0} \otimes_{\kk\llbracket Q\rrbracket} A$ is always a Maurer--Cartan solution. In other words, over any $A \in \mathbf{Art}_Q$, there is at least one log smooth deformation over $S_A$. Unfortunately, this is not always the case.

\begin{ex}[Persson--Pinkham]\label{obstructed-defo-nc}\note{obstructed-defo-nc}
 Let $X = Y \amalg_D Z$ as in \cite[Thm.~1']{PerssonPinkham1983}. Then $X$ is a $d$-semistable normal crossing space with two smooth components $Y$ and $Z$ intersecting in a smooth variety $D$. In particular, it can be endowed with the structure of a log smooth log morphism $f_0: X_0 \to S_0$. The space $X_0$ is, however, not the central fiber of any analytic semistable degeneration, which would exist if we could find an infinitesimal deformation over all $S_k$.
\end{ex}

Thus, there is no $\kk\llbracket Q\rrbracket$-linear dg Lie algebra controlling $\mathrm{LD}_{X_0/S_0}$. However, Chan, Leung, and Ma observed fairly recently in \cite{ChanLeungMa2023} that this problem can be solved by replacing a dg Lie algebra with a $\kk\llbracket Q\rrbracket$-linear \emph{curved} Lie algebra, which is a graded Lie algebra with an operator $\bar\partial$ for which the condition $\bar\partial^2 = 0$ is relaxed.

\begin{defn}
 Let $\Lambda$ be a complete local Noetherian $\kk$-algebra with residue field $\kk$. Then a \emph{$\Lambda$-linear curved Lie algebra} is a quadruple $(L^\bullet,[-,-],\bar\partial,\ell)$ where each $L^i$ is a flat and complete $\Lambda$-module, $(L^\bullet,[-,-])$ is a graded $\Lambda$-linear Lie algebra, $\bar\partial: L^i \to L^{i + 1}$ is a $\Lambda$-linear map with 
 $$\bar\partial[\theta,\xi] = [\bar\partial\theta,\xi] + (-1)^{|\theta| + 1}[\theta,\bar\partial\xi],$$
 and $\ell \in \m_\Lambda \cdot L^2$ is an element with $\bar\partial^2(\theta) = [\ell,\theta]$ and $\bar\partial(\ell) = 0$.
\end{defn}

The operator $\bar\partial$ is called the \emph{predifferential}, and $\ell \in L^2$ is called the \emph{curvature}. We set $L_A^\bullet := L^\bullet \otimes_\Lambda A$ for $A \in \mathbf{Art}_\Lambda$. Every element $\phi \in \m_A \cdot L^1_A$ gives rise to a modified predifferential $\bar\partial_\phi := \bar\partial + [\phi,-]$, which forms a new $\Lambda$-linear curved Lie algebra with the curvature 
$$\ell_\phi := \bar\partial\phi + \frac{1}{2}[\phi,\phi] + \ell.$$
This is called the \emph{twisting procedure}.\index{twisting procedure}\footnote{The twisting procedure is a more general phenomenon which is discussed in the nice new book \cite{Dotsenko2024} by Dotsenko, Shadrin, and Vallette. It applies classically to curved $A_\infty$-algebras as well as curved $L_\infty$-algebras, and moreover, there is an elaborate theory of the twisting procedure on the level of operads.} In order to obtain a deformation functor, we consider elements $\phi \in \m_A \cdot L^1_A$ with $\ell_\phi = 0$, i.e., elements which satisfy the \emph{extended} Maurer--Cartan equation 
$$\bar\partial\phi + \frac{1}{2}[\phi,\phi] + \ell = 0.$$
These elements satisfy $\bar\partial_\phi^2 = [\ell_\phi,-] = 0$. The usual formulae, which can be found in Lemma~\ref{gauge-action}, form a \emph{gauge action} of elements in $\m_A \cdot L_A^0$ on the Maurer--Cartan solutions, and equivalence classes form the deformation functor 
$$\mathrm{Def}(L^\bullet,-): \enspace \mathbf{Art}_\Lambda \to \mathbf{Set}.$$
Now $\phi = 0$ is not automatically a Maurer--Cartan solution for all $A \in \mathbf{Art}_\Lambda$, thus allowing for the existence of a $\kk\llbracket t\rrbracket$-linear curved Lie algebra controlling the deformation functor in the examples of Persson--Pinkham.
It has been worked out in \cite{Felten2022} that for log smooth and saturated $f_0: X_0 \to S_0$, there is indeed a $\kk\llbracket Q\rrbracket$-linear curved Lie algebra $L_{X_0/S_0}^\bullet$ which controls the log smooth deformation functor $\mathrm{LD}_{X_0/S_0}$.

\vspace{\baselineskip}

The curved Lie algebra $L_{X_0/S_0}^\bullet$ is constructed in a complex process, which we generalize (as compared to \cite{Felten2022}) in this book.  

When $f: X \to S$ is a generically log smooth family of relative dimension $d \geq 1$, then we have the log de Rham complex $\Omega^\bullet_{U/S}$ on $U$. We obtain a complex on the whole of $X$ by simply defining $\W^\bullet_{X/S} := j_*\Omega^\bullet_{U/S}$ as the direct image. Similarly, we define the  polyvector fields as $\V^\bullet_{X/S} := j_*\Theta^{-\bullet}_{U/S}$. They live in negative degrees $[-d,0]$, a convention that is by now customary to us and goes at least back to \cite{ChanLeungMa2023}. We usually assume that the formation of $\V^\bullet_{X/S}$ and $\W^\bullet_{X/S}$ commutes with base change. The pair $\V\,\W^\bullet_{X/S} := (\V^\bullet_{X/S},\,\W^\bullet_{X/S})$ carries a number of operations, which turn it into what we call a \emph{two-sided Gerstenhaber calculus}, see Proposition~\ref{G-A-construction}.

In a first step toward the construction of $L^\bullet_{X_0/S_0}$, we forget about the log structure on $f_0: X_0 \to S_0$ and the local models $V_{\alpha;A} \to S_A$ for its infinitesimal deformations, and we just retain the two-sided Gerstenhaber calculi. This yields a deformation problem for \emph{geometric families of two-sided Gerstenhaber calculi}. Now the central fiber is a morphism $f_0: X_0 \to S_0$ of schemes, carrying a two-sided Gerstenhaber calculus $\V\,\W^\bullet_{X_0/S_0}$, and the local models for deformations are morphism of schemes $V_{\alpha;A} \to S_A$, carrying a two-sided Gerstenhaber calculus $\V\,\W^\bullet_{\alpha;A}$. For morphisms $B' \to B$ in $\mathbf{Art}_Q$, they come with restriction maps, and on overlaps $V_\alpha \cap V_\beta$, we have isomorphisms 
$$\V\,\W^\bullet_{\alpha;A}|_{\alpha\beta} \cong \V\,\W^\bullet_{\beta;A}|_{\alpha\beta}$$
which are induced from the isomorphisms of generically log smooth families. Now a deformation of $f_0: X_0 \to S_0$ is a morphism $f_A: X_A \to S_A$ of schemes together with a two-sided Gerstenhaber calculus $\V\,\W^\bullet_A$ which is, on $V_\alpha$, isomorphic to the local model $\V\,\W^\bullet_{\alpha;A}$. Unlike in the case of generically log smooth families, we now have to track the isomorphisms with the local models. The reason for this is that we need to distinguish between \emph{inner} automorphisms of a geometric family of two-sided Gerstenhaber calculi, which can be constructed explicitly from the operations in the calculus, and \emph{outer} automorphisms, which are simply structure-preserving invertible self-maps. The inner automorphisms are in one-to-one correspondence with automorphisms of the generically log smooth family, but there may be additional outer automorphisms. From the one-to-one correspondence between automorphisms in the two cases, 
we obtain that 
$$\mathrm{LD}_{X_0/S_0}^\D \cong \mathrm{GDef}^\D_{X_0/S_0}$$
as functors of Artin rings, where the latter deformation functor classifies equivalence classes deformations of geometric families of two-sided Gerstenhaber calculi. Two deformations are equivalent if there is an (outer) isomorphism such that the induced automorphisms of the local models $V_{\alpha;A} \to S_A$ via the fixed comparison isomorphisms is an \emph{inner} automorphism of the local model.

In the next step, we apply methods of homological algebra to the geometric families of Gerstenhaber calculi. We fix a second cover $\U = \{U_i\}_i$ of $X_0$, which defines a notion of \emph{Thom--Whitney resolution} $\TW^\bullet(\F)$ of a sheaf of $\kk$-vector spaces $\F$ on $X_0$. Applying $\TW^\bullet(-)$ to the two-sided Gerstenhaber calculus $\V\,\W^\bullet_0$ on the central fiber yields what we call a \emph{differential bigraded two-sided Gerstenhaber calculus}. Each sheaf $\V^p_{X_0/S_0}$ and $\W^i_{X_0/S_0}$ is replaced with an acyclic resolution, and all operations extend to the resolutions. The original two-sided Gerstenhaber calculus can be recovered as the \emph{cohomology} of the differential $\bar\partial$. We apply the same resolution construction also to the local models, obtaining the curved two-sided Gerstenhaber calculi $\V\,\W^{\bullet,\bullet}_{\alpha;A}$. When we have a deformation $f_A: X_A \to S_A$ with two-sided Gerstenhaber calculus $\V\,\W^\bullet_A$, then we can apply the resolution as well. The resulting curved two-sided Gerstenhaber calculus $\V\,\W^{\bullet,\bullet}_A$ is locally isomorphic to the local models $\V\,\W^{\bullet,\bullet}_{\alpha;A}$. It is a curious but in fact central fact of our theory that, once we forget the differential $\bar\partial$ and keep just the \emph{bigraded two-sided Gerstenhaber calculus}, all $\V\,\W_A^{\bullet,\bullet}$ are isomorphic, no matter what the original deformation was. This is essentially because isomorphism classes are classified by $H^1(X_0,\V_0^{-1,0}) = 0$, which vanishes due to acyclicity. In fact, there is a unique gluing $(PV_{X_0/A}^{\bullet,\bullet},DR^{\bullet,\bullet}_{X_0/A})$ of the $\V\,\W^{\bullet,\bullet}_{\alpha;A}$, even if there is no deformation $f_A: X_A \to S_A$; this gluing carries all operations except the differential $\bar\partial$, which is not invariant under inner automorphisms of the local models and hence has no unique action on $(PV_{X_0/A}^{\bullet,\bullet},DR^{\bullet,\bullet}_{X_0/A})$. We can, however, choose some differential operator $\bar\partial$ on $(PV_{X_0/A}^{\bullet,\bullet},DR^{\bullet,\bullet}_{X_0/A})$ which is of the form $\bar\partial + [\varphi,-]$ on $\V\,\W^{\bullet,\bullet}_{\alpha;A}$ for $\varphi \in \V^{-1,1}_{\alpha;A}$ depending on the chosen isomorphism with $\V\,\W^{\bullet,\bullet}_{\alpha;A}$. This differential operator is called a \emph{predifferential} because we may have $\bar\partial^2 \not = 0$.

A \emph{differential} on $(PV_{X_0/A}^{\bullet,\bullet},DR^{\bullet,\bullet}_{X_0/A})$ is now a differential operator $\bar\partial' := \bar\partial + [\varphi,-]$ with $(\bar\partial')^2 = 0$. When we have such a differential, then the cohomology is a deformation of geometric families of two-sided Gerstenhaber calculi, locally isomorphic to $\V\,\W^\bullet_{\alpha;A}$. For two differentials, the associated deformations are equivalent if and only if the two differentials are \emph{gauge equivalent}, and any deformation of families of calculi is the cohomology of some differential. Thus, gauge equivalence classes of differentials form a functor of Artin rings isomorphic to $\mathrm{GDef}^\D_{X_0/S_0}$.

When taking a limit, then we obtain a curved two-sided Gerstenhaber calculus 
$$(PV_{X_0/\Lambda}^{\bullet,\bullet},DR^{\bullet,\bullet}_{X_0/\Lambda})$$
over $\Lambda = \kk\llbracket Q\rrbracket$. Now $L^\bullet_{X_0/S_0} = \Gamma(X_0,PV^{-1,\bullet}_{X_0/\Lambda})$ is a $\Lambda$-linear curved Lie algebra, and gauge equivalence classes of Maurer--Cartan solutions are nothing but gauge equivalence classes of differentials as above. Thus, we have an isomorphism 
$$\mathrm{LD}^\D_{X_0/S_0} \cong \mathrm{Def}(L^\bullet_{X_0/S_0},-): \enspace \mathbf{Art}_Q \to \mathbf{Set}$$
of functors of Artin rings.

\vspace{\baselineskip}

In the earlier work \cite{Felten2022} of the author, we have carried out the same construction for log smooth morphisms only, and with the Gerstenhaber \emph{algebra} $\V^\bullet_{X/S}$ only, there denoted by $\G^\bullet_{X_0/S_0}$. For the construction of $L^\bullet_{X_0/S_0}$, this makes little difference, but we have indeed $\W^\bullet_{X_0/S_0}$ as well at our disposal, which greatly amplifies the possibilities to study properties of the deformations. For example, compare Theorem~\ref{first-abs-unob-thm-intro} with Theorem~\ref{second-abs-unob-thm-intro}. The article \cite{Felten2022} was in turn inspired by \cite{ChanLeungMa2023}, where the Gerstenhaber calculus including $\W^\bullet_{X_0/S_0}$ is covered, albeit with one operation less (the left contraction $\vdash$ is missing), only in the analytical setting, and only for global sections, not on the sheaf level. We need the part of $\W^\bullet_{X_0/S_0}$ in order to study the logarithmic Bogomolov--Tian--Todorov theorem.

\section{The logarithmic Bogomolov--Tian--Todorov theorem}

If $X/\kk$ is a smooth and proper Calabi--Yau variety, i.e., $\Omega^d_X \cong \cO_X$, where $d = \mathrm{dim}(X)$, then the famous Bogomolov--Tian--Todorov theorem states that $\mathrm{Def}_X$ is \emph{unobstructed}, i.e., if $B' \to B$ is a surjection in $\mathbf{Art}_\kk$, then $\mathrm{Def}_X(B') \to \mathrm{Def}_X(B)$ is surjective. Several proofs are known for this theorem, including Ran's proof via the $T^1$-lifting criterion in \cite{Ran1992} and Iacono--Manetti's proof via the homotopy abelianity of the dg Lie algebra $L_X^\bullet$ in \cite{AlgebraicBTT2010}. 

The $\kk\llbracket Q\rrbracket$-linear curved Lie algebra $L^\bullet_{X_0/S_0}$ is closely related to the dg Lie algebra $L_X^\bullet$ of Iacono--Manetti in that both are constructed via a Thom--Whitney resolution. However, the proof of unobstructedness via homotopy abelianity cannot be generalized to $L^\bullet_{X_0/S_0}$; indeed, it is not even clear what homotopy abelianity of $L^\bullet_{X_0/S_0}$ should mean since it is not a complex. The argument of Iacono--Manetti shows in our situation that $L^\bullet_{X_0/S_0} \otimes_{\kk\llbracket Q\rrbracket} \kk$ is homotopy abelian, as we have explained in \cite[Thm.~3.3]{FeltenPetracci2022}, showing that $\mathrm{LD}_{X_0/S_0}$ is unobstructed on the subcategory of Artinian local $\kk$-algebras where the $Q$-action is trivial. This captures only locally trivial log smooth deformations of $f_0: X_0 \to S_0$ but no deformations in the smoothing direction.

At this point, the argument of Chan, Leung, and Ma in \cite{ChanLeungMa2023} takes advantage of the fact that we do not only have a $\kk\llbracket Q\rrbracket$-linear curved Lie algebra but much more structure. The centerpiece of their argument is \cite[Thm.~5.6]{ChanLeungMa2023}, which they call the \emph{abstract unobstructedness theorem}. They state the theorem solely for the structures as they arise in this article, but the abstract unobstructedness theorem is a purely algebraic fact once the correct definitions are collected. At the center of formulating the abstract unobstructedness theorem in purely algebraic terms is the notion of a \emph{$\Lambda$-linear curved Batalin--Vilkovisky algebra}. Strictly speaking, the setup can be further weakened, but in practice, at least for this work, we have that structure. Our definition has some minor improvements in that we stipulate necessary relations which are nowhere explicit in \cite{ChanLeungMa2023}.

\begin{defn}
 A \emph{$\Lambda$-linear curved Batalin--Vilkovisky algebra} of dimension $d \geq 1$ is a tuple 
 $$(G^{\bullet,\bullet}, \wedge, 1, [-,-],\Delta,\bar\partial,\ell,y)$$
 where $G^{p,q}$ is a flat and complete $\Lambda$-module for $-d \leq p \leq 0$ and $q \geq 0$, the operation $\wedge$ is a $\Lambda$-bilinear product with unit $1 \in G^{0,0}$, the operation $[-,-]$ is a $\Lambda$-bilinear Lie bracket, $\Delta: G^{p,q} \to G^{p + 1,q}$ is a Batalin--Vilkovisky operator satisfying $\Delta^2 = 0$, the operation $\bar\partial: G^{p,q} \to G^{p,q + 1}$ is a predifferential, $\ell \in \m_\Lambda \cdot G^{-1,2}$ is such that $\bar\partial^2(\theta) = [\ell,\theta]$, and $y \in \m_\Lambda \cdot G^{0,1}$ is such that $\Delta\bar\partial(\theta) + \bar\partial\Delta(\theta) = [y,\theta]$. The precise relations can be found in Chapter~\ref{BV-alg-sec}.
\end{defn}

When $G^{\bullet,\bullet}$ is a $\Lambda$-linear curved Batalin--Vilkovisky algebra, then $L^\bullet := G^{-1,\bullet}$ is a $\Lambda$-linear curved Lie algebra, giving rise to a deformation functor $\mathrm{Def}(L^\bullet,-)$. In order to show that this deformation functor is unobstructed, it is sufficient (and necessary) to show that the Maurer--Cartan functor $\mathrm{MC}(L^\bullet,-)$ is unobstructed. The subject of the abstract unobstructedness of \cite{ChanLeungMa2023} is however not the Maurer--Cartan functor $\mathrm{MC}(L^\bullet,-)$ but the \emph{semi-classical Maurer--Cartan functor} $\mathrm{SMC}(G^{\bullet,\bullet},-)$. This functor of Artin rings classifies solutions $(\phi,f)$ with $\phi \in \m_A \cdot G_A^{-1,1}$ and $f \in \m_A \cdot G_A^{0,0}$ of the \emph{semi-classical extended Maurer--Cartan equation}
\begin{align}
 \bar\partial\phi + \frac{1}{2}[\phi,\phi] + \ell &= 0 \nonumber \\
 \bar\partial f + [\phi,f] + y + \Delta \phi &= 0. \nonumber
\end{align}
Obviously, if $(\phi,f)$ is a semi-classical Maurer--Cartan solution, then $\phi$ is a classical Maurer--Cartan solution. These names come from the fact that we consider also a \emph{quantum} Maurer--Cartan equation in proving the unobstructedness theorem. For $G^{\bullet,\bullet}$, the most natural condition which suffices for the unobstructedness theorem is the following:

\begin{defn}
 A $\Lambda$-linear curved Batalin--Vilkovisky algebra is \emph{quasi-perfect} if the following conditions hold:
 \begin{enumerate}[label=(\roman*)]
  \item The first spectral sequence $'\!E_{0;r}$ associated with the double complex $(G_0^{\bullet,\bullet},\Delta,\bar\partial)$ over $A_0 = \kk$ degenerates at $E_1$.
  \item For every $A \in \mathbf{Art}_\Lambda$, the induced map $H^k(G_A^\bullet,\check d) \to H^k(G_0^\bullet,\check d)$ with 
  $$\check d = \Delta + \bar\partial + (\ell + y) \wedge (-)$$
  is surjective.
 \end{enumerate}
\end{defn}
Over $\kk$, we have $\bar\partial^2 = 0$ since $\ell \in \m_\Lambda \cdot G^{-1,2}$; thus, $(G_0^{\bullet,\bullet},\Delta,\bar\partial)$ is indeed a double complex. The map $\check d$ in the definition turns out to always satisfy $\check d^2 = 0$, and hence it is a differential on the total complex $G^\bullet$. The notion of \emph{perfectness} incorporates an additional finiteness condition which is not necessary for the unobstructedness.

\begin{thm}[First abstract unobstructedness theorem]\label{first-abs-unob-thm-intro}\note{first-abs-unob-thm-intro}
 Let $G^{\bullet,\bullet}$ be a quasi-perfect $\Lambda$-linear curved Batalin--Vilkovisky algebra. Then $\mathrm{SMC}(G^{\bullet,\bullet},-)$ is unobstructed.
\end{thm}

In fact, a weaker condition, called \emph{semi-perfectness} and defined in Definition~\ref{semi-perf-defn}, is sufficient. Also note that Theorem~\ref{first-abs-unob-thm-intro} does \emph{not} show unobstructedness of $\mathrm{Def}(L^\bullet,-)$---it only shows that \emph{pairs} $(\phi,f)$ can be lifted along surjections $B' \to B$ in $\mathbf{Art}_\Lambda$. We will come back to this point shortly.

In the case without log structures, the usage of Batalin--Vilkovisky algebras to show unobstructedness is by now classical. Early uses in the mathematics literature include \cite{Manin1999,BarannikovKontsevich1998}; a transition from the original use in physics to deformation theory seems to be made in \cite{Stasheff1997}. The degeneracy of the spectral sequence as a condition for the unobstructedness of the deformation functor seems to be first mentioned in \cite[\S 4.2.2]{KKP2008}; in Proposition~\ref{spec-seq-alternative-formulation}, we check that the degeneration of the spectral sequence at $E_1$ is indeed equivalent to the criterion as stated in \cite[Defn.~4.13]{KKP2008}. The degeneration of a somewhat different spectral sequence is given as a criterion in \cite{Terilla2008}.

While Theorem~\ref{first-abs-unob-thm-intro} is enough to show the \emph{existence} of some deformation, it is insufficient to conclude that $\mathrm{LD}_{X_0/S_0}^\D$ is unobstructed, i.e., that \emph{every} infinitesimal deformation to some $B$ can be lifted to $B'$. To show this stronger statement, we take once again advantage of additional structure that we have in the geometric situation. Instead of a Batalin--Vilkovisky algebra, we use a Batalin--Vilkovisky \emph{calculus}.

\begin{defn}
 A \emph{$\Lambda$-linear curved Batalin--Vilkovisky calculus} of dimension $d \geq 1$ is a tuple 
 $$(G^{\bullet,\bullet}, \wedge, 1_G, [-,-],\Delta,\bar\partial,\ell,y, A^{\bullet,\bullet},\wedge,1_A,\partial,\, \invneg \, ,\omega,\bar\partial, \cL)$$
 with a $\Lambda$-linear curved Batalin--Vilkovisky algebra, flat and complete $\Lambda$-modules $A^{i,j}$ for $0 \leq i \leq d$ and $j \geq 0$, a product $\wedge$ on $A^{\bullet,\bullet}$ with unit $1_A$, a de Rham differential $\partial: A^{i,j} \to A^{i + 1,j}$, a contraction map $\invneg: G^{p,q} \times A^{i,j} \to A^{p + i, q + j}$, a volume form $\omega \in A^{d,0}$, a predifferential $\bar\partial: A^{i,j} \to A^{i, j + 1}$ with $\bar\partial^2(\alpha) = \cL_\ell(\alpha)$, and a Lie derivative $\cL$. These data must satisfy the relations given in Chapter~\ref{BV-alg-sec}; in particular, 
 $$\kappa: G^{p,q} \to A^{p + d,q}, \quad \theta \mapsto (\theta \ \invneg \ \omega),$$
 must be an isomorphism, and we have $\Delta(\theta) \ \invneg \ \omega = \partial(\theta \ \invneg \ \omega)$.
\end{defn}

By Lemma~\ref{perfectness-stated-on-G}, a $\Lambda$-linear curved Batalin--Vilkovisky calculus $(G^{\bullet,\bullet},A^{\bullet,\bullet})$ is quasi-perfect (in the sense of Definition~\ref{G-calc-q-perf-def}) if and only if the Batalin--Vilkovisky algebra $G^{\bullet,\bullet}$ is quasi-perfect as defined above. When we exploit this additional structure, we can show that any classical Maurer--Cartan solution $\phi$ can be complemented to a semi-classical Maurer--Cartan solution $(\phi,f)$, see Theorem~\ref{SMC-MC-smooth}. This yields a second abstract unobstructedness theorem, the actual unobstructedness result which we have desired in the first place. This somewhat stronger result is not contained in \cite{ChanLeungMa2023}.

\begin{thm}[Second abstract unobstructedness theorem]\label{second-abs-unob-thm-intro}\note{second-abs-unob-thm-intro}
 Let $(G^{\bullet,\bullet},A^{\bullet,\bullet})$ be a quasi-perfect $\Lambda$-linear curved Batalin--Vilkovisky calculus. Then $\mathrm{MC}(L^\bullet,-)$ is unobstructed for the $\Lambda$-linear dg Lie algebra $L^\bullet = G^{-1,\bullet}$.
\end{thm}

Recall that, in the geometric situation of a generically log smooth family with a system of deformations $\D$, we have constructed a $\kk\llbracket Q\rrbracket$-linear curved \emph{Gerstenhaber} calculus 
\begin{equation}\label{PV-DR-intro}
 (PV^{\bullet,\bullet}_{X_0/\Lambda},DR^{\bullet,\bullet}_{X_0/\Lambda}).
\end{equation}
In other words, we do not have the volume form $\omega$, the Batalin--Vilkovisky operator $\Delta$, and the element $y \in \m_\Lambda \cdot G^{0,1}$ on this structure. However, when assuming that $f_0: X_0 \to S_0$ is log Calabi--Yau, then a volume form $\tilde \omega_0 \in \W^d_{X_0/S_0}$ gives rise to a volume form 
$$\omega_0 \in DR^{d,0}_{X_0/\kk} = DR^{d,0}_{X_0/\Lambda} \otimes_\Lambda \kk.$$
This can be lifted order by order to obtain a (even after the initial choice of $\tilde\omega_0$ non-canonical) volume form $\omega \in DR^{d,0}_{X_0/\Lambda}$. Applying Proposition~\ref{BV-calc-construction}, we find a structure of Batalin--Vilkovisky calculus on our Gerstenhaber calculus after establishing the necessary properties of $\omega$. This strategy is slightly different from the original approach in \cite{ChanLeungMa2023}, which constructs $\omega$ and $\Delta$ directly along with the other structures of the Gerstenhaber calculus.

\vspace{\baselineskip}

With Theorem~\ref{second-abs-unob-thm-intro}, the final step to achieve the logarithmic Bogomolov--Tian--Todorov theorem is to show that \eqref{PV-DR-intro} is indeed quasi-perfect. The first part, the degeneration of the spectral sequence, has been known from the beginning in the log smooth case, see \cite[Thm.~4.12]{kkatoFI}. In the log toroidal case, this is a recent insight by Filip, Ruddat, and the author in \cite{FFR2021}; the method is the same as the original algebraic approach by Deligne--Illusie, heavily relying on the local models of log toroidal families to study the behavior of the method under the presence of log singularities. The second part of the quasi-perfectness condition is somewhat subtle. If there is some generically log smooth deformation $f_A: X_A \to S_A$ of type $\D$, then it is equivalent to that the relative Hodge--de Rham spectral sequence of $f_A$ degenerates at $E_1$. However, the condition is stronger in that it does not rely on the existence of some gluing of the local models $V_{\alpha;A} \to S_A$ to a global deformation, and must be satisfied before we know the existence of a global gluing in order to conclude this very existence from Theorem~\ref{first-abs-unob-thm-intro}. Thus, that we have shown in \cite{FFR2021} that the relative spectral sequence degenerates at $E_1$ for infinitesimal log toroidal deformations does not show that the condition is satisfied. Nonetheless, a subtle variant of this argument, given by Chan, Leung, and Ma in \cite[Lemma~4.18]{ChanLeungMa2023}, yields the desired condition. In Chapter~\ref{perfect-G-calc-sec}, we show that this argument applies to our algebraically constructed Gerstenhaber calculus \eqref{PV-DR-intro} as well, not only to the analytic version of Chan--Leung--Ma. Interestingly, we have to analytify it nonetheless in order to conclude the quasi-perfectness of the algebraic Gerstenhaber calculus---the argument is genuinely analytical. In \cite{FFR2021}, the situation is the same.

\begin{thm}
 Let $f_0: X_0 \to S_0$ be a proper log Gorenstein log toroidal family. Let $\D$ be a system of deformations as a generically log smooth family all of whose local models $V_{\alpha;A} \to S_A$ are log toroidal. Then the $\kk\llbracket Q\rrbracket$-linear curved (two-sided) Gerstenhaber calculus \eqref{PV-DR-intro} is quasi-perfect.
\end{thm}
\begin{cor}[Logarithmic Bogomolov--Tian--Todorov theorem]\label{log-BBT-intro}\note{log-BBT-intro}
 Let $f_0: X_0 \to S_0$ be a proper log Calabi--Yau log toroidal family. Let $\D$ be a system of deformations which is log toroidal. Then $\mathrm{LD}^\D_{X_0/S_0}$ is unobstructed. In particular, this holds when $f_0: X_0 \to S_0$ is log smooth and $\D$ is the system of log smooth deformations.
\end{cor}

In the log smooth case, we have stated this result in \cite[Thm.~3.7]{FeltenPetracci2022} in our joint work with Petracci. There, we deduce via \cite[Prop.~2.8]{FeltenPetracci2022} that, in order for $\mathrm{LD}_{X_0/S_0}$ to be unobstructed, it is sufficient that the restriction map along 
$$A_{k + 1} = \kk[Q]/\m_Q^{k + 2} \to \kk[Q]/\m_Q^{k + 1} = A_k$$
is surjective. Our description of this key step has, however, been very sketchy, deferring the details to a future article, and just mentioning the method of \cite{ChanLeungMa2023}. Here, we deliver the promised elaborate account of the theory in its algebraic version. The reduction to $A_{k + 1} \to A_k$ is no longer necessary due to the more general form of Theorem~\ref{second-abs-unob-thm-intro}.

In general, many interesting generically log smooth families are not log toroidal. It would be interesting to find a version of Corollary~\ref{log-BBT-intro} for other log singularities than log toroidal ones. By the theory that we already have, it will be sufficient to show that the Gerstenhaber calculus \eqref{PV-DR-intro} is quasi-perfect.

\section{Constancy of the Hodge numbers}

Let $f_0: X_0 \to S_0$ be a proper log Gorenstein generically log smooth family with a system of deformations $\D$. Assume that \eqref{PV-DR-intro} is quasi-perfect, as is the case for example if everything is log toroidal. We do \emph{not} assume that $f_0: X_0 \to S_0$ is log Calabi--Yau here. On any deformation $f: X_A \to S_A$ of type $\D$, we have an isomorphism 
$$\HH^m(X_A,\W^\bullet_{X_A/S_A}) \cong H^m(DR^\bullet_{X_0/A}, d_\phi)$$
for the operator $d_\phi = \partial + \bar\partial_\phi$, where $\bar\partial_\phi$ is the differential corresponding to the Maurer--Cartan solution $\phi$ corresponding to $f: X_A \to S_A$. The right hand side commutes with base change as part of the definition of quasi-perfectness. Thus, also the left hand side commutes with base change. Then we can deduce from Proposition~\ref{spectral-sequence-Artin-ring} that the Hodge--de Rham spectral sequence 
$$E_1^{pq} = R^qf_*\W^p_{X_A/S_A} \Rightarrow \W^\bullet_{X_A/S_A}$$
degenerates at $E_1$, that $E_1^{pq}$ is a free $A$-module, and that its formation commutes with base change. Moreover, also the abutment $\HH^m(X_A,\W^\bullet_{X_A/S_A})$ is a free $A$-module, and its formation commutes with base change.

In the case of a log toroidal family $f_A: X_A \to S_A$, at least for some bases $A$, we have known this before by \cite[Thm.~1.10]{FFR2021}, but here, the argument is slightly different in that we know that $H^m(X_A,\W^\bullet_{X_A/S_A})$ is free, and that its formation commutes with base change, \emph{prior to the existence} of $f_A: X_A \to S_A$. This hypercohomology is not so much associated with $f_A: X_A \to S_A$ as it is associated with the curved Gerstenhaber calculus \eqref{PV-DR-intro}, and thus with the \emph{deformation problem}. Although we have chosen the specific total differential $d_\phi$ associated with a Maurer--Cartan solution $\phi$ above, the cohomology is independent of $\phi$ in that we can transform $d_\phi$ into $d_{\phi'}$ via an automorphism of \eqref{PV-DR-intro}, and $d_\phi$ is a differential even if $\phi$ does not satisfy the Maurer--Cartan equation, see Lemma~\ref{hypercohom-comparison}.

\section{Unobstructedness of line bundles}

When $X/\kk$ is a smooth and proper Calabi--Yau variety, and $\cL$ is a line bundle on $X$, then a deformation of the pair $(X,\cL)$ is a flat deformation $X_A \to S_A$ of $X/\kk$ together with a line bundle $\cL_A$ on $X_A$ and an isomorphism $\cL_A|_X \cong \cL$. Isomorphism classes of deformations of the pair form a deformation functor 
$$\mathrm{Def}_X(\cL,-): \enspace \mathbf{Art} \to \mathbf{Set}.$$
This deformation functor is unobstructed. As explained in \cite[Rem.~2.6]{Iacono2021defPairs}, this follows from Ran's $T^1$-lifting criterion. However, more is true: the main result of Iacono and Manetti's article \cite{Iacono2021defPairs} is that the dg Lie algebra controlling $\mathrm{Def}_X(\cL,-)$ is homotopy abelian. Their proof is as follows: they construct a new log Calabi--Yau variety $Y = \PP(\cO_X \oplus \cL)$, endowed with the divisorial log structure from two sections $\Delta = \Delta_0 + \Delta_\infty$ of this $\PP^1$-bundle, and then the dg Lie algebra controlling $\mathrm{Def}_X(\cL,-)$ is quasi-isomorphic to the dg Lie algebra controlling $\mathrm{Def}_{(Y,\Delta)}$.

Given a generically log smooth family $f_0: X_0 \to S_0$ with a system of deformations $\D$, and given a line bundle $\cL_0$ on $X_0$, we can consider similarly generically log smooth deformations of type $\D$ of the pair $f_0: (X_0,\cL_0) \to S_0$---they consist of a generically log smooth deformation $f_A: X_A \to S_A$ of type $\D$ together with a line bundle $\cL_A$ on $X_A$ which deforms $\cL_0$. Such deformations form a deformation functor 
$$\mathrm{LD}_{X_0/S_0}^\D(\cL_0,-): \enspace \mathbf{Art}_Q \to \mathbf{Set}.$$
In this situation, we can form a similar $\PP^1$-bundle $p_0: P_0(\cL_0) \to X_0$, where we add the divisorial log structure of $\Delta = \Delta_0 + \Delta_\infty$ to the pull-back log structure from $X_0$ with the procedure described in Chapter~\ref{fiber-bundle-constr-sec}. For $g_0: P_0(\cL_0) \to S_0$, we can construct a new system of deformations $\D(\cL_0)$ by applying the construction of $P_0(\cL_0)$ to the system of deformations $\D$. Then we obtain an isomorphism 
$$\mathrm{LD}_{X_0/S_0}^\D(\cL_0,-) \cong \mathrm{LD}^{\D(\cL_0)}_{P_0(\cL_0)/S_0}$$
of functors of Artin rings. If $f_0: X_0 \to S_0$ is log Calabi--Yau, then $g_0: P_0(\cL_0) \to S_0$ is log Calabi--Yau as well. Thus, for $\mathrm{LD}_{X_0/S_0}^\D(\cL_0,-)$ to be unobstructed in the case where $f_0: X_0 \to S_0$ is log Calabi--Yau, it is sufficient that the characteristic curved (two-sided) Gerstenhaber calculus of $g_0: P_0(\cL_0) \to S_0$ is quasi-perfect. If $f_0: X_0 \to S_0$ is log toroidal, and the system of deformations $\D$ is log toroidal, then $g_0: P_0(\cL_0) \to S_0$ and the system of deformations $\D(\cL_0)$ are log toroidal as well. Thus, in this case, if $f_0: X_0 \to S_0$ is additionally proper, then $\mathrm{LD}_{X_0/S_0}^\D(\cL_0,-)$ is unobstructed by Corollary~\ref{log-BBT-intro}. In other words, the pair $f_0: (X_0,\cL_0) \to S_0$ is unobstructed in this situation:

\begin{thm}
 Let $f_0: X_0 \to S_0$ be a proper lo Calabi--Yau log toroidal family, and let $\cL_0$ be a line bundle on $X_0$. Let $\D$ be a system of deformations which is log toroidal. Then $\mathrm{LD}^\D_{X_0/S_0}(\cL_0,-)$ is unobstructed. In particular, this holds when $f_0: X_0 \to S_0$ is log smooth and $\D$ is the system of log smooth deformations.
\end{thm}

\begin{rem}
 The forgetful map 
 $$\mathrm{LD}_{X_0/S_0}^\D(\cL_0,-) \to \mathrm{LD}_{X_0/S_0}^\D$$
 is in general \emph{not} smooth, even in the situation where we can show that each of the two functors is unobstructed. In other words, sometimes a line bundle $\cL_B$ over $f: X_B \to S_B$ cannot be extended to a given thickening $f': X_{B'} \to S_{B'}$.
\end{rem}
\begin{rem}
 It is currently an open question if quasi-perfectness of \eqref{PV-DR-intro} implies quasi-perfectness of $(PV^{\bullet,\bullet}_{P_0(\cL_0)/\Lambda},DR^{\bullet,\bullet}_{P_0(\cL_0)/\Lambda})$. In practice, this does not seem to be too much of a problem as we may expect that most methods showing the quasi-perfectness of one of them also shows the quasi-perfectness of the other.
\end{rem}

In \cite[Rem.~1.32]{Gross2011}, Gross and Siebert have announced that the polarizing line bundle on $f_0: \check X_0(B,\cP,s) \to S_0$ can be deformed as well over $\mathrm{Spf}\,\kk\llbracket t\rrbracket$ by applying their reconstruction procedure to the total space of the line bundle. At this point, we see that indeed deformations of $f_0: \check X_0(B,\cP,s) \to S_0$ with any line bundle $\cL_0$ are unobstructed, confirming their claim, yet with a totally different method.

\section{Modifications of the log structure}

With Corollary~\ref{log-BBT-intro}, we have a powerful unobstructedness result for log toroidal log Calabi--Yau spaces $f_0: X_0 \to S_0$. However, sometimes we want to deform a space which is not log Calabi--Yau but, say, log Fano. In some cases, we can \emph{modify} the log structure and obtain a log Calabi--Yau. In general, given a line bundle $\cL_0$ on $X_0$ and a section $s_0 \in \cL_0$, we can define a modification $g_0: X_0(s_0) \to S_0$ with the property $\W^d_{X_0(s_0)/S_0} \cong \W^d_{X_0/S_0} \otimes \cL_0$. Thus, when $\cL_0 = (\W^d_{X_0/S_0})^\vee$ is the anti-canonical bundle, then $g_0: X_0(s_0) \to S_0$ is log Calabi--Yau. We have already used a variant of this construction in \cite[\S 6]{FFR2021}. Here, we study a more general version systematically.

Given a generically log smooth family $f: X \to S$ and a line bundle $\cL$, we can add the log structure of the zero section $X \subseteq L$ to the pull-back log structure on $L$ and obtain a log smooth log morphism $p: L(X) \to X$. Then the \emph{modification} of $f: X \to S$ in $s \in \cL$ is given by pulling back the log structure from $L(X)$ via the embedding $s: X \to L$ defined by $s \in \cL$. We write $f \circ h: X(s) \to S$ for this modified log structure. 

The key difficulty is to find conditions on $s \in \cL$ under which this construction is well-behaved. As such a condition, we propose \emph{log regularity} as defined in Definition~\ref{log-qreg-defn} and Definition~\ref{log-reg-defn}.\footnote{Log quasi-regularity and log regularity coincide in the case of a plain generically log smooth family and are different only in the enhanced case.} For us, the starting point in formulating this condition has been \cite[IV,\,Thm.~3.2.2]{LoAG2018}, which gives conditions for a strict closed subscheme in a log smooth scheme to be log smooth itself---in a sense, this is what we want for $X(s)$ inside $f \circ p: L(X) \to S$.

Now let $f_0: X_0 \to S_0$ be a generically log smooth family with a system of deformations $\D$, let $\cL_0$ be a line bundle on $X_0$, and let $s_0 \in \cL_0$ be a log regular section. Then $g_0: X_0(s_0) \to S_0$ is generically log smooth as well. For any deformation $\cL$ of $\cL_0$, a section $s \in \cL$ with $s|_0 = s_0$ is log regular again. After choosing a deformation of $\cL_0$ over the local model $V_{\alpha;A} \to S_A$ together with a section restricting to $s_0$, we obtain a generically log smooth deformation $\tilde V_{\alpha;A} \to S_A$ of $g_0: X_0(s_0) \to S_0$. Up to isomorphism, this deformation does not depend on the choices we made; thus, we obtain a system of deformations $\D(s_0)$ for $g_0: X_0(s_0) \to S_0$, and hence a deformation functor $\mathrm{LD}_{X_0(s_0)/S_0}^{\D(s_0)}$. On the other hand side, we can also consider deformations $f: X_A \to S_A$ of $f_0: X_0 \to S_0$ together with a line bundle $\cL_A$ and a section $s \in \cL_A$ with $s|_0 = s_0$. This gives rise to a deformation functor $\mathrm{LD}^\D_{X_0/S_0}(\cL_0,s_0,-)$. There is an obvious map 
$$\mathrm{LD}_{X_0/S_0}^\D(\cL_0,s_0,-) \to \mathrm{LD}_{X_0(s_0)/S_0}^{\D(s_0)},$$
which is in fact an isomorphism of functors of Artin rings. This ties the deformation theory of $f_0: X_0 \to S_0$ closely to the one of $g_0: X_0(s_0) \to S_0$. In the case where $\cL_0$ is the anti-canonical line bundle, unobstructedness of $\mathrm{LD}^\D_{X_0/S_0}$ is often equivalent to unobstructedness of $\mathrm{LD}^{\D(s_0)}_{X_0(s_0)/S_0}$, see Corollary~\ref{original-modif-unob-equiv}.

We call log regular sections which satisfy the stronger condition in \cite[\S 6]{FFR2021} \emph{log transversal}.\footnote{It is actually an open question if the two conditions are equivalent.} If $f: X \to S$ is log toroidal, and $s \in \cL$ is a log transversal section, then $g: X(s) \to S$ is log toroidal as well. Thus, in the case of a proper log toroidal family family $f_0: X_0 \to S_0$ with a log transversal section $s_0 \in (\W^d_{X_0/S_0})^\vee$, the deformations of $g_0: X_0(s_0) \to S_0$ of type $\D(s_0)$ are unobstructed.

An application of the theory of modifications is \cite[Thm.~1.1]{FFR2021}. In Chapter~\ref{smoothing-nc-sec}, we discuss the following variant.

\begin{thm}[Smoothing normal crossing spaces] Let $V/\CC$ be a proper normal crossing space such that every open stratum $S^\circ \in [\cS_kV]$ is quasi-projective for $k \geq 0$. Suppose that $\omega_V^\vee$ is a globally generated line bundle on $V$, and that $\T^1_V$ is a globally generated line bundle on the double locus $D = V_{sing}$. Then $V$ admits a formal smoothing.
\end{thm}

If $V$ is projective, then the formal smoothing can be algebraized to a smoothing $f: X \to \Spec \CC\llbracket t\rrbracket$. In any case, there is an analytic smoothing $F: \X \to \Delta$ over a small disk which coincides with the formal smoothing up to any prescribed order (but not necessarily for all orders simultaneously). In \cite[Thm.~1.1]{FFR2021}, the assumptions differ slightly. Due to the assumption of a well-behaved section of $\omega_V^\vee$, the global generatedness of $\omega_V^\vee$ is dropped as well as the quasi-projectivity of open strata $S^\circ \in [\cS_0V]$, but then, concerning open strata in $[\cS_kV]$ for $k \geq 1$, the stronger condition that the double locus $D$ is projective is required. The argument is more or less the same, using a more refined version of Bertini's theorem to get the, in our opinion, more natural condition that the strata are quasi-projective.

\section{Enhanced generically log smooth families}

The generically log smooth family in Example~\ref{xy-w2-t2-zw-tu-example} does not have the base change property. Thus, its two-sided Gerstenhaber calculus $\V\,\W^\bullet_{X/S}$ is not well suited to study the family. However, in this example, and in many others as well, we can remedy this situation by choosing subsheaves $\G^p_{X/S} \subseteq \V^p_{X/S}$ and $\A^i_{X/S} \subseteq \W^i_{X/S}$. With this trick, we can enforce compatibility under base change by just \emph{defining} the base change as the subsheaves given by the pull-backs of $\G^p_{X/S}$ and $\A^i_{X/S}$. In general, we may have to relax the condition that they are \emph{subsheaves}, but usually the injectivity of the embedding is preserved as long as we are working with $Q = \NN$. When $\G^p_{X/S}$ and $\A^i_{X/S}$ are chosen, we call the resulting structure an \emph{enhanced generically log smooth family}. In the main text below, we provide the whole theory for enhanced generically log smooth families with often little additional effort compared to the plain case, but in the introduction, we work only with plain generically log smooth families, which are simpler and, as of now, more important.\footnote{In fact, we do not yet know any example of an enhanced generically log smooth family where we can prove quasi-perfectness of the characteristic curved (two-sided) Gerstenhaber calculus, or rather, we know the examples where we wish to have this property, but it has so far evaded our attempts to prove quasi-perfectness.}

\section{Prerequisites}

Throughout the book, we assume that the reader is familiar with algebraic geometry at the level of Hartshorne's \cite{Hartshorne1977} (and some standard topics not discussed there), and logarithmic geometry at the level of Ogus' \cite{LoAG2018}, especially Chapters~III and IV. A basic understanding of complex geometry as in Grauert's and Remmert's \cite{GrauertRemmert1984} is helpful but not needed except in Chapter~\ref{perfect-G-calc-sec}. We also assume acquaintance with infinitesimal deformation theory as in Schlessinger's short article \cite{Schlessinger1968} and the books \cite{Sernesi2006} or \cite{Hartshorne2010}, but most of the material there is not necessary for this book. The reader not acquainted with F.~Kato's log smooth deformation theory should read \cite{Kato1996} first. We also assume the theory of log toroidal families as contained in our article \cite{FFR2021}, of which a more detailed exposition can be found in the author's PhD thesis \cite{FeltenThesis}. Except for basic acquaintance with the concept of log toroidal families, the reader may consult these two works as needed. Prior knowledge of the Gross--Siebert program like \cite{GrossSiebertI,GrossSiebertII,Gross2011} is helpful as well, in particular to put results in context, but we do not assume that the reader is familiar with these works.

\section{Structure of the book}

In Chapter~\ref{alg-str-sec}, we define the algebraic structures which are the main subject of our study. In particular, we define Gerstenhaber and Batalin--Vilkovisky calculi as well as their bigraded and curved counterparts.

In Chapter~\ref{gauge-trafo-sec}, we study gauge transforms of our structures in some generality.

In Chapter~\ref{extended-MC-eqn-sec}, we study the classical extended Maurer--Cartan equation of a curved Lie algebra, in particular a curved Gerstenhaber calculus, as well as the semi-classical extended Maurer--Cartan equation of a curved Batalin--Vilkovisky calculus. We show that solutions of these equations parametrize the different ways of turning the curved structures into a differential graded one by modifying the predifferential $\bar\partial$. Furthermore, we define perfectness and quasi-perfectness for curved Gerstenhaber calculi and related structures.

In Chapter~\ref{two-abstract-unobstructed-sec}, we prove the two abstract unobstructedness theorems Theorem~\ref{first-abs-unob-thm-intro} and Theorem~\ref{second-abs-unob-thm-intro} by following the argument in \cite{ChanLeungMa2023}. In doing so, we also define a quantum extended Maurer--Cartan equation and study liftings of its solutions first.

In Chapter~\ref{gen-log-smooth-sec}, we review the basic theory of generically log smooth families, in particular the notions of being log Gorenstein and having the base change property. We give the definition of a system of deformations $\D$ for $f_0: X_0 \to S_0$. We also define enhanced generically log smooth families and their systems of deformations.

In Chapter~\ref{gen-log-sm-vector-bundle}, we review deformations of a pairs consisting of an (enhanced) generically log smooth family $f_0: X_0 \to S_0$ and a vector bundle $\E_0$ on $X_0$. We briefly investigate the relationship between deformations of $(X_0,\E_0)$, deformations of $(X_0,\mathrm{det}(\E_0)$, and deformations of $X_0$.

In Chapter~\ref{geom-fam-P-alg-sec}, we introduce a deformation theory of geometric families of algebraic structures. The most important example of this is obtained by replacing a generically log smooth deformation $f_A: X_A \to S_A$ with the deformation on the level of two-sided Gerstenhaber calculi. In doing so, we may forget about the log structures and just keep the Gerstenhaber calculi.

In Chapter~\ref{char-alg-constr-sec}, we give the very important construction of a characteristic curved algebra associated with a deformation problem of algebraic structures. This construction essentially relies on the Thom--Whitney resolution. It allows to form a single algebraic structure which controls the deformation problem and, in the case of Gerstenhaber calculi, computes the Hodge numbers. It is likely that this structure is closely tied to other important invariants of $f_0: X_0 \to S_0$ as well, like Hodge structures, a Frobenius structure similar to \cite{BarannikovKontsevich1998},\footnote{Cf.~also to \cite[\S 6.2]{ChanLeungMa2023}, where a semi-infinite Hodge structure is constructed.} or even Gromov--Witten invariants of the mirror, although the characteristic curved algebra alone is certainly not sufficient to compute these invariants.

In Chapter~\ref{perfect-G-calc-sec}, we show that the characteristic curved Gerstenhaber calculus is indeed perfect if $f_0: X_0 \to S_0$ is proper and log toroidal, and all deformations in a system of deformations $\D$ are log toroidal as well. The argument in this chapter relies on analytification, so we work over $\kk = \CC$. Through the Lefschetz principle, the result should be true over all fields $\kk$ of characteristic $0$.

In Chapter~\ref{fiber-bundle-constr-sec}, we study deformations of pairs $f_0: (X_0,\cL_0) \to S_0$ with a line bundle $\cL_0$ on $X_0$.

In Chapter~\ref{alg-degen-sec}, we apply the results of Chapter~\ref{fiber-bundle-constr-sec} to an ample line bundle on a projective $f_0: X_0 \to S_0$ over the standard log point $S_0 = \Spec(\NN \to \CC)$ to enhance a formal degeneration to an algebraic degeneration over $\Spec \CC\llbracket t\rrbracket$. We briefly discuss the geometric properties of such an algebraic degeneration. 

In Chapter~\ref{log-modif-sec}, we present our theory of modifications $g: X(s) \to S$ of log morphisms $f: X \to S$, which often allows us to turn log Fano spaces into log Calabi--Yau spaces in a well-behaved way. As an application, we discuss the smoothing of normal crossing spaces.

\section{About additional material}

Throughout the main text, we use a number of results for which we have no appropriate references, or on which we wanted to elaborate for the reader's convenience.

In Chapter~\ref{fiber-total-space-sec}, given a morphism $f: X \to S$, we study the relationship between that a coherent sheaf $\F$ on $X$ is reflexive, and that its fibers $\F_s$ are reflexive.

In Chapter~\ref{multilin-diff-op-sec}, we introduce the notion of a \emph{multilinear differential operator} of finite order
$$\mu: \F_1 \times ... \times \F_n \to \G,$$
generalizing the case of differential operators $D: \F \to \G$ between two quasi-coherent sheaves $\F$ and $\G$. As it turns out, this theory is quite well-behaved. Our primary interest is in the Schouten--Nijenhuis bracket $[-,-]$ and the Lie derivative $\cL_{-}(-)$ as bilinear differential operators.

In Chapter~\ref{spectral-seq-sec}, we prove the criterion for $E_1$-degeneration of spectral sequences over $\kk$, which is alluded to in \cite[Defn.~4.13]{KKP2008}. We also give a more precise criterion for partial degeneration at $E_1$, and we study degeneration of spectral sequences over Artinian local rings $A$.

In Chapter~\ref{analyt-sec}, we study the analytification of schemes $X$ of finite type over $\CC$. In particular, we show that all differential operators of first order between quasi-coherent algebraic sheaves can be analytified.

In Chapter~\ref{toroidal-cr-sp-sec}, we discuss the notion of a toroidal crossing space $(V,\cP,\bar\rho)$, which goes back to \cite{SchroerSiebert2006}, was essentially developed in \cite{GrossSiebertI}, and briefly defined in \cite{FFR2021}. This seems to be the first introductory discussion of this notion available, and some of the results seem to have not yet been stated anywhere else (for example that $V$ has slc singularities).

In Chapter~\ref{elem-GS-type-sec}, we discuss log toroidal families. We introduce the notion of a log toroidal family of (elementary) Gross--Siebert type for those whose singularities are controlled by local models discussed by Gross--Siebert. For the reader's convenience, we also discuss the proof of the important fact that any two deformations which are log toroidal of elementary Gross--Siebert type (\emph{divisorial} deformations in the terminology of Gross--Siebert) are locally isomorphic.

\section{What is new in this book?}

Many results in this book have been known before, in some form or another. Here is what is new in this book:

\begin{enumerate}[label=(\arabic*)]
 \item the precise definitions of a Gerstenhaber calculus and Batalin--Vilkovisky calculus, in all their variants;
 \item the Second Abstract Unobstructedness Theorem~\ref{second-abstract-unob-thm};
 \item the criterion for partial degenerations of spectral sequences in Proposition~\ref{spec-seq-alternative-formulation}; 
 \item the unobstructedness results for line bundles on log toroidal log Calabi--Yau spaces in Chapter~\ref{fiber-bundle-constr-sec}; in particular, the deformations of the line bundle claimed in \cite[Rem.~1.32]{Gross2011} exist indeed, albeit that we have a different proof than the one announced there;
 \item as a consequence of the unobstructedness of a space with a line bundle, the existence of actual algebraic families over $\Spec \kk\llbracket t\rrbracket$ in the case where the central fiber is a projective log Calabi--Yau space, not only formal families;
 \item the precise formulation of changing the log structure of a log Fano variety to obtain a log Calabi--Yau one in Chapter~\ref{log-modif-sec}; a weaker and less precise version of this has been given in \cite{FFR2021};
 \item a theory of multilinear differential operators in algebraic geometry, given in Chapter~\ref{multilin-diff-op-sec};
 \item a weakening of the notion of generically log smooth family to the notion of \emph{enhanced generically log smooth family}, which has better stability properties under base change;
 \item the first comprehensive presentation of the theory of toroidal crossing spaces in our sense, given in Chapter~\ref{toroidal-cr-sp-sec};
 \item an interpretation of the sheaf $\B$ occurring in \cite[\S 2.2]{GrossSiebertII}, given in Chapter~\ref{elem-GS-type-sec}.
\end{enumerate}

\section{Related works}

\textbf{Logarithmic geometry.} Logarithmic geometry was developed in the late 1980s by Fontaine and Illusie after structures with logarithmic poles had appeared in various contexts, most notably crystalline cohomology, the original context of logarithmic geometry. The first published account is by K.~Kato in \cite{kkatoFI} in 1989.\footnote{According to private communication with Luc Illusie, he and Jean-Marc Fontaine conceived the concept of a log structure on the train from Paris to Oberwolfach in July 1988. There, Illusie quickly wrote four or five pages sketching the construction and handed them to Kazuya Kato who seemed interested. A few months later, Kato surprised them by presenting his preprint of \cite{kkatoFI}, containing an elaborate theory of log structures with additional concepts like the chart of a log structure. While Illusie did not further pursue log structures in the meantime, Fontaine had started to work out the theory himself. He completely abandoned the project after he received Kato's preprint. Indeed, he never published on the subject of log geometry.} A parallel theory, nowadays called Deligne--Faltings log structures, appears around the same time in \cite{Faltings1990} by Faltings. The first highlight of logarithmic geometry is Tsuji's celebrated proof of the $C_{st}$-conjecture in \cite{Tsuji1999} in 1999, which contributed greatly to the popularity of the method.

In 2001, Olsson gives in \cite{Olsson2001} a new stack-theoretic interpretation of logarithmic geometry. Starting from there and from F.~Kato's \cite{Kato2000}, log structures on stacks and stacks over the category of log schemes have been the interest of mostly recent research, see \cite{Olsson2007,Gillam2012,GrossSiebert2013,Chen2014,AbramovichChen2014,Shentu2016,ACGS2020}. In 2018, Talpo and Vistoli developed in \cite{TalpoVistoli2018} yet another approach to logarithmic geometry via infinite root stacks. This approach is particularly suited to define a category of quasi-coherent sheaves on a log scheme which reflects the log geometry and not just the classical geometry of the underlying scheme.

\vspace{\baselineskip}

\noindent \textbf{Semistable degenerations.} The simplest and best-known case of a log smooth family is a semistable family. Interest in semistable families seems to arise around 1969 in the one-dimensional case with the Deligne--Mumford compactification $\overline\M_g$ of the stack of $\M_g$ of curves of genus $g$ in \cite{DeligneMumford1969}; this compactification is achieved precisely by adding curves with nodal singularities. In \cite{KKMS1973}, it was proven in 1973 that any degeneration $f: X \to C$ of varieties of arbitrary dimension over a smooth curve $C$ can be brought in semistable form after a finite base change; this is the well-known semistable reduction theorem. 

For any (projective) holomorphic family $f: X^* \to \Delta^*$ over a small punctured disk, Schmid has constructed in 1973 in \cite{Schmid1973} a limiting mixed Hodge structure on the cohomology $H^k(X_t,\CC)$ of the general fiber. If the family can be completed to a semistable degeneration $f: X \to \Delta$, the central fiber also carries a canonical mixed Hodge structure by work of Deligne. However, this mixed Hodge structure is not the same as Schmid's limiting mixed Hodge structure. In 1976, Steenbrink has given in \cite{Steenbrink1976} an interpretation of the limiting mixed Hodge structure in terms of the geometry of the semistable family $f: X \to \Delta$. This work already contains much of the later log differential calculus of semistable degenerations.

In \cite{Kulikov1977}, Kulikov classified in a 1977 article (in Russian) all possible central fibers of a semistable degeneration of K3 surfaces;\footnote{Kulikov's claim on the classification of degenerations of Enriques surfaces in the same article has been found to be false.} a more rigorous account has been published by Persson and Pinkham in \cite{PerssonPinkham1981} in 1981. Two years later, Friedman has shown in \cite{Friedman1983} that indeed all possible central fibers do occur in an analytic semistable degeneration of K3 surfaces. This is the first important smoothing result which turns out to be a special case of the unobstructedness of the logarithmic deformation functor in the Calabi--Yau case. Another important insight of \cite{Friedman1983} is that a normal crossing space $V$ which occurs as the central fiber of a semistable degeneration must satisfy $\T^1_V \cong \cO_{V^{sing}}$, i.e., $V$ is $d$-semistable. It was observed by Persson and Pinkham in the same year in \cite{PerssonPinkham1983} that $d$-semistability is not enough, i.e., that there are $d$-semistable normal crossing spaces which do not occur as the central fiber of a semistable degeneration.

In 1994, Friedman's smoothing result for normal crossing surfaces has been generalized to arbitrary dimension by Kawamata and Namikawa in \cite{KawamataNamikawa1994}. They use an early version of the language of logarithmic deformation theory; the argument essentially relies on flat deformation theory and the analysis of a specific mixed Hodge complex, which is related to Steenbrink's construction of the limiting mixed Hodge structure in a semistable degeneration. In the same year, in \cite{Steenbrink1995}, Steenbrink himself, too, gives a construction of a mixed Hodge complex on what he calls a logarithmic deformation. By this, he means a normal crossing space together with the structure of a log smooth log morphism to the standard log point; it generalizes his earlier construction of the limiting mixed Hodge structure as well.

The smoothing result of Kawamata--Namikawa has been applied and refined in a couple of instances. Already in 1990, Fujita has classified the possible semistable degenerations of del Pezzo surfaces in \cite{Fujita1990}, and in 2007, Kachi has applied the result of Kawamata--Namikawa in \cite{Kachi2007} to show that all semistable del Pezzo surfaces occur indeed as the central fiber of a semistable degeneration. In 2010, Lee has computed in \cite{Lee2010} the topological invariants of many smoothings arising from Kawamata--Namikawa. In 2020, Lee constructs in \cite{Lee2020} new examples of normal crossing Calabi--Yau threefolds which are smoothable by Kawamata--Namikawa, giving rise to smooth Calabi--Yau threefolds with topological invariants that no previously known Calabi--Yau threefold has. In 2021, Sano shows in \cite{Sano2021} that, for every $N \geq 4$, there are infinitely many non-K\"ahler Calabi--Yau $N$-folds $X$ with pairwise different second Betti number $b_2(X)$. In 2023, Hashimoto and Sano show in \cite{HashimotoSano2023} that the same is true for $N = 3$, thus giving infinitely many different topological types of Calabi--Yau threefolds, albeit that none of them is projective. Infinitely many non-K\"ahler Calabi--Yau threefolds with $b_2(X) = 0$ and  pairwise different third Betti number $b_3(X) = 2h$ for $h \geq 103$ have already been constructed by Friedman in \cite{Friedman1991} in 1991, attributing the construction to Clemens who has used a similar construction in \cite{Clemens1983}. In 1994, Lu and Tian give a construction with $h = 25$ in \cite{LuTian1994}.

In 2023, Doi and Yotsutani give in \cite{DoiYotsutani2023} an entirely different approach to smoothing normal crossing spaces in differential geometry.

\vspace{\baselineskip}

\noindent\textbf{Higher methods in deformation theory.} Every dg Lie algebra $L^\bullet$ gives rise to an associated deformation functor $\mathrm{Def}_{L^\bullet}$, and many deformation problems admit a dg Lie algebra which controls the deformation problem in this way. For example, the holomorphic smooth deformation functor is governed by the Kodaira--Spencer dg Lie algebra $KS^\bullet_X$, i.e., the Dolbeault resolution of the holomorphic tangent sheaf $\Theta^1_X$. Similarly, every $L_\infty$-algebra $L^\bullet$ gives rise to a deformation functor, which depends only on the homotopy class of $L^\bullet$. Since every $L_\infty$-algebra is homotopy equivalent to a dg Lie algebra, proven for example by Kriz and May in \cite{KrizMay1995} in 1995, the deformation problems governed by some $L_\infty$-algebra are the same as the deformation problems governed by some dg Lie algebra.

$L_\infty$-algebras feature prominently in Kontsevich's celebrated proof of the formality conjecture in \cite{Kontsevich2003}, published in 2003: The dg Lie algebras $T_{poly}(X)$ and $D_{poly}(X)$ are homotopy equivalent via an $L_\infty$-morphism. An important technical result on $L_\infty$-algebras is the homotopy transfer, which appears clearest and most general in the 2007 work \cite{FiorenzaManetti2007} of Fiorenza and Manetti, but is much older, going back to a result of Kadeishvili in \cite{Kadeishvili1982} on $A_\infty$-algebras. This result essentially states that, when a complex is a homotopy retract of an $L_\infty$-algebra, then there is an induced homotopy equivalent $L_\infty$-structure on that complex.

It has long been a well-established philosophy that \emph{every} deformation problem should be governed by dg Lie algebras. Indeed, for a large number of deformation problems, such a dg Lie algebra is known. This philosophy has been established independently as a theorem by Pridham in \cite{Pridham2010} in 2010 and Lurie in \cite{LurieX} in 2011: They define an $\infty$-category of formal moduli problems as an $\infty$-category of certain functors of $\mathbb{E}_\infty$-algebras, and this $\infty$-category is equivalent to the $\infty$-category obtained from dg Lie algebras by localization of quasi-isomorphisms. A nice survey has been given by Calaque and Grivaux in \cite{CalaqueGrivaux2021}. A new conceptual approach to the Lurie--Pridham theorem has been given by Le Grignou and Roca i Lucio in \cite{GrignouLucio2023}.

Given a deformation problem, its deformation functor maps a local Artin ring $A$ to the \emph{set} of isomorphism classes. Similarly, given a dg Lie algebra $L^\bullet$, its deformation functor maps $A$ to the \emph{set} of equivalence classes. There is, however, an obvious $1$-categorical enhancement: The deformation groupoid (over $A$) of a deformation problem has as objects deformations over $A$, and as morphisms isomorphisms between deformations. Similarly, the Deligne groupoid $\C(L^\bullet)$ of a dg Lie algebra $L^\bullet$ has as objects Maurer--Cartan solutions, and as morphisms gauge transforms that send one solution to the other. Hinich has proven in \cite{Hinich1997} in 1997 that the two groupoids are equivalent for certain deformation problems. In the course of doing so, he defines a Kan complex $\mathrm{MC}_\bullet(L^\bullet)$ out of the semisimplicial dga $(A_{PL})_\bullet$ of polynomial differential forms on simplices. Then the Deligne groupoid $\C(L^\bullet)$ is isomorphic to the Poincar\'e groupoid of $\mathrm{MC}_\bullet(L^\bullet)$, which is the category obtained from the left adjoint of the nerve functor $N: \mathrm{Cat} \to \mathrm{sSet}$. Since $\mathrm{MC}_\bullet(L^\bullet)$ is a Kan complex, we may consider it as an $\infty$-categorical realization of the deformation problem governed by $L^\bullet$.

The Kan complex $\mathrm{MC}_\bullet(L^\bullet)$ is not equivalent to the nerve $N_\bullet\C(L^\bullet)$ of the Deligne groupoid. In 2009, Getzler has constructed in \cite{Getzler2009} in the generality of $L_\infty$-algebras a subcomplex $\gamma_\bullet(L^\bullet) \subseteq \mathrm{MC}_\bullet(L^\bullet)$ such that $\gamma_\bullet(L^\bullet)$ is isomorphic to $N_\bullet\C(L^\bullet)$ if $L^\bullet$ is a nilpotent dg Lie algebra concentrated in degrees $[0,\infty)$, which is the relevant case in deformation theory. In 2020, Robert-Nicoud and Vallette have given a more explicit and more general interpretation of $\gamma_\bullet(L^\bullet)$ in \cite{RobertVallette2020}.

The algebraic variant of the Kodaira--Spencer dg Lie algebra $KS^\bullet_X$, which is the classical counterpart of the global logarithmic deformation theory presented in this book, takes a Thom--Whitney resolution $L_X^\bullet$ of $\Theta^1_X$ as the dg Lie algebra that controls $\mathrm{Def}_X$. Then a canonical homotopy retract to the \v{C}ech complex $\check\C(\U,\Theta^1_X)$ turns the latter into an $L_\infty$-algebra controlling $\mathrm{Def}_X$ via the homotopy transfer. The idea to use the Thom--Whitney resolution of $\Theta^1_X$ to control the smooth deformation functor goes at least back to the 1997 articles \cite{HinichSchechtman1997I,HinichSchechtman1997II} of Hinich and Schechtman. In these articles, the construction is used to describe the hull of $\mathrm{Def}_X$ in terms of $\Theta^1_X$ in a case where $\mathrm{Def}_X$ is assumed to be unobstructed. The article \cite{Hinich1997} of Hinich discussed above in the context of $\mathrm{MC}_\bullet(L^\bullet)$ then also shows that the Thom--Whitney resolution of $\Theta^1_X$ controls $\mathrm{Def}_X$. Iacono and Manetti achieve the present form of the theory in \cite{AlgebraicBTT2010} in 2010. Manetti and others have further elaborated on various parts of the theory in \cite{Manetti2009,FiorenzaManetti2009,FiorenzaManettiMartinengo2012,ManettiMeazzini2019,ManettiMeazzini2020}. The Thom--Whitney resolution has a long history; the variant which we use is the one used in Iacono--Manetti \cite{AlgebraicBTT2010}; it has been developed by Navarro Aznar in \cite{AznarHodgeDeligne1987} in 1987. Hinich and Schechtman give their own work \cite{HinichSchechtman1987}, also from 1987, as the source of the Thom--Whitney construction they use in \cite{HinichSchechtman1997I,HinichSchechtman1997II}.

\vspace{\baselineskip}

\noindent\textbf{Non-logarithmic Bogomolov--Tian--Todorov theorems.} The BBT theorem, asserting that infinitesimal deformations of a compact Calabi--Yau manifold are unobstructed, was first proven in a special case by Bogomolov in \cite{Bogomolov1978} in 1978. The general case has been achieved independently by Tian in \cite{Tian1987} in 1987 and by Todorov in \cite{Todorov1989} in 1989. Both proofs use differential geometry. The first algebraic proof was given by Ran in \cite{Ran1992} in 1992 via the so-called $T^1$-lifting criterion, in the case of the complex numbers as the base field. In the same year, Kawamata has generalized the algebraic proof to an arbitrary base field in \cite{Kawamata1992}. In 1999, Fantechi and Manetti have generalized in \cite{FantechiManetti1999} the $T^1$-lifting criterion by weakening the necessary assumptions.

In the case of compact complex manifolds, the infinitesimal deformations are governed by the Kodaira--Spencer dg Lie algebra $KS^\bullet_X$, i.e., the Dolbeault resolution of the holomorphic tangent sheaf. In 1990, Goldman and Millson have shown in \cite{GoldmanMillson1990} that this dg Lie algebra is homotopy abelian in the Calabi--Yau case, a statement which implies the unobstructedness of the deformation functor. Their proof relies on the old trick of \cite{DGMS1975} to show that $KS^\bullet_X$ is quasi-isomorphic to the $\partial$-cohomology of the Dolbeault resolution $(A^{n - 1,\bullet}_X,\bar\partial)$, which shows formality, and then the (Bogomolov--)Tian--Todorov formula shows that this $\partial$-cohomology is an abelian dg Lie algebra. The proof of unobstructedness of the extended deformation functor in Barannikov--Kontsevich \cite{BarannikovKontsevich1998} mentioned above relies on the same method.

In the algebraic case, Iacono and Manetti show the unobstructedness of the dg Lie algebra $L^\bullet_X$, constructed via the Thom--Whitney resolution, in \cite{AlgebraicBTT2010} in 2010. Their proof relies on the theory of period maps and Cartan homotopies developed by Fiorenza and Manetti in \cite{FiorenzaManetti2009}. Iacono generalizes the method to give an abstract unobstructedness theorem in \cite{AbstractBTT2017} in 2017, giving a very explicit criterion for a dg Lie algebra to be homotopy abelian. Iacono also uses this method in \cite{Iacono2015} in 2015 to show homotopy abelianity for a pair $(X,D)$ of a smooth variety $X$ with a divisor $D \subset X$, and in \cite{Iacono2021defPairs} in 2021 (with Manetti) to show homotopy abelianity for a pair $(X,L)$ of a smooth variety $X$ and a line bundle $L$. In \cite{IaconoManetti2019} in 2019, they study deformations of pairs $(X,\F)$ where $\F$ is a coherent sheaf.

The argument of Goldman--Millson, in particular before the background of extended deformations described above, suggests to study Batalin--Vilkovisky algebras instead of dg Lie algebras. In 2008, Terilla gives in \cite{Terilla2008} a smoothness criterion for the deformation functor associated with a BV algebra in terms of a quantization thereof. In the same year, Katzarkov, Kontsevich, and Pantev give in \cite{KKP2008} a new perspective on the homotopy abelianity of the dg Lie algebra inside a (dg) BV algebra; in particular, they define the degeneration property. These two articles lead up to the logarithmic variants pioneered in Chan--Leung--Ma \cite{ChanLeungMa2023}, which we discuss below.

\vspace{\baselineskip}

\noindent\textbf{Formal Frobenius manifolds and Batalin--Vilkovisky algebras.} Given a Calabi--Yau manifold $\check X$, its genus-$0$ Gromov--Witten invariants can be encoded in the quantum cohomology ring, which in turn may be viewed as a structure of a formal Frobenius supermanifold on the formal stalk $\check\M$ at $0$ of the total cohomology ring $H^\bullet = H^\bullet(\check X,\Omega^\bullet_{\check X})$.\footnote{We are ignoring here that, in general, one has to work with coefficients in the Novikov ring.} After forgetting the metric/pairing, the latter is equivalent to a sequence of graded-symmetric products $m_n: (H^\bullet)^{\otimes n} \to H^\bullet$ of degree $2(2 - n)$ for $n \geq 2$, satisfying certain associativity relations. This is what Getzler calls a hypercommutative algebra in \cite{Getzler1995} in 1995. 

In the article \cite{Kontsevich1995} of 1995, when expounding the foundational ideas of homological mirror symmetry, Kontsevich suggests that this structure of a formal Frobenius supermanifold on $\check\M$ should correspond to a structure of a formal Frobenius supermanifold on the formal stalk $\M$ of the total Hochschild cohomology $H^\bullet(X,\bigwedge^\bullet\Theta^1_X)$ of the mirror $X$, and that $\M$ should be considered as an extended moduli space of $X$, extending the interpretation of $H^1(X,T_X)$ as the tangent space of the smooth deformation functor $\mathrm{Def}_X$. Barannikov and Kontsevich give in \cite{BarannikovKontsevich1998} in 1998 a construction of such a formal Frobenius supermanifold on $\M$ by studying the differential BV algebra $\mathbf{t} = \bigoplus_{p,q} \Gamma(X,\bigwedge^q\overline T^*_X \otimes \bigwedge^pT_X)$ of polyvector fields; this formal Frobenius supermanifold conjecturally corresponds to the quantum cohomology of the mirror $\check X$. In \cite{Manin1999}, Manin gives, among many other things, an expanded exposition of the construction of the formal Frobenius manifold. The construction has been further investigated, in particular from the perspective of Hodge theory, by Barannikov in \cite{Barannikov1999,Barannikov2001,Barannikov2002}. Merkulov defines in \cite{Merkulov1999} in 1999 a structure of an $A_\infty$-algebra on the kernel of the BV operator on $\mathbf{t}$; he gives in \cite{Merkulov2000} in 2000 an extension of the construction of the Frobenius manifold to the general case where $X$ needs no longer be Calabi--Yau, with values in $F_\infty$-manifolds. The construction of Barannikov--Kontsevich may be viewed as endowing the cohomology $H^\bullet(A^\bullet,d)$ of any dg Batalin--Vilkovisky algebra $A^\bullet$ with the structure of a hypercommutative algebra. This viewpoint has been further expanded by Losev and Shadrin in \cite{Losev2007}, and by Park in \cite{Park2007}, both in 2007.

Commutative $BV_\infty$-algebras have been first defined by Kravchenko in \cite{Kravchenko2000} in 2000, partially inspired by Barannikov--Kontsevich. A canonical notion of (not necessarily commutative) homotopy BV algebra has been provided in 2012 by G\'alvez-Carrillo, Tonks, and Vallette in \cite{Vallette2012} by taking the Koszul resolution of the $BV$-operad. In 2013,  Drummond-Cole and Vallette have given in \cite{Vallette2013} an explicit description of the minimal model of this operad; an algebra over this operad is called a skeletal homotopy Batalin--Vilkovisky algebra. This allows them to extend the construction of Barannikov--Kontsevich: Every dg Batalin--Vilkovisky algebra $A^\bullet$ which is endowed with so-called non-commutative Hodge--de Rham degeneration data gives rise to a structure of \emph{homotopy} hypercommutative algebra on the cohomology $H^\bullet(A^\bullet,d)$. Here, a homotopy hypercommutative algebra is defined via the Koszul resolution of the operad of hypercommutative algebras. Since $H^\bullet(A^\bullet,d)$ has trivial differential, this homotopy hypercommutative algebra has an underlying hypercommutative algebra, and this hypercommutative algebra is precisely the one constructed by Barannikov--Kontsevich. Unlike in the construction of Barannikov--Kontsevich, the dg Batalin--Vilkovisky algebra $A^\bullet$ can be reconstructed up to homotopy from the homotopy hypercommutative algebra $H^\bullet(A^\bullet,d)$, again by \cite{Vallette2013}. The results have been strengthened and extended by Dotsenko, Shadrin, and Vallette in \cite{Dotsenko2015}. As a consequence of results by Koroshkin, Markarian, and Shadrin in \cite{Khoroshkin2013}, there is also a hypercommutative structure on the dg BV algebra $A^\bullet$ (endowed with additional data), which is homotopy equivalent to the homotopy hypercommutative structure on its cohomology. In 2023, Cirici and Horel prove in \cite{Cirici2023} that, for a Calabi--Yau manifold, the Dolbeault resolution of polyvector fields is formal as a hypercommutative algebra, i.e., homotopy equivalent to its cohomology. Thus, in the case of Calabi--Yau manifolds, the hypercommutative algebra structure on the Hochschild cohomology looses no homotopical information compared to the dg BV algebra structure on polyvector fields. In the case of surfaces and threefolds, this has been proven before by Drummond-Cole in \cite{Drummond2014}. This result is particularly interesting for us because it suggests that the Batalin--Vilkovisky calculi considered in this book may be interpreted as an incarnation of the $B$-model in Barannikov--Kontsevich mirror symmetry.

In 2013, Braun and Lazarev give in \cite{BraunLazarev2013} a version of the theory in Katzarkov--Kontsevich--Pantev (discussed above) for commutative $BV_\infty$-algebras. They define the degeneration property in this context, which implies the homotopy abelianity of the associated $L_\infty$-algebra. This degeneration property is, on the one hand side, a generalization of Katzarkov--Kontsevich--Pantev's degeneration property, and on the other hand side a variant of the degeneration properties considered by Vallette and others in \cite{Vallette2012,Vallette2013,Dotsenko2015}. There has been some additional recent interest in commutative $BV_\infty$-algebras, see \cite{BashkirovVoronov2017,MarklVoronov2017,Voronov2020}. 

A similar structure as the $A_\infty$-algebra structure defined by Merkulov on the kernel of the BV operator also occurs in the recent work \cite{MandelRuddat2023} of Mandel and Ruddat, where tropical quantum field theories are defined, and two new methods to compute the multiplicities of tropical curves are provided.

\vspace{\baselineskip}

\noindent\textbf{Logarithmic deformation theory.} In 1996, F.~Kato gives in \cite{Kato1996} the modern formulation of log smooth deformation theory as a functor from $\mathbf{Art}_Q$ to $\mathbf{Set}$. The notion of log smoothness itself has been already introduced by K.~Kato in the seminal paper \cite{kkatoFI} mentioned above, but we find a fully developed deformation theory for the first time in \cite{Kato1996}. Two years later, in 1998, F.~Kato developed a more general theory of functors of \emph{log} Artin rings, which does not seem to have been in use since then. In 2000, F.~Kato comes in \cite{Kato2000} back to the original motivation of $\overline\M_g$, giving an interpretation of $\overline\M_g$ as the log stack of log smooth curves.

In 2015, Tziolas has shown in \cite{Tziolas2015} that a $d$-semistable Fano variety $X_0$ is smoothable in a semistable family by showing that the obstruction space $H^2(X_0,\Theta^1_{X_0/S_0})$ vanishes, applying F.~Kato's log smooth deformation theory. In 2020, Shentu has treated in \cite{Shentu2020} the more general case of semistable Fano varieties over the log point $\Spec (\NN^r \to \kk)$; such families are also known as polystable families, following the terminology of Berkovich in \cite{Berkovich1999}. 

Iacono's article \cite{Iacono2015} on the deformations of pairs $(X,D)$ with a divisor $D \subset X$, and Iacono and Manetti's article \cite{Iacono2021defPairs} on deformations of pairs $(X,L)$ with a line bundle also rely on logarithmic deformation theory over the trivial log point; namely, in both articles, the compactifying log structure of a divisor is considered.

In the case of an snc divisor $D \subset X$, logarithmic deformations need to be distinguished from deformations of the closed immersion $D \subset X$ where both $X$ and $D$ are allowed to vary. If $(X,D)$ is a log CY pair, the first are always unobstructed, but the latter can be obstructed, as shown by Petracci, Robins, and the author in \cite{FPR2022}.

\vspace{\baselineskip}

\noindent\textbf{The Gross--Siebert program.} The Gross--Siebert program is an explicit constructive approach to mirror symmetry via degenerations to spaces which are glued from toric varieties. Mirror pairs of these so-called toric log Calabi--Yau spaces admit a version of dual Lagrangian torus fibrations to an integral tropical manifold $B$. The original idea goes back to Batyrev's mirror symmetry of \cite{Batyrev1994} from 1994, and the more general Batyrev--Borisov mirror symmetry of \cite{BatyrevBorisov1996} from 1996. In the foundational article \cite{GrossSiebertI} from 2006, Gross and Siebert describe the construction of the degenerate Calabi--Yau spaces $X_0(B,\cP,s)$ and $\check X_0(B,\cP,s)$ as well as a log structure on these spaces and a mirror construction of $(B,\cP,\varphi)$, the discrete Legendre transform $(\check B,\check\cP,\check\varphi)$. In the sequel \cite{GrossSiebertII} of 2010, they describe the deformation theory of these spaces. The first highlight of the Gross--Siebert program is \cite{Gross2011} from 2011, where Gross and Siebert construct infinitesimal deformations of $\check X_0(B,\cP,s)$ up to arbitrary order via an intricate algorithm. In \cite{Gross2005}, Gross explains how Batyrev--Borisov duality can be considered as a special case of their construction. There has been tremendous interest in toric degenerations since.
In 2006, Nishinou and Siebert use the degeneration in \cite{NishinouSiebert2006} to count curves on toric varieties. In 2010, Ruddat generalizes in \cite{Ruddat2010} the computation of certain Hodge numbers in the construction. In the same year, Nakayama and Ogus give with \cite{Nakayama2010} a study on properties of relatively log smooth morphisms, the type of log singularity occurring in the Gross--Siebert construction. In 2014, Casta\~no-Bernard and Matessi study conifold transitions in this context in \cite{Matessi2014}. In 2015, Gross, Hacking, and Keel give a variant of toric degenerations for Looijenga pairs $(Y,D)$ in \cite{GrossHackingKeel2015}. In \cite{Prince2018}, Prince applies the Gross--Siebert algorithm to study Fano surfaces, similar to Example~\ref{Fano-3fold-exa}; in \cite{Prince2021}, he constructs new Calabi--Yau threefolds via the Gross--Siebert algorithm. In \cite{Arguz2021}, Arg\"uz studies the real as opposed to complex geometry of toric degenerations. In \cite{RuddatSiebert2020}, Ruddat and Siebert show that the deformations constructed via the Gross--Siebert algorithm are formally versal as deformations of log schemes. In the same article, they construct canonical coordinates on the base of the formally versal deformation. In \cite{GrossHackingSiebert2022}, Gross, Hacking, and Siebert construct canonical bases of ample line bundles for toric degenerations (as well as more general situations). There is also an extensive literature on questions related to Gromov--Witten invariants in the Gross--Siebert construction and more general log contexts, see \cite{GrossPandharipandeSiebert2010,GrossSiebert2013,FilippiniStoppa2015,Bousseau2019,Bousseau2020,Qile2014,AbramovichWise2018,ACGS2020,Graefnitz2022} to give just a subjective sample.

\vspace{\baselineskip}

\noindent\textbf{Toroidal crossing spaces.} A toroidal crossing space $(V,\cP,\bar\rho)$ consists of a scheme $V$ together with a sheaf of monoids $\cP$ and a global section $\bar\rho \in \cP$ which is locally the ghost sheaf of a vertical and saturated log smooth log morphism to the standard log point $S_0$. This notion is relevant in the construction of log singular log varieties, as one may construct $(V,\cP,\bar\rho)$ first, and then a section $s \in \cL\cS_V$ in the sheaf of actual log smooth log structures with the specified ghost sheaf. The classical case is when $V$ is a normal crossing space; in this situation, $(\cP,\bar\rho)$ can be canonically constructed from the geometry of $V$. The existence of a global section of $\cL\cS_V$ is equivalent to $d$-semistability, introduced by Friedman in \cite{Friedman1983}, and further investigated in \cite{KawamataNamikawa1994,Steenbrink1995,Kato1996}. In 2003, Olsson gives a more general treatment, still in the semistable case, in \cite{Olsson2003}. This, in turn, has been further generalized by Li in \cite{Li2007} in 2007, where the case of weakly semistable families over a higher-dimensional base is treated. Also related to normal crossing spaces as toroidal crossing spaces, Tziolas has provided in \cite{Tziolas2011} methods to compute $\T^1_V$.

The case of toroidal crossings, where a semistable degeneration is replaced with a more general vertical and saturated log smooth morphism, has been first investigated (and named) by Schr\"oer and Siebert in \cite{SchroerSiebert2006} in 2006, under the name of Gorenstein toroidal crossings. In the same year, the variant of the theory that we consider now standard has been developed by Gross and Siebert in \cite{GrossSiebertI} while expounding their mirror symmetry construction. In 2021, Filip, Ruddat, and the author have studied deformations of toroidal crossing spaces (already endowed with a section of $\cL\cS_V$) in \cite{FFR2021}, where also the name \emph{toroidal crossing space} for a triple $(V,\cP,\bar\rho)$ as above is established. In the appendix of this book, we give the first extended exposition on toroidal crossing spaces.

\vspace{\baselineskip}

\noindent\textbf{The smoothing method of Chan--Leung--Ma.} In 2019, Chan, Leung, and Ma achieve in \cite{ChanLeungMa2023} the major breakthrough needed to generalize the classical Bogomolov--Tian--Todorov theorem into a logarithmic one. Their key insight is that the dg Lie algebras of classical deformation theory must be replaced with curved Lie algebras in order to describe log deformations as solutions of a Maurer--Cartan equation. The pre-dg Lie algebra can be constructed by an adaptation of Iacono--Manetti's approach via the Thom--Whitney resolution. Then a pre-dg BV algebra is constructed, whose analysis in the spirit of Katzarkov--Kontsevich--Pantev \cite{KKP2008} and Terilla \cite{Terilla2008} yields unobstructedness of pairs of a log deformation together with a volume form. The method can be applied whenever we have locally fixed an isomorphism type of log deformations, in particular for log smooth deformations and in the Gross--Siebert program. The article \cite{ChanLeungMa2023} also contains a treatment of semi-infinite variations of Hodge structures in that setting, generalizing the work of Barannikov.

In 2021, Filip, Ruddat, and the author have applied in \cite{FFR2021} the method of Chan--Leung--Ma to show a very general smoothing result for normal crossing spaces, relying on the local deformation theory for certain log singularities developed by Gross and Siebert in \cite{GrossSiebertII}. In 2022, the author has in \cite{Felten2022} slightly modified the method to prove explicitly that the log smooth deformation functor $\mathrm{LD}_{X_0/S_0}$ is controlled by a curved Lie algebra $L^\bullet_{X_0/S_0}$. Also in 2022, Petracci and the author have announced and partially proven in \cite{FeltenPetracci2022} the logarithmic Bogomolov--Tian--Todorov theorem, i.e., the unobstructedness of $\mathrm{LD}_{X_0/S_0}$ if $f_0: X_0 \to S_0$ is log Calabi--Yau; a full proof is contained in this book. If the base carries the trivial log structure, the log Bogomolov--Tian--Todorov theorem also follows from the method of Iacono--Manetti, as described in \cite{FeltenPetracci2022} in full generality, and in \cite{Iacono2015} in a special case.

In 2022, Chan and Ma apply in \cite{CLM2022pairs} their smoothing method to pairs of a space $X$ and a bounded perfect complex $\C^*$, very similar to the study of vector bundles and in particular line bundles in the present work. However, \cite{CLM2022pairs} misses the unobstructedness of line bundles on a log CY space, and hence needs to assume $H^2(X,\cO_X) = 0$ for the most interesting results. In the same year, Chan, Ma, and Suen apply in \cite{ChanMaSuen2022} the theory of \cite{CLM2022pairs} to deform certain vector bundles on some $X_0(B,\cP,s)$ of dimension $2$.

Chan, Leung, and Ma have as well introduced methods of asymptotic analysis into the Gross--Siebert program in \cite{CLM2022scattering}. These methods have been applied in \cite{ChanMa2020tropical} and \cite{LeungMaYoung2021} to curve counting, and in \cite{CLM2022fukaya} to produce a variant of the above smoothing method where the Thom--Whitney resolution, applied on $X_0$, is replaced with a resolution constructed from asymptotic analysis, applied on $B$. This allows to tie solutions of the Maurer--Cartan equation explicitly to scattering diagrams, thus connecting the two methods to construct log deformations.

\vspace{\baselineskip}

\noindent\textbf{Global smoothings of normal singularities.} Smoothings and deformations constructed via logarithmic deformation theory must be distinguished from smoothings constructed via the classical flat deformation functor $\mathrm{Def}_X$. There is, in fact, a vast literature on the latter, both for normal and non-normal singularities, usually for either Calabi--Yau or Fano varieties. In 1994, Namikawa has shown in \cite{Namikawa1994} that $\mathrm{Def}_X$ of a projective $3$-dimensional $\QQ$-Calabi--Yau variety with terminal singularities is unobstructed, after a special case with Kleinian singularities has been treated by Ran in \cite{Ran1992Kleinian} in 1992. The deformations arising from these unobstructedness results need not always be smoothings, but under some special hypotheses they are. Related questions have been further investigated in \cite{Gross1997,Namikawa1997inv,Namikawa1997Fano,Minagawa2001,Namikawa2002,Minagawa2003} mostly by Namikawa and Minagawa, and more recently in \cite{Sano2016QI,Sano2017,Sano2017QII,Sano2018} by Sano exclusively in the Fano case (and variants thereof). Other works focus on the relation between $X$ and its smoothing; in 2011, Jahnke and Radloff show in \cite{JahnkeRadloff2011} that the Picard numbers of a Gorenstein terminal Fano threefold $X$ and its smoothing coincide. 

New interest in this topic has arisen recently with a series of articles by Friedman and Laza on $k$-rational and $k$-du Bois singularities, see \cite{FriedmanLaza2023defo,FriedmanLaza2023local,FriedmanLaza2023higher,FriedmanLaza2023higherIsol,FriedmanLaza2023liminal,FriedmanLaza2023log}.

In a different direction, much work has been done to study the flat deformations of Gorenstein toric Fano varieties, starting with the work of Altmann \cite{Altmann1994,Altmann1995,Altmann1997,Altmann1997ob,Altmann2000} in the 1990s. An overview can be found in \cite{Petracci2022}. Conversely, different methods have been developed to degenerate a smooth Fano variety to a Gorenstein toric Fano variety, for example the classical \cite{GonciuleaLakshmibai1996} for flag varieties, or Anderson's construction of degenerations to toric varieties from Okounkov bodies in \cite{Anderson2013} of 2013. The general philosophy is that moduli spaces of Fano varieties may be classified by the Gorenstein toric Fano varieties contained in them, as opposed to the more intricate philosophy of the Gross--Siebert program that moduli spaces can be classified by the degenerations to reducible toric varieties contained in them.

A study of deformations in the non-normal case with classical methods has been undertaken by Tziolas in \cite{Tziolas2010} in 2010. In \cite{FantechiFP2021} and \cite{FantechiFP2022}, Fantechi, Franciosi, and Pardini apply the result of Tziolas to semi-smooth varieties. Related to and motivated by this, Nobile reviews in \cite{Nobile2023} the relation between formal smoothings and geometric smoothings. Technically, also Friedman's classical \cite{Friedman1983} as well as Kawamata and Namikawa's \cite{KawamataNamikawa1994} apply classical methods to construct global deformations of non-normal varieties, but we have discussed these works already above.

\vspace{\baselineskip}

\noindent\textbf{Log Hodge structures.} Just as the curved Gerstenhaber calculi that we study here, log Hodge structures are another tool to study log spaces from a cohomological perspective. Log Hodge structures have been developed by K.~Kato, Nakayama, and Usui in a series of articles starting around 2000, see \cite{KatoUsui2000,KatoUsui2009,KatoNakayamaUsui2009,KatoNakayamaUsui2010,KatoNakayamaUsui2011,KatoNakayamaUsui2013}. Somewhat surprisingly, the concern of these articles is not so much the construction of log Hodge structures from families of log schemes but rather the classification of log Hodge structures; in fact, the definition of a log Hodge structure is adapted to make their classification feasible, not their construction. An exception to that is the early work \cite{Matsubara1998} of Matsubara in 1998, and possibly a preprint of K.~Kato mentioned therein, as well as the work \cite{KMN2002} of K.~Kato, Matsubara, and Nakayama where they construct log Hodge structures if the base is log smooth over the trivial log point. The question of constructing a log Hodge structure geometrically over non-trivial log points (and more general bases) has been addressed quite recently by Fujisawa and Nakayama in \cite{FujisawaNakayama2015,FujisawaNakayama2020}; they obtain partial results which are already quite satisfactory. Usui's article \cite{Usui2014} suggests that certain mirror phenomena can be captured by log Hodge structures, and this topic certainly deserves further investigation, as well as the relation with the log semi-infinite variations of Hodge structures that show up in \cite{ChanLeungMa2023}.

\section{Outlook}

Several closely related problems of varying difficulty and importance have remained unaddressed here.

\vspace{\baselineskip}

\noindent\textbf{Deformations of vector bundles.} We show that deformations of pairs $f_0: (X_0,\cL_0) \to S_0$ are unobstructed when $\cL_0$ is a vector bundle, and $f_0: X_0 \to S_0$ is log Calabi--Yau. By repeating the argument, we expect that simultaneous deformations of a finite collection $(\cL_{0;1},...,\cL_{0;m})$ of line bundles are unobstructed as well. It is also open to consider deformations of vector bundles of rank $r \geq 2$, or coherent sheaves, or simultaneous deformations of the category of coherent sheaves, including their morphisms.

\vspace{\baselineskip} 

\noindent\textbf{Construction and characterization of log regular sections.} As mentioned below, it is open if every log regular section $s_0 \in \cL_0$ is also log transversal. If not, it is also open to show that the modification of a log toroidal family in a log regular section is always log toroidal. Furthermore, it is largely open to give criteria for the existence of global log regular sections, but cf.~the results in \cite{FFR2021} on the construction of log Calabi--Yau log structures on certain normal crossing spaces, a version of which we discuss in Theorem~\ref{smoothing-nc-spaces}.

\vspace{\baselineskip}

\noindent\textbf{Propagation of perfectness to modifications and $\PP^1$-bundles.} Assume that $f_0: X_0 \to S_0$ is a proper enhanced generically log smooth family with perfect curved Gerstenhaber calculus. Then it is open if, given a line bundle $\cL_0$, the family $P_0(\cL_0) \to S_0$ constructed in Chapter~\ref{fiber-bundle-constr-sec} has perfect curved Gerstenhaber calculus. It is also open if, given a log regular section $s_0$ in some line bundle $\cL_0$, the modification $X_0(s_0) \to S_0$ constructed in Chapter~\ref{log-modif-sec} has perfect curved Gerstenhaber calculus. Although it seems plausible to assume this, the perfectness of the new curved Gerstenhaber calculus does not appear to be an easy formal consequence in these settings.

\vspace{\baselineskip}

\noindent\textbf{Homotopy theory of Gerstenhaber calculi.} Although our construction of the characteristic curved Gerstenhaber calculus depends on the choice of two coverings, it is reasonable to expect that the resulting curved Gerstenhaber algebra is independent of these choices up to homotopy. Some results are already known in this direction, see \cite[\S 3.3.2]{ChanLeungMa2023}. In the non-logarithmic case, where a homotopy theory is more straightforward due to the absence of true predifferentials, this seems to be classical, at least for dg Lie algebras/$L_\infty$-algebras. Note also that there has been recent progress on the homotopy theory of curved Lie algebras (\cite{Maunder2017,Bellier2020,Lucio2022}).

\vspace{\baselineskip}

\noindent\textbf{Semi-derived deformations.} In practice, it might be difficult to construct a system of deformations $\D$ for a generically log smooth family $f_0: X_0 \to S_0$. One idea is to use local resolutions of singularities, and then take a (derived) direct image of the Gerstenhaber calculi on the resolutions. It is reasonable to expect that the resolutions do not glue, and that the derived direct images agree only up to homotopy. A variant of the theory should be developed that works in this setting. Here, we may start directly with curved Gerstenhaber calculi. While there may not be any well-defined actual Gerstenhaber calculus on such a deformation, it is reasonable to expect that there is a well-defined underlying deformation of schemes because many resolutions of log singularities satisfy $R\pi_*\cO_{\tilde X} = \cO_X$. It is then however unclear which invariants are preserved in such a deformation. It will probably be difficult to extract information about a smoothing from the general fiber in these settings. Using derived Gerstenhaber calculi (which are yet to be defined) may also rectify situations in which the curved Gerstenhaber calculus of a non-derived system of deformations is not perfect.

\vspace{\baselineskip}

\noindent\textbf{Integral log smooth morphisms.} We assume throughout that our morphisms are saturated. There should be a similar theory for morphisms that are only integral. This situation is less rigid, so we have less information about the families. In the absence of non-coherent log singularities, one can take a base change which makes the map saturated, but this alters the underlying scheme, and then the behavior of our log singularities is not yet clear.

\vspace{\baselineskip}

\noindent\textbf{Local deformation theory.} In this book, we have studied extensively the passage from local deformation theory of log schemes to global deformation theory. For this, we assume that local deformations are fixed in a compatible way. However, some generically log smooth families admit many generically log smooth deformations while others admit none. It is an open problem to classify all generically log smooth log singularities over $S_0$ and their local deformations.

\section{Acknowledgements}

I thank my postdoc mentor Aise Johan de Jong for discussion and ongoing support around most of the topics covered in this study. I thank Richard Thomas for discussion around his example of an obstructed vector bundle of rank $2$, see Example~\ref{R-Thomas-ex-1}. I thank Luc Illusie for clarifications about the early history of log structures. I thank Andrea Petracci and Andr\'es G\'omez Villegas for comments on the draft of this book. I thank Taro Sano for bringing up the question about unobstructedness of line bundles on log Calabi--Yau spaces. His question largely guided me to the present form of the material in this book. I thank S\'andor Kov\'acs for helpful comments on the notion of semi-log-canonical singularities. I thank Bruno Vallette for pointing out many important references in operads and homotopical algebra. I thank my former PhD mentor Helge Ruddat for further discussion around toroidal crossing spaces and smoothings.

I thank Columbia University for its hospitality. This study has been financially supported by the grant FE 2102/1-1 of the German research foundation.

\section{Notations and conventions}

We fix a field $\kk$ of characteristic $0$.

\subsection{Complexes and double complexes}

Let $\bA$ be an abelian category. Complexes $C^\bullet$ in $\bA$ have cohomological grading, i.e., the differential is $\partial: C^i \to C^{i + 1}$. For $s \in \ZZ$, the shift is $C^i[s] := C^{i + s}$ with differential $\partial[s] := (-1)^s \cdot \partial$ acquiring a sign $(-1)^s$. In a double complex $(C^{\bullet,\bullet},\partial,\bar\partial)$, we assume 
$$\partial\bar\partial + \bar\partial\partial = 0;$$
then the differential of the total complex is $\partial + \bar\partial$.

\subsection{Artinian rings}
Here we fix notations for the general study of functors of Artin rings. Let $\Lambda$ be a complete local Noetherian $\kk$-algebra with residue field $\kk$. We denote the maximal ideal by $\m_\Lambda \subset \Lambda$. We write $A_k = \Lambda/\m_\Lambda^{k + 1}$ for integers $k \geq 0$; in particular, $A_0 = \kk$. We write furthermore $A_\eps = \kk[\eps]/(\eps^2)$ with the $\Lambda$-algebra structure induced by $\Lambda \to A_0 \to A_\eps$.
We denote the category of Noetherian local complete $\Lambda$-algebras with residue field $\kk$ by $\mathbf{Alg}_\Lambda$; inside it, we denote the category of Artinian local $\Lambda$-algebras with residue field $\kk$ by $\mathbf{Art}_\Lambda$. Morphisms are local homomorphisms of $\Lambda$-algebras. A \emph{small extension} is a surjection $B' \to B$ in $\mathbf{Art}_\Lambda$ such that the kernel $I \subset B'$ of $B' \to B$ is a principal ideal with $I \cdot \m_{B'} = 0$; then $I$ is one-dimensional. Every surjection in $\mathbf{Art}_\Lambda$ factors into a sequence of small extensions. 

A \emph{functor of Artin rings} (sic) is a covariant functor $F: \mathbf{Art}_\Lambda \to \mathbf{Set}$ with $F(A_0) = \{*\}$. We denote the terminal object $T$ in the functor category, defined by $T(A) = \{*\}$ for all $A \in \mathbf{Art}_\Lambda$, by $\{*\}$. We say, a functor of Artin rings $F$ is \emph{unobstructed} if $F(B') \to F(B)$ is surjective for every surjection $B' \to B$ in $\mathbf{Art}_\Lambda$. We say a natural transformation $F \to G$ of functors of Artin rings is \emph{smooth} if, for every surjection $B' \to B$ in $\mathbf{Art}_\Lambda$, the induced map $F(B') \to F(B) \times_{G(B)} G(B')$ is surjective. A functor of Artin rings $F$ is unobstructed if and only if the natural map $F \to \{*\}$ is smooth.
For a functor of Artin rings $F: \mathbf{Art}_\Lambda \to \mathbf{Set}$, we have the following conditions:
\begin{itemize}
 \item[$(H_0)$]: $F(A_0) = \{*\}$ has only one element.
 \item[$(H_1)$]: The natural map $F(A' \times_A A'') \to F(A') \times_{F(A)} F(A'')$ is surjective for every small extension $A'' \to A$.
 \item[$(H_2)$]: The map in $(H_1)$ is bijective for $A'' = A_\eps$ and $A = A_0$.
\end{itemize}
A functor of Artin rings $F$ satisfying these conditions is called a \emph{deformation functor}. In this case, the \emph{tangent space} $t_F := F(A_\eps)$ carries a natural structure of a $\kk$-vector space. Also, if $p: A' \to A$ is a small extension with kernel $I \subset A'$, and if $\eta' \in F(A')$ with $\eta := F(p)(\eta') \in F(A)$, then there is a natural transitive action of $t_F \otimes_\kk I$ on $F(p)^{-1}(\eta)$.
\begin{itemize}
 \item[$(H_3)$]: The $\kk$-vector space $t_F$ has finite dimension.
 \item[$(H_4)$]: If $p: A' \to A$ is a small extension with kernel $I$, and $\eta' \in F(A')$ with $\eta := F(p)(\eta')$, then the natural action of $t_F \otimes_\kk I$ on $F(p)^{-1}(\eta)$ is bijective.
\end{itemize}
A deformation functor has a hull if and only if it satisfies $(H_3)$; it is pro-representable if and only if it satisfies $(H_3)$ and $(H_4)$.
A general reference for functors of Artin rings is \cite{Schlessinger1968}.

In another direction, following e.g.\ \cite{ManettiLieMethods2022}, a functor of Artin rings is \emph{homogeneous} if the map in $(H_1)$ is bijective whenever $A'' \to A$ is surjective. Every homogeneous functor of Artin rings is a deformation functor, but conversely, most deformation functors are not homogeneous.

\subsection{Logarithmic deformation functors}

Here we fix notations for the study of the deformation theory of log schemes. Let $Q$ be a sharp toric monoid. We denote the associated monoid $\kk$-algebra by $\kk[Q]$, and the elements of the distinguished $\kk$-basis by $z^q$ for $q \in Q$. Since $Q$ is sharp, $Q^+ = Q \setminus \{0\}$ is a monoid ideal; we denote the associated maximal ideal of $\kk[Q]$ by $\kk[Q^+]$. When we complete $\kk[Q]$ in $\kk[Q^+]$, then we obtain the Noetherian complete local $\kk$-algebra $\kk\llbracket Q\rrbracket$ with residue field $\kk$. We denote its maximal ideal by $\m_Q$. We write $\mathbf{Art}_Q$ for the category of Artinian local $\kk\llbracket Q\rrbracket$-algebras with residue field $\kk$. As above, we have specific objects $A_0 = \kk$, $A_k = \kk\llbracket Q\rrbracket/(\m_Q^{k + 1})$, and $A_\eps = \kk[\eps]/(\eps^2)$ in $\mathbf{Art}_Q$. We denote the associated log schemes by $S_0 = \Spec (Q \to A_0)$, $S_k = \Spec(Q \to A_k)$, and $S_\eps = \Spec (Q \to A_\eps)$. For a general object $A \in \mathbf{Art}_Q$, we write $S_A = \Spec(Q \to A)$.

If $f_0: X_0 \to S_0$ is a log smooth morphism, then we write $\mathrm{LD}_{X_0/S_0}: \mathbf{Art}_Q \to \mathbf{Set}$ for its associated log smooth deformation functor. A general reference for basic log smooth deformation theory is \cite{Kato1996}.


\part{Abstract unobstructedness theorems}


\chapter{Algebraic structures}\label{alg-str-sec}\note{alg-str-sec}

The purpose of this chapter is to fix the definitions of several \emph{algebraic structures} that will play an important role in our study of logarithmic deformation theory. We will study essentially seven different algebraic structures:
\begin{enumerate}[label=(\arabic*)]
 \item Lie algebras;
 \item Lie--Rinehart algebras and pairs;
 \item Gerstenhaber algebras and calculi;
 \item Batalin--Vilkovisky algebras and calculi.\footnote{Gerstenhaber calculi and Batalin--Vilkovisky calculi will have two variants, a one-sided and a two-sided one, which differ by an additional operation whose relevance we discovered only toward the end of this project. This operation is relevant for some of our applications but not for all.}
\end{enumerate}
Each of them comes in four variants. The singly graded original will usually appear as a collection of sheaves $(\F^i)_i$ on a space $X$ together with a collection of operations between them; to each $\F^i$ will be assigned a degree $|\F^i| \in \ZZ$. While one may omit this degree in the case of Lie algebras and Lie--Rinehart algebras, it is essential for Gerstenhaber or Batalin--Vilkovisky algebras and calculi. The \emph{differential bigraded} version shows up once we take an acyclic resolution $\F^i \to \F^{i,\bullet}$ of each $\F^i$ which is compatible with all operations. The (plainly) \emph{bigraded} version is obtained by forgetting the differential $\bar\partial: \F^{i,j} \to \F^{i,j + 1}$. Finally, the \emph{curved} version arises when we relax the condition $\bar\partial^2 = 0$. In this case, $\bar\partial: \F^{i,j} \to \F^{i,j + 1}$ is called a \emph{predifferential}, and there will be a section $\ell$ of some sheaf $\F^i$, the \emph{curvature}, which measures the defect of $\bar\partial$ to be a differential.

\vspace{\baselineskip}

Many of our arguments in later chapters are equally valid for several of the above algebraic structures. To talk about all of them at once, we introduce an adhoc formalism of algebraic structures, which is essentially a simplified (and somewhat adapted) version of colored operads.\footnote{Since we work with constants, we have to allow operations of arity $0$, which not all introductory texts on colored operads do. It is then often possible to treat (bi-)graded and differential (bi-)graded versions at once by changing the target of a representation but not the operad. Even a theory for the curved case has been recently worked out in \cite{Lucio2022}. We will do all this by hand, defining three algebraic structures $\cP$, $\cP^{bg}$, and $\cP^{crv}$ in each case. The inclined reader familiar with operadic calculus may want to compare the result of the canonical procedure with our explicit definitions below.} For the purpose of our discussion, an \emph{algebraic structure} $\cP$ comprises the following data:
\begin{enumerate}[label=(\arabic*)]
 \item a non-empty set $D = D(\cP)$, called the set of \emph{domains};
 \item a \emph{grading} $|\cdot|: D \to \ZZ$, assigning to every domain its degree;
 \item for every domain $P \in D$, a set $\cP(P)$, the set of \emph{constants};
 \item for all domains $P_1,...,P_n \in D$ ($n \geq 1$) and $Q \in D$, a set $\cP(P_1,...,P_n;Q)$, the set of \emph{operations};
 \item \emph{axioms} between the constants and operations that should be satisfied.
\end{enumerate}
Then in a \emph{representation}\index{algebraic structure!representation} of $\cP$ (or \emph{$\cP$-algebra}), say in the category of abelian sheaves on a topological space $X$, there is an abelian sheaf $\E^P$ associated with every $P \in D$, a section $C^\gamma \in \E^P$ associated with every $\gamma \in \cP(P)$, and a multilinear map 
$$A^\mu: \E^{P_1} \times ... \times \E^{P_n} \to \E^Q$$
associated with every operation $\mu \in \cP(P_1,...,P_n;Q)$; they must satisfy the axioms, which are stated as equations for sums of compositions of operations and constants. 

\vspace{\baselineskip}

We will be concerned with representations of the seven algebraic structures in several contexts---as modules over a ring, as sheaves on a topological space etc. We unify these different setups as follows: 

\begin{defn}
 A \emph{context}\index{context}\index{algebraic structure!context} is a quadruple $(\cS,\C,\cO,\bC)$ where $\cS$ is a site, $\C$ is a sheaf of rings on $\cS$, the sheaf of \emph{constants}, $\cO$ is a sheaf of $\C$-algebras, and $\bC$ is a class of $\cO$-modules. In a representation of an algebraic structure $\cP$ in the context $(\cS,\C,\cO,\bC)$, the domains are $\cO$-modules in the class $\bC$, the constants are global sections of $\E^P$, and the operations are $\C$-multilinear maps. They are not required to be $\cO$-multilinear.
\end{defn}

\begin{ex}
 Typical examples of contexts include the following:
 \begin{enumerate}[label=(\alph*)]
  \item Let $f: X \to S$ be a flat morphism of Noetherian schemes. Then we can take for $\cS$ the small Zariski site of $X$ and $\C = f^{-1}(\cO_S)$, $\cO = \cO_X$. For $\bC$, we take coherent $\cO_X$-modules which are flat over $S$. We denote this context by $\mathfrak{Coh}(X/S)$. When we take all quasi-coherent modules flat over $S$ respective all $\cO_X$-modules flat over $S$, we denote the context by $\mathfrak{QCoh}(X/S)$ respective $\mathfrak{Mod}(X/S)$. When we wish to include also non-flat modules, we write $\mathfrak{Coh}'(X/S)$, $\mathfrak{QCoh}'(X/S)$, respective $\mathfrak{Mod}'(X/S)$.
  \item Let $A \to R$ be a flat ring homomorphism. Then we can take $\cS$ to be the site of a topological space with one point, $\C = A$, and $\cO = R$. For $\bC$, we take finitely generated $R$-modules which are flat over $A$. We denote this context by $\mathfrak{Mod}_{fg}(R/A)$.
  \item Let $\Lambda$ be a complete local Noetherian $\kk$-algebra with residue field $\kk$. For $\cS$, we take the site of a topological space with one point; we take $\C = \cO = \Lambda$. For $\bC$, we take flat and complete $\Lambda$-modules (which are not necessarily finitely generated). We denote this context by $\mathfrak{Comp}(\Lambda)$.
 \end{enumerate}
\end{ex}

\section{Lie algebras}\label{Lie-alg-sec}\note{Lie-alg-sec}

Here are our three variants of Lie algebras.

\begin{defn}\index{Lie algebra}
 Let $(\cS,\C,\cO,\bC)$ be a context.
 \begin{enumerate}[label=(\alph*)]
  \item A \emph{Lie algebra} $(\cL,[-,-])$ consists of an $\cO$-module $\cL \in \bC$ and a $\C$-bilinear map $[-,-]: \cL \times \cL \to \cL$ such that $[\theta,\xi] = -[\xi,\theta]$ and the \emph{Jacobi identity}
  $$[\theta,[\xi,\eta]] = [[\theta,\xi],\eta] + [\xi,[\theta,\eta]]$$
  holds. By convention, we assign to $\cL$ the \emph{degree} $|\cL| = -1$; this is in order to fit later conventions.
  \item A \emph{graded Lie algebra}\index{Lie algebra!bigraded}\footnote{Or \emph{bigraded} Lie algebra to put it more systematically, consistent with the fact that we have already assigned the degree $|\cL| = -1$ to a Lie algebra.} $(\cL^\bullet,[-,-])$ consists of an $\cO$-module $\cL^i \in \bC$ of \emph{bidegree} $(-1,i)$ and \emph{total degree} $i - 1$ for every $i \geq 0$ and a $\C$-bilinear map $[-,-]: \cL^i \times \cL^j \to \cL^{i + j}$ for all indices $i,j \geq 0$ such that 
  $$[\theta,\xi] = -(-1)^{(|\theta| + 1)(|\xi| + 1)} [\xi,\theta]$$
  (with $|\theta| = i - 1$ for $i \in \cL^i$) and the \emph{Jacobi identity}
  $$[\theta,[\xi,\eta]] = [[\theta,\xi],\eta] + (-1)^{(|\theta| + 1)(|\xi| + 1)}[\xi,[\theta,\eta]]$$
  holds.
  \item A \emph{curved Lie algebra}\index{Lie algebra!curved} $(\cL^\bullet,[-,-],\bar\partial,\ell)$ consists of a graded Lie algebra $(\cL^\bullet,[-,-])$, a $\C$-linear map $\bar\partial: \cL^i \to \cL^{i + 1}$ such that 
  $$\bar\partial[\theta,\xi] = [\bar\partial\theta,\xi] + (-1)^{|\theta| + 1}[\theta,\bar\partial\xi],$$
  and a global section $\ell \in \cL^2$ such that $\bar\partial^2\theta = [\ell,\theta]$ and $\bar\partial\ell = 0$. The map $\bar\partial: \cL^i \to \cL^{i + 1}$ is called the \emph{predifferential}, and the global section $\ell \in \cL^2$ is called the \emph{curvature}.
  \item A \emph{differential graded Lie algebra} is a curved Lie algebra with $\ell = 0$. In this case, $\bar\partial^2 = 0$, but this condition is not sufficient in general (unless the curved Lie algebra is faithful as defined below).
  \item A Lie algebra is \emph{faithful}\index{Lie algebra!faithful} or \emph{centerless}\index{Lie algebra!centerless} if the map 
 $$\mathrm{ad}_L: \cL \to \cH om_\C(\cL,\cL), \quad \theta \mapsto (\xi \mapsto [\theta,\xi]),$$
 is injective.
 \item 
 A graded/dg/curved Lie algebra is \emph{faithful} or \emph{centerless} if the map 
 $$\mathrm{ad}_{L}^i: \cL^i \to \cH om_\C(\cL^0,\cL^i), \quad \theta \mapsto (\xi \mapsto [\theta,\xi]),$$
 is injective for every $i \geq 0$. Note that the source of the Hom space in the target is always $\cL^0$.
 \item A graded/dg/curved Lie algebra is \emph{bounded}\index{Lie algebra!bounded} if $\cL^i = 0$ for $i >> 0$.
 \end{enumerate}
  
\end{defn}

\begin{rem}
 A couple of remarks are in order.
 \begin{enumerate}[label=(\arabic*)]
  \item What we call a (differential) graded Lie algebra is often called an \emph{odd} differential graded Lie algebra to clarify the grading convention, which differs from the more obvious \emph{even} grading convention with $[\theta,\xi] = -(-1)^{|\theta||\xi|}[\xi,\theta]$.
  \item The operator $\bar\partial: \cL^i \to \cL^{i + 1}$ is called a \emph{predifferential} to distinguish it from the \emph{differentials}, which satisfy $\bar\partial^2 = 0$. We will later modify $\bar\partial$ to $\bar\partial_\phi = \bar\partial + [\phi,-]$ for $\phi \in \cL^1$, and it will be useful to distinguish actual differentials from predifferentials also terminologically. 
  
  \item The term \emph{predifferential} seems to have arisen relatively recently in \cite{Maunder2017,Bellier2020, Lucio2022} as well as independently in the author's own previous article \cite{Felten2022}. According to \cite{Bellier2020, Lucio2022}, a predifferential graded object is a graded object $\A^\bullet$ together with a map $\bar\partial: \A^\bullet \to \A^{\bullet - 1}$ (in homological grading convention) which may not satisfy $\bar\partial^2 = 0$. Thus, one may argue that a \emph{predifferential graded Lie algebra} should be what we have defined as a curved Lie algebra without the conditions $\bar\partial^2\theta = [\ell,\theta]$ and $\bar\partial\ell = 0$.
  \item In \cite[Defn.~10.4]{Felten2020}, we have defined a predifferential graded Lie algebra in such a way that we have an element $\ell \in \cL^2$ with $\bar\partial^2\theta = [\ell,\theta]$ but without the condition $\bar\partial\ell = 0$, halfway between a predifferential graded Lie algebra as discussed in the previous remark and a curved Lie algebra as defined above. This definition has been modelled on the structures used by Chan, Leung, and Ma in \cite{ChanLeungMa2023}, and there, the condition $\bar\partial\ell = 0$ is not explicit. Early in the course of writing this book, we have added the condition to the definition because we need it (and its variants) later in the proofs of Lemma~\ref{d-is-diff} and Lemma~\ref{check-d-is-diff}. After making the manuscript available on the arXiv, the author has learned from Bruno Vallette that the condition $\bar\partial\ell = 0$ is (or should be) indeed part of the standard definition of a curved Lie algebra, for example as defined in \cite[Defn.~4.1]{Maunder2017} or \cite[Defn.~2.15]{Lucio2022}, or in \cite{Positselski2011} for curved dg algebras\footnote{Positselski refers to curved algebras as curved dg algebras to emphasize their graded and predifferential nature. His definition requires $\bar\partial(h) = 0$ for the curvature $h$.} (although $\bar\partial \ell = 0$ is not required in \cite{Chuang2016}, which also refers to predifferentials as differentials for lack of a better name as late as 2016). Thus, we have abandoned our original terminology \emph{predifferential graded Lie algebra}. This terminology seemed most natural to us, given that we consider $\bar\partial$ as a predifferential, even consistently with \cite{Bellier2020,Lucio2022}, but it was chosen in ignorance of the concept of curved Lie algebras. We have replaced it with the already established standard name. This also prevents confusion with the concept of differential graded pre-Lie algebras, discussed for example in \cite[\S 6.4.8]{LodayVallette2012}.
  \item The name \emph{faithful} refers to the fact that the adjoint representation is faithful. The name \emph{centerless} is borrowed from group theory, where a group is called \emph{centerless} if the identity is the only element which commutes with all other elements. For other algebraic structures, the two notions will differ, see for example Lie--Rinehart pairs.
 \end{enumerate}

\end{rem}

\begin{ex}
 Let $f: X \to S$ be a smooth morphism of (Noetherian) schemes. Then the relative tangent sheaf $\T^1_{X/S}$ with its Lie bracket forms a Lie algebra in the context $\mathfrak{Coh}(X/S)$. The Lie bracket is $f^{-1}(\cO_S)$-linear but not $\cO_X$-linear.
\end{ex}

\section{Lie--Rinehart algebras and pairs}\label{LR-alg-sec}\note{LR-alg-sec}

A \emph{Lie--Rinehart algebra} consists of a Lie algebra acting on a ring of functions. The notion goes back to Rinehart's thesis \cite{Rinehart1963}, where it is called a \emph{$(K,R)$-Lie algebra}. The name Lie--Rinehart algebra seems to arise first in \cite{Huebschmann1990} and is also mentioned in \cite{KosmannSchwarzbach1995}; it seems firmly established in \cite{Huebschmann1998}. A Lie--Rinehart algebra can be considered as an algebraic analog of a Lie algebroid, but when we apply our definition of a Lie--Rinehart algebra in the context of a smooth variety $X/\kk$, then we obtain a more general notion than a Lie algebroid.

\begin{defn}
 Let $(\cS,\C,\cO,\bC)$ be a context.
 \begin{enumerate}[label=(\alph*)]
  \item A \emph{Lie--Rinehart algebra}\index{Lie--Rinehart algebra} 
  $$(\F,\T,\ast^F,\ast^T,\nabla^F,\nabla^T,1_F)$$
  consists of two $\cO$-modules $\F \in \bC$ and $\T \in \bC$ with degrees $|\F| = 0$ and $|\T| = -1$, a global section $1_F \in \F$, two $\cO$-bilinear maps 
  $$\ast^F: \F \times \F \to \F, \quad \ast^T: \F \times \T \to \T,$$
  and two $\C$-bilinear maps 
  $$\nabla^F: \T \times \F \to \F, \quad \nabla^T: \T \times \T \to \T.$$
  We write $\ast = \ast^F$, $\nabla = \nabla^F$, and $[-,-] = \nabla^T$ for short. They must satisfy the relations 
  $$(a \ast b) \ast^P p = a \ast^P( b \ast^P p), \quad a \ast b = b \ast a, \quad  1_F \ast^P p = p$$
  for the two multiplications $\ast^F, \ast^T$, the anti-commutativity 
  $$[\theta,\xi] := \nabla^T_\theta(\xi) = - \nabla^T_\xi(\theta) = - [\xi,\theta],$$ 
  the $F$-linearity condition $a \ast^T \nabla_\theta(b) = \nabla_{a \ast^T \theta}(b)$, the two Jacobi identities 
  $$\nabla^P_{[\theta,\xi]}(p) = \nabla^P_\theta \nabla^P_\xi(p) - \nabla^P_\xi\nabla^P_\theta(p),$$
  and the two odd Poisson identities 
  $$\nabla^P_\theta(a \ast^P p) = \nabla_\theta(a) \ast^P p + a \ast^P \nabla^P_\theta(p).$$
  \item A \emph{bigraded Lie--Rinehart algebra}\index{Lie--Rinehart algebra!bigraded} 
  $$(\F^\bullet,\T^\bullet,\ast^F,\ast^T,\nabla^F,\nabla^T,1_F)$$
  consists of $\cO$-modules $\F^i \in \bC$ (of bidegree $(0,i)$ and total degree $i$) and $\T^i \in \bC$ (of bidegree $(-1,i)$ and total degree $i - 1$) for every $i \geq 0$, the $\cO$-bilinear maps 
  $$\ast^F: \F^i \times \F^j \to \F^{i + j}, \quad \ast^T: \F^i \times \T^j \to \T^{i + j},$$
  the $\C$-bilinear maps 
  $$\nabla^F: \T^i \times \F^j \to \F^{i + j}, \quad \nabla^T: \T^i \times \T^j \to \T^{i + j},$$
  and the global section $1_F \in \F^0$. 
  They must satisfy the relations 
  $$(a \ast b) \ast^P p = a \ast^P( b \ast^P p), \quad a \ast b = (-1)^{|a||b|}b \ast a, \quad  1_F \ast^P p = p$$
  for the two multiplications $\ast^F, \ast^T$, the anti-commutativity 
  $$[\theta,\xi] = - (-1)^{(|\theta| + 1)(|\xi| + 1)}[\xi,\theta],$$ 
  the $F$-linearity condition $a \ast^T \nabla_\theta(b) = \nabla_{a \ast^T \theta}(b)$, the two Jacobi identities 
  $$\nabla^P_{[\theta,\xi]}(p) = \nabla^P_\theta \nabla^P_\xi(p) - (-1)^{(|\theta| + 1)(|\xi| + 1)} \nabla^P_\xi\nabla^P_\theta(p),$$
  and the two odd Poisson identities 
  $$\nabla^P_\theta(a \ast^P p) = \nabla_\theta(a) \ast^P p + (-1)^{(|\theta| + 1)|a|}a \ast^P \nabla^P_\theta(p).$$
  \item A \emph{curved Lie--Rinehart algebra}\index{Lie--Rinehart algebra!curved}
  $$(\F^\bullet,\T^\bullet,\ast^F,\ast^T,\nabla^F,\nabla^T,1_F,\bar\partial^F,\bar\partial^T,\ell)$$
  consists of a bigraded Lie--Rinehart algebra, the $\C$-linear maps $\bar\partial^F: \F^i \to \F^{i + 1}$ and $\bar\partial^T: \T^i \to \T^{i + 1}$ satisfying the four derivation rules 
  $$\bar\partial^P\nabla^P_\theta(p) = \nabla^P_{\bar\partial^T\theta}(p) + (-1)^{|\theta| + 1}\nabla^P_\theta(\bar\partial^P p)$$
  and 
  $$\bar\partial^P(a \ast^P p) = \bar\partial^F(a) \ast^P p + (-1)^{|a|} a \ast^P \bar\partial^P p,$$
  and a global section $\ell \in \T^2$ with $(\bar\partial^P)^2(p) = \nabla^P_\ell(p)$ and $\bar\partial^T(\ell) = 0$. We usually write $\bar\partial = \bar\partial^P$ for short; this map is called the \emph{predifferential}. The section $\ell \in \T^2$ is called the \emph{curvature}.
  \item A \emph{differential bigraded Lie--Rinehart algebra} is a curved Lie--Rinehart algebra with $\ell = 0$.
  \item A Lie--Rinehart algebra is \emph{centerless}\index{Lie--Rinehart algebra!centerless} if the map 
  $$\mathrm{ad}_T: \T \to \cH om_\C(\T,\T), \quad \theta \mapsto (\xi \mapsto \nabla^F_\theta(\xi)),$$
  is injective; it is \emph{strictly faithful}\index{Lie--Rinehart algebra!(strictly) faithful} if the map 
  $$\mathrm{ad}_F: \T \to \cH om_\C(\F,\F), \quad \theta \mapsto (a \mapsto \nabla^F_\theta(a)),$$
  is injective; it is \emph{faithful} if the intersection of the two kernels is zero.
  \item A bigraded/dbg/curved Lie--Rinehart algebra is \emph{centerless} if the map 
  $$\mathrm{ad}_{T}^i: \T^i \to \cH om_\C(\T^0,\T^i), \quad \theta \mapsto (\xi \mapsto \nabla^F_\theta(\xi)),$$
  is injective for all $i \geq 0$; it is \emph{strictly faithful} if the map 
  $$\mathrm{ad}_{F}^i: \T^i \to \cH om_\C(\F^0,\F^i), \quad \theta \mapsto (a \mapsto \nabla^F_\theta(a)),$$
  is injective for all $i \geq 0$; it is \emph{faithful} if the intersection of the two kernels is zero for all $i \geq 0$.
  \item A bigraded/dbg/curved Lie--Rinehart algebra is \emph{bounded}\index{Lie--Rinehart algebra!bounded} if $\F^i = 0$ and $\T^i = 0$ for $i >> 0$.
 \end{enumerate}

\end{defn}
\begin{ex}
 Let $f: X \to S$ be a smooth morphism. Then we obtain a Lie--Rinehart algebra in the context $\mathfrak{Coh}(X/S)$ by setting $\T = \T^1_{X/S}$ and $\F = \cO_X$. The two products $\ast^F$ and $\ast^T$ are given by the $\cO_X$-module structure, $\nabla^T$ is the Lie bracket, and $\nabla^F$ is the action of $\T^1_{X/S}$ on $\cO_X$ by derivations. The latter two operations are only $f^{-1}(\cO_S)$-linear but not $\cO_X$-linear. This Lie--Rinehart algebra is strictly faithful because a derivation in $\T^1_{X/S}$ is uniquely determined by its action on $\cO_X$. It is also centerless because a derivation which commutes with all other derivations must be trivial (look at $[D,fD](g) = D(f) \cdot D(g)$ for a derivation $D$). In particular, it is faithful.
\end{ex}

A \emph{Lie--Rinehart module} over a Lie--Rinehart algebra $(\F,\T)$, as defined, for example, in \cite{Casas2004}, is an $\cO$-module $\E$ together with a $\C$-bilinear operation $\nabla^E: \T \times \E \to \E$ which satisfies $\nabla_{a\theta}^E(e) = a\nabla_\theta^E(e)$, the Jacobi identity 
$$\nabla_{[\theta,\xi]}^E(e) = \nabla_\theta^E\nabla_\xi^E(e) - \nabla_\xi^E\nabla_\theta^E(e),$$
and the derivation rule 
$$\nabla_{\theta}^E(ae) = [\theta,a] \cdot e + a \cdot \nabla_\theta^E(e).$$
A \emph{Lie--Rinehart pair} is then a Lie--Rinehart algebra together with a Lie--Rinehart module.

\begin{defn}
 Let $(\cS,\C,\cO,\bC)$ be a context.
 \begin{enumerate}[label=(\alph*)]
  \item A \emph{Lie--Rinehart pair}\index{Lie--Rinehart pair}
  $$(\F,\T,\E,\ast^F,\ast^T,\ast^E,\nabla^F,\nabla^T,\nabla^E,1_F)$$
  consists of a Lie--Rinehart algebra, an $\cO$-module $\E \in \bC$ of degree $|\E| = 0$, an $\cO$-bilinear map $\ast^E: \F \times \E \to \E$, and a $\C$-bilinear map $\nabla^E: \T \times \E \to \E$ such that 
  $$(a \ast^F b) \ast^E e = a \ast^E (b \ast^E e), \quad 1_F \ast^E e = e,$$
  the $F$-linearity
  $$a \ast^E \nabla^E_\theta(e) = \nabla^E_{a \ast^T \theta}(e),$$
  the Jacobi identity 
  $$\nabla_{[\theta,\xi]}^E(e) = \nabla_\theta^E\nabla_\xi^E(e) - \nabla_\xi^E\nabla_\theta^E(e),$$
  and the derivation rule 
  $$\nabla_{\theta}^E(a \ast^E e) = \nabla^F_\theta(a) \ast^E e + a \ast^E \nabla_\theta^E(e)$$
  hold.
  \item A \emph{bigraded Lie--Rinehart pair}\index{Lie--Rinehart pair!bigraded} 
  $$(\F^\bullet,\T^\bullet,\E^\bullet,\ast^F,\ast^T,\ast^E,\nabla^F,\nabla^T,\nabla^E,1_F)$$
  consists of a bigraded Lie--Rinehart algebra, $\cO$-modules $\E^i \in \bC$ (of bidegree $(0,i)$ and total degree $i$) for all $i \geq 0$, the $\cO$-bilinear maps $\ast^E: \F^i \times \E^j \to \E^{i + j}$, and the $\C$-bilinear maps $\nabla^E: \T^i \times \E^j \to \E^{i + j}$ such that
  $$(a \ast^F b) \ast^E e = a \ast^E (b \ast^E e), \quad 1_F \ast^E e = e,$$
  the $F$-linearity
  $$a \ast^E \nabla^E_\theta(e) = \nabla^E_{a \ast^T \theta}(e),$$
  the Jacobi identity 
  $$\nabla_{[\theta,\xi]}^E(e) = \nabla_\theta^E\nabla_\xi^E(e) - (-1)^{(|\theta| + 1)(|\xi| + 1)}\nabla_\xi^E\nabla_\theta^E(e),$$
  and the derivation rule 
  $$\nabla_{\theta}^E(a \ast^E e) = \nabla^F_\theta(a) \ast^E e + (-1)^{(|\theta| + 1)|a|}a \ast^E \nabla_\theta^E(e)$$
  hold.
  \item A \emph{curved Lie--Rinehart pair}\index{Lie--Rinehart pair!curved}
  $$(\F^\bullet,\T^\bullet,\E^\bullet,(\ast^P)_{P \in D},(\nabla^P)_{P \in D},(\bar\partial^P)_{P \in D},1_F,\ell)$$
  consists of a curved Lie--Rinehart algebra and a $\C$-linear map $\bar\partial^E: \E^i \to \E^{i + 1}$ satisfying the derivation rules 
  $$\bar\partial^E\nabla^E_\theta(e) = \nabla^T_{\bar\partial^T\theta}(e) + (-1)^{|\theta| + 1}\nabla^E_\theta(\bar\partial^Ee)$$
  and 
  $$\bar\partial^E(a \ast^E e) = \bar\partial^F(a) \ast^E e + (-1)^{|a|} a \ast^E \bar\partial^E(e)$$
  as well as $(\bar\partial^E)^2(e) = \nabla^E_\ell(e)$.
  \item A \emph{differential bigraded Lie--Rinehart pair} is a curved Lie--Rinehart pair with $\ell = 0$.
  \item A Lie--Rinehart pair is \emph{centerless}\index{Lie--Rinehart pair!centerless} respective \emph{strictly faithful}\index{Lie--Rinehart pair!(strictly) faithful} if the underlying Lie--Rinehart algebra is; it is \emph{faithful} if the intersection of the kernels of the three maps 
  $$\mathrm{ad}_P: \T \to \cH om_\C(P,P), \quad \theta \mapsto (p \mapsto \nabla^P_\theta(p)),$$
  for $P \in \{\F,\T,\E\}$ is zero.
  \item A bigraded/dbg/curved Lie--Rinehart pair is \emph{centerless} respective \emph{strictly faithful} if the underlying (predifferential) bigraded Lie--Rinehart algebra is; it is \emph{faithful} if, for each $i \geq 0$, the intersection of the kernels of the three maps 
  $$\mathrm{ad}^i_P: \T^i \to \cH om_\C(P^0,P^i), \quad \theta \mapsto (p \mapsto \nabla^P_\theta(p)),$$
  is zero.
  \item A bigraded/dbg/curved Lie--Rinehart pair is \emph{bounded}\index{Lie--Rinehart pair!bounded} if $\F^i = 0$, $\T^i = 0$, and $\E^i = 0$ for $i >> 0$.
 \end{enumerate}
\end{defn}
\begin{ex}
 Let $X/\kk$ be a smooth variety, let $\F = \cO_X$, $\T = \T^1_{X/\kk}$, and let $\E$ be a vector bundle with an (algebraic) connection $\nabla: \E \to \E \otimes \Omega^1_{X/\kk}$. Then we can consider $(\F,\T,\E)$ as a Lie--Rinehart pair in the context $\mathfrak{Coh}(X/\kk)$ with the induced map $\nabla: \T \times \E \to \E$.
\end{ex}

\section{Gerstenhaber algebras and calculi}\label{G-alg-sec}\note{G-alg-sec}

When we endow the polyvector fields $\bigwedge^p\T^1_{X/S}$ with the Schouten--Nijenhuis bracket, the result is a \emph{Gerstenhaber algebra}. The notion goes back to \cite{Gerstenhaber1963}. We fix an integer $d \geq 1$, which we call the \emph{dimension} of the Gerstenhaber algebra. For the polyvector fields, it corresponds to the relative dimension of $f: X \to S$. We put Gerstenhaber algebras in negative degrees, i.e., they are in the range $[-d,0]$. This makes the formulae for its modules more natural, and this convention has been used before in \cite{ChanLeungMa2023,Felten2022}.

\begin{defn}
 Let $(\cS,\C,\cO,\bC)$ be a context.
 \begin{enumerate}[label=(\alph*)]
  \item A \emph{Gerstenhaber algebra}\index{Gerstenhaber algebra} of dimension $d$
 $$(\G^\bullet,\wedge,1_G,[-,-])$$
 consists of:
 \begin{itemize}
  \item an $\cO$-module $\G^p \in \bC$ of degree $|\G^p| = p$ for every $-d \leq p \leq 0$; for $p$ outside this range, we set $\G^p = 0$ whenever necessary;
  \item an $\cO$-bilinear product $-\wedge-: \G^p \times \G^{p'} \to \G^{p + p'}$ and a global section $1_G \in \G^0$
  such that 
  $$\theta \wedge (\xi \wedge \eta) = (\theta \wedge \xi) \wedge \eta; \quad \theta \wedge \xi = (-1)^{|\theta||\xi|} \xi \wedge \theta; \quad 1_G \wedge \theta = \theta;$$
 \item a $\C$-bilinear product $[-,-]: \G^{p} \times \G^{p'} \to \G^{p + p' + 1}$
 such that
  $$[\theta,\xi] = - (-1)^{(|\theta| + 1)(|\xi| + 1)}[\xi,\theta]$$ and both the Jacobi identity
  $$[\theta,[\xi,\eta]] = [[\theta,\xi],\eta] + (-1)^{(|\theta| + 1)(|\xi| + 1)}[\xi,[\theta,\eta]]$$
  and the odd Poisson identity 
  $$[\theta,\xi \wedge \eta] = [\theta,\xi] \wedge \eta + (-1)^{(|\theta| + 1)|\xi|}\xi \wedge [\theta,\eta]$$
  hold.
  \end{itemize}
  From these axioms, we also find $[\theta,1_G] = 0$, the formula $[\varphi,[\varphi,\xi]] = [\frac{1}{2}[\varphi,\varphi],\xi]$ for $|\varphi|$ even, a second odd Poisson identity 
  $$[\theta \wedge \xi, \eta] = \theta \wedge [\xi,\eta] + (-1)^{(|\eta| + 1)|\xi|}[\theta,\eta] \wedge \xi,$$
  and $[\varphi^n,\theta] = n[\varphi,\theta] \wedge \varphi^{n - 1}$ for $n \geq 1$ and $|\varphi|$ even, where we write $\varphi^n := \varphi \wedge ... \wedge \varphi$.
  \item A \emph{bigraded Gerstenhaber algebra}\index{Gerstenhaber algebra!bigraded} of dimension $d$
 $$(\G^{\bullet,\bullet},\wedge,1_G,[-,-])$$
 consists of:
 \begin{itemize}
  \item the $\cO$-modules $\G^{p,q} \in \bC$ of bidegree $(p,q)$ and total degree $p + q$ for every $-d \leq p \leq 0$ and every $q \geq 0$; for $(p,q)$ outside this range, we set $\G^{p,q} = 0$ whenever necessary;
  \item an $\cO$-bilinear product $-\wedge-: \G^{p,q} \times \G^{p',q'} \to \G^{p + p',q + q'}$ and a global section $1_G \in \G^{0,0}$
  such that 
  $$\theta \wedge (\xi \wedge \eta) = (\theta \wedge \xi) \wedge \eta; \quad \theta \wedge \xi = (-1)^{|\theta||\xi|} \xi \wedge \theta; \quad 1_G \wedge \theta = \theta;$$
 \item a $\C$-bilinear product $[-,-]: \G^{p,q} \times \G^{p',q'} \to \G^{p + p' + 1,q + q'}$
 such that
  $$[\theta,\xi] = - (-1)^{(|\theta| + 1)(|\xi| + 1)}[\xi,\theta]$$ and both the Jacobi identity
  $$[\theta,[\xi,\eta]] = [[\theta,\xi],\eta] + (-1)^{(|\theta| + 1)(|\xi| + 1)}[\xi,[\theta,\eta]]$$
  and the odd Poisson identity 
  $$[\theta,\xi \wedge \eta] = [\theta,\xi] \wedge \eta + (-1)^{(|\theta| + 1)|\xi|}\xi \wedge [\theta,\eta]$$
  hold.
  \end{itemize}
  From these axioms, we find the same additional formulae as above, namely $[\theta,1_G] = 0$, the formula $[\varphi,[\varphi,\xi]] = [\frac{1}{2}[\varphi,\varphi],\xi]$ for $|\varphi|$ even, a second odd Poisson identity 
  $$[\theta \wedge \xi, \eta] = \theta \wedge [\xi,\eta] + (-1)^{(|\eta| + 1)|\xi|}[\theta,\eta] \wedge \xi,$$
  and $[\varphi^n,\theta] = n[\varphi,\theta] \wedge \varphi^{n - 1}$ for $n \geq 1$ and $|\varphi|$ even, where we write $\varphi^n := \varphi \wedge ... \wedge \varphi$.
  \item A \emph{curved Gerstenhaber algebra}\footnote{Note that for us, \emph{curved} always refers to the graded Lie algebra $\G^{-1,\bullet}$ in a bigraded Gerstenhaber algebra $\G^{\bullet,\bullet}$. We never use the term \emph{curved Gerstenhaber algebra} to denote the structure of a curved Lie algebra on a singly graded Gerstenhaber algebra $\G^\bullet$, considered as a graded Lie algebra.}\index{Gerstenhaber algebra!curved} of dimension $d$
  $$(\G^{\bullet,\bullet},\wedge,1,[-,-],\bar\partial,\ell)$$
  consists of a bigraded Gerstenhaber algebra of dimension $d$, a $\C$-linear map $\bar\partial: \G^{p,q} \to \G^{p,q + 1}$ satisfying the two derivation rules 
  $$\bar\partial[\theta,\xi] = [\bar\partial\theta,\xi] + (-1)^{|\theta| + 1}[\theta,\bar\partial\xi], \quad \bar\partial(\theta \wedge \xi) = \bar\partial\theta \wedge \xi + (-1)^{|\theta|}\theta \wedge \bar\partial\xi,$$
  and a global section $\ell \in \G^{-1,2}$ with $\bar\partial^2(\theta) = [\ell,\theta]$ and $\bar\partial\ell = 0$. Then, it also satisfies $\bar\partial(1) = 0$ and $\ell \wedge \ell = 0$ as well as 
  $$\bar\partial(\varphi^n) = n\cdot \bar\partial\varphi \wedge \varphi^{n - 1}$$ for $n \geq 1$ and $|\varphi|$ even, where we write $\varphi^n := \varphi \wedge ... \wedge \varphi$.
  \item A \emph{differential bigraded Gerstenhaber algebra} of dimension $d$ is a curved Gerstenhaber algebra of dimension $d$ with $\ell = 0$.
  \item A Gerstenhaber algebra is \emph{centerless} if the map 
  $$\mathrm{ad}_T: \G^{-1} \to \cH om_\C(\G^{-1},\G^{-1}), \quad \theta \mapsto (\xi \mapsto [\theta,\xi]),$$
  is injective; it is called \emph{strictly faithful} if the map 
  $$\mathrm{ad}_F: \G^{-1} \to \cH om_\C(\G^0,\G^0), \quad \theta \mapsto (\xi \mapsto [\theta,\xi]),$$
  is injective; it is \emph{faithful} if the intersection of the kernels of all maps 
  $$\mathrm{ad}_{G^p}: \G^{-1} \to \cH om(\G^p,\G^p), \quad \theta \mapsto (\xi \mapsto [\theta,\xi]),$$
  is zero.
  \item A bigraded/dbg/curved Gerstenhaber algebra is \emph{centerless}\index{Gerstenhaber algebra!centerless} if the map 
  $$\mathrm{ad}_T^q: \G^{-1,q} \to \cH om_\C(\G^{-1,0},\G^{-1,q}), \quad \theta \mapsto (\xi \mapsto [\theta,\xi]),$$
  is injective for all $q \geq 0$; it is called \emph{strictly faithful}\index{Gerstenhaber algebra!(strictly) faithful} if the map 
  $$\mathrm{ad}_F^q: \G^{-1,q} \to \cH om_\C(\G^{0,0},\G^{0,q}), \quad \theta \mapsto (\xi \mapsto [\theta,\xi]),$$
  is injective for all $q \geq 0$; it is \emph{faithful} if the intersection of the kernels of all maps 
  $$\mathrm{ad}_{G^p}^q: \G^{-1,q} \to \cH om(\G^{p,0},\G^{p,q}), \quad \theta \mapsto (\xi \mapsto [\theta,\xi]),$$
  is zero for each $q \geq 0$.
  \item A bigraded/dbg/curved Gerstenhaber algebra is \emph{bounded}\index{Gerstenhaber algebra!bounded} if $\G^{p,q} = 0$ for $q >> 0$.
 \end{enumerate}
\end{defn}
\begin{rem}
 When we take $\F = \G^0$, $\T = \G^{-1}$, then we obtain a Lie--Rinehart algebra with $\ast^P = \wedge$ and $\nabla^P = [-,-]$.
\end{rem}
\begin{ex}\label{smooth-polyvector-exa}\note{smooth-polyvector-exa}
 Let $f: X \to S$ be a smooth morphism of relative dimension $d$. When we set $\G^p := \bigwedge^{-p} \T^1_{X/S}$ for $-d \leq p \leq 0$, then we obtain a Gerstenhaber algebra in the context $\mathfrak{Coh}(X/S)$ by taking for $[-,-]$ the Schouten--Nijenhuis bracket $[-,-]_{sn}$. When we take instead $[-,-] = -[-,-]_{sn}$---the \emph{negative} of the Schouten--Nijenhuis bracket---, then we obtain a Gerstenhaber algebra as well. We shall follow this latter convention.
\end{ex}

There are several notions of modules over a Gerstenhaber algebra possible, cf.~for example \cite{FeltenThesis}. The only notion interesting for us at this point is the one that fits to the de Rham complex as a module over the polyvector fields via contraction. We call a pair of a Gerstenhaber algebra and such a module (a \emph{de Rham module}) a \emph{Gerstenhaber calculus}.

\begin{defn}\label{Gerstenhaber-calc-def}\note{Gerstenhaber-calc-def}
 Let $(\cS,\C,\cO,\bC)$ be a context.
 \begin{enumerate}[label=(\alph*)]
  \item A \emph{Gerstenhaber calculus}\index{Gerstenhaber calculus} of dimension $d$
 $$\G\C^\bullet = (\G^\bullet,\wedge,1_G,[-,-],\A^{\bullet},\wedge,1_A,\partial,\,\invneg\, ,\cL)$$
 consists of a Gerstenhaber algebra of dimension $d$ and:
 \begin{itemize}
  \item an $\cO$-module $\A^i \in \bC$ of degree $i$ for every $0 \leq i \leq d$; for $i$ outside this range, we set $\A^i = 0$ whenever necessary;
  \item an $\cO$-bilinear product 
  $-\wedge - : \A^{i} \times \A^{i'} \to \A^{i+i'}$
  and a global section $1_A \in \A^{0}$ such that 
  $$(\alpha \wedge \beta) \wedge \gamma = \alpha \wedge (\beta \wedge \gamma); \quad \alpha \wedge \beta = (-1)^{|\alpha||\beta|}\beta \wedge \alpha; \quad 1_A \wedge \alpha = \alpha; $$
  \item a $\C$-linear map $\partial: \A^{i} \to \A^{i + 1}$ with $\partial^2 = 0$, satisfying the derivation rule
  $$\partial(\alpha \wedge \beta) = \partial(\alpha) \wedge \beta + (-1)^{|\alpha|} \alpha \wedge \partial(\beta),$$
  the \emph{de Rham differential};
  \item an $\cO$-bilinear map 
  $\invneg \ : \G^{p} \times \A^{i} \to \A^{p + i}$
  such that 
  $$1_G \ \invneg \ \alpha = \alpha \quad \mathrm{and} \quad (\theta \wedge \xi) \ \invneg \ \alpha = \theta \ \invneg \ (\xi \ \invneg \ \alpha);$$
  it is called the \emph{contraction map};
  \item a $\C$-bilinear map $\cL_{-}(-): \G^{p} \times \A^{i} \to \A^{p + i + 1}$ such that 
  $$\cL_{[\theta,\xi]}(\alpha) = \cL_\theta(\cL_\xi(\alpha)) - (-1)^{(|\theta| + 1)(|\xi| + 1)}\cL_\xi(\cL_\theta(\alpha));$$
  it is called the \emph{Lie derivative}.
  \end{itemize}
  They must satisfy additionally:
  \begin{itemize}
  \item the mixed Leibniz rule\index{mixed Leibniz rule}
  $$\theta \ \invneg \ \cL_\xi(\alpha) = (-1)^{|\xi| + 1}([\theta,\xi] \ \invneg \ \alpha) + (-1)^{|\theta|(|\xi| + 1)}\cL_\xi(\theta \ \invneg \ \alpha);$$
  \item the Lie--Rinehart homotopy formula\index{Lie--Rinehart homotopy formula}
  $$(-1)^{|\theta|}\cL_\theta(\alpha) = \partial(\theta \ \invneg \ \alpha) - (-1)^{|\theta|}(\theta \ \invneg \ \partial\alpha);$$
  consequently, we have 
  $$\cL_{\theta \wedge \xi}(\alpha) = (-1)^{|\xi|}\cL_\theta(\xi\ \invneg\ \alpha) + \theta\ \invneg\ \cL_\xi(\alpha);$$
  \item for $\theta \in \G^{0}$, the identity $\theta \ \invneg\ \alpha = (\theta \ \invneg \ 1_A) \wedge \alpha$;
  \item for $\theta \in \G^{-1}$, the identity
  $$\theta\ \invneg\ (\alpha \wedge \beta) = (\theta\ \invneg\ \alpha) \wedge \beta + (-1)^{|\alpha||\theta|}\alpha \wedge (\theta\ \invneg\ \beta);$$
  consequently, we have for $\theta \in \G^{-1}$
  $$\cL_\theta(\alpha \wedge \beta) = \cL_\theta(\alpha) \wedge \beta + (-1)^{|\alpha|(|\theta| + 1)} \alpha \wedge \cL_\theta(\beta);$$
  \item the map $\lambda: \G^0 \to \A^0, \ \theta \mapsto \theta \ \invneg \ 1_A,$ is an $\cO$-linear isomorphism with $\lambda(\theta \wedge \xi) = \lambda(\theta) \wedge \lambda(\xi)$.
 \end{itemize}
 Then we also have $\cL_{1_G}(\alpha) = 0$ as well as $\cL_\theta(1_A) = \partial(\theta \ \invneg \ 1_A)$ for $\theta \in \G^0$ and $\cL_\theta(1_A) = 0$ for $\theta \in \G^p$ with $p \leq - 1$. For $\varphi$ with $|\varphi|$ even and $n \geq 2$, we have 
 $$\cL_{\varphi^n}(\alpha) = n\varphi^{n - 1} \ \invneg \ \cL_\varphi(\alpha) + \frac{n(n - 1)}{2}([\varphi,\varphi] \wedge \varphi^{n - 2}) \ \invneg \ \alpha.$$
 The Lie--Rinehart homotopy formula yields $\partial\cL_\theta(\alpha) = (-1)^{|\theta| + 1} \cL_\theta(\partial\alpha)$.
 \item A \emph{bigraded Gerstenhaber calculus}\index{Gerstenhaber calculus!bigraded} of dimension $d$
 $$\G\C^{\bullet,\bullet} = (\G^{\bullet,\bullet},\wedge,1_G,[-,-],\A^{\bullet,\bullet},\wedge,1_A,\partial,\,\invneg\, ,\cL)$$
 consists of a bigraded Gerstenhaber algebra of dimension $d$ and:
 \begin{itemize}
  \item an $\cO$-module $\A^{i,j} \in \bC$ of bidegree $(i,j)$ and total degree $i + j$ for every $0 \leq i \leq d$ and every $j \geq 0$; for $(i,j)$ outside this range, we set $\A^{i,j} = 0$ whenever necessary;
  \item an $\cO$-bilinear product 
  $-\wedge - : \A^{i,j} \times \A^{i',j'} \to \A^{i+i',j + j'}$
  and a global section $1_A \in \A^{0,0}$ such that 
  $$(\alpha \wedge \beta) \wedge \gamma = \alpha \wedge (\beta \wedge \gamma); \quad \alpha \wedge \beta = (-1)^{|\alpha||\beta|}\beta \wedge \alpha; \quad 1_A \wedge \alpha = \alpha; $$
  \item a $\C$-linear map $\partial: \A^{i,j} \to \A^{i + 1,j}$ with $\partial^2 = 0$, satisfying the derivation rule
  $$\partial(\alpha \wedge \beta) = \partial(\alpha) \wedge \beta + (-1)^{|\alpha|} \alpha \wedge \partial(\beta),$$
  the \emph{de Rham differential};
  \item an $\cO$-bilinear map 
  $\invneg \ : \G^{p,q} \times \A^{i,j} \to \A^{p + i,q + j}$
  such that 
  $$1_G \ \invneg \ \alpha = \alpha \quad \mathrm{and} \quad (\theta \wedge \xi) \ \invneg \ \alpha = \theta \ \invneg \ (\xi \ \invneg \ \alpha);$$
  it is called the \emph{contraction map};
  \item a $\C$-bilinear map $\cL_{-}(-): \G^{p,q} \times \A^{i,j} \to \A^{p + i + 1,q + j}$ such that 
  $$\cL_{[\theta,\xi]}(\alpha) = \cL_\theta(\cL_\xi(\alpha)) - (-1)^{(|\theta| + 1)(|\xi| + 1)}\cL_\xi(\cL_\theta(\alpha));$$
  it is called the \emph{Lie derivative}.
  \end{itemize}
  They must satisfy additionally:
  \begin{itemize}
  \item the mixed Leibniz rule
  $$\theta \ \invneg \ \cL_\xi(\alpha) = (-1)^{|\xi| + 1}([\theta,\xi] \ \invneg \ \alpha) + (-1)^{|\theta|(|\xi| + 1)}\cL_\xi(\theta \ \invneg \ \alpha);$$
  \item the Lie--Rinehart homotopy formula
  $$(-1)^{|\theta|}\cL_\theta(\alpha) = \partial(\theta \ \invneg \ \alpha) - (-1)^{|\theta|}(\theta \ \invneg \ \partial\alpha);$$
  consequently, we have 
  $$\cL_{\theta \wedge \xi}(\alpha) = (-1)^{|\xi|}\cL_\theta(\xi\ \invneg\ \alpha) + \theta\ \invneg\ \cL_\xi(\alpha);$$
  \item for $\theta \in \G^{0,q}$, the identity $\theta \ \invneg\ \alpha = (\theta \ \invneg \ 1_A) \wedge \alpha$;
  \item for $\theta \in \G^{-1,q}$, the identity
  $$\theta\ \invneg\ (\alpha \wedge \beta) = (\theta\ \invneg\ \alpha) \wedge \beta + (-1)^{|\alpha||\theta|}\alpha \wedge (\theta\ \invneg\ \beta);$$
  consequently, we have for $\theta \in \G^{-1,q}$
  $$\cL_\theta(\alpha \wedge \beta) = \cL_\theta(\alpha) \wedge \beta + (-1)^{|\alpha|(|\theta| + 1)} \alpha \wedge \cL_\theta(\beta);$$
  \item the map $\lambda: \G^{0,q} \to \A^{0,q}, \ \theta \mapsto \theta \ \invneg \ 1_A,$ is an $\cO$-linear isomorphism with $\lambda(\theta \wedge \xi) = \lambda(\theta) \wedge \lambda(\xi)$.
 \end{itemize}
 As above, we also have $\cL_{1_G}(\alpha) = 0$ as well as $(-1)^{|\theta|}\cL_\theta(1_A) = \partial(\theta \ \invneg \ 1_A)$ for $\theta \in \G^{0,q}$ and $\cL_\theta(1_A) = 0$ for $\theta \in \G^{p,q}$ with $p \leq - 1$. For $\varphi$ with $|\varphi|$ even and $n \geq 2$, we have 
 $$\cL_{\varphi^n}(\alpha) = n\varphi^{n - 1} \ \invneg \ \cL_\varphi(\alpha) + \frac{n(n - 1)}{2}([\varphi,\varphi] \wedge \varphi^{n - 2}) \ \invneg \ \alpha.$$
 The Lie--Rinehart homotopy formula yields $\partial\cL_\theta(\alpha) = (-1)^{|\theta| + 1} \cL_\theta(\partial\alpha)$.
 \item A \emph{curved Gerstenhaber calculus}\index{Gerstenhaber calculus!curved} of dimension $d$
 $$\G\C^{\bullet,\bullet} = (\G^{\bullet,\bullet},\wedge,1_G,[-,-], \bar\partial,\ell,\A^{\bullet,\bullet},\wedge,1_A,\partial,\,\invneg\, ,\cL,\bar\partial)$$
 is simultaneously a curved Gerstenhaber algebra of dimension $d$ and a bigraded Gerstenhaber calculus of dimension $d$
 together with a $\C$-linear map $\bar\partial: \A^{i,j} \to \A^{i,j + 1}$ satisfying the derivation rules
 $$\bar\partial(\alpha \wedge \beta) = \bar\partial\alpha \wedge\beta + (-1)^{|\alpha|}\alpha \wedge\bar\partial\beta, \quad \bar\partial(\theta \ \invneg\ \alpha) = (\bar\partial\theta) \ \invneg \ \alpha + (-1)^{|\theta|} \theta \ \invneg\ \bar\partial\alpha$$
 as well as $\bar\partial^2(\alpha) = \cL_\ell(\alpha)$ and $\partial\bar\partial + \bar\partial\partial = 0$. The Lie--Rinehart homotopy formula yields 
 $$\bar\partial\cL_\theta(\alpha) = \cL_{\bar\partial\theta}(\alpha) + (-1)^{|\theta| + 1}\cL_\theta(\bar\partial\alpha).$$
 \item A \emph{differential bigraded Gerstenhaber calculus} of dimension $d$ is a curved Gerstenhaber calculus of dimension $d$ with $\ell = 0$.
 \item A Gerstenhaber calculus of dimension $d$ is \emph{centerless}\index{Gerstenhaber calculus!centerless} respective \emph{strictly faithful} if the underlying Gerstenhaber algebra is; \emph{faithfulness} is analogous to the previous definitions, taking into account all maps into $\cH om_\C(\G^{p,0},\G^{p,q})$ and $\cH om_\C(\A^{i,0},\A^{i,q})$.
 \item A bigraded/dbg/curved Gerstenhaber calculus of dimension $d$ is \emph{centerless} respective \emph{strictly faithful} if the underlying bigraded/dbg/curved Gerstenhaber algebra is; \emph{faithfulness} is analogous to the previous definitions.
 \item A bigraded/dbg/curved Gerstenhaber calculus of dimension $d$ is \emph{bounded}\index{Gerstenhaber calculus!bounded} if $\G^{p,q} = 0$ and $\A^{i,q} = 0$ for all $-d \leq p \leq 0$, $0 \leq i \leq d$ for $q >> 0$.
 \end{enumerate}
 
\end{defn}

\begin{rem}
 The symbol $\G\C^\bullet$ which we use above as a short-hand notation for a Gerstenhaber calculus is in analogy with our notation for general algebraic structures; the bullet is meant to run over the index set $D = [-d,0] \sqcup [0,d]$ although we will never use the notation $\G\C^P$ for specific values $P \in D$.
\end{rem}

\begin{ex}\label{smooth-Gerstenhaber-calc-exa}\note{smooth-Gerstenhaber-calc-exa}
 Let $f: X \to S$ be a smooth morphism of relative dimension $d$. Continuing Example~\ref{smooth-polyvector-exa}, we set $\A^i = \Omega^i_{X/S}$. Then we obtain a Gerstenhaber calculus of dimension $d$ in the context $\mathfrak{Coh}(X/S)$ with the de Rham differential $\partial$, the contraction map $\invneg$ as defined, for example, in \cite{FeltenThesis}, and the Lie derivative $\cL$ as defined by the Lie--Rinehart homotopy formula in the definition.
\end{ex}

\subsection{The local Batalin--Vilkovisky condition}

In a Gerstenhaber calculus $\G\C^\bullet = (\G^\bullet,\A^\bullet)$, the $\wedge$-product turns $\G^0$ and $\A^0$ into sheaves of commutative rings, which are isomorphic via $\lambda: \G^0 \to \A^0$. We denote this sheaf of rings occasionally by $\F$, consistent with our notation for Lie--Rinehart pairs. Every $\G^p$ and every $\A^i$ comes with a structure of $\F$-module, and both $\wedge$-products as well as the contraction $\invneg$ are $\F$-bilinear. 

\begin{defn}\label{G-calc-Gorenstein-defn}\note{G-calc-Gorenstein-defn}
 A Gerstenhaber calculus $(\G^\bullet,\A^\bullet)$ of dimension $d$ is \emph{Gorenstein}\footnote{We have chosen this name because, in applications, $\A^d$ is usually some canonical bundle of a space.}\index{Gerstenhaber calculus!Gorenstein} if $\A^d$ is a locally free $\F$-module of rank $1$.
\end{defn}

When $\G\C^\bullet$ is Gorenstein, then every local generator $\omega$ of $\A^d$ defines locally an $\F$-linear map 
$$\kappa_\omega: \enspace \G^p \to \A^{p + d}, \quad \theta \mapsto \theta \ \invneg \ \omega.$$
In geometric applications, for example on a smooth or, more generally, log smooth morphism $f: X \to S$, this map is usually an isomorphism of $\F$-modules. Moreover, when $\G\C^\bullet$ is in fact a (part of a) Batalin--Vilkovisky calculus as defined below, then $\A^d \cong \A^0$, and $\kappa_\omega$ is a global isomorphism. We consider the condition that $\kappa_\omega$, defined locally, is an isomorphism as a property of Gerstenhaber calculi.

\begin{defn}\label{loc-BV-defn}\note{loc-BV-defn}
 A Gerstenhaber calculus $(\G^\bullet,\A^\bullet)$ is \emph{locally Batalin--Vilkovisky}\index{Gerstenhaber calculus!locally Batalin--Vilkovisky} if it is Gorenstein, and the map $\kappa_\omega: \G^p \to \A^{p + d}$ is an isomorphism of $\F$-modules for every local generator $\omega$ of $\A^d$ as an $\F$-module.
\end{defn}

Note that it is sufficient to check this condition on some local generator in a neighborhood of each point.

We also introduce a bigraded variant of the two definitions. We write $\F^0 = \G^{0,0} = \A^{0,0}$.

\begin{defn}\label{loc-BV-bg-defn}\note{loc-BV-bg-defn}
 A bigraded Gerstenhaber calculus $(\G^{\bullet,\bullet},\A^{\bullet,\bullet})$ is \emph{Gorenstein} if we can find everywhere locally an element $\omega \in \A^{d,0}$ such that $v_\omega: \A^{0,j} \to \A^{d,j}, \: \alpha \mapsto \alpha \wedge \omega,$ is an isomorphism of $\F^0$-modules. It is \emph{locally Batalin--Vilkovisky} if, for every such $\omega$, the map $\kappa_\omega: \G^{p,q} \to \A^{p + d,q}, \: \theta \mapsto \theta \ \invneg \ \omega$, is an isomorphism of $\F^0$-modules.
\end{defn}

\begin{rem}
 We have to be careful with the notion of being locally Batalin--Vilkovisky for a curved Gerstenhaber calculus since $\kappa_\omega$ and $v_\omega$ are only compatible with $\bar\partial$ if $\bar\partial\omega = 0$. If we add this condition to the definition of being locally Batalin--Vilkovisky in the curved case, then we might end up with a Batalin--Vilkovisky calculus, as defined below, which is not locally Batalin--Vilkovisky.
\end{rem}

\subsection{The operator $d = \partial + \bar\partial + \ell \ \invneg \ (-)$ and the hypercohomology}

Let $(\G^{\bullet,\bullet},\A^{\bullet,\bullet})$ be a curved Gerstenhaber calculus in the context $(\cS,\C,\cO,\bC)$. Assume $\ell = 0$ for a moment. Then $(\A^{\bullet,\bullet},\partial,\bar\partial)$ is a double complex, and we can form its \emph{total complex}
$\mathrm{Tot}^\bullet(\A^{\bullet,\bullet},\partial,\bar\partial) = (\A^\bullet, d)$ with
$$\A^m := \bigoplus_{i + j = m} \A^{i,j}, \quad d := \partial + \bar\partial,$$
where $\A^m$ is a finite direct sum since both variables $i$ and $j$ are bounded below.
However, when $\ell \not= 0$, then possibly $d^2 \not= 0$.
Nonetheless, as was apparently first observed in \cite[Defn.~4.10]{ChanLeungMa2023}, we can define a differential on $\A^\bullet$ as the $\C$-linear operator
$$d: \A^m \to \A^{m + 1}, \quad \alpha \mapsto \partial\,\alpha + \bar\partial\, \alpha + (\ell \ \invneg\ \alpha),$$
called the \emph{total de Rham differential};\index{total de Rham differential} it still preserves the total degree, albeit that $\A^{i,j}$ is mapped into the direct sum of three pieces $\A^{i + 1,j}$, $\A^{i,j + 1}$, and $\A^{i - 1,j + 2}$ instead of two as in the case $\bar\partial^2 = 0$.

\begin{lemdef}\label{d-is-diff}\note{d-is-diff}
 The operator $d: \A^m \to \A^{m + 1}$ is a differential, i.e., $d^2 = 0$, and the derivation rule 
 $$d(\alpha \wedge \beta) = d\alpha \wedge \beta + (-1)^{|\alpha|}\alpha \wedge d\beta$$
 holds. We denote its cohomology by 
 $\HH^m(\A^{\bullet,\bullet}) := H^m(\A^\bullet, d)$ and call it the \emph{hypercohomology} of $(\G^{\bullet,\bullet},\A^{\bullet,\bullet})$.
\end{lemdef}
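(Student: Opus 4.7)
The plan is to expand $d^2(\alpha)$ into its nine constituent pieces and pair them up so that each pair either vanishes by a known relation or cancels against the contribution $\cL_\ell(\alpha)$ produced by $\bar\partial^2$.

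Concretely, I would write
\begin{align*}
d^2(\alpha) &= \partial^2\alpha + (\partial\bar\partial + \bar\partial\partial)\alpha + \bar\partial^2\alpha \\
&\quad + \bigl(\partial(\ell\invneg\alpha) + \ell\invneg\partial\alpha\bigr) + \bigl(\bar\partial(\ell\invneg\alpha) + \ell\invneg\bar\partial\alpha\bigr) + \ell\invneg(\ell\invneg\alpha),
\end{align*}
and then dispose of the terms one by one. The first line collapses to $\cL_\ell(\alpha)$ using $\partial^2=0$, $\partial\bar\partial+\bar\partial\partial=0$, and the curvature identity $\bar\partial^2(\alpha) = \cL_\ell(\alpha)$ from the definition of a curved Gerstenhaber calculus. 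The last term vanishes because $(\theta\wedge\xi)\invneg\alpha = \theta\invneg(\xi\invneg\alpha)$ and $\ell\wedge\ell=0$ (this latter identity is recorded as an automatic consequence of the curved Gerstenhaber algebra axioms).

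The two mixed brackets are the content of the argument. For the $\partial$-bracket I would apply the Lie--Rinehart homotopy formula to $\theta=\ell$: remembering that the total degree of $\ell\in\G^{-1,2}$ is $|\ell|=1$, the formula $(-1)^{|\theta|}\cL_\theta(\alpha) = \partial(\theta\invneg\alpha) - (-1)^{|\theta|}(\theta\invneg\partial\alpha)$ rearranges to
$$\partial(\ell\invneg\alpha) + \ell\invneg\partial\alpha = -\cL_\ell(\alpha).$$
For the $\bar\partial$-bracket I would use the curved derivation rule $\bar\partial(\theta\invneg\alpha) = (\bar\partial\theta)\invneg\alpha + (-1)^{|\theta|}\theta\invneg\bar\partial\alpha$, again with $\theta=\ell$, together with $\bar\partial\ell=0$; this gives
$$\bar\partial(\ell\invneg\alpha) + \ell\invneg\bar\partial\alpha = 0.$$
Summing everything, the $\cL_\ell(\alpha)$ contributed by $\bar\partial^2$ cancels the $-\cL_\ell(\alpha)$ from the Lie--Rinehart formula, and $d^2 = 0$.

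The Leibniz rule is then a one-line check once one has the derivation rules for the three summands of $d$. The rules for $\partial$ and $\bar\partial$ are given axioms; for the third, since $\ell \in \G^{-1,\bullet}$, the axiom for contraction by elements of $\G^{-1,q}$ gives $\ell\invneg(\alpha\wedge\beta) = (\ell\invneg\alpha)\wedge\beta + (-1)^{|\alpha||\ell|}\alpha\wedge(\ell\invneg\beta)$, and $|\ell|=1$ produces exactly the Koszul sign $(-1)^{|\alpha|}$. Adding the three derivation rules termwise yields $d(\alpha\wedge\beta) = d\alpha\wedge\beta + (-1)^{|\alpha|}\alpha\wedge d\beta$.

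The main obstacle is bookkeeping of signs: one must consistently interpret $|\alpha|$ as the total degree and remember that $|\ell|=1$ so that $(-1)^{|\ell|}=-1$, which is precisely the sign needed for the Lie--Rinehart cancellation. Nothing beyond the axioms of a curved Gerstenhaber calculus (and the consequence $\ell\wedge\ell=0$) is used.
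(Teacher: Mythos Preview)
Your proof is correct and follows exactly the approach sketched in the paper: the paper's one-line proof singles out the identity $\partial(\ell\,\invneg\,\alpha) = -\cL_\ell(\alpha) - \ell\,\invneg\,\partial\alpha$ (the Lie--Rinehart homotopy formula at $\theta=\ell$, $|\ell|=1$) as the key input for $d^2=0$ and then refers to the derivation rules for $\partial$, $\bar\partial$, and $\ell\,\invneg\,(-)$ for the Leibniz rule, which is precisely what you have written out in full.
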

\begin{proof}
 The first assertion is a direct computation using $\partial(\ell \ \invneg\ \alpha) = -\cL_\ell(\alpha) - \ell \ \invneg\ \partial\alpha$. The second assertion follows from the derivation rules for $\partial$, $\bar\partial$, and $\ell \ \invneg\ (-)$.
\end{proof}

\begin{rem}
 The name comes from the following situation: Let $f: X \to S$ be a (log) smooth morphism of relative dimension $d$, and consider the Gerstenhaber calculus of Example~\ref{smooth-Gerstenhaber-calc-exa}. Then there is a differential bigraded Gerstenhaber calculus in $\mathfrak{QCoh}(X/S)$ which forms a flasque resolution of the original singly graded Gerstenhaber calculus. Its hypercohomology in the sense of the above Definition is exactly the hypercohomology of the de Rham complex.
\end{rem}

\subsection{Two-sided Gerstenhaber calculi}

A Gerstenhaber calculus $\G\C^\bullet$ as defined above comes with a contraction map $\invneg: \G^p \times \A^i \to \A^{p + i}$ which turns $\A^\bullet$ into a module over $\G^\bullet$. However, usually we can define a second analogous contraction map $\vdash\,: \G^p \times \A^i \to \G^{p + i}$, which turns $\G^\bullet$ into a module over $\A^\bullet$. This \emph{left contraction}, as we shall call it to distinguish it from $\invneg$, which we then call the \emph{right contraction}, has not been used in Chan, Leung, and Ma's article \cite{ChanLeungMa2023}, and we have become aware of it only lately. Most of the theory does not need the left contraction, but it is occasionally convenient to have it at our disposal, especially in the setting of enhanced generically log smooth families, which carry a Gerstenhaber calculus which is not determined by the geometry of the underlying generically log smooth family. We have decided to include the left contraction not in our original definition of a Gerstenhaber calculus but in a separate definition, that of a \emph{two-sided} Gerstenhaber calculus. In practice, many one-sided Gerstenhaber calculi carry a unique left contraction; we will see below that this is the case, for example, for Gerstenhaber calculi which are locally Batalin--Vilkovisky.\footnote{The local Batalin--Vilkovisky condition is broken by taking global sections. Thus, while we can construct the left contraction on the sheaf level in this case, the global sections carry a left contraction as well which is not easily obtained from the right contraction on the level of global sections.} One may desire that $\lambda(\theta \vdash \alpha) = \theta \ \invneg \ \alpha$ for $|\theta| + |\alpha| = 0$ so that $\vdash$ would be an extension of $\invneg$ to the range $|\theta| + |\alpha| \leq 0$, but this is not compatible with the other natural relations that we want to impose, in particular \eqref{left-contraction-def} below. We will have $\lambda(\theta \vdash \alpha) = (-1)^{|\theta|} \theta \ \invneg \ \alpha$ for $|\theta| + |\alpha| = 0$ instead, and, given the other relations, this cannot be rectified by writing $\alpha \dashv \theta$ since this would also change the natural signs in the other relations, for example \eqref{left-contraction-def}. At least, this convention yields the very satisfactory formula $[\theta,g] = \theta \vdash \partial(\lambda(g))$ for our conventions for the bracket $[-,-]$, which is the negative of the Schouten--Nijenhuis bracket in practice.

\begin{defn}\label{two-sided-G-calc-defn}\note{two-sided-G-calc-defn}
 Let $(\cS,\C,\cO,\bC)$ be a context.
 \begin{enumerate}[label=(\alph*)]
  \item A \emph{two-sided Gerstenhaber calculus}\index{Gerstenhaber calculus, two-sided} of dimension $d$ 
  $$\G\C^\bullet = (\G^\bullet,\wedge,1_G,[-,-],\A^{\bullet},\wedge,1_A,\partial,\,\invneg\, ,\cL, \vdash)$$
  is a Gerstenhaber calculus of dimension $d$ together with an $\cO$-bilinear operation 
  $$\vdash\,: \G^p \times \A^i \to \G^{p + i},$$
  the \emph{left contraction},\index{left contraction} satisfying the following relations:
  \begin{itemize}
   \item $\theta \vdash 1_A = \theta$ and $\theta \vdash (\alpha \wedge \beta) = (\theta \vdash \alpha) \vdash \beta$;
   \item for $\alpha \in \A^0$, the identity $\theta \vdash \alpha = \theta \wedge (1_G \vdash \alpha)$;
  \item for $\alpha \in \A^1$, the identity 
  $$(\theta \wedge \xi) \vdash \alpha = (-1)^{|\xi||\alpha|} (\theta \vdash \alpha) \wedge \xi + \theta \wedge (\xi \vdash \alpha);$$
  \item for $\theta \in \G^{-i}$ and $\alpha \in \A^i$, the identity $\lambda(\theta \vdash \alpha) = (-1)^{i}\,\theta \ \invneg \ \alpha$;
  \item for $\omega \in \A^d$, the identity
  \begin{equation}\label{left-contraction-def}
   (\theta \vdash \alpha) \ \invneg \ \omega = (-1)^{|\alpha||\omega|} (\theta \ \invneg \ \omega) \wedge \alpha;
  \end{equation}
  \item for $\omega^\vee \in \G^{-d}$, the identity
  \begin{equation}
   \omega^\vee \vdash (\theta \ \invneg \ \alpha) = (-1)^{|\omega^\vee||\theta|}\, \theta \wedge (\omega^\vee \vdash \alpha);
  \end{equation}
  \item for $\theta \in \G^{-1}$, the \emph{special left mixed Leibniz rule}\index{special left mixed Leibniz rule}\index{mixed Leibniz rule!special left}\footnote{This relation is dictated by the compatibility of $\vdash$ with gauge transforms, cf.~also Definition~\ref{Cartan-str-defn}. As stated, the relation does not hold for $\theta$ of other degrees than $-1$ in those Gerstenhaber calculi in which we are interested. Unfortunately, we could not find a general relation of which this is a special case.}
  $$[\theta,\xi \vdash \alpha] = [\theta,\xi] \vdash \alpha + \xi \vdash \cL_\theta(\alpha).$$
  \end{itemize}
  \item A \emph{bigraded two-sided Gerstenhaber calculus}\index{Gerstenhaber calculus, two-sided!bigraded} of dimension $d$
 $$\G\C^{\bullet,\bullet} = (\G^{\bullet,\bullet},\wedge,1_G,[-,-],\A^{\bullet,\bullet},\wedge,1_A,\partial,\,\invneg\, ,\cL, \vdash)$$
  is a bigraded Gerstenhaber calculus of dimension $d$ together with an $\cO$-bilinear map 
  $$\vdash\,: \G^{p,q} \times \A^{i,j} \to \G^{p + i,q + j},$$
  the \emph{left contraction}, satisfying the following relations:
  \begin{itemize}
   \item $\theta \vdash 1_A = \theta$ and $\theta \vdash (\alpha \wedge \beta) = (\theta \vdash \alpha) \vdash \beta$;
   \item for $\alpha \in \A^{0,j}$, the identity $\theta \vdash \alpha = \theta \wedge (1_G \vdash \alpha)$;
  \item for $\alpha \in \A^{1,j}$, the identity 
  $$(\theta \wedge \xi) \vdash \alpha = (-1)^{|\xi||\alpha|} (\theta \vdash \alpha) \wedge \xi + \theta \wedge (\xi \vdash \alpha);$$
  \item for $\theta \in \G^{-i,q}$ and $\alpha \in \A^{i,j}$, the identity $\lambda(\theta \vdash \alpha) = (-1)^{i}\,\theta \ \invneg \ \alpha$;\footnote{This relation does not acquire the usual sign from the total degree when we obtain a bigraded two-sided Gerstenhaber calculus as the Thom--Whitney resolution of a singly graded one.}
  \item for $\omega \in \A^{d,j}$, the identity $(\theta \vdash \alpha) \ \invneg \ \omega = (-1)^{|\alpha||\omega|} (\theta \ \invneg \ \omega) \wedge \alpha$;
  \item for $\omega^\vee \in \G^{-d,q}$, the identity $\omega^\vee \vdash (\theta \ \invneg \ \alpha) = (-1)^{|\omega^\vee||\theta|}\, \theta \wedge (\omega^\vee \vdash \alpha)$;
  \item for $\theta \in \G^{-1,q}$, the \emph{special left mixed Leibniz rule}
  $$[\theta,\xi \vdash \alpha] = [\theta,\xi] \vdash \alpha + (-1)^{(|\theta| + 1)|\xi|}\,\xi \vdash \cL_\theta(\alpha).$$
  \end{itemize}
  \item A \emph{curved two-sided Gerstenhaber calculus}\index{Gerstenhaber calculus, two-sided!curved} of dimension $d$
 $$\G\C^{\bullet,\bullet} = (\G^{\bullet,\bullet},\wedge,1_G,[-,-], \bar\partial,\ell,\A^{\bullet,\bullet},\wedge,1_A,\partial,\,\invneg\, ,\cL,\vdash,\bar\partial)$$
 is simultaneously a bigraded two-sided Gerstenhaber calculus of dimension $d$ and a curved Gerstenhaber calculus of dimension $d$ such that 
 $$\bar\partial(\theta \vdash \alpha) = \bar\partial\theta \vdash \alpha + (-1)^{|\theta|}\, \theta \vdash \bar\partial\alpha.$$
 \item A \emph{differential bigraded two-sided Gerstenhaber calculus} of dimension $d$ is a curved one with $\ell = 0$.
 \item The notions of faithfulness, strict faithfulness, centerlessness, and boundedness apply to the underlying one-sided Gerstenhaber calculus.
 \end{enumerate}
\end{defn}

 When $\G\C^\bullet$ is a one-sided Gerstenhaber calculus which is locally Batalin--Vilkovisky, then we can use the relation 
 $$(\theta \vdash \alpha) \ \invneg \ \omega = (-1)^{|\alpha||\omega|} (\theta \ \invneg \ \omega) \wedge \alpha$$
to \emph{define} the left contraction. Since this relation is $\A^0$-linear in $\omega$ on both sides, this definition is independent of the choice of local generator $\omega$ and thus globally consistent. Similarly, we can define the left contraction on a bigraded one-sided Gerstenhaber calculus which is locally Batalin--Vilkovisky. With this definition, most relations for $\vdash$ follow from the relations in a one-sided Gerstenhaber calculus, but not all.

\begin{lemma}\label{one-sided-two-sided-G-calc}\note{one-sided-two-sided-G-calc}
 Let $(\cS,\C,\cO,\bC)$ be a context.
 \begin{enumerate}[label=\emph{(\alph*)}]
  \item Let $\G\C^\bullet$ be a one-sided Gerstenhaber calculus of dimension $d$ which is locally Batalin--Vilkovisky. Define $\vdash$ as above. Then $\vdash\, : \G^p \times \A^i \to \G^{p + i}$ is $\cO$-bilinear and satisfies the following relations:
  \begin{itemize}
   \item $\theta \vdash 1_A = \theta$ and $\theta \vdash (\alpha \wedge \beta) = (\theta \vdash \alpha) \vdash \beta$;
   \item for $\alpha \in \A^0$, the identity $\theta \vdash \alpha = \theta \wedge (1_G \vdash \alpha)$;
  \item for $\theta \in \G^0$ and $\alpha \in \A^0$, the identity $\lambda(\theta \vdash \alpha) = \theta \ \invneg \ \alpha$;
  \item for $\theta \in \G^{-1}$ and $\alpha \in \A^1$, the identity $\lambda(\theta \vdash \alpha) = - \, \theta \ \invneg \ \alpha$;
  \item for $\omega^\vee \in \G^{-d}$, the identity $\omega^\vee \vdash (\theta \ \invneg \ \alpha) = (-1)^{|\omega^\vee||\theta|}\, \theta \wedge (\omega^\vee \vdash \alpha)$;
  \item for $\theta \in \G^{-1}$, the special left mixed Leibniz rule
  $$[\theta,\xi \vdash \alpha] = [\theta,\xi] \vdash \alpha + \xi \vdash \cL_\theta(\alpha).$$
  \end{itemize}
  In other words, only the identity 
  $$(\theta \wedge \xi) \vdash \alpha = (-1)^{|\xi||\alpha|} (\theta \vdash \alpha) \wedge \xi + \theta \wedge (\xi \vdash \alpha),$$
  for $\alpha \in \A^1$ as well as $\lambda(\theta \vdash \alpha) = (-1)^i \, \theta \ \invneg \ \alpha$ for $\theta \in \G^{-i}$ and $\alpha \in \A^i$ with $i \geq 2$ are missing.
  \item Let $\G\C^{\bullet,\bullet}$ be a bigraded one-sided Gerstenhaber calculus of dimension $d$ which is locally Batalin--Vilkovisky. Define $\vdash$ as above. Then $\vdash: \G^{p,q} \times \A^{i,j} \to \G^{p + i, q + j}$ is an $\cO$-bilinear map satisfying all relations from the definition of a bigraded two-sided Gerstenhaber calculus except for possibly the relation for $(\theta \wedge \xi) \vdash \alpha$ and $\lambda(\theta \vdash \alpha) = (-1)^i \, \theta \ \invneg \ \alpha$ for $i \geq 2$.\footnote{As above, in the curved case, if $\bar\partial\omega\not= 0$, we cannot conclude that the relation for $\bar\partial(\theta \vdash \alpha)$ holds automatically.}
 \end{enumerate}
\end{lemma}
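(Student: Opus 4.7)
The approach is to \emph{define} $\theta \vdash \alpha$ locally via the displayed relation and then verify each listed identity by applying $\invneg\,\omega$ to both sides and invoking the injectivity of $\kappa_\omega$. Fix a local generator $\omega$ of $\A^d$; since the calculus is locally Batalin--Vilkovisky, $\kappa_\omega: \G^{p+i} \to \A^{p+i+d}$ is an $\F$-linear isomorphism, so the element $\theta \vdash \alpha := \kappa_\omega^{-1}\bigl((-1)^{|\alpha||\omega|}(\theta \invneg \omega) \wedge \alpha\bigr)$ is well defined. Any two local generators differ by an $\F$-unit, and the defining relation is $\F$-linear in $\omega$ on both sides, so the result is independent of the choice and $\vdash$ glues into a global operation; the $\cO$-bilinearity is inherited from $\cO$-bilinearity of $\invneg$ and $\wedge$.

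The easy identities $\theta \vdash 1_A = \theta$ and $\theta \vdash (\alpha \wedge \beta) = (\theta \vdash \alpha) \vdash \beta$ follow by contracting with $\omega$ and using the contraction axiom $(\theta \wedge \xi) \invneg \omega = \theta \invneg (\xi \invneg \omega)$ together with associativity of $\wedge$. For $\alpha \in \A^0$, the relation $\theta \vdash \alpha = \theta \wedge (1_G \vdash \alpha)$ reduces after contracting to $\theta \invneg (\alpha \wedge \omega) = (\theta \invneg \omega) \wedge \alpha$, which follows from $\F$-bilinearity of $\invneg$ since $\alpha \in \A^0 = \F$ acts by scalar multiplication.

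The two $\lambda$-identities are verified by wedging $\lambda(\theta \vdash \alpha)$ with $\omega$ and using $\zeta \invneg \omega = \lambda(\zeta) \wedge \omega$ for $\zeta \in \G^0$. For $\theta \in \G^0$, $\alpha \in \A^0$ this is immediate. For $\theta \in \G^{-1}$ and $\alpha \in \A^1$, the $\G^{-1}$-Leibniz rule applied to $\alpha \wedge \omega \in \A^{d+1} = 0$ yields $(\theta \invneg \alpha) \wedge \omega = \alpha \wedge (\theta \invneg \omega)$, and combining the sign $(-1)^d$ from the definition of $\vdash$ with the $\wedge$-commutation sign $(-1)^{d-1}$ produces the expected $-1$. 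The identity for $\omega^\vee \in \G^{-d}$ is verified by contracting both sides with $\omega$, pulling the resulting scalar $c := \omega^\vee \invneg \omega \in \A^0$ through the contraction by $\F$-bilinearity, and observing that the sign exponents $(|\theta|+|\alpha|)d$ and $|\omega^\vee||\theta| + |\alpha|d = -d|\theta| + |\alpha|d$ agree modulo $2$.

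The main obstacle is the special left mixed Leibniz rule for $\theta \in \G^{-1}$. After contracting with $\omega$, I would expand $[\theta, \xi \vdash \alpha] \invneg \omega$ via the mixed Leibniz rule with $\xi \vdash \alpha$ in place of $\xi$, producing a combination of $\theta \invneg \cL_{\xi \vdash \alpha}(\omega)$ and $\cL_{\xi \vdash \alpha}(\theta \invneg \omega)$; and similarly unfold $([\theta,\xi] \vdash \alpha) \invneg \omega$ and $(\xi \vdash \cL_\theta(\alpha)) \invneg \omega$ from the definition. Matching the two sides reduces, via a second application of the mixed Leibniz rule and the Lie--Rinehart homotopy formula, to a careful sign bookkeeping with the factors of $(-1)^d$ and the $\wedge$-commutation signs. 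Part (b) follows from the identical argument with bidegrees and total degrees in place of degrees, using the bigraded Gorenstein isomorphism $v_\omega: \A^{0,q} \to \A^{d,q}$ and the induced $\kappa_\omega: \G^{p,q} \to \A^{p+d,q}$ for each $q$.
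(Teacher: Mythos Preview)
Your approach is correct and essentially the same as the paper's: define $\vdash$ locally via the relation \eqref{left-contraction-def}, note independence of the generator $\omega$, and then verify each identity by applying the isomorphism $\kappa_\omega$ to both sides. The paper's proof is terser but follows the identical strategy, and your treatment of the $\lambda$-identity for $i=1$ via the $\G^{-1}$-derivation rule on $\alpha\wedge\omega\in\A^{d+1}=0$ is exactly what the paper alludes to when it says this step ``uses the relation for $\theta\,\invneg\,(\alpha\wedge\beta)$''.

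One tactical remark on the special left mixed Leibniz rule: your suggestion to substitute $\xi\vdash\alpha$ into the $\cL$-slot of the mixed Leibniz rule leads to terms $\cL_{\xi\vdash\alpha}(\omega)$ and $\cL_{\xi\vdash\alpha}(\theta\,\invneg\,\omega)$ that are awkward to relate back to $[\theta,\xi]$ and $\cL_\theta(\alpha)$. The computation becomes much cleaner if you instead keep $\theta\in\G^{-1}$ in the $\cL$-slot. Since $|\theta|+1=0$, the mixed Leibniz rule gives simply $[\theta,\eta]\,\invneg\,\omega = \cL_\theta(\eta\,\invneg\,\omega) - \eta\,\invneg\,\cL_\theta(\omega)$ for any $\eta$; applying this with $\eta=\xi\vdash\alpha$, expanding $(\xi\vdash\alpha)\,\invneg\,\omega = (-1)^{|\alpha|d}(\xi\,\invneg\,\omega)\wedge\alpha$, and using that $\cL_\theta$ is a derivation of $\wedge$ yields the result directly once you apply the mixed Leibniz rule once more to $\cL_\theta(\xi\,\invneg\,\omega)$. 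The two residual terms involving $\cL_\theta(\omega)$ cancel because $\cL_\theta(\omega)=f\omega$ for some $f\in\F$. This is presumably the route the paper intends by ``one uses the usual mixed Leibniz rule''.
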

\begin{proof}
 We start with the singly graded case. That $\vdash$ respects the indicated degrees follows from the degrees in the defining relation of $\vdash$. Similarly, since both $\wedge$ and $\invneg$ are $\cO$-bilinear, we find that $\vdash$ is $\cO$-bilinear. The claimed relations now all follow from applying the local isomorphism $\kappa_\omega(-) = (-) \ \invneg \ \omega$ to the relation. To obtain the special left mixed Leibniz rule, one uses the usual mixed Leibniz rule. The proof of $\lambda(\theta \vdash \alpha) = -\, \theta \ \invneg \ \alpha$ for $i = 1$ uses the relation for $\theta \ \invneg \ (\alpha \wedge \beta)$, and that we have no such relation for $|\theta| \not= -1$ is the reason why we cannot show $\lambda(\theta \vdash \alpha) = (-1)^i \, \theta \ \invneg \ \alpha$ for $i \geq 2$ at this point. The proof in the bigraded case is similar. The computation for the bigraded special left mixed Leibniz rule is very tedious due to many signs that need to be tracked.
\end{proof}

\section{Batalin--Vilkovisky algebras and calculi}\label{BV-alg-sec}\note{BV-alg-sec}

A \emph{Batalin--Vilkovisky algebra} is a Gerstenhaber algebra with an additional operator $\Delta: \G^p \to \G^{p + 1}$. This operator is usually obtained inside a Gerstenhaber \emph{calculus} from the de Rham differential $\partial$ by means of an isomorphism $\G^p \cong \A^{p + d}$ induced by a chosen global volume form $\omega \in \A^d$, which exists on Calabi--Yau spaces. For theoretical purposes, it is also worthwhile to study Batalin--Vilkovisky algebras without a de Rham module. We fix an integer $d \geq 1$.

\begin{defn}
 Let $(\cS,\C,\cO,\bC)$ be a context.
 \begin{enumerate}[label=(\alph*)]
  \item A \emph{Batalin--Vilkovisky algebra}\index{Batalin--Vilkovisky algebra} of dimension $d$
  $$(\G^\bullet,\wedge,1_G,[-,-],\Delta)$$
  is a Gerstenhaber algebra of dimension $d$ together with a $\C$-linear \emph{Batalin--Vilkovisky operator}\index{Batalin--Vilkovisky operator} $\Delta: \G^p \to \G^{p + 1}$ which satisfies 
  $$\Delta(1_G) = 0, \quad \Delta^2 = 0, \quad \Delta[\theta,\xi] = [\Delta(\theta),\xi] + (-1)^{|\theta| + 1}[\theta,\Delta(\xi)],$$
  and the \emph{Bogomolov--Tian--Todorov formula}\index{Bogomolov--Tian--Todorov formula}
  $$(-1)^{|\theta|}[\theta,\xi] = \Delta(\theta \wedge \xi) - \Delta(\theta) \wedge \xi - (-1)^{|\theta|}\theta \wedge \Delta(\xi).$$
  \item A \emph{bigraded Batalin--Vilkovisky algebra}\index{Batalin--Vilkovisky algebra!bigraded} of dimension $d$
  $$(\G^{\bullet,\bullet},\wedge,1_G,[-,-],\Delta)$$
  is a bigraded Gerstenhaber algebra of dimension $d$ together with a $\C$-linear \emph{Batalin--Vilkovisky operator} $\Delta: \G^{p,q} \to \G^{p + 1,q}$ which satisfies 
  $$\Delta(1_G) = 0, \quad \Delta^2 = 0, \quad \Delta[\theta,\xi] = [\Delta(\theta),\xi] + (-1)^{|\theta| + 1}[\theta,\Delta(\xi)],$$
  and the \emph{Bogomolov--Tian--Todorov formula}
  $$(-1)^{|\theta|}[\theta,\xi] = \Delta(\theta \wedge \xi) - \Delta(\theta) \wedge \xi - (-1)^{|\theta|}\theta \wedge \Delta(\xi).$$
  \item A \emph{curved Batalin--Vilkovisky algebra}\index{Batalin--Vilkovisky algebra!curved} of dimension $d$
  $$(\G^{\bullet,\bullet},\wedge,1_G,[-,-],\Delta,\bar\partial,\ell,y)$$
  is a bigraded Batalin--Vilkovisky algebra of dimension $d$ together with a $\C$-linear operator $\bar\partial: \G^{p,q} \to \G^{p,q + 1}$ satisfying the two derivation rules 
   $$\bar\partial[\theta,\xi] = [\bar\partial\theta,\xi] + (-1)^{|\theta| + 1}[\theta,\bar\partial\xi], \quad \bar\partial(\theta \wedge \xi) = \bar\partial\theta \wedge \xi + (-1)^{|\theta|}\theta \wedge \bar\partial\xi,$$
  and global sections $\ell \in \G^{-1,2}$ and $y \in \G^{0,1}$ which satisfy
  $$\bar\partial^2(\theta) = [\ell,\theta], \quad \bar\partial(\ell) = 0, \quad \bar\partial\Delta(\theta) + \Delta\bar\partial(\theta) = [y,\theta], \quad \bar\partial(y) + \Delta(\ell) = 0.$$
  We then have 
  $$\bar\partial(1_G) = 0,\quad \ell \wedge \ell = 0, \quad y \wedge y = 0, \quad y \wedge \ell + \ell \wedge y = 0, \quad [\ell,y] + [y,\ell] = 0.$$
  \item A \emph{differential bigraded Batalin--Vilkovisky algebra} of dimension $d$ is a curved Batalin--Vilkovisky algebra of dimension $d$ with $\ell = 0$ and $y = 0$.
  \item The notions of \emph{centerless}\index{Batalin--Vilkovisky algebra!centerless}, \emph{strictly faithful}, \emph{faithful}, and \emph{bounded}\index{Batalin--Vilkovisky algebra!bounded} apply to the underlying bigraded/dbg/curved Gerstenhaber algebra.
 \end{enumerate}

\end{defn}

\begin{rem}
 The condition $\bar\partial(\ell) + \Delta(y) = 0$ is not explicitly discussed in \cite{ChanLeungMa2023}; however, the condition is satisfied in their context, and we need it to show Lemma~\ref{check-d-is-diff}. Moreover, the proof of Proposition~\ref{BV-calc-construction} suggests that the condition is natural.
\end{rem}

A \emph{Batalin--Vilkovisky calculus} is the analog of a Batalin--Vilkovisky algebra for a Gerstenhaber calculus. 

\begin{defn}
 Let $(\cS,\C,\cO,\bC)$ be a context.
 \begin{enumerate}[label=(\alph*)]
  \item A \emph{Batalin--Vilkovisky calculus}\index{Batalin--Vilkovisky calculus} of dimension $d$
  $$(\G^{\bullet},\wedge,1_G,[-,-],\Delta,\A^{\bullet},\wedge,1_A,\partial,\invneg\:,\cL,\omega)$$
  is simultaneously a bigraded Gerstenhaber calculus of dimension $d$ and a bigraded Batalin--Vilkovisky algebra of dimension $d$ together with a global section $\omega \in \A^{d}$ such that 
  $$\kappa: \G^{p} \to \A^{p + d}, \quad \theta \mapsto \theta \ \invneg\ \omega,$$
  is an isomorphism of $\cO$-modules, 
  $$v: \A^{0} \to \A^{d}, \quad \alpha \mapsto \alpha \wedge \omega,$$
  is an isomorphism of $\cO$-modules, and 
  $$\Delta(\theta) \ \invneg \ \omega = \partial(\theta \ \invneg \ \omega)$$
  holds.
  \item A \emph{bigraded Batalin--Vilkovisky calculus}\index{Batalin--Vilkovisky calculus!bigraded} of dimension $d$
  $$(\G^{\bullet,\bullet},\wedge,1_G,[-,-],\Delta,\A^{\bullet,\bullet},\wedge,1_A,\partial,\invneg\:,\cL,\omega)$$
  is simultaneously a Gerstenhaber calculus of dimension $d$ and a Batalin--Vilkovisky algebra of dimension $d$ together with a global section $\omega \in \A^{d,0}$ such that 
  $$\kappa: \G^{p,q} \to \A^{p + d,q}, \quad \theta \mapsto \theta \ \invneg\ \omega,$$
  is an isomorphism of $\cO$-modules, 
  $$v: \A^{0,q} \to \A^{d,q}, \quad \alpha \mapsto \alpha \wedge \omega,$$
  is an isomorphism of $\cO$-modules, and 
  $$\Delta(\theta) \ \invneg \ \omega = \partial(\theta \ \invneg \ \omega)$$
  holds.
  \item A \emph{curved Batalin--Vilkovisky calculus}\index{Batalin--Vilkovisky calculus!curved} of dimension $d$
  $$(\G^{\bullet,\bullet},\wedge,1_G,[-,-],\Delta,\bar\partial,\ell,y,\A^{\bullet,\bullet},\wedge,1_A,\partial,\invneg\:,\cL,\bar\partial,\omega)$$
  is simultaneously a curved Gerstenhaber calculus of dimension $d$ and a bigraded Batalin--Vilkovisky calculus of dimension $d$ such that $y \ \invneg \ \omega = \bar\partial \omega$.
  \item A \emph{differential bigraded Batalin--Vilkovisky calculus} of dimension $d$ is a curved Batalin--Vilkovisky calculus of dimension $d$ with $\ell = 0$ and $y = 0$.
  \item The notions of \emph{centerless},\index{Batalin--Vilkovisky calculus!centerless} \emph{strictly faithful}, \emph{faithful}, and \emph{bounded}\index{Batalin--Vilkovisky calculus!bounded} apply to the underlying bigraded/dbg/curved Gerstenhaber calculus.
 \end{enumerate}
\end{defn}
\begin{ex}
 Let $f: X \to S$ be a smooth morphism of Noetherian schemes of relative dimension $d$. Assume that $\omega_{X/S} \cong \cO_X$. Then the polyvector fields together with the de Rham complex and a chosen volume form $\omega \in \omega_{X/S}$ form a Batalin--Vilkovisky calculus of dimension $d$ in the context $\mathfrak{Coh}(X/S)$. Namely, we use the Gerstenhaber calculus of Example~\ref{smooth-Gerstenhaber-calc-exa}, and then we apply Proposition~\ref{BV-calc-construction} below.
\end{ex}

\subsection{Batalin--Vilkovisky calculi from Gerstenhaber calculi}

Batalin--Vilkovisky calculi arise naturally from Gerstenhaber calculi in the Calabi--Yau setting.

\begin{prop}\label{BV-calc-construction}\note{BV-calc-construction}
 Let $(\cS,\C,\cO,\bC)$ be a context.
 \begin{enumerate}[label=\emph{(\alph*)}]
  \item Let $(\G^\bullet,\A^\bullet)$ be a Gerstenhaber calculus of dimension $d$, and let $\omega \in \A^d$ be a global section such that 
  $$\kappa: \G^p \to \A^{p + d}, \quad \theta \mapsto (\theta \ \invneg \ \omega),$$
  is an isomorphism of $\cO$-modules for every $-d \leq p \leq 0$. Then $(\G^\bullet, \A^\bullet)$ is a Batalin--Vilkovisky calculus with $\Delta := \kappa^{-1} \circ \partial \circ \kappa$.
  \item Let $(\G^{\bullet,\bullet},\A^{\bullet,\bullet})$ be a bigraded Gerstenhaber calculus of dimension $d$, and let $\omega \in \A^{d,0}$ be a global section such that 
  $$\kappa: \G^{p,q} \to \A^{p + d,q}, \quad \theta \mapsto (\theta \ \invneg \ \omega),$$
  is an isomorphism of $\cO$-modules for all $-d \leq p \leq 0$ and $q \geq 0$. Then $(\G^{\bullet,\bullet},\A^{\bullet,\bullet})$ is a bigraded Batalin--Vilkovisky calculus with $\Delta := \kappa^{-1} \circ \partial \circ \kappa$.
  \item Let $(\G^{\bullet,\bullet},\A^{\bullet,\bullet})$ be a curved Gerstenhaber calculus of dimension $d$, and let $\omega \in \A^{d,0}$ be a global section such that 
  $$\kappa: \G^{p,q} \to \A^{p + d,q}, \quad \theta \mapsto (\theta \ \invneg \ \omega),$$
  is an isomorphism of $\cO$-modules for all $-d \leq p \leq 0$ and $q \geq 0$. Then $(\G^{\bullet,\bullet},\A^{\bullet,\bullet})$ is a curved Batalin--Vilkovisky calculus with $\Delta := \kappa^{-1} \circ \partial \circ \kappa$ and $y := \kappa^{-1}(\bar\partial\omega) \in \G^{0,1}$.
 \end{enumerate}
\end{prop}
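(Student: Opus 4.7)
The plan is to transport the de Rham differential $\partial$ to the polyvector side via $\kappa$ and verify all axioms of a (bigraded/curved) Batalin--Vilkovisky calculus. Throughout, $\kappa(\theta) = \theta \invneg \omega$ is an $\cO$-linear isomorphism, and I will use the Lie--Rinehart homotopy formula, the mixed Leibniz rule, and the derivation rules listed in Definition~\ref{Gerstenhaber-calc-def} as the main algebraic tools.

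First I would dispatch the easy axioms. The relation $\Delta(\theta) \invneg \omega = \partial(\theta \invneg \omega)$ is literally the definition of $\Delta$. The identity $\Delta^2 = 0$ is immediate from $\partial^2 = 0$ and the fact that $\Delta$ is conjugate to $\partial$ via $\kappa$. For $\Delta(1_G) = 0$, note that $\kappa(1_G) = \omega \in \A^d$ (respectively $\A^{d,0}$) and $\A^{d+1} = 0$ by the range convention in the definition, so $\partial \omega = 0$. A useful consequence for later steps is the identity
\[
\cL_\theta(\omega) = (-1)^{|\theta|}\,\Delta\theta \invneg \omega,
\]
obtained by substituting $\alpha = \omega$ into the Lie--Rinehart homotopy formula.

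The heart of the proof is the Bogomolov--Tian--Todorov formula. I would start from $\kappa(\Delta[\theta,\xi]) = \partial([\theta,\xi] \invneg \omega)$, then rewrite $[\theta,\xi]\invneg \omega$ by rearranging the mixed Leibniz rule $\theta \invneg \cL_\xi(\omega) = (-1)^{|\xi|+1}([\theta,\xi]\invneg \omega) + (-1)^{|\theta|(|\xi|+1)}\cL_\xi(\theta\invneg\omega)$ to isolate $[\theta,\xi]\invneg\omega$. Substituting the formula for $\cL_\xi(\omega)$ above, and applying the Lie--Rinehart homotopy formula a second time to the term $\cL_\xi(\kappa\theta)$ with $\partial \kappa\theta = \kappa \Delta\theta$ and $\xi \invneg \kappa\theta = \kappa(\xi \wedge \theta)$, produces an identity in which every term is of the form $\kappa(\cdot)$. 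Cancelling $\kappa$ and rearranging via graded commutativity of $\wedge$ yields $(-1)^{|\theta|}[\theta,\xi] = \Delta(\theta\wedge\xi) - \Delta\theta \wedge \xi - (-1)^{|\theta|}\theta\wedge\Delta\xi$. The remaining derivation-type formula $\Delta[\theta,\xi] = [\Delta\theta,\xi] + (-1)^{|\theta|+1}[\theta,\Delta\xi]$ is then a formal consequence: apply $\Delta$ to the BTT identity, use $\Delta^2 = 0$, and apply BTT twice more to reexpress $\Delta(\Delta\theta\wedge\xi)$ and $\Delta(\theta\wedge\Delta\xi)$; the $\Delta\theta\wedge\Delta\xi$ terms cancel and the result follows.

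For (c), the curved case, the new data $y := \kappa^{-1}(\bar\partial\omega) \in \G^{0,1}$ tautologically satisfies $y \invneg \omega = \bar\partial \omega$. For $\bar\partial y + \Delta \ell = 0$, I would apply $\bar\partial$ to $y\invneg\omega = \bar\partial\omega$, use the $\bar\partial$-derivation rule on $\invneg$, use $\bar\partial^2\omega = \cL_\ell(\omega)$ and the identity above for $\cL_\ell(\omega)$, and use $y \wedge y = 0$ (graded commutativity with $|y|$ odd) to eliminate the $y\invneg(y\invneg\omega)$ contribution. For $\bar\partial\Delta(\theta) + \Delta\bar\partial(\theta) = [y,\theta]$, I would compute $\bar\partial\partial(\theta\invneg\omega)$ both directly (using $\partial\bar\partial + \bar\partial\partial = 0$ and then expanding $\partial\bar\partial(\theta\invneg\omega)$ via the $\bar\partial$-derivation rule and the definition of $\Delta$) and via the $\bar\partial$-derivation rule applied to $\Delta\theta\invneg\omega = \partial(\theta\invneg\omega)$; equating and applying $\kappa^{-1}$ yields an identity involving $\Delta(\theta\wedge y)$. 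Substituting BTT to expand $\Delta(\theta\wedge y)$, noting that $\Delta y = 0$ because $\Delta y \in \G^{1,1} = 0$ (as $p$ is bounded above by $0$), produces $\bar\partial\Delta\theta + \Delta\bar\partial\theta = -[\theta,y] = [y,\theta]$ by anti-commutativity with $|y|+1$ even.

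The main obstacle is the BTT calculation: the combinatorics of signs from the Lie--Rinehart homotopy formula, the mixed Leibniz rule, and graded commutativity must be tracked carefully, and the same machinery recycled for the curved identity. Everything else is essentially formal manipulation of $\kappa$-conjugation, using that $\A^{d+1} = 0$ and that contractions collapse $p > 0$ inputs to zero.
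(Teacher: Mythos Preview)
Your proposal is correct and follows essentially the same route as the paper: contract everything against $\omega$, use the Lie--Rinehart homotopy formula together with the mixed Leibniz rule to establish the Bogomolov--Tian--Todorov identity, then deduce the derivation rule for $\Delta[-,-]$ and the curved identities formally (via BTT and the degree bounds $\A^{d+1}=0$, $\G^{1,\bullet}=0$). Two small points: you have not checked that $v:\A^0\to\A^d$, $\alpha\mapsto\alpha\wedge\omega$, is an isomorphism---this is part of the definition of a BV calculus, and the paper notes it follows from $v\circ\lambda=\kappa$---and the opening phrase ``$\kappa(\Delta[\theta,\xi])=\partial([\theta,\xi]\,\invneg\,\omega)$'' in your BTT paragraph is a non sequitur, since what you actually work with is $\kappa([\theta,\xi])=[\theta,\xi]\,\invneg\,\omega$.
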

\begin{proof}
 Since $v \circ \lambda = \kappa$, the map $v: \A^0 \to \A^d, \, \alpha \mapsto \alpha \wedge \omega,$ must be an $\cO$-linear isomorphism.  With our definition, $\Delta$ is a $\C$-linear operator with $\Delta(1) = 0$ (by degree reasons) and $\Delta^2 = 0$. The Bogomolov--Tian--Todorov formula can be shown by evaluating
 $$\left(\Delta(\theta \wedge \xi) - \Delta(\theta) \wedge \xi - (-1)^{|\theta|}\theta \wedge \Delta(\xi)\right) \ \invneg \ \omega$$
 with the Lie--Rinehart homotopy formula and then applying the mixed Leibniz rule. The derivation rule for $\Delta[\theta,\xi]$ then follows by applying $\Delta$ to the Bogomolov--Tian--Todorov formula. This concludes the proof in the singly graded case. The bigraded case is similar.
 
 To show $\bar\partial\Delta + \Delta\bar\partial = [y,-]$ in the curved case, note that
 $$(\bar\partial\Delta(\theta) + \Delta\bar\partial(\theta)) \ \invneg \ \omega = \left((-1)^{|\theta|}\Delta\theta\ \invneg\ \bar\partial\omega\right) - \cL_\theta(\bar\partial\omega).$$
 The Bogomolov--Tian--Todorov formula yields
 $$-[y, \theta] \ \invneg\ \omega = (-1)^{|\theta|}\partial(\theta \ \invneg\ \bar\partial\omega) + (-1)^{|\theta| + 1}\Delta\theta \ \invneg \ \bar\partial\omega.$$
 Then the Lie--Rinehart homotopy formula applied to the left summand on the right yields
 $$(\bar\partial\Delta(\theta) + \Delta\bar\partial(\theta)) \ \invneg \ \omega = [y, \theta] \ \invneg \ \omega.$$
 Next, by a direct computation
 $$(\bar\partial y + \Delta\ell) \ \invneg\ \omega = \bar\partial^2(\omega) - \cL_\ell(\omega) - (y \wedge y) \ \invneg\ \omega = 0$$
 since $y \wedge y = 0$ (because $y \in \G^{0,1}$), so $\bar\partial (y) + \Delta(\ell) = 0$. The remaining condition $y \ \invneg \ \omega = \bar\partial \omega$ is exactly the construction of $y$.
\end{proof}

\subsection{Two-sided Batalin--Vilkovisky calculi}

Similar to the case of Gerstenhaber calculi, we define a two-sided Batalin--Vilkovisky calculus as one endowed with a left contraction $\vdash$. Since every Batalin--Vilkovisky calculus $\G\C^\bullet$ comes already equipped with a volume form $\omega \in \A^d$ which induces isomorphisms $\kappa: \G^p \cong \A^{p + d}$, there is only one possible $\vdash$ compatible with \eqref{left-contraction-def}. By Lemma~\ref{one-sided-two-sided-G-calc}, it satisfies already most of the relations imposed in a two-sided Gerstenhaber calculus.

\begin{defn}\label{two-sided-BV-calc-def}\note{two-sided-BV-calc-def}
 A Batalin--Vilkovisky calculus $\G\C^\bullet$ of dimension $d$ is \emph{two-sided}\index{Batalin--Vilkovisky calculus!two-sided} if the left contraction $\vdash\,: \G^p \times \A^i \to \G^{p + i}$ turns it into a two-sided Gerstenhaber calculus. Bigraded two-sided Batalin--Vilkovisky calculi and curved two-sided Batalin--Vilkovisky calculi are defined analogously. 
\end{defn}

We do not impose any additional relations. In particular, the obvious analog of Proposition~\ref{BV-calc-construction} holds.

\subsection{The operator $\check d = \Delta + \bar\partial + (\ell + y) \wedge (-)$}

Let $\G^{\bullet,\bullet}$ be a curved Batalin--Vilkovisky \emph{algebra}. Assume $\ell = 0$ and $y = 0$ for a moment. Then $(\G^{\bullet,\bullet},\Delta,\bar\partial)$ is a double complex, and we can form its \emph{total complex} $(\G^\bullet,\check d)$ as 
$$\G^m := \bigoplus_{p + q = m} \G^{p,q}, \quad \check d := \Delta + \bar\partial;$$
here, $\G^m$ is a finite direct sum since both $p$ and $q$ are bounded below. If $\ell \not= 0$ or $y \not= 0$, then we may have $\check d^2 \not= 0$. To correct this, we set 
$$\check d: \G^m \to \G^{m + 1}, \quad \theta \mapsto \Delta\theta + \bar\partial\theta + (\ell + y) \wedge \theta;$$
this is a $\C$-linear operator compatible with the total degree but not the bidegree.

\begin{lemma}\label{check-d-is-diff}\note{check-d-is-diff}
 The operator $\check d$ is a $\C$-linear differential, i.e., we have $\check d^2 = 0$.
 We denote the cohomology of $(\G^\bullet,\check d)$ by $\HH^\bullet(\G^{\bullet,\bullet})$.
\end{lemma}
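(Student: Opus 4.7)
The plan is to expand $\check d^2(\theta)$ into the nine terms obtained from $(\Delta + \bar\partial + (\ell + y)\wedge -)\circ (\Delta + \bar\partial + (\ell + y)\wedge -)$, and then show that they cancel in three natural groups. I will abbreviate $m = \ell + y$; the key point will be that $m$ has total degree $1$, so the usual signs in the Bogomolov--Tian--Todorov formula and in the derivation rule for $\bar\partial$ both simplify agreeably.

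First, three of the nine terms recombine to produce three commutators via the curved algebra axioms: $\Delta^2\theta = 0$, $\bar\partial^2\theta = [\ell,\theta]$, and $(\Delta\bar\partial + \bar\partial\Delta)(\theta) = [y,\theta]$. A fourth term, $m \wedge m \wedge \theta$, vanishes by the three explicit identities $\ell \wedge \ell = 0$, $y \wedge y = 0$, $\ell \wedge y + y \wedge \ell = 0$ listed in the definition. What remains is the mixed group
\[
\Delta(m \wedge \theta) + \bar\partial(m \wedge \theta) + m \wedge \Delta\theta + m \wedge \bar\partial\theta,
\]
which I need to identify with $-[\ell,\theta] - [y,\theta]$ in order to cancel the two surviving commutators.

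For the first and third summands of this group, I apply the Bogomolov--Tian--Todorov formula with the odd element $m$; since $|m| = 1$, the formula reads
\[
\Delta(m \wedge \theta) + m \wedge \Delta\theta = -[m,\theta] + \Delta(m) \wedge \theta.
\]
For the second and fourth summands, the derivation rule for $\bar\partial$ with $|m| = 1$ gives
\[
\bar\partial(m \wedge \theta) + m \wedge \bar\partial\theta = \bar\partial(m) \wedge \theta.
\]
Adding these, the mixed group equals $-[m,\theta] + (\Delta m + \bar\partial m) \wedge \theta$. Now $\Delta m + \bar\partial m = \Delta \ell + \bar\partial y + \Delta y + \bar\partial \ell$: the first two terms vanish by $\bar\partial(y) + \Delta(\ell) = 0$, the term $\bar\partial \ell$ is zero by hypothesis, and $\Delta y \in \G^{1,1} = 0$ because $\G^{p,q} = 0$ for $p > 0$. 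So the mixed group reduces to $-[\ell,\theta] - [y,\theta]$, cancelling the surviving commutators and giving $\check d^2(\theta) = 0$.

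The main obstacle is the careful sign accounting, especially ensuring that BTT is applied with the correct $|m|$ and that the two pieces of $\bar\partial m$ and $\Delta m$ really do cancel using exactly the curved-algebra compatibility $\bar\partial(y) + \Delta(\ell) = 0$, which is the reason that condition was added. No new computations involving the $\invneg$ or $\cL$ operations are needed, since $\check d$ lives entirely on the Gerstenhaber algebra side.
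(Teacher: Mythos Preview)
Your proof is correct and follows essentially the same approach as the paper's: a direct expansion of $\check d^2(\theta)$ that reduces to $(\Delta y + \bar\partial\ell + \Delta\ell + \bar\partial y)\wedge\theta$ and then invokes $\bar\partial\ell = 0$, $\bar\partial y + \Delta\ell = 0$, and $\Delta y = 0$ by degree reasons. You have simply spelled out the intermediate cancellations (via the Bogomolov--Tian--Todorov formula and the derivation rule for $\bar\partial$) that the paper compresses into the phrase ``a direct computation.''
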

\begin{proof}
 A direct computation yields 
 $$\check d^2(\theta) = \Delta (y) \wedge \theta + \bar\partial (\ell) \wedge \theta + \Delta (\ell)\wedge \theta + \bar\partial (y) \wedge \theta;$$
 it is here that we use our assumptions $\bar\partial(\ell) = 0$ and $\bar\partial (y) + \Delta(\ell) = 0$ in order to conclude $\check d^2(\theta) = 0$ (note that $\Delta y = 0$ by degree reasons).
\end{proof}
\begin{rem}
 The derivation rules for $\check d(\theta \wedge \xi)$ and $\check d[\theta,\xi]$ which one may naively expect do not hold.
\end{rem}

In a curved Batalin--Vilkovisky \emph{calculus} $(\G^{\bullet,\bullet},\A^{\bullet,\bullet})$, the operator $\check d$ is nothing but the translation of $d$ to $\G^{\bullet,\bullet}$ using the isomorphism $\kappa$.

\begin{lemma}\label{d-check-d-comp}\note{d-check-d-comp}
  Let $(\G^{\bullet,\bullet},\A^{\bullet,\bullet})$ be a curved Batalin--Vilkovisky calculus. Then $\kappa \circ \check d = d \circ \kappa$.
\end{lemma}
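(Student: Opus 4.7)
The plan is a direct computation comparing the three summands of $d$ against the three summands of $\check d$, transported through the contraction $\kappa(\theta) = \theta \invneg \omega$. For $\theta \in \G^{p,q}$, I would expand
$$d(\kappa(\theta)) = \partial(\theta \invneg \omega) + \bar\partial(\theta \invneg \omega) + \ell \invneg (\theta \invneg \omega)$$
and try to rewrite each summand as the contraction of some piece of $\check d(\theta)$ with $\omega$.

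For the first summand, I would simply invoke the defining relation $\Delta(\theta) \invneg \omega = \partial(\theta \invneg \omega)$ of a Batalin--Vilkovisky calculus to get $\partial(\theta \invneg \omega) = \kappa(\Delta\theta)$. For the third summand, the associativity $(\ell \wedge \theta) \invneg \omega = \ell \invneg (\theta \invneg \omega)$ of the contraction immediately identifies it with $\kappa(\ell \wedge \theta)$. The only term requiring some manipulation is the middle one. Here I would apply the derivation rule for $\bar\partial$ with respect to $\invneg$, namely
$$\bar\partial(\theta \invneg \omega) = \bar\partial\theta \invneg \omega + (-1)^{|\theta|}\theta \invneg \bar\partial\omega,$$
and then use the two curved compatibilities $y \invneg \omega = \bar\partial\omega$ and the graded commutativity of $\wedge$. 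Since $y \in \G^{0,1}$ has total degree $1$, one has $\theta \wedge y = (-1)^{|\theta|} y \wedge \theta$, so
$$(-1)^{|\theta|}\theta \invneg (y \invneg \omega) = (-1)^{|\theta|}(\theta \wedge y) \invneg \omega = (y \wedge \theta) \invneg \omega,$$
and hence $\bar\partial(\theta \invneg \omega) = \bar\partial\theta \invneg \omega + (y \wedge \theta) \invneg \omega = \kappa(\bar\partial\theta) + \kappa(y \wedge \theta)$.

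Adding the three pieces up, I obtain
$$d(\kappa(\theta)) = \kappa(\Delta\theta) + \kappa(\bar\partial\theta) + \kappa(y \wedge \theta) + \kappa(\ell \wedge \theta) = \kappa\big(\Delta\theta + \bar\partial\theta + (\ell + y) \wedge \theta\big) = \kappa(\check d(\theta)),$$
which is the claim. The main (mild) obstacle is just to keep track of the Koszul sign on the $\bar\partial\omega$ term: one needs to see that the factor $(-1)^{|\theta|}$ produced by the derivation rule for $\invneg$ is exactly cancelled by the factor $(-1)^{|\theta|\cdot|y|}$ arising from moving $y$ past $\theta$ under $\wedge$, so that the wedge correction appears as $y \wedge \theta$ rather than $\theta \wedge y$, matching the definition of $\check d$. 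Everything else is immediate from the definitions and does not use the conditions $\bar\partial\ell = 0$ or $\bar\partial y + \Delta\ell = 0$ (those were already consumed in proving $d^2 = 0$ and $\check d^2 = 0$).
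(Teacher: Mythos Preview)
Your proof is correct and follows exactly the approach indicated in the paper. The paper's proof is the one-line remark that this is a short direct computation using $(-1)^{|\theta|}\,\theta\ \invneg\ \bar\partial\omega = (y \wedge \theta)\ \invneg\ \omega$, and you have spelled out precisely that computation, including the sign check.
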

\begin{proof}
 This is a short and direct computation using $(-1)^{|\theta|}\theta \ \invneg\ \bar\partial\omega = (y \wedge \theta) \ \invneg\ \omega$.
\end{proof}

\section{Semi-Cartan structures and Cartan structures}\label{Cartan-str-sec}\note{Cartan-str-sec}

We introduce two notions of additional structures on an algebraic structure $\cP$ to capture the similarities between our above definitions: \emph{semi-Cartan structures} and \emph{Cartan structures}. The name is meant to suggest that an algebraic structure with a (semi-)Cartan structure on it is similar to the Cartan calculus in differential geometry.

When discussing (semi-)Cartan structures on an algebraic structure $\cP$ abstractly, the axioms of $\cP$ play no role. If all the data of an algebraic structure except axioms are given, we call this a \emph{pre-algebraic structure}. If $\cP$ is a pre-algebraic structure, then we call a representation of $\cP$ in a context a $\cP$-\emph{pre-algebra}.

Our notion of a context $(\cS,\C,\cO,\bC)$ allows us to distinguish between $\cO$-linear maps and $\C$-linear maps. The notion of a \emph{multilinear differential operator of finite order} interpolates between the two. The classical notion of a linear differential operator can be found e.g.~in \cite[0G3Q]{stacks}. Here is our multilinear variant.

\begin{defn}
 A $\C$-multilinear map 
 $$\mu: P_1 \times ... \times P_n \to Q$$
 is a \emph{differential operator of order $0$} if it is $\cO$-multilinear. Then, inductively, for $n \geq 1$, it is a \emph{differential operator of order $N + 1$} if, for all indices $1 \leq \ell \leq n$ and all local sections $a \in \Gamma(U,\cO_X)$, the induced map 
 $$D_{\ell;a}(\mu): P_1|_U \times ... \times P_n|_U \to Q|_U, \quad (p_1,...,p_n) \mapsto \mu(p_1,...,a \cdot p_\ell,...,p_n) - a \cdot \mu(p_1,...,p_n),$$
 is a differential operator of order $N$. The map $\mu$ is a \emph{differential operator of finite order} if it is a differential operator of order $N$ for some $N \geq 0$. Differential operators of order $N$ form a sheaf $\D iff_{\cO/\C}^N(P_1,...,P_n;Q)$.
\end{defn}
\begin{rem} 
 \hspace{1cm}
 \begin{enumerate}[label=(\arabic*)]
  \item If a map is a differential operator of order $N$, it is also one of any order $M \geq N$. If $\mu$ is a differential operator of order $N$, and $\nu_1,...,\nu_n$ are differential operators of order $M$, then the composition $\mu(\nu_1,...,\nu_n)$ is a differential operator of order $M + N$.
  \item Differential operators of finite order play an important role in the context $\mathfrak{Coh}(X/S)$ since they admit base change along morphisms $T \to S$, which we do not have for general $\C$-multilinear maps.
  \item If $\F$ is a sheaf of $\cO$-algebras, and all sheaves $P_1,...,P_n,Q$ are sheaves of $\F$-modules, then the same definition of multilinear differential operators makes sense with $\cO$ replaced by $\F$.
 \end{enumerate}

\end{rem}

A (pre-)algebraic structure $\cP$ itself does not keep track of the $\cO$-module structure and possible differential operator properties of the maps. We use \emph{semi-Cartan structures} for this purpose.

\begin{defn}
 Let $\cP$ be a pre-algebraic structure. Then a \emph{semi-Cartan structure}\index{semi-Cartan structure} on $\cP$ 
 $$(F,\:1_F,\:(\ast^P)_{P \in D},\:\cP_\bullet)$$
 consists of a domain $F \in D(\cP)$ of degree $|F| = 0$, a  constant $1_F \in \cP(F)$, an operation $\ast^P \in \cP(F,P;P)$ for every $P \in D(\cP)$, and an exhaustive increasing filtration 
 $$\cP_0(P_1,...,P_n;Q) \subseteq \cP_1(P_1,...,P_n;Q) \subseteq ... \subseteq \cP(P_1,...,P_n;Q)$$
 with $\ast^P \in \cP_0(F,P;P)$. A $\cP$-pre-algebra $\E^\bullet$ is \emph{semi-Cartanian}\index{semi-Cartanian} if $\ast^P: \E^F \times \E^P \to \E^P$ is $\cO$-bilinear, we have
 $$a \ast^F b = b \ast^F a, \quad a \ast^P( b \ast^P p) = (a \ast^F b) \ast^P p, \quad 1_F \ast^P p = p,$$
 i.e., $\F := \E^F$ is an $\cO$-algebra with unit $1_F$ and $\E^P$ is an $\F$-module via $\ast^P$, and, for every $\mu \in \cP_N(P_1,...,P_n;Q)$, the map 
 $$\mu: \E^{P_1} \times ... \times \E^{P_n} \to \E^Q$$
 is a multilinear differential operator of order $N$ with respect to $\F/\C$. In particular, it is a multilinear differential operator of order $N$ with respect to $\cO/\C$. 
\end{defn}

In the context $\mathfrak{Coh}(X/S)$, we will usually have $\E^F = \cO_X$, but in the context $\mathfrak{Comp}(\Lambda)$, we have $\cO = \C$, and $\E^F$ will be often a non-trivial $\cO$-algebra.

\begin{ex}
 The following algebraic structures carry a natural semi-Cartan structure in their singly graded version:
 \begin{enumerate}[label=(\alph*)]
 \item Lie--Rinehart algebras; we have $\ast^P \in \cP_0(F,P;P)$ and $\nabla^P \in \cP_1(T,P;P)$;
 \item Lie--Rinehart pairs;
 \item Gerstenhaber algebras of dimension $d$; we take $F = \G^0$; the operation $\ast^P$ is given by the $\wedge$-product with $F$; for the filtration, we have $\wedge \in \cP_0$ and $[-,-] \in \cP_1$;
 \item Gerstenhaber calculi of dimension $d$; we take $F = \G^0$; for $\A^i$, the operation $\ast^P$ is given by the $\wedge$-product with $\A^0$ via the isomorphism $\lambda$; for the filtration, we have $\invneg \in \cP_0$, $\partial \in \cP_1$, $\cL \in \cP_1$;
 \item Batalin--Vilkovisky algebras and calculi of dimension $d$ with the same semi-Cartan structure as the underlying Gerstenhaber algebras and calculi; for the filtration, we have $\Delta \in \cP_1$.
 \end{enumerate}
 They carry a natural semi-Cartan structure in their bigraded and curved versions as well, but for these versions, we will introduce separate conventions below. For each of these algebraic structures, a $\cP$-algebra as defined above is automatically semi-Cartanian. Note that the condition that a map should be not only $\C$-multilinear but $\cO$-multilinear, which recurs often in our above definitions, fits nicely into the framework of semi-Cartan structures, while an algebraic structure itself has no direct means of distinguishing $\C$-multilinear from $\cO$-multilinear maps. Also note that the algebraic structure of Lie algebras does not naturally carry a semi-Cartan structure since it has no object $F$.
\end{ex}

A \emph{Cartan structure} also carries information about the tangent sheaf and the action by derivations.

\begin{defn}\label{Cartan-str-defn}\note{Cartan-str-defn}
 Let $\cP$ be a pre-algebraic structure. Then a \emph{Cartan structure}\index{Cartan structure} on $\cP$ 
 $$(F,\:T,\: Z,\:1_F,\:(\ast^P)_{P \in D},\: (\nabla^P)_{P \in D},\:\cP_\bullet)$$
 is a semi-Cartan structure together with a domain $T \in D(\cP)$ of degree $|T| = -1$ (in particular, $F \not= T$), a subset $Z \subseteq D(\cP)$ with $F \in Z$,\footnote{See the end of the section for the purpose of $Z$.} and an operation $\nabla^P \in \cP_1(T,P;P)$ for every domain $P \in D(\cP)$. We assume furthermore that 
 $$\cP(P_1,...,P_n;Q) = \emptyset \enspace \mathrm{if} \ n \geq 3.$$
 A $\cP$-pre-algebra is \emph{Cartanian}\index{Cartanian} if it is semi-Cartanian, satisfies $\nabla^T_{\theta}(\xi) = - \nabla_\xi^T(\theta)$ for $\theta,\xi \in \T$ and
 $$a \ast^M \nabla^M_\theta(m) = \nabla^M_{a \ast^T \theta}(m)$$
 for every $M \in Z$ (in particular for $M = F$),
 has $\nabla_\theta^P(\gamma) = 0$ for all constants $\gamma \in \cP(P)$, and for all $\mu \in \cP(P_1,...,P_n;Q)$ (for $n \geq 1$), the relation 
 \begin{equation}\label{partial-unifying-eqn}
  \nabla^Q_\theta(\mu(p_1,...,p_n)) = \sum_{s = 1}^n \mu(p_1,...,\nabla^{P_s}_\theta(p_s),...,p_n)
 \end{equation}
  holds.
\end{defn}

Lie--Rinehart algebras, Gerstenhaber algebras, and (one- and two-sided) Gerstenhaber calculi (and, technically, also their bigraded counterparts) carry a natural Cartan structure with $Z = \{F\}$. We have $\nabla^P_\theta(\xi) = [\theta,\xi]$ respective $\nabla^P_\theta(\alpha) = \cL_\theta(\alpha)$. Lie--Rinehart \emph{pairs} carry a natural Cartan structure with $Z = \{F,E\}$. In all these cases, the $\cP$-algebras are automatically Cartanian. Batalin--Vilkovisky algebras and calculi also carry a natural Cartan structure---the same as the underlying Gerstenhaber algebras or calculi---, but the $\cP$-algebras are not Cartanian since we have $\nabla_\theta(\Delta(\xi)) = [\theta,\Delta(\xi)] \not= \Delta[\theta,\xi] = \Delta(\nabla_\theta(\xi))$. (Similarly, the curved algebras are not Cartanian since $\nabla^P$ is not compatible with the predifferential $\bar\partial$, and since $\nabla_\theta(\ell) \not= 0$ in general; however, we will introduce a modified version of Cartanianity below to account for this.)

\begin{rem}
 The relation \eqref{partial-unifying-eqn} can be considered as a partial unification of many relations that we have required in the definitions. For example, it yields the Poisson identities, the definition of the Lie bracket, and the Jacobi identity in a Lie--Rinehart algebra. It yields with the Jacobi identity and the derivation rule two of the three relations of a Lie--Rinehart module. Ultimately, a Cartan structure on an algebraic structure determines a Lie--Rinehart algebra inside it and exhibits every domain in $D(\cP)$ as a sort of weak Lie--Rinehart module over it (not necessarily satisfying the third condition of a Lie--Rinehart module unless $P \in Z$).
\end{rem}

\begin{rem}
 The condition $\cP(P_1,...,P_n;Q) = \emptyset$ for $n \geq 3$ plays an important role when we construct the Thom--Whitney resolution of a Cartanian $\cP$-algebra in $\mathfrak{Coh}(X/S)$. In the general case, it is not clear (to the author) how to define the signs of the operations in the resolution correctly. Also, by giving the definitions of bigraded and curved versions explicitly for all algebraic structures we are interested in, we have circumvented the problem of defining the correct relations of the operations in the Thom--Whitney resolution in general.
\end{rem}

The purpose of the subset $Z \subseteq D(\cP)$ is to give a modified variant of faithfulness which takes only $P \in Z$ into account. For $Z = \{F\}$, we recover strict faithfulness, but Lie--Rinehart pairs are often only faithful when we take $\E^E$ into account as well.

\begin{defn}\label{Cartan-faithful-def}\note{Cartan-faithful-def}
 Let $\cP$ be a pre-algebraic structure carrying a Cartan structure.
 \begin{enumerate}[label=(\alph*)]
  \item A Cartanian $\cP$-pre-algebra is \emph{strictly faithful} if the map 
  $$\mathrm{ad}_F: \T \to \cH om_\C(\F,\F), \quad \theta \mapsto (a \mapsto \nabla^F_\theta(a)),$$
  is injective. 
  \item A Cartanian $\cP$-pre-algebra is \emph{$Z$-faithful} if the intersection of the kernels of the maps
  $$\mathrm{ad}_M: \T \to \cH om_\C(\E^M,\E^M), \quad \theta \mapsto (m \mapsto \nabla^M_\theta(m)),$$
  for $M \in Z$ is zero. 
 \end{enumerate}
\end{defn}

Due to the condition $a \ast^M \nabla^M_\theta(m) = \nabla^M_{a \ast^T \theta}(m)$, the notion of $Z$-faithfulness is easier to study, but it is sufficient for our purposes.

\section{(Semi-)Cartan structures in the bigraded/curved case}\label{bg-version-sec}\note{bg-version-sec}

Given an algebraic structure $\cP$ such as Lie--Rinehart algebras, it is difficult to give a general rule to construct the relations of the bigraded version $\cP^{bg}$ out of the relations of $\cP$ such that we get back our explicit definitions of the bigraded versions above. However, on the level of the pre-algebraic structures, i.e., omitting the relations, the construction of $\cP^{bg}$ out of $\cP$ is straightforward.

\begin{defn}
 Let $\cP$ be a pre-algebraic structure. Then we define a new pre-algebraic structure $\cP^{bg}$ as follows: The set of domains is $D(\cP^{bg}) := \{P^j \ | \ P \in D(\cP), j \geq 0\}$, and the grading is $|P^j| := |P| + j$. The set of constants is $\cP^{bg}(P^0) = \cP(P)$ and $\cP^{bg}(P^j) = \emptyset$ for $j \geq 1$. For the operations, we have 
 $$\cP^{bg}(P_1^{j_1},...,P_n^{j_n};Q^j) = \cP(P_1,...,P_n;Q)$$
 if $j_1 + ... + j_n = j$, and 
  $$\cP^{bg}(P_1^{j_1},...,P_n^{j_n};Q^j) = \emptyset$$
  otherwise. If $\gamma \in \cP(P)$ or $\mu \in \cP(P_1,...,P_n;Q)$, then we keep the same notation for the constant in $\cP^{bg}(P^0)$ and the operation in $\cP^{bg}(P^{j_1},...,P^{j_n};Q^j)$ if confusion is unlikely.
  \end{defn}
  
  If $\cP$ carries a semi-Cartan structure or a Cartan structure, then $\cP^{bg}$ carries a variant of the respective structure as well. However, for a $\cP^{bg}$-pre-algebra to be semi-Cartanian respective Cartanian, we wish to impose additional conditions that do not come from the general definition of (semi-)Cartanian pre-algebras, and which reflect the bigraded nature of $\cP^{bg}$.
  
  \begin{defn}\label{Cartanian-Pbg-def}\note{Cartanian-Pbg-def}
  If $\cP$ carries a semi-Cartan structure, then we say that a $\cP^{bg}$-pre-algebra $\E^{\bullet,\bullet}$ is \emph{semi-Cartanian}\index{semi-Cartanian} if
  \begin{itemize}
   \item $\ast^P: \F^{j_1} \times \E^{P,j_2} \to \E^{P,j_1 + j_2}$ is $\cO$-bilinear;
   \item $a \ast^F b = (-1)^{|a||b|} b \ast^F a$ for $a \in \F^{j_1}$, $b \in \F^{j_2}$ and the total degree $|a| = 0 + j_1$, $|b| = 0 + j_2$;
   \item $a \ast^P (b \ast^P p) = (a \ast^F b) \ast^P p$ for $a \in \F^{j_1}$, $b \in \F^{j_2}$, $p \in \E^{P,j_3}$;
   \item $1_F \ast^P p = p$ for $p \in \E^{P,j}$;
   \item every operation $\mu \in \cP^{bg}_N(P_1^{j_1},...,P_n^{j_n};Q^j)$ (for the induced filtration from $\cP$) is a multilinear differential operator of order $N$ with respect to $\F^0/\C$.
  \end{itemize}
  If $\cP$ carries a Cartan structure, then we say a $\cP^{bg}$-pre-algebra $\E^{\bullet,\bullet}$ is \emph{Cartanian}\index{Cartanian} if it is semi-Cartanian and:
  \begin{itemize}
   \item $\nabla_\theta^T(\xi) = - (-1)^{(|\theta| + 1)(|\xi| + 1)}\nabla_\xi^T(\theta)$ for $\theta \in \T^{j_1}$, $\xi \in \T^{j_2}$;
   \item $a \ast^M \nabla_\theta^M(m) = \nabla_{a \ast^T \theta}^M(m)$ for $M \in Z$ and $a \in \F^{j_1}$, $\theta \in \T^{j_2}$, $m \in \E^{M,j_3}$;
   \item $\nabla^P_\theta(\gamma) = 0$ for $\theta \in \T^j$ and constants $\gamma \in \E^{P,0}$;
   \item $\nabla_\theta^Q(\mu(p)) = (-1)^{|\mu|(|\theta| + 1)} \mu(\nabla_\theta^P(p))$ for $\theta \in \T^{j_1}$, $p \in \E^{P,j_2}$ for unary operations $\mu \in \cP(P;Q)$, where $|\mu| := |Q| - |P|$ is the degree of $\mu$;
   \item $\nabla_\theta^R\mu(p,q) = \mu(\nabla^{P}_\theta(p),q) + (-1)^{(|\theta| + 1)(|\mu| + |p|)}\mu(p,\nabla^Q_\theta(q))$ for $p \in \E^{P,j_1}$,$q \in \E^{Q,j_2}$, $\theta \in \T^j$ for binary operations $\mu \in \cP(P,Q;R)$, where $|\mu| := |R| - |P| - |Q|$ is the degree of $\mu$.
  \end{itemize}
\end{defn}
\begin{rem}
 These sign conventions are compatible with the explicit definitions of bigraded structures that we have made.
 We do not know the correct signs in the formulae if $\mu$ is a ternary or higher operation. Calculations with various compositions of binary operations in the Thom--Whitney resolution (see Chapter~\ref{TW-reso-sec}) suggest that there may not be a universal sign correct for all ternary operations. This is essentially the reason why we exclude higher operations from the definition of a Cartan structure. All sign conventions that we make are chosen in a way to fit our definition of Thom--Whitney resolutions in Chapter~\ref{TW-reso-sec}.
\end{rem}

Similarly, we can define the curved version on the level of pre-algebraic structures. 

\begin{defn}\label{Cartan-curved-def}\note{Cartan-curved-def}
 Let $\cP$ be a pre-algebraic structure which carries a Cartan structure. Then we define a new pre-algebraic structure $\cP^{crv}$ as follows: First, we take the pre-algebraic structure $\cP^{bg}$. Then we add a constant $\ell \in \cP(T^2)$ and an operation $\bar\partial \in \cP_1(P^j;P^{j + 1})$ for every $P^j \in D(\cP^{bg})$. We say that a $\cP^{crv}$-pre-algebra is \emph{Cartanian}\index{Cartanian} if it is Cartanian for $\cP^{bg}$ and:
 \begin{itemize}
  \item $\bar\partial(\gamma) = 0$ for constants $\gamma \in \cP(P)$;
  \item $\bar\partial(\mu(p)) = (-1)^{|\mu|}\mu(\bar\partial(p))$ for unary operations $\mu \in \cP(P;Q)$ and $p \in \E^{P,j}$;
  \item $\bar\partial\mu(p,q) = \mu(\bar\partial(p),q) + (-1)^{|\mu| + |p|}\mu(p,\bar\partial(q))$ for binary operations $\mu \in \cP(P,Q;R)$ and $p \in \E^{P,j_1}$, $q \in \E^{Q,j_2}$;
  \item $\bar\partial^2(p) = \nabla_\ell(p)$ for all $P^j \in D(\cP^{bg})$;
  \item  $\bar\partial(\ell) = 0$.
 \end{itemize}
\end{defn}

\begin{defn}
 Let $\cP$ be a pre-algebraic structure carrying a Cartan structure.
 \begin{enumerate}[label=(\alph*)]
  \item A Cartanian $\cP^{bg}$-pre-algebra is \emph{strictly faithful} if, for every $j \geq 0$, the map 
  $$\mathrm{ad}_F^j: \T^j \to \cH om_\C(\F^0,\F^j), \quad \theta \mapsto (a \mapsto \nabla^{F^0}_\theta(a)),$$
  is injective.
   \item A Cartanian $\cP^{bg}$-pre-algebra is \emph{$Z$-faithful} if, for every $j \geq 0$, the intersection of the kernels of the maps
  $$\mathrm{ad}_M^j: \T^j \to \cH om_\C(\E^{M,0},\E^{M,j}), \quad \theta \mapsto (m \mapsto \nabla^{M^0}_\theta(m)),$$
  for $M \in Z$ is zero.
 \end{enumerate}
 A Cartanian $\cP^{crv}$-pre-algebra is strictly faithful respective $Z$-faithful if the underlying $\cP^{bg}$-pre-algebra is.
\end{defn}

\chapter{Gauge transforms}\label{gauge-trafo-sec}\note{gauge-trafo-sec}

Let us work in the following setup:

\begin{sitn}
 Let $\Lambda$ be a complete local Noetherian $\kk$-algebra with residue field $\kk$, let $X$ be a topological space (with associated site $\cS$), and let $\C$ be the constant sheaf of rings with stalk $\Lambda$. We form a context $\mathfrak{Flat}(X;\Lambda) := (\cS,\C,\cO,\bC)$ by setting $\cO = \C$ and by taking for $\bC$ the class of flat sheaves of $\Lambda$-modules, i.e., sheaves of $\Lambda$-modules $\E$ with flat stalks $\E_x$. 
\end{sitn}

For every $A \in \mathbf{Art}_\Lambda$, we can form the tensor product $\E_A := \E \otimes_\Lambda A$ in the category of sheaves of $\Lambda$-modules; it is a flat sheaf of $A$-modules. We consider it as an infinitesimal thickening of $\E_0 := \E \otimes_\Lambda \kk$. If $B' \to B$ is a surjection in $\mathbf{Art}_\Lambda$ with kernel $I \subset B'$, then we have an exact sequence 
$$0 \to I \otimes_\Lambda \E_x \to \E_{B',x} \to \E_{B,x} \to 0$$
of stalks at $x \in X$ due to flatness. If $B' \to B$ is a small extension, then $I \otimes_\Lambda \E_x \cong I \otimes_\kk \E_{0,x}$.

Let $\cP$ be a pre-algebraic structure---we shall be primarily interested in our seven algebraic structures carrying a Cartan structure and in their bigraded counterparts---, and let $\E^\bullet$ be a $\cP$-pre-algebra in $\mathfrak{Flat}(X;\Lambda)$. For every $A \in \mathbf{Art}_\Lambda$, the tensor product $\E_A^\bullet$ is a $\cP$-pre-algebra in $\mathfrak{Flat}(X;A)$; indeed, all operations allow a base change along $\Lambda \to A$ because they are $\Lambda$-multilinear, and the underlying topological space remains the same. 

For a surjection $B' \to B$ in $\mathbf{Art}_\Lambda$, an \emph{infinitesimal automorphism} of $\E_{B'}^\bullet$ over $\E_B^\bullet$ consists of an $B'$-linear automorphism $\phi^P: \E_{B'}^P \to \E_{B'}^P$ for every domain $P \in D(\cP)$ which restricts to the identity on $\E_B^P$ along the canonical map $\E_{B'}^P \to \E_B^P$, preserves all constants in $\cP(P)$, and commutes with all operations 
$$\mu: \E^{P_1}_{B'} \times ... \times \E_{B'}^{P_n} \to \E_{B'}^Q.$$
Infinitesimal automorphisms form a sheaf of groups $\A ut_{B'/B}(\E^\bullet_{B'})$ on $X$. In general, it is not so easy to determine this sheaf, but if $\cP$ carries a Cartan structure and $\E^\bullet$ is Cartanian, then we can easily describe specific infinitesimal automorphisms, the \emph{gauge transforms}. They are controlled by elements $\theta \in \T_{B'} = \E_{B'}^T$ and constructed from the operations $\nabla^P$---which is the reason why we require these operations as part of a Cartan structure. In good situations, $\theta$ can be recovered from its gauge transform, and the gauge transforms are precisely those infinitesimal automorphisms which ``arise from geometry''. Before we define gauge transforms, we need an easy lemma.

\begin{lemma}
 Let $\cP$ be a pre-algebraic structure carrying a Cartan structure, and let $\E^\bullet$ be a Cartanian $\cP$-pre-algebra in $\mathfrak{Flat}(X;\Lambda)$. Then $\E^\bullet_A$ is a Cartanian $\cP$-pre-algebra in $\mathfrak{Flat}(X;A)$.
\end{lemma}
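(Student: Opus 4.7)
The plan is to verify each piece of the Cartanian definition for $\E^\bullet_A$ by checking that it is inherited via the base change $-\otimes_\Lambda A$. The text immediately preceding the lemma has already recorded that $\E^\bullet_A$ is a $\cP$-pre-algebra in $\mathfrak{Flat}(X;A)$, because every operation and constant is $\Lambda$-multilinear and therefore extends uniquely to $A$-multilinear operations on the tensored sheaves. So the task is purely to transport the Cartanian and semi-Cartanian axioms from $\E^\bullet$ to $\E^\bullet_A$.

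For the semi-Cartanian part, the equational axioms---commutativity $a \ast^F b = b \ast^F a$, associativity of $\ast^P$, the unit relation $1_F \ast^P p = p$, and $\cO$-bilinearity of each $\ast^P$ over the new $\cO = A$---are identities between $\Lambda$-multilinear expressions in $\E^\bullet$ and therefore survive base change tautologically once we note that the $\F_A$-module structure on $\E^P_A$ is defined from the $\F$-module structure on $\E^P$ by tensoring. The one nontrivial point is the filtration: if $\mu \in \cP_N(P_1,\ldots,P_n;Q)$ is realized on $\E^\bullet$ by a multilinear differential operator of order $N$ with respect to $\F/\C$, I must check that $\mu_A := \mu \otimes_\Lambda A$ is such an operator of order $N$ with respect to $\F_A/A$, where $\F_A = \F \otimes_\Lambda A$. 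I would argue this by induction on $N$: for a local section $a \otimes \alpha$ of $\F_A$, a direct computation using $\Lambda$-multilinearity gives $D_{\ell;a \otimes \alpha}(\mu_A) = D_{\ell;a}(\mu)_A \circ (\text{multiplication by } \alpha)$, which is of order $N-1$ by induction; since local sections of $\F_A$ are locally finite sums of such pure tensors and differential operators of a fixed order are closed under finite sums and scalar multiplication, this closes the induction.

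For the remaining Cartanian conditions---skew-symmetry $\nabla^T_\theta(\xi) = -\nabla^T_\xi(\theta)$, the compatibility $a \ast^M \nabla^M_\theta(m) = \nabla^M_{a \ast^T \theta}(m)$ for $M \in Z$, vanishing on constants $\nabla^P_\theta(\gamma) = 0$, and the derivation identity \eqref{partial-unifying-eqn}---each is a universal equation between operations and constants of $\E^\bullet$; applying $-\otimes_\Lambda A$ turns it into the corresponding equation for $\E^\bullet_A$, with constants $\gamma \otimes 1$ playing the role of the base-changed constants as required. The main (and essentially only) technical step is the differential-operator induction of the middle paragraph; the rest is a formal consequence of the multilinearity of the operations and the universal property of $-\otimes_\Lambda A$.
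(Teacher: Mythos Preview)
Your proof is correct and follows essentially the same approach as the paper: both argue that every Cartanian axiom is a $\Lambda$-multilinear identity among the operations and constants, and such identities pass through $-\otimes_\Lambda A$ by checking on stalks, where elements are sums of pure tensors. The only cosmetic difference is in the handling of the differential-operator condition: the paper observes that ``$\mu$ is a differential operator of order $N$ over $\F/\C$'' is itself a multilinear identity for the map $\F \times \cdots \times \F \times \E^{P_1} \times \cdots \times \E^{P_n} \to \E^Q$ (iterated $D_{\ell;a}$'s vanish), so it falls under the same umbrella, whereas you unpack this via an explicit induction on $N$. Both arguments are valid; the paper's packaging is slightly more uniform, yours slightly more explicit. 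Note also that, as the paper remarks, in this context $\cO = \C$, so the $\cO$-bilinearity requirement on $\ast^P$ is automatic and does not need separate verification.
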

\begin{proof}
 All requirements can be expressed as relations of (compositions of) operations (possibly with inserted constants). This holds in particular for the differential operator property, which can be expressed as relations for maps 
 $$\F \times ... \times \F \times \E^{P_1} \times ... \times \E^{P_n} \to \E^Q.$$
 The relations can be checked on the stalks, and there we have $\E^P_{A,x} = \E^P \otimes_\Lambda A$. By $A$-multilinearity of all relations, it is sufficient to check them on entries which are in the image of the natural maps $\E^P_x \to \E^P_{A,x}$, and there, the relations hold by assumption. Also note that all $\cO$-linearity requirements are automatic since $\C = \cO$.
\end{proof}

\begin{defn}\label{gauge-trafo-def}\note{gauge-trafo-def}
 Let $B' \to B$ be a surjection in $\mathbf{Art}_\Lambda$ with kernel $I \subset B'$, and let $\E^\bullet$ be a Cartanian $\cP$-pre-algebra. Let $\theta \in \Gamma(U,I \cdot \T_{B'})$ for some open $U \subseteq X$. Then the \emph{gauge transform}\index{gauge transform} is the map 
 $$\mathrm{exp}_\theta: \enspace \E_{B'}^P|_U \to \E_{B'}^P|_U, \quad p \mapsto \sum_{n = 0}^\infty \frac{(\nabla^P_\theta)^n(p)}{n!} = p + \nabla^P_\theta(p) + \frac{1}{2}\nabla^P_\theta\nabla^P_\theta(p) + ... \ .$$
\end{defn}

This definition is one of the key reasons why we assume $\mathrm{char}(\kk) = 0$. Let us write $[\theta,\xi] := \nabla^T_\theta(\xi)$ for $\theta,\xi \in \T$ as usual. This turns $\T$ into a sheaf of Lie algebras.

\begin{lemma}\label{gauge-trafo-prop}\note{gauge-trafo-prop}
 The gauge transform has the following properties:
 \begin{enumerate}[label=\emph{(\alph*)}]
  \item $\mathrm{exp}_\theta: \E_{B'}^P \to \E_{B'}^P$ is a well-defined $B'$-linear sheaf homomorphism.
  \item $\mathrm{exp}_\theta: \E_{B'}^P \to \E_{B'}^P$ restricts to the identity on $\E_B^P$.
  \item For a constant $\gamma \in \cP(P)$, we have $\mathrm{exp}_\theta(\gamma) = \gamma$.
  \item For an operation $\mu \in \cP(P_1,...,P_n;Q)$, we have 
  $$\mu(\mathrm{exp}_\theta(p_1),..., \mathrm{exp}_\theta(p_n)) = \mathrm{exp}_\theta(\mu(p_1,...,p_n)).$$
  \item For $\theta = 0 \in I \cdot \T_{B'}$, we have $\mathrm{exp}_0(p) = p$.
  \item For $\theta,\xi \in I \cdot \T_{B'}$, the composition is given by $\mathrm{exp}_\theta \circ \mathrm{exp}_\xi(p) = \mathrm{exp}_{\theta \odot \xi}(p)$ for the \emph{Baker--Campbell--Hausdorff formula}\index{Baker--Campbell--Hausdorff formula}
  \begin{align}
  \theta \odot \xi &:= \sum_{n = 1}^\infty\frac{(-1)^{n + 1}}{n} \sum_{r_1 + s_1 > 0}\dots \sum_{r_n + s_n > 0}\frac{1}{\sum_{i = 1}^n(r_i + s_i)}\frac{[\theta^{(r_1)},\xi^{(s_1)},...,\theta^{(r_n)},\xi^{(s_n)}]}{\prod_{i = 1}^nr_i!s_i!} \label{BCH}\tag{BCH} \\
  &= \theta + \xi + \frac{1}{2}[\theta,\xi] + \frac{1}{12}[\theta,[\theta,\xi]] - \frac{1}{12}[\xi,[\theta,\xi]] + ... \nonumber
 \end{align}
 where $[\theta] := \theta$ and $[\theta_1,\theta_2,...,\theta_{n + 1}] := [\theta_1,[\theta_2,...,\theta_n]]$ inductively for $n \geq 2$, and where $\theta^{(r)}$ denotes the repetition of $r$ entries of $\theta$.
 \item The inverse of $\mathrm{exp}_\theta$ is given by $\mathrm{exp}_{-\theta}$.
 \end{enumerate}
 This shows that $\mathrm{exp}_\theta$ is an infinitesimal automorphism of the $\cP$-pre-algebra $\E_{B'}^\bullet$ over $\E_B^\bullet$. When we endow $I \cdot \T_{B'}$ with the Baker--Campbell--Hausdorff product $\odot$, it becomes a sheaf of groups with neutral element $0$. Then
 \begin{equation}\label{exp-homom}
  \mathrm{exp}: \enspace  (I \cdot \T_{B'},\odot) \to (\A ut_{B'/B}(\E_{B'}^\bullet),\circ)
 \end{equation}
 is a homomorphism of sheaves of groups. If $B' \to B$ is a small extension, then $\theta \odot \xi = \theta + \xi$, and $(I \cdot \T_{B'},\odot)$ is abelian.
\end{lemma}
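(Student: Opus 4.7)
The plan is to verify each of the seven assertions in turn, exploiting throughout that $I \subset B'$ is nilpotent (say $I^N = 0$), so that $\theta \in I \cdot \T_{B'}$ satisfies $\nabla^P_{\theta_1}\cdots\nabla^P_{\theta_N}(p) = 0$; in particular the exponential series terminates after finitely many terms and thus defines a genuine map.

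For (a), the operator $\nabla^P_\theta: \E^P_{B'} \to \E^P_{B'}$ is $B'$-linear because $\nabla^P$ is $\C$-bilinear (and $\C = \cO$ here, so after base change to $B'$ it is $B'$-bilinear in each slot); the finite sum of $B'$-linear operators is $B'$-linear. For (b), since $\theta \in I \cdot \T_{B'}$, each summand $(\nabla^P_\theta)^n(p)/n!$ for $n \geq 1$ lies in $I \cdot \E^P_{B'}$ (i.e.\ in the kernel of $\E^P_{B'} \to \E^P_B$), so the exponential reduces to the identity modulo $I$. For (c), the Cartanian axiom $\nabla^P_\theta(\gamma) = 0$ for every constant $\gamma \in \cP(P)$ kills every term of positive order in the series, leaving $\gamma$ itself.

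The key step is (d). Using the restriction $\cP(P_1,\dots,P_n;Q) = \emptyset$ for $n \geq 3$ from the definition of a Cartan structure, it suffices to handle arity $n = 1$ and $n = 2$. For $n = 1$, the relation \eqref{partial-unifying-eqn} reads $\nabla^Q_\theta \circ \mu = \mu \circ \nabla^P_\theta$, so $\mu$ commutes termwise with the exponential. For $n = 2$, iterating $\nabla^R_\theta(\mu(p,q)) = \mu(\nabla^P_\theta p, q) + \mu(p, \nabla^Q_\theta q)$ yields the binomial identity
\[
(\nabla^R_\theta)^n(\mu(p,q)) \;=\; \sum_{k + \ell = n} \binom{n}{k}\, \mu\bigl((\nabla^P_\theta)^k p,\, (\nabla^Q_\theta)^\ell q\bigr),
\]
and dividing by $n!$ and summing over $n$ gives $\exp_\theta(\mu(p,q)) = \mu(\exp_\theta p, \exp_\theta q)$ by rearranging the resulting double sum.

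Assertion (e) is immediate from the definition. For (f), applying \eqref{partial-unifying-eqn} with $\mu = \nabla^P$ and using the Cartanian antisymmetry $\nabla^T_\theta(\xi) = -\nabla^T_\xi(\theta)$ gives
\[
\nabla^P_\theta \nabla^P_\xi(p) - \nabla^P_\xi \nabla^P_\theta(p) \;=\; \nabla^P_{[\theta,\xi]}(p),
\]
so $\theta \mapsto \nabla^P_\theta$ is a Lie algebra homomorphism from $\T_{B'}$ to $\mathrm{End}_\C(\E^P_{B'})$. Restricted to the nilpotent sheaf of Lie algebras $I \cdot \T_{B'}$ (nilpotent because $I$ is), the standard Baker--Campbell--Hausdorff formula in the associative endomorphism algebra yields $\exp(\nabla^P_\theta) \circ \exp(\nabla^P_\xi) = \exp(\nabla^P_{\theta \odot \xi})$, which is exactly the claim. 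Then (g) follows from (f) applied to $\xi = -\theta$ together with $\theta \odot (-\theta) = 0$ (an elementary consequence of the BCH series, since every nested bracket that appears contains two equal $\pm\theta$ entries and cancels with an analogous term, while the linear part is $\theta + (-\theta) = 0$).

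The final statements are then formal consequences: (a)--(d) show that $\exp_\theta$ is an infinitesimal automorphism of $\E^\bullet_{B'}$ over $\E^\bullet_B$; (e)--(g) show that $\exp$ is a homomorphism into $\A ut_{B'/B}(\E^\bullet_{B'})$; and when $B' \to B$ is a small extension so that $I^2 = 0$, every iterated bracket in \eqref{BCH} lies in $I^2 \cdot \T_{B'} = 0$, leaving $\theta \odot \xi = \theta + \xi$. The main obstacle I expect is a careful bookkeeping of the combinatorial identity in (d)---in particular verifying that the binomial expansion really does reproduce the full Cauchy product of the two exponentials---and checking that the nilpotence hypothesis $I^N = 0$ propagates correctly through the iterated application of $\nabla^P_\theta$, which ultimately relies on the $\cO$-linearity ($= B'$-linearity) of $\nabla^P$ in its $\T$-entry to pull $I$ outside.
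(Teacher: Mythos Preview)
Your proposal is correct and follows essentially the same route as the paper: nilpotency of $I$ for well-definedness, the Cartanian axioms for (b)--(c), the binomial expansion for the binary case of (d), and the reduction of (f) to the classical operator BCH via the Lie algebra homomorphism $\theta \mapsto \nabla^P_\theta$ (the paper spells out the logarithm step explicitly, but the content is the same). The only point you leave implicit that the paper makes explicit is that $(I\cdot\T_{B'},\odot)$ really is a group---associativity and inverses for $\odot$ itself, not just after applying $\exp$---for which the paper invokes the standard fact that BCH defines a group law on any nilpotent Lie algebra over a field of characteristic~$0$.
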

\begin{proof}
Since $\nabla^P_\theta$ is $B'$-linear, it follows from the nilpotency of $I$ that $\mathrm{exp}_\theta$ is a well-defined $B'$-linear operator. The induced map on $\E_B^P$ is the identity since $\theta \in I \cdot \T_{B'}$. For the constants $\gamma \in \cP(P)$, we find $\mathrm{exp}_\theta(\gamma) = \gamma$ since $\nabla_\theta^P(\gamma) = 0$. For unary operations $\mu \in \cP(P;Q)$, we have 
$$\mathrm{exp}_\theta(\mu(p)) = \sum_{n = 0}^\infty \frac{\nabla_\theta^n(\mu(p))}{n!} = \sum_{n = 0}^\infty \frac{\mu(\nabla^n_\theta(p))}{n!} = \mu(\mathrm{exp}_\theta(p)).$$
For binary operations $\mu \in \cP(P_1,P_2;Q)$, we find 
$$\nabla^n_\theta(\mu(p_1,p_2)) = \sum_{k = 0}^n{\binom{n}{k}}\mu(\nabla^k_\theta(p_1),\nabla^{n - k}_\theta(p_2))$$
by induction on $n$, and then $\mathrm{exp}_\theta(\mu(p_1,p_2)) = \mu(\mathrm{exp}_\theta(p_1),\mathrm{exp}_\theta(p_2))$. By assumption on a Cartan structure, there are no higher operations, but a similar proof would show the corresponding formula. If $\theta = 0$, then $\nabla_0(p) = 0$, so $\mathrm{exp}_\theta(p) = p$. 

We can reduce the Baker--Campbell--Hausdorff formula in our case to a general Baker--Campbell--Hausdorff formula for operators on $\kk$-vector spaces. Note that
 $$\mathrm{exp}_\theta = \mathrm{exp}(\nabla_\theta) := \sum_{n = 0}^\infty \frac{\nabla_\theta^n}{n!}$$
 as operators. Now $\mathrm{exp}(\nabla_\theta) \circ \mathrm{exp}(\nabla_\xi) - \mathrm{Id}$ is a nilpotent $B'$-linear operator; when we apply the formula for the logarithm, then we obtain an operator 
 $$C := \sum_{n = 1}^\infty \frac{(-1)^{n + 1}}{n} (\mathrm{exp}(B_\theta) \circ \mathrm{exp}(B_\xi) - \mathrm{Id})^n: \E^P_{B'} \to \E^P_{B'}$$
 with the property that $\mathrm{exp}(C) = \mathrm{exp}_\theta \circ \mathrm{exp}_\xi$. When we expand this formula, we get 
 $$C = \sum_{n = 1}^\infty \frac{(-1)^{n + 1}}{n}\sum_{r_1 + s_1 > 0} \dots \sum_{r_n + s_n > 0} \frac{\nabla_\theta^{r_1}\nabla_\xi^{s_1} ... \nabla_\theta^{r_n}\nabla_\xi^{s_n}}{\prod_{i = 1}^n r_i!s_i!}.$$
 For general operators $P_1, P_2, ...$, we set $[P_1] := P_1$, $[P_1,P_2] := P_1P_2 - P_2P_1$, and $[P_1,...,P_{n + 1}] := [P_1,[P_2,...,P_{n + 1}]]$ inductively, i.e., they are the commutators. Let us introduce notation 
 $$I(n;e) := \left\{(r_i,s_i)_i \in \prod_{i = 1}^n (\NN \times \NN) \ | \ r_i + s_i > 0,\ \sum_{i = 1}^n(r_i + s_i) = e\right\}$$
 for the part of the index set where the sum of all indices is $e$. Then the classical Baker--Campbell--Hausdorff formula for operators\index{Baker--Campbell--Hausdorff formula!for operators} yields
 \begin{align}
  &\qquad \:\,\sum_{n = 1}^e \frac{(-1)^{n + 1}}{n} \sum_{(r_i,s_i)_i \in I(n;e)} \frac{\nabla_\theta^{r_1}\nabla_\xi^{s_1} ... \nabla_\theta^{r_n}\nabla_\xi^{s_n}}{\prod_{i = 1}^n r_i!s_i!} \nonumber \\
  &= \frac{1}{e} \cdot \sum_{n = 1}^e \frac{(-1)^{n + 1}}{n} \sum_{(r_i,s_i)_i \in I(n;e)} \frac{[\nabla_\theta^{(r_1)},\nabla_\xi^{(s_1)}, ..., \nabla_\theta^{(r_n)},\nabla_\xi^{(s_n)}]}{\prod_{i = 1}^n r_i!s_i!} \nonumber
 \end{align}
 where the sum over $n$ stops at $e$ because $I(n;e) = \emptyset$ for $n > e$. The formula does only hold for the sum but not for each individual value of $n$. The Jacobi identity yields via induction over $m$ the formula
 $$[\nabla_{\theta_1},...,\nabla_{\theta_m}](p) = \nabla_{[\theta_1,...,\theta_m]}(p),$$
 where we have on the left the iterated commutator of operators, and on the right the iterated Lie bracket in $\T_{B'}$. The formula shows that $C = \nabla_{\theta \odot \xi}$ with the definition of $\theta \odot \xi$ in the statement; thus $\mathrm{exp}_\theta \circ \mathrm{exp}_\xi = \mathrm{exp}_{\theta \odot \xi}$.
 
 Evaluating the Baker--Campbell--Hausdorff formula, we find $\theta \odot (-\theta) = 0$ since $[\theta,\theta] = -[\theta,\theta]$ and hence $[\theta,\theta] = 0$. Thus, $\mathrm{exp}_\theta$ and $\mathrm{exp}_{-\theta}$ are inverse to each other.
 
 The sheaf of magmas (sets with a binary operation) $(I \cdot \T_{B'},\odot)$ is a sheaf of groups because $\odot$ defines a group structure on every nilpotent $\kk$-Lie algebra, see e.g.~\cite[V,\,\S\S1-3]{ManettiComplexDefo2004}. That $\mathrm{exp}$ is a group homomorphism is now clear. If $B' \to B$ is a small extension, we have $I \cdot \m_{B'} = 0$ and hence $I^2 = 0$, so $[I \cdot \T_{B'},I\cdot \T_{B'}] = 0$, and hence $\theta \odot \xi = \theta + \xi$.
\end{proof}

\begin{ex}
 Let $f_A: X_A \to S_A$ be a separated, log smooth, and saturated morphism to the log point $S_A = \Spec (Q \to A)$. Let $\G^\bullet_{X_A/S_A}$ be its Gerstenhaber algebra of polyvector fields (with the conventions of \cite{Felten2022}). Then the map 
 $$(I \cdot \G^{-1}_{X_A/S_A},\odot) \to (\A ut_{A/\kk}(\G^\bullet_{X_A/S_A}),\circ)$$
 is injective. By \cite[\S\S2-3]{Felten2022}, the gauge transforms of the Gerstenhaber algebra $\G^\bullet_{X_A/S_A}$ are precisely those automorphisms which are induced from infinitesimal automorphisms of the log smooth deformation $f_A: X_A \to S_A$ of its central fiber $f_0: X_0 \to S_0$.
\end{ex}

A priori, the exponential map in \eqref{exp-homom} is not injective in general. However, under mild conditions, it is. For our purposes, this condition is (virtually) always satisfied.

\begin{lemma}\label{gauge-inject}\note{gauge-inject}
 Let $\E^\bullet$ be a Cartanian $\cP$-pre-algebra in the context $\mathfrak{Flat}(X;\Lambda)$. Assume that $\E_0^\bullet$ is faithful, i.e., the intersection of the kernels of the maps 
 $$\mathrm{ad}_{P,0}: \T_0 \to \cH om_\kk(\E_0^P,\E_0^P), \quad \theta \mapsto (p \mapsto \nabla^P_\theta(p)),$$
 is zero. Then, for all $A \in \mathbf{Art}_\Lambda$, the intersection of the kernels of the maps $\mathrm{ad}_{P,A}: \T_A \to \cH om_A(\E_A^P,\E_A^P)$ is zero. Moreover, for a surjection $B' \to B$ in $\mathbf{Art}_\Lambda$ with kernel $I \subset B'$, the exponential map 
 $$\mathrm{exp}: (I \cdot \T_{B'},\odot) \to (\A ut_{B'/B}(\E_{B'}^\bullet),\circ)$$
 is injective. In particular, it is injective if $\E_0^\bullet$ is centerless or strictly faithful.
\end{lemma}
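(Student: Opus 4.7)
The plan is to prove the three parts in order, reducing each to the previous one. Part 3 is immediate from part 2 once we observe that centerlessness (injectivity of $\mathrm{ad}_{T,0}$) or strict faithfulness (injectivity of $\mathrm{ad}_{F,0}$) each imply faithfulness as defined in the statement, since each is injectivity of one of the maps whose common kernel is to be zero. Parts 1 and 2 both rely on essentially the same ``top-order'' trick: decompose along the graded pieces of the $\m$-adic filtration, use flatness to write each graded piece as $\m^{n}/\m^{n+1}\otimes_\kk \E_0^P$, and then apply faithfulness at the level $\kk$ to deduce vanishing of the leading term.

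For part 1, let $\theta \in \T_A$ lie in the intersection of the kernels. Reducing modulo $\m_A$ gives $\bar\theta \in \T_0$ with $\nabla^P_{\bar\theta} = 0$ on every $\E_0^P$; faithfulness of $\E_0^\bullet$ forces $\bar\theta = 0$, so $\theta \in \m_A \T_A$. I would then show by induction on $n$ that $\theta \in \m_A^n \T_A$ for every $n$, which concludes since $\m_A$ is nilpotent. For the inductive step, pick a $\kk$-basis $\{a_i\}$ of $\m_A^n/\m_A^{n+1}$ and write $\theta \equiv \sum a_i \otimes \theta_i \pmod{\m_A^{n+1}\T_A}$ via the isomorphism $\m_A^n \T_A/\m_A^{n+1}\T_A \cong (\m_A^n/\m_A^{n+1}) \otimes_\kk \T_0$ supplied by flatness. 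For any $p \in \E_A^P$ with reduction $\bar p \in \E_0^P$, the $B'$-linearity of $\nabla^P_\theta$ gives
$$\nabla^P_\theta(p) \equiv \sum a_i \otimes \nabla^P_{\theta_i}(\bar p) \pmod{\m_A^{n+1}\E_A^P}.$$
Since $\nabla^P_\theta = 0$ and the $a_i$ are $\kk$-linearly independent, each $\nabla^P_{\theta_i}(\bar p)$ vanishes for all $\bar p$ and all $P$; faithfulness at level $\kk$ then forces each $\theta_i = 0$, i.e.\ $\theta \in \m_A^{n+1}\T_A$.

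For part 2, let $\theta \in I\cdot\T_{B'}$ with $\exp_\theta = \id$, and set $J := I$. The key estimate is that $\nabla_\theta^{n}$ maps $\E_{B'}^P$ into $J^n \E_{B'}^P$, so for $n \geq 2$ and $\theta \in J\m_{B'}^k\T_{B'}$ the $n$-th term of the exponential lies in $J^n \m_{B'}^{nk} \E_{B'}^P \subseteq J\m_{B'}^{k+1}\E_{B'}^P$ (using $J\subseteq \m_{B'}$). Consequently, modulo $J\m_{B'}^{k+1}\E_{B'}^P$ one has $\exp_\theta(p) - p \equiv \nabla_\theta(p)$. I would then run exactly the same ``leading-term'' decomposition as in part 1: write $\theta \equiv \sum a_i\otimes \theta_i$ in the graded piece $J\m_{B'}^k/J\m_{B'}^{k+1} \otimes_\kk \T_0$, deduce that each $\nabla^P_{\theta_i}$ kills $\E_0^P$ for every $P$, invoke faithfulness at level $\kk$ to get each $\theta_i=0$, and conclude $\theta \in J\m_{B'}^{k+1}\T_{B'}$. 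Iterating from $k=0$ and using nilpotency of $\m_{B'}$ yields $\theta = 0$.

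The main obstacle is not any deep idea but rather a bookkeeping one: correctly identifying which graded piece the ``leading term'' of $\theta$ lives in, and verifying that the higher-order tails of the exponential series are indeed swallowed by passing to that graded piece. The two places where I must be careful are (i) using flatness of $\E^\bullet$ over $\Lambda$ to identify $\m^n\E_A^P/\m^{n+1}\E_A^P$ with $(\m^n/\m^{n+1})\otimes_\kk \E_0^P$ and similarly for $\T$, and (ii) checking the exponent arithmetic $J^n\m_{B'}^{nk}\subseteq J\m_{B'}^{k+1}$ for $n\geq 2$ and $k\geq 0$, which follows from $(n-1)(k+1)\geq 1$. Once these are in place, the argument for part 2 is essentially part 1 applied one filtration level at a time.
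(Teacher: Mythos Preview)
Your proof is correct and follows essentially the same strategy as the paper's: use flatness to identify each graded piece with a copy of $\E_0^\bullet$, then invoke faithfulness at level $\kk$ on the leading term. The paper organizes Part~2 a bit more cleanly by first decomposing $B' \to B$ into a chain of \emph{small} extensions; for a small extension one has $I\cdot\m_{B'} = 0$, so $\exp_\theta = \id + \nabla_\theta$ on the nose and your tail estimate $J^n\m_{B'}^{nk} \subseteq J\m_{B'}^{k+1}$ becomes unnecessary---Part~2 then reduces immediately to Part~1.
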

\begin{proof}
 We prove the first statement by induction over small extensions $B' \to B$ with kernel $I$. Let $\theta \in \T_{B'}$ with $\nabla_\theta^P = 0$ for all $P \in D(\cP)$. Then, in particular, $\nabla_\theta^P|_B = 0$. By the induction hypothesis, we find $\theta|_B = 0$, so $\theta \in I \cdot \T_{B'}$. Since $B' \to B$ is a small extension, $I$ is a one-dimensional $\kk$-vector space, so we can choose a generator $i \in I$ with $I = i \cdot \kk$. Then, at least locally, $\theta = i \cdot \xi$ for some $\xi \in \T_{B'}$. Now the kernel of $\mu_i: B' \to B', \: b \mapsto ib,$ is $\m_{B'}$; thus, since $\E_{B'}^P$ is flat over $B'$, the kernel of $\mu_i: \E_{B'}^P \to \E_{B'}^P, \: p \mapsto ip,$ is $\m_{B'} \cdot \E_{B'}^P$. We have 
 $$0 = \nabla^P_\theta(p) = i \cdot \nabla^P_\xi(p),$$
 hence $\nabla_\xi^P(p) \in \m_{B'} \cdot \E_{B'}^P$. This shows that $\nabla_{\xi_0}^P: \E_0^P \to \E_0^P$ is the zero map, where $\xi_0 := \xi|_0$. Since this holds for all $P \in D(\cP)$, the faithfulness assumption on $\E_0^\bullet$ shows $\xi_0 = 0$. However, in this case, $\theta = i \cdot \xi = 0$.
 
 By decomposing $B' \to B$ into a sequence of small extensions, we see that it is sufficient to prove the second statement in the case of a small extension $B' \to B$. In this case, $\mathrm{exp}_\theta(p) = p + \nabla_\theta^P(p)$, so $\mathrm{exp}_\theta = \mathrm{id}$ implies $\nabla_\theta^P(p) = 0$ for all $P \in D(\cP)$ and $p \in \E_{B'}^P$. Then the claim follows from our first statement.
\end{proof}
\begin{rem}
 If $\E^P$ is \emph{complete}, i.e., $\E^P$ is the limit of the system $\E^p_k := \E^P \otimes_\Lambda \Lambda/\m_\Lambda^{k + 1}$, and $\E_0^\bullet$ is faithful, then $\E^\bullet$ is faithful as well. More generally, it suffices that the map from $\E^P$ to the limit is injective to conclude faithfulness of $\E^\bullet$. The converse is false: Let $\Lambda = \kk\llbracket t\rrbracket$, and let $X = \{*\}$ be a point. Consider $F = \Lambda[s]$ and $T = \Lambda[s] \cdot D$. These flat modules are not complete, but they inject into their $t$-adic completions. We turn them into a Lie--Rinehart algebra by setting $D = t \cdot \partial_s$, i.e., $D(s) = t\partial_s(s) = t$ etc., and 
 $$[q(t,s)\cdot D,r(t,s)\cdot D] := \big(q(t,s) \cdot D(r(t,s)) - r(t,s) \cdot D(q(t,s))\big) \cdot D.$$
 This Lie--Rinehart algebra is both strictly faithful and centerless. However, in the base change $(F_0,T_0)$ to $\kk$, the generator $D$ of $T_0$ acts as the zero operator, and the Lie bracket vanishes as well. Thus, $(F_0,T_0)$ is neither strictly faithful nor centerless nor faithful.
\end{rem}
\begin{rem}
 In \cite[\S 10]{Felten2022}, a curved Lie algebra $L^\bullet$ (in the context $\mathfrak{Comp}(\Lambda)$ of flat and complete $\Lambda$-modules) is defined to be \emph{faithful} if, in our language, $L^\bullet_A$ is centerless for all $A \in \mathbf{Art}_\Lambda$. The proof of the lemma shows that it would have been enough to require that $L_0^\bullet$ is centerless.
\end{rem}



\chapter{The extended Maurer--Cartan equations}\label{extended-MC-eqn-sec}\note{extended-MC-eqn-sec}

As is well known, every differential graded Lie algebra $L^\bullet$ (in the context $\mathfrak{Comp}(\Lambda)$) gives rise to a deformation functor $\mathrm{Def}(L^\bullet,-)$ by taking gauge equivalence classes of Maurer--Cartan solutions. This is the reason why dg Lie algebras are ubiquitous in deformation theory: For many deformation problems, one can find a dg Lie algebra $L^\bullet$ such that $\mathrm{Def}(L^\bullet,-)$ classifies isomorphism classes of deformations. Less well known is that every \emph{curved} Lie algebra gives rise to a deformation functor $\mathrm{Def}(L^\bullet,-)$ as well, by considering solutions of the \emph{extended} Maurer--Cartan equation. This is absolutely necessary in logarithmic deformation theory because these deformation problems are not controlled by any dg Lie algebra.

We work in the context $\mathfrak{Comp}(\Lambda)$ throughout. Recall that objects in $\bC$ are flat and complete $\Lambda$-modules. We work with the following structures in this chapter.

\begin{defn}\label{L-curved-def}\note{L-curved-def}\index{$\Lambda$-linear curved algebras}
 \hspace{1cm}
 \begin{enumerate}[label=(\alph*)]
  \item A \emph{$\Lambda$-linear curved Lie algebra} is a curved Lie algebra $L^\bullet$ in the context $\mathfrak{Comp}(\Lambda)$ with $\ell \in \m_\Lambda \cdot L^2$.
  \item Let $\cP$ be a pre-algebraic structure carrying a Cartan structure. Then a \emph{$\Lambda$-linear curved $\cP$-algebra} is a Cartanian $\cP^{crv}$-pre-algebra in $\mathfrak{Comp}(\Lambda)$ in the sense of Definition~\ref{Cartan-curved-def} with $\ell \in \m_\Lambda \cdot \T^2$.
  \item A \emph{$\Lambda$-linear curved Lie--Rinehart algebra} respective \emph{Lie--Rinehart pair} respective \emph{Gerstenhaber algebra} respective \emph{Gerstenhaber calculus} is a curved Lie--Rinehart algebra (respective any of the other notions) in the context $\mathfrak{Comp}(\Lambda)$ with $\ell \in \m_\Lambda \cdot \T^2$.
  \item A \emph{$\Lambda$-linear Batalin--Vilkovisky curved algebra} respective \emph{Batalin--Vilkovisky calculus} is a curved Batalin--Vilkovisky algebra respective calculus in $\mathfrak{Comp}(\Lambda)$ with $\ell \in \m_\Lambda \cdot \G^{-1,2}$ and $y \in \m_\Lambda \cdot \G^{0,1}$.
 \end{enumerate}
 Note that, for example, if $\G_d$ is the algebraic structure of Gerstenhaber algebras of dimension $d$, then every $\Lambda$-linear curved Gerstenhaber algebra is a $\Lambda$-linear curved $\G_d$-algebra but not conversely since we assume fewer relations for the latter notion.
\end{defn}

\section{The classical extended Maurer--Cartan equation}

We introduce the classical extended Maurer--Cartan equation. After reviewing the most basic case of curved Lie algebras, we briefly discuss the case of curved Lie--Rinehart algebras which is important for deformations of pairs, and then go to the main case of a curved Gerstenhaber algebra.

\subsection{Curved Lie algebras}\label{curved-Lie-alg-sec}\note{curved-Lie-alg-sec}

Let $L^\bullet$ be a $\Lambda$-linear curved Lie algebra. Here, we recall how it gives rise to a deformation functor. By definition, it comes with a predifferential $\bar\partial$, inducing a predifferential $\bar\partial: L^q_A \to L^{q + 1}_A$ for every $A \in \mathbf{Art}_\Lambda$. However, for every $\phi \in \m_A \cdot L_A^1$, we can define a map 
$$\bar\partial_\phi: L_A^q \to L_A^{q + 1}, \quad \xi \mapsto \bar\partial \xi + [\phi,\xi].$$
It is an $A$-linear predifferential as well; in particular, we have 
$$\bar\partial_\phi^2(\xi) = [\bar\partial(\phi) + \frac{1}{2}[\phi,\phi] + \ell, \xi]$$
so that $(L^\bullet_A,[-,-],\bar\partial_\phi,\ell_\phi)$ with 
$$\ell_\phi := \bar\partial(\phi) + \frac{1}{2}[\phi,\phi] + \ell$$
is another curved Lie algebra. In particular, we have $\bar\partial_\phi(\ell_\phi) = 0$ since $[\phi,[\phi,\phi]] = 0$. We can consider 
$\mathrm{PDiff}_A(L^\bullet) := \m_A \cdot L_A^1$
as the space of predifferentials. Since $\phi$ can always be lifted along a surjection $B' \to B$ in $\mathbf{Art}_\Lambda$, there is not really a distinguished predifferential; we may consider a curved Lie algebra as a graded Lie algebra which comes with an affine space of predifferentials, none of which is intrinsically distinguished.

If the central fiber $L_0^\bullet$ is centerless, then $\bar\partial_\phi$ is a differential if and only if $\ell_\phi = 0$ (by the argument in the proof of Lemma~\ref{gauge-inject}); in general, this is a sufficient condition. If $\ell = 0$, then this becomes the classical \emph{Maurer--Cartan equation}\note{Maurer--Cartan equation!classical}
$$\bar\partial(\phi) + \frac{1}{2}[\phi,\phi] = 0.$$
In our case, we have an additional term $\ell$ so that we talk about an \emph{extended} Maurer--Cartan equation.

\begin{defn}
 Let $L^\bullet$ be a $\Lambda$-linear curved Lie algebra, and let $A \in \mathbf{Art}_\Lambda$. Then the \emph{classical extended Maurer--Cartan equation}\index{Maurer--Cartan equation!classical extended} is the equation
 $$\ell_\phi = \bar\partial(\phi) + \frac{1}{2}[\phi,\phi] + \ell = 0$$
 which is to be satisfied by elements $\phi \in \m_A \cdot L_A^1$. Its solutions form a homogeneous\footnote{This is standard. See e.g.\ \cite[10.5]{Felten2022} for some hints how to show this.} functor 
 $$\mathrm{MC}(L^\bullet,-): \mathbf{Art}_\Lambda \to \mathbf{Set}$$
 of Artin rings.
\end{defn}

For a fixed predifferential $\bar\partial$, the solutions of the classical extended Maurer--Cartan equation form the \emph{differentials} on $L_A^\bullet$ in the sense that $(L_A^\bullet,[-,-],\bar\partial_\phi,\ell_\phi)$ becomes a differential graded Lie algebra in our sense. 

Observe that the Definition~\ref{gauge-trafo-def} of gauge transforms makes sense also for curved Lie algebras, although they do not come with a Cartan structure. The predifferential $\bar\partial_\phi$ is not compatible with gauge transforms. Nonetheless, we can formulate precisely how it behaves under gauge transforms. In order to do this, we need the power series expansion 
$$T(x) := \frac{e^x - 1}{x} = \sum_{n = 0}^\infty \frac{x^n}{(n + 1)!}  = 1 + \frac{x}{2} + \frac{x^2}{6} + \frac{x^3}{24} + ... \ .$$
\begin{lemdef}\label{gauge-action}\note{gauge-action}\index{Baker--Campbell--Hausdorff formula}
 Let $L^\bullet$ be a $\Lambda$-linear curved Lie algebra, and let $B' \to B$ be a surjection in $\mathbf{Art}_\Lambda$ with kernel $I \subset B'$. Let $\theta \in I \cdot L_{B'}^0$ and $\phi \in \m_{B'} \cdot L_{B'}^1$. Then we have 
 $$\mathrm{exp}_\theta(\bar\partial_\phi(\xi)) = \mathrm{exp}_\theta(\bar\partial\xi  + [\phi,\xi]) = \left(\bar\partial(-) + [\mathrm{exp}_\theta(\phi) - T(\nabla_\theta)(\bar\partial\theta),-]\right)(\mathrm{exp}_\theta(\xi))$$
 with $\nabla_\theta = [\theta,-]$ and 
 $$T(\nabla_\theta)(\bar\partial\theta) = \bar\partial\theta + \frac{1}{2}[\theta,\bar\partial\theta] + \frac{1}{6}[\theta,[\theta,\bar\partial\theta]] + ... \ .$$
 This gives rise to the \emph{gauge action}\index{gauge action}
 $$\mathrm{exp}_\theta * \phi := \mathrm{exp}_\theta(\phi) - T(\nabla_\theta)(\bar\partial\theta) = \phi + T(\nabla_\theta)([\theta,\phi] - \bar\partial\theta),$$
 which satisfies $\mathrm{exp}_\theta * (\mathrm{exp}_\xi * \phi) = \mathrm{exp}_{\theta \odot \xi} * \phi$ with the Baker--Campbell--Hausdorff product $\theta \odot \xi$.
 We say that $\phi,\phi' \in \m_{B'} \cdot L_{B'}^1$ are \emph{gauge equivalent}\index{gauge equivalent} relative to $B$ if there is some $\theta \in I \cdot L_{B'}^0$ with $\mathrm{exp}_\theta * \phi = \phi'$. This is an equivalence relation.
\end{lemdef}
\begin{proof}
 See \cite[\S6.3]{ManettiLieMethods2022} for a general discussion about the gauge action. The formula for the behavior of $\bar\partial_\phi$ under gauge transforms is taken from \cite[Lemma~2.5]{ChanLeungMa2023}. The statements remain true for a curved Lie algebra since the square $\bar\partial^2$ does not show up anywhere. To see the first statement explicitly, note that 
 $$\bar\partial\nabla_\theta^n(\xi) = \nabla_\theta^n(\bar\partial\xi) + \sum_{s = 1}^n{\binom{n}{s}}\nabla_{\nabla_\theta^{s - 1}(\bar\partial \theta)}\nabla_\theta^{n - s}(\xi)$$
 for $n \geq 1$. Then we obtain 
 $$\bar\partial \mathrm{exp}_\theta(\xi) = \mathrm{exp}_\theta(\bar\partial\xi) + [T(\nabla_\theta)(\bar\partial \theta),\mathrm{exp}_\theta(\xi)]$$
 by a direct computation, from which the version for $\bar\partial_\phi$ follows easily.
\end{proof}

We are mostly interested in gauge equivalence relative to $A_0 = \kk$. If $L_0^\bullet$ is centerless, then $\phi,\phi' \in \m_A\cdot L_A^1$ are gauge equivalent if and only if there is some $\theta \in \m_A \cdot L_A^0$ with 
$$\mathrm{exp}_\theta \circ \bar\partial_\phi = \bar\partial_{\phi'} \circ \mathrm{exp}_\theta.$$
\begin{rem}
 It is tempting to use this as a definition of gauge equivalence. In fact, I have stated this wrongly in \cite[\S 10.2]{Felten2022}, where gauge equivalence has been defined by this equation without assuming that $L_0^\bullet$ is centerless. The problem with the latter definition is that it can depend on which space $\bar\partial_\phi$ acts, i.e., when we require the above equation in a bigraded Gerstenhaber calculus then it might give a different notion of gauge equivalence than when we consider it only in the graded Lie algebra which is the $(-1)$-part of the bigraded Gerstenhaber algebra. However, the definition of gauge equivalence in \cite[Defn.~7.1]{Felten2022} is still correct because of strict faithfulness (i.e., $(-1)$-injectivity)---there, $\mathrm{exp}_\theta \circ \bar\partial_\phi = \bar\partial_{\phi'} \circ \mathrm{exp}_\theta$ is required on the level of the bigraded Gerstenhaber algebra.
\end{rem}

If $\phi,\phi' \in \m_A \cdot L_A^1$ are gauge equivalent, then $\phi$ is a Maurer--Cartan solution if and only if $\phi'$ is a Maurer--Cartan solution. If $L_0^\bullet$ is centerless, then this is easy to see, but it is true in general. For the general case, first observe that 
$$\bar\partial T(\nabla_\theta)(\bar\partial\theta) = T(\nabla_\theta)(\bar\partial^2\theta)  + \frac{1}{2}[T(\nabla_\theta)(\bar\partial\theta),T(\nabla_\theta)(\bar\partial\theta)],$$
hence we have 
$$\mathrm{exp}_\theta(\ell) - \ell = \frac{1}{2}[T(\nabla_\theta)(\bar\partial\theta),T(\nabla_\theta)(\bar\partial\theta)] - \bar\partial T(\nabla_\theta)(\bar\partial\theta).$$
With this, we find 
$$\bar\partial\phi' + \frac{1}{2}[\phi',\phi'] + \ell = \mathrm{exp}_\theta(\bar\partial\phi + \frac{1}{2}[\phi,\phi] + \ell)$$
by using Lemma~\ref{gauge-action} in a straightforward computation of the left hand side since $\mathrm{exp}_\theta$ commutes with $[-,-]$. Thus $\phi$ is a Maurer--Cartan solution if and only if $\phi' = \mathrm{exp}_\theta * \phi$ is a Maurer--Cartan solution.

Gauge equivalence classes of Maurer--Cartan solutions form a deformation functor 
$$\mathrm{Def}(L^\bullet,-): \mathbf{Art}_\Lambda \to \mathbf{Set},$$
which is in general not homogeneous.
The key insight of \cite{ChanLeungMa2023} and, subsequently, of \cite{Felten2022}, is that deformation functors in log geometry are isomorphic to this deformation functor for an appropriate choice of $L^\bullet$.

\begin{ex}
 Let $f_0: X_0 \to S_0$ be proper, log smooth, and saturated over the punctual log scheme $S_0 = \Spec (Q \to \kk)$. Then for the curved Lie algebra $L^\bullet_{X_0/S_0}$ constructed in \cite[\S 8]{Felten2022}, the log smooth deformation functor $\mathrm{LD}_{X_0/S_0}$ is isomorphic to $\mathrm{Def}(L^\bullet_{X_0/S_0},-)$.
\end{ex}

\subsection{Predifferential graded $\cP$-algebras}

Let $\cP$ be an algebraic structure carrying a Cartan structure, and let $E^{\bullet,\bullet}$ be a $\Lambda$-linear curved $\cP$-algebra, i.e., a Cartanian $\cP^{crv}$-\emph{pre}-algebra in $\mathfrak{Comp}(\Lambda)$ with $\ell \in \m_\Lambda \cdot T^2$. Then $T^\bullet$ is a $\Lambda$-linear curved Lie algebra, so we obtain a deformation functor 
$$\mathrm{Def}(E^{\bullet,\bullet},-) := \mathrm{Def}(T^\bullet,-): \mathbf{Art}_\Lambda \to \mathbf{Set}.$$
A general $\phi \in \m_A \cdot T_A^1$ gives a new predifferential
$$\bar\partial_\phi: \E^{P,j} \to \E^{P,j + 1}, \quad p \mapsto \bar\partial p + \nabla_\phi(p),$$
in the sense that $(E_A^{\bullet,\bullet},\bar\partial_\phi,\ell_\phi)$ is again a Cartanian curved $\cP$-algebra with 
$$\ell_\phi := \bar\partial\phi + \frac{1}{2}[\phi,\phi] + \ell \in T_A^2.$$ 
Indeed, if $\bar\partial$ satisfies the five conditions in Definition~\ref{Cartan-curved-def}, then $\bar\partial_\phi$ satisfies them as well.

\begin{rem}
 If $E^{\bullet,\bullet}$ is in fact a Lie--Rinehart algebra or pair or a Gerstenhaber algebra or calculus, then it still is with the new predifferential $\bar\partial_\phi$ and the new $\ell_\phi$. This is because the axioms we are missing with a Cartanian curved $\cP$-algebra do not involve $\bar\partial$. On the other side, this discussion does not apply to Batalin--Vilkovisky algebras or calculi since already their plain, singly graded version does not admit a Cartan structure.
\end{rem}

If $\phi$ is a solution of the classical extended Maurer--Cartan equation, then $\ell_\phi = 0$ and $\bar\partial_\phi$ is a differential on $E_A^{\bullet,\bullet}$. If two solutions $\phi,\phi'$ of the classical extended Maurer--Cartan equation are gauge equivalent, $\phi' = \mathrm{exp}_\theta * \phi$, then we have 
$$\mathrm{exp}_\theta \circ \bar\partial_\phi = \bar\partial_{\phi'} \circ \mathrm{exp}_\theta$$
on all $E_A^{P,j}$ by the proof of Lemma~\ref{gauge-action}. If $E_0^{\bullet,\bullet}$ is faithful, e.g.~centerless or strictly faithful, then we can identify the set of Maurer--Cartan solutions with those $\phi \in \m_A \cdot T_A^1$ such that $\bar\partial_\phi^2 = 0$, and two differentials $\bar\partial_\phi$ and $\bar\partial_{\phi'}$ are gauge equivalent if and only if they are transformed into each other by a gauge transform. Here, it is crucial that, in our definition of faithfulness in the bigraded case, we have required the condition not only for $T^0$ but for all $T^j$.

\begin{ex}
 Let $f_0: X_0 \to S_0$ be a separated, log smooth, and saturated morphism to  a log point $S_0$. Then the construction in \cite{Felten2022} does not only yield a curved Lie algebra which controls the log smooth deformation functor, but in fact, it yields a curved Gerstenhaber algebra. We will see later that we really get a curved Gerstenhaber calculus even more generally. Since the underlying bigraded Gerstenhaber calculus is strictly faithful, we can interpret Maurer--Cartan solutions as differentials on a Gerstenhaber calculus, not only on a Lie algebra.
\end{ex}

\begin{ex}
 When we study deformations of pairs $f_0: X_0 \to S_0$ and a line bundle $\cL_0$ on $X_0$, then we apply this discussion to $\Lambda$-linear curved Lie--Rinehart pairs.
\end{ex}

\begin{ex}
 In a $\Lambda$-linear pbdg Batalin--Vilkovisky algebra or calculus $\E^{\bullet,\bullet}$, we consider the classical extended Maurer--Cartan equation and deformation functor of the underlying $\Lambda$-linear Gerstenhaber algebra or calculus as the ones of $\E^{\bullet,\bullet}$.
\end{ex}

When showing unobstructedness, we can work equivalently with the Maurer--Cartan functor $\mathrm{MC}(E^{\bullet,\bullet},-)$.

\begin{lemma}\label{unobstr-equiv}\note{unobstr-equiv}
 Let $E^{\bullet,\bullet}$ be a $\Lambda$-linear curved $\cP$-algebra. Then $\mathrm{Def}(E^{\bullet,\bullet},-)$ is unobstructed if and only if $\mathrm{MC}(E^{\bullet,\bullet},-)$ is unobstructed.
\end{lemma}
\begin{proof}
 If $\mathrm{MC}(G^{\bullet,\bullet},-)$ is unobstructed, then $\mathrm{Def}(G^{\bullet,\bullet},-)$ is obviously unobstructed as well. Conversely, let $B' \to B$ be a small extension with kernel $I$. Let $\phi$ be a Maurer--Cartan solution over $B$, and let $\psi'$ be a Maurer--Cartan solution over $B'$ such that $\psi := \psi'|_B$ is gauge equivalent to $\phi$. Let $\theta \in \m_B \cdot T_B^0$ be such that $\mathrm{exp}_\theta * \psi = \phi$, and let $\theta' \in \m_{B'} \cdot T_{B'}^0$ be a lift of $\theta$. Then define $\phi' := \mathrm{exp}_{\theta'} * \psi'$. This is a Maurer--Cartan solution as well. Since the gauge action commutes with base change along $B' \to B$, we find $\phi'|_B = \phi$. Thus $\mathrm{MC}(E^{\bullet,\bullet},-)$ is unobstructed.
\end{proof}

\section{The semi-classical extended Maurer--Cartan equation}

Now let $G^{\bullet,\bullet}$ be a $\Lambda$-linear curved \emph{Batalin--Vilkovisky} algebra. Let us assume (for simplicity) that $G_0^{\bullet,\bullet}$ is strictly faithful. We have seen above that a classical Maurer--Cartan solution $\phi$ gives rise to a differential $\bar\partial_\phi$ on $G^{\bullet,\bullet}$ by modifying the predifferential $\bar\partial$ with $[\phi,-]$.
However, on a curved Batalin--Vilkovisky algebra, we also have the Batalin--Vilkovisky operator $\Delta$.
The anti-commutation rule becomes
$$\bar\partial_\phi\Delta + \Delta\bar\partial_\phi = [y + \Delta\phi,-],$$
which is in general not zero. To achieve that this is also zero, i.e., that we do not only have $\bar\partial_\phi^2 = 0$ but also an anti-commuting Batalin--Vilkovisky operator, we also want to modify $\Delta$ with an element $f \in \m_A \cdot G_A^{0,0}$, i.e., $\Delta_f := \Delta + [f,-]$. Then $\Delta_f^2 = 0$ is automatic; moreover, we have 
$$\bar\partial_\phi\Delta_f + \Delta_f\bar\partial_\phi = [\bar\partial f + [\phi,f] + y + \Delta\phi,-].$$
The term in the bracket on the right hand side is in $G^{0,2}_A$, so we cannot conclude from strict faithfulness\footnote{The fact that $[1,-] = 0$ suggests that we cannot expect an equivalent of faithfulness for elements that are in $G_A^{0,\bullet}$ rather than $G_A^{-1,\bullet}$.} that it is necessary to have 
$$\bar\partial f + [\phi,f] + y + \Delta\phi = 0$$
in order to have $\bar\partial_\phi\Delta_f + \Delta_f\bar\partial_\phi = 0$, but it is certainly sufficient.

\begin{defn}
 The \emph{semi-classical\footnote{This notion is intermediate between the \emph{classical} extended Maurer--Cartan equation and the \emph{quantum} extended Maurer--Cartan equation that we consider below.} extended Maurer--Cartan equation}\index{Maurer--Cartan equation!semi-classical extended} is the system of equations
 \begin{align}
  \bar\partial\phi + \frac{1}{2}[\phi,\phi] + \ell &= 0 \nonumber \\
  \bar\partial f + [\phi,f] + y + \Delta\phi &= 0 \nonumber
 \end{align}
 which is to be satisfied by pairs $(\phi,f)$ with $\phi \in \m_A \cdot G_A^{-1,1}$ and $f \in \m_A \cdot G_A^{0,0}$. Its solutions form a homogeneous\footnote{The proof is the same as for $\mathrm{MC}(L^\bullet,-)$.} functor 
 $$\mathrm{SMC}(G^{\bullet,\bullet},-): \mathbf{Art}_\Lambda \to \mathbf{Set}$$
 of Artin rings.
\end{defn}

\begin{lemma}
 If $(\phi,f) \in \mathrm{SMC}(G^{\bullet,\bullet},A)$, then $G_A^{\bullet,\bullet}$ forms an $A$-linear differential bg Batalin--Vilkovisky algebra with the differentials $\Delta_f$ and $\bar\partial_\phi$ . More precisely:
 \begin{enumerate}[label=\emph{(\roman*)}]
  \item $\bar\partial_\phi^2(\theta) = 0$ and $\bar\partial_\phi\Delta_f(\theta) + \Delta_f\bar\partial_\phi(\theta) = 0$;
  \item the two derivation rules for $\bar\partial_\phi(\theta \wedge \xi)$ and $\bar\partial_\phi[\theta,\xi]$ hold;
  \item $\Delta_f^2(\theta) = 0$, $\Delta_f(1) = 0$, and the derivation rule for $\Delta_f[\theta,\xi]$ holds;
  \item the Bogomolov--Tian--Todorov formula for $\Delta_f(\theta \wedge \xi)$ holds.
 \end{enumerate}
\end{lemma}
\begin{proof}
 All statements
 follow from easy and straightforward computations.
\end{proof}

For a $\Lambda$-linear curved Gerstenhaber algebra, or, more generally, a $\Lambda$-linear curved $\cP$-algebra as in Definition~\ref{L-curved-def}, the gauge transform $\mathrm{exp}_\theta$ for an element $\theta \in \m_A \cdot T_A^0$ gives an isomorphism between $(\E_A^{\bullet,\bullet},\bar\partial_\phi)$ and $(\E_A^{\bullet,\bullet},\bar\partial_{\phi'})$ for $\phi' = \mathrm{exp}_\theta * \phi$. The same is true for $\Lambda$-linear curved Batalin--Vilkovisky algebras as soon as we extend the gauge action to $f \in \m_A \cdot G_A^{0,0}$ appropriately.

\begin{lemdef}\label{gauge-action-BV}\note{gauge-action-BV}
 Let $G^{\bullet,\bullet}$ be a $\Lambda$-linear curved Batalin--Vilkovisky algebra, and let $B' \to B$ be a surjection in $\mathbf{Art}_\Lambda$ with kernel $I \subset B'$. Let $\theta \in I \cdot G_{B'}^{-1,0}$ and $f \in \m_{B'} \cdot G_{B'}^{0,0}$. Then we have 
 $$\mathrm{exp}_\theta \circ \Delta_f(\xi) = \Delta \circ \mathrm{exp}_\theta(\xi) + [\mathrm{exp}_\theta(f) - T(\nabla_\theta)(\Delta \theta),\: \mathrm{exp}_\theta(\xi)].$$
 This gives rise to the \emph{gauge action}
 $$\mathrm{exp}_\theta * f := \mathrm{exp}_\theta(f) - T(\nabla_\theta)(\Delta\theta) = f + T(\nabla_\theta)([\theta,f] - \Delta\theta),$$
 which satisfies $\mathrm{exp}_\theta * (\mathrm{exp}_\xi * f) = \mathrm{exp}_{\theta \odot \xi} * f$ with the Baker--Campbell--Hausdorff product $\theta \odot \xi$.
\end{lemdef}
\begin{proof}
 The proof of the first statement is the same as the one of Lemma~\ref{gauge-action} with $\bar\partial$ replaced by $\Delta$ and $\phi$ replaced by $f$. The formula $\mathrm{exp}_\theta * (\mathrm{exp}_\xi * f) = \mathrm{exp}_{\theta \odot \xi} * f$ comes from the fact that $\mathrm{exp}_\theta * f$ is the gauge action in the differential graded Lie algebra $M^\bullet := (G_{B'}^{\bullet + 1,0},[-,-],\Delta)$. Note that the grading is such that $M^0 = G_{B'}^{-1,0}$ and $M^1 = G_{B'}^{0,0}$.
\end{proof}

We say that two pairs $(\phi,f)$ and $(\phi',f')$ are \emph{gauge equivalent} if there is some $\theta \in \m_A \cdot G_A^{-1,0}$ with $\mathrm{exp}_\theta * \phi = \phi'$ and $\mathrm{exp}_\theta * f = f'$. In this case, $\mathrm{exp}_\theta$ induces an isomorphism between the $\Lambda$-linear curved Batalin--Vilkovisky algebras $(G_A^{\bullet,\bullet},\Delta_f,\bar\partial_\phi)$ and $(G_A^{\bullet,\bullet},\Delta_{f'},\bar\partial_{\phi'})$. Just as in the case of $\phi$, the pair $(\phi,f)$ is a solution of the semi-classical Maurer--Cartan equation if and only if $(\phi',f')$ is. To see the second part, observe that 
\begin{align}
 \bar\partial T(\nabla_\theta)&(\Delta\theta) + \Delta T(\nabla_\theta)(\bar\partial\theta) \nonumber \\
 &= T(\nabla_\theta)(\bar\partial\Delta\theta) + T(\nabla_\theta)(\Delta\bar\partial\theta) + [T(\nabla_\theta)(\Delta\theta),T(\nabla_\theta)(\bar\partial\theta)]. \nonumber
\end{align}
This implies 
$$\mathrm{exp}_\theta(y) - y = [T(\nabla_\theta)(\Delta\theta),T(\nabla_\theta)(\bar\partial\theta)] - \bar\partial T(\nabla_\theta)(\Delta\theta) - \Delta T(\nabla_\theta)(\bar\partial\theta).$$
From here, we obtain 
$$\bar\partial f' + [\phi',f'] + y + \Delta\phi' = \mathrm{exp}_\theta(\bar\partial f + [\phi,f] + y + \Delta\phi)$$
by a straightforward computation using Lemma~\ref{gauge-action} and Lemma~\ref{gauge-action-BV}. Thus, $(\phi,f)$ is a semi-classical Maurer--Cartan solution if and only if $(\phi',f')$ is. Gauge equivalence classes of elements $(\phi,f) \in \mathrm{SMC}(G^{\bullet,\bullet},A)$ form a deformation functor
$$\mathrm{SDef}(G^{\bullet,\bullet},-): \mathbf{Art}_\Lambda \to \mathbf{Set}.$$
In fact, $(H_1)$ is easy since $\mathrm{SMC}(G^{\bullet,\bullet},-)$ is homogeneous, and $(H_2)$ follows (as in the classical case) from a careful analysis of lifts of gauge transforms in the diagram in \cite[10.5]{Felten2022}. As in the classical case, $\mathrm{SDef}(G^{\bullet,\bullet},-)$ is unobstructed if and only if $\mathrm{SMC}(G^{\bullet,\bullet},-)$ is unobstructed.

\par\vspace{\baselineskip}

Now suppose that $G^{\bullet,\bullet}$ is part of a curved Batalin--Vilkovisky calculus $(G^{\bullet,\bullet},A^{\bullet,\bullet})$. Recall that, for $\phi \in \m_A \cdot G_A^{-1,1}$, we have on $A_A^{\bullet,\bullet}$ the new predifferential
$$\bar\partial_\phi(\alpha) := \bar\partial\alpha + \cL_\phi(\alpha)$$
since $\nabla_\theta = \cL_\theta$ on $A^{\bullet,\bullet}$. We wish to find a modification $\omega_{(\phi,f)}$ of $\omega$ such that:
\begin{enumerate}[label=(\roman*)]
 \item $(G_A^{\bullet,\bullet},A_A^{\bullet,\bullet},\Delta_f,\bar\partial_\phi,\omega_{(\phi,f)})$ is a $\Lambda$-linear curved Batalin--Vilkovisky calculus for all pairs $(\phi,f)$ with $\ell_\phi$ and $y_{(\phi,f)} := \bar\partial f + [\phi,f] + y + \Delta\phi$;
 \item if $(\phi,f)$ and $(\phi',f')$ are gauge equivalent via $\theta$, then $(G_A^{\bullet,\bullet},A_A^{\bullet,\bullet},\Delta_f,\bar\partial_\phi,\omega_{(\phi,f)})$ and $(G_A^{\bullet,\bullet},A_A^{\bullet,\bullet},\Delta_{f'},\bar\partial_{\phi'},\omega_{(\phi',f')})$ are isomorphic via $\mathrm{exp}_\theta$.
\end{enumerate}
If $(\phi,f)$ is a semi-classical Maurer--Cartan solution, then $(G_A^{\bullet,\bullet},A_A^{\bullet,\bullet},\Delta_f,\bar\partial_\phi,\omega_{(\phi,f)})$ is automatically a \emph{differential} bg Batalin--Vilkovisky calculus.

Since $\omega_{(\phi,f)} \in A_A^{d,0}$ would be a constant, we expect that 
\begin{equation}\label{omega-cond}
 \omega_{(\phi',f')} = \mathrm{exp}_\theta(\omega_{(\phi,f)}).
\end{equation}
\begin{lemma}\label{omega-compat}\note{omega-compat}
 The choice
 \begin{align}
 \omega_{(\phi,f)} := \omega_f := e^{f \ \invneg }\omega :=  \sum_{n = 0}^\infty \frac{(f \ \invneg \ )^n(\omega)}{n!} \:
 &= \omega + (f \ \invneg \ \omega) + \frac{f \ \invneg \ (f \ \invneg\ \omega)}{2} + ... \nonumber \\
 &= \left(\sum_{n = 0}^\infty \frac{f^n}{n!}\right) \ \invneg\ \omega \:= (e^f) \  \invneg\ \omega, \nonumber
\end{align}
satisfies \eqref{omega-cond}.
\end{lemma}
\begin{proof}
 We have to show $\mathrm{exp}_\theta(e^f \ \invneg \ \omega) = e^{f'} \ \invneg \ \omega$ with $f' = \mathrm{exp}_\theta(f) - T(\nabla_\theta)(\Delta\theta)$. It is straightforward to see that it is sufficient to show 
 $$\mathrm{exp}(-T(\nabla_\theta)(\Delta\theta)) \ \invneg \ \omega = \mathrm{exp}_\theta(\omega)$$
 where the $\mathrm{exp}$ on the left hand side is the exponential of the nilpotent element $-T(\nabla_\theta)(\Delta\theta)$ in the ring $G_A^{0,0}$. We can expand the left hand side to 
 $$\left(1 + \sum_{t = 1}^\infty \sum_{n = 1}^t \frac{(-1)^n}{n!} \sum_{\substack{k_1 + ... + k_n = t \\ k_i \geq 1}}  \prod_{i = 1}^n \frac{1}{k_i!} \nabla_\theta^{k_i - 1}\Delta(\theta)\right) \ \invneg \ \omega.$$
 Let us denote the summand for $t \geq 1$ in this formula by $S(t)$. The right hand side of the equation is 
 $$\mathrm{exp}_\theta(\omega) = \omega + \sum_{t = 1}^\infty \frac{1}{t!}\cL^t_\theta(\omega).$$
 To conclude the proof, we show 
 $$(S(t) \ \invneg \ 1) \wedge \omega =  \frac{1}{t!}\cL^t_\theta(\omega)$$
 by induction on $t \geq 1$. We have $S(1) = -\Delta\theta$ and $\cL_\theta(\omega) = -\Delta\theta \ \invneg \ \omega$ for the base case. On the right hand side, the induction step yields 
 $$\frac{1}{(t + 1)!}\cL_\theta^{t + 1}(\omega) = \frac{1}{t + 1}\Big(\big([\theta,S(t)] - S(t) \wedge \Delta\theta\big) \ \invneg \ 1\Big) \wedge \omega$$
 so that it remains to show 
 $$S(t + 1) = \frac{1}{t + 1}\big([\theta,S(t)] - S(t) \wedge \Delta\theta\big).$$
 This is slightly tricky combinatorics. Let $I(t)$ be the set of tuples $(k_1,...,k_n)$ of natural numbers $\geq 1$ of variable length $\geq 1$ with $k_1 + ... + k_n = t$, and let $s(k_1,...,k_n) = \frac{(-1)^n}{n!} \prod_{i = 1}^n \frac{1}{k_i!}\nabla_\theta^{k_i - 1}\Delta\theta$ so that $S(t) = \sum_{\underline k \in I(t)} s(\underline k)$. Then 
 \begin{align}
  [\theta,s(k_1,...,k_n)] &- s(k_1,...,k_n) \wedge \Delta\theta \nonumber \\ &= \sum_{j = 1}^n (k_j + 1) s(k_1,...,k_j + 1,...,k_n) + \sum_{j = 0}^n s(k_1,...,k_{j},1,k_{j + 1},...,k_n). \nonumber
 \end{align}
 Collecting all summands $s(\ell_1,...,\ell_n)$ for $\ell_1 + ... + \ell_n = t + 1$, we find the assertion.
\end{proof}

The volume form $\omega_f$ does what we want.

\begin{lemma}\label{MC-sol-modified-BV-calculus}\note{MC-sol-modified-BV-calculus}
 Let $\phi \in \m_A \cdot G_A^{-1,1}$ and $f \in \m_A \cdot G_A^{0,0}$.
 Then $(G_A^{\bullet,\bullet},A_A^{\bullet,\bullet},\Delta_f,\bar\partial_\phi,\omega_f)$ is a $\Lambda$-linear curved Batalin--Vilkovisky calculus with $\ell_\phi$ and $y_{(\phi,f)}$. If $(\phi,f) \in \mathrm{SMC}(G^{\bullet,\bullet},A)$, then it is a differential bg Batalin--Vilkovisky calculus. Concretely, we have:
 \begin{enumerate}[label=\emph{(\roman*)}]
  \item $\bar\partial_\phi^2(\alpha) = \cL_{\ell_\phi}(\alpha)$ and $\bar\partial_\phi\partial(\alpha) + \partial\bar\partial_\phi(\alpha) = 0$;
  \item the two derivation rules hold for $\bar\partial_\phi(\alpha \wedge \beta)$ and $\bar\partial_\phi(\theta \ \invneg\ \alpha)$;
  \item $\bar\partial_\phi \omega_f = y_{(\phi,f)} \ \invneg \ \omega_f$; in particular, if $(\phi,f)$ is a semi-classical Maurer--Cartan solution, then 
  $$\bar\partial_\phi\omega_f = 0$$
  and $\bar\partial_\phi(\theta) \ \invneg\ \omega_f = \bar\partial_\phi(\theta\ \invneg\ \omega_f)$;
  \item $\kappa_f: G_A^{p,q} \to G_A^{d + p,q}, \: \theta \mapsto \theta \ \invneg\ \omega_f,$ is an isomorphism of $A$-modules;
  \item $\upsilon_f: A_A^{0,q} \to A_A^{d,q}, \: \alpha \mapsto \alpha \wedge \omega_f,$ is an isomorphism of $A$-modules;
  \item $\Delta_f(\theta) \ \invneg\ \omega_f = \partial(\theta\ \invneg\ \omega_f)$, i.e., the modified Batalin--Vilkovisky operator $\Delta_f$ arises from the de Rham differential $\partial$ via the new isomorphism $\kappa_f$.
 \end{enumerate}
\end{lemma}
\begin{proof}
 The first two statements follow from the general theory of $\Lambda$-linear curved $\cP$-algebras. For $\bar\partial_\phi\omega_f = y_{(\phi,f)} \ \invneg \ \omega_f$, first note that $\bar\partial_\phi(e^f) = \bar\partial_\phi(f) \wedge e^f$. Then we have 
 \begin{align}
  \bar\partial_\phi(\omega_f) &= \bar\partial_\phi(e^f) \ \invneg \ \omega + e^f \ \invneg\ \bar\partial_\phi\omega = e^f \ \invneg\ (\bar\partial_\phi f\ \invneg \ \omega + \bar\partial_\phi\omega) \nonumber \\
  &= e^f \ \invneg \ (\bar\partial_\phi f \ \invneg \ \omega + y \ \invneg \ \omega + \partial(\phi \ \invneg \ \omega)) \nonumber \\
  &= e^f \ \invneg \ (\bar\partial f + [\phi,f] + y + \Delta\phi) \ \invneg \ \omega = y_{(\phi,f)} \ \invneg \ \omega_f \nonumber
 \end{align}
 since $y \ \invneg\ \omega = \bar\partial\omega$ and $\cL_\phi(\omega) = \partial(\phi \ \invneg \ \omega) = \Delta\phi \ \invneg \ \omega$.
 Next, let $\mu_f(\theta) := e^f \wedge \theta$; it is an isomorphism of $A$-modules with inverse $\mu_{-f}$. Since $\kappa_f = \kappa \circ \mu_f$ and $\kappa$ is an isomorphism, $\kappa_f$ is an isomorphism as well. The proof that $\upsilon_f$ is an isomorphism is similar.
 The last statement follows from an application of the Bogomolov--Tian--Todorov formula when we also use $[e^f,\theta] = [f,\theta] \wedge e^f$.
\end{proof}

If $(\phi,f)$ and $(\phi',f')$ are gauge equivalent via $\theta$, then the two $\Lambda$-linear curved Batalin--Vilkovisky calculi $(G_A^{\bullet,\bullet},A_A^{\bullet,\bullet},\Delta_f,\bar\partial_\phi,\omega_f)$ and $(G_A^{\bullet,\bullet},A_A^{\bullet,\bullet},\Delta_{f'},\bar\partial_{\phi'},\omega_{f'})$ are isomorphic via $\mathrm{exp}_\theta$. Indeed, the compatibilities with most operations follow from generalities on (Cartanian) $\Lambda$-linear curved $\cP$-algebras, the compatibility with $\Delta$ is Lemma~\ref{gauge-action-BV}, the compatibility with $\omega$ is Lemma~\ref{omega-compat}, and we have checked the compatibility with $\ell$ and $y$ when we have shown that solving the semi-classical Maurer--Cartan equation is a gauge invariant property.

\section{Perfect curved Batalin--Vilkovisky algebras and calculi}

In this section, we present conditions under which we can show in Chapter~\ref{two-abstract-unobstructed-sec} unobstructedness of our Maurer--Cartan functors using the argument of Chan--Leung--Ma in \cite{ChanLeungMa2023}.

If a $\Lambda$-linear curved \emph{Gerstenhaber} calculus $(G^{\bullet,\bullet},A^{\bullet,\bullet})$ arises from a \emph{proper}, saturated, and log smooth morphism, log Calabi--Yau or not, then it is perfect in the sense of the following definition:

\begin{defn}\label{G-calc-q-perf-def}\note{G-calc-q-perf-def}
 Let $(G^{\bullet,\bullet},A^{\bullet,\bullet})$ be a $\Lambda$-linear curved Gerstenhaber calculus. We say that it is \emph{quasi-perfect}\index{Gerstenhaber calculus!quasi-perfect}\index{quasi-perfect} if it is strictly faithful and the following conditions hold:
 \begin{enumerate}[label=(\roman*)]
  \item The first spectral sequence $'\!E_{0;r}$ associated with the double complex $(A_0^{\bullet,\bullet},\partial,\bar\partial)$ degenerates at $E_1$.
  \item For every $A \in \mathbf{Art}_\Lambda$ and every $k \geq 0$, the induced map 
  $$H^k(A_A^\bullet,d) \to H^k(A_0^\bullet, d)$$
  is surjective, where $d = \partial + \bar\partial + \ell \ \invneg \ (-)$.
 \end{enumerate}
  We say that it is \emph{perfect}\index{Gerstenhaber calculus!perfect}\index{perfect!Gerstenhaber calculus} if, furthermore, for every $A \in \mathbf{Art}_\Lambda$, the $A$-module $H^k(A_A^\bullet,d)$ is finitely generated.
\end{defn}
\begin{rem}
 Several remarks are in order.
 \begin{enumerate}[label=(\arabic*)]
  \item The first spectral sequence is the one with $E_1$-page $'\!E_{0;1}^{p,q} = H^q(G_0^{p,\bullet},\bar\partial)$ and boundary maps $d_r^{p,q}:\  '\!E_{0;r}^{p,q} \to\ '\!E_{0;r}^{p + r,q + 1 - r}$. The index $0$ in $E_{0;r}$ is to indicate that this spectral sequence is constructed from $A_0^{\bullet,\bullet}$.
  \item If $(G^{\bullet,\bullet},A^{\bullet,\bullet})$ is perfect, and if $\phi \in \m_A \cdot G_A^{-1,1}$ is a Maurer--Cartan solution, then the first spectral sequence of the double complex $(A_A^{\bullet,\bullet},\partial,\bar\partial_\phi)$ degenerates at $E_1$ by Proposition~\ref{spectral-sequence-Artin-ring}. The cohomology $H^q(A_A^{p,\bullet},\bar\partial_\phi)$ is a flat $A$-module whose formation commutes with base change, as is the total cohomology $H^k(A_A^\bullet,\partial + \bar\partial_\phi)$. This is possible without any finiteness conditions by the very general nature of Proposition~\ref{spectral-sequence-Artin-ring}, and will be discussed in more detail toward the end of this section, especially Lemma~\ref{hypercohom-comparison}.
  \item Geometrically, the $E_1$-degeneration of $'\!E_{0;r}$ corresponds to the Hodge--de Rham degeneration over the point, the surjectivity of $H^k(A_A^\bullet,d) \to H^k(A_0^\bullet,d)$ corresponds to the freeness of the relative algebraic de Rham cohomology $R^kf_*\Omega_{X_A/S_A}^\bullet$, and the finiteness condition corresponds to properness of $f: X_A \to S_A$. However, especially the surjectivity condition is stronger than its geometric interpretation because it even has a content if there is no Maurer--Cartan element which would correspond to a geometric map $f: X_A \to S_A$.
 \end{enumerate}
\end{rem}

If we have a $\Lambda$-linear curved \emph{Batalin--Vilkovisky} calculus, then we can use the isomorphisms $\kappa$ to state (quasi-)perfectness in terms of the Batalin--Vilkovisky \emph{algebra}.

\begin{lemma}\label{perfectness-stated-on-G}\note{perfectness-stated-on-G}
 Let $(G^{\bullet,\bullet},A^{\bullet,\bullet})$ be a $\Lambda$-linear curved Batalin--Vilkovisky calculus. Assume that $G^{\bullet,\bullet}$ is strictly faithful. Then the calculus is quasi-perfect if and only if the following conditions hold:
 \begin{enumerate}[label=\emph{(\roman*)}]
  \item The first spectral sequence $'\!E_{0;r}$ associated with the double complex $(G_0^{\bullet,\bullet},\Delta,\bar\partial)$ over $A_0 = \kk$ degenerates at $E_1$.
  \item For every $A \in \mathbf{Art}_\Lambda$, the induced map $H^k(G_A^\bullet,\check d) \to H^k(G_0^\bullet,\check d)$ with 
  $$\check d = \Delta + \bar\partial + (\ell + y) \wedge (-)$$
  is surjective.
 \end{enumerate}
 In this case, $(G^{\bullet,\bullet},A^{\bullet,\bullet})$ is perfect if and only if, for every $A \in \mathbf{Art}_\Lambda$, the $A$-module $H^k(G_A^\bullet,\check d)$ is finitely generated.
\end{lemma}
\begin{proof}
 Over $A_0 = \kk$, the two double complexes $(G_0^{\bullet,\bullet},\Delta,\bar\partial)$ and $(A_0^{\bullet,\bullet},\partial,\bar\partial)$ are isomorphic via $\kappa = (-) \ \invneg\ \omega$. This is because $\bar\partial\omega \in \m_\Lambda \cdot A^{d,1}$. In particular, their spectral sequences are isomorphic. Since $\kappa \circ \check d = d \circ \kappa$, the isomorphism $\kappa$ also induces an isomorphism $H^k(G_A^\bullet,\check d) \cong H^k(A_A^{\bullet},d)$, which is compatible with base change.
\end{proof}

For a $\Lambda$-linear curved Batalin--Vilkovisky algebra, we turn this into a definition:

\begin{defn}
 Let $G^{\bullet,\bullet}$ be a $\Lambda$-linear curved Batalin--Vilkovisky algebra.  We say that $G^{\bullet,\bullet}$ is \emph{quasi-perfect}\index{Batalin--Vilkovisky algebra!quasi-perfect}\index{quasi-perfect!Batalin--Vilkovisky algebra} respective \emph{perfect}\index{perfect!Batalin--Vilkovisky algebra}\index{Batalin--Vilkovisky algebra!perfect} if it is strictly faithful and satisfies the corresponding conditions in Lemma~\ref{perfectness-stated-on-G}.
\end{defn}

\begin{rem}
 The third condition might seem a bit odd. In view of the $T^1$-lifting criterion of Z.~Ran in \cite{Ran1992}, one might expect a condition about the freeness of, for example, $H^j(G_A^{-1,\bullet},\bar\partial_\phi)$ for every Maurer--Cartan solution $\phi$. We will see in \ref{freeness-section} that, what is required here, is in fact stronger, and, in a sense, more elegant. We do not know if the weaker condition would be sufficient to prove unobstructedness theorems, but if the weaker condition (for all $H^j(G_A^{i,\bullet},\bar\partial_\phi)$) holds and the functor is unobstructed, then also the stronger condition as stated above holds. 
\end{rem}

The following is the abstract unobstructedness theorem of \cite[Thm.~5.6]{ChanLeungMa2023}, stated in its proper clarity and generality.\footnote{In fact, quasi-perfectness can further be weakened to semi-perfectness, which we discuss below when we prove the theorem.}

\begin{thm}[First abstract unobstructedness theorem]\index{Abstract unobstructedness theorem!first}
 Let $G^{\bullet,\bullet}$ be a quasi-perfect $\Lambda$-linear curved Batalin--Vilkovisky algebra. Then $\mathrm{SMC}(G^{\bullet,\bullet},-)$ is unobstructed, i.e., for every surjection $B' \to B$ in $\mathbf{Art}_\Lambda$, the induced map $\mathrm{SMC}(G^{\bullet,\bullet},B') \to \mathrm{SMC}(G^{\bullet,\bullet},B)$ is surjective.
\end{thm}

This is sufficient as long as we only want to know that a solution of the (classical extended) Maurer--Cartan equation exists. Namely, we can start with the pair $(0,0)$ over $A_0$, and then we can lift it from order to order and obtain a solution $(\phi_k,f_k)$ of the semi-classical extended Maurer--Cartan equation. Then $\phi_k$ is also a solution of the classical extended Maurer--Cartan equation. This is sufficient e.g.~for the situation in our previous article \cite{FFR2021} (with Filip and Ruddat), where we were interested in the existence of a deformation in the geometric setup. When we want to show that \emph{every} classical Maurer--Cartan solution can be lifted independently of an $f$ that makes $(\phi,f)$ into a semi-classical Maurer--Cartan solution, i.e., when we want to show that the deformation functor in a geometric setup is unobstructed, we have to work again with $\Lambda$-linear curved Batalin--Vilkovisky \emph{calculi}.

\begin{thm}[Second abstract unobstructedness theorem]\label{second-abstract-unob-thm}\note{second-abstract-unob-thm}\index{Abstract unobstructedness theorem!second}
 Let $(G^{\bullet,\bullet},A^{\bullet,\bullet})$ be a quasi-perfect $\Lambda$-linear curved Batalin--Vilkovisky calculus. Then $\mathrm{MC}(G^{\bullet,\bullet},-)$ is unobstructed.
\end{thm}

Note that we require a perfect curved Batalin--Vilkovisky calculus. It is not sufficient to have a perfect curved Gerstenhaber calculus, as they arise from any geometry, i.e., not necessarily Calabi--Yau. On the other hand, the finiteness condition is usually satisfied in geometry, but it is not necessary for the abstract theorem. This second abstract unobstructedness theorem has, to the author's knowledge, not been addressed in \cite{ChanLeungMa2023}.

\section{Freeness of $H^j(G_A^{i,\bullet},\bar\partial_\phi)$ and $H^j(A_A^{i,\bullet},\bar\partial_\phi)$}\label{freeness-section}\note{freeness-section}

Let, for a moment, $Q$ be a sharp toric monoid, and let $f_A: X_A \to S_A$ be a proper, log smooth, and saturated morphism of fs log schemes, where $S_A = \Spec(Q \to A)$ for an Artinian local $\kk\llbracket Q\rrbracket$-algebra $A$ with residue field $\kk$. Then, by the Hodge--de Rham degeneration theorem, 
$$R^q(f_A)_*\Omega_{X_A/S_A}^p$$ is a finitely generated free $A$-module whose formation commutes with base change. If $f_A: X_A \to S_A$ is log Calabi--Yau, i.e., we have $\omega_{X_A/S_A} \cong \cO_{X_A}$, then also 
$$R^q(f_A)_*\Theta^p_{X_A/S_A}$$ is a finitely generated free $A$-module whose formation commutes with base change. This suggests that, for a Maurer--Cartan solution $\phi$, the $A$-modules $H^j(A_A^{i,\bullet},\bar\partial_\phi)$ and $H^j(G_A^{i,\bullet},\bar\partial_\phi)$ might be free and commute with base change. This is in fact true under the quasi-perfectness assumption, not even assuming finite generation.

\subsection{Freeness of $H^j(G_A^{i,\bullet},\bar\partial_\phi)$ and the relevance of $\check d = \bar\partial + \Delta + (\ell + y) \wedge (-)$}

Let $G^{\bullet,\bullet}$ be a quasi-perfect $\Lambda$-linear curved Batalin--Vilkovisky algebra. Suppose we have a semi-classical Maurer--Cartan solution $(\phi,f)$ over $A \in \mathbf{Art}_\Lambda$. We have seen above that then
$$(C^{\bullet,\bullet},d,d') := (G_A^{\bullet,\bullet},\Delta_f,\bar\partial_\phi)$$
is a double complex. For the moment, we are interested in the cohomology $H^k(G_A^\bullet,\bar\partial_\phi + \Delta_f)$ of its total complex. By our quasi-perfectness assumption, we know that
$$H^k(G_A^\bullet,\check d) \to H^k(G_0^\bullet,\check d)$$
is surjective, 
where $\check d\theta = \bar\partial\theta + \Delta\theta + (\ell + y) \wedge \theta$.
Since $\phi$ and $f$ are nilpotent for the $\wedge$-product, we can define $E := \sum_{n = 0}^\infty\frac{(f + \phi)^n}{n!}$; then we have a map
$$\Phi_E: G_A^m \to G_A^m, \quad \theta \mapsto E \wedge \theta.$$
This map is an isomorphism of $A$-modules, for its inverse is $\Psi_E(\theta) = \sum_{n = 0}^\infty\frac{(-1)^n(f + \phi)^n}{n!} \wedge \theta$.

\begin{lemma}
 We have 
 $$\Phi_E \circ (\bar\partial_\phi + \Delta_f) = \check d \circ \Phi_E.$$
 In particular, we have 
 $$H^k(G_A^\bullet,\bar\partial_\phi + \Delta_f) \cong H^k(G_A^\bullet,\check d),$$
 so that $H^k(G_A^\bullet,\Delta_f + \bar\partial_\phi) \to H^k(G_0^\bullet,\Delta + \bar\partial)$ is surjective. 
\end{lemma}
\begin{proof}
 Recall the formulae
 $$\bar\partial(\theta^n) = n\bar\partial(\theta) \wedge \theta^{n - 1} \quad \mathrm{and} \quad \Delta(\theta^n) = n\Delta(\theta) \wedge \theta^{n - 1} + \frac{n(n -1)}{2}\theta^{n - 2} \wedge [\theta,\theta]$$
 for $\theta \in G_A^\bullet$ of even degree and $n \geq 2$. By computation, the assertion $\Phi_E \circ (\bar\partial_\phi + \Delta_f)(\xi) = \check d \circ \Phi_E(\xi)$ is equivalent to 
 $$E \wedge [f + \phi,\xi] = [E,\xi] + \left(\bar\partial(f + \phi) + \Delta(f + \phi) + \frac{1}{2}[f + \phi,f + \phi] + \ell + y\right) \wedge E \wedge \xi.$$
 Since $E \wedge [f + \phi,\xi] = [E,\xi]$ and $\check d(f + \phi) = 0$, this holds.
\end{proof}

\begin{prop}\label{free-G-prop}\note{free-G-prop}
 Let $G^{\bullet,\bullet}$ be a quasi-perfect $\Lambda$-linear curved Batalin--Vilkovisky algebra. Let $(\phi,f)$ be a semi-classical Maurer--Cartan solution over $A \in \mathbf{Art}_\Lambda$. Then 
  $$H^j(G_A^{i,\bullet},\bar\partial_\phi)$$
  is a free $A$-module, and its formation commutes with base change in $\mathbf{Art}_A$. If $G^{\bullet,\bullet}$ is perfect, then this cohomology module is finitely generated.
\end{prop}
\begin{proof}
 We apply Proposition~\ref{spectral-sequence-Artin-ring} and Corollary~\ref{spectral-sequence-Artin-ring-base-change} to the double complex $(C^{\bullet,\bullet},d,d') = (G_A^{\bullet,\bullet},\Delta_f,\bar\partial_\phi)$. If $H^k(G_A^\bullet,\Delta_f + \bar\partial_\phi)$ is finitely generated, so are its subquotients.
\end{proof}
\begin{rem}
 Whenever $G^{\bullet,\bullet}$ is part of a perfect $\Lambda$-linear curved Batalin--Vilkovisky calculus $(G^{\bullet,\bullet},A^{\bullet,\bullet})$, then, by Theorem~\ref{SMC-MC-smooth} below, a classical Maurer--Cartan solution $\phi$ can be extended to a semi-classical Maurer--Cartan solution $(\phi,f)$; thus, in this case, the proposition also holds for classical Maurer--Cartan solutions $\phi$.
\end{rem}

\subsection{Freeness of $H^j(A_A^{i,\bullet},\bar\partial_\phi)$ and the relevance of $d = \partial + \bar\partial + \ell \ \invneg \ (-)$}

Let $(G^{\bullet,\bullet},A^{\bullet,\bullet})$ be a quasi-perfect $\Lambda$-linear curved Gerstenhaber calculus. It may be actually a Batalin--Vilkovisky calculus, but we do not require this extra structure here. Suppose we have a classical Maurer--Cartan solution $\phi$ over $A \in \mathbf{Art}_\Lambda$, not necessarily part of a semi-classical Maurer--Cartan solution $(\phi,f)$. Then we can form a double complex 
$$(C^{\bullet,\bullet},d,d') := (A_A^{\bullet,\bullet},\partial,\bar\partial_\phi).$$
Here, we know that $H^k(A_A^\bullet,d) \to H^k(A_0^\bullet,d)$ is surjective with $d\alpha := \partial\alpha + \bar\partial\alpha + (\ell \ \invneg\ \alpha).$ 
After setting $E := e^\phi = \sum_{n = 0}^\infty \frac{\phi^n}{n!}$, we have an isomorphism
$$\Phi_E: A_A^m \to A_A^m,  \quad \alpha \mapsto E \ \invneg \ \alpha,$$
of $A$-modules.

\begin{lemma}\label{hypercohom-comparison}\note{hypercohom-comparison}
 We have 
 $$\Phi_E \circ (\partial + \bar\partial_\phi)(\alpha) = d \circ \Phi_E(\alpha).$$
 In particular, we have 
 $$H^k(A_A^\bullet,\partial + \bar\partial_\phi) \cong H^k(A_A^\bullet,d),$$
 so that $H^k(A_A^\bullet,\partial + \bar\partial_\phi) \to H^k(A_0^\bullet,d)$ is surjective.
\end{lemma}
\begin{proof}
 First, we have
 $\cL_{e^\phi}(\alpha) = e^\phi \ \invneg\ \cL_\phi(\alpha) + \frac{1}{2}[\phi,\phi] \ \invneg\ e^\phi \ \invneg\ \alpha.$
 Then we compute
 \begin{align}
  &\quad\: d \circ \Phi_E(\alpha) - \Phi_E \circ (\partial + \bar\partial_\phi)(\alpha) \nonumber \\
  &= \partial(e^\phi \ \invneg\ \alpha) + \bar\partial(e^\phi \ \invneg\ \alpha) + (\ell \ \invneg\ e^\phi \ \invneg\ \alpha) - (e^\phi \ \invneg\ \partial\alpha) - (e^\phi \ \invneg\ \bar\partial\alpha) - (e^\phi \ \invneg\ \cL_\phi(\alpha)) \nonumber \\
  &= \cL_{e^\phi}(\alpha) + \bar\partial(e^\phi) \ \invneg\ \alpha + (\ell \ \invneg\ e^\phi \ \invneg\ \alpha) - (e^\phi \ \invneg\ \cL_\phi(\alpha)) \nonumber \\
  &= (\bar\partial\phi \ \invneg\ e^\phi \ \invneg\ \alpha) + (\frac{1}{2}[\phi,\phi] \ \invneg\ e^\phi \ \invneg\ \alpha) + (\ell \ \invneg\ e^\phi \ \invneg\ \alpha) = 0 \nonumber
 \end{align}
 because $\phi$ is a classical Maurer--Cartan solution.
\end{proof}

Then we have:

\begin{prop}\label{free-A-prop}\note{free-A-prop}
 Let $(G^{\bullet,\bullet},A^{\bullet,\bullet})$ be a quasi-perfect $\Lambda$-linear curved Gerstenhaber calculus. Let $\phi$ be a classical Maurer--Cartan solution over $A \in \mathbf{Art}_\Lambda$. Then 
  $$H^j(A_A^{i,\bullet},\bar\partial_\phi)$$
  is a free $A$-module, and its formation commutes with base change in $\mathbf{Art}_A$. If $(G^{\bullet,\bullet},A^{\bullet,\bullet})$ is perfect, then the cohomology module is finitely generated.
\end{prop}
\begin{proof}
 We apply Proposition~\ref{spectral-sequence-Artin-ring} and Corollary~\ref{spectral-sequence-Artin-ring-base-change} to the double complex $(A_A^{\bullet,\bullet},\partial,\bar\partial_\phi)$.
\end{proof}



\chapter{The two abstract unobstructedness theorems}\label{two-abstract-unobstructed-sec}\note{two-abstract-unobstructed-sec}

We prove the two abstract unobstructedness theorems. We achieve this by solving the \emph{quantum extended Maurer--Cartan equation}, hence the names classical and semi-classical for the above Maurer--Cartan equations.

\section{Formal power series and formal Laurent series in $G^{p,q}_A$}

Let $G^{\bullet,\bullet}$ be a $\Lambda$-linear curved Batalin--Vilkovisky algebra. Let us write $G_\Lambda^{p,q} := G^{p,q}$.
Then we set
$$G^{p,q}_A\fpsh := \{\sum_{i = 0}^\infty g_i \hb^i \ | \ g_i \in G_A^{p,q} \}$$
for either $A \in \mathbf{Art}_\Lambda$ or $A = \Lambda$ itself, a convention which we follow in this section.
Compared with \cite{ChanLeungMa2023}, we have changed the notation from $t$ to $\hb$ in order to avoid confusion with the frequent use of $t$ as a variable in $\Lambda$. We also set
$$G^{p,q}_A\fpsu := \{\sum_{i = 0}^\infty g_i u^i \ | \ g_i \in G_A^{p,q}\},$$
where we think of $u$ as a variable with $u^2 = \hb$, i.e., 
$G_A^{p,q}\fpsh \subset G_A^{p,q}\fpsu$
is the subspace of formal power series with entries in even degrees.\footnote{In \cite{ChanLeungMa2023}, the notation $t^{\frac{1}{2}}$ is used instead of $u$, which I find somewhat cumbersome.} 
Note that, as an $A$-module, we have
$$G^{p,q}_A\fpsu = \varprojlim G^{p,q}_A[u]/(u^m).$$

\begin{lemma}\label{power-series-prop}
 \note{power-series-prop}
 Let $A \in \mathbf{Art}_\Lambda$ or $A = \Lambda$. Also, let $B' = \Lambda$ or $B' \in \mathbf{Art}_\Lambda$, let $B \in \mathbf{Art}_\Lambda$, and let $B' \to B$ be a (local) homomorphism of $\Lambda$-algebras.
 \begin{enumerate}[label=\emph{(\roman*)}]
  \item We have isomorphisms
 $$G^{p,q}_{B'}\fpsh \otimes_{B'} B \xrightarrow{\cong} G^{p,q}_B\fpsh \quad \mathrm{and} \quad G^{p,q}_{B'}\fpsu \otimes_{B'} B \xrightarrow{\cong} G^{p,q}_B\fpsu.$$
 \item The $A$-modules $G^{p,q}_A\fpsh$ and $G_A^{p,q}\fpsu$ are flat. 
 \item The $\Lambda$-modules $G^{p,q}\fpsh$ and $G^{p,q}\fpsu$ are complete with respect to $\m_\Lambda$.
 \end{enumerate}
\end{lemma}

Before we embark into the proof, we need the following lemma.

\begin{lemma}
 Let $R$ be a (commutative and unital) ring. Let $(M_i)_{i \in \NN}$ be an inverse system of flat $R$-modules with surjective restriction maps. Let $N$ be a finitely presented $R$-module. Then we have an isomorphism
 $$\Phi_N: \ \ \left(\varprojlim M_i \right) \otimes_R N \to \varprojlim \left(M_i \otimes_R N \right).$$
\end{lemma}
\begin{proof}
 First note that the statement holds for $N = R^n$. If $N$ is finitely \emph{generated}, we have an exact sequence 
 $$0 \to K \to R^n \to N \to 0.$$
 The system $(M_i \otimes_R K)_i$ has surjective restriction maps, so, together with the flatness of $M_i$, the upper line in the following diagram is exact:
 \[
  \xymatrix{
   0 \ar[r] & \varprojlim (M_i \otimes_R K) \ar[r] & \varprojlim (M_i \otimes_R R^n) \ar[r] & \varprojlim (M_i \otimes_R N) \ar[r] & 0 \\
   & \left(\varprojlim M_i\right) \otimes_R K \ar[r] \ar[u]^{\Phi_K} & \left(\varprojlim M_i\right) \otimes_R R^n \ar[r] \ar[u]^{\cong} & \left(\varprojlim M_i\right) \otimes_R N \ar[r] \ar[u]^{\Phi_N} & 0 \\
  }
 \]
 The lower row is exact since taking the tensor product is right exact. From the diagram, we find that $\Phi_N$ is surjective. Since our $N$ is assumed to be finitely \emph{presented}, the kernel $K$ is actually finitely \emph{generated}, so $\Phi_K$ is surjective. Now a diagram chase shows that $\Phi_N$ is also injective.
\end{proof}

\begin{proof}[Proof of Lemma~\ref{power-series-prop}]
 It suffices to show the Lemma for $u$; then the proof for $\hb$ is the same.

 (i): Note that $B$ is finitely presented as an $B'$-module because it is actually a $\kk$-vector space of finite dimension. Note also that $G_{B'}^{p,q}[u]/(u^m)$ is a flat $B'$-module, and that $G_{B'}^{p,q}[u]/(u^m) \otimes_{B'} B \cong G_B^{p,q}[u]/(u^m)$.
 
 (ii): Note that $G_A^{p,q}[u]/(u^m)$ is a flat $A[u]/(u^m)$-module because a tensor product preserves flatness. Thus, by \cite[0912]{stacks}, the limit $G_A^{p,q}\fpsu$ is a flat $A[u]$-module, hence a flat $A$-module.
 
 (iii): We have 
 $$G^{p,q}\fpsu = \varprojlim_m \varprojlim_k G^{p,q}_k[u]/(u^m) =  \varprojlim_k \varprojlim_m G^{p,q}_k[u]/(u^m) = \varprojlim_k G^{p,q}_k\fpsu$$
 since inverse limits commute with each other.
\end{proof}

Next, we set, for either $A \in \mathbf{Art}_\Lambda$ or $A = \Lambda$,
$$G_A^{p,q}\fLsu := \{\sum_{i\, =\, -N}^\infty g_iu^i \ | \ N \geq 0, g_i \in G_A^{p,q}\}$$
and also
$$G_A^{p,q}\fLsh := \{\sum_{i\, =\, -N}^\infty g_i\hb^i \ | \ N \geq 0, g_i \in G_A^{p,q}\}.$$
We have
$$G^{p,q}_A\fLsu = G^{p,q}_A\fpsu _u,$$
i.e., it is the localization of the $A[u]$-module $G^{p,q}_A\fpsu$ in the non-zero divisor $u$. Thus, we also have
$$G_A^{p,q}\fLsu = G_A^{p,q}\fpsu \otimes_{A[u]}A[u,u^{-1}].$$
In particular, $G_A^{p,q}\fLsu$ is a flat $A[u,u^{-1}]$-module. Similarly, we have
$$G^{p,q}_A\fLsu = G^{p,q}_A\fpsu \otimes_{\kk[u]} \kk[u,u^{-1}].$$
Note also that
$$G_{B'}^{p,q}\fLsu \otimes_{B'} B = G_{B'}^{p,q}\fpsu \otimes_{B'[u]} B[u,u^{-1}] = (G_{B'}^{p,q}\fpsu \otimes_{B'} B) \otimes_{B[u]} B[u,u^{-1}] = G_B^{p,q}\fLsu$$
for $B' = \Lambda$ or $B' \in \mathbf{Art}_\Lambda$ and $B \in \mathbf{Art}_\Lambda$, along a map $B' \to B$.

\begin{rem}
 Unlike the case of $G^{p,q}\fpsu$, the natural map
 $$G^{p,q}\fLsu \to \varprojlim_k G_k^{p,q}\fLsu$$
 is \emph{not} an isomorphism. Namely, an element in the limit on the right might have components in arbitrarily negative degree while the degree of the non-zero components of an element on the left is bounded below. The map is, however, injective.
\end{rem}

\section{The quantum extended Maurer--Cartan equation}\index{Maurer--Cartan equation!quantum extended}

Instead of the original extended Maurer--Cartan equations, we study the \emph{quantum extended Maurer--Cartan equation}
$$(\bar\partial + \hb\Delta)\varphi_A + \frac{1}{2}[\varphi_A,\varphi_A] + (\ell + \hb y) = 0 \: \in \: G^1_A\fpsh$$
which is to be satisfied by elements
$$\varphi_A \: =\: \sum_{i = 0}^\infty C^{0,0}_i(\varphi_A)\hb^i + \sum_{i = 0}^\infty C^{-1,1}_i(\varphi_A)\hb^i + ... + \sum_{i = 0}^\infty C^{-d,d}_i(\varphi_A)\hb^i \: \in \: \m_A \cdot G^0_A\fpsh.$$
Here, we write 
$$G^m_A\fpsh := \bigoplus_{p + q = m}G^{p,q}_A\fpsh;$$
since $p$ is bounded below by $-d$, and $q$ is bounded below by $0$, this direct sum is actually finite. We denote the set of solutions by $\mathrm{QMC}(G^{\bullet,\bullet},A)$.

\begin{lemma}
 Assume that $\varphi_A \in \m_A \cdot G_A^0\fpsh$ satisfies the quantum extended Maurer--Cartan equation. Then $(\bar\partial + \hb \Delta + [\varphi_A,-])^2 = 0$ as an operator $G^m_A\fpsh \to G^{m + 1}_A\fpsh$.
\end{lemma}
\begin{proof}
 For $\theta \in G_A^m\fpsh$, we have
 $$(\bar\partial + \hb \Delta + [\varphi_A,-])^2(\theta) = [\bar\partial\varphi_A + \hb\Delta(\varphi_A) + \frac{1}{2}[\varphi_A,\varphi_A] + \ell + \hb y,\theta] = 0$$
 by direct computation.
\end{proof}
\begin{rem}
 In \cite{ChanLeungMa2023}, the authors claim that the two conditions would be equivalent. While this is the case for the classical extended Maurer--Cartan equation, I don't see why it should be true here. Namely, not for every $(p,q)$ it is true that, for $\theta \in G^{p,q}_A$, the condition $[\theta,\xi] = 0$ for all $\xi$ implies $\theta = 0$. For example, we have $[1,\xi] = 0$ for all $\xi$ (proven by using the odd Poisson identity) but $1 \not= 0$.
\end{rem}

We define a \emph{scaling operator}\index{scaling operator}
$$\sigma: \: G_A^{p,q}\fLsu \to G_A^{p,q}\fLsu, \quad \theta \mapsto u^{q - p - 2} \theta;$$
it is an automorphism. Since $\varphi_A \in \m_A \cdot G^0_A\fpsh$, the sum 
$$e^{\sigma(\varphi_A)} = \sum_{n = 0}^\infty \frac{\sigma(\varphi_A)^n}{n!} \: \in \: G^0_A\fLsu$$
is actually finite and hence well-defined. Recall that we have $\check d = \bar\partial + \Delta + (\ell + y) \wedge - $.

\note{q-ext-MC-var-1}
\begin{lemma}\label{q-ext-MC-var-1}
 An element $\varphi_A \in  \m_A \cdot G^0_A\fpsh$ satisfies the quantum extended Maurer--Cartan equation if and only if 
 $$\check d(e^{\sigma(\varphi_A)}) = 0.$$
\end{lemma}
\begin{proof}
 We write $\varphi = \varphi_A$ for short. Let $\varphi = \varphi^{0,0} + \varphi^{-1,1} + ... + \varphi^{-d,d}$ be the bidegree decomposition with $\varphi^{-i,i} \in G_k^{-i,i}\fpsh$. Then the quantum extended Maurer--Cartan equation is equivalent to the following system of equations:
 \begin{align}
  (0,1)&: \quad \bar\partial\varphi^{0,0} + \hb\Delta\varphi^{-1,1} + \frac{1}{2}[\varphi^{0,0},\varphi^{-1,1}] + \frac{1}{2}[\varphi^{-1,1},\varphi^{0,0}] + \hb y = 0 \\
  (-1,2)&: \quad \bar\partial\varphi^{-1,1} + \hb\Delta\varphi^{-2,2} + \frac{1}{2}[\varphi^{0,0},\varphi^{-2,2}] + \frac{1}{2}[\varphi^{-1,1},\varphi^{-1,1}] + \frac{1}{2}[\varphi^{-2,2},\varphi^{0,0}] + \ell = 0 \\
  (-j,j + 1)&: \quad \bar\partial\varphi^{-j,j} + \hb\Delta\varphi^{-j - 1,j + 1} + \sum_{i = 0}^{j + 1} \frac{1}{2}[\varphi^{-i,i},\varphi^{-(j + 1 - i),j + 1 - i}] = 0 
 \end{align}
 where we have one equation for every $2 \leq j \leq d$ (or only two in total if $d = 1$). By multiplication with powers of $\hb$, this system is equivalent to the system 
 \begin{align}
  (0,1)&: \quad \hb^{-1}\bar\partial\varphi^{0,0} + \Delta\varphi^{-1,1} + \frac{1}{2}\hb^{-1}[\varphi^{0,0},\varphi^{-1,1}] + \frac{1}{2}\hb^{-1}[\varphi^{-1,1},\varphi^{0,0}] + y = 0 \\
  (-1,2)&: \quad \bar\partial\varphi^{-1,1} + \hb\Delta\varphi^{-2,2} + \frac{1}{2}[\varphi^{0,0},\varphi^{-2,2}] + \frac{1}{2}[\varphi^{-1,1},\varphi^{-1,1}] + \frac{1}{2}[\varphi^{-2,2},\varphi^{0,0}] + \ell = 0 \\
  (-j,j + 1)&: \quad \hb^{j - 1}\bar\partial\varphi^{-j,j} + \hb^j\Delta\varphi^{-j - 1,j + 1} + \sum_{i = 0}^{j + 1} \frac{1}{2}\hb^{j - 1}[\varphi^{-i,i},\varphi^{-(j + 1 - i),j + 1 - i}] = 0. 
 \end{align}
 Since $\sigma(\varphi) = \hb^{-1}\varphi^{0,0} + \varphi^{-1,1} + \hb\varphi^{-2,2} + ...$, putting everything together again, this system is equivalent to 
 $$\bar\partial(\sigma(\varphi)) + \Delta(\sigma(\varphi)) + \frac{1}{2}[\sigma(\varphi),\sigma(\varphi)] + \ell + y = 0 \: \in \: G_A^1\fpsh.$$
 A direct computation yields
 $$\check d(e^{\sigma(\varphi)}) = \left(\bar\partial(\sigma(\varphi)) + \Delta(\sigma(\varphi)) + \frac{1}{2}[\sigma(\varphi),\sigma(\varphi)] + \ell + y\right) \wedge e^{\sigma(\varphi)}.$$
 Thus, if $\varphi$ satisfies the quantum extended Maurer--Cartan equation, then $\check d(e^{\sigma(\varphi)}) = 0$. Conversely, assume that $\check d(e^{\sigma(\varphi)}) = 0$. Let $\mu$ be the above term such that $\check d(e^{\sigma(\varphi)}) = \mu \wedge e^{\sigma(\varphi)} = 0$. Assume that $\mu \not= 0$. Let $m$ be the number such that $\mu \in \m_A^m \cdot G_A^0\fLsu$ but $\mu \notin \m_A^{m + 1} \cdot G_A^0\fLsu$. Now
 $$\mu \wedge e^{\sigma(\varphi)} = \mu + \mu \wedge \sigma(\varphi) + ... = 0.$$
 Since $\mu \wedge \sigma(\varphi) + ... \in \m_A^{m + 1}\cdot G_A^0\fLsu$, we find that $\mu \in \m_A^{m + 1} \cdot G_A^0\fLsu$, a contradiction. Thus, we have $\mu = 0$, and $\varphi$ satisfies the quantum extended Maurer--Cartan equation.
\end{proof}

\section{Solving the quantum extended Maurer--Cartan equation}

In this section, we show that the quantum extended Maurer--Cartan functor $\mathrm{QMC}(G^{\bullet,\bullet},-)$ is unobstructed if $G^{\bullet,\bullet}$ is quasi-perfect. In fact, an even weaker assumption is sufficient.

\begin{defn}\label{semi-perf-defn}\note{semi-perf-defn}\index{semi-perfect}\index{Batalin--Vilkovisky algebra!semi-perfect}
 Let $G^{\bullet,\bullet}$ be a strictly faithful $\Lambda$-linear Batalin--Vilkovisky algebra of dimension $d$. Then $G^{\bullet,\bullet}$ is \emph{semi-perfect} if:
 \begin{enumerate}[label=(\roman*)]
  \item the first spectral sequence $'\!E_{0;r}$ associated with the double complex $(G_0^{\bullet,\bullet},\Delta,\bar\partial)$ over $A_0 = \kk$ degenerates at $E_1$ in $[0,1]$, as defined in Definition~\ref{degeneration-level-k};
  \item for every $A \in \mathbf{Art}_\Lambda$, the induced map $H^k(G_A^\bullet,\check d) \to H^k(G_0^\bullet,\check d)$ is surjective for $k \in \{-1,0\}$.
 \end{enumerate}
\end{defn}

Clearly, every quasi-perfect $G^{\bullet,\bullet}$ is also semi-perfect. If the cohomology groups over $A_0 = \kk$ are finitely generated as $\kk$-vector spaces, in order to satisfy the first condition, it is sufficient to have that the dimensions of $'\!E_{0;1}^{p,q}$ sum up to the dimension of $H^k(G_0^\bullet,\check d)$ for either $k = 0$ or $k = 1$. The second condition for both $k = -1$ and $k = 0$ ensures that $H^0(G_A^\bullet,\check d)$ is a flat $A$-module.

\begin{thm}\label{QMC-unobstructed}\note{QMC-unobstructed}\index{unobstructedness theorem!quantum extended MC eqn.}
 Let $G^{\bullet,\bullet}$ be a $\Lambda$-linear curved Batalin--Vilkovisky algebra of dimension $d$. Assume that $G^{\bullet,\bullet}$ is semi-perfect.
 Then the quantum extended Maurer--Cartan functor 
 $$\mathrm{QMC}(G^{\bullet,\bullet},-): \mathbf{Art}_\Lambda \to \mathbf{Set}$$
 is unobstructed. In particular, if $G^{\bullet,\bullet}$ is quasi-perfect, then the functor is unobstructed.
\end{thm}
\begin{rem}
 This is the most fundamental unobstructedness theorem which we consider here. We have learned the proof from Chan--Leung--Ma in \cite[Thm.~5.6]{ChanLeungMa2023}, where a more general statement in a more restrictive context is given. Here, we give the least restrictive conditions possible for the argument to work. The proof in \cite[Thm.~5.6]{ChanLeungMa2023} in turn has been inspired by \cite{Barannikov2002,KKP2008,Terilla2008}, which all deal with the classical, not logarithmic, case. In particular, \cite{Terilla2008} contains a very general unobstructedness theorem, which is proven by studying $V\fpsh$ with a differential $Q + \hb\Delta$ instead of the original BV algebra $V$ with differential $Q$ and BV operator $\Delta$. However, Theorem~\ref{QMC-unobstructed} is not a mere consequence of this unobstructedness theorem due to the predifferential nature of our situation. 
\end{rem}

Let $\varphi_B \in \m_B \cdot G_B^0\fpsh$ be a solution of the quantum extended Maurer--Cartan equation over $B \in \mathbf{Art}_\Lambda$, and let $B' \to B$ be a small extension in $\mathbf{Art}_\Lambda$ with kernel $I \subset B'$. We have an exact sequence 
$$0 \to I \cdot G_{B'}^{p,q}\fpsh \to G_{B'}^{p,q}\fpsh \to G_{B}^{p,q}\fpsh \to 0$$
where we have additionally $I \cdot G_{B'}^{p,q}\fpsh = G_0^{p,q} \fpsh \otimes_\kk I$ when we consider $I$ as a $\kk$-vector space. 

Let us write $\varphi := \varphi_B$. In the rest of the section, we show the existence of a solution $\varphi' := \varphi_{B'}$ of the quantum extended Maurer--Cartan equation over $B'$ with $\varphi'|_B = \varphi_B = \varphi$, where $(-)|_B$ is the restriction map in the above exact sequence.

As the first step, we show another variant of Lemma~\ref{q-ext-MC-var-1}.

\begin{lemma}
 Let $\varphi_A \in \m_A \cdot G_A^0\fpsh$. Then $\varphi_A$ is a solution of the quantum extended Maurer--Cartan equation if and only if 
 $$\check d_{\hb}(e^{\varphi_A/\hb}) = 0$$
 in $G_A^\bullet\fLsh$ where $\check d_{\hb} := \bar\partial + \hb\Delta + \hb^{-1}(\ell + \hb y) \wedge (-)$. Moreover, this map satisfies $\check d_{\hb}^2 = 0$.
\end{lemma}
\begin{proof}
 The element $e^{\varphi_A/\hb} \in G_A^0\fLsh$ is well-defined because $\m_A$ is nilpotent, so exponents of $\hb$ are bounded below when computing the power series.
 
 By direct computation, we have $\check d_{\hb}(\theta) = u \cdot \sigma^{-1} \circ \check d \circ \sigma(\theta)$ in $G_A^\bullet\fLsu$, so $\check d_{\hb}(e^{\varphi_A/\hb}) = 0$ if and only if $\check d \circ \sigma(e^{\varphi_A/\hb}) = 0$.
 
 Since $\sigma(\theta \wedge \xi) = \hb \cdot \sigma(\theta) \wedge \sigma(\xi)$, we find $\sigma((\varphi_A/\hb)^n) = \sigma(\varphi_A)^n \cdot \hb^{-1}$ and thus 
 $$\sigma(e^{\varphi_A/\hb}) = \hb^{-1}\cdot e^{\sigma(\varphi_A)}.$$
 Consequently, $\check d_{\hb}(e^{\varphi_A/\hb}) = 0$ is equivalent to $\check d(e^{\sigma(\varphi_A)}) = 0$, the statement of Lemma~\ref{q-ext-MC-var-1}.
\end{proof}

Since we assume $\varphi$ to be a solution of the quantum extended Maurer--Cartan equation, we obtain $\check d_{\hb}(e^{\varphi/\hb}) = 0$. By a direct computation, we have
$$\check d_{\hb}(e^{\varphi/\hb}) = \left(\bar\partial(\varphi/\hb) + \hb\Delta(\varphi/\hb) + \frac{1}{2}\hb[\varphi/\hb,\varphi/\hb] + \hb^{-1}(\ell + \hb y)\right) \wedge e^{\varphi/\hb}.$$
Let $\mu$ be the term in the left bracket on the right hand side. As in the proof of Lemma~\ref{q-ext-MC-var-1}, we have $\mu = 0$.

Now we choose an arbitrary lift $\tilde\varphi$ of $\varphi$ in $\m_{B'} \cdot G_{B'}^0\fpsh$, i.e., $\tilde\varphi|_B = \varphi$. Let 
$$\tilde\mu := \bar\partial(\tilde\varphi/\hb) + \hb\Delta(\tilde\varphi/\hb) + \frac{1}{2}\hb[\tilde\varphi/\hb,\tilde\varphi/\hb] + \hb^{-1}(\ell + \hb y);$$
then $\tilde\mu \in \hb^{-1}\cdot\m_{B'}\cdot G_{B'}^1\fpsh$, i.e., the lowest $\hb$-term is at $\hb^{-1}$. Since $\tilde\mu|_B = \mu = 0$, we have $\tilde\mu \in I \cdot G_{B'}^1\fLsh$. Because $I \cdot \m_{B'} = 0$, we have moreover
$$\check d_{\hb}(e^{\tilde\varphi/\hb}) = \tilde\mu \wedge e^{\tilde\varphi/\hb} = \tilde\mu.$$

\begin{lemma}
 Let $G^{\bullet,\bullet}$ be as above. Then we have an exact sequence 
 $$0 \to (G_0^\bullet\fLsh,\check d_{\hb}) \otimes_\kk I \to (G_{B'}^\bullet\fLsh,\check d_{\hb}) \to (G_B^\bullet\fLsh,\check d_{\hb}) \to 0$$
 of complexes; it induces an exact  sequence
 $$0 \to H^1(G_0^\bullet\fLsh,\check d_{\hb}) \otimes_\kk I \to H^1(G_{B'}^\bullet\fLsh,\check d_{\hb}) \to H^1(G_B^\bullet\fLsh,\check d_{\hb})$$
 in cohomology.\footnote{Here we use the second condition of Theorem~\ref{QMC-unobstructed}.}
\end{lemma}
\begin{proof}
 The first sequence is exact on the level of graded pieces, and the horizontal maps are compatible with the differentials, so the first sequence of complexes is exact.
 
 For $A \in \mathbf{Art}_\Lambda$, we have a series of functorial isomorphisms. Denote by $SG_A^\bullet\fLsh$ the complex with graded pieces $SG_A^m\fLsh = u^mG_A^m\fLsh \subseteq G_A^m\fLsu$. Then we have an isomorphism
 $$\sigma: (G_A^\bullet\fLsh,\check d_{\hb}) \xrightarrow{\cong} (SG_A^\bullet\fLsh,u\check d)$$
 of complexes since $\check d_{\hb} = u \cdot \sigma^{-1} \circ \check d \circ \sigma$. Next, multiplication with $u^m$ in degree $m$ gives an isomorphism
 $$u^\bullet: (G_A^\bullet\fLsh,\check d) \xrightarrow{\cong} (SG_A^\bullet\fLsh,u\check d)$$
 of complexes. They induce an isomorphism $H^k(G_A^\bullet\fLsh,\check d_{\hb}) \cong H^k(G_A^\bullet\fLsh,\check d)$ of $A$-modules, functorial in $A \in \mathbf{Art}_\Lambda$. Further, we have a functorial isomorphism
 $$H^k(G_A^\bullet\fLsh,\check d) \xrightarrow{\cong} H^k(G_A^\bullet,\check d) \fLsh.$$
 Since $H^k(G_{B}^\bullet,\check d) \to H^k(G_0^\bullet,\check d)$ is surjective for both $k = -1$ and $k = 0$, we find that $H^0(G_B^\bullet,\check d)$ is flat over $B$ by Proposition~\ref{Artin-complex-base-change}. Then the proof of Corollary~\ref{spectral-sequence-Artin-ring-base-change} shows that $H^0(G_{B'}^\bullet,\check d) \to H^0(G_{B}^\bullet,\check d)$ is surjective so that
 $$H^0(G_{B'}^\bullet\fLsh,\check d_{\hb}) \to H^0(G_B^\bullet\fLsh,\check d_{\hb})$$
 is surjective as well. This proves the second part of the assertion.
\end{proof}

Since $\check d_{\hb}(e^{\tilde\varphi/\hb}) = \tilde\mu$, we have $\check d_{\hb}(\tilde\mu) = 0$. Thus, we can find an element $a \in G_0^1\fLsh \otimes_\kk I$ with $a \mapsto \tilde\mu$ and $\check d_{\hb}(a) = 0$; in particular, we have $[a] \mapsto [\tilde\mu] = 0$ on cohomology classes of $\check d_{\hb}$. Since the map is injective on cohomology classes by the Lemma, we have $[a] = 0$; thus, there is $b \in G_0^0\fLsh \otimes_\kk I$ with $\check d_{\hb}(b) = a$. Expanding the definition of $\check d_{\hb}$, we find $(\bar\partial + \hb\Delta)(b) = a$. Note that in fact $a \in \hb^{-1}G_0^1\fpsh \otimes_\kk I$, i.e., $a$ has no $\hb$-terms below $\hb^{-1}$, because this is true for $\tilde\mu$. We want to choose $b$ in such a way that $b \in \hb^{-1}G_0^0\fpsh \otimes_\kk I$ as well. 

\begin{lemma}\label{QMC-unob-lemma}\note{QMC-unob-lemma}
 In the above situation, let $\alpha \in G_0^1\fpsh \subset G_0^1\fLsh$, and assume that we have $\beta \in G_0^0\fLsh$ with $(\bar\partial + \hb\Delta)(\beta) = \alpha$. Then we can find $\gamma \in G_0^0\fpsh$ with $(\bar\partial + \hb\Delta)(\gamma) = \alpha$.\footnote{Here we use the first assumption of Theorem~\ref{QMC-unobstructed}.}
\end{lemma}
\begin{proof}
 We have $(G_0^\bullet\fLsh,\bar\partial + \hb\Delta) = (G_0^\bullet\fpsh,\bar\partial + \hb\Delta) \otimes_{\kk\fpsh} \kk\fLsh$. Since $\kk\fpsh \to \kk\fpsh_{\hb} = \kk\fLsh$ is flat, we find $$H^k(G_0^\bullet\fLsh,\bar\partial + \hb\Delta) = H^k(G_0^\bullet\fpsh,\bar\partial + \hb\Delta) \otimes_{\kk\fpsh} \kk\fLsh.$$ Since $G_0^\bullet\fpsh \to G_0^\bullet\fLsh$ is injective, $\alpha$ defines a class $[\alpha] \in H^1(G_0^\bullet\fpsh,\bar\partial + \hb\Delta)$; by assumption, its image in $H^1(G_0^\bullet\fLsh,\bar\partial + \hb\Delta)$ is $0$. The kernel of 
 $$H^1(G_0^\bullet\fpsh,\bar\partial + \hb\Delta) \to H^1(G_0^\bullet\fpsh,\bar\partial + \hb\Delta) \otimes_{\kk\fpsh} \kk\fLsh$$
 consists of those classes $\delta$ on the left with $\hb^m\delta = 0$ for some $m > 0$. By assumption, the first spectral sequence degenerates at $E_1$ in $[0,1]$ so that $H^1(G_0^\bullet\fpsh,\bar\partial + \hb\Delta)$ is flat, hence torsion-free, by Proposition~\ref{spec-seq-alternative-formulation}. Thus $[\alpha] = 0$ as a cohomology class in $H^1(G_0^\bullet\fpsh,\bar\partial + \hb\Delta)$, and the assertion follows.
\end{proof}

The lemma holds as well for $G_0^\bullet\fpsh \otimes_\kk I$ because $I$ is a $\kk$-vector space of dimension $1$. Replacing $a$ with $\hb a$ for a moment, we can find $c \in G_0^0\fpsh \otimes_\kk I$ with $(\bar\partial + \hb\Delta)(c) = \hb a$; then $b = \hb^{-1}c$ is our desired $b$.
With this $b$, we set $\varphi' := \tilde\varphi - \hb b \in \m_{B'}\cdot G_{B'}^0\fpsh$. Then we have $\varphi'|_B = \varphi$ and 
 $$\check d_{\hb}(e^{\varphi'/\hb}) = \check d_{\hb}(e^{\tilde\varphi/\hb - b}) = \check d_{\hb}(1 + (\tilde\varphi/\hb - b) + ...) = \check d_{\hb}(e^{\tilde\varphi/\hb} - b) = \tilde\mu - a = 0$$
 since $b \wedge \tilde\varphi = 0$ and $b \wedge b = 0$. Thus $\varphi'$ is a solution of the quantum extended Maurer--Cartan equation, which is a lift of $\varphi = \varphi_B$, concluding the proof of Theorem~\ref{QMC-unobstructed}.

\section{The first abstract unobstructedness theorem}

In this section, we show that the semi-classical Maurer--Cartan functor $\mathrm{SMC}(G^{\bullet,\bullet},-)$ is unobstructed if $G^{\bullet,\bullet}$ is quasi-perfect or, more generally, semi-perfect. To do so, we introduce yet another variant $\mathrm{QMC}_0(G^{\bullet,\bullet},-)$.

\begin{defn}
 Let $G^{\bullet,\bullet}$ be a $\Lambda$-linear curved Batalin--Vilkovisky algebra. A solution 
 $$\varphi_A = \sum_{i = 0}^\infty \left(\sum_{p = 0}^d C_i^{-p,p}(\varphi_A)\right)\hb^i \in \m_A \cdot G_A^0\fpsh$$
 with $C_i^{-p,p}(\varphi_A) \in \m_A\cdot G_A^{-p,p}$ of the quantum extended Maurer--Cartan equation 
 $$(\bar\partial + \hb\Delta)\varphi_A + \frac{1}{2}[\varphi_A,\varphi_A] + (\ell + \hb y) = 0$$
 is called \emph{obtuse} if $C_0^{0,0}(\varphi_A) = 0$. We denote the set of obtuse solutions of the quantum extended Maurer--Cartan equation by $\mathrm{QMC}_0(G^{\bullet,\bullet},A)$.
\end{defn}
\begin{rem}
 The name ``obtuse'', usually used to refer to an angle of more than $90^\circ$, refers to the missing apex in $(0,0)$ when we plot the non-zero coefficients $C_i^{-p,p}$ at $(i,p)$ into the $\NN^2$-plane.
\end{rem}

\note{QMC0-unobstructed}
\begin{prop}[cf.~5.12 in \cite{ChanLeungMa2023}]\label{QMC0-unobstructed}
 Let $G^{\bullet,\bullet}$ be a semi-perfect $\Lambda$-linear curved Batalin--Vilkovisky algebra.  Then the functor $\mathrm{QMC}_0(G^{\bullet,\bullet},-)$ is unobstructed.
\end{prop}
\begin{proof}
 Let $B' \to B$ be a small extension in $\mathbf{Art}_\Lambda$ with kernel $I \subset B'$. Let $\varphi \in \mathrm{QMC}_0(G^{\bullet,\bullet},B)$ be an obtuse solution of the quantum extended Maurer--Cartan equation, and let $\varphi'$ be a (possibly not obtuse) lift to $B'$, which exists by Theorem~\ref{QMC-unobstructed}. By assumption, we have
 $$(\bar\partial + \hb\Delta)\varphi' + \frac{1}{2}[\varphi',\varphi'] + (\ell + \hb y) = 0;$$
 considering only the coefficient of $\hb^0$, we then find
 $$\bar\partial\left(\sum_{p = 0}^dC_0^{-p,p}(\varphi')\right) + \frac{1}{2}\left[\sum_{p = 0}^dC_0^{-p,p}(\varphi'),\sum_{p = 0}^dC_0^{-p,p}(\varphi')\right] + \ell = 0 \quad \in \: G_{B'}^1 = \bigoplus_{p = 0}^d G_{B'}^{-p,p + 1}.$$
 Considering the pieces in the various bidegrees separately, we find, for $G_{B'}^{0,0}$, that
 $$\bar\partial C_0^{0,0}(\varphi') + [C_0^{0,0}(\varphi'),C_0^{-1,1}(\varphi')] = 0.$$
 Since $\varphi$ is obtuse, we have $C_0^{0,0}(\varphi') \in I \cdot G_{B'}^{0,0}$; because $I \cdot \m_{B'} = 0$, we find $\bar\partial C_0^{0,0}(\varphi') = 0$. We set $\hat\varphi' := \varphi' - C_0^{0,0}(\varphi')$. Again using $I \cdot \m_{B'} = 0$, we find that $\hat\varphi'$ is a solution of the quantum extended Maurer--Cartan equation; it is obtuse by construction.
\end{proof}

With this preparation, it is now easy to prove the first abstract unobstructedness theorem.

\begin{thm}[First abstract unobstructedness theorem]\index{unobstructedness theorem!first abstract}\index{Maurer--Cartan equation!semi-classical extended}
 Let $G^{\bullet,\bullet}$ be a quasi-perfect or just semi-perfect $\Lambda$-linear curved Batalin--Vilkovisky algebra. Then the semi-classical Maurer--Cartan functor $\mathrm{SMC}(G^{\bullet,\bullet},-)$ is unobstructed.
\end{thm}
\begin{proof}
 Let $B' \to B$ be a small extension in $\mathbf{Art}_\Lambda$ with kernel $I \subset B'$. Let $(\phi,f)$ be a semi-classical Maurer--Cartan solution over $B$. Then $\varphi := \phi + \hb f$ is an obtuse solution of the quantum extended Maurer--Cartan equation. By Proposition~\ref{QMC0-unobstructed}, we find an obtuse lift $\varphi'$. We set $\phi' := C_0^{-1,1}(\varphi')$ and $f' := C_1^{0,0}(\varphi')$. Then the $(-1,2)$-part of the $\hb^0$-piece of the quantum extended Maurer--Cartan equation for $\varphi'$ yields 
 $$\bar\partial\phi' + \frac{1}{2}[\phi',\phi'] + \ell = 0,$$
 and the $(0,1)$-part of the $\hb^1$-piece yields
 $$\bar\partial f' + [\phi',f'] + y + \Delta\phi' = 0.$$
 Thus, $(\phi',f')$ is a semi-classical Maurer--Cartan solution lifting $(\phi,f)$ to $B'$.
\end{proof}

\section{The second abstract unobstructedness theorem}\index{unobstructedness theorem!second abstract}\index{Maurer--Cartan equation!classical extended}

We deduce the second abstract unobstructedness Theorem~\ref{second-abstract-unob-thm} from the first one. In fact, it is sufficient to show that the forgetful map
$$\mathrm{SMC}(G^{\bullet,\bullet},-) \to \mathrm{MC}(G^{\bullet,\bullet},-)$$
is a smooth morphism of functors of Artin rings. Namely, if it is smooth, then it is also surjective, i.e., surjective on every $A \in \mathbf{Art}_\Lambda$; then smoothness of $\mathrm{MC}(G^{\bullet,\bullet},-)$ follows from smoothness of $\mathrm{SMC}(G^{\bullet,\bullet},-)$ by \cite[2.5(iii)]{Schlessinger1968}. For the second abstract unobstructedness theorem, semi-perfectness alone is not sufficient, and we assume quasi-perfectness for simplicity.

\note{SMC-MC-smooth}
\begin{thm}\label{SMC-MC-smooth}
 Let $(G^{\bullet,\bullet},A^{\bullet,\bullet})$ be a quasi-perfect $\Lambda$-linear curved Batalin--Vilkovisky calculus. Then 
 $$\mathrm{SMC}(G^{\bullet,\bullet},-) \to \mathrm{MC}(G^{\bullet,\bullet},-)$$
 is smooth.
\end{thm}
\begin{proof}
 Let $B' \to B$ be a small extension in $\mathbf{Art}_\Lambda$ with kernel $I \subset B'$. Let $(\phi,f)$ be a semi-classical Maurer--Cartan solution over $B$, and let $\phi'$ be a classical Maurer--Cartan solution over $B'$ with $\phi'|_B = \phi$. We have to show that it can be extended to a semi-classical Maurer--Cartan solution $(\phi',f')$ over $B'$ with $f'|_B = f$.
 
 We write $\omega_f = e^f \ \invneg\ \omega$; then we have $\bar\partial_\phi\omega_f = 0$ by Lemma~\ref{MC-sol-modified-BV-calculus}. Thus, it defines a class $[\omega_f] \in H^0(A_B^{0,\bullet},\bar\partial_\phi)$. By Proposition~\ref{free-A-prop}, the formation of $H^0(A_{B'}^{0,\bullet},\bar\partial_{\phi'})$ commutes with base change; in particular, $H^0(A_{B'}^{0,\bullet},\bar\partial_{\phi'}) \to H^0(A_B^{0,\bullet},\bar\partial_\phi)$ is surjective, and we find $\omega' \in A_{B'}^{0,0}$ representing a class $[\omega']$---i.e., $\bar\partial_{\phi'}\omega' = 0$---with $[\omega']|_B = [\omega_f]$. Since
 $$H^0(A_{B'}^{0,\bullet},\bar\partial_{\phi'}) = \mathrm{ker}(\bar\partial_{\phi'}: A_{B'}^{0,0} \to A_{B'}^{0,1})$$
 and analogously for $B$, we have in fact $\omega'|_B = \omega_f$.
 
 Let $\tilde f \in \m_{B'} \cdot G_{B'}^{0,0}$ be an arbitrary lift of $f$, and set $\omega_{\tilde f} := e^{\tilde f} \ \invneg\ \omega$. Then $\omega_{\tilde f}|_B = \omega_f$. Let $g \in G_{B'}^{0,0}$ be the unique element with $g \ \invneg\ \omega = \omega'$, and let $h := g - e^{\tilde f}$. Then we have 
 $$h \ \invneg\ \omega = \omega' - \omega_{\tilde f} \in I \cdot A_{B'}^{d,0},$$
 so $h \in I \cdot G_{B'}^{0,0}$. Set $f' := \tilde f + h$. Then we have $f'|_B = f$ and
 $$e^{f'} \ \invneg\ \omega = e^{\tilde f + h} \ \invneg\ \omega = (e^{\tilde f} + h) \ \invneg \ \omega = \omega'$$
 since $I \cdot \m_{B'} = 0$, so $e^{\tilde f + h} = e^{\tilde f} + h$.
 
 We have $e^{f'} \wedge e^{-f'} = 1$, so multiplication with $e^{f'}$ is an isomorphism of $B'$-modules. Consequently, the contraction map
 $$\kappa' : G_{B'}^{p,q} \to A_{B'}^{d + p,q}, \quad \theta \mapsto \theta \ \invneg\ \omega' = (\theta \wedge e^{f'}) \ \invneg \ \omega,$$
 is an isomorphism of $B'$-modules.
 
 Although we do not yet know if $(\phi',f')$ is a semi-classical Maurer--Cartan solution, we define $\Delta_{f'}(\theta) := \Delta(\theta) + [f',\theta]$ as in that case. Then, using $e^{f'} \wedge [f',\theta] = [e^{f'},\theta]$ and the Bogomolov--Tian--Todorov formula for $\Delta(e^{f'} \wedge \theta)$, we find 
 $$\partial(\theta \ \invneg\ \omega') = \Delta_{f'}(\theta) \ \invneg \ \omega'.$$
 Now we have
 \begin{align}
  (\bar\partial f' + [\phi',f'] + y + \Delta\phi') \ \invneg \ \omega' &= \bar\partial f' \ \invneg\ \omega' + \Delta_{f'}(\phi') \ \invneg\ \omega' + y \ \invneg\ \omega' \nonumber \\
  &= (\bar\partial e^{f'}) \ \invneg\ \omega + \partial(\phi' \ \invneg\ \omega') + e^{f'} \ \invneg\ \bar\partial\omega \nonumber \\
  &= \bar\partial(e^{f'} \ \invneg\ \omega) + \cL_{\phi'}(\omega') = \bar\partial_{\phi'}(\omega') = 0 \nonumber
 \end{align}
 where we have $y \ \invneg\ \omega' = e^{f'} \ \invneg\ \bar\partial\omega$ due to $y \ \invneg\ \omega = \bar\partial\omega$, and $\cL_{\phi'}(\omega') = \partial(\phi' \ \invneg\ \omega')$ by the Lie--Rinehart homotopy formula. Since contraction with $\omega'$ is an isomorphism, $(\phi',f')$ is a semi-classical Maurer--Cartan solution.
\end{proof}


\part{Geometry}


\chapter{Generically log smooth families}\label{gen-log-smooth-sec}\note{gen-log-smooth-sec}

In this chapter,\index{generically log smooth family} we review the basic theory of generically log smooth families and sketch a deformation theory thereof. Additionally, we introduce the notion of an \emph{enhanced generically log smooth family}, which has better base change properties. A \emph{generically log smooth family}, as defined in \cite{FFR2021}, is---on the level of schemes---a separated flat family $f: X \to S$ of finite type of Noetherian schemes with reduced Cohen--Macaulay fibers of some fixed dimension $d \geq 1$, the \emph{relative dimension} of $f: X \to S$. It is endowed with an open subset $j: U \subseteq X$ on which we have the structure of a log smooth and saturated log morphism $f: U \to S$ of fs log schemes. The complement $Z := X \setminus U$, on which we have defined no log structure, is required to satisfy the \emph{codimension condition}\index{codimension condition}
\begin{equation}\label{CC}
 \mathrm{codim}(Z_s,X_s) \geq 2 \tag{CC}
\end{equation}
for the fibers over every point $s \in S$. If $g: Y \to T$ is a second generically log smooth family with log smooth locus $V$, then a \emph{morphism} of generically log smooth families consists of a log morphism $b: T \to S$ and a morphism $c: Y \to X$ with $b \circ g = f \circ c$ and $V \subseteq c^{-1}(U)$. On $V \to U$, the map $c$ must be also a morphism of log schemes, compatible with the log part of $b: T \to S$. If $f: X \to S$ is a generically log smooth family and $b: T \to S$ a log morphism, then the fiber product is defined in the obvious way. This construction of the fiber product works well because we insist that $f: U \to S$ is saturated.

\section{Generically log smooth deformations}

As a general framework to study deformations of generically log smooth families, we introduce the following definition.

\begin{defn}\index{generically log smooth deformation}
 Let $S_0 = \Spec (Q \to \kk)$ be a log point, and let $f_0: (X_0, U_0) \to S_0$ be a generically log smooth family. Let $A \in \mathbf{Art}_Q$. Then a \emph{generically log smooth deformation} over $S_A$ is a generically log smooth family $f_A: (X_A,U_A) \to S_A$ together with a morphism $i: (X_0,U_0) \to (X_A,U_A)$ of generically log smooth families which induces an isomorphism $f_A \times_{S_A} S_0 \cong f_0$. In particular, $i^{-1}(U_A) = U_0$. 
 
 An \emph{isomorphism} of generically log smooth deformations over the same base $S_A$ is an isomorphism of generically log smooth families over $S_A$ which is compatible with the maps from $X_0$. If $S_B \to S_{B'}$ is a morphism, then a morphism of generically log smooth deformations over $S_B \to S_{B'}$ is defined similarly (inducing an isomorphism after base change along $S_B \to S_{B'}$).
\end{defn}

Isomorphism classes of generically log smooth deformations form a deformation functor.

\begin{lemma}
 The functor 
 $$\mathrm{LD}_{X_0/S_0}^{gen}: \mathbf{Art}_Q \to \mathbf{Set}.$$
 of isomorphism classes of generically log smooth deformations is a deformation functor, i.e., it satisfies $(H_0)$, $(H_1)$, and $(H_2)$.
\end{lemma}
\begin{proof}
 This is completely analogous to the proof that the log smooth deformation functor $\mathrm{LD}$ is a deformation functor, see \cite[Thm.~8.7]{Kato1996}.
\end{proof}

To give an impression of this functor, let us provide the following fact.

\begin{lemma}
 For every $A \in \mathbf{Art}_Q$, we have a Cartesian square
\[
 \xymatrix{
  \mathrm{LD}_{X_0/S_0}^{gen}(A) \ar[r] \ar[d] & \mathrm{LD}_{U_0/S_0}(A) \ar[d] \\
  \mathrm{Def}_{X_0}(A) \ar[r] & \mathrm{Def}_{U_0}(A) \\
 }
\]
of sets with the log smooth deformation functor $\mathrm{LD}_{U_0/S_0}$ and the flat deformation functor $\mathrm{Def}_{X_0}$.
\end{lemma}
\begin{proof}
 Let $P(A)$ be the fiber product; its elements consist of a flat deformation $X_A \to S_A$ and a log smooth deformation $U_A \to S_A$ such that there is an isomorphism $\cO_{X_A}|_{U_0} \cong \cO_{U_A}$. By choosing one such isomorphism, we see that the canonical map $\mathrm{LD}^{gen}_{X_0/S_0}(A) \to P(A)$ is surjective. Note in particular that $X_A \to S_A$ is Cohen--Macaulay. For injectivity, note that the canonical map 
 $$\rho: \mathrm{LD}_{X_0/S_0}^{gen}(A) \to \mathrm{LD}_{U_0/S_0}(A)$$
 is injective because we can extend an isomorphism of two structure sheaves from $U_0$ to $X_0$. 
\end{proof}

\section{The reflexive de Rham complex and polyvector fields}\label{W-sec}\note{W-sec}
\index{reflexive de Rham complex}

Let $f: X \to S$ be a generically log smooth family with log smooth locus $U$. Then we have the log de Rham complex $\Omega^\bullet_{U/S}$. As in \cite{FFR2021}, we define the \emph{reflexive de Rham complex} by 
$$\W^\bullet_{X/S} := j_*\Omega^\bullet_{U/S}$$
where $j: U \subseteq X$ is the open inclusion.\footnote{The log structure can always be extended to $X$ by taking $\M_X := j_*\M_U$. The reason why Gross and Siebert started to work with $\W^1_{X/S}$ instead of $\Omega^1_{X/S}$, the functorially defined sheaf of log differential forms, in the first place is that $\Omega^1_{X/S}$ is not quasi-coherent already in the simplest examples, see \cite[1.11]{GrossSiebertII}. Also, it is unclear if this log structure commutes with base change.} Since $X$ is Cohen--Macaulay and $Z = X \setminus U$ has codimension $\geq 2$, the pieces $\W^i_{X/S}$ are reflexive coherent sheaves. If $d$ is the relative dimension of $f: X \to S$, then 
$$\omega_{X/S} := \W^d_{X/S}$$
is the reflexive log canonical sheaf. In general, it is only a reflexive sheaf of rank one, but in many applications, it will be a line bundle, even if the other pieces are not locally free; in this case, we say that $f: X \to S$ is \emph{log Gorenstein}.\index{log Gorenstein} If $\omega_{X/S} \cong \cO_X$, then we say $f: X \to S$ is \emph{log Calabi--Yau}.\index{log Calabi--Yau} Similarly to $\Omega^1_{U/S}$, we have the \emph{log tangent sheaf} $\Theta^1_{U/S} = \cH om(\Omega^1_{U/S},\cO_U)$, which is locally free on $U$ and classifies relative log derivations $(D,\Delta)$ with values in $\cO_X$. Since, in general, the sheaf of log derivations is naturally reflexive, we also write 
$$\Theta^1_{X/S} := j_*\Theta^1_{U/S};$$
we consider this as a sheaf of Lie algebras with the natural Lie bracket. Its sections $(D,\Delta)$ on an open subset $W \subseteq X$ act as derivations $D: \cO_W \to \cO_W$ with log part $\Delta: \M_{U \cap W} \to \cO_{U \cap W}$.
We denote the exterior powers of $\Theta^1_{U/S}$ by $\Theta^p_{U/S} := \Lambda^p \Theta^1_{U/S}$. Then we define 
$$\V^{-1}_{X/S} := j_*\Theta^1_{U/S} \quad \mathrm{and} \quad \V^{-p}_{X/S} := j_*\Theta^p_{U/S}$$
as well as $\V^0_{X/S} := \cO_X$. We consider $\V^\bullet_{X/S}$ as a Gerstenhaber algebra endowed with the \emph{negative} of the usual Schouten--Nijenhuis bracket, see also Proposition~\ref{G-A-construction} below. In particular, the difference between $\V^{-1}_{X/S}$ and $\Theta^1_{X/S}$ is the sign of the Lie bracket as well as the sign of the action on $\cO_X$.  We call $\V^\bullet_{X/S}$ the \emph{reflexive Gerstenhaber algebra of log polyvector fields}. Each $\V^p_{X/S}$ is a reflexive coherent sheaf.\footnote{We put $\V^\bullet_{X/S}$ in negative degrees since this convention is more natural from the perspective of Gerstenhaber calculi, in particular the two contraction maps $\vdash$ and $\invneg$, consistent with our definition of a Gerstenhaber calculus. The notation $\W^\bullet_{X/S}$ originally grew out of denoting $\Omega$ as $W$ in emails, for its vague similarity of shapes. The notation $\V^\bullet_{X/S}$ is intended to suggest \emph{vector fields}. The symbol $\Theta^\bullet$ is reserved for actual log morphisms, and the symbol $\T$ is used for the sheaves computed from the classical, non-logarithmic cotangent complex.}

\subsubsection*{The base change property}

Let $f: X \to S$ be a generically log smooth family, and let $b: T \to S$ be a strict morphism. Let $g: Y \to T$ be the generically log smooth family obtained by base change, and let $c: Y \to X$ be the induced map. On $V = c^{-1}(U)$, we have an isomorphism $c^*\Omega^i_{U/S} \cong \Omega^i_{V/T}$, which can be extended to a map $c^*\W^i_{X/S} \to \W^i_{Y/T}$ since the target is reflexive. In general, this map is not an isomorphism, so we have attached a name to the situation where it is in \cite{FFR2021}:
\begin{defn}\label{bc-prop}\note{bc-prop}\index{base change property}
 Let $f: X \to S$ be a generically log smooth family. Then $f: X \to S$ has the \emph{base change property} if, for every strict morphism $b: T \to S$ of Noetherian fs log schemes with base change $g: Y \to T$ and map $c: Y \to X$, the map $c^*\W^i_{X/S} \to \W^i_{Y/T}$ is an isomorphism.
\end{defn}
\begin{rem}
 If $f: X \to S$ is log Gorenstein, then we have isomorphisms $c^*\V^p_{X/S} \cong \V^p_{Y/T}$ as well. For a more comprehensive discussion of this fact and an interpretation of the map, see \cite[\S 6.3]{FeltenThesis} in the author's thesis.
\end{rem}

\begin{ex}
 By \cite[Thm.~8.2]{FFR2021}, a generically log smooth deformation which is at the same time a log toroidal family has the base change property. This is sufficient for one of our main applications---namely unisingular deformations of log toroidal families of elementary Gross--Siebert type.
\end{ex}

For generically log smooth deformations $f_A: X_A \to S_A$ of $f_0: X_0 \to S_0$, this property has a particularly simple behavior. 

\begin{lemma}\label{base-change-prop-criterion}\note{base-change-prop-criterion}
 Let $f_A: X_A \to S_A$ be a generically log smooth deformation of $f_0: X_0 \to S_0$. Let $i: X_0 \to X_A$ be the inclusion. Then $f_A$ has the base change property if and only if $i^*\W^i_{X_A/S_A} \cong \W^i_{X_0/S_0}$ for all $0 \leq i \leq d$; this is the case if and only if $\W^i_{X_A/S_A}$ is flat over $S_A$ for all $0 \leq i \leq d$.
\end{lemma}
\begin{proof}
 The equivalence of the latter two statements follows directly from \cite[Thm.~0.4]{Wahl1976}. If $f_A$ has the base change property, we obviously have the isomorphism for $S_0 \to S_A$. For the converse, it is sufficient to assume that $X_A = \Spec S$ is affine, and that $T = \Spec R$ is affine as well. If $R = \ell$ is a field, then $A \to \ell$ factors through a flat map $\kk \to \ell$, so the isomorphism holds. Then, in the general case, we already know that we have an isomorphism $H^0(Y_t,(c^*\W^i_{X_A/S_A})_t) \to H^0(V_t,(c^*\W^i_{X_A/S_A})_t)$ in the fibers of $Y \to T$, so the claim follows from Lemma~\ref{bijective-in-fibers}.
\end{proof}

This allows us to give a characterization of the base change property in general.

\begin{cor}\label{general-base-change-char}\note{general-base-change-char}
 Let $f: X \to S$ be a generically log smooth family of relative dimension $d$. Then $f$ has the base change property if and only if the following two statements hold for each $0 \leq i \leq d$:
 \begin{enumerate}[label=\emph{(\roman*)}]
  \item $\W^i_{X/S}$ is flat over $S$;
  \item for every point $s \in S$, the map $\W^i_{X/S}|_{X_s} \to \W^i_{X_s/\kappa(s)}$ is an isomorphism.
 \end{enumerate}
\end{cor}
\begin{proof}
 If $f$ has the base change property, then (ii) holds by assumption. To show that $\W^i_{X/S}$ is flat, it is sufficient to assume that $S$ is the spectrum of a local Noetherian ring, and that $X$ is affine. By Lemma~\ref{base-change-prop-criterion}, the base change of $\W^i_{X/S}$ to every Artinian ring is flat. Then a combination of \cite[00HP,\, 00MC,\,0584]{stacks} respective their methods allows to conclude that $\W^i_{X/S}$ is flat over $S$. For the converse, first observe that the base change property holds for a base change to a field $\ell$ mapping to $S$ since it factors through a field extension $\kappa(s) \to \ell$ for some $s \in S$. Then the assertion follows from Lemma~\ref{bijective-in-fibers}.
\end{proof}

With this criterion, we can give examples of generically log smooth families which are log Gorenstein and have the base change property, but which are not log toroidal.

\begin{figure}
 \begin{mdframed}
  \begin{center}
   \includegraphics[scale=0.8]{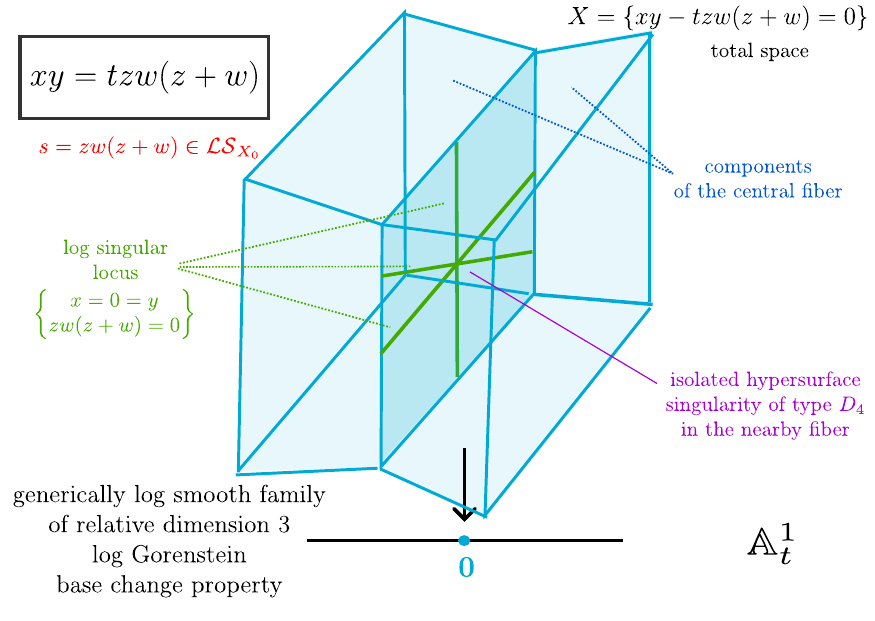}
  \end{center}
  \caption{Example~\ref{3-line-log-sing-ex}}
 \end{mdframed}
\end{figure}

\begin{ex}\label{3-line-log-sing-ex}\note{3-line-log-sing-ex}\index{$D_4$-singularity}
 Let 
 $$X = \Spec \kk[x,y,z,w,t]/(xy - tzw(z + w)),$$
 and let $f: X \to S = \bAA^1_t$ be the obvious map. Endow both the source and target with the divisorial log structure defined by $t = 0$. The nearby fiber $X_s = f^{-1}(s)$ is a three-dimensional normal variety with an isolated hypersurface singularity at $0$. In fact, $xy = zw(z + w)$ defines a threefold singularity of type $D_4$, in particular, a terminal Gorenstein singularity. Since $xy = zw$ is the only toric terminal Gorenstein threefold singularity, $X_s$ is not toroidal, and $f: X \to \bAA^1_t$ is not log toroidal. The central fiber $V = \Spec \kk[x,y,z,w]/(xy)$ is a normal crossing space with two components intersecting in $\bAA^2_{z,w}$. Inside $V$, the log singular locus is $Z = \{zw(z + w) = 0\}$, i.e., it consists of three lines intersecting in $0$. The induced log structure is given by $s = zw(z + w) \in \cL\cS_V \subseteq \T^1_V$ outside $Z$. Outside the very singular point $0$, the local model for the log singularity is $xy = tz$, so the family is log toroidal outside $0$. To show that $f: X \to S$ has the base change property and is log Gorenstein, we compute first (in Macaulay2) the relative classical tangent sheaf $\T^0_{X/S}$ as the kernel of the map 
 $$\begin{pmatrix}
    \frac{\partial f}{\partial x} & \frac{\partial f}{\partial y} & \frac{\partial f}{\partial z} & \frac{\partial f}{\partial w}\\
   \end{pmatrix}: \quad \cO_X^{\oplus 4} \to \cO_X,$$
  where $f = xy - tzw(z + w)$. Then $\V^{-1}_{X/S} = \T^0_{X/S}$, and we can compute $\W^1_{X/S}$ as the dual of $\V^{-1}_{X/S}$. It turns out that $\W^3_{X/S} \cong \cO_X$, so $f: X \to S$ is log Gorenstein. Flatness of $\W^1_{X/S}$ and $\W^2_{X/S}$ follows from the fact that they have no $\kk[t]$-torsion. Finally, a direct computation shows that both $\W^1_{X/S}|_{X_s}$ and $\W^2_{X/S}|_{X_s}$ are reflexive for all $s \in \kk$. For this, it suffices to check reflexivity for $s = 0$ and $s = 1$. Thus, $f: X \to S$ has the base change property.
\end{ex}

\begin{figure}
 \begin{mdframed}
  \begin{center}
   \includegraphics[scale=0.8]{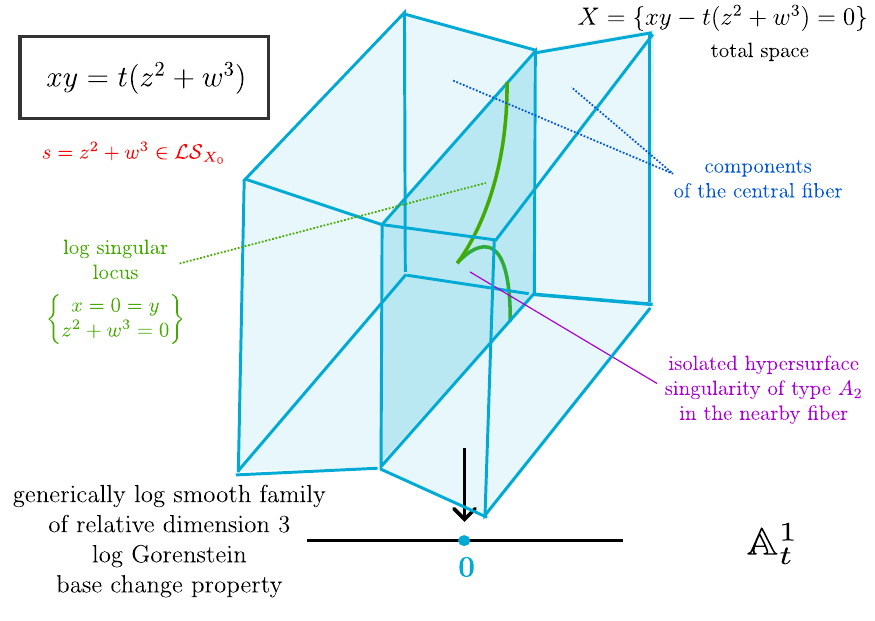}
  \end{center}
  \caption{Example~\ref{cuspidal-log-sing-ex}}
 \end{mdframed}
\end{figure}

\begin{ex}\label{cuspidal-log-sing-ex}\note{cuspidal-log-sing-ex}\index{$A_2$-singularity}
 Let 
 $$X = \Spec \kk[x,y,z,w,t]/(xy - t(z^2 + w^3)),$$
 and let $f: X \to S = \bAA^1_t$ be the obvious map. Endow both the source and target with the divisorial log structure defined by $t = 0$. Computations in Macaulay2 analogous to Example~\ref{3-line-log-sing-ex} show that $f: X \to S$ is a log Gorenstein generically log smooth family with the base change property as well. Now the log singular locus inside the central fiber $V = \Spec \kk[x,y,z,w]/(xy)$ is a cuspidal cubic with cusp at $0$. The local model for the log singularity outside the very singular point $0$ is again $xy = tz$. The nearby fiber $X_s = f^{-1}(s)$ is a normal threefold with isolated hypersurface singularity at $0$. More precisely, $X_s$ has a Kleinian singularity of type $A_2$, i.e., again a terminal Gorenstein singularity which is not toroidal. Thus, also this $f: X \to \bAA^1_t$ is not log toroidal.
\end{ex}

\subsubsection*{The log Gorenstein property}\index{log Gorenstein}

On the one hand side, every log Calabi--Yau generically log smooth family is log Gorenstein; on the other hand side, if a generically log smooth family $f: X \to S$ has the base change property, and we know that it is log Gorenstein, then also $\V^p_{X/S}$ is flat over $S$, and its formation commutes with base change. For these reasons, we will virtually always assume that our generically log smooth families are log Gorenstein. Since we have not yet done so elsewhere, we investigate briefly the log Gorenstein notion.

\begin{lemma}\label{log-Gor-basic}\note{log-Gor-basic}
 Let $f: X \to S$ be a generically log smooth family of relative dimension $d$. If $f: X \to S$ is log Gorenstein, so is any base change $g: Y \to T$. If $f: X \to S$ has the base change property for $\W^d_{X/S}$, i.e, $c^*\W^d_{X/S} \to \W^d_{Y/T}$ is an isomorphism for every base change, then $f: X \to S$ is log Gorenstein if and only if every fiber $X_s \to \kappa(s)$ is log Gorenstein.
\end{lemma}
\begin{proof}
 If $\W^d_{X/S}$ is a line bundle, so is $c^*\W^d_{X/S}$, where $c: Y \to X$ is the base change map. Then $c^*\W^d_{X/S}$ is reflexive, and $\W^d_{Y/T}$ is reflexive as well; both are isomorphic on $V = c^{-1}(U)$, so they are isomorphic, and $\W^d_{Y/T}$ is a line bundle. 
 
 If $f: X \to S$ has the base change property for $\W^d_{X/S}$, then $\W^d_{X/S}|_{X_s} \cong \W^d_{X_s/\kappa(s)}$. The latter is a line bundle by assumption. In particular, we have $\mathrm{dim}_{\kappa(x)}(\W^d_{X/S} \otimes \kappa(x)) = 1$ for all points $x \in X$. Since $\W^d_{X/S}$ is a line bundle on $U$, this implies that it is a line bundle on $X$. Namely, we find locally a surjection $\cO_X \to \W^d_{X/S}$, and it must be injective because it is injective on $U$ and $\cO_X$ is $Z$-closed.
\end{proof}

Recall from \cite[I, Defn.~4.3.1]{LoAG2018} that a homomorphism $\theta: Q \to P$ of integral monoids is \emph{vertical} if its image is not contained in any proper face of $P$. If $\theta: \NN \to P$ is a vertical saturated injection of sharp toric monoids, then $P$ is a Gorenstein monoid. Namely, let $\rho = \theta(1)$, and let $E \subset P$ be the unique basis of $P$ as an $\NN$-set. $E$ is the union of those faces of $P$ which do not contain $\rho$, i.e., since $\theta$ is vertical, we have $E = \partial P$. Every $p \in P$ decomposes uniquely as $p = e + n\rho$ with $e \in E$ and $n \in \NN$. Thus, we have $\mathrm{int}(P) = \rho + P$, hence $P$ is Gorenstein with Gorenstein degree $\rho$ by the criterion in \cite[p.~126]{Oda1988}. In particular, $A_\theta: A_P \to A_\NN$ is a flat morphism with Gorenstein fibers. Since every vertical saturated log smooth morphism $f: U_0 \to S_0$ over $S_0 = \Spec (\NN \to \kk)$ admits a local model by some vertical saturated injection $\theta: \NN \to P$, we find that $U_0$ is a Gorenstein scheme. With this preparation, we prove:

\begin{lemma}\label{log-Gor-vertical}\note{log-Gor-vertical}
 Let $f: X \to S$ be a vertical generically log smooth family, i.e., for every $x \in U$, the map $\overline\M_{S,f(x)} \to \overline\M_{U,x}$ is vertical. Assume that $f: X \to S$ is log Gorenstein. Then every fiber is a Gorenstein scheme. Conversely, if every fiber is a Gorenstein scheme, then $f: X \to S$ is log Gorenstein provided that either $S$ is a log point or $f: X \to S$ has the base change property for $\W^d_{X/S}$.\footnote{Presumably the converse holds in general. For example, when we know that, for a vertical saturated log smooth morphism $f: X \to S$, $\Omega^d_{U/S}$ is isomorphic to the relative dualizing sheaf $\omega_{U/S}$, then the converse holds in general. Unfortunately, this is only known for $S = \Spec(\NN \to \kk)$, see \cite[Thm.~2.21]{Tsuji1999dual}.}
\end{lemma}
\begin{proof}
 First, we show that every fiber of $f: U \to S$ is a Gorenstein scheme. We may assume that $S = \Spec (Q \to \kk)$ for some sharp toric monoid $Q$ and field $\kk$. If $Q = 0$, then verticality implies that $f: U \to S$ is strict, hence smooth, and $U$ is a Gorenstein scheme. If $Q = \NN$, then our claim follows from the above discussion. In the general case, we find a local homomorphism $h: Q \to \NN$, giving rise to a map of log schemes $b: T = \Spec (\NN \to \kk) \to \Spec (Q \to \kk)$. After base change along $b: T \to S$, the underlying scheme of $U$ remains unchanged, and we are in the case $Q = \NN$. Thus, here $U$ is a Gorenstein scheme as well. Now assume that $f: X \to S$ is log Gorenstein. Then every fiber is log Gorenstein, and after base change induced by $h: Q \to \NN$ as above, we can assume that the base is $S = \Spec (\NN \to \kk)$. Let $\omega_X$ be the dualizing sheaf of $X$. As in \cite[Prop.~2.21]{FeltenThesis}, we have an isomorphism $\omega_X|_U \cong \W^d_{U/S}$. By \cite[0AWN]{stacks}, the dualizing sheaf $\omega_X$ has the property $(S_2)$, so we have $\omega_X = j_*\omega_X|_U$. Since $\W^d_{X/S}$ is a line bundle by assumption, this implies $\omega_X \cong \W^d_{X/S}$. But then the dualizing sheaf is a line bundle, so $X$ is Gorenstein. Conversely, if each fiber of $f: X \to S$ is Gorenstein, then we can apply \cite[Cor.~2.22]{FeltenThesis} after base change along $h: Q \to \NN$ and obtain that $\W^d_{X_s/\kappa(s)}$ is a line bundle on each fiber. If $S$ is a log point itself, then we are finished. If $f: X \to S$ has the base change property for $\W^d_{X/S}$, then the result follows from Lemma~\ref{log-Gor-basic}.
\end{proof}

If $f: X \to S$ is not vertical, then it may be log Gorenstein even if the fibers are not Gorenstein schemes.

\begin{ex}
 Let $P \subseteq \ZZ^2$ be the sharp toric monoid generated by ray generators $(1,0)$ and $(-2,3)$ as well as interior generators $(0,1)$ and $(-1,2)$. Then $P$ is not a Gorenstein monoid, hence $A_P$ is not a Gorenstein scheme. Nonetheless, $A_P \to A_0$ is of course log Gorenstein because it is log smooth. Next, let $\tilde P \subseteq \ZZ^3$ be the sharp toric  monoid consisting of elements $(x,y,z)$ with either $x < 0$, $z \geq 0$, and $(x,y) \in P$ or $x \geq 0$, $z \geq x$, and $(x,y) \in P$. Let $\rho = (0,0,1)$. Then $\theta: \NN \to \tilde P, \: 1 \mapsto \rho$, is a saturated injection, so $A_\theta: A_P \to A_\NN$ is saturated and log smooth, in particular log Gorenstein, but neither the special fiber nor the generic fiber of $A_\theta$ is a Gorenstein scheme.
\end{ex}

\subsubsection*{The Gerstenhaber calculus}

Polyvector fields and the de Rham complex carry more structure which is very relevant to our deformation theory. The obvious $\wedge$-product, the unit element $1 \in \V^0_{X/S}$, and the \emph{negative} Schouten--Nijenhuis bracket $[-,-] = -[-,-]_{sn}$ (already mentioned above) turn $\V_{X/S}^\bullet$ into a Gerstenhaber algebra in the context $\mathfrak{Coh}'(X/S)$.\footnote{The context $\mathfrak{Coh}'(X/S)$, as opposed to $\mathfrak{Coh}(X/S)$, allows also modules which are not flat over $S$.}\footnote{The sign convention for the bracket is used in various places, including \cite{ChanLeungMa2023}, \cite{Felten2022}, and \cite{FeltenThesis}.}
Similarly, $\W^\bullet_{X/S}$ naturally comes with a $\wedge$-product, $1 \in \W^0_{X/S}$, and a de Rham differential $\partial: \W^i_{X/S} \to \W^{i + 1}_{X/S}$ which is only $f^{-1}(\cO_S)$-linear. As is shown e.g.~in \cite[Lemma~2.32]{FeltenThesis} in the author's thesis, we can construct a natural (right) contraction map $\invneg$. Then we can define the Lie derivative $\cL$ via the Lie--Rinehart homotopy formula. As is checked in \cite[Lemma~2.36]{FeltenThesis}, this turns $(\V^\bullet_{X/S},\W^\bullet_{X/S})$ into a (one-sided) Gerstenhaber calculus in the context $\mathfrak{Coh}'(X/S)$. We can also endow it with a left contraction. Summarizing the discussion, we have:

\begin{prop}\label{G-A-construction}\note{G-A-construction}\index{generically log smooth family!Gerstenhaber calculus}
 Let $f: X \to S$ be a generically log smooth family of relative dimension $d$.
 \begin{enumerate}[label=\emph{(\alph*)}]
  \item We have a two-sided Gerstenhaber calculus 
 $$\V\,\W_{X/S}^\bullet := (\V^\bullet_{X/S},\wedge,1,[-,-],\: \W_{X/S}^\bullet,\wedge,1,\partial,\,\invneg\,,\cL,\vdash)$$ 
 of dimension $d$ in the context $\mathfrak{Coh}'(X/S)$. The graded pieces are reflexive coherent sheaves. The $\wedge$-products and the two contractions $\vdash$ and $\invneg$ are $\cO_X$-linear whereas $[-,-]$, $\partial$, and $\cL$ are only $f^{-1}(\cO_S)$-linear. The Gerstenhaber calculus is strictly faithful.
   \item If $f: X \to S$ is log Gorenstein, then $\V\,\W^\bullet_{X/S}$ is locally Batalin--Vilkovisky.
   \item If $f: X \to S$ is log Calabi--Yau, then $\V\,\W^\bullet_{X/S}$ carries the structure of a two-sided Batalin--Vilkovisky calculus, depending on the choice of a global trivializing section $\omega \in \W^d_{X/S}$. 
   \item If $f: X \to S$ has the base change property, then $\W^i_{X/S}$ is flat over $S$. If $f: X \to S$ is moreover log Gorenstein, then $\V^p_{X/S}$ is flat over $S$ as well. In this case, we have a two-sided Gerstenhaber calculus in the context $\mathfrak{Coh}(X/S)$.
 \end{enumerate}
\end{prop}
\begin{proof}
 We have a one-sided Gerstenhaber calculus by \cite[Lemma~2.32]{FeltenThesis}. On $U$, we have that $\W^d_{U/S}$ is a line bundle, so we can construct the left contraction $\vdash$ on $U$ as discussed before Lemma~\ref{one-sided-two-sided-G-calc}. Then the left contraction on $X$ is obtained as the direct image from $U$. Most relations then follow from Lemma~\ref{one-sided-two-sided-G-calc}, first on $U$, and then on $X$. By induction on $r$, we see that 
 \begin{align}
  0 =\ &(\theta_r \wedge ... \wedge \theta_1) \ \invneg \ (\omega \wedge \alpha)  \nonumber \\
  =\ &((\theta_r \wedge ... \wedge \theta_1)\ \invneg \ \omega) \wedge \alpha + \sum_{i = 1}^r (-1)^{d + 1 - i}((\theta_r \wedge ... \wedge \hat\theta_i \wedge ... \wedge \theta_1) \ \invneg \ \omega) \wedge (\theta_i \ \invneg \ \alpha) \nonumber
 \end{align}
 for $r \geq 1$ and $\theta_i \in \V^{-1}_{X/S}$ and $\alpha \in \W^1_{X/S}$. Thus, we have 
 $$(\theta_r \wedge ... \wedge \theta_1) \vdash \alpha = \sum_{i = 1}^r (-1)^{i + 1}(\theta_r \wedge ... \wedge \hat\theta_i \wedge ... \wedge \theta_1)  \wedge (\theta_i \vdash \alpha).$$
 From this, a direct computation yields the relation for $(\theta \wedge \xi) \vdash \alpha$ when $\theta = \theta_r \wedge ... \wedge \theta_1$ and $\xi = \xi_s \wedge ... \wedge \xi_1$. However, on $U$, every section of $\V^p_{X/S}$ is locally of this form, so the relation follows. As usual, the above formula yields
 $$(\theta_r \wedge ... \wedge \theta_1) \vdash (\alpha_1 \wedge ... \wedge \alpha_r) = \sum_{\tau \in S_r} (-1)^\tau \prod_{i = 1}^r \theta_i \vdash \alpha_{\tau(i)},$$
 and since we also have 
 $$(\theta_r \wedge ... \wedge \theta_1) \ \invneg \ (\alpha_1 \wedge ... \wedge \alpha_r) = \sum_{\tau \in S_r} (-1)^\tau \prod_{i = 1}^r \theta_i \ \invneg \ \alpha_{\tau(i)},$$
 we obtain $\lambda(\theta \vdash \alpha) = (-1)^r \, \theta \ \invneg \ \alpha$ for $r \geq 2$ from the case $r = 1$, using that, on $U$, every element of $\V^{-r}_{X/S}$ respective $\W^r_{X/S}$ is locally of the product form.
 
 On the strict locus $U^{str}$, strict faithfulness is clear from the construction. Since the strict locus is scheme-theoretically dense, the restriction $\V^{-1}_{X/S} \to j_*\V^{-1}_{U^{str}/S}$ is injective; hence, $\V^\bullet_{X/S}$ is strictly faithful.
 
 If $f: X \to S$ is log Gorenstein, then a local generator $\omega \in \W^d_{X/S}$ on an open subset $W \subseteq X$ induces an isomorphism $\V^p_{(U \cap W)/S} \cong \W^{p + d}_{(U \cap W)/S}$. Since both sides are reflexive, this is an isomorphism on $W$. In the log Calabi--Yau case, we use Proposition~\ref{BV-calc-construction}. The remaining statements are clear.
\end{proof}

When we keep only the data of a Lie--Rinehart algebra in the context $\mathfrak{Coh}'(X/S)$, then we denote it by $\cL\R^\bullet_{X/S}$; it has two $\cO_X$-modules 
$$\cL\R^F_{X/S} = \cO_X, \quad \cL\R^T_{X/S} = \Theta^1_{X/S}.$$
As in the case of the Gerstenhaber algebra, we consider $\cL\R^T_{X/S}$ endowed with the negative of the usual Lie bracket on the log tangent sheaf, and it acts on $\cL\R^F_{X/S}$ via the negative of the evaluation of the log derivation on functions. However, when we consider $\Theta^1_{X/S}$ as a Lie algebra, then we use the usual Lie bracket, as explained above.

\section{Infinitesimal automorphisms}\label{inf-auto}\note{inf-auto}\index{generically log smooth family!infinitesimal automorphisms}

We generalize the treatment of infinitesimal automorphisms in \cite{Felten2022} from log smooth deformations to generically log smooth deformations. Let $B' \to B$ be a surjection in $\mathbf{Art}_Q$ with kernel $I \subset B'$, write $S \to S'$ for $S_B \to S_{B'}$, and let
\[
 \xymatrix{
 X_0 \ar[r] \ar[d]^{f_0} & X \ar[r] \ar[d]^f & X' \ar[d]^{f'} \\
 S_0 \ar[r] & S \ar[r] & S' \\
 }
\]
be a Cartesian diagram of generically log smooth deformations with log smooth loci $U_0$,  $U$, and $U'$. The ideal sheaf of the closed subscheme $X \subset X'$ is $\I = I \cdot \cO_{X'}$. An automorphism of $f': X' \to S'$ over $f: X \to S$ consists of an automorphism $\phi: \cO_{X'} \to \cO_{X'}$ of sheaves of rings, compatible with $f': X' \to S'$ and $\cO_{X'} \to \cO_X$, and an automorphism $\Phi: \M_{U'} \to \M_{U'}$ of sheaves of monoids, compatible with the log part of $f': X' \to S'$ and $\M_{U'} \to \M_U$; on $U'$, they must be compatible with $\alpha: \M_{U'} \to \cO_{U'}$ so that $(\phi,\Phi)$ is an automorphism of a log scheme. They form a sheaf of groups $\A ut_{X'/X}$ on $X'$.

\begin{lemma}\label{sheaf-of-autom}\note{sheaf-of-autom}
 Let $V$ be an open subset of $X_0$. Then the natural restriction map
 $$\Gamma(V,\A ut_{X'/X}) \to \Gamma(V \cap U_0,\A ut_{X'/X})$$
 is an isomorphism. Thus, $\A ut_{X'/X} = j_* \A ut_{U'/U}$ where $j: U' \to X'$ is the inclusion.
\end{lemma}
\begin{proof}
 Since $X'$ is Cohen--Macaulay and $Z' = X' \setminus U'$ is of codimension $\geq 2$, we have $\cO_{X'} = j_*\cO_{U'}$ for the inclusion $j: U' \to X'$. Thus, given $\phi$ on $U' \cap V$, it can be extended in a unique way to an automorphism of sheaves of groups on $X' \cap V$. It satisfies all compatibilities automatically.
\end{proof}

Since $U' \to S'$ is log smooth, the results of \cite{Felten2022} hold for $\A ut_{U'/U}$. Let us briefly recall them. The sheaf of groups $\A ut_{U'/U}$ is isomorphic to the sheaf $\D er_{U'/S'}(\I)$ of relative log derivations $(D, \Delta)$ with values in $\I$, i.e., $D: \cO_{U'} \to \I$ is a derivation and $\Delta: \M_{U'} \to \I$ is its log part.
This is a sheaf of Lie algebras as a subalgebra of $\Theta^1_{U'/S'}$ which
is filtered by 
$$F^k := F^k\D er_{U'/S'}(\I) := \D er_{U'/S'}(\I^k) \subset \D er_{U'/S'}(\I)$$
for $k \geq 1$, the sheaf of derivations with values in $\I^k \subset \I$. We have $[F^k,F^\ell] \subset F^{k + \ell}$, so the 
lower central series of $\D er_{U'/S'}(\I)$ is eventually 
$0$, and it is a sheaf of nilpotent Lie algebras. In particular, the Baker--Campbell--Hausdorff formula
turns $\D er_{U'/S'}(\I)$ into a sheaf of groups.

More precisely, in \cite{Felten2022}, we find two explicit isomorphisms
\[
 \xymatrix{
  {\D er}_{U'/S'}(\I)
  \ar@<0.5ex>[r]^-{\mathrm{Exp}} &
  {\A ut}_{U'/U} \ar@<0.5ex>[l]^-{\mathrm{Log}}. \\
 }
\]
Given $(D,\Delta) \in \D er_{U'/S'}(\I)$, the automorphism
$(\phi,\Phi) = \mathrm{Exp}_{U'/U}(D,\Delta)$ is defined by the formulae
\begin{align}
 \phi: \cO_{U'} \to \cO_{U'}, \quad \phi(a) &= \sum_{n = 0}^\infty \frac{D^n(a)}{n!}\nonumber \\
 \Phi: \M_{U'} \to \M_{U'}, \quad \Phi(m) &= m + \alpha^{-1}\left(\sum_{n = 0}^\infty \frac{[\Delta(m) + D]^n(1)}{n!}\right) \nonumber 
\end{align}
where the sums are actually finite and the symbol $\Delta(m)$ denotes the multiplication operator with this element. Conversely, given $(\phi,\Phi) \in \A ut_{U'/U}$, the classical part of $\mathrm{Log}_{U'/U}(\phi,\Phi)$ is
$$ D: \cO_{U'} \to \I, \quad D(a) = \sum_{n = 1}^\infty \frac{(-1)^{n - 1}[\phi - \mathrm{Id}]^n(a)}{n},$$
and the log part is
$$\Delta: \M_{U'} \to \I, \quad \Delta(m) = \sum_{n = 1}^\infty \left(\sum_{k = 0}^n {\binom{n}{k}} \frac{(-1)^{k + 1}\alpha(\Phi^k(m) - m)}{n}\right),$$
which is a finite sum as well.

On $U'$, we have $\D er_{U'/S'}(\I) = I \cdot \Theta^1_{U'/S'}$, which is also the kernel of the restriction map $\Theta^1_{U'/S'} \to \Theta^1_{U/S}$. Thus, we have $\A ut_{X'/X} \cong j_*(I \cdot\Theta^1_{U'/S'})$ and moreover, 
$$\mathrm{Log}:\quad  \A ut_{X'/X} \xrightarrow{\cong} \mathrm{ker}(\Theta^1_{X'/S'} \to \Theta^1_{X/S})$$
where the map $\Theta^1_{X'/S'} \to \Theta^1_{X/S}$ is induced via push-forward from the map on $U'$, where we have a canonical restriction of relative log derivations, and need not be surjective if the deformation does not have the base change property defined in Definition~\ref{bc-prop} (or possibly if it is not log Gorenstein). If $f': X' \to S'$ has the base change property and is log Gorenstein, then the kernel is equal to $I \cdot \Theta^1_{X'/S'}$.

If $B' \to B$ is a \emph{small} extension so that $I \cdot \m_{B'} = 0$, then we obtain 
$$\A ut_{X'/X} = \Theta^1_{X_0/S_0} \otimes_\kk I.$$

\subsubsection*{Restricting automorphisms}

Let $B' \to B$ and $B'' \to B'$ be two surjections in $\mathbf{Art}_Q$, and let $f: X \to S$, $f': X' \to S'$, $f'': X'' \to S''$ be (compatible) deformations of $f_0: X_0 \to S_0$. Then there is a natural restriction map
$$\rho: \A ut_{X''/X} \to \A ut_{X'/X},$$
which fits into a diagram
\[
 \xymatrix{
  1 \ar[r] & \A ut_{X''/X} \ar[d]^{\rho} \ar[r]^{\mathrm{Log}} & \Theta^1_{X''/S''} \ar[d] \ar[r] & \Theta^1_{X/S} \ar@{=}[d] \\
  1 \ar[r] & \A ut_{X'/X} \ar[r]^{\mathrm{Log}} & \Theta^1_{X'/S'} \ar[r] & \Theta^1_{X/S} \\
 }
\]
of (left) exact sequences. If $f'': X'' \to S''$ has the base change property and $f_0: X_0 \to S_0$ is log Gorenstein, then the middle vertical arrow is surjective \footnote{Since $W^d_{X''/S''}|_{X_0} \cong W^d_{X_0/S_0}$ is a line bundle and $W^d_{X''/S''}$ is flat over $S''$, it is a line bundle as well, cf.~Remark~\ref{geom-fam-Gerstenhaber-calc-rem}. Thus, $f'': X'' \to S''$ is log Gorenstein.}. In this case, also $\rho$ is surjective. In particular, it is surjective over affine open subsets---every automorphism of $f': X' \to S'$ defined on an affine open subset can be lifted to $f'': X'' \to S''$.

\begin{figure}
 \begin{mdframed}
  \begin{center}
   \includegraphics[scale=0.8]{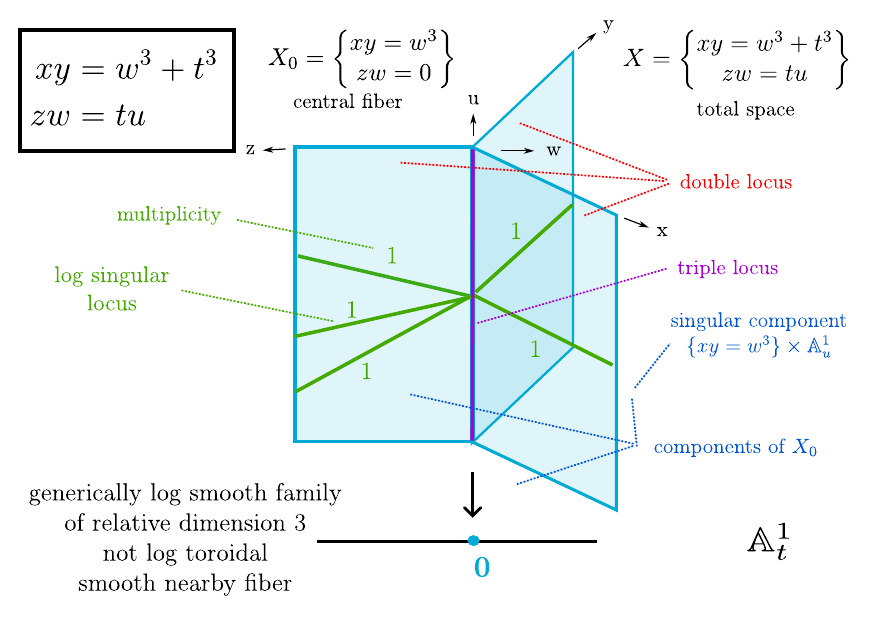}
  \end{center}
  \caption{Example~\ref{base-change-violation-t3-w3}. Compare also with Example~\ref{xy-w2-t2-zw-tu-example}.}
 \end{mdframed}
\end{figure}

\begin{ex}\label{base-change-violation-t3-w3}\note{base-change-violation-t3-w3}
 If $f'': X'' \to S''$ has \emph{not} the property that $\Theta^1_{X''/S''} \otimes_{\cO_{X''}} \cO_{X_0} \cong \Theta^1_{X_0/S_0}$, then surjectivity may fail. For an example, let $S = \bAA^1_t$, let 
 $$X = \Spec \kk[x,y,z,w,t,u]/(xy - w^3 - t^3,\: zw - tu),$$
 and let $f: X \to S$ be the obvious map. Endow both spaces with the compactifying log structures coming from $\{t = 0\}$. This is a generically log smooth family. The central fiber $X_0$ has three irreducible components intersecting in three strata $\bAA^2_{u,z}$, $\bAA^2_{u,x}$, and $\bAA^2_{u,y}$ of codimension $1$. The part of the log singular locus inside the central fiber is given by three lines $\{u^3 + z^3 = 0\}$ inside $\bAA^2_{u,z}$ and one line $\{u = 0\}$ each inside $\bAA^2_{u,x}$ and $\bAA^2_{u,y}$. The family is unisingular in the sense of Definition~\ref{unisingular-defn}.
 
 The sheaf of relative classical derivations on $f: X \to S$ equals the sheaf of relative log derivations on $f: X \to S$. This allows us to compute the restriction maps 
 $$\Theta^1_{X_2/S_2} \to \Theta^1_{X_1/S_1} \to \Theta^1_{X_0/S_0}$$
 explicitly, and it turns out that $\A ut_{X_2/X_0} \to \A ut_{X_1/X_0}$ is \emph{not} surjective. In fact, the cokernel of the map is a $\kk$-vector space of dimension $1$. We have carried out the computation in Macaulay2, and an annotated script for this computation is included in Chapter~\ref{t3-w3-script-sec} in the appendix. The computation is done over $\QQ$, but everything should commute with flat base change to $\kk$.
\end{ex}

\subsubsection*{The action of automorphisms on the Lie--Rinehart algebra and the Gerstenhaber calculus}

Let us assume that $f_0: X_0 \to S_0$ is log Gorenstein, and that our deformations have the base change property. Let $\varphi: X' \to X'$ be an automorphism of the deformation $f': X' \to S'$ over $f: X \to S$. Let us write $\varphi: X_1' \to X_2'$ to distinguish the roles of source and target. As part of the definition, we have a map 
$\phi: \W^0_{X'_2/S'} \to \W^0_{X'_1/S'}$. Pull-back of differential forms gives us 
$$d\varphi^*:\enspace \W^1_{X_2'/S'} \to \W^1_{X'_1/S'},$$
and the exterior powers of this map give maps $d^i\varphi^*$ on $\W^i_{X'/S'}$ after push-forward. Considering $\V^{-1}_{X_2'/S'}$ as the dual of $\W^1_{X_2'/S'}$ (via the universal property of the log differential forms, or equivalently, via the contraction map) gives us 
$$T\varphi^*: \enspace \V^{-1}_{X'_2/S'} \to \V^{-1}_{X_1'/S'}$$
via pre-composition with $(d\varphi^*)^{-1}$ and post-composition with $\phi$, i.e., if $\xi \in \V^{-1}_{X_2'/S'}$ and $\alpha \in \W^1_{X_1'/S'}$, then $T\varphi^*(\xi) \ \invneg \ \alpha = \phi(\xi \ \invneg \ (d\varphi^*)^{-1}(\alpha))$. Finally, we obtain a map $T^p\varphi^*$ on $\V^p_{X'/S'}$ via exterior powers and push-forward of $T\varphi^*$.

\begin{lemma}\label{geom-auto-gauge-trafo-corr}\note{geom-auto-gauge-trafo-corr}
 Let $f': X' \to S'$ have the base change property and be log Gorenstein. Let $\theta = \mathrm{Log}(\varphi)$. The above map is equal to the gauge transform $\mathrm{exp}_{-\theta}$ on the Gerstenhaber calculus $\V\,\W^\bullet_{X'/S'}$. In particular, it is an automorphism of the Gerstenhaber calculus. This induces a one-to-one correspondence between automorphisms of $f': X' \to S'$ over $f: X \to S$ and gauge transforms induced by elements $\theta \in I \cdot \V^{-1}_{X'/S'}$.
\end{lemma}
\begin{proof}
 This is essentially \cite[Lemma~3.2]{Felten2022}, but the proof there is not completely correct. First, we work on $U'$. Then $\phi: \cO_{X'} \to \cO_{X'}$ is equal to $\mathrm{exp}_{-\theta}$ by definition; namely, if $\theta = (D,\Delta)$, then $[\theta,g] = -D(g)$ for $g \in \cO_{X'}$. From the axioms of a Gerstenhaber calculus, we deduce $\theta \ \invneg \ \partial g = [-\theta,g] \ \invneg \ 1 = D(g)$.\footnote{We see here a nice example of how our choice $[-,-] = -[-,-]_{sn}$ interacts with our chosen axioms for a Gerstenhaber calculus. Since we do not switch signs in the contraction map, we get $D(a) = \theta \ \invneg\ \partial a$. Then, due to the sign in the Lie--Rinehart homotopy formula, we have $\partial D(a) = -\cL_\theta(a)$, and this is what we need because we are using $\mathrm{exp}_{-\theta}$.} Let us write $\square_\theta: \W^1_{U'/S'} \to \W^1_{U'/S'}, \: \alpha \mapsto \partial(\theta \ \invneg \ \alpha)$. Since $d\varphi^* \circ \partial = \partial \circ \phi$, we have 
 $$d\varphi^*(\partial g) = \sum_{n = 0}^\infty \frac{\partial D^n(g)}{n!} = \sum_{n = 0}^\infty \frac{\square_\theta^n(\partial g)}{n!} = \sum_{n = 0}^\infty \frac{\cL_{-\theta}^n(\partial g)}{n!} = \mathrm{exp}_{-\theta}(\partial g).$$
 The second last equality holds due to $\cL_{-\theta}(\partial g) = \nabla_\theta(\partial g) + \theta \ \invneg \ \partial^2g$. Since we have both $d\varphi^*(g \cdot \alpha) = \phi(g) \cdot d\varphi^*(\alpha)$ and $\mathrm{exp}_{-\theta}(g \cdot \alpha) = \phi(g) \cdot \mathrm{exp}_{-\theta}(\alpha)$, we have $d\varphi^* = \mathrm{exp}_{-\theta}$ on the submodule of $\W^1_{U'/S'}$ generated by elements of the form $\partial g$. On the strict locus of $U' \to S'$, the log differentials agree with the classical differentials, so this is everything. Since the strict locus is scheme-theoretically dense, we get $d\varphi^* = \mathrm{exp}_{-\theta}$ everywhere on $U'$. 
 
 There is a second, more conceptual proof of $d\varphi^* = \mathrm{exp}_{-\theta}$, which does not rely on the density of the strict locus.\footnote{If one works with \emph{integral} log smooth morphisms instead of saturated ones, then the strict locus may be empty, for example in the central fiber of $\bAA^2 \to \bAA^1, \: t \mapsto x^2y^2$. See also Chapter~\ref{char-abs-G-calc-sec} for another situation which needs the more general proof.} In general, $\W^1_{U'/S'}$ is locally generated by elements of the form $\delta(m)$, where $\delta: \M_{U'} \to \W^1_{U'/S'}$ is the log part of the universal derivation, see \cite[IV, Prop.~1.2.11]{LoAG2018}. This map satisfies $d\varphi^* \circ \delta = \delta \circ \Phi$, i.e., we have 
 \begin{align}
  d\varphi^*(\delta(m)) &= \delta\left(m + \alpha^{-1}\left(\sum_{n = 0}^\infty\frac{[\Delta(m) + D]^n(1)}{n!}\right)\right) \nonumber \\
  &= \delta(m) + \left(\sum_{n = 0}^\infty\frac{[\Delta(m) + D]^n(1)}{n!}\right)^{-1} \cdot d\left(\sum_{n = 0}^\infty\frac{[\Delta(m) + D]^n(1)}{n!}\right)  \nonumber
 \end{align}
 where $d: \cO_{U'} \to \W^1_{U'/S'}$ is the classical part of the universal derivation, and where $\Delta(m)$ denotes the operator which multiplies the argument with $\Delta(m) \in \cO_{U'}$. Using the formula in the proof of \cite[Lemma~2.2]{Felten2022}, we can show that 
 $$\left(\sum_{n = 0}^\infty\frac{[\Delta(m) + D]^n(1)}{n!}\right)^{-1} = \sum_{n = 0}^\infty\frac{[-\Delta(m) + D]^n(1)}{n!}.$$
 Since $\partial \delta(m) = 0$, we have $\cL_{-\theta}(\delta(m)) = \square_\theta(\delta(m))$, and then we obtain inductively $\partial\square_\theta^n(\delta(m)) = 0$ and $\cL_{-\theta}^n(\delta(m)) = \square_\theta^n(\delta(m))$. Thus, in order to show $d\varphi^*(\delta(m)) = \mathrm{exp}_{-\theta}(\delta(m))$, it is sufficient to show 
 $$\square_\theta^n(\delta(m)) = \sum_{k = 0}^n {\binom{n}{k}} [-\Delta(m) + D]^{n - k}(1) \cdot \partial[\Delta(m) + D]^k(1) =: A_n(m).$$
 Unfortunately, this seems to be surprisingly difficult. As the first step, we wish to show 
 $$\partial[\Delta(m) + D]^n(1) = \sum_{k = 1}^n{\binom{n}{k}}[\Delta(m) + D]^{n - k}(1)\cdot \square_\theta^k(\delta(m))$$
 for $n \geq 1$ (the sum really starts at $k = 1$). This is easy to verify for the first few values of $n$, but a direct inductive argument does not seem to be available. Assume that we know this formula up to $n$. Then we compute, for $1 \leq p \leq n + 1$, 
 \begin{align}
  P_p(m) &:= \sum_{k = 1}^{p} {\binom{p}{k}} \left(\partial [\Delta(m) + D]^{p - k}(1)\right) \wedge \square_\theta^k(\delta(m)) \quad \in \W^2_{U'/S'} \nonumber \\
  &= \sum_{\substack{k,\ell \geq 1 \\ k + \ell \leq p}} \frac{p!}{k!\ell!(p - k - \ell)!} [\Delta(m) + D]^{p - k - \ell} \wedge \square_\theta^\ell(\delta(m)) \wedge \square_\theta^k(\delta(m)) = 0. \nonumber
 \end{align}
 The sum is zero since $\square_\theta^\ell(\delta(m)) \wedge \square_\theta^k(\delta(m)) = - \square_\theta^k(\delta(m)) \wedge \square_\theta^\ell(\delta(m))$, and the sum is symmetric in $k$ and $\ell$. This shows 
 \begin{align}
  \sum_{k = 1}^{p}&{\binom{p}{k}}\ \partial[\Delta(m) + D]^{p - k}(1)\, \cdot \,\Big(\theta \ \invneg \ \square_\theta^k(\delta(m))\Big) \nonumber \\
  &= \sum_{k = 1}^{p}{\binom{p}{k}} \ D[\Delta(m) + D]^{p - k} \, \cdot \, \square_\theta^k(\delta(m)) \nonumber
 \end{align}
 via the formula for $\theta \ \invneg \ (\alpha \wedge \beta)$ since $D(g) = \theta \ \invneg\  \partial g$ for $g \in \cO_{U'}$. With this preparation, the induction step is now a straightforward computation of $\partial [\Delta(m) + D]^{n + 1}(1)$. As soon as we have this formula, we can compute 
 \begin{align}
  A_n(m) &= \sum_{k = 0}^n {\binom{n}{k}}[-\Delta(m) + D]^{n - k}(1) \cdot \sum_{\ell = 0}^{k - 1}{\binom{k}{\ell}}[\Delta(m) + D]^{\ell}(1) \cdot \square_\theta^{k - \ell}(\delta(m)) \nonumber \\
  &= \sum_{p = 1}^n \sum_{q = 0}^{n - p}\frac{n!}{(n - p - q)!p!q!}[-\Delta(m) + D]^q(1) \cdot [\Delta(m) + D]^{n - p - q}(1) \cdot \square_\theta^p(\delta(m)) \nonumber \\ 
  &= \sum_{p = 1}^n {\binom{n}{p}} [\Delta(0) + D]^{n - p}(1) \cdot \square_\theta^p(\delta(m)) = \square_\theta^n(\delta(m)) \nonumber
 \end{align}
 by setting $p = k - \ell$ and $q = n - k$.
 
 Since both $d^\bullet\varphi^*$ and $\mathrm{exp}_{-\theta}$ are compatible with the $\wedge$-product, we obtain $d^i\varphi^* = \mathrm{exp}_{-\theta}$ on $\W^i_{U'/S'}$. A direct computation with the definition yields 
 $$T\varphi^*(\xi) \ \invneg \ \alpha = \mathrm{exp}_{-\theta}(\xi) \ \invneg \ \alpha$$
 for $\xi \in \Theta^1_{U'/S'}$ and $\alpha \in \W^1_{U'/S'}$, so $T\varphi^*(\xi) = \mathrm{exp}_{-\theta}(\xi)$. Then both $T^p\varphi^*$ and $\mathrm{exp}_{-\theta}$ are compatible with the $\wedge$-product, so we have $T^p\varphi^* = \mathrm{exp}_{-\theta}$ as well. Since all sheaves $\F$ involved satisfy $j_*\F|_U = \F$, we obtain the assertion on $X'$ from the one on $U'$ by push-forward. Finally, infinitesimal automorphisms are in one-to-one correspondence with elements of $I \cdot \Theta^1_{X'/S'}$, and they are in turn in one-to-one correspondence with gauge transforms of the Gerstenhaber calculus by Lemma~\ref{gauge-inject} since $\V^\bullet_{X_0/S_0}$ is strictly faithful.
\end{proof}

\begin{rem}
 We write $\mathrm{Exp}(\theta)$ with a capital E for the automorphism induced by $\theta \in I \cdot \Theta^1_{X'/S'}$, and we write $\mathrm{exp}_{-\theta}$ with a lower case e for the gauge transform. In forming $\mathrm{Exp}(\theta)$, we use the positive Lie bracket of $\Theta^1_{X'/S'}$, and in forming $\mathrm{exp}_{-\theta}$, we use the negative Lie bracket of $\V^{-1}_{X'/S'}$.
\end{rem}

When we restrict this construction to $\V^0_{X'/S'}$ and $\V^{-1}_{X'/S'}$, then we obtain an induced gauge transform of the Lie--Rinehart algebra $\cL\R^\bullet_{X'/S'}$. Exactly as in the case of the Gerstenhaber calculus, automorphisms of $f': X' \to S'$ are in one-to-one correspondence with gauge transforms induced by $\theta \in I \cdot \cL\R_{X'/S'}^T$.

\section{Enhanced generically log smooth families}

Unless a (log Gorenstein) generically log smooth family $f: X \to S$ has the base change property, its Gerstenhaber calculus $\V\,\W_{X/S}^\bullet$ is not stable under base change. While the generically log smooth families which we studied in earlier works have the base change property, this is no longer true in the generality we are interested here, as shown by Example~\ref{base-change-violation-t3-w3}. We stabilize the Gerstenhaber calculus with the notion of an \emph{enhanced generically log smooth family}.

\begin{defn}\label{enhanced-gen-log-defn}\note{enhanced-gen-log-defn}
 An \emph{enhanced generically log smooth family of relative dimension $d \geq 1$}\index{enhanced generically log smooth family} is a tuple 
 $$(f: X \to S,\, U \subseteq X, \,\G^\bullet_{X/S},\,\A^\bullet_{X/S},\,\varpi^\bullet)$$
 where:
 \begin{itemize}
  \item $f: X \to S$ is a generically log smooth family of relative dimension $d$ with  log smooth locus $j: U \subseteq X$;
  \item $\W^d_{X/S}$ is a line bundle, i.e., $f: X \to S$ is log Gorenstein;
  \item $\G\C^\bullet_{X/S} = (\G^\bullet_{X/S},\A^\bullet_{X/S})$ is a two-sided Gerstenhaber calculus of dimension $d$ in the context $\mathfrak{Coh}(X/S)$; in particular, its pieces are flat over $S$;
  \item $\varpi^\bullet: \G^\bullet_{X/S} \to \V^\bullet_{X/S}$ and $\varpi^\bullet: \A^\bullet_{X/S} \to \W^\bullet_{X/S}$ form a map of two-sided Gerstenhaber calculi, i.e., $\varpi^p$ and $\varpi^i$ are $\cO_X$-linear maps which are compatible with the two $\wedge$-products, with the left contraction $\vdash$ and the right contraction $\invneg$, with the bracket $[-,-]$, with the de Rham differential $\partial$, with the Lie derivative $\cL_{-}(-)$, and with the constants;
  \item $\varpi^0: \G^0_{X/S} \to \V^0_{X/S}$ and $\varpi^0: \A^0_{X/S} \to \W^0_{X/S}$ are isomorphisms;
  \item $\varpi^d: \A^d_{X/S} \to \W^d_{X/S}$ is an isomorphism; in particular, $\A^d_{X/S}$ is a line bundle;
  \item the Gerstenhaber calculus $\G\C^\bullet_{X/S}$ is locally Batalin--Vilkovisky, i.e., every local volume form $\omega \in \A^d_{X/S}$ gives rise to a local isomorphism $\kappa_\omega: \G^p_{X/S} \cong \A^{p + d}_{X/S}$;
  \item $\varpi^\bullet$ is an isomorphism on $U$ for all $\G^p_{X/S}$ and $\A^i_{X/S}$.
 \end{itemize}
\end{defn}

Given a map $b: T \to S$, we define the fiber product $g: Y = X \times_S T$ in the obvious way. The Gerstenhaber calculus is given by $\G^p_{Y/T} := c^*\G^p_{X/S}$ and $\A^i_{Y/T} := c^*\A^i_{X/S}$, with the obvious maps $\varpi^\bullet$ to $\V^p_{Y/T}$ and $\W^i_{Y/T}$. Every operation on $\G\C^\bullet_{X/S}$ is a multilinear differential operator, so they pull-back to operations on $\G\C^\bullet_{Y/T}$ by Proposition~\ref{multilin-diff-op-shvs}. Since multi-compositions and linear combinations of multilinear differential operators are multilinear differential operators, we can consider every relation in $\G\C^\bullet_{X/S}$ as a multilinear differential operator which vanishes. Now the pull-back is compatible with multi-compositions and linear combinations, so the relation also holds in $\G\C^\bullet_{Y/T}$. Similarly, when $g: X' \to X$ is \'etale, separated, and of finite type,\footnote{\'Etale alone is not sufficient as the resulting family is required to be separated and of finite type.} then we have an induced two-sided Gerstenhaber calculus $\G\C^\bullet_{X'/S}$ which turns $f \circ g: X' \to S$ into an enhanced generically log smooth family. In both cases, the log Gorenstein and local Batalin--Vilkovisky condition as well as the isomorphy on $U$ are preserved due to their $\cO_X$-linear nature.

\begin{defn}\label{enhanced-morphism-defn}\note{enhanced-morphism-defn}
 Let $f: X \to S$ and $g: Y \to T$ be enhanced generically log smooth families of relative dimension $d$. Then a \emph{morphism} from $g$ to $f$ is a tuple $\varphi = (b,c,d^\bullet\varphi^*,T^\bullet\varphi^*)$ where:
 \begin{itemize}
  \item $b: T \to S$ is a morphism of log schemes;
  \item $c: Y \to X$ is a morphism of schemes with $V \subseteq c^{-1}(U)$ and $f \circ c = b \circ g$; it carries also the structure of a morphism of log schemes $c: V \to U$ with $f \circ c = b \circ g$ as morphisms of log schemes;
  \item the induced morphism $Y \to X \times_S T$ is strict and \'etale;
  \item $(d^\bullet\varphi^*,T^\bullet\varphi^*): c^*\G\C^\bullet_{X/S} \to \G\C^\bullet_{Y/T}$ is an isomorphism of two-sided Gerstenhaber calculi which is compatible with the two maps $\varpi^\bullet$ to $c^*\V\,\W^\bullet_{X/S}$ and $\V\,\W^\bullet_{Y/T}$; here, $c^*$ of a two-sided Gerstenhaber calculus is formed as indicated above.
 \end{itemize}
\end{defn}

The assumption that $Y \to X \times_S T$ is strict and \'etale is necessary in order to make sense of the map $T^\bullet\varphi^*$ being part of a morphism of two-sided Gerstenhaber calculi.

Often, the map $\varpi^\bullet$ is injective, for example when $\G\C^\bullet_{X/S}$ is obtained as the pull-back of $\V\,\W^\bullet_{X/S}$ in a family over a one-dimensional base, or when $\G\C^\bullet_{X/S}$ is the direct image from a resolution of log singularities with no contracted components. Within this text, we will usually assume that $\varpi^\bullet$ is injective and remains so after any base change since this simplifies the arguments---now $\G\C^\bullet_{X/S}$ is a Gerstenhaber subcalculus of $\V\,\W^\bullet_{X/S}$---and is often satisfied.

\begin{defn}
 We say that an enhanced generically log smooth family $f: X \to S$ is \emph{torsionless} if $\G^p_{X/S}$ and $\A^i_{X/S}$ are torsionless and remain so after any base change. Equivalently, $\varpi^\bullet$ is injective and remains so after any base change. By Lemma~\ref{injective-in-fibers}, it is sufficient to require this condition in the fibers of $f: X \to S$.
\end{defn}

For a generically log smooth families, we use the notation $\Theta^\bullet_{X/S}$ for the Gerstenhaber algebra of reflexive polyvector fields in positive degrees, endowed with the positive Schouten--Nijenhuis bracket. Analogously, we write $\Gamma^\bullet_{X/S}$ to denote $\G^\bullet_{X/S}$ in positive degrees and with the positive bracket, i.e., the negative of the bracket on $\G^\bullet_{X/S}$. In particular, we will use $\Gamma^1_{X/S}$ when studying infinitesimal automorphisms.

Examples of enhanced generically log smooth families which are not mere generically log smooth families, i.e., $\varpi^\bullet$ is not an isomorphism, can be found in Chapter~\ref{enhanced-sys-of-defo-sec}.

\section{Systems of deformations}

One should think of generically log smooth deformations rather as the correct language to talk about deformations of generically log smooth spaces than of the correct notion of deformations. For example, in \cite[Ex.~2.9]{GrossSiebertII}, they construct an infinite series of presumably pairwise non-isomorphic generically log smooth deformations of the central fiber of $\Spec \kk[x,y,z,t]/(xy - zt) \to \Spec \kk[t]$ endowed with the log structures defined by $\{t = 0\}$. Since the deformation theory that we present in this work relies on locally unique deformations, as are given for example in the case of log smooth deformations, we have to fix what we call a \emph{system of deformations}. This notion might seem ad hoc at first glance, and it probably is, but we will demonstrate in Corollary~\ref{sys-defo-exists} in the appendix that natural systems of deformations exist in a substantial range of applications. In this section, we study the plain generically log smooth case, and in the next section, we turn to the enhanced case.

\begin{defn}\label{admissible-def}\note{admissible-def}
 Let $f_0: (X_0,U_0) \to S_0$ be a generically log smooth family over a log point $S_0 = \Spec(Q \to \kk)$. Assume that $f_0: X_0 \to S_0$ is log Gorenstein. We say that a Zariski open cover $\V = \{V_\alpha\}_\alpha$ of $X_0$ is:
 \begin{enumerate}[label=(\alph*)]\index{open cover!pre-admissible}
  \item \emph{pre-admissible} if it consists of (only) finitely many affine open immersions $j_\alpha: V_\alpha \to X_0$, i.e., for every affine subset $W \subseteq X_0$, the intersection $W \cap V_\alpha$ is affine as well;
  \item \emph{weakly admissible}\index{open cover!weakly admissible} if it is pre-admissible and
  $$H^n(V_{\alpha_1} \cap ... \cap V_{\alpha_r},\cO_{X_0}|_{\alpha_1...\alpha_r}) = 0, \quad H^n(V_{\alpha_1} \cap ... \cap V_{\alpha_r},\Theta^1_{X_0/S_0}|_{\alpha_1...\alpha_r}) = 0$$  
  for all $n \geq 1$ and indices $\alpha_1,...,\alpha_r$;
  \item \emph{admissible}\index{open cover!admissible} if it is pre-admissible and 
  $$H^n(V_{\alpha_1} \cap ... \cap V_{\alpha_r},\F|_{\alpha_1...\alpha_r}) = 0$$
  for $\F = \G^p_{X_0/S_0}, \A^i_{X_0/S_0}$, all $n \geq 1$, and indices $\alpha_1,...,\alpha_r$;
  \item \emph{affine} if every $V_\alpha$ is an affine scheme.
 \end{enumerate}
 Every affine open cover is admissible, and every admissible open cover is weakly admissible.
\end{defn}

\begin{rem}
 Under these conditions, we can compute the cohomology of the respective sheaves from the \v{C}ech complex.
\end{rem}

\begin{defn}\label{sys-of-defo}\note{sys-of-defo}\index{generically log smooth family!system of deformations}\index{system of deformations}
 Let $f_0: (X_0,U_0) \to S_0$ be a generically log smooth family over a log point $S_0 = \Spec(Q \to \kk)$. Assume that $f_0: X_0 \to S_0$ is log Gorenstein. Let $\V = \{V_\alpha\}_\alpha$ be a weakly admissible open cover. A \emph{system of deformations} $\D$ of $f_0: X_0 \to S_0$, subordinate to $\V$, is a tuple 
 $$(V_{\alpha;A} \to S_A,\: \rho_{\alpha;BB'},\: \psi_{\alpha\beta;A})$$
 where:
 \begin{itemize}
  \item for every $A \in \mathbf{Art}_Q$, the  object $V_{\alpha;A} \to S_A$ is a generically log smooth deformation of $X_0|_\alpha := X_0|_{V_\alpha}$ over $S_A$; we assume that this deformation has the base change property, i.e., $\W^k_{V_{\alpha;A}/S_A}$ is flat over $S_A$;
  \item for every $B' \to B$ in $\mathbf{Art}_Q$, we have a map $\rho_{\alpha;BB'}: V_{\alpha;B} \to V_{\alpha;B'}$ of generically log smooth deformations; they are compatible with compositions and, by definition, with the maps from $X_0|_\alpha$ that are part of a generically log smooth deformation;
  \item on intersections $V_\alpha \cap V_\beta$, we have isomorphisms $\psi_{\alpha\beta;A}: V_{\alpha;A}|_{\alpha\beta} \xrightarrow{\cong} V_{\beta;A}|_{\alpha\beta}$ of generically log smooth deformations; they commute with the restrictions $\rho_{BB'}$, and, by definition, with the maps from $X_0|_{\alpha\beta}$; there is no cocycle condition on triple intersections $V_\alpha \cap V_\beta \cap V_\gamma$.
 \end{itemize}
 A generically log smooth deformation $f_A: X_A \to S_A$ of $f_0: X_0 \to S_0$ is \emph{of type $\D$} if we can find isomorphisms $\chi_\alpha: X_A|_\alpha \cong V_{\alpha;A}$ of generically log smooth deformations for each $\alpha$. Isomorphism classes of generically log smooth deformations of type $\D$ form a functor of Artin rings
 $$\mathrm{LD}_{X_0/S_0}^\D: \mathbf{Art}_Q \to \mathbf{Set}.$$
\end{defn}
\begin{rem}\label{etale-local-isom-rem}\note{etale-local-isom-rem}
 The condition $X_{A}|_\alpha \cong V_{\alpha;A}$ is equivalent to that there is an (\'etale) open cover $\{U_i\}_i$ by affines subordinate to $\V$ such that we can find isomorphisms $X_A|_i \cong V_{\alpha(i);A}|_i$ of generically log smooth deformations. The proof is by induction over small extensions, where $H^1(V_\alpha,\A ut_{X'/X}) = H^1(V_\alpha,\Theta^1_{X_0/S_0}\otimes_\kk I) = 0$, and then we have to arrange the automorphisms on $U_i$ to be the given one on $V_\alpha$ over the smaller ring, which we achieve by liftability of all automorphisms on the affines $U_i$.
\end{rem}
\begin{ex}
 Let $f_0: X_0 \to S_0$ be separated, log smooth, and saturated. Let $\V = \{V_\alpha\}_\alpha$ be a finite open affine cover. The log smooth deformations give rise to a system of deformations $\D$. Generically log smooth deformations of type $\D$ are the same as log smooth deformations.
\end{ex}

\begin{ex}
 Let $(V,Z,s)$ be a well-adjusted triple as defined in Definition~\ref{well-adj-def}, and let $f_0: X_0 \to S_0$ be the associated generically log smooth family. Assume that it is of elementary Gross--Siebert type. Then Corollary~\ref{sys-defo-exists} gives a system of deformations $\D$.
\end{ex}

\begin{lemma}\label{defo-type-D-is-defo-functor}\note{defo-type-D-is-defo-functor}
 Let $\D$ be a system of deformations. Then $\mathrm{LD}_{X_0/S_0}^\D$ is a deformation functor.
\end{lemma}
\begin{proof}
 The condition $(H_0)$ is clear. To check $(H_1)$, let $A' \to A$ be arbitrary and $A'' \to A$ be a surjection in $\mathbf{Art}_Q$, and let $B := A' \times_A A''$. Let $f: X \to S_A$ be of type $\D$, and let $f': X' \to S_{A'}$ and $f'': X'' \to S_{A''}$ be two liftings of type $\D$. We show that 
 $$Y := (|X_0|,\: \cO_{X'} \times_{\cO_X} \cO_{X''},\: \M' \times_\M \M'')$$
 is a common lifting of type $\D$ over $S_B$. After restricting to $V_\alpha$, we have an isomorphism $X'|_\alpha \cong V_{\alpha;A'}$ by assumption. By restriction, it induces an isomorphism $X|_\alpha \cong V_{\alpha;A}$. Since all automorphisms of $X|_\alpha$ can be lifted along $X''|_\alpha \to X|_\alpha$ \footnote{This is because $H^1(V_\alpha, I'' \cdot \Theta^1_{X''/S_{A''}}) = 0$ for $I''$ the kernel of $A'' \to A$.}, we can modify the existent isomorphism $X''|_\alpha \cong V_{\alpha;A''}$ to be compatible with the restriction $X''|_\alpha \to X|_\alpha$ and the given isomorphism $X|_\alpha \cong V_{\alpha;A}$. Then the universal property of $Y$ yields a map $Y|_\alpha \to V_{\alpha;B}$ of generically log smooth families which is compatible with everything. The base change of this map to $S_{A'}$ is an isomorphism, so it must be a closed immersion on underlying schemes. Let $J' \subset B$ be the kernel of $B \to A'$, and let $J''$ be the kernel of $B \to A''$. If $g \in \cO_{V_{\alpha;B}}$ is in the kernel of $Y|_\alpha \to V_{\alpha;B}$, it must be in both $J' \cdot \cO_{V_{\alpha;B}}$ and $J'' \cdot \cO_{V_{\alpha;B}}$. Since $V_{\alpha;B} \to S_B$ is flat, extension of ideals from $S_B$ commutes with intersection; we have $J' \cap J'' = 0$, hence $g = 0$. Because all other maps are strict, $Y|_\alpha \to V_{\alpha;B}$ is strict, too; in particular, it is an isomorphism of generically log smooth families. Finally, given $(H_1)$, the condition $(H_2)$ is inherited from $\mathrm{LD}_{X_0/S_0}^{gen}$.
\end{proof}
\begin{rem}
 Note that, as usual, the proof does not show bijectivity of the map in $(H_1)$ in general because $Y$ depends on the choice of the two maps $X \to X'$ and $X \to X''$, which are not encoded in the deformation functor.
\end{rem}

\section{Enhanced systems of deformations}\label{enhanced-sys-of-defo-sec}\note{enhanced-sys-of-defo-sec}

In Definition~\ref{sys-of-defo}, we have required that, in a system of deformations $\D$, every local model $V_{\alpha;A} \to S_A$ should have the base change property. This is necessary because we use this flatness in many places. Working with enhanced generically log smooth families allows us to use a modified Gerstenhaber calculus $\G\C^\bullet_{X/S}$, flat over $S$, instead of the sometimes non-flat $\V\,\W^\bullet_{X/S}$, and thus study the deformation theory of many generically log smooth families which do not have the base change property. For simplicity, we assume that our enhanced generically log smooth families are torsionless as defined above.\footnote{As long as $\varpi^{-1}: \G^{-1}_{X/S} \to \V^{-1}_{X/S}$ is injective and remains so after any base change, the theory should work with not too many modifications. However, if we drop the injectivity of $\varpi^{-1}$, then new complications arise due to the presence of gauge transforms which act trivially on the Gerstenhaber calculus.}

When it comes to infinitesimal automorphisms of enhanced generically log smooth families, then we have to distinguish between \emph{inner automorphisms} and \emph{outer automorphisms}. The latter are precisely the automorphisms in the sense of Definition~\ref{enhanced-morphism-defn}. The former are those outer automorphisms which are of the form $\mathrm{Exp}(\theta)$ for some $\theta \in I \cdot \Gamma^{1}_{X'/S'} = I \cdot \G^{-1}_{X'/S'}$ under the map 
$$\mathrm{Exp}:\enspace I \cdot \Gamma^{1}_{X'/S'} \xrightarrow{\varpi} I \cdot \Theta^1_{X'/S'} \subseteq \mathrm{ker}(\Theta^1_{X'/S'} \to \Theta^1_{X/S}) \xrightarrow{\mathrm{Exp}} \A ut_{X'/X},$$
i.e., the gauge transform on $\G\C^\bullet_{X'/S'} \subseteq \V\,\W^\bullet_{X'/S'}$ is given by $\mathrm{exp}_{-\theta}$. This obviously defines an (outer) automorphism in the sense of Definition~\ref{enhanced-morphism-defn} (cf.~Lemma~\ref{geom-auto-gauge-trafo-corr}). Since $\mathrm{Exp}$ is injective, every inner automorphism is induced by a unique $\theta \in I \cdot \Gamma^{1}_{X'/S'}$. The outer automorphisms are those induced by elements $\theta \in \mathrm{ker}(\Theta^{1}_{X'/S'} \to \Theta^{1}_{X/S})$ with the property that $\mathrm{exp}_{-\theta}$ preserves $\G\C^\bullet_{X'/S'} \subseteq \V\,\W^\bullet_{X'/S'}$.

\begin{defn}\label{enhanced-sys-of-defo}\note{enhanced-sys-of-defo}\index{enhanced generically log smooth family!system of deformations}
 Let $f_0: X_0 \to S_0$ be a torsionless enhanced generically log smooth family over a log point $S_0 = \Spec (Q \to \kk)$. Let $\V = \{V_\alpha\}_\alpha$ be a pre-admissible open cover of $X_0$. We write $\V\,\W^\bullet_0 := \V\,\W^\bullet_{X_0/S_0}$ as well as $\G\C^\bullet_0 := \G\C^\bullet_{X_0/S_0}$ for short.
 \begin{enumerate}[label=(\alph*)]
  \item An \emph{enhanced system of deformations} $\D$ subordinate to $\V$ is a tuple 
 $$(V_{\alpha;A} \to S_A,\: \rho_{\alpha;BB'},\: \psi_{\alpha\beta;A}, \: \G\C^\bullet_{\alpha;A})$$
 where:
 \begin{itemize}
  \item for every $A \in \mathbf{Art}_Q$, the  object $V_{\alpha;A} \to S_A$ is a generically log smooth deformation of $X_0|_\alpha := X_0|_{V_\alpha}$ over $S_A$; the two-sided Gerstenhaber calculus 
  $$\G\C^\bullet_{\alpha;A} \subseteq \V\,\W^\bullet_{V_{\alpha;A}/S_A} =: \V\,\W^\bullet_{\alpha;A}$$ in the context $\mathfrak{Coh}(V_{\alpha;A}/S_A)$ turns $V_{\alpha;A} \to S_A$ into an enhanced generically log smooth family; the restriction map $\V\,\W^\bullet_{\alpha;A} \to \V\,\W^\bullet_{0}|_\alpha$ maps $\G\C^\bullet_{\alpha;A} $ to $\G\C^\bullet_0|_\alpha$, and the induced map after base change is an isomorphism;
  \item for every $B' \to B$ in $\mathbf{Art}_Q$, we have a map $\rho_{\alpha;BB'}: V_{\alpha;B} \to V_{\alpha;B'}$ of generically log smooth deformations; they are compatible with compositions and, by definition, with the maps from $X_0|_\alpha$ that are part of a generically log smooth deformation; the restriction map $\V\,\W^\bullet_{\alpha;B'} \to \V\,\W^\bullet_{\alpha;B}$ maps $\G\C^\bullet_{\alpha;B'}$ to $\G\C_{\alpha;B}^\bullet$ and induces an isomorphism after base change;
  \item on intersections $V_\alpha \cap V_\beta$, we have isomorphisms $\psi_{\alpha\beta;A}: V_{\alpha;A}|_{\alpha\beta} \xrightarrow{\cong} V_{\beta;A}|_{\alpha\beta}$ of generically log smooth deformations; they commute with the restrictions $\rho_{BB'}$, and, by definition, with the maps from $X_0|_{\alpha\beta}$; the induced isomorphism 
  $$\psi_{\alpha\beta;A}^*:\: \V\,\W^\bullet_{\beta;A}|_{\alpha\beta} \to \V\,\W^\bullet_{\alpha;A}|_{\alpha\beta}$$
  maps $\G\C^\bullet_{\beta;A}|_{\alpha\beta}$ isomorphically onto $\G\C^\bullet_{\alpha;A}|_{\alpha\beta}$;
  \item applying Lemma~\ref{geom-auto-gauge-trafo-corr} on the log smooth locus yields a unique element 
  $$o_{\alpha\beta\gamma;A} \in \Gamma(V_\alpha \cap V_\beta \cap V_\gamma \cap U_0, \m_A \cdot \V^{-1}_{\alpha;A})$$
  such that the cocycle $\psi_{\gamma\alpha;A} \circ \psi_{\beta\gamma;A} \circ \psi_{\alpha\beta;A}$ induces $\mathrm{exp}_{-o_{\alpha\beta\gamma;A}}$ on the Gerstenhaber calculus $\V\,\W_{\alpha;A}^\bullet$ on $V_{\alpha} \cap V_\beta \cap V_\gamma \cap U_0$; since this Gerstenhaber calculus is reflexive, this is actually a section on $V_\alpha \cap V_\beta \cap V_\gamma$ of the kernel of $\V^{-1}_{\alpha;A} \to \V^{-1}_{0}|_\alpha$, and $\mathrm{exp}_{-o_{\alpha\beta\gamma;A}}$ is the induced action of the cocycle; we assume that
  \begin{equation*}
   o_{\alpha\beta\gamma;A} \in \Gamma(V_\alpha \cap V_\beta \cap V_\gamma, \m_A \cdot \G^{-1}_{\alpha;A});
  \end{equation*}
  in other words, the cocycles must be inner automorphisms;
  \item the open cover $\V$ is admissible, i.e., $H^n(V_{\alpha_1} \cap ... \cap V_{\alpha_r},\F) = 0$ for $\F = \G^p_0,\A^i_0$, for all $n \geq 1$, and for all indices $\alpha_1,...,\alpha_r$.
 \end{itemize}
 \item An \emph{enhanced generically log smooth deformation of type $\D$} of $f_0: X_0 \to S_0$ is a generically log smooth deformation $f_A: X_A \to S_A$ of $f_0: X_0 \to S_0$ together with isomorphisms $\chi_\alpha: X_A|_\alpha \cong V_{\alpha;A}$ of generically log smooth deformations such that 
 $$\psi_{\beta\alpha;A} \circ \chi_\beta|_{\alpha\beta} \circ \chi_\alpha^{-1}|_{\alpha\beta}: \: V_{\alpha;A}|_{\alpha\beta} \to V_{\alpha;A}|_{\alpha\beta}$$
 is an inner automorphism of the enhanced generically log smooth family $V_{\alpha;A} \to S_A$.
 Transporting $\G\C_{\alpha;A}^\bullet \subseteq \V\,\W^\bullet_{\alpha;A}$ along $\chi_\alpha$ gives rise to a Gerstenhaber calculus $\G\C_{X_A/S_A}^\bullet \subseteq \V\,\W_{X_A/S_A}^\bullet$, which is independent of $\alpha$ on overlaps, hence well-defined, due to our condition on the automorphisms on overlaps. This turns $f_A: X_A \to S_A$ into an enhanced generically log smooth family.
  \item Let $B' \to B$ be a morphism in $\mathbf{Art}_\Lambda$, and let $f': (X_{B'}, \{\chi_\alpha'\}) \to S_{B'}$ and $f: (X_B, \{\chi_\alpha\}) \to S_B$ be two enhanced generically log smooth deformations of type $\D$. Then a \emph{morphism} $c: X_B \to X_{B'}$ is a morphism of generically log smooth deformations such that ${\chi}_\alpha'|_B \circ \chi_\alpha^{-1}$ is an inner automorphism of the enhanced generically log smooth family $V_{\alpha;B} \to S_B$.
  \item Two enhanced generically log smooth deformations $f: X_A \to S_A$ and $f': X_A' \to S_A$ are \emph{equivalent} if there is an isomorphism between them. This means that we have an isomorphism of generically log smooth deformations such that $\chi_\alpha' \circ \chi_\alpha^{-1}$ is an inner automorphism. Equivalence classes of enhanced generically log smooth deformations of type $\D$ form a functor\index{enhanced generically log smooth family!deformation functor} of Artin rings
 $$\mathrm{ELD}_{X_0/S_0}^\D: \mathbf{Art}_Q \to \mathbf{Set}.$$
 \end{enumerate}
\end{defn}

\begin{lemma}\label{enhanced-LD-defo-functor}\note{enhanced-LD-defo-functor}
 This is a deformation functor, i.e., it satisfies $(H_0)$, $(H_1)$, and $(H_2)$.
\end{lemma}
\begin{proof}
 $(H_0)$ is clear. For $(H_1)$, set $B = A' \times_A A''$ and note that 
 $$V_{\alpha;A'} \times_{V_{\alpha;A}} V_{\alpha;A''} = V_{\alpha;B}, \quad \G\C_{\alpha;A'} \times_{\G\C_{\alpha;A}} \G\C_{\alpha;A''} = \G\C_{\alpha;B}$$
 by the methods of Lemma~\ref{defo-type-D-is-defo-functor} and Lemma~\ref{P-def-defo-functor} below. Then $(H_1)$ follows essentially like in the proof of Lemma~\ref{defo-type-D-is-defo-functor}. To see $(H_2)$, we use the method also used in the proof of Lemma~\ref{P-def-defo-functor} below.
\end{proof}

Of course, if $\G\C_0^\bullet = \V\,\W_{0}^\bullet$ and $\G\C_{\alpha;A}^\bullet = \V\,\W_{\alpha;A}^\bullet$, then we have defined nothing new and just obtain generically log smooth deformations of type $\D$.

\begin{rem}
 Although the local models for the deformations do not have the base change property as required in Definition~\ref{sys-of-defo}, the functor $\mathrm{LD}^\D_{X_0/S_0}$ makes sense for an enhanced system of deformations as well, albeit that it is in general only a functor of Artin rings and not a deformation functor since the proof of condition $(H_1)$ in Lemma~\ref{defo-type-D-is-defo-functor} may fail. Since we do not require $H^n(V_\alpha,\V^{-1}_{X_0/S_0}) =0 $ for $n \geq 1$, and since $H^2(V_\alpha,\V^{-1}_{X_0/S_0})$ is no longer an obstruction space for liftings along small extensions in the situation where not all local automorphisms lift, the isomorphisms $\chi_\alpha: X_A|_\alpha \cong V_{\alpha;A}$ must be relaxed to $X_A$ being locally isomorphic to $V_{\alpha;A}$, say on a cover $\U = \{U_i\}_i$ which refines $\V$. Moreover, the change of comparison map between $\chi_i$ and $\chi_j$ is no longer an inner automorphism of $V_{\alpha;A}|_{ij}$ as an enhanced generically log smooth family but only an automorphism of $V_{\alpha;A}|_{ij}$ as a generically log smooth family. In particular, these deformations do not necessarily carry a structure of enhanced generically log smooth family as dictated by $\D$. We obtain a comparison map 
 $$\mathrm{ELD}^\D_{X_0/S_0} \to \mathrm{LD}^\D_{X_0/S_0}$$
 which, in general, may be neither injective nor surjective. Namely, deformations of first order can still be classified as usual, and the map is given by
 $$H^1(X_0,\G^{-1}_{X_0/S_0}) \to H^1(X_0,\V^{-1}_{X_0/S_0})$$
 on the level of tangent spaces.
 Note that $\kk \cdot \eps\, \otimes  \V^{-1}_{X_0/S_0} = \mathrm{ker}(\V^{-1}_{X_\eps/S_\eps} \to \V^{-1}_{X_0/S_0})$ is the sheaf of infinitesimal first order automorphisms of $f_0 \times \kk[\eps]/(\eps^2)$ as a generically log smooth family.
\end{rem}

\par\vspace{\baselineskip}

The following example shows that, in a sense, every generically log smooth deformation to an integral scheme comes from an enhanced system of deformations. Before we go to the example, we need an easy lemma.

\begin{lemma}\label{central-fiber-injection}\note{central-fiber-injection}
 Let $R$ be one of the following rings: $\kk[t]$, $\kk[t]_{(t)}$, $\kk\llbracket t\rrbracket$. Let $f: X \to S = \Spec R$ be a flat morphism whose fibers are pure of dimension $d$ and $S_2$. Let $U \subseteq X$ be an open subset such that $Z = X \setminus U$ has relative codimension $\geq 2$, and let $\F$ be a reflexive sheaf on $X$ which is locally free on $U$. Let $S_0 \subseteq S$ be the closed subscheme defined by $t = 0$, and let $f_0: X_0 \to S_0$ be the base change of $f: X \to S$. Let $\F_0 := \F|_{X_0}$. Then the natural map $\rho: \F_0 \to j_*\F_0|_{U_0}$ is injective.
\end{lemma}
\begin{proof}
 Inside $U$, every stalk $\F_x$ is flat over $\cO_{S,f(x)}$ and hence over $R$. Since $R$ is a Dedekind domain, flatness is equivalent to being torsion-free. In particular, $\F_x$ has no $t$-torsion for $x \in U$, and hence $\F = j_*\F|_U$ has no $t$-torsion at any stalk. Thus, the left map in the sequence 
 $$0 \to \F \xrightarrow{\cdot t} \F \to \F_0 \to 0$$
 is injective, and since $X_0 \subseteq X$ is defined by the ideal $(t)$, the sequence is exact. Let $\theta \in \F$ be such that $\theta|_{X_0}$ is in the kernel of $\rho: \F_0 \to j_*\F_0|_{U_0}$. Then $\theta|_U$ is in the image of the multiplication with $t$. Let $g \in \Gamma(U,\F)$ be such that $tg = \theta|_U$. Since $\F$ is reflexive, our element $g$ can be extended to $X$, and we have $tg = \theta$ for this extension.
\end{proof}

\begin{ex}
 Let $S = \Spec(\NN \to \CC\llbracket t\rrbracket)$, and let $f: X \to S$ be a log Gorenstein generically log smooth family of relative dimension $d$. Let us assume that $f: X \to S$ is vertical, which ensures that the generic fiber $X_\eta$ carries the trivial log structure, hence we have a genuine deformation to a scheme. Let us first study the geometry of $f: X \to S$. For simplicity, we assume that $X$ is connected. Using \cite[055J]{stacks}, also the generic fiber $X_\eta$ is connected. Since $X_\eta$ carries the trivial log structure, it is smooth on $U_\eta$, i.e., regular in codimension $1$. Since it is also Cohen--Macaulay, we find that $X_\eta$ is normal. Because it is connected, it is then integral as well. We may show similarly that the total space $X$ is normal; namely, the generic points of the irreducible components of $X_0$ are regular. Since $X$ is connected, it must be integral. By Lemma~\ref{log-Gor-vertical}, our log Gorenstein assumption ensures that both fibers $X_0$ and $X_\eta$ are Gorenstein. Then also the total space $X$ is Gorenstein.
 
 On $X$, we have a locally Batalin--Vilkovisky Gerstenhaber calculus $\V\,\W_{X/S}^\bullet$ from Proposition~\ref{G-A-construction}. Its pieces are locally free on $U$, so they are flat over $\CC\llbracket t\rrbracket$, and hence they have no $t$-torsion, neither on $U$ nor on $X$. Since $\CC\llbracket t \rrbracket$ is a Dedekind domain, this implies that these sheaves are flat over $S$. Lemma~\ref{central-fiber-injection} shows that we have injections $\V\,\W_{X/S}^P|_{X_0} \subseteq \V\,\W_{X_0/S_0}^P$, and on the fiber $X_\eta$ this holds because $X_\eta \subseteq X$ is an open subset. Then Lemma~\ref{injective-in-fibers} shows that this map is injective for any base change $g: Y \to T$ of $f: X \to S$. Given a, say affine, cover $\V = \{\hat V_\alpha\}_\alpha$ of $X$, we obtain an enhanced system of deformations $\D$ for $f_0: X_0 \to S_0$ subordinate to $\V$ by setting $V_{\alpha;A} := (f \times_S S_A)|_{\hat V_\alpha}$ and $\G\C_{\alpha;A}^\bullet := \V\,\W_{X/S}^\bullet|_{V_{\alpha;A}}$. The restriction maps and comparison isomorphisms come from the global nature of $f \times_S S_A$. Then the cocycles have the required form because they are the identity. Obviously, $f \times_S S_A$ is an enhanced generically log smooth deformation of $f_0: X_0 \to S_0$.
\end{ex}

We discuss this construction explicitly for the family of Example~\ref{base-change-violation-t3-w3}.

\begin{ex}
 Let 
 $$f: X = \Spec \kk[x,y,z,w,u,t]/(xy - t^3 - w^3,\, zw - tu) \to \Spec \kk[t] = S,$$
 be the obvious map $t \mapsto t$. It is smooth over $t \not= 0$, and generically log smooth once we endow it with the compactifying log structure coming from $t = 0$. Outside 
 $$\{0\} = \{x = y = z = w = u = t = 0\},$$
 it is unisingular. Let us denote its Gerstenhaber calculus by $\G\C^\bullet_S := \V\,\W^\bullet_S = (\V^\bullet_{X/S},\W^\bullet_{X/S})$. Then we obtain an enhanced system of deformations with a single affine open $V_\alpha = X_0$ by setting $\G\C^\bullet_A := \G\C^\bullet_S \otimes_S S_A$. The map $f: X \to S$ is flat because $\cO_X$ is torsion-free and $\kk[t]$ is a Dedekind domain. Similarly, $\G_S^p$ and $\A_S^i$ are flat over $S$. The central fiber is reduced and pure of dimension $3$. It is the product of the boundary of the Gorenstein affine toric variety $\Spec \kk[x,y,z,w]/(xy - w^3)$ with $\bAA^1_u$ and hence Gorenstein itself. In particular, $f: X \to S$ is a Gorenstein and hence Cohen--Macaulay morphism. As in Example~\ref{base-change-violation-t3-w3}, we can compute $\G_S^{-1}$ as the relative classical derivations. Then $\G_S^{-3} = (\bigwedge^3 \G_S^{-1})^{\vee\vee} = j_*\G_S^{-3}|_U$ is isomorphic to $\cO_X$, where $U \subseteq X$ is the log smooth locus---this seems to be the hardest part of the computation. Then $\A_S^3 \cong (\G_S^{-1})^\vee \cong \cO_X$ is a line bundle, and this still holds, of course, after any base change, i.e., the family is log Gorenstein, which was to be expected in view of Lemma~\ref{log-Gor-vertical} since $f: X \to S$ is vertical.
 Another computation in Macaulay2 shows that 
 \begin{equation}\label{t3-w3-restriction}
  \G^{-1}_S \otimes_S S_0 = \G_0^{-1} \to j_*\G_0^{-1}|_{U_0}
 \end{equation}
 is injective, showing purity at $G^{-1}$ (in the sense of Definition~\ref{geom-fam-P-alg-def}). Similarly, one can show purity at $G^{-2}$. Of course, we already know this by Lemma~\ref{central-fiber-injection}.  Since the map \eqref{t3-w3-restriction} is not surjective, not every infinitesimal automorphism of generically log smooth families corresponds to a gauge transform for $\G_0^{-1}$. This is possible because $(\G_0^\bullet,\A_0^\bullet)$ is not equal to the reflexive Gerstenhaber calculus $(\V_{X_0/S_0}^\bullet,\W_{X_0/S_0}^\bullet)$ and its variants over infinitesimal deformations who show up in Lemma~\ref{geom-auto-gauge-trafo-corr}.
\end{ex}



\chapter{Families with a vector bundle}\label{gen-log-sm-vector-bundle}\note{gen-log-sm-vector-bundle}

A generically log smooth family with a vector bundle $f: (X,U,\E) \to S$ is, as the name says, a generically log smooth family $f:  (X,U) \to S$ together with a vector bundle $\E$ on $X$.\footnote{We will be mainly interested in the case of line bundles because then the deformation functor is unobstructed in the Calabi--Yau case. However, the basic theory works for vector bundles as well. We restrict to vector bundles as opposed to more general modules because they have an obvious local deformation once we have a deformation of the underlying space.} Let us assume that $\E$ is of constant rank $r \geq 1$. A morphism from a generically log smooth family with a vector bundle $g: (Y,V,\E_Y) \to T$ to another one $f: (X,U,\E_X) \to S$ is a morphism of generically log smooth families---consisting of two maps $c: Y \to X$ and $b: T \to S$ with $V \subseteq c^{-1}(U)$---together with a homomorphism $\E_X \to c_*\E_Y$ of $\cO_X$-modules which induces an isomorphism $c^*\E_X \cong \E_Y$.

 Similarly, an enhanced generically log smooth family with a vector bundle $f: (X,U,\E) \to S$ consists of an enhanced generically log smooth family $f: (X,U) \to S$ and a vector bundle $\E$ on $X$, which we assume to be of some constant rank $r \geq 1$. A morphism is a morphism of enhanced generically log smooth families together with a compatible map $\E_X \to c_*\E_Y$ which induces an isomorphism $c^*\E_X \cong \E_Y$.

\section{Generically log smooth deformations with a vector bundle}

Let $S_0 = \Spec (Q \to \kk)$ for some sharp toric monoid $Q$, and let $f_0: (X_0,U_0,\E_0) \to S_0$ be a generically log smooth family of relative dimension $d$ with a vector bundle $\E_0$ of rank $r$.

\begin{defn}
 Let $A \in \mathbf{Art}_Q$. Then a \emph{generically log smooth deformation with a vector bundle}\index{generically log smooth family!with a vector bundle} is a generically log smooth family with a vector bundle $f_A: (X_A,U_A,\E_A) \to S_A$ together with a morphism $i:(X_0,U_0,\E_0) \to (X_A,U_A,\E_A)$ of generically log smooth families with a vector bundle---especially, we have a map $\E_A \to \E_0$---which induces an isomorphism $f_A \times_{S_A} S_0 \cong f_0$. In particular, $i^{-1}(U_A) = U_0$. Morphisms and isomorphisms of generically log smooth deformations with a vector bundle are defined in analogy with the case of generically log smooth deformations, in particular, they are compatible with the map from the central fiber. Isomorphism classes of generically log smooth deformations with a vector bundle form a functor of Artin rings 
 $$\mathrm{LD}^{gen}_{X_0/S_0}(\E_0): \mathbf{Art}_Q \to \mathbf{Set}.$$
\end{defn}

\begin{lemma}\label{gen-log-sm-pair-defo-functor}\note{gen-log-sm-pair-defo-functor}
 This is a deformation functor, i.e., it satisfies $(H_0)$, $(H_1)$, and $(H_2)$.
\end{lemma}
\begin{proof}
 The condition $(H_0)$ is clear. For $(H_1)$, let $B = A' \times_A A''$, let $S = S_A$, $S' = S_{A'}$, $S'' = S_{A''}$, $T = S_B$, and let $f': (X',\E') \to S'$ and $f'': (X'',\E'') \to S''$ be two deformations which restrict to $f: (X,\E) \to S$ via a choice of the restriction map. For the lift $g: Y \to T$, we take as underlying generically log smooth deformation
 $$Y := (|X_0|,\:\cO_{X'} \times_{\cO_X} \cO_{X''},\: \M_{U'} \times_{\M_U} \M_{U''}).$$
 Then we set $\F := \E' \times_\E \E''$ along the chosen maps $\E' \to \E$ and $\E'' \to \E$; this is an $\cO_Y$-module because the restrictions on the level of the vector bundles are compatible with the restrictions on the level of the structure sheaves, and it is quasi-coherent because the construction commutes with localization (in local sections $f \in \cO_Y$ over affines). Then $\F$ is locally free by \cite[Thm.~2.2]{Ferrand2003}. For $(H_2)$, let $A = \kk$ and $A'' = A_\eps$. Let $g: (Z,\cH) \to T$ be a simultaneous lift of $f': (X',\E') \to S'$ and $f'': (X'',\E'') \to S''$. Let $g: (Y, \F) \to T$ be the lift constructed above. The universal property of the fiber products gives a map $Z \to Y$ of generically log smooth families together with a compatible map $\F \to \cH$ of modules. Since generically log smooth deformations form a deformation functor, $Z \to Y$ is an isomorphism (cf.~\cite[Lemma~9.2]{Kato1996}). But then also $\F \to \cH$ is an isomorphism because both coherent sheaves are flat over $T$, and the pull-back of the map to $S_0$ is an isomorphism.
\end{proof}

\section{Derivations of $f: (X,\E) \to S$}\label{derivations-gen-log-sm-vec-bdl-sec}\note{derivations-gen-log-sm-vec-bdl-sec}

We have seen in Chapter~\ref{inf-auto} that infinitesimal automorphisms of deformations of generically log smooth families are controlled by log derivations. Here, we introduce the analogous notion of derivations for generically log smooth families with a vector bundle $f: (X,\E) \to S$.

\begin{defn}\label{derivation-vec-bdl}\note{derivation-vec-bdl}
 A \emph{derivation}\index{derivation!family with a vector bundle}u of $f: (X,\E) \to S$ on an open subset $V \subseteq X$ is a triple $(D,\Delta,u)$ where $D: \cO_X|_V \to \cO_X|_V$ is a relative derivation of $f: X \to S$, the map 
 $$\Delta: \M_U|_{V \cap U} \to \cO_U|_{V \cap U}$$
 is a sheaf homomorphism which turns $(D,\Delta)$ into a relative log derivation of $f: U \cap V \to S$, and $u: \E|_V \to \E|_V$ is a homomorphism of abelian sheaves such that 
 $$u(a\cdot e) = D(a) \cdot e + a \cdot u(e)$$
 for $a \in \cO_X$ and $e \in \E$. In particular, $u$ is $f^{-1}(\cO_S)$-linear. Derivations of $f: (X,\E) \to S$ form a sheaf $\Theta^1_{X/S}(\E)$ of $\cO_X$-modules.
\end{defn}

At this point, we do not yet know that $\Theta^1_{X/S}(\E)$ is coherent. However, we have an $\cO_X$-linear forgetful map $\Theta^1_{X/S}(\E) \to \Theta^1_{X/S}$ forgetting $u$, and we have an $\cO_X$-linear inclusion 
$$\E nd(\E) \to \Theta^1_{X/S}(\E), \quad A \mapsto (D(a) = 0, \ \Delta(m) = 0,\  u(e) = A(e)).$$
This gives rise to a sequence 
\begin{equation}\label{AE}
 0 \to \E nd(\E) \to \Theta^1_{X/S}(\E) \to \Theta^1_{X/S} \to 0 \tag{AE}
\end{equation}
of $\cO_X$-modules, which we call the \emph{Atiyah extension}\index{Atiyah extension} in analogy with the classical deformation theory of smooth manifolds with a line bundle.

\begin{lemma}\label{Atiyah-ext-lemma}\note{Atiyah-ext-lemma}
 The sequence \eqref{AE} is exact and locally split. In particular, $\Theta^1_{X/S}(\E)$ is a coherent reflexive sheaf, locally free of rank $d + r^2$ on $U$.
\end{lemma}
\begin{proof}
 As is easily checked, this sequence is exact on the left and in the middle for the sections over any open subset. If $V \subseteq X$ is an open on which $\cL|_V \cong \cO_X|_V$ via the local basis $e_1,...,e_r$, and if $(D,\Delta) \in \Gamma(V,\Theta^1_{X/S})$, then we form a splitting by setting $u(\sum_i a_ie_i) := \sum_iD(a_i)e_i$.
\end{proof}

We turn $\Theta^1_{X/S}(\E)$ into a sheaf of Lie algebras by setting 
$$[(D_1,\Delta_1,u_1),(D_2,\Delta_2,u_2)] := (D_1 \circ D_2 - D_2 \circ D_1, \: D_1 \circ \Delta_2 - D_2 \circ \Delta_1, \: u_1 \circ u_2 - u_2 \circ u_1).$$
This is a natural extension of the Lie algebra structure on $\Theta^1_{X/S}$; the Lie bracket is $f^{-1}(\cO_S)$-linear but not $\cO_X$-linear. The sheaf $\Theta^1_{X/S}(\E)$ also fits into a Lie--Rinehart pair 
$$\cL\R\cP^\bullet_{X/S}(\E)$$
as follows: We set 
$$\cL\R\cP^F = \cO_X, \quad \cL\R\cP^T = \Theta^1_{X/S}(\E), \quad \cL\R\cP^E = \E$$ as modules over $\cO_X = \cL\R\cP^F$. Then we set $\nabla^F_{(D,\Delta,u)}(a) := - D(a)$ and $ \nabla^E_{(D,\Delta,u)}(e) := - u(e)$ as well as
$$\nabla^T_{(D_1,\Delta_1,u_1)}(D_2,\Delta_2,u_2) := -[(D_1,\Delta_1,u_1),(D_2,\Delta_2,u_2)],$$
where the $(-)$-sign is in accordance with our convention of taking the negative of the Schouten--Nijenhuis bracket.

\begin{lemma}\label{LRP-gen-log-vector-bundle}\note{LRP-gen-log-vector-bundle}
 We have:
 \begin{enumerate}[label=\emph{(\alph*)}]
  \item If $\Theta^1_{X/S}$ is flat over $S$, then $\cL\R\cP^\bullet_{X/S}(\E)$ is a Lie--Rinehart pair in $\mathfrak{Coh}(X/S)$.
  \item If $f: X \to S$ has the base change property and is log Gorenstein\footnote{In this case, $\Theta^1_{X/S}$ is flat over $S$ by Proposition~\ref{G-A-construction}.}, then the formation of $\cL\R\cP^\bullet_{X/S}(\E)$ commutes with base change.
  \item $\cL\R\cP^\bullet_{X/S}(\E)$ is $Z$-faithful for $Z = \{F,E\}$.
 \end{enumerate}
\end{lemma}
\begin{proof}
 Part (a) is mostly computational. Flatness is required because of our definition of the context $\mathfrak{Coh}(X/S)$.
 For part (b), let $b: T \to S$, let $c: Y = X \times_S T \to X$, and let $g: (Y,c^*\E) \to T$ be the generically log smooth family with a vector bundle obtained by base change. Then we have obvious maps $\cO_X \to c_*\cO_Y$ and $\E \to c_*c^*\E$, which induce isomorphisms on $Y$. To construct the map 
 $$\Theta^1_{X/S}(\E) \to c_*\Theta^1_{Y/T}(c^*\E),$$
 let $\theta = (D,\Delta,u) \in \Theta^1_{X/S}(\E)$. Then $(D,\Delta)$ corresponds to a homomorphism $k: \W^1_{X/S} \to \cO_X$, which can be pulled back to $\W^1_{Y/T} \to \cO_Y$. The map $u: \E \to \E$ is a differential operator of order $1$, so there is a pull-back $c^*\E \to c^*\E$ as well. This gives the desired section of $c_*\Theta^1_{Y/T}(c^*\E)$. Since this map is compatible with the two Atiyah extensions and $c^*\Theta^1_{X/S} \cong \Theta^1_{Y/T}$ by the base change property and log Gorenstein assumption, we obtain an isomorphism on $Y$. It remains to show that the maps $\nabla^P$ on $\cL\R\cP^\bullet_{Y/T}(c^*\E)$ are the ones induced via pull-back of multilinear differential operators from $\cL\R\cP^\bullet_{X/S}(\E)$. We can assume that $X$, $S$, $T$, $Y$ are all affine. Then, on global sections of $Y$, the claim follows from the compatibility of the maps on $X$ and $\cO_T$-(multi-)linearity. On open subsets of $Y$, the claim follows from the unique extendability of multilinear differential operators to localizations.
 For part (c), let 
 $$\theta = (D,\Delta,u) \in \Gamma(V,\Theta^1_{X/S}(\E))$$
 be such that $D(a) = 0$ for all $a \in \Gamma(V',\cO_X)$ and $u(e) = 0$ for all $e \in \Gamma(V',\E)$ for $V' \subseteq V$. Since $\Theta^1_{X/S}$ is strictly faithful, we have $(D,\Delta) = 0$. Since $u = 0$, we find $\theta = 0$, so $\cL\R\cP^\bullet_{X/S}(\E)$ is $Z$-faithful.
\end{proof}

\section{Infinitesimal automorphisms}

Let $f_0: (X_0,\E_0) \to S_0$ be a generically log smooth family with a vector bundle. Let $B' \to B$ be a surjection in $\mathbf{Art}_Q$ with kernel $I \subset B'$, and let $f: (X,\E) \to S = S_B$ and $f': (X',\E') \to S' = S_{B'}$ be two generically log smooth deformations with a vector bundle. Assume we have a morphism $f \to f'$. We denote the sheaf of automorphisms of $f': (X',\E') \to S'$ over $f: (X,\E) \to S$ by $\A ut_{(X',\E')/(X,\E)}$. A section over $V \subseteq X'$ consists of an automorphism $\phi: \cO_{X'} \to \cO_{X'}$ of sheaves of rings on $V$, a map $\Phi: \M_{X'} \to \M_{X'}$ on $V \cap U'$ that yields an automorphism of log schemes, and a map $\psi: \E' \to \E'$ on $V$ which is compatible with $\phi$. As in Lemma~\ref{sheaf-of-autom}, for every open $V \subseteq X_0$, the restriction 
$$\Gamma(V,\A ut_{(X',\E')/(X,\E)}) \to \Gamma(V \cap U_0, \A ut_{(X',\E')/(X,\E)})$$
is an isomorphism. Let 
$$\Theta^{1}_{X'/S'}(\E',I)$$
be the sheaf of those derivations $(D,\Delta,u)$ of $f: (X',\E') \to S'$ with $D$ and $\Delta$ mapping into $I \cdot \cO_{X'}$ and $u$ mapping into $I \cdot \E'$.

\begin{lemma}\label{vector-bdl-autom}\note{vector-bdl-autom}
 We have two inverse isomorphisms 
 \[
 \xymatrix{
  \Theta^1_{X'/S'}(\E',I)
  \ar@<0.5ex>[r]^-{\mathrm{Exp}} &
  {\A ut}_{(X',\E')/(X,\E)} \ar@<0.5ex>[l]^-{\mathrm{Log}} \\
 }
\]
 of sheaves of groups, where the left hand side is endowed with the Baker--Campbell--Hausdorff product induced from the original\footnote{As in the discussion of automorphisms of generically log smooth families above, we use the original (not negative) Lie bracket on $\Theta^1(X/S,\E)$ when describing automorphisms.} Lie bracket $[-,-] = -\nabla^T$. If $\theta = (D,\Delta,u) \in \Theta^1_{X'/S'}(\E',I)$, then $(\phi,\Phi,\psi) = \mathrm{Exp}(\theta)$ is given by the formulae in Chapter~\ref{inf-auto} for $\phi$, $\Phi$, and by 
 $$\psi: \enspace \E' \to \E', \quad e \mapsto \sum_{n = 0}^\infty \frac{u^n(e)}{n!} = e + u(e) + \frac{1}{2}u^2(e) + ...;$$
 if $(\phi,\Phi,\psi) \in \A ut_{(X',\E')/(X,\E)}$, then $(D,\Delta,u) =  \mathrm{Log}(\phi,\Phi,\psi)$ is given by the formulae in Chapter~\ref{inf-auto}, and by 
 $$u: \enspace \E' \to \E', \quad e \mapsto \sum_{n = 1}^\infty \frac{(-1)^{n - 1}[\psi - \mathrm{Id}]^n(e)}{n}.$$
\end{lemma}
\begin{proof}
 First, let us work on $U_0$. 
 Given $(D,\Delta,u) \in \Theta^1_I(X'/S',\E')$, a rather easy direct computation shows that $(\phi,\Phi,\psi) = \mathrm{Exp}(D,\Delta,u)$ is an automorphism. For the converse, the hardest part is to show that $u = \mathrm{Log}(\psi)$ satisfies $u(ae) = D(a)e + au(e)$. This can be achieved by an easy adaptation of the corresponding part in the proof of \cite[Lemma~2.3]{Felten2022}. Because $\mathrm{Exp}(u)$ and $\mathrm{Log}(\psi)$ are given by the formulae for the exponential respective the logarithm, they are inverse to each other. Thus, we have two isomorphisms of sheaves of \emph{sets} on $U_0$. Now let $Z_0 = X_0 \setminus U_0$. Since $\cO_{X'}$ and $\E'$ are $Z_0$-closed, i.e., $j_*\cO_{U'} = \cO_{X'}$ and $j_*\E'|_{U'} = \E'$, it is easy to show that the automorphism sheaf is $Z_0$-closed. Similarly, $\Theta^1_{X'/S'}(\E',I)$ is $Z_0$-closed; in fact, both $I \cdot \cO_{X'}$ and $I \cdot \E'$ are $Z_0$-closed as well because they are the kernels of $\cO_{X'} \to \cO_X$ respective $\E' \to \E$, and both $\cO_{X'}$ and $\E'$ are flat over $S'$. Thus, we have two isomorphisms of sheaves of sets on $X'$. The proof that $\mathrm{Exp}$ is a group homomorphism for the Baker--Campbell--Hausdorff formula is similar to the proof of Lemma~\ref{gauge-trafo-prop}. Then its inverse $\mathrm{Log}$ is a group homomorphism as well.
\end{proof}

\begin{rem}
 If $\Theta^1_{X'/S'}$ is flat over $S'$, then the canonical map 
 $$I \cdot \Theta^1_{X'/S'}(\E') \to \Theta^1_{X'/S'}(\E',I)$$
 is an isomorphism. Namely, the right hand side is the kernel of $\Theta^1_{X'/S'}(\E') \to \Theta^1_{X/S}(\E)$, which can be computed as the left hand side if $\Theta^1_{X'/S'}$, hence $\Theta^1_{X'/S'}(\E')$, is flat over $S'$.
\end{rem}

\subsection{Automorphisms of the Lie--Rinehart pair}

Let us assume that $f_0: (X_0,\E_0) \to S_0$ is log Gorenstein, and that every deformation which we consider has the base change property. Let $f': (X',\E') \to S'$ be a deformation over $S'$ with base change $f: (X,\E) \to S$, and let $\varphi = (\phi,\Phi,\psi)$ be an automorphism of $f'$ over $f$. We wish to construct an induced action on $\cL\R\cP^\bullet_{X'/S'}(\E')$. On $\cL\R\cP^F_{X'/S'}(\E') = \cO_{X'}$ and $\cL\R\cP^E_{X'/S'}(\E') = \E'$, the action should be given by $\phi$ respective $\psi$. As we have seen above, we have $\phi = \mathrm{exp}_{-\theta}$ and $\psi = \mathrm{exp}_{-\theta}$ for $\theta = \mathrm{Log}(\varphi)$. Then we just \emph{define} the induced action of $\varphi$ on $\cL\R\cP^T_{X'/S'}(\E')$ by $\mathrm{exp}_{-\theta}$. Namely, this is automatically a gauge transform and hence an automorphism of Lie--Rinehart pairs, it induces automatically the identity on $f: (X,\E) \to S$, and it is automatically compatible with $T\varphi^*$ along the projection in the Atiyah extension \eqref{AE} since this projection is a morphism of Lie algebras and $T\varphi^*$ is equal to $\mathrm{exp}_{-\theta}$ in $\cL\R^T_{X'/S'}$. This construction yields a one-to-one correspondence between automorphisms in $\A ut_{(X',\E')/(X,\E)}$ and gauge transforms of $\cL\R\cP^\bullet_{X'/S'}(\E')$ induced from $\theta \in I \cdot \cL\R\cP^T_{X'/S'}(\E')$ because $\cL\R\cP^\bullet_{X'/S'}(\E')$ is $Z$-faithful by Lemma~\ref{LRP-gen-log-vector-bundle}.

\section{Systems of deformations}

Similar to the case of plain generically log smooth deformations, we wish to devise a method to define a subfunctor of $\mathrm{LD}^{gen}_{X_0/S_0}(\E_0)$ which classifies specific generically log smooth deformations with a vector bundle which are locally unique. Let us fix an open cover of $X_0$ which is weakly admissible for $f_0: X_0 \to S_0$ in the sense of Definition~\ref{admissible-def}. This specifies how deformations of the underlying generically log smooth family should look like locally. If we had allowed arbitrary coherent sheaves $\E_0$, then we would also need to specify the local deformations of $\E_0$, but since we insist that $\E_0$ is a vector bundle, we just require the deformation to be a vector bundle as well. In fact, every flat deformation $\E$ of $\E_0$, i.e., $f: X \to S$ is flat and $\E$ is flat over $S$, is locally free. Namely, a local trivialization can be lifted to $\E$.

\begin{defn}
 Let $\V = \{V_\alpha\}_\alpha$ be an open cover of $X_0$ by finitely many open immersions. Then it is \emph{weakly $\E_0$-admissible} if it is weakly admissible and $\E_0|_\alpha \cong \cO_{X_0}^{\oplus r}|_\alpha$ for every $\alpha$.
\end{defn}

In this case, a flat deformation $\E$ of $\E_0$ is not only locally trivial but trivial on each $V_\alpha$. Namely, since $H^1(V_\alpha,\cO_{X_0}|_\alpha) = 0$, we can lift the trivialization of $\E_0|_\alpha$ to $\E|_\alpha$.

\begin{defn}
 Let $f_0: X_0 \to S_0$ be a log Gorenstein generically log smooth family of relative dimension $d$, and let $\E_0$ be a vector bundle of rank $r$ on $X_0$. Let $\V = \{V_\alpha\}_\alpha$ be a weakly $\E_0$-admissible open cover of $X_0$. Then a \emph{system of deformations} $\D$ for $f_0: (X_0,\E_0) \to S_0$ subordinate to $\V$ is a system of deformations for $f_0: X_0 \to S_0$ subordinate to $\V$. A generically log smooth deformation with a vector bundle $f: (X,\E) \to S$ is \emph{of type $\D$} if $f: X \to S$ is of type $\D$. Isomorphism classes of these deformations form a functor of Artin rings 
 $$\mathrm{LD}^\D_{X_0/S_0}(\E_0): \mathbf{Art}_Q \to \mathbf{Set}.$$
\end{defn}

Deformations of type $\D$ are now such deformations which are, on $V_\alpha$, isomorphic to $(V_{\alpha;A},\cO_{V_{\alpha;A}}^{\oplus r}) \to S_A$.

\begin{lemma}
 This is a deformation functor, i.e., it satisfies $(H_0)$, $(H_1)$, and $(H_2)$.
\end{lemma}
\begin{proof}
 This is a combination of the proofs of Lemma~\ref{defo-type-D-is-defo-functor} and Lemma~\ref{gen-log-sm-pair-defo-functor}.
\end{proof}

\section{Enhanced systems of deformations}

Now we present the version for enhanced generically log smooth families. As usual, we restrict to the simpler torsionless case.

\begin{defn}\label{enhanced-sys-defo-vector-bdl}\note{enhanced-sys-defo-vector-bdl}
 Let $f_0: X_0 \to S_0$ be a torsionless enhanced generically log smooth family of relative dimension $d$, and let $\E_0$ be a vector bundle on $X_0$ of constant rank $r \geq 1$. Let $\V = \{V_\alpha\}_\alpha$ be a weakly $\E_0$-admissible open cover of $X_0$. Then an \emph{enhanced system of deformations}\index{system of deformations!enhanced} $\D$ for $f_0: (X_0,\E_0) \to S_0$ subordinate to $\V$ is an enhanced system of deformations $\D$ subordinate to $\V$. Enhanced deformations of type $\D$ are given by enhanced generically log smooth deformations $f: X_A \to S_A$ of type $\D$, i.e., generically log smooth deformations $f_A: X_A \to S_A$ together with isomorphisms $\chi_\alpha: X_A|_\alpha \cong V_{\alpha;A}$, together with a vector bundle $\E_A$ on $X_A$ of constant rank $r$. Two enhanced generically log smooth deformations with a vector bundle \emph{of type} $\D$ are \emph{equivalent} if there is an isomorphism between them together with a compatible isomorphism of the vector bundles. Equivalence classes of enhanced deformations form a deformation functor 
 $$\mathrm{ELD}^\D_{X_0/S_0}(\E_0): \enspace \mathbf{Art}_Q \to \mathbf{Set}.$$
\end{defn}

Since we require $H^1(V_\alpha,\cO_{X_0}) = 0$, we have $\E_A|_\alpha \cong \cO_{V_\alpha;A}^{\oplus r}$.

Let $f: (X_A,\E_A) \to S_A$ be an enhanced generically log smooth deformation with a vector bundle of type $\D$. Through the comparison isomorphisms $\chi_\alpha: X_A|_\alpha \cong V_{\alpha;A}$, we have a distinguished subsheaf $\Gamma^{1}_{X_A/S_A} \subseteq \Theta^{1}_{X_A/S_A}$. We obtain an Atiyah extension 
$$0 \to \E nd(\E_A) \to \Gamma^{1}_{X_A/S_A}(\E_A) \to \Gamma^{1}_{X_A/S_A} \to 0$$
by considering triples $(D,\Delta,u)$ with $(D,\Delta) \in \Gamma^{1}_{X_A/S_A}$ and an additive map $u: \E_A \to \E_A$ satisfying $u(a \cdot e) = D(a) \cdot e + a \cdot u(e)$. A Lie bracket is given on $\Gamma^{1}_{X_A/S_A}(\E_A)$ with the same formula as for $\Theta^1_{X_A/S_A}(\E_A)$.
The kernel of 
$$\Gamma^{1}_{X_{B'}/S_{B'}}(\E_{B'}) \to \Gamma^{1}_{X_B/S_B}(\E_B)$$
is given by $I \cdot \Gamma^{1}_{X_{B'}/S_{B'}}(\E_{B'})$ (since the sheaves are flat over the base), and every $\theta = (D,\Delta,u) \in I \cdot \Gamma^{1}_{X_{B'}/S_{B'}}(\E_{B'})$ induces an (inner) automorphism $\mathrm{Exp}(\theta)$ of the enhanced deformation which acts as $\mathrm{exp}_{-(D,\Delta)}$ on the two-sided Gerstenhaber calculus, and by 
$$\psi(e) = \sum_{n = 0}^\infty \frac{u^n(e)}{n!} = e + u(e) + \frac{1}{2}u^2(e) + ...$$
on the vector bundle $\E_{B'}$. Every automorphism of the enhanced deformation is of this form because it induces an inner automorphism of the underlying enhanced generically log smooth family by definition.

We form a Lie--Rinehart pair consisting of $\F = \cO_{X_A}$, $\T = \Gamma^{1}_{X_A/S_A}(\E_A)$, and $\E = \E_A$ by endowing it with the negative brackets, i.e., $\nabla^F_\theta(a) = -D(a)$, $\nabla^T_\theta(\xi) = - [\theta,\xi]_{Lie}$, and $\nabla^E_\theta(e) = -u(e)$; we denote it by $\cL\R\cP^\bullet_{X_A/S_A}(\E_A)$. 

We shall occasionally also write $\G^{-1}_{X/S}(\E)$ for $\Gamma^1_{X/S}(\E)$ when considered endowed with the negative Lie bracket and the negative action on $\E$.

\section{From a vector bundle to its determinant}

In the above situation, we can form the determinant $\mathrm{det}(\E_A)$ of $\E_A$, and we can forget $\E_A$. This gives rise to a diagram 
\[
 \xymatrix{
  D_r := \mathrm{ELD}^\D_{X_0/S_0}(\E_0) \ar[rr]^\pi \ar[dr]^\rho & & D_0 := \mathrm{ELD}_{X_0/S_0}^\D \\
  & D_1 := \mathrm{ELD}_{X_0/S_0}^\D(\mathrm{det}(\E_0)) \ar[ur]^\sigma & \\
 }
\]
of deformation functors. In this section, we examine these maps briefly.

The main results of this article show that both $D_0$ and $D_1$ are smooth deformation functors if $\A^d_0 \cong \cO_{X_0}$ and certain technical conditions are met. Since the obstructions to extending a line bundle $\cL_B$ from $X_B$ to a given thickening $X_{B'}$ lie in $H^2(X_0,\cO_{X_0}) \otimes_\kk I$, the map $\sigma: D_1 \to D_0$ is smooth if $H^2(X_0,\cO_{X_0}) = 0$. However, if $H^2(X_0,\cO_{X_0}) \not= 0$, then the map is not always smooth.

\begin{ex}
 This is \cite[Exer.~6.7(c)-(d)]{Hartshorne2010}. Let $X_0 \subset \PP^3$ be the smooth quartic surface defined by $f_0 = x^4 + y^4 + xz^3 + yw^3$, and let $\cL_0$ be the line bundle associated with the divisor $Y_0 = \{x = y = 0\}$. Since $X_0$ is a K3 surface, we have $H^2(X_0,\cO_{X_0}) \cong \kk \not= 0$. Consider the deformation $X$ of $X_0$ over $\kk[t]/(t^2)$ defined by $f = f_0 + tz^2w^2$. Then there is no line bundle $\cL$ on $X$ with $\cL|_0 \cong \cL_0$. Thus, the map $\sigma: D_1 \to D_0$ is not a smooth morphism of deformation functors.
\end{ex}

Similarly, if $H^2(X_0,\E nd(\E_0)) = \mathrm{Ext}^2(\E_0,\E_0) = 0$, then the map $\pi: D_r \to D_0$ is smooth by \cite[Thm.~7.1]{Hartshorne2010}.

\begin{ex}\label{R-Thomas-ex-1}\note{R-Thomas-ex-1}
 This example is due to R.~Thomas. Let $X_0 \subseteq \PP^2 \times \PP^2$ be the $(3,3)$-divisor of \cite{Thomas1999},\footnote{The published version of this article contains a mistake which is corrected in a newer version on the arXiv.} and let $\E_0$ be the vector bundle of rank $2$ of \cite[Thm.~1.1]{Thomas1999}. Let $X_1 = X_0 \times \Spec \CC[t]/(t^2)$, and let $X_2 = X_0 \times \Spec \CC[t]/(t^3)$. Let $\E_1$ be a lift of $\E_0$ to $X_1$ which is not trivial. Then $\E_1$ cannot be lifted to a vector bundle $\E_2$ on $X_2$. Thus, the map $\pi: D_2 \to D_0$ of deformation functors is not smooth.
\end{ex}
\begin{rem}
 It is currently an open question if, in this example, the deformation functor $D_2$ itself is smooth. Related to this, if $D_2$ is smooth here, it is open to find some Calabi--Yau space (say smooth projective) $X_0$ together with a vector bundle $\E_0$ of rank $r \geq 2$ such that $D_r$ is not smooth.
\end{rem}

The trace map in linear algebra gives rise to an $\cO_{X_0}$-linear trace map 
$$\mathrm{tr}: \E nd(\E_0) \to \cO_{X_0}.$$
It satisfies $\mathrm{tr}(\mathrm{Id}) = r$, i.e., the composition with the diagonal embedding $L: \cO_{X_0} \to \E nd(\E_0), \: \lambda \mapsto (e \mapsto \lambda \cdot e)$, is the multiplication with $r$. Since $\mathrm{char}(\kk) = 0$, the trace map is split by $\frac{1}{r}L$. Thus, the induced map 
$$H^2(\mathrm{tr}): \mathrm{Ext}^2(\E_0,\E_0) \to H^2(X_0,\cO_{X_0})$$
is split and surjective; we denote its kernel by $\mathrm{Ext}^2(\E_0,\E_0)_0$.

\begin{prop}\label{rho-smoothness-criterion}\note{rho-smoothness-criterion}
 Assume that $\mathrm{Ext}^2(\E_0,\E_0)_0 = 0$. Then $\rho: D_r \to D_1$ is a smooth map of deformation functors.
\end{prop}
\begin{proof}
 Let $B' \to B$ be a small extension in $\mathbf{Art}_\Lambda$ with kernel $I \subset B'$, let $f: (X,\E) \to S_{B}$ be an enhanced deformation of generically log smooth families with a vector bundle, let $\cL = \mathrm{det}(\E)$, and let $f': (X',\cL') \to S_{B'}$ be a lift to $S_{B'}$. We have to show that there is a vector bundle $\E'$ on $X'$ which lifts $\E$ and satisfies $\mathrm{det}(\E') \cong \cL'$ in a compatible way. By \cite[Thm.~7.1]{Hartshorne2010}, the obstruction for lifting $\E$ lies in $H^2(X_0,I \otimes_\kk \E nd(\E_0))$. Similarly, the obstruction for lifting $\cL$ to $X'$ lies in $H^2(X_0,I \otimes_\kk \E nd(\cL_0))$, where $\cL_0 := \mathrm{det}(\E_0)$. Given some local lift $\E'$ of $\E$ and a local automorphism of $\E'$ over $\E$ determined by $\theta \in I \otimes_\kk \E nd(\E_0)$, the induced local automorphism of $\cL' = \mathrm{det}(\E')$ is determined by $\mathrm{tr}(\theta) \in I \otimes_\kk \E nd(\cL_0)$. Thus, if $o \in I \otimes_\kk \mathrm{Ext}^2(\E_0,\E_0)$ is the obstruction to lifting $\E$ to $X'$, then $H^2(\mathrm{tr})(o) \in I \otimes_\kk H^2(X_0,\cO_{X_0})$ is the obstruction to lifting $\cL$ to $X'$. However, we have $H^2(\mathrm{tr})(o) = 0$ in our situation, and $H^2(\mathrm{tr})$ is injective by assumption. Thus, $o = 0$, and a lift $\E'$ exists.
\end{proof}

\begin{ex}
 We continue Example~\ref{R-Thomas-ex-1}. Since $H^2(X_0,\cO_{X_0}) = 0$, the map $\sigma: D_1 \to D_0$ is smooth. Thus, given the non-trivial $\E_1$, we can find a lift $\cL_2$ of $\cL_1 := \mathrm{det}(\E_1)$ to $X_2$. However, a lift $\E_2$ still does not exist, so $\rho: D_2 \to D_1$ is not smooth.
\end{ex}



\chapter{Geometric families of $\cP$-algebras}\label{geom-fam-P-alg-sec}\note{geom-fam-P-alg-sec}

When we start with a log Gorenstein generically log smooth family $f_0: X_0 \to S_0$, we can form a sheaf of Gerstenhaber calculi $(\V^\bullet_{X_0/S_0},\W^\bullet_{X_0/S_0})$ on $X_0$ by Proposition~\ref{G-A-construction}. When $f_A: X_A \to S_A$ is a deformation of $f_0$ with the base change property, then its sheaf of Gerstenhaber calculi is a deformation of $(\V^\bullet_{X_0/S_0},\W^\bullet_{X_0/S_0})$. In this chapter, we define and study sheaves of $\cP$-algebras for algebraic structures $\cP$ carrying a Cartan structure, as well as their infinitesimal deformations. When we want to capture as much information as possible, we can work with two-sided Gerstenhaber calculi. When we want only a minimal structure that is enough for basic deformation theory, we can work with Lie--Rinehart algebras. In the case of deformations with a vector bundle, we work with Lie--Rinehart pairs.

While we are currently mostly interested in families which come from log geometry, this theory is of independent interest as it can also be applied beyond log geometry. We have weakened the Cohen--Macaulay assumption, which is usually satisfied in log geometry, to an $S_2$-assumption to allow potential applications in the deformation theory of normal singularities.

\begin{defn}\label{geom-fam-P-alg-def}\note{geom-fam-P-alg-def}
 Let $\cP$ be an algebraic structure carrying a Cartan structure. Let $S$ be a Noetherian scheme. A \emph{geometric family of $\cP$-algebras}\index{geometric family of $\cP$-algebras} of dimension $d$ is a tuple
 $$(f: X \to S,\: U^{sm} \subseteq  U \subseteq X, \: \E^\bullet)$$
 where:
 \begin{enumerate}[label=(\alph*)]
  \item 
  \begin{itemize}
    \item $f: X \to S$ is a separated and flat morphism of finite type of Noetherian schemes whose geometric fibers are reduced, have Serre's property $(S_2)$, and are pure of dimension $d$;
  \item $U \subseteq X$ is a Zariski open subset satisfying \eqref{CC};
  \item $U^{sm} \subseteq U$ is a Zariski open subset such that $U^{sm} \to S$ is smooth, and $U^{sm}_s \subseteq X_s$ is scheme-theoretically dense in the fiber for every $s \in S$;
  \end{itemize}
  \item 
   \begin{itemize}
    \item $\E^\bullet$ is a Cartanian $\cP$-algebra in $\mathfrak{Coh}(X/S)$;
   \end{itemize}
   \item
    \begin{itemize}
     \item the map $\cO_X \to \F := \E^F, \: 1 \mapsto 1_F,$ is an isomorphism of $\cO_X$-modules;
     \item for a morphism $b: T \to S$ of Noetherian schemes, let $c: Y := X \times_S T \to X$ be the induced map via pullback; then for all such $b: T \to S$, we assume that the $\cP$-algebra $c^*\E^\bullet$ in $\mathfrak{Coh}(Y/T)$ is $Z$-faithful (as defined in Definition~\ref{Cartan-faithful-def}).
    \end{itemize}
 \end{enumerate}
 When no confusion is likely, we write a geometric family of $\cP$-algebras as $f: (X,\E^\bullet) \to S$ for short.
 A \emph{morphism} of geometric families of $\cP$-algebras from $g: (Y,V,V^{sm},\E_Y^\bullet) \to T$ to $f: (X,U,U^{sm},\E_X^\bullet) \to S$ consists of two morphisms $b: T \to S$ and $c: Y \to X$ of schemes, satisfying $c^{-1}(U) = V$ and $c^{-1}(U^{sm}) = V^{sm}$ and giving a Cartesian diagram, and maps 
 $$\E_X^P \to c_*\E_Y^P$$
 of $\cO_X$-modules that induce isomorphisms $c^*\E_X^P \cong \E_Y^P$  and are compatible with all constants in $\cP(P)$ and all operations in $\cP(P_1,...,P_n;Q)$.
 A geometric family of $\cP$-algebras $f:(X, \E^\bullet) \to S$ is called:
 \begin{itemize}
  \item \emph{strictly faithful}\index{geometric family of $\cP$-algebras!strictly faithful} if $c^*\E^\bullet$ is strictly faithful for all $b: T \to S$;
  \item \emph{tame at $P \in D(\cP)$}\index{geometric family of $\cP$-algebras!tame} if $\E^P|_U$ is locally free;
  \item \emph{pure at $P \in D(\cP)$}\index{geometric family of $\cP$-algebras!pure} if $c^*\E^P \to j_*(c^*\E^P)|_V$ is injective after every base change $b: T \to S$;
  \item \emph{closed at $P \in D(\cP)$}\index{geometric family of $\cP$-algebras!closed} if $c^*\E^P \to j_*(c^*\E^P)|_V$ is an isomorphism after every base change $b: T \to S$;\footnote{The names \emph{pure} and \emph{closed} refer to the terminology in \cite[Defn.~5.9.9]{EGAIV-2}.}
  \item \emph{reflexive at $P \in D(\cP)$}\index{geometric family of $\cP$-algebras!reflexive} if it is tame at $P$ and closed at $P$;
  \item \emph{locally free at $P \in D(\cP)$}\index{geometric family of $\cP$-algebras!locally free} if $\E^P$ is a locally free sheaf.
 \end{itemize}

\end{defn}

\begin{rem}
 Several remarks are in order. 
 \begin{enumerate}[label=(\arabic*)]
  \item  Concretely, $\E^P$ consists of the following structures: For each $P \in D(\cP)$, we have a coherent sheaf $\E^P$ of $\cO_X$-modules, flat over $S$; for each $\gamma \in \cP(P)$, we have a global section $\gamma \in \E^P$; for each $\mu \in \cP(P;Q)$, we have an $f^{-1}(\cO_S)$-linear map $\mu: \E^P \to \E^Q$; for each $\mu \in \cP(P,Q;R)$, we have an $f^{-1}(\cO_S)$-bilinear map $\mu: \E^P \times \E^Q \to \E^R$. They must satisfy all relations required by the algebraic structure $\cP$---we do not just assume that $\E^P$ is a pre-algebra.
  \item We write $\F := \E^F$ and $\T := \E^T$. Since $\cO_X \to \F$ is an isomorphism and $\ast^P: \F \times \E^P \to \E^P$ is $\cO_X$-bilinear, the $\F$-module structure coincides with the $\cO_X$-module structure on every $\E^P$, and in particular, the product on $\F$ coincides with the product on $\cO_X$. Thus, every $\mu \in \cP_N(P_1,...,P_n;Q)$ is a differential operator of order $N$ with respect to $\cO_X/f^{-1}(\cO_S)$.
  \item Let $b: T \to S$ be a morphism of Noetherian schemes with base change $c: Y \to X$. Then each $c^*\E^P$ is a coherent sheaf on $Y$, flat over $T$. By Proposition~\ref{multilin-diff-op-shvs}, every multilinear differential operator $\mu: \E^{P_1} \times ... \times \E^{P_n} \to \E^Q$ pulls back to a well-defined multilinear differential operator $c^*\mu: c^*\E^{P_1} \times ... \times c^*\E^{P_n} \to c^*\E^Q$. Since this construction preserves compositions and the identity on objects $\E^P$, and is compatible with constants, the relations of $\cP$ hold in $c^*\E^\bullet$ as well. Thus, $c^*\E^\bullet$ is a Cartanian $\cP$-algebra in $\mathfrak{Coh}(Y/T)$.
  \item Since $c^*\E^\bullet$ is a Cartanian $\cP$-algebra in $\mathfrak{Coh}(Y/T)$, the requirement that $c^*\E^\bullet$ should be $Z$-faithful is defined. A fortiori, if $\E^\bullet$ remains $Z$-faithful after any base change, so does $c^*\E^\bullet$. Thus, if $(f: X \to S, \: U \subseteq X,\: \E^\bullet)$ is a geometric family of $\cP$-algebras, so is $(g: Y \to T, \: V \subseteq Y, \: c^*\E^\bullet)$. Concretely, $Z$-faithfulness means that, if $\theta \in c^*\T$ is a local section with $\nabla_\theta^P = 0$ as a map $\nabla_\theta^P: c^*\E^P \to c^*\E^P$ for all $P \in Z$, then $\theta = 0$.
  \item Since $U \subseteq X$ satisfies \eqref{CC} and the fibers of $f: X \to S$ have Serre's property $(S_2)$, the natural map $\cO_X \to j_*\cO_U$ is an isomorphism by \cite[Prop.~3.5]{Hassett2004}. This holds after any base change. Thus, every geometric family of $\cP$-algebras is closed at $F$.
  \item If $\E^\bullet$ is strictly faithful, then the natural map $\T \to j_*\T|_U$ is injective. Namely, if $\theta|_U = 0$, then $\nabla_\theta^F|_U = 0$, but $\cO_X = j_*\cO_U$, so $\nabla_\theta^F = 0$. Conversely, if $\E^\bullet$ is pure at $T$ and $\E^\bullet|_U$ is strictly faithful, then $\E^\bullet$ is strictly faithful.
  \item Let $f: (X,\E^\bullet) \to S$ be pure or closed at $P \in D(\cP)$. Then, for every $s \in S$, the natural map $\E^P_s \to j_*\E^P_s|_{U_s}$ is injective respective bijective. Conversely, if this holds, then it holds after any base change to the spectrum of a field, and then $f: (X,\E^\bullet) \to S$ is pure respective closed at $P \in D(\cP)$ by Lemma~\ref{injective-in-fibers} and Lemma~\ref{bijective-in-fibers}.
  \item In practice, we usually know that $\E^P|_U$ is locally free, that $\E^\bullet|_{U^{sm}}$ is $Z$-faithful or even strictly faithful after any base change, and that $\T_s \to j_*\T_s|_{U_s}$ is injective for $s \in S$. Then we can conclude\footnote{The map $\E^P|_U \to j^{sm}_*\E^P|_{U^{sm}}$ is injective if $\E^P$ is locally free on $U$ because $U^{sm} \to U$ is scheme-theoretically dense in every fiber and thus after any base change.} that the family is tame at every $P \in D(\cP)$, pure at $T$, and $Z$-faithful respective strictly faithful. This is one reason why we carry the two open subsets $U$ and $U^{sm}$ as part of the structure. 
 \end{enumerate}
\end{rem}
\begin{ex}
 Here is an example of a geometric family of Gerstenhaber calculi which arises from a classical construction.
 Let 
 $$f: \bAA^4 \to \bAA^1_t, \quad t \mapsto x^2 + y^2 + z^2 + w^3.$$
 Then the central fiber is the normal $A_2$-threefold singularity, and $f$ is a smoothing. We take $\Lambda = \kk\llbracket t\rrbracket$ and obtain a Gerstenhaber calculus as the base change of the classical reflexive relative Gerstenhaber calculus of $f: \bAA^4 \to \bAA^1_t$. Since $f: \bAA^4 \to \bAA^1_t$ is smooth outside the isolated singularity in the central fiber, $\E^P|_U$ is locally free when we set $U := \bAA^4 \setminus \{0\}$. A direct calculation using Macaulay2 shows that $\T_0 \to j_*\T_0|_{U_0}$ is injective. Since $\E^\bullet$ is strictly faithful on $U$, we find that $\E^\bullet_A$ is tame and strictly faithful for every $A \in \mathbf{Art}_\Lambda$.
\end{ex}

For practical applications, let us show the following lemma. The proof is a little more involved than one may expect because of the non-$\cO_X$-linear nature of the involved maps.

\begin{lemma}\label{Z-faithful-check-on-fiber}\note{Z-faithful-check-on-fiber}
 Assume that $Z \subseteq D(\cP)$ is finite. Then, to show that $c^*\E^\bullet$ is $Z$-faithful (respective strictly faithful) for all $b: T \to S$, it is sufficient to show that the fiber $\E^\bullet_s$ is $Z$-faithful (respective strictly faithful) for every point $s \in S$.
\end{lemma}
\begin{proof}
 Let $f: (X,\E^\bullet) \to S$ be a candidate for a geometric family of $\cP$-algebras which satisfies everything except possibly the last condition on $Z$-faithfulness. Assume furthermore that $\E_s^\bullet$ is $Z$-faithful (respective strictly faithful) for every point $s \in S$. First, let $K$ be a field, and let $\Spec K \to S$ be a morphism. Let $s \in S$ be the image. Then we have a factorization 
 $$\Spec K \to \Spec \kappa(s) \to S.$$
 Let $\{\lambda_i\}$ be a $\kappa(s)$-basis of $K$. Let $V \subseteq X_s$ be an affine open subset, and assume $\theta = \sum_i \theta(i) \otimes \lambda_i \in \Gamma(V_K,\T_K)$ with $\theta(i) \in \Gamma(V,\T_{\kappa(s)})$ satisfies $\nabla^M_\theta = 0$ for all $M \in Z$. Then, for $m \in \E^M_{\kappa(s)}$, we have $\nabla_\theta^M(m) = \sum_i \nabla_{\theta(i)}^M(m) \otimes \lambda_i = 0$, so $\nabla^M_{\theta(i)}(m) = 0$ and hence $\theta(i) = 0$ by assumption. Thus, $\theta = 0$. 
 
 Next, let $W \subseteq V_K$ be a standard open subset defined by a function $g$, and let $\theta \in \Gamma(W,\T_K)$ with $\nabla^M_\theta = 0$ for all $M \in Z$. We have $g^n\nabla^M_\theta = \nabla^M_{g^n\theta}$. Thus, we can assume without loss of generality that $\theta$ is the restriction of a section $\theta \in \Gamma(V_K,\T_K)$. Let $W' \subseteq V_K$ be another standard open, and let $m \in \Gamma(W',\E^M_K)$ be a section. Then $\nabla_\theta^M(m)|_{W \cap W'} = 0$, so $g^n \cdot \nabla_\theta^M(m) = 0$ in $\Gamma(W',\E_K^M)$ for some $n \geq 0$. The coherent subsheaves $\Z^M(n) := \{m \in \E_K^M \ | \ g^n \cdot m = 0\}$ for an ascending sequence in the coherent sheaf $\E^M_K$; since $X$ is Noetherian, the sequence becomes stationary. Thus, there is some $n$ with $g^n \cdot \nabla_\theta^M(m) = 0$ for all local sections $m \in \E^M_K$. Since $Z$ is finite, we can find an $n$ which works for all $M \in Z$ simultaneously. Then we find $g^n \theta = 0$ because we already know $Z$-faithfulness (respective strict faithfulness) for sections in $\Gamma(V_K,\T_K)$; in particular, $\theta|_W = 0$ so that $\E^\bullet_K$ is $Z$-faithful (respective strictly faithful).
 
 For the next step, let $A$ be an Artinian local ring with residue field $K$, and let $\Spec A \to S$ be a morphism. Then the proof of Lemma~\ref{gauge-inject} shows that $\E_A^\bullet$ is $Z$-faithful (respective strictly faithful) if $\E_K^\bullet$ is.
 
 Finally, let $T = \Spec \cO_T$ be Noetherian affine, and let $b: T \to S$ be a morphism. Let $\theta \in \Gamma(V,\T_T)$ be a section over an affine open $V \subseteq Y$ with $\nabla_\theta^M = 0$ for $M \in Z$. Then, for a map $\cO_T \to A$ to an Artinian local ring $A$, the induced differential operator $\nabla_\theta^M$ on $\E^M_A$ is zero as well. Since $\E_A^\bullet$ is $Z$-faithful (respective strictly faithful), we find $\theta|_A = 0$. Then $\theta = 0$ by Lemma~\ref{zero-in-thick-fibers}.
\end{proof}

\subsubsection*{Notions of good geometric families of $\cP$-algebras}

In Definition~\ref{geom-fam-P-alg-def}, we have given a general definition to capture geometric families of $\cP$-algebras for all algebraic structures $\cP$ carrying a Cartan structure at once. However, for specific algebraic structures $\cP$, we wish to impose additional hypotheses unique to that $\cP$. It is here that $U^{sm}$ comes into play to tie the family of $\cP$-algebras to what happens classically on the smooth locus. These definitions are not essential but rather illustrate the concept of geometric families of $\cP$-algebras.

Let $\T^0_{X/S}$ be the (classical) relative tangent sheaf, i.e., the dual of $\Omega^1_{\underline X/\underline S}$. For every geometric family of $\cP$-algebras, we have a map 
$$a: \T \to \T^0_{X/S}, \quad \theta \mapsto (g \mapsto -\nabla_\theta^F(g)),$$
of Lie algebras (when endowing the right hand side with the negative Lie bracket)\footnote{With the negative sign we account for our construction of the bracket on $\T = \G^{-1}$.} and $\cO_X$-modules, which corresponds to the \emph{anchor map} of a Lie algebroid.

\begin{defn}
 A geometric family of Lie--Rinehart algebras $f: (X,\F,\T) \to S$ is \emph{good}\index{geometric family of Lie--Rinehart algebras!good} if it is strictly faithful, tame at $F$ and $T$, pure at $T$, and if the anchor map $a: \T|_{U^{sm}} \to \T^0_{U^{sm}/S}$ is an isomorphism.
\end{defn}

If we have a Lie--Rinehart pair $(\F,\T,\E)$, then we have an extended anchor map 
$$a^E: \T \to \T^0_{X/S}(\E), \quad \theta \mapsto (g \mapsto -\nabla^F_\theta(g), \: e \mapsto -\nabla^E_\theta(e)),$$
where $\T^0_{X/S}(\E)$ denotes derivations of $f: (X,\E) \to S$ in the sense of Definition~\ref{derivation-vec-bdl}.

\begin{defn}
 A geometric family of Lie--Rinehart pairs $f: (X,\F,\T,\E) \to S$ is \emph{good}\index{geometric family of Lie--Rinehart pairs!good} if it is $Z$-faithful, tame and pure at $T$, if $\E$ is a vector bundle of constant rank $r$, and if the extended anchor map $a^E: \T|_{U^{sm}} \to \T^0_{U^{sm}/S}(\E)$ is an isomorphism.
\end{defn}

\begin{defn}\label{good-G-alg-defn}\note{good-G-alg-defn}
 A geometric family of Gerstenhaber algebras $f: (X,\G^\bullet) \to S$ is \emph{good}\index{geometric family of Gerstenhaber algebras!good} if:
 \begin{itemize}
  \item the relative dimension $d$ of $f: X \to S$ equals the dimension $d$ of the Gerstenhaber algebra $\G^\bullet$, which is  in degrees $-d \leq p \leq 0$;
  \item it is strictly faithful as well as tame and pure at all $\G^{p}$;
  \item  the anchor map $a: \T|_{U^{sm}} \to \T^0_{U^{sm}/S}$ is an isomorphism;
  \item $\G^\bullet|_U$ is isomorphic to the exterior algebra of $\G^{-1}|_U$ via the $\wedge$-product;
  \item $\G^{-d}$ is a line bundle.
 \end{itemize}
\end{defn}

\begin{defn}\label{good-G-calc-defn}\note{good-G-calc-defn}
 A geometric family of two-sided Gerstenhaber calculi $f: (X,\G^\bullet,\A^\bullet)\to S$ is \emph{good}\index{geometric family of Gerstenhaber calculi!good} if:
 \begin{itemize}
  \item the relative dimension $d$ of $f: X \to S$ equals the dimension $d$ of the two-sided Gerstenhaber calculus $(\G^\bullet,\A^\bullet)$;
  \item it is strictly faithful as well as tame and pure at all $\G^p$ and $\A^i$;
  \item it is \emph{locally Batalin--Vilkovisky},\index{locally Batalin--Vilkovisky} i.e., $\A^d$ is a line bundle, and a local generator $\omega \in \A^d$ induces a local isomorphism $\G^p \cong \A^{p + d}$ for every $-d \leq p \leq 0$; if this property holds for one local generator, it holds for every local generator (cf.~Proposition~\ref{BV-calc-construction});
  \item the anchor map $a: \G^{-1}|_{U^{sm}} \to \T^0_{U^{sm}/S}$ is an isomorphism;
  \item $\G^\bullet|_U$ is isomorphic to the exterior algebra of $\G^{-1}|_U$ via the $\wedge$-product;
  \item $\A^\bullet|_U$ is isomorphic to the exterior algebra of $\A^1|_U$ via the $\wedge$-product;
  \item the induced map 
  $$a^\vee: \A^1|_{U^{sm}} \to \cH om(\T^0_{U^{sm}/S},\cO_{U^{sm}}) \cong \Omega^1_{\underline U^{sm}/\underline S}, \quad \alpha \mapsto (a^{-1}(\theta) \mapsto (\theta\ \ \invneg\ \alpha)),$$
  is an isomorphism of $\cO_X$-modules.
 \end{itemize}
\end{defn}

\begin{rem}
 If $f: (X,\G^\bullet) \to S$ is a good geometric family of Gerstenhaber algebras, then $\G^\bullet|_{U^{sm}}$ is isomorphic to the Gerstenhaber algebra $\G^\bullet_{U^{sm}/S}$ of the strict log smooth family $f: U^{sm} \to S$. Similarly, if $f: (X,\G^\bullet,\A^\bullet) \to S$ is a good geometric family of two-sided Gerstenhaber calculi, then $(\G^\bullet,\A^\bullet)|_{U^{sm}}$ is isomorphic to the Gerstenhaber calculus of $f: U^{sm} \to S$. For example, one can show that 
 $$a^\vee \circ \partial = \partial_{dR}:\: \cO_{U^{sm}} \to \Omega^1_{\underline U^{sm}/\underline S},$$
 where $\partial_{dR}$ is the usual de Rham differential, by exploiting the definition of the anchor map $a$ and the axioms of a Gerstenhaber calculus.
\end{rem}

\begin{ex}\label{log-Gorenstein-good-geom-fam-ex}\note{log-Gorenstein-good-geom-fam-ex}
 Let $f: X \to S$ be a log Gorenstein generically log smooth family. Then its Gerstenhaber calculus in the sense of Proposition~\ref{G-A-construction} forms a good geometric family of two-sided Gerstenhaber calculi when we take for $U^{sm}$ the strict locus of $f: U \to S$, which is scheme-theoretically dense in every fiber by \cite[Prop.~10.1]{Felten2022}.
\end{ex}

\section{Geometric deformations of $\cP$-algebras}

Let $\cP$ be an algebraic structure carrying a Cartan structure. Let $\Lambda$ be a complete local Noetherian $\kk$-algebra\footnote{We do \emph{not} assume that $\Lambda$ must be of the form $\kk\llbracket Q\rrbracket$.} with residue field $\kk$, and let 
$$f_0: (X_0,U_0,U_0^{sm},\E^\bullet_0) \to S_0 = \Spec \kk$$
be a geometric family of $\cP$-algebras. 

\begin{defn}
 Let $f_0: X_0 \to S_0$ be as above, and let $A \in \mathbf{Art}_\Lambda$. Then a \emph{geometric deformation of $\cP$-algebras}\index{geometric deformation of $\cP$-algebras} of $f_0: X_0 \to S_0$ over $A$ is a geometric family of $\cP$-algebras $$f_A: (X_A,U_A,U_A^{sm},\E_A^\bullet) \to S_A$$
 together with a morphism from $f_0: X_0 \to S_0$ (which, by definition, induces an isomorphism after base change). If $f_0: X_0 \to S_0$ is a good geometric family of $\cP$-algebras, then we say that a geometric deformation $f_A: X_A \to S_A$ is \emph{good}\index{geometric deformation of $\cP$-algebras!good} if it is good as a geometric family of $\cP$-algebras.
\end{defn}
\begin{rem}\label{geom-fam-Gerstenhaber-calc-rem}\note{geom-fam-Gerstenhaber-calc-rem}
 When we already know that $f_0: (X_0,\E_0^\bullet) \to S_0$ is a geometric family of $\cP$-algebras, checking the conditions for $f_A: (X_A,\E_A^\bullet) \to S_A$ becomes easier. Assume that $f_A: X_A \to S_A$ is a separated and flat thickening of $f_0: X_0 \to S_0$ of finite type, and let $\E_A^\bullet$ be a Cartanian $\cP$-algebra in $\mathfrak{Coh}(X_A/S_A)$ together with a map $\E_A^\bullet \to \E_0^\bullet$ of $\cO_{X_A}$-modules which is compatible with all constants and operations and induces an isomorphism after base change to $X_0$. This is enough to conclude that $f_A: (X_A,\E_A^\bullet) \to S_A$ is a geometric deformation of $f_0: (X_0,\E_0^\bullet) \to S_0$ when we take $U_A$ and $U_A^{sm}$ such that $U_A = U_0$ and $U_A^{sm} = U_0^{sm}$ on underlying schemes. The most interesting part of the proof is $Z$-faithfulness, which follows from Lemma~\ref{Z-faithful-check-on-fiber}, and that $\cO_{X_A} \to \F_A$ is an isomorphism. The latter morphism is surjective because its pull-back to $X_0$ is surjective. The target $\F_A$ is flat over $A$ so that the embedding of the kernel is universally injective (over $A$). Then the pull-back to $X_0$ is the kernel of $\cO_{X_0} \to \F_0$, hence $0$. Thus, the kernel vanishes itself, and $\cO_{X_A} \to \F_A$ is an isomorphism.
 
 Similarly, if $f_0: (X_0,\E_0^\bullet)$ is tame or locally free at $P \in D(\cP)$, then $f_A$ is tame respective locally free at $P \in D(\cP)$. Since $\E_A^P$ is flat over $S_A$, it follows from Lemma~\ref{injective-in-fibers} respective Lemma~\ref{bijective-in-fibers} that $f_A$ is pure respective closed at $P \in D(\cP)$ if $f_0$ is. Then, if $f_0$ is reflexive at $P \in D(\cP)$, the deformation $f_A$ is reflexive at $P \in D(\cP)$ as well. If $f_0$ is strictly faithful, so is $f_A$ by Lemma~\ref{Z-faithful-check-on-fiber}.
\end{rem}

\begin{rem}\label{good-defo-rem}\note{good-defo-rem}
 When we already know that $f_0:(X_0,\E_0^\bullet) \to S_0$ is a \emph{good} geometric family of Gerstenhaber calculi, a deformation $f_A: (X_A,\E_A^\bullet) \to S_A$ is good as well. A local generator of $\A^d_0$ can be lifted to a local section of $\A^d_A$, which induces a local isomorphism $\cO_{X_A} \cong \A_A^d$ similarly to the case of $\F_A$ above. Then, the local generator gives locally a homomorphism $\kappa_\omega: \G^p_A \to \A^{p + d}_A$ whose pull-back to $X_0$ is an isomorphism; since $\A^{p + d}_A$ is flat over $A$, we find that this $\kappa_\omega$ is an isomorphism. Similar arguments complete the proof.
\end{rem}

This definition is only intended to be language. Be aware of the curious fact that, if $A, A' \in \mathbf{Art}_\Lambda$ are isomorphic as $\kk$-algebras but not as $\Lambda$-algebras, then there is no real difference in the notions of deformations over $A$ or $A'$ but nonetheless we distinguish them.

\par\vspace{\baselineskip}

We do not wish to form isomorphism classes and a functor of Artin rings at this point as we do not yet have defined a useful notion of isomorphism of geometric deformations of $\cP$-algebras. To this end, we need gauge transforms in the sense of Definition~\ref{gauge-trafo-def}.

Let $B' \to B$ be a surjection in $\mathbf{Art}_\Lambda$ with kernel $I \subset B'$; let $f': X_{B'} \to S_{B'}$ be a geometric deformation of $\cP$-algebras of $f_0: X_0 \to S_0$ over $B$, and let $f: X_B \to S_B$ be the base change to $B$. For $\theta \in \Gamma(X_0,I \cdot \T_{B'})$, the \emph{gauge transform} $\mathrm{exp}_\theta$ consists of sheaf maps
$$\mathrm{exp}_\theta: \quad \E^P_{B'} \to \E^P_{B'}, \quad p \mapsto \sum_{n = 0}^\infty \frac{(\nabla^P_\theta)^n(p)}{n!}
,$$
which give rise to an automorphism $\mathrm{exp}_\theta: X_{B'} \to X_{B'}$ of geometric deformations $\cP$-algebras.\footnote{Use $\cO_{X_{B'}} = \F_{B'}$ for the automorphism of underlying schemes.} 
Let us denote by $\G auge_{X_{B'}/X_B}$ the sheaf of gauge transforms, considered as a sheaf of groups which acts by automorphisms of $f': X_{B'} \to S_{B'}$.
If $B' \to B$ and $B'' \to B'$ are two surjections in $\mathbf{Art}_\Lambda$, then we obtain a restriction map
$$\G auge_{X_{B''}/X_B} \to \G auge_{X_{B'}/X_B},$$
which coincides with the restriction map $I_{B''/B} \cdot \G_{B''}^{-1} \to I_{B'/B}\cdot  \G_{B'}^{-1}$; it is surjective\footnote{Recall from Example~\ref{base-change-violation-t3-w3} that a generically log smooth deformation does not need to have this property if we take all automorphisms into account.} because our deformations satisfy, by definition, $\T_{B'} \otimes_{B'} B = \T_B$.

If $\theta,\xi \in \Gamma(X_0,I \cdot \G_{B'}^{-1})$ with $\mathrm{exp}_\theta = \mathrm{exp}_\xi$, then $\theta = \xi$ by induction over small extensions, using $Z$-faithfulness on each small extension. Thus, gauge transforms form a subsheaf 
$$\G auge_{X_{B'}/X_B} \subseteq \A ut_{X_{B'}/X_B}$$
of all automorphisms of geometric deformations of $\cP$-algebras.\footnote{This is the reason why we require $Z$-faithfulness; if we do not have it, then there are ``ghost'' gauge transforms which act by the identity.}

\section{Systems of deformations}

We define a device to control geometric deformations of $\cP$-algebras locally, analogous to the case of generically log smooth families.
Let $\Lambda$ and $f_0: (X_0,\E_0^\bullet) \to S_0$ be as above.

\begin{defn}\label{P-admissible-def}\note{P-admissible-def}
 A pre-admissible covering $\V = \{V_\alpha\}_\alpha$ of $X_0$ is \emph{admissible}\index{cover!admissible} with respect to $\E_0^\bullet$ if, for every $P \in D(\cP)$, we have 
 $$H^i(V_{\alpha_1} \cap ... \cap V_{\alpha_r}, \E_0^P|_{\alpha_1...\alpha_r}) = 0$$
 for all $i \geq 1$ and indices $\alpha_1,...,\alpha_r$, where we write $\F|_{\alpha_1...\alpha_r} := \F|_{V_{\alpha_1} \cap ... \cap V_{\alpha_r}}$.
\end{defn}

We require this condition to compute the cohomology of $\E_0^P$ from the \v{C}ech complex of $\{V_\alpha\}_\alpha$, and to make deformations on $V_\alpha$ unique up to isomorphism. The following definition essentially goes back to \cite[\S 2]{ChanLeungMa2023}. The key point is that the cocycles must be gauge transforms.

\begin{defn}
 Let $\cP$ be an algebraic structure carrying a Cartan structure, let $f_0: (X_0,U_0,U_0^{sm},\E_0^\bullet) \to S_0$ be a geometric family of $\cP$-algebras, and let $\{V_\alpha\}_\alpha$ be an admissible open cover.
 \begin{enumerate}[label=(\alph*)]
  \item A \emph{system of deformations}\index{system of deformations} $\D$ for $f_0: (X_0,\E_0^\bullet) \to S_0$ is a tuple
 $$\D = (f_{\alpha;A}: (V_{\alpha;A},\E_{\alpha;A}^\bullet) \to S_A, \: \rho_{\alpha;BB'},\: \psi_{\alpha\beta;A},\: o_{\alpha\beta\gamma;A}) $$
 where:
 \begin{itemize}
  \item for every $A \in \mathbf{Art}_\Lambda$, the object $f_{\alpha;A}: (V_{\alpha;A},\E_{\alpha;A}^\bullet) \to S_A$ is a geometric deformation of $\cP$-algebras of $f_0: (X_0,\E_0^\bullet) \to S_0$; in particular, $(V_{\alpha;0},\E_{\alpha;A}^\bullet) = (X_0,\E_0^\bullet)|_\alpha$;
  \item for every map $B' \to B$ in $\mathbf{Art}_\Lambda$, the object $\rho_{\alpha;BB'}: (V_{\alpha;B},\E_{\alpha;B}^\bullet) \to (V_{\alpha;B'},\E_{\alpha;B'}^\bullet)$ is a map of geometric families of $\cP$-algebras, compatible with the map from $V_{\alpha;0}$; they induce isomorphisms $\E^P_{\alpha;B'} \otimes_{B'} B \cong \E_{\alpha;B}^P$ by the definition of a morphism of geometric families of $\cP$-algebras;
  \item for every $A \in \mathbf{Art}_\Lambda$, the object $\psi_{\alpha\beta;A}: (V_{\alpha;A},\E_{\alpha;A}^\bullet)|_{\alpha\beta} \to (V_{\beta;A},\E_{\beta;A}^\bullet)|_{\alpha\beta}$ is an isomorphism of geometric deformations of $\cP$-algebras; we assume them to be compatible with the restriction maps $\rho_{\alpha;BB'}$; in particular, $\psi_{\alpha\beta;0}$ is the identity; they satisfy $\psi_{\alpha\beta;A} ^{-1} = \psi_{\beta\alpha;A}$;
  \item the cocycles $\psi_{\gamma\alpha;A} \circ \psi_{\beta\gamma;A} \circ \psi_{\alpha\beta;A}$ are equal to (unique) gauge transforms $\mathrm{exp}_{o_{\alpha\beta\gamma;A}}$ for $o_{\alpha\beta\gamma;A} \in \Gamma(V_\alpha \cap V_\beta \cap V_\gamma,\m_A \cdot \T_{\alpha;A})$.
 \end{itemize}
 \item A \emph{geometric deformation of $\cP$-algebras of type $\D$} is a geometric deformation $f_A: (X_A,\E_A^\bullet) \to S_A$ of $\cP$-algebras of $f_0: (X_0,\E_0^\bullet) \to S_0$ 
together with isomorphisms 
 $$\chi_\alpha: (X_A,\E_A^\bullet)|_{\alpha} \cong (V_{\alpha;A},\E_{\alpha;A}^\bullet)$$
 of geometric deformations of $\cP$-algebras such that 
 $$\psi_{\beta\alpha;A} \circ \chi_\beta|_{\alpha\beta} \circ \chi_\alpha^{-1}|_{\alpha\beta}$$
 is a gauge transform of $(V_{\alpha;A},\E_{\alpha;A}^\bullet)|_{\alpha\beta}$ for all indices $\alpha,\beta$.
 \item 
 If $B' \to B$ is a map in $\mathbf{Art}_\Lambda$, and if $f: (X_B,\E_B^\bullet) \to S_B$ and $f': (X_{B'},\E_{B'}^\bullet) \to S_{B'}$ are two geometric deformations of $\cP$-algebras of type $\D$ with local isomorphisms $\chi_\alpha$ and $\chi'_\alpha$, then a \emph{morphism} is a morphism of geometric deformations of $\cP$-algebras---consisting of maps $\cO_{X_{B'}} \to \cO_{X_B}$ and $\E_{B'}^\bullet \to \E_B^\bullet$ inducing an isomorphism in pull-backs, and compatible with the maps to the central fiber---with the following property: The local isomorphism $\chi'_\alpha$ induces, via the Cartesian squares, a local isomorphism $\chi'_\alpha|_B: (X_{B},\E_B^\bullet)|_\alpha \cong (V_{\alpha;B},\E_{\alpha;B}^\bullet)$. Then $\chi'_\alpha|_B \circ \chi_\alpha^{-1}$ must be a gauge transform of $(V_{\alpha;B},\E_{\alpha;B}^\bullet)$.
  \item 
 
 Two geometric deformations of type $\D$ are \emph{equivalent} if there is an isomorphism $\sigma: (X_A,\E_A^\bullet) \to (X'_A,\E_A'^\bullet)$ of geometric deformations of $\cP$-algebras such that $\chi'_\alpha \circ \sigma|_\alpha \circ \chi_\alpha^{-1}$ is a gauge transform of $(V_{\alpha;A},\E_{\alpha;A}^\bullet)$. This is precisely an isomorphism of geometric deformations of $\cP$-algebras of type $\D$ because we have $\chi'_\alpha \circ \sigma_\alpha = \chi'_\alpha|_A$ by the construction of $\chi'_\alpha|_A$. Equivalence classes of geometric deformations of $\cP$-algebras of type $\D$ form a functor of Artin rings
 $$\mathrm{GDef}^\D(\E_0^\bullet,-): \mathbf{Art}_\Lambda \to \mathbf{Set}.$$
 \end{enumerate}

\end{defn}
\begin{rem}
 At this point, we see why it makes sense to work over $\mathbf{Art}_\Lambda$. Namely, for $A,A' \in \mathbf{Art}_\Lambda$, we may choose completely different local models $V_{\alpha;A}$ even if $A \cong A'$ as $\kk$-algebras. For example, in the case of log smooth deformations over $S_0 = \Spec(\NN \to \kk)$, we obtain different local models over $A = \kk[t]/(t^2)$ and $A' = \kk[s]/(s^2)$ with $1 \mapsto t$ respective $1 \mapsto 0$.
\end{rem}

\begin{lemma}\label{P-def-defo-functor}\note{P-def-defo-functor}
 The functor $\mathrm{GDef}^\D(\E_0^\bullet,-)$
 is a deformation functor.
\end{lemma}
\begin{proof}
 Condition $(H_0)$ is clear. For condition $(H_1)$, let $A' \to A$ be arbitrary and $A'' \to A$ be surjective in $\mathbf{Art}_\Lambda$, and let $B := A' \times_A A''$. In the case of the local models $V_{\alpha;C}$, we form 
 $$Y := (|V_\alpha|,\: \cO_{V_{\alpha;A'}} \times_{\cO_{V_{\alpha;A}}}\cO_{V_{\alpha;A''}},\: \E^P_{\alpha;A'} \times_{\E^P_{\alpha;A}} \E^P_{\alpha;A''});$$
 due to the universal property, we obtain a map $Y \to V_{\alpha;B}$. As in the case of $\mathrm{LD}_{X_0/S_0}^\D$ in Lemma~\ref{defo-type-D-is-defo-functor}, we first obtain that $\cO_Y \cong \cO_{V_{\alpha;B}}$ is an isomorphism---for the surjectivity, we use $\cO_Y \otimes_B A' \cong \cO_{V_{\alpha;A'}}$ and that we can apply \cite[09ZW]{stacks}. With the same injectivity argument as for $\cO_Y$, we find that $\E_{\alpha;B}^P \to \E_Y^P$ is injective. However, it is also surjective because $\E_Y^P \otimes_B A' \cong \E_{\alpha;A'}^P$ by \cite[Thm.~2.2]{Ferrand2003}. Thus, $Y = V_{\alpha;B}$. The general case is then very similar to the case of $\mathrm{LD}_{X_0/S_0}^{\D}$ in Lemma~\ref{defo-type-D-is-defo-functor} because we can lift gauge transforms from $X_A|_{\alpha}$ to $X_{A''}|_\alpha$. The proof of condition $(H_2)$ is analogous to the log smooth case in \cite[Thm.~8.7]{Kato1996}, using the above-constructed push-out of geometric deformations of $\cP$-algebras.
\end{proof}

If $B' \to B$ is a surjection in $\mathbf{Art}_\Lambda$ and $f: (X_B,\E_B^\bullet) \to S_B$ and $f': (X_{B'},\E_{B'}^\bullet) \to S_{B'}$ are two compatible geometric deformations of $\cP$-algebras of type $\D$, then we have the sheaf 
$$\A ut_{X_{B'}/X_B}^\D$$
of relative automorphisms. It consists precisely of the gauge transforms, i.e.,
$$\A ut_{X_{B'}/X_B}^\D = \G auge_{X_{B'}/X_B}$$
since gauge transforms correspond to gauge transforms under isomorphisms of $\cP$-algebras---and since gauge transforms inject into all automorphisms. In particular, if $B' \to B$ is a small extension with kernel $I \subset B'$, then we get
$$\A ut_{X_{B'}/X_B}^\D = \T_0 \otimes_\kk I.$$
Now let $f: (X_B,\E_B^\bullet) \to S_B$ a geometric deformation of $\cP$-algebras of type $\D$. On $V_\alpha$, the local models allow a lift to $S_{B'}$. Since all automorphisms can be lifted, we can compare them with maps that become the identity over $S_B$. This yields the following standard result.

\begin{prop}
 Let $f_0: X_0 \to S_0$ be a geometric family of $\cP$-algebras, and let $\D$ be a system of deformations over $\Lambda$. Let $B' \to B$ be a small extension with kernel $I$, and assume that we have a geometric deformation $f: X_B \to S_B$ of type $\D$. Then:
 \begin{enumerate}[label=\emph{(\roman*)}]
  \item The automorphisms of a given lifting $f': X_{B'} \to S_{B'}$ lie in $H^0(X_0,\T_0) \otimes_\kk I$.
  \item Given one lifting, the set of all liftings is a torsor under $H^1(X_0,\T_0) \otimes_\kk I$.
  \item The obstruction to the existence of a lifting is in $H^2(X_0,\T_0) \otimes_\kk I$.
 \end{enumerate}
\end{prop}

\section{Systems of deformations from geometry}

\subsection{Systems of deformations from log geometry: generically log smooth families}

Let $f_0: X_0 \to S_0$ be a log Gorenstein generically log smooth family over $S_0 = \Spec( Q \to \kk)$, and let $\V = \{V_\alpha\}_\alpha$ be a weakly admissible open cover. When we have a system of deformations $\D$ for $f_0$ subordinate to $\V$, then we can form an associated system of good geometric deformations of Lie--Rinehart algebras $\D^{lr}$ such that geometric deformations of Lie--Rinehart algebras of type $\D^{lr}$ correspond one-to-one to generically log smooth deformations of type $\D$. On the central fiber, we take for $U_0^{sm}$ the strict locus $U_0^{str}$ of $f: U_0 \to S_0$, which is smooth because $f_0: U_0 \to S_0$ is log smooth, and scheme-theoretically dense by \cite[Prop.~10.1]{Felten2022}.
We have a geometric family of Lie--Rinehart algebras $\cL\R^\bullet_{X_0/S_0}$. This is a \emph{good} geometric family of Lie--Rinehart algebras essentially by construction. Since $\V$ is weakly admissible with respect to $f_0: X_0 \to S_0$, it is also admissible with respect to $\cL\R_{X_0/S_0}^\bullet$. The local model $V_{\alpha;A} \to S_A$ from $\D$ gives rise to a geometric deformation of Lie--Rinehart algebras $(V_{\alpha;A},\cL\R_{V_{\alpha;A}/S_A}^\bullet)$; the pieces are flat because we assume each local deformation to have the base change property. The $(V_{\alpha;A},\cL\R_{V_{\alpha;A}/S_A}^\bullet)$ come with restriction maps and comparison isomorphisms induced from those of $\D$. The cocycles are gauge transforms by Lemma~\ref{geom-auto-gauge-trafo-corr}. 

Analogously, we can also form a good geometric family of Gerstenhaber algebras $\V_{X_0/S_0}^\bullet$ on $X_0$, or a good geometric family $\V\,\W^\bullet_{X_0/S_0}$ of two-sided Gerstenhaber calculi. If $\V$ is not only weakly admissible but admissible, then we can form an analogous system of geometric deformations of Gerstenhaber algebras $\D^g$ respective a system of geometric deformations of Gerstenhaber calculi $\D^{gc}$. Again, the log Gorenstein assumption and the base change property are crucial to make $\V\,\W^\bullet_{X_A/S_A}$ flat over $S_A$ and compatible with base change. Again, Lemma~\ref{geom-auto-gauge-trafo-corr} is crucial to make the cocycles gauge transforms.

In all three cases, we have the following result, which we formulate only for the most important case of two-sided Gerstenhaber calculi.

\begin{prop}\label{LDD-GDefD-iso}\note{LDD-GDefD-iso}
 Let $\Lambda = \kk\llbracket Q\rrbracket$, let $f_0: X_0 \to S_0$ be a generically log smooth family, and let $\D$ be a system of generically log smooth deformations subordinate to an admissible cover $\V = \{V_\alpha\}_\alpha$. Let $\D^{gc}$ be the associated system of geometric deformations of two-sided Gerstenhaber calculi. Then the natural map 
 $$\mathrm{LD}_{X_0/S_0}^\D \to \mathrm{GDef}^{\D^{gc}}(\V\,\W^\bullet_{X_0/S_0},-), \qquad (f: X_A \to S_A) \mapsto (X_A,\,U_A,\,U_A^{str},\: \V^\bullet_{X_A/S_A},\W^\bullet_{X_A/S_A})$$
 is an isomorphism of functors of Artin rings.
\end{prop}
\begin{proof}
 Let $f_A : X_A \to S_A$ be a generically log smooth deformation of type $\D$. Write $(\V^\bullet,\W^\bullet)$ for its two-sided Gerstenhaber calculus. First note that the image of the map is in fact a (good) geometric family of Gerstenhaber calculi. Next, note that we have a geometric \emph{deformation} of two-sided Gerstenhaber calculi because $U_A^{str}|_{X_0} = U_0^{str}$ and $f_A: X_A \to S_A$ has the base change property. It is of type $\D^{gc}$ because the (existent but not fixed) isomorphisms $(X_A,\V^\bullet,\W^\bullet)|_\alpha \cong (V_{\alpha;A},\V_{\alpha;A}^\bullet,\W_{\alpha;A}^\bullet)$ of generically log smooth families give rise to a choice of the desired isomorphisms of two-sided Gerstenhaber calculi. On overlaps, the comparison maps must be gauge transforms by Lemma~\ref{geom-auto-gauge-trafo-corr}. If we choose other isomorphisms, then they differ, again by Lemma~\ref{geom-auto-gauge-trafo-corr}, by a gauge transform so that the resulting geometric deformations of two-sided Gerstenhaber calculi of type $\D^{gc}$ are equivalent. That the map is an isomorphism follows from tracking gluings of the pieces on $V_\alpha$ as in the proof of \cite[Prop.~4.2]{Felten2022}.
\end{proof}

\subsection{Systems of deformations from log geometry: enhanced generically log smooth families}\label{sys-defo-enhanced-gen-log-fam}\note{sys-defo-enhanced-gen-log-fam}

Let $f_0: X_0 \to S_0$ be an enhanced generically log smooth family with an enhanced system of deformations $\D$ subordinate to $\V = \{V_\alpha\}_\alpha$. Then we obtain a system of geometric deformations of Gerstenhaber calculi $\D^{gc}$ essentially by forgetting the log structures and only keeping the underlying schemes as well as the Gerstenhaber calculi. It is clear from the definitions that we obtain an isomorphism 
$$\mathrm{ELD}^\D_{X_0/S_0} \to \mathrm{GDef}^{\D^{gc}}(\G_0^\bullet,\A_0^\bullet,-)$$
of functors of Artin rings.

\subsection{Systems of deformations from log geometry: vector bundles}\label{sys-defo-vector-bundles}\note{sys-defo-vector-bundles}

Let $f_0: (X_0,\E_0) \to S_0$ be an enhanced generically log smooth family with a vector bundle $\E_0$ of rank $r$, and let $\V = \{V_\alpha\}_\alpha$ be an $\E_0$-admissible open cover of $X_0$. Let $\D$ be a system of deformations subordinate to $\V$. Now $\cL\R\cP_0^\bullet := \cL\R\cP^\bullet_{X_0/S_0}(\E_0)$ is a good geometric family of Lie--Rinehart pairs when we take for $U_0^{sm}$ the strict locus of $f_0: U_0 \to S_0$ as above. We construct a system of geometric deformations of Lie--Rinehart pairs $\D^{lrp}$ as follows: When $V_{\alpha;A} \to S_A$ is a local model from $\D$, then we take 
$$\cL\R\cP_{\alpha;A}^\bullet = \cL\R\cP^\bullet_{V_{\alpha;A}/S_A}(\cO_{V_{\alpha;A}}^{\oplus r}).$$
It makes sense to define $\cL\R\cP_{\alpha;A}^E$ as above because $\E_0$ is trivial on $V_\alpha$ by assumption.
We take the restriction maps $\rho_{\alpha;BB'}$ induced from $\D$. Over $S_0$, we have to \emph{choose} a trivialization $\E_0|_\alpha \cong \cO_{V_{\alpha;\kk}}^{\oplus r}$ to obtain $\cL\R\cP_0^E|_\alpha \cong \cL\R\cP_{\alpha;\kk}^E$. The definition of the comparison isomorphisms $\psi_{\alpha\beta;A}$ needs special care. Namely, if we just took the comparison isomorphisms from $\cO_{V_{\alpha;A}}$ for $\cO_{V_{\alpha;A}}^{\oplus r}$, then they would not give the correct map over $S_0$. Instead, we have to lift the isomorphism 
$$\cO_{V_{\alpha;\kk}}^{\oplus r}|_{\alpha\beta} \cong \E_0|_{\alpha\beta} \cong \cO_{V_{\beta;\kk}}^{\oplus r}|_{\alpha\beta}$$
order by order along $A_k = \kk[Q]/\m_Q^{k + 1}$ and then take the base change to an arbitrary $A$.
The cocycles are gauge transforms because we have gauge transforms on the level of the underlying enhanced generically log smooth family, and any compatible automorphism of the vector bundle then gives rise to a gauge transform in $\G^{-1}(\E)$.

\begin{prop}
 Let $f_0: X_0 \to S_0$ be an enhanced generically log smooth family, let $\E_0$ be a vector bundle on $X_0$, and let $\D$ be a system of deformations subordinate to an $\E_0$-admissible open cover $\V = \{V_\alpha\}_\alpha$ of $X_0$. Let $\D^{lrp}$ be the associated system of geometric deformations of Lie--Rinehart pairs. Then the natural map 
 $$\mathrm{ELD}^\D_{X_0/S_0}(\E_0) \to \mathrm{GDef}^{\D^{lrp}}(\cL\R\cP_0^\bullet,-), \quad (f_A: (X_A,\E_A) \to S_A) \mapsto \cL\R\cP^\bullet_{X_A/S_A}(\E_A)$$
 is an isomorphism of functors of Artin rings.
\end{prop}
\begin{proof}
 The map $f_A: X_A \to S_A$ together with $|U_A| = |U_0|$ and $|U_A^{sm}| = |U_0^{sm}|$ as topological spaces, and with $\cL\R\cP^\bullet_{X_A/S_A}(\E_A)$ is a good geometric family of Lie--Rinehart pairs, essentially by Lemma~\ref{LRP-gen-log-vector-bundle}. The map $f_0 \to f_A$ induces a map $\cL\R\cP^\bullet_{X_A/S_A}(\E_A) \to \cL\R\cP^\bullet(X_0/S_0,\E_0)$ which exhibits $\cL\R\cP^\bullet_{X_A/S_A}(\E_A)$ as a geometric \emph{deformation} of Lie--Rinehart pairs. Now let $\chi_\alpha: X_A|_\alpha \cong V_{\alpha;A}$ be the local isomorphism which turns $f_A$ into a deformation of type $\D$. By assumption, we have $\E_0|_\alpha \cong \cO_{X_0}^{\oplus r}|_\alpha$. This isomorphism can be lifted to an isomorphism $\E_A|_\alpha \cong \cO_{X_A}^{\oplus r}|_\alpha$ because $H^0(V_\alpha,\cO_{X_A}) \to H^0(V_\alpha,\cO_{X_0})$ and hence $H^0(V_\alpha,\E_A) \to H^0(V_\alpha,\E_0)$ is surjective. This induces an isomorphism $\cL\R\cP^\bullet_{X_A/S_A}(\E_A)|_\alpha \cong \cL\R\cP^\bullet_{\alpha;A}$ of geometric deformations of Lie--Rinehart pairs. The comparison maps between $\chi_\alpha$ and $\chi_\beta$ are gauge transforms because they are induced from comparisons of enhanced generically log smooth deformations with a vector bundle. Thus we have an element in $\mathrm{GDef}^{\D^{rlp}}(\cL\R\cP_0^\bullet,A)$. When we choose different trivializations of $\E_0$, different lifts of the trivialization to $A$, or different isomorphisms $\chi_\alpha: X_A|_\alpha \cong V_{\alpha;A}$, then the geometric deformations of Lie--Rinehart pairs of type $\D^{rlp}$ are all equivalent. Similarly, if $f_A: (X_A,\E_A) \to S_A$ is isomorphic to $f'_A: (X_A',\E_A') \to S_A$, we obtain isomorphic geometric deformations of Lie--Rinehart pairs of type $\D^{rlp}$. Thus, our map is well-defined on the level of objects. It is then easy to see that it is in fact a natural transformation. The proof of bijectivity is analogous to \cite[Prop.~4.2]{Felten2022}.
\end{proof}

\subsection{Systems of deformations for isolated normal hypersurface singularities}\label{sys-defo-hypersurface-sing}\note{sys-defo-hypersurface-sing}

In this section, we briefly indicate how our notion of system of Gerstenhaber deformations can also, in principle, be applied to classical deformation problems.

\begin{ex}
 Let $0 \not= F(x_0,...,x_d) \in \kk[x_0,...,x_d]$ be a polynomial such that $\{F = t\}$ is smooth for $t \not = 0$ and is normal with a single isolated singularity in $\{0\}$ for $t = 0$. Let $f: X = \bAA^{d + 1} \to \bAA^1_t = S, \: t \mapsto F$, be the associated family; it is flat, and, by assumption, smooth on $U = X \setminus \{0\}$. Since $X$ is Cohen--Macaulay, all fibers are Cohen--Macaulay. They are reduced by assumption. Let $j: U \to X$ be the inclusion, and let $(\G_S^\bullet,\A_S^\bullet) = (j_*\Theta^\bullet_{U/S},j_*\Omega^\bullet_{U/S})$ be the direct image of the Gerstenhaber calculus of a smooth morphism (of course with $[-,-] = -[-,-]_{sn}$). The sheaves $\G_S^p$ and $\A_S^i$ are locally free of rank $d$ on $U$, so they are torsion-free and hence flat over the Dedekind domain $\kk[t]$. Since $\A_S^d$ is a reflexive sheaf of rank $1$ on the regular scheme $X$, it is a line bundle. Then Lemma~\ref{central-fiber-injection} shows that we obtain a system of deformations over $\Lambda = \kk\llbracket t\rrbracket$ by setting $(\G_A^\bullet,\A_A^\bullet) := (\G_S^\bullet \otimes_S S_A,\, \A_S^\bullet \otimes_S S_A)$.
\end{ex}



\chapter{The characteristic algebra $E^{\bullet,\bullet}_{X_0/\Lambda}$}\label{char-alg-constr-sec}\note{char-alg-constr-sec}
\index{characteristic algebra $E^{\bullet,\bullet}_{X_0/\Lambda}$}

In this chapter, we work in the following situation.

\begin{sitn}\label{geom-P-defo-sitn}\note{geom-P-defo-sitn}
 We fix a complete local Noetherian $\kk$-algebra $\Lambda$ with residue field $\kk$. We have an algebraic structure $\cP$  carrying a Cartan structure, and $f_0: (X_0,\E_0^\bullet) \to S_0$ is a geometric family of $\cP$-algebras. We have an open cover $\V = \{V_\alpha\}_\alpha$ of $X_0$ which is admissible with respect to $\E_0^\bullet$, and $\D$ is a system of deformations for $f_0$ subordinate to $\V$ over $\Lambda$. We have another open cover $\U = \{U_i\}_i$ of $X_0$ such that the union $\U \cup \V$ is admissible with respect to $\E_0^\bullet$.
\end{sitn}

In this situation, we construct a $\Lambda$-linear (Cartanian) $\cP^{crv}$-pre-algebra (in the sense of Definition~\ref{L-curved-def})
$$E^{\bullet,\bullet}_{X_0/\Lambda} := E^{\bullet,\bullet}(X_0/S_0,\E_0^\bullet,\V,\U,\D),$$
the \emph{characteristic algebra} of $f_0: (X_0,\E_0^\bullet) \to S_0$ with respect to $\V,\U,\D$. It depends on further choices which are suppressed in the notation. Then 
$$L^\bullet_{X_0/\Lambda} := E^{T,\bullet}_{X_0/\Lambda}$$
is a $\Lambda$-linear curved Lie algebra which controls the deformation functor $\mathrm{GDef}^\D(\E_0^\bullet,-)$. If $f_A: (X_A,\E_A^\bullet) \to S_A$ is the deformation corresponding to $\phi \in \m_A \cdot (L^1_{X_0/\Lambda} \otimes_\Lambda A)$, then 
$$H^k(X_A,\E_A^P) = H^k(E_{X_0/\Lambda}^{P,\bullet} \otimes_\Lambda A,\: \bar\partial + \nabla^P_\phi).$$
The most important special case of Situation~\ref{geom-P-defo-sitn} is the case of (two-sided) Gerstenhaber calculi.

\begin{sitn}\label{geom-G-calc-sitn}\note{geom-G-calc-sitn}
 We fix a complete local Noetherian $\kk$-algebra $\Lambda$ with residue field $\kk$. We have a good geometric family $f_0: (X_0,\G_0^\bullet,\A_0^\bullet) \to S_0$ of two-sided Gerstenhaber calculi of relative dimension $d \geq 1$, as defined in Definition~\ref{good-G-calc-defn}. We have an admissible open cover $\V = \{V_\alpha\}_\alpha$  of $X_0$, and $\D$ is a system of deformations for $f_0$ subordinate to $\V$. Each deformation in $\D$ is good by Remark~\ref{good-defo-rem}. We have another open cover $\U = \{U_i\}_i$ of $X_0$ such that $\U \cup \V$ is admissible with respect to $(\G_0^\bullet,\A_0^\bullet)$.
\end{sitn}

In this situation, the characteristic algebra is actually a $\Lambda$-linear curved two-sided Gerstenhaber calculus; we denote it by 
\begin{equation}\label{PV-DR-line}
 (PV_{X_0/\Lambda}^{\bullet,\bullet},DR_{X_0/\Lambda}^{\bullet,\bullet}).
\end{equation}
In Situation~\ref{geom-G-calc-sitn}, $\A^d_0$ is a line bundle. If $\A_0^d \cong \cO_{X_0}$, then we say $f_0$ is \emph{virtually Calabi--Yau}.\index{virtually Calabi--Yau} In this case, \eqref{PV-DR-line} is a $\Lambda$-linear curved two-sided Batalin--Vilkovisky calculus by Lemma~\ref{PV-DR-BV-struc}. Consequently, if \eqref{PV-DR-line} is quasi-perfect, then $\mathrm{GDef}^\D(\G_0^\bullet,\A_0^\bullet,-)$ is unobstructed by Theorem~\ref{second-abstract-unob-thm}. In particular, this applies in our situation of main interest.

\begin{sitn}\label{enh-gen-log-sm-sitn}\note{enh-gen-log-sm-sitn}
 We fix a sharp toric monoid $Q$ and set $\Lambda := \kk\llbracket Q\rrbracket$; we set $S_0 := \Spec (Q \to \kk)$. We have a torsionless enhanced generically log smooth family $f_0: X_0 \to S_0$, an admissible open cover $\V = \{V_\alpha\}_\alpha$ of $X_0$ with admissibility as specified in Definition~\ref{enhanced-sys-of-defo}, and a system of deformations $\D$ subordinate to $\V$ in the sense of Definition~\ref{enhanced-sys-of-defo}. Furthermore, we have another open cover $\U = \{U_i\}_i$ of $X_0$ such that $\U \cup \V$ is admissible. Similar to Example~\ref{log-Gorenstein-good-geom-fam-ex}, when taking the associated geometric families of two-sided Gerstenhaber calculi $\G\C^\bullet_{X_A/S_A}$ and the system of deformations $\D^{gc}$, we are in Situation~\ref{geom-G-calc-sitn}. We have an isomorphism 
 $$\mathrm{ELD}^\D_{X_0/S_0} \cong \mathrm{GDef}^{\D^{gc}}(\G_0^\bullet,\A_0^\bullet,-)$$
 of functors of Artin rings.
\end{sitn}

In this situation, if $f_0: X_0 \to S_0$ is log Calabi--Yau, then \eqref{PV-DR-line} is a $\Lambda$-linear curved two-sided Batalin--Vilkovisky calculus which controls the deformation functor $\mathrm{ELD}^\D_{X_0/S_0}$.

In another direction, we can also construct a $\Lambda$-linear curved Lie--Rinehart pair which controls enhanced generically log smooth deformations with a vector bundle.

\begin{sitn}\label{enh-vec-bdl-defo-sitn}\note{enh-vec-bdl-defo-sitn}
 We fix a sharp toric monoid $Q$ and set $\Lambda := \kk\llbracket Q\rrbracket$; we set $S_0 := \Spec (Q \to \kk)$. We have a torsionless enhanced generically log smooth family $f_0: X_0 \to S_0$, a vector bundle $\E_0$ on $X_0$ of rank $r$, an $\E_0$-admissible open cover $\V = \{V_\alpha\}_\alpha$ of $X_0$,\footnote{Here, $\E_0$-admissibility means that, on the one hand side, $\V$ is weakly $\E_0$-admissible as used in Definition~\ref{enhanced-sys-defo-vector-bdl}, and, on the other hand side, $\V$ is admissible as specified in Definition~\ref{enhanced-sys-of-defo}.}\index{$\E_0$-admissible} and a system of deformations $\D$ subordinate to $\V$ in the sense of Definition~\ref{enhanced-sys-defo-vector-bdl}. Furthermore, we have another open cover $\U = \{U_i\}_i$ of $X_0$ such that $\U \cup \V$ is $\E_0$-admissible. When taking the Lie--Rinehart pairs $\cL\R\cP^\bullet_{X_A/S_A}(\E_A)$ and the system of deformations $\D^{rlp}$, we are in Situation~\ref{geom-P-defo-sitn} since the cohomology of $\G^{-1}_{X_0/S_0}(\E_0)$ vanishes on each $V_{\alpha_1} \cap ... \cap V_{\alpha_s}$. We have an isomorphism 
 $$\mathrm{ELD}^\D_{X_0/S_0}(\E_0) \cong \mathrm{GDef}^{\D^{rlp}}(\cL\R\cP^\bullet_0,-)$$
 of functors of Artin rings.
\end{sitn}

The main part of this chapter is an adaptation of \cite[\S\S 5 - 8]{Felten2022}, generalizing it from the case of Gerstenhaber algebras to $\cP$-algebras, in particular two-sided Gerstenhaber calculi.

\section{The Thom--Whitney resolution}\label{TW-reso-sec}\note{TW-reso-sec}
\index{Thom--Whitney resolution}

The Thom--Whitney resolution is an acyclic resolution of complexes which is well-suited to preserve additional algebraic structures on these complexes---exactly what we want to have when going from a sheaf of $\cP$-algebras to a curved $\cP$-algebra. In its present form, they go back to \cite{AznarHodgeDeligne1987}; it seems that their first use in algebraic infinitesimal deformation theory is in  Iacono--Manetti's algebraic proof of the Bogomolov--Tian--Todorov theorem in \cite{AlgebraicBTT2010}, where the reader can find extensive references to their prior use in topology. Of course, they feature prominently in Chan--Leung--Ma's method of deforming log spaces \cite{ChanLeungMa2023}. In \cite{Felten2022}, we gave a recollection of its basic properties, which we summarize again for convenience and to fix notations.

We denote the category of sets $[n] = \{0,...,n\}$ with order-preserving injections as morphisms by $\Delta_\mathrm{mon}$; then a \emph{semi\underline{co}simplicial}\index{semisimplicial} object in a category $\C$ is a covariant functor $\Delta_\mathrm{mon} \to \C$. Explicitly, a semicosimplicial object is a list of objects $A_0, A_1, ...$ together with $n + 1$ maps $\partial_{k,n}: A_{n - 1} \to A_n$ for each $n \geq 1$ satisfying $\partial_{\ell, n + 1}\partial_{k,n} = \partial_{k + 1,n + 1}\partial_{\ell,n}$.

A \emph{semisimplicial} object is a contravariant functor $\Delta_\mathrm{mon} \to \C$. For us, the most important semisimplicial object is the semisimplicial differential graded commutative algebra $(A_{PL})_n$ formed by the dgca of (global algebraic) differential forms on $H_n = \{t_0 + ... + t_n = 1\} \subset \bAA^{n + 1}_\kk$ together with maps $\delta^{k,n}: (A_{PL})_n \to (A_{PL})_{n - 1}$ induced by pull-back along inclusions of coordinate hyperplanes in $\bAA^{n + 1}$.

If $V^\Delta$ is a semicosimplicial complex of vector spaces---here, each $V_n$ is a complex $(V_n^\bullet,d)$ of vector spaces---then we can form the \emph{Thom--Whitney bicomplex}\index{Thom--Whitney resolution!Thom--Whitney bicomplex}
$$C_\mathrm{TW}^{i,j}(V^\Delta) = \left\{(x_n)_{n \in \NN} \in \prod_{n \in \NN} (A_{PL})_n^i \otimes_\kk V_n^j \enspace \Big| \enspace \forall k,n:\: (\delta^{k,n} \otimes \mathrm{Id})(x_n) = (\mathrm{Id} \otimes \partial_{k,n})(x_{n - 1})  \right\}.$$
It comes with two natural differentials
$$\delta_1((a_n \otimes v_n)_n) = (da_n \otimes v_n)_n \quad \mathrm{for} \quad (a_n \otimes v_n)_n \in C_\mathrm{TW}^{i,j}(V^\Delta),$$ induced from the differential of $(A_{PL})_n$, and 
$$\delta_2((a_n \otimes v_n)_n) = (-1)^i(a_n \otimes dv_n)_n, \quad \mathrm{for} \quad (a_n \otimes v_n)_n \in C_\mathrm{TW}^{i,j}(V^\Delta)$$ induced from the differential of $V_n^\bullet$. The Thom--Whitney bicomplex is an exact functor, i.e., if 
$$0 \to U^\Delta \to V^\Delta \to W^\Delta \to 0$$
is exact, then 
$$0 \to C^{i,j}_\TW(U^\Delta) \to C_\TW^{i,j}(V^\Delta) \to C_\TW^{i,j}(W^\Delta) \to 0$$
is exact as well for all $i,j$. We shall also need the following result.

\begin{lemma}\label{TW-constr-bounded-filtered-colimit}\note{TW-constr-bounded-filtered-colimit}
 Let $V^\Delta_{m}$ be a directed system of semicosimplicial complexes of vector spaces, and let $V^\Delta$ be the colimit in the sense that each $V^j_n$ is the colimit of the $V^j_{m;n}$. Assume that the system $V^\Delta_m$ is bounded in the sense that there is $N > 0$ such that $V^j_{m;n} = 0$ for all $n > N$ and all $m$ and $j$. Then the canonical map 
 $$\varinjlim_m C^{i,j}_\TW(V_m^\Delta) \to C^{i,j}_\TW(V^\Delta)$$
 is an isomorphism of complexes of vector spaces.
\end{lemma}
\begin{proof}
 It is easy to show that $V^\Delta$ is a semicosimplicial complex of vector spaces as well, so the statement makes sense. The proof of the main statement is straightforward once we use that each element either in $C^{i,j}_\TW(V^\Delta_m)$ or $C^{i,j}_\TW(V^\Delta)$ is represented by a \emph{finite} sequence $(x_n)_n$ with $x_n \in (A_{PL})_n^i \otimes V_{m;n}^j$ respective $(A_{PL})_n^i \otimes V_n^j$, and that $(A_{PL})_n^i \otimes V_n^j$ is the colimit of the system $(A_{PL})_n^i \otimes V^j_{m;n}$.
\end{proof}

\par\vspace{\baselineskip}

Let $S/\kk$ be a Noetherian scheme defined over $\kk$, and let $f: X \to S$ be a morphism of Noetherian schemes. Let $\U = \{U_i\}_i$ be a pre-admissible open cover of $X$ in the sense of Definition~\ref{admissible-def}, and let $\F$ be a quasi-coherent sheaf on $X$.
Then we can form the \emph{\v{C}ech semicosimplicial sheaf} $\F(\U)$ as in \cite[Ex.~5.1]{Felten2022}---its objects are the quasi-coherent sheaves 
$$\F(\U)_n = \prod_{i_0 < ... < i_n} j_*\F|_{U_{i_0} \cap ... \cap U_{i_n}},$$
and the maps are the usual \v{C}ech maps. This semicosimplicial object is \emph{bounded} in the sense that $\F(\U)_n = 0$ for $n >> 0$. Since $S$ is a $\kk$-scheme, the sections $\Gamma(V,\F(\U)_n)$ form a $\kk$-vector space on each open $V \subseteq X$. By considering $\F(\U)$ as a semicosimplicial complex concentrated in degree $0$, we can form its Thom--Whitney bicomplex $C_\mathrm{TW}^{\bullet,\bullet}(\Gamma(V,\F(\U)))$. They form a presheaf of $\kk$-vector spaces on $X$ which is actually a sheaf, as explained e.g.~in \cite[Constr.~5.3]{Felten2022}. Moreover, we have a natural $\cO_{X}$-action which turns each $C_\mathrm{TW}^{i,0}(\F(\U))$ into a quasi-coherent sheaf.
The following fact is not discussed in \cite{Felten2022}.

\begin{lemma}\label{TW-flat-sheaf-lemma}\note{TW-flat-sheaf-lemma}
 If $\F$ is flat over $S$, then $C_\TW^{i,0}(\F(\U))$ is flat over $S$ as well.
\end{lemma}
\begin{proof}
Assume that $S = \Spec(R)$ is affine. Using that $U_i \to X$ is an affine morphism, we find that each $\F(\U)_n$ is flat over $S$. Then, if $V \subseteq X$ is affine, we get that the (actually finite) product 
$$\prod_{n \in \NN} (A_{PL})_n^i \otimes_\kk \Gamma(V,\F(\U)_n)$$
is flat over $S$. Similar to the proof of \cite[Lemma~5.2]{Felten2022}, we find 
$$\Gamma(V,\,C_\TW^{i,0}(\F(\U))\,) \otimes_R Q = \Gamma(V,\,C_\TW^{i,0}((\F \otimes_R Q)(\U))\,)$$
for every finitely presented $R$-module $Q$. This allows us to conclude that the inclusion 
$$\Gamma(V,\,C_\TW^{i,0}(\F(\U))\,) \to \prod_{n \in \NN} (A_{PL})_n^i \otimes_\kk \Gamma(V,\F(\U)_n)$$
is universally injective so that the left term is flat over $R$.
\end{proof}

The first differential 
$$\delta_1: C_\mathrm{TW}^{i,0}(\F(\U)) \to C_\mathrm{TW}^{i + 1,0}(\F(\U))$$
is $\cO_X$-linear, and this, together with the canonical map 
$$\F \to C_\mathrm{TW}^{0,0}(\F(\U)), \quad f \mapsto (1 \otimes (f|_{U_{i_0} \cap ... \cap U_{i_n}})_{i_0 < ... < i_n})_n,$$
turns $C_\mathrm{TW}^{\bullet,0}(\F(\U))$ into a resolution of $\F$. Namely, this complex is quasi-isomorphic (on the level of sections over any open subset) to the \v{C}ech complex $\check \C^\bullet(\U,\F)$ by \cite[Thm.~2.14]{AznarHodgeDeligne1987}.

\begin{defn}
 Fix a pre-admissible cover $\U$ of $X$. Then the \emph{Thom--Whitney resolution} of a quasi-coherent sheaf $\F$ is the complex $\TW^\bullet(\F) := C^{\bullet,0}_\TW(\F(\U))$ of quasi-coherent sheaves with $\cO_X$-linear differentials, which is quasi-isomorphic to $\F$ via the canonical map $\F \to \TW^0(\F)$. This defines a functor.
\end{defn}
\begin{rem}
 The Thom--Whitney resolution is actually defined on the level of sheaves of $\CC$-vector spaces. The $\cO_X$-module structure comes in addition to that; the Thom--Whitney resolution commutes with the forgetful functor from (quasi-coherent) $\cO_X$-modules to abelian sheaves.
\end{rem}
\begin{rem}
 When applying the Thom--Whitney resolution to a complex $(\F^\bullet,\partial)$ of sheaves, we have to apply our above sign convention: The map $\TW^i(\partial)$ acquires a sign $(-1)^i$. Similarly, if $h: \F^\bullet \to \G^\bullet$ is a map of degree $|h|$ between complexes of sheaves, then our convention will be that $\TW^i(h)$ acquires a sign $(-1)^{|h|\cdot i}$.
\end{rem}

\begin{lemma}\label{TW-exact}\note{TW-exact}\index{Thom--Whitney resolution!exactness}
 The functor $\TW^i(-)$ is exact on quasi-coherent sheaves\footnote{The lemma is probably not true in general for arbitrary abelian sheaves.} for all $i \geq 0$.
\end{lemma}
\begin{proof}
 Each embedding $U_{i_0} \cap ... \cap U_{i_n} \to X$ is an affine open immersion, so the direct image is an exact functor on quasi-coherent sheaves. Thus, a short exact sequence 
 $$0 \to \E \to \F \to \G \to 0$$
 of quasi-coherent sheaves gives rise to a short exact sequence 
 $$0 \to \E(\U)_n \to \F(\U)_n \to \G(\U)_n \to 0,$$
 so we have a short exact sequence of semicosimplicial $\cO_X$-modules. Then, on each affine open subset of $X$, the above mentioned exactness of the Thom--Whitney construction on semicosimplicial vector spaces yields the exactness of 
 $$0 \to \TW^i(\E) \to \TW^i(\F) \to \TW^i(\G) \to 0.$$
\end{proof}

\begin{lemma}\label{TW-filtered-colimit}\note{TW-filtered-colimit}\index{Thom--Whitney resolution!filtered colimits}
 The functor $\TW^i(-)$ preserves filtered colimits of quasi-coherent sheaves for all $i \geq 0$.
\end{lemma}
\begin{proof}
 On a Noetherian scheme, a filtered colimit $\F = \varinjlim_m \F_m$ of quasi-coherent sheaves has the property that $\varinjlim_m \Gamma(W,\F_m) = \Gamma(W,\F)$ for all open subsets $W \subseteq X$. Since $\U$ is a finite open cover, there is an $N$ such that $\F_m(\U)_n = 0$ for all $n > N$ and all $m$. Then we have $\varinjlim_m \Gamma(W,\TW^i(\F_m)) = \Gamma(W,\TW^i(\F))$ for all open subsets $W \subseteq X$ by Lemma~\ref{TW-constr-bounded-filtered-colimit}. In particular, $\varinjlim_m\TW^i(\F_m) = \TW^i(\F)$ as sheaves.
\end{proof}

If $\U$ is admissible\footnote{In analogy with our prior definitions, we say that an open cover $\U = \{U_i\}_i$ of $X$ by finitely many affine open immersions $j_i: U_i \to X$ is \emph{admissible} with respect to a family $\{\F_k\}_k$ of quasi-coherent sheaves if $H^\ell(U_{i_0} \cap ... \cap U_{i_n},\F_k) = 0$ for all $\ell \geq 1$, all indices $i_0,...,i_n$, and all $\F_k$.} with respect to $\F$, then $\TW^\bullet(\F)$ computes the cohomology of $\F$ by \cite[0FLH]{stacks}. Also, in this case, each $\TW^i(\F)$ is acyclic for $\Gamma(X,-)$. Unfortunately, the proof of acyclicity in \cite[Lemma~5.6]{Felten2022} cannot be easily generalized to a non-affine open cover $\U$; instead, we have the following more conceptual proof.

\begin{lemma}\label{gen-acyc-TW-lemma}\note{gen-acyc-TW-lemma}
 Assume that $\U$ is an admissible open cover of $X$ with respect to $\F$. Let $V \subseteq X$ be an open subset with $H^\ell(V \cap U_{i_0} \cap .. \cap U_{i_n}, \F) = 0$ for all indices $i_0,...,i_n$. Then 
 $H^\ell(V,\TW^k(\F)) = 0$
 for $\ell \geq 1$; hence, we have 
 $$H^\ell(V,\F) = H^\ell\left(\, \Gamma\big(V,\TW^\bullet(\F)\big),\,\delta_1\right).$$
 In particular, this holds for $V = X$ and $V = U_{j_0} \cap ... \cap U_{j_m}$.
\end{lemma}
\begin{proof}
 Since $X$ is (locally) Noetherian, the injective objects of $\mathrm{QCoh}(X)$ are precisely those injective objects of $\mathrm{Mod}(X)$ which are quasi-coherent. This is a consequence of \cite[Ch.~II,\, Thm.~7.18]{Hartshorne1966}, as pointed out by A.~Preygel on math\textit{overflow}. Now let $\F \to \I^\bullet$ be a quasi-coherent injective resolution of $\F$. Then each 
 $$\I^m(\U)_n = \prod_{i_0 < ... < i_n}j_*\I^m|_{U_{i_0} \cap ... \cap U_{i_n}}$$
 is a quasi-coherent and injective sheaf as well; indeed, the restriction is injective, and then the direct image is injective by \cite[02N5]{stacks}.  In particular, each $\I^m(\U)_n$ is a flasque sheaf by \cite[Lemma~2.4]{Hartshorne1977}. Since the Thom--Whitney resolution is an exact functor, this implies that $\TW^k(\I^m)$ is flasque as well because it is constructed by application of the functor to each open subset. Since $\TW^k(-)$ is an exact functor on quasi-coherent sheaves, we have an exact sequence 
 $$0 \to \TW^k(\F) \to \TW^k(\I^0) \to \TW^k(\I^1) \to ...$$
 of quasi-coherent sheaves. Thus, $\TW^k(\I^\bullet)$ computes the cohomology of $\TW^k(\F)$.
 
 Since each $U_i \to X$ is an affine morphism, we have
 \begin{equation}\label{cohom-FU-comp}
  H^\ell(V,\F(\U)_n) = \bigoplus_{i_0 < ... < i_n} H^\ell(V \cap U_{i_0} \cap ... \cap U_{i_n},\F) = 0
 \end{equation}
 for $\ell \geq 1$. Since $\I^\bullet(\U)_n$ computes the cohomology of $\F(\U)_n$, this shows that 
 $$0 \to H^0(V,\F(\U)_n) \to H^0(V,\I^0(\U)_n) \to H^0(V,\I^1(\U)_n) \to ...$$
 is exact. But then 
 $$0 \to H^0(V,\TW^k(\F)) \to H^0(V,\TW^k(\I^0)) \to H^0(V,\TW^k(\I^1)) \to ...$$
 is exact as well due to the exactness of the Thom--Whitney construction on semicosimplicial vector spaces, and we find $H^\ell(V,\TW^k(\F)) = 0$ for $\ell \geq 1$.
\end{proof}
\begin{rem}
 This is not true for the cohomology on a general open subset $V \subseteq X$, even if each $U_i$ is affine.
\end{rem}

If $b: T \to S$ is an affine morphism of finite type, and if $c: Y = X \times_T S \to X$, then $c^{-1}\U$ is an open cover of $Y$, and we have a natural isomorphism
\begin{equation}\label{TW-base-change-formula}
 c^*C_\mathrm{TW}^{i,0}(\F(\U)) \cong C_\mathrm{TW}^{i,0}((c^*\F)(c^{-1}\U))
\end{equation}
of quasi-coherent sheaves by \cite[Lemma~5.2]{Felten2022} and \cite[02KG]{stacks}. However, in general, $c^{-1}(\U)$ may not be admissible with respect to $c^*\F$ so that $c^*C_\TW^{\bullet,0}(\F(\U))$ may not compute the cohomology of $c^*\F$. Of course, if $\U$ is an affine cover, then $c^{-1}\U$ is affine as well and hence admissible. The following case is also of interest for us.

\begin{lemma}
 Assume that $S = \Spec A$ is the spectrum of an Artinian local $\kk$-algebra $A$ with residue field $\kk$, and let $b: S_0 \to S$ be the inclusion induced by $A \to \kk$. Let $c: X_0 \to X$  be the inclusion of the central fiber. Assume that $f: X \to S$ is flat and separated, and assume that $\F$ is flat over $S$. Then an open cover $\U$ of $X$ by finitely many affine open immersions is admissible with respect to $\F$ if and only if $c^{-1}\U$ is admissible with respect to $\F_0 := c^*\F$.
\end{lemma}
\begin{proof}
 Use \cite[Thm.~0.4]{Wahl1976} to compare $H^\ell(U_{i_0} \cap ... \cap U_{i_n},\F)$ with $H^\ell(U_{i_0} \cap ... \cap U_{i_n},\F_0)$.
\end{proof}

\subsection*{The Thom--Whitney resolution of a geometric deformation of $\cP$-algebras}
\index{Thom--Whitney resolution!geometric deformations of $\cP$-algebras}

Let $\cP$ be an algebraic structure carrying a Cartan structure, and let $f_0: (X_0,\E_0^\bullet) \to S_0$ be a geometric family of $\cP$-algebras. Let $\U = \{U_i\}_i$ be an open cover of $X_0$ which is admissible with respect to $\E_0^\bullet$. Let $W_0 \subseteq X_0$ be some Zariski open subset, and let $f: (W,\E^\bullet) \to S$ be a geometric deformation of $f_0|_{W_0}$ over $S = \Spec A$ for $A \in \mathbf{Art}_\Lambda$. We define 
 $$\TW^{P,q}(\E^\bullet) := C^{q,0}_\TW(\E^P(\U))$$
 for $q \geq 0$ and $P \in D(\cP)$; this is a quasi-coherent sheaf on $X$, flat over $S$. The natural map 
 $$\E^P \to \TW^{P,\bullet}(\E^\bullet)$$
 is a resolution of $\E^P$, but it is not necessarily acyclic and does not necessarily compute the cohomology of $\E^P$ unless $W_0 = X_0$.\footnote{This is because we may have $H^\ell(W \cap U_{i_0} \cap ... \cap U_{i_n},\E^P) \not= 0$.} We denote the differential by 
 $$\bar\partial: \TW^{P,q}(\E^\bullet) \to \TW^{P,q + 1}(\E^\bullet);$$
 it is $\cO_W$-linear. Every constant $\gamma \in \cP(P)$ gives rise to a constant $\gamma \in \TW^{P,0}(\E^\bullet)$ via $\E^P \to \TW^{P,0}(\E^\bullet)$. Every unary operation $\mu \in \cP_N(P;Q)$ gives rise to an unary operation 
 $$\mu: \TW^{P,q}(\E^\bullet) \to \TW^{Q,q}(\E^\bullet), \quad (a_n \otimes p_n)_n \mapsto (-1)^{q\cdot |\mu|} \cdot (a_n \otimes \mu(p_n))_n,$$
 with $|\mu| := |Q| - |P|$.
 The choice of the sign is compatible with our prior convention for the sign of the second differential in the Thom--Whitney bicomplex of a semicosimplicial complex of vector spaces. It can be interpreted as swapping $a_n$ and $\mu$. Every binary operation $\mu \in \cP_N(P,Q;R)$ gives rise to a binary operation
 \begin{align}
  \mu: \: &\TW^{P,q}(\E^\bullet) \times \TW^{Q,q'}(\E^\bullet) \to \TW^{R,q + q'}(\E^\bullet), \nonumber \\ 
  &((a_n \otimes p_n)_n, (b_n \otimes q_n)_n) \mapsto (-1)^{(|P| + |\mu|) \cdot q'} \cdot ((a_n \wedge b_n) \otimes \mu(p_n,q_n))_n \ , \nonumber
 \end{align}
 with $|\mu| := |R| - |Q| - |P|$.
 This formula can be interpreted as the sign which we obtain from swapping $b_n$ and the operator $\mu(p_n,-)$, which is of degree $|R| - |Q|$. The sign convention is abstracted from \cite[Defn.~3.9]{ChanLeungMa2023} and the subsequent discussion.
 
 \begin{rem}\label{no-ternary-sign-rem}\note{no-ternary-sign-rem}
  Unfortunately, we do not know a reasonable sign convention for operations of higher arity. In a Gerstenhaber algebra, different compositions of $\wedge$ and $[-,-]$ suggest different rules for ternary operations. This is the main reason why we require $\cP(P_1,...,P_n;Q) = \emptyset$ for $n \geq 3$ in the definition of a Cartan structure.
 \end{rem}
 
 \begin{prop}
  $\TW^{\bullet,\bullet}(\E^\bullet)$ is a $Z$-faithful bounded Cartanian $\cP^{crv}$-pre-algebra in the context $\mathfrak{QCoh}(W/S)$ with $\ell = 0$. Its formation commutes with base change in $\mathbf{Art}_\Lambda$.
 \end{prop}
 \begin{proof}
  Since each $\TW^{P,q}(\E^\bullet)$ is a quasi-coherent sheaf, flat over $S$, and each operation is $A$-multilinear, we have indeed a $\cP^{crv}$-pre-algebra when setting $\ell = 0$. Since $\F(\U)_n = 0$ for $n$ exceeding the number $M$ of opens in $\U$, we have $\TW^{P,q}(\E^\bullet) = 0$ for $q > M$ since then $(A_{PL})^q_n = 0$ for all $n \leq M$; thus, $\TW^{\bullet,\bullet}(\E^\bullet)$ is (uniformly) bounded.
  
  If $\mu \in \cP_0(P;Q)$ or $\mu \in \cP_0(P,Q;R)$, then the induced map is $\cO_W$-(bi-)linear; in particular, the induced products 
  $$\ast^P: \TW^{F,q}(\E^\bullet) \times \TW^{P,q'}(\E^\bullet) \to \TW^{P,q + q'}(\E^\bullet)$$
  are $\cO_W$-bilinear. Graded commutativity (for $P = F$) and unitality of this product are easy; it is also associative because the signs of the two ways to evaluate 
  $$(a_n \otimes f_n)_n \ast^F (b_n \otimes g_n)_n \ast^P (c_n \otimes p_n)_n$$
  turn out to agree. If $\mu \in \cP_N(P;P)$, then, for $a = (a_n \otimes g_n)_n \in \TW^{F,0}(\E^\bullet)$, we find 
  $$D_{1;a}((b_n \otimes p_n)_n) = (-1)^{|b||\mu|} \big((a_n \wedge b_n) \otimes (D_{1;g_n}(p_n))\big)_n;$$
  repeated application of this formula shows that, on $\TW^{\bullet,\bullet}(\E^\bullet)$, $\mu$ is a differential operator of order $N$ with respect to $\TW^{F,0}(\E^\bullet)/A$. Similar formulae hold in case $\mu \in \cP_N(P,Q;R)$ with the factor $(-1)^{(|P| + |\mu|)|c|}$ for $\mu$ applied to $(b_n \otimes p_n)_n$ and $(c_n \otimes q_n)_n$ for both $D_{1;a}$ and $D_{2;a}$. Thus, $\mu$ is a bilinear differential operator with respect to $\TW^{F,0}(\E^\bullet)/A$ in this case. In particular, $\TW^{\bullet,\bullet}(\E^\bullet)$ is semi-Cartanian as a $\cP^{bg}$-pre-algebra in the sense of Definition~\ref{Cartanian-Pbg-def}. That it is a Cartanian $\cP^{bg}$-pre-algebra (in the sense of Definition~\ref{Cartanian-Pbg-def}) follows from an easy but tedious computation comparing signs.
  
  For $\gamma \in \cP(P)$, we have $\bar\partial(1 \otimes \gamma) = d(1) \otimes \gamma = 0$. The two formulae for $\bar\partial\mu(p)$ and $\bar\partial\mu(p,q)$ are straightforward. Finally, we have $\bar\partial^2(p) = \nabla^P_\ell(p)$ and $\bar\partial(\ell) = 0$ since $\bar\partial^2 = 0$ and $\ell = 0$. Thus, $\TW^{\bullet,\bullet}(\E^\bullet)$ is a Cartanian $\cP^{crv}$-pre-algebra in the sense of Definition~\ref{Cartan-curved-def}.
  
  The proof of $Z$-faithfulness is very similar to \cite[Lemma~6.3]{Felten2022}; we just have to choose instead of a function $f \in \cO(V \cap U_{i_0} \cap ... \cap U_{i_n})$ an appropriate section of some $\E^M$ for $M \in Z$.
  
  That the formation of the quasi-coherent sheaf $\TW^{P,q}(\E^\bullet)$ commutes with base change along $B' \to B$ in $\mathbf{Art}_\Lambda$ follows from \eqref{TW-base-change-formula}. Obviously, the formation of constants commutes with base change. Finally, on affine opens, the formation of an operation $\mu$ commutes with base change; since $\mu$ is a multilinear differential operator over both $B'$ and $B$, the operation over $B$ is the induced one from the one over $B'$ via our base change construction.
 \end{proof}
 
 \begin{rem}
  In special cases, additional relations are satisfied.
  \begin{enumerate}[label=(\arabic*)]
   \item If $\E^\bullet$ is a Lie--Rinehart algebra (respective pair), then $\TW^{\bullet,\bullet}(\E^\bullet)$ is a differential bigraded Lie--Rinehart algebra (respective pair). 
   \item If $\E^\bullet$ is a Gerstenhaber algebra (respective calculus), then $\TW^{\bullet,\bullet}(\E^\bullet)$ is a differential bigraded Gerstenhaber algebra (respective calculus). In this situation, we also write $\TW^{p,q}(\G^\bullet)$ and $\TW^{i,j}(\A^\bullet)$ for the pieces of $\TW^{\bullet,\bullet}(\E^\bullet)$. Explicitly, the bigraded $\wedge$-product and the bigraded Lie bracket on $\TW^{\bullet,\bullet}(\G^\bullet)$ are given by
 $$(a_n \otimes \theta_n)_n \wedge (b_n \otimes \xi_n)_n = (-1)^{|b||\theta|} ((a_n \wedge b_n) \otimes (\theta_n \wedge \xi_n))_n$$
 and
 $$[(a_n \otimes \theta_n)_n,(b_n \otimes \xi_n)_n] = (-1)^{(|\theta| + 1)|b|}((a_n \wedge b_n) \otimes [\theta_n,\xi_n])_n.$$
 The bigraded $\wedge$-product on $\TW^{\bullet,\bullet}(\A^\bullet)$ is given by
 $$(a_n \otimes \alpha_n)_n \wedge (b_n \otimes \beta_n)_n = (-1)^{|b||\alpha|} ((a_n \wedge b_n) \otimes (\alpha_n \wedge \beta_n))_n;$$
 the $\cO_W$-bilinear contraction map is
 $$(a_n \otimes \theta_n)_n \ \invneg\ (b_n \otimes \alpha_n)_n = (-1)^{|\theta||b|} ((a_n \wedge b_n) \otimes (\theta_n \ \invneg \ \alpha_n))_n,$$
 and the $f^{-1}(\cO_S)$-bilinear Lie derivative is
 $$\cL_{(a_n \otimes \theta_n)_n}((b_n \otimes \alpha_n)_n) = (-1)^{(|\theta| + 1)|b|} ((a_n \wedge b_n) \otimes \cL_{\theta_n}(\alpha_n))_n.$$
  \item \index{Thom--Whitney resolution!Gerstenhaber calculus} If $\E^\bullet$ is a two-sided Gerstenhaber calculus, then $\TW^{\bullet,\bullet}(\E^\bullet)$ is a differential bigraded two-sided Gerstenhaber calculus. The bigraded left contraction on $\TW^{\bullet,\bullet}(\A^\bullet)$ is given by 
  $$(a_n \otimes \theta_n) \vdash (b_n \otimes \alpha_n)_n = (-1)^{|\theta||b|} ((a_n \wedge b_n) \otimes (\theta_n \vdash \alpha_n))_n.$$
  A direct computation shows that all the relations required in Definition~\ref{two-sided-G-calc-defn} are satisfied.
   \item If $\E^\bullet = (\G^\bullet,\A^\bullet)$ is a Gerstenhaber calculus, and $\omega \in \Gamma(W,\A^d)$ is a global section which turns it into a Batalin--Vilkovisky calculus by Proposition~\ref{BV-calc-construction}, then $(1 \otimes \omega)_n \in \TW^{d,0}(\A^\bullet)$ turns $\TW^{\bullet,\bullet}(\E^\bullet)$ into a differential bigraded Batalin--Vilkovisky calculus. The operations $\kappa, v$, and $\Delta$ on $\TW^{\bullet,\bullet}(\E^\bullet)$ are induced from these operations on $\E^\bullet$. Obviously, if $\E^\bullet$ is two-sided, so is $\TW^{\bullet,\bullet}(\E^\bullet)$ since this holds for the Gerstenhaber calculi.
  \end{enumerate}
 \end{rem}

\section{Bigraded Thom--Whitney  deformations of $\cP$-algebras}\label{bg-TW-defo-P-alg-sec}\note{bg-TW-defo-P-alg-sec}

In this section, we study geometric deformations of $f_0: (X_0,\E_0^\bullet) \to S_0$ via their Thom--Whitney resolutions. So let us assume that we are in Situation~\ref{geom-P-defo-sitn}.\footnote{Under Lemma~\ref{gen-acyc-TW-lemma}, the admissibility of the \emph{union} $\U \cup \V$ ensures that a Thom--Whitney resolution with respect to $\U$ computes the cohomology on $V_\alpha$. If both covers consist of affines, this is automatic.} 
First, we can apply the Thom--Whitney resolution (with respect to $\U$) to $\E_0^\bullet$ and obtain 
$$\TW^{\bullet,\bullet}_0 := \TW^{\bullet,\bullet}(\E_0^\bullet)$$
as a Cartanian $\cP^{crv}$-pre-algebra on the central fiber. 
Next, we apply the Thom--Whitney resolution to each local model $\E_{\alpha;A}^\bullet$ and obtain the Cartanian $\cP^{crv}$-pre-algebra 
$$\TW^{\bullet,\bullet}_{\alpha;A} := \TW^{\bullet,\bullet}(\E_{\alpha;A}^\bullet)$$
on $V_{\alpha;A}$.
They come with restriction maps
$$\TW(\rho_{\alpha;BB'})^*: \TW_{\alpha;B'}^{\bullet,\bullet} \to \TW_{\alpha;B}^{\bullet,\bullet}$$
and comparison isomorphisms
$$\TW(\psi_{\alpha\beta;A})^*: \TW_{\beta;A}^{\bullet,\bullet}|_{\alpha\beta} \xrightarrow{\cong} \TW_{\alpha;A}^{\bullet,\bullet}|_{\alpha\beta}$$
on overlaps. The symbol ${}^*$ indicates that now the arrow direction is reversed compared with the original geometric arrow directions of these maps.

The Thom--Whitney resolution preserves gauge transforms. If $\theta \in I_{B'/B}\cdot \E^{T}_{B'}$ induces a gauge transform of some $\cP$-algebra $\E_{B'}^\bullet$ over $B'$, then the induced automorphism of the Thom--Whitney resolution $\TW^{\bullet,\bullet}(\E_{B'}^\bullet)$ is the gauge transform of 
$$(1 \otimes \theta)_n \in I_{B'/B} \cdot \TW^{T,0}(\E_{B'}^\bullet).$$
Compared to a general gauge transform of $\TW^{\bullet,\bullet}(\E_{B'}^\bullet)$, this gauge transform is special because it is compatible with $\bar\partial$ due to $\bar\partial((1 \otimes \theta)_n) = 0$; a general gauge transform interacts with $\bar\partial$ according to Lemma~\ref{gauge-action}. We find that the cocycle 
$$\TW(\psi_{\gamma\alpha;A})^* \circ \TW(\psi_{\beta\gamma;A})^* \circ \TW(\psi_{\alpha\beta;A})^*$$
is a gauge transform and moreover compatible with $\bar\partial$. 

\par\vspace{\baselineskip}

In analogy with \cite[Defn.~6.4]{Felten2022}, we define \emph{bigraded Thom--Whitney deformations} of $f_0:(X_0,\E_0^\bullet) \to S_0$. This gives a notion of the underlying deformation of \emph{bigraded} $\cP$-algebras when we forget the horizontal differential $\bar\partial$ of $\TW^{\bullet,\bullet}(\E^\bullet)$. Studying this object is easier than studying the actual Thom--Whitney resolutions and gives additional insight into their structures---in fact, while there may be many geometric deformations of Gerstenhaber calculi, their Thom--Whitney resolutions all have the same underlying deformation of bigraded $\cP$-algebras once we forget the horizontal differential $\bar\partial$. In \cite{Felten2022}, we called such a deformation a \emph{Thom--Whitney--Gerstenhaber deformation}.

\begin{defn}\label{bg-TW-defo-defn}\note{bg-TW-defo-defn}\index{Thom--Whitney deformation, bigraded}
 In the above situation, a \emph{bigraded Thom--Whitney deformation} over $A \in \mathbf{Art}_\Lambda$ is a Cartanian $\cP^{bg}$-pre-algebra $\cH^{\bullet,\bullet}$ in the context $\mathfrak{Flat}(|X_0|;A)$ together with a map  
 $$\rho^*: \cH^{\bullet,\bullet} \to \TW^{\bullet,\bullet}(\E_0^\bullet)$$
 of $\cP^{bg}$-pre-algebras and isomorphisms 
 $$\chi_\alpha^*: \TW_{\alpha;A}^{\bullet,\bullet}\xrightarrow{\cong}  \cH^{\bullet,\bullet}|_\alpha$$
 of $\cP^{bg}$-pre-algebras in $\mathfrak{Flat}(|V_\alpha|;A)$ which are compatible with the two restrictions $\rho^*|_\alpha$ and $\TW(\rho_{\alpha;A\kk})^*$. The composition 
 $$\TW(\psi_{\beta\alpha;A})^* \circ (\chi_\alpha^*)^{-1} \circ \chi_\beta^*: \: \TW^{\bullet,\bullet}_{\beta;A}|_{\alpha\beta} \to \TW^{\bullet,\bullet}_{\beta;A}|_{\alpha\beta}$$
 must be a gauge transform of $\TW^{\bullet,\bullet}_{\beta;A}|_{\alpha\beta}$ by some element $\theta \in I_{B'} \cdot \TW^{T,0}_{\beta;A}|_{\alpha\beta}$ which \emph{may not} satisfy $\bar\partial\theta = 0$.
 
 If $B' \to B$ is a map in $\mathbf{Art}_\Lambda$, and $(\cH_{B}^{\bullet,\bullet},{\rho}^*,(\chi_\alpha^*)_\alpha)$ and $(\cH_{B'}^{\bullet,\bullet},{\rho'}^*,({\chi'}_\alpha^*)_\alpha)$ are two bigraded Thom--Whitney deformations, then a \emph{morphism} is a map 
 $$r^*: \cH_{B'}^{\bullet,\bullet} \to \cH_B^{\bullet,\bullet}$$
 of $\cP^{bg}$-pre-algebras which is compatible with $\rho^*$ and ${\rho'}^*$, which induces an isomorphism $\cH^{P,q}_{B'} \otimes_{B'} B \cong \cH^{P,q}_B$ of sheaves of $B$-modules, and such that 
 $$({\chi'}_\alpha^*|_B)^{-1} \circ \chi_\alpha^*$$
 is a gauge transform of $\TW_{\alpha;A}^{\bullet,\bullet}$. Every morphism over the identity $A \to A$ is an isomorphism.
\end{defn}
\begin{rem}
 There is no horizontal differential $\bar\partial$ on $\cH^{\bullet,\bullet}$. Correspondingly, we do not require the comparison gauge transforms on overlaps to be compatible with $\bar\partial$. Also, this definition does not fix a scheme structure of a deformation of $X_0$ over $S_A$. The closest that we get to a structure sheaf is $\cH^{0,0}$, but this is, of course, not locally isomorphic to $\cO_{V_\alpha}$.
\end{rem}
\begin{ex}\label{TW-resolution-geom-P-def}\note{TW-resolution-geom-P-def}
 Let $f: (X_A,\E_A^\bullet) \to S_A$ be a geometric deformation of $\cP$-algebras of type $\D$. Then $\TW^{\bullet,\bullet}(\E_A^\bullet)$ has a canonical structure of a bigraded Thom--Whitney deformation induced by the comparison isomorphisms $\chi_\alpha$ that are part of a geometric deformation of type $\D$.
\end{ex}

\begin{lemma}\label{bg-TW-defo-autom-lemma}\note{bg-TW-defo-autom-lemma}
 Let $\cH^{\bullet,\bullet}$ be a bigraded Thom--Whitney deformation over $A$. Then 
 $$(\m_A \cdot \cH^{T,0}, \odot) \to (\A ut(\cH^{\bullet,\bullet}),\circ), \quad \theta \mapsto \mathrm{exp}_\theta,$$
 is an isomorphism of sheaves of groups, i.e., the automorphisms of $\cH^{\bullet,\bullet}$ as a bigraded Thom--Whitney deformation are precisely the gauge transforms.
\end{lemma}
\begin{proof}
 Every gauge transform is indeed an automorphism of $\cH^{\bullet,\bullet}$, so the map is well-defined. It is injective by $Z$-faithfulness. Locally on $V_\alpha$, the map is an isomorphism essentially due to our definition of an isomorphism of bigraded Thom--Whitney deformations.
\end{proof}
\begin{cor}
 Let $\cH_{B'}^{\bullet,\bullet}$ be a bigraded Thom--Whitney deformation over $B'$, with base change $\cH_B^{\bullet,\bullet}$ to $B$. Then every global automorphism of $\cH_B^{\bullet,\bullet}$ can be lifted to $\cH_{B'}^{\bullet,\bullet}$.
\end{cor}
\begin{proof}
 It is sufficient to assume that $B' \to B$ is a small extension. Then we have an exact sequence 
 $$0 \to \TW_0^{T,0} \otimes_\kk I \to \m_{B'} \cdot \cH_{B'}^{T,0} \to \m_B \cdot \cH_B^{T,0} \to 0.$$
 By Lemma~\ref{gen-acyc-TW-lemma}, the right hand map is surjective on global sections.
\end{proof}

As in \cite[Lemma~6.6]{Felten2022}, the we have the following result.

\begin{lemma}
 Let $B' \to B$ be a small extension in $\mathbf{Art}_\Lambda$ with kernel $I \subset B'$. Let $\cH^{\bullet,\bullet}$ be a bigraded Thom--Whitney deformation over $B$.
 \begin{enumerate}[label=\emph{(\alph*)}]
  \item Given a lifting ${\cH'}^{\bullet,\bullet}$ to $B'$, the relative automorphisms are in 
  $$H^0(X_0,\TW_0^{T,0}) \otimes_\kk I.$$
  \item Given a lifting ${\cH'}^{\bullet,\bullet}$ to $B'$, the isomorphism classes of liftings are in
  $$H^1(X_0,\TW_0^{T,0}) \otimes_\kk I.$$
  \item The obstructions to the existence of a lifting are in 
  $$H^2(X_0,\TW_0^{T,0}) \otimes_\kk I.$$
 \end{enumerate}
\end{lemma}

Since $H^\ell(X_0,\TW_0^{T,0}) = 0$ for $\ell \geq 1$ by Lemma~\ref{gen-acyc-TW-lemma}---we are now working on the whole space $X_0$, not on an open subspace where this may fail---, the obstructions vanish, and there is a single isomorphism class of a lifting.

\begin{defn}\label{char-sheaf-def}\note{char-sheaf-def}
 For $A = A_0 = \kk$, we define the \emph{characteristic bigraded sheaf} $\E_A^{\bullet,\bullet}$ as the Cartanian $\cP^{bg}$-pre-algebra $$\E_0^{\bullet,\bullet} := \E_{A_0}^{\bullet,\bullet} := \TW^{\bullet,\bullet}_0$$
 (by forgetting the differential $\bar\partial$). For $A = A_k = \Lambda/\m_\Lambda^{k + 1}$, we define 
 $\E_A^{\bullet,\bullet}$ by inductively choosing a lift $\E_{k + 1}^{\bullet,\bullet} := \E_{A_{k + 1}}^{\bullet,\bullet}$ of $\E_k^{\bullet,\bullet} = \E_{A_k}^{\bullet,\bullet}$. Then, for a general $A$, we define 
 $$\E_A^{\bullet,\bullet} := \E_{A_k}^{\bullet,\bullet} \otimes_{A_k} A$$
 as the pull-back along the canonical map $A_k \to A$ for sufficiently large $k$. 
\end{defn}
\begin{rem} \hspace{1cm}
 \begin{enumerate}[label=(\arabic*)]
  \item Since $\E_{k + 1}^{\bullet,\bullet}$ is defined as a lift of $\E_k^{\bullet,\bullet}$, we have isomorphisms $$\E_{k + 1}^{\bullet,\bullet} \otimes_{A_{k + 1}} A_k \cong \E_k^{\bullet,\bullet}$$
 as part of the choice of a lift. Then, for a general $A$, we have an induced isomorphism 
 $$\E_{k + 1}^{\bullet,\bullet} \otimes_{A_{k + 1}} A \cong \E_k^{\bullet,\bullet} \otimes_{A_k} A$$
 which does not depend on any further choice. In this sense, $\E_A^{\bullet,\bullet}$ is well-defined.
 \item If $\E_k^{\bullet,\bullet}$ and $\tilde\E_k^{\bullet,\bullet}$ are two systems of chosen lifts, then we can lift the isomorphism $\Phi_0: \E_0^{\bullet,\bullet} = \tilde\E_0^{\bullet,\bullet}$ in a non-canonical and non-unique way order by order to obtain a compatible system of isomorphisms $\Phi_k:  \E_k^{\bullet,\bullet} \cong \tilde\E_k^{\bullet,\bullet}$. After this choice, the induced isomorphisms $\Phi_A: \E_A^{\bullet,\bullet} \cong \tilde\E_A^{\bullet,\bullet}$ are unique (separately for every possible choice of $A_k \to A$).
 \item When $f_A: (X_A,\E_A^\bullet) \to S_A$ is a geometric deformation of $\cP$-algebras, then $\TW^{\bullet,\bullet}(\E_A^\bullet)$ is non-uniquely isomorphic to $\E_A^{\bullet,\bullet}$. The isomorphism can be constructed by lifting along the steps of a decomposition of $A \to \kk$ into small extensions.
 \item In the case of a Gerstenhaber algebra, we write $\cP\V^{\bullet,\bullet}_A$, and in the case of a Gerstenhaber calculus, we write additionally $\D\R^{\bullet,\bullet}_A$.
 \end{enumerate}
\end{rem}

\section{Predifferentials on bigraded Thom--Whitney deformations}
\index{Thom--Whitney deformation, bigraded!predifferentials}

In this section, we generalize \cite[\S 7]{Felten2022} from Gerstenhaber algebras to $\cP$-algebras. The local model $\TW_{\alpha;A}^{\bullet,\bullet}$ of a bigraded Thom--Whitney deformation comes with a differential 
$$\bar\partial_{\alpha;A}:\: \TW_{\alpha;A}^{P,q} \to \TW_{\alpha;A}^{P,q + 1}.$$
Every $\phi \in \m_A \cdot \TW_{\alpha;A}^{T,1}$ defines a modification $\bar\partial_{\alpha;A} + \nabla_\phi^P(-)$, which turns $\TW_{\alpha;A}^{\bullet,\bullet}$ into another Cartanian $\cP^{crv}$-pre-algebra with $\ell_{\alpha;A}(\phi) = \bar\partial_{\alpha;A}(\phi) +\frac{1}{2} \nabla^T_\phi(\phi)$. 
If $\theta \in \m_A \cdot \TW_{\alpha;A}^{T,0}$, then Lemma~\ref{gauge-action} shows that 
$$\mathrm{exp}_\theta(\bar\partial_{\alpha;A}(p) + \nabla_\phi^P(p)) = \bar\partial_{\alpha;A}(\mathrm{exp}_\theta(p)) + \nabla_{\mathrm{exp}_\theta * \phi}^P(\mathrm{exp}_\theta(p)),$$
i.e., maps of the form $\bar\partial_{\alpha;A} + \nabla^P_\phi(-)$ remain of this form under gauge transforms, albeit with a new value of $\phi$.

\begin{defn}\label{bg-TW-defo-prediff}\note{bg-TW-defo-prediff}
 Let $\cH^{\bullet,\bullet}$ be a bigraded Thom--Whitney deformation over $A \in \mathbf{Art}_\Lambda$. Then a \emph{predifferential} on $\cH^{\bullet,\bullet}$ is a (global) map $\bar\partial:\: \cH^{P,q} \to \cH^{P,q + 1}$
 with 
 $$(\chi_\alpha^*)^{-1} \circ \bar\partial|_\alpha \circ \chi_\alpha^* = \bar\partial_{\alpha;A} + \nabla^P_{\phi_\alpha}(-)$$
 for some $\phi_\alpha \in \m_A \cdot \TW_{\alpha;A}^{T,0}$. Two predifferentials $\bar\partial_1$ and $\bar\partial_2$ are \emph{gauge equivalent} if there is an automorphism $\psi^*: \cH^{\bullet,\bullet} \to \cH^{\bullet,\bullet}$ with $(\psi^*)^{-1} \circ \bar\partial_1 \circ \psi^* = \bar\partial_2$. 
\end{defn}

The element $\phi_\alpha$ is unique since $\TW_{\alpha;A}^{\bullet,\bullet}$ is $Z$-faithful. Since 
$$(\bar\partial_{\alpha;A} + \nabla^P_{\phi_\alpha}(-))^2 = \nabla^P_{\ell_{\alpha;A}(\phi_\alpha)}(-),$$
we also have $\bar\partial^2 = \nabla_{\ell_\alpha}^P(-)$ for $\ell_\alpha := \chi_\alpha^*(\ell_{\alpha;A}(\phi_\alpha)) \in \m_A \cdot \cH^{T,2}|_\alpha$. By $Z$-faithfulness, they glue to a global section $\ell \in \m_A \cdot \cH^{T,2}$ with $\bar\partial^2 = \nabla_\ell^P(-)$. This turns the triple $(\cH^{\bullet,\bullet},\bar\partial,\ell)$ into a global Cartanian $\cP^{crv}$-pre-algebra.

A predifferential is a \emph{differential} if $\bar\partial^2 = 0$. This is the case if and only if the corresponding $\ell$ vanishes, again due to $Z$-faithfulness.

We say that two pairs $(\cH^{\bullet,\bullet}_1,\bar\partial_1)$ and $({\cH}_2^{\bullet,\bullet},\bar\partial_2)$ consisting of a bigraded Thom--Whitney deformation together with a predifferential are \emph{isomorphic} if there is an isomorphism $r^*: {\cH}_2^{\bullet,\bullet} \cong \cH^{\bullet,\bullet}_1$ of bigraded Thom--Whitney deformations with $\bar\partial_2 = (r^*)^{-1} \circ \bar\partial_1 \circ r^*$. We collect a few obvious results which ensure that the functor of isomorphism classes of pairs $(\cH^{\bullet,\bullet},\bar\partial)$ is well-behaved.

\begin{lemma}\label{basic-prediff-lemma}\note{basic-prediff-lemma}
 The following hold:
 \begin{enumerate}[label=\emph{(\alph*)}]
  \item If $(\cH^{\bullet,\bullet}_1,\bar\partial_1)$ and $(\cH^{\bullet,\bullet}_2,\bar\partial_2)$ are isomorphic, then $\bar\partial_1$ is a differential if and only if $\bar\partial_2$ is a differential.
  \item On the same $\cH^{\bullet,\bullet}$, the two pairs $(\cH^{\bullet,\bullet},\bar\partial_1)$ and $(\cH^{\bullet,\bullet},\bar\partial_2)$ are isomorphic if and only if $\bar\partial_1$ and $\bar\partial_2$ are gauge equivalent.
  \item If $\cH^{\bullet,\bullet}_1$ and ${\cH}_2^{\bullet,\bullet}$ are two isomorphic bigraded Thom--Whitney deformations, and if $\bar\partial_1$ is a predifferential on $\cH_1^{\bullet,\bullet}$, then there is some predifferential $\bar\partial_2$ on ${\cH}_2^{\bullet,\bullet}$ such that the pairs become isomorphic.
  \item If $r^*: \cH_{B'}^{\bullet,\bullet} \to \cH_B^{\bullet,\bullet}$ is a morphism of bigraded Thom--Whitney deformations over $B' \to B$, and $\bar\partial'$ is a predifferential on $\cH_{B'}^{\bullet,\bullet}$, then there is a unique induced predifferential $\bar\partial$ on $\cH_B^{\bullet,\bullet}$ which commutes with $r^*$. If $\bar\partial'$ is a differential, so is $\bar\partial$.\footnote{The converse may fail. Also note that $B' \to B$ does not need to be a surjection.}
  \item If $(\cH_1^{\bullet,\bullet},\bar\partial_1)$ and $(\cH_2^{\bullet,\bullet},\bar\partial_2)$ are isomorphic pairs over $B'$, so are $\cH_1^{\bullet,\bullet} \otimes_{B'} B$ and $\cH_2^{\bullet,\bullet} \otimes_{B'} B$ with their induced predifferentials.
 \end{enumerate}

\end{lemma}

\begin{defn}
 \index{Thom--Whitney deformation, bigraded!$\mathrm{TWD}^\D(\E_0^\bullet,-)$}
 Isomorphism classes of pairs $(\cH^{\bullet,\bullet},\bar\partial)$ consisting of a bigraded Thom--Whitney deformation $\cH^{\bullet,\bullet}$ and a differential (not just a predifferential) $\bar\partial$ form a functor of Artin rings 
 $$\mathrm{TWD}^\D(\E_0^\bullet,-): \mathbf{Art}_\Lambda \to \mathbf{Set}.$$
\end{defn}

Example~\ref{TW-resolution-geom-P-def} yields a natural transformation 
$$\mathbf{tw}: \mathrm{GDef}^\D(\E_0^\bullet,-) \Rightarrow \mathrm{TWD}^\D(\E_0^\bullet,-)$$
of functors of Artin rings. Conversely, when we have a pair $(\cH^{\bullet,\bullet},\bar\partial)$ with $\bar\partial^2 = 0$, then we can take the cohomology sheaf $\E_A^P := H^0(\cH^{P,\bullet},\bar\partial)$ for each $P \in D(\cP)$. All constants and operations descend to $\E_A^\bullet$, so that it is a $\cP$-pre-algebra (in some context, say sheaves of $A$-modules on $|X_0|$). Since \cite[Lemma~7.3]{Felten2022} holds here as well---namely, using our admissibility assumption with respect to $\U \cup \V$ and Lemma~\ref{gen-acyc-TW-lemma}, we have $H^1(V_\alpha,\E_{\alpha;A}^T) = 0$, and this cohomology is computed by $(\TW_{\alpha;A}^{T,\bullet},\bar\partial)$---, the $\cP$-pre-algebra $\E_A^\bullet$ is locally isomorphic to the local model $\E_{\alpha;A}^\bullet$. Comparing two of these isomorphisms on the level of bigraded Thom--Whitney deformations with a differential shows that the isomorphisms with $(\TW_{\alpha;A}^{\bullet,\bullet},\bar\partial_{\alpha;A})$ respective $(\TW_{\beta;A}^{\bullet,\bullet},\bar\partial_{\beta;A})$ differ from $\TW(\psi_{\alpha\beta;A})^*$ by a gauge transform $\mathrm{exp}_\theta$ which respects differentials. However, such a gauge transform must satisfy $\bar\partial\theta = 0$, i.e., it comes from a gauge transform of $\E_{\alpha;A}^\bullet$. Thus, $\cO_{X_A} := \E_A^F$ turns $X_A \to S_A$ into a flat deformation of $X_0 \to S_0$, and $\E_A^\bullet$ is a geometric deformation of $\cP$-algebras of $\E_0^\bullet$. This defines a natural transformation 
$$\mathbf{h}: \mathrm{TWD}^\D(\E_0^\bullet,-) \Rightarrow \mathrm{GDef}^\D(\E_0^\bullet,-)$$
of functors of Artin rings. As in \cite[Lemma~7.4]{Felten2022}, we have the following result:

\begin{prop}\label{GDefD-TWDD-iso}\note{GDefD-TWDD-iso}
 The two natural transformations $\mathbf{tw}$ and $\mathbf{h}$ are inverse to each other, so that $\mathrm{GDef}^\D(\E_0^\bullet,-) \cong \mathrm{TWD}^\D(\E_0^\bullet,-)$.
\end{prop}

\section{Construction of $E_{X_0/\Lambda}^{\bullet,\bullet}$}

We generalize \cite[\S 8]{Felten2022} to the case of $\cP$-algebras. Recall from Definition~\ref{char-sheaf-def} that we have a system $\E_A^{\bullet,\bullet}$ of bigraded Thom--Whitney deformations over $A \in \mathbf{Art}_\Lambda$ together with restriction maps between them along $B' \to B$. We will construct a compatible system of predifferentials on $\E_A^{\bullet,\bullet}$. Then $E_{X_0/\Lambda}^{\bullet,\bullet}$ is a Cartanian $\cP^{crv}$-pre-algebra in the context $\mathfrak{Comp}(\Lambda)$ obtained as a limit of global sections of $\E_A^{\bullet,\bullet}$.

Let $B' \to B$ be a map in $\mathbf{Art}_\Lambda$. Every predifferential (locally) on $\E_{B'}^{\bullet,\bullet}$ restricts uniquely to $\E_B^{\bullet,\bullet}$ as we have seen in Lemma~\ref{basic-prediff-lemma}. Conversely, if $B' \to B$ is a surjection, and if we have a predifferential $\bar\partial_B$ on $\E_B^{\bullet,\bullet}$, it is always the restriction of some predifferential $\bar\partial_{B'}$ on $\E_{B'}^{\bullet,\bullet}$. To see this, it is sufficient to assume that $B' \to B$ is a small extension with kernel $I \subset B'$. Then, as in \cite[\S 8]{Felten2022}, the predifferentials on $\E_{B'}^{\bullet,\bullet}$ which restrict to $\bar\partial_B$ form a torsor under $I \cdot \E_{B'}^{T,1} \cong \E_0^{T,1} \otimes_\kk I$. Since $H^1(X_0,\E_0^{T,1}) = 0$ by Lemma~\ref{gen-acyc-TW-lemma}, every torsor is trivial, so it has a global section $\bar\partial_{B'}$.

\begin{defn}\label{char-curved-sheaf-defn}\note{char-curved-sheaf-defn}
 \index{characteristic curved sheaf}
 The \emph{characteristic curved sheaf} $(\E_A^{\bullet,\bullet},\bar\partial_A)$ is obtained from the characteristic bigraded sheaf $\E_A^{\bullet,\bullet}$ as follows: On $A_0 = \kk$, we have the differential $\bar\partial_0$ of $\E_0^{\bullet,\bullet}$. Then, on $A_{k + 1} = \Lambda/\m_\Lambda^{k + 2}$, the predifferential $\bar\partial_{k + 1}$ is obtained by choosing a lift of $\bar\partial_k$. Then, for a general $A \in \mathbf{Art}_\Lambda$, the predifferential $\bar\partial_A$ is the unique restriction of $\bar\partial_k$ along $A_k \to A$ for $k$ sufficiently large. We obtain a unique element $\ell_A \in H^0(X_0,\m_A \cdot \E_A^{T,2})$ with $\bar\partial_A^2 = \nabla_\ell(-)$ and $\bar\partial_A(\ell_A) = 0$ as explained after Definition~\ref{bg-TW-defo-prediff}; they satisfy $\ell_{B'}|_B = \ell_B$.
\end{defn}

Over $A \in \mathbf{Art}_\Lambda$, we define the characteristic algebra as 
$$E_{X_0/A}^{\bullet,\bullet} := H^0(X_0,(\E_A^{\bullet,\bullet},\bar\partial_A,\ell_A)).$$
\begin{lemma}\label{char-curved-sheaf-prop}\note{char-curved-sheaf-prop}
 We have the following:
 \begin{enumerate}[label=\emph{(\alph*)}]
  \item $\E_A^{P,q}$ is acyclic, i.e., $H^\ell(X_0,\E_A^{P,q}) = 0$ for $\ell \geq 1$.
  \item $E_{X_0/A}^{P,q}$ is a flat $A$-module.\footnote{Unfortunately, also the question of flatness of the global sections is not discussed in \cite{Felten2022}.}
  \item $E_{X_0/A}^{\bullet,\bullet}$ is a Cartanian $\cP^{crv}$-pre-algebra in the context $\mathfrak{Comp}(A)$.
  \item For each $B' \to B$, we have a restriction map $E_{X_0/B'}^{P,q} \to E_{X_0/B}^{P,q}$ which induces an isomorphism $E_{X_0/B'}^{P,q} \otimes_{B'} B \cong E_{X_0/B}^{P,q}$.
 \end{enumerate}
\end{lemma}
\begin{proof}
 If $B' \to B$ is a small extension, then we have an exact sequence 
 $$0 \to I \otimes_\kk \E_0^{P,q} \to \E_{B'}^{P,q} \to \E_B^{P,q} \to 0.$$
 The left hand term is acyclic by Lemma~\ref{gen-acyc-TW-lemma}. Thus, if the right hand term is acyclic, the middle term is acyclic as well, so the statement follows from induction over small extensions.
 
 For flatness of $E_{X_0/A}^{P,q}$, let $\check\C^\bullet(\W,\E_{X_0/A}^{P,q})$ be the \v{C}ech resolution of $\E_{X_0/A}^{P,q}$ in sheaves for some finite affine open cover $\W$ of $X_0$. We can assume that every open subset in $\W$ is contained in some $V_\alpha$. This computes the cohomology of $\E_{X_0/A}^{P,q}$, so we have an exact sequence 
 $$0 \to E_{X_0/A}^{P,q} \to H^0(X_0,\check\C^0(\W,\E_{X_0/A}^{P,q})) \to ... \to H^0(X_0,\check\C^{N - 1}(\W,\E_{X_0/A}^{P,q})) \to 0$$
 where $N$ is the number of open subsets in $\W$. From the second term on, each term is a flat $A$-module because $\E_{X_0/A}^{P,q}|_\alpha \cong \TW_{\alpha;A}^{P,q}$ is an $\cO_{V_{\alpha;A}}$-module which is flat over $S_A$ by Lemma~\ref{TW-flat-sheaf-lemma}, so sections over an intersection of opens in $\W$ form a flat $A$-module. Then $E_{X_0/A}^{P,q}$ is a flat $A$-module by repeated application of \cite[Lemma~0.2.1]{Wahl1976}.
 
 Once we know flatness, $E_{X_0/A}^{\bullet,\bullet}$ is a Cartanian $\cP^{crv}$-pre-algebra in $\mathfrak{Comp}(A)$ because $\E_A^{\bullet,\bullet}$ is one in $\mathfrak{Flat}(|X_0|;A)$. The last statement is similar to \cite[Cor.~5.7]{Felten2022}.
\end{proof}

\begin{defn}\label{char-algebra-def}\note{char-algebra-def}
 The \emph{characteristic algebra} is the limit\footnote{Recall that the formation of inverse limits of sheaves commutes with taking sections. Thus, we obtain the same characteristic algebra either by taking first global sections of $\E^{P,q}_A$ and then the limit, or vice versa. This also means that the characteristic algebra behaves well with respect to gluing the characteristic algebras of open subsets.} 
 $$E_{X_0/\Lambda}^{\bullet,\bullet} := \varprojlim_{k \to \infty} E_{X_0/A_k}^{\bullet,\bullet}.$$
\end{defn}

Each $E_{X_0/\Lambda}^{P,q}$ is a flat $\Lambda$-module by \cite[0912]{stacks}, and it is complete by construction. Thus, $E_{X_0/\Lambda}^{\bullet,\bullet}$ is a Cartanian $\cP^{crv}$-pre-algebra in the context $\mathfrak{Comp}(\Lambda)$. For each $A \in \mathbf{Art}_\Lambda$, we have $E_{X_0/A}^{\bullet,\bullet} \cong E_{X_0/\Lambda}^{\bullet,\bullet} \otimes_\Lambda A$.

\begin{rem}
 Since $E_{X_0/\Lambda}^{\bullet,\bullet}$ depends on many choices, let us collect them in one place; it depends:
 \begin{enumerate}[label=(\arabic*)]
  \item on the open cover $\V$ and the system of deformations $\D$;
  \item on the open cover $\U$;
  \item on the liftings $\E_{k + 1}^{\bullet,\bullet}$ of the bigraded Thom--Whitney deformation $\E_k^{\bullet,\bullet}$;
  \item on the liftings of the predifferentials along $\E_{k + 1}^{\bullet,\bullet} \to \E_k^{\bullet,\bullet}$.
 \end{enumerate}
\end{rem}

\begin{rem}
 In the case of Gerstenhaber algebras, we write $PV_{X_0/\Lambda}^{\bullet,\bullet}$ for $E_{X_0/\Lambda}^{\bullet,\bullet}$. It is a $\Lambda$-linear curved Gerstenhaber algebra in this case. In the case of (two-sided) Gerstenhaber calculi, we write additionally $DR_{X_0/\Lambda}^{\bullet,\bullet}$ for the part corresponding to the de Rham complex. Then $(PV_{X_0/\Lambda}^{\bullet,\bullet},DR_{X_0/\Lambda}^{\bullet,\bullet})$ is a $\Lambda$-linear curved (two-sided) Gerstenhaber calculus.
\end{rem}

\section{Maurer--Cartan elements in $E_{X_0/\Lambda}^{\bullet,\bullet}$}

The key reason to study $E_{X_0/\Lambda}^{\bullet,\bullet}$ is because it controls $\mathrm{TWD}^\D(\E_0^\bullet,-)$ and thus $\mathrm{GDef}^\D(\E_0^\bullet,-)$. In fact, the $\Lambda$-linear curved Lie algebra $L^\bullet := E_{X_0/\Lambda}^{T,\bullet}$ is sufficient for this purpose. If $\phi_A \in \m_A \cdot L_A^1$ is a solution of the extended Maurer--Cartan equation
$$\bar\partial_A\phi_A + \frac{1}{2}[\phi_A,\phi_A] + \ell_A = 0 \in L_A^2,$$
then $\bar\partial_A + \nabla_{\phi_A}(-)$ is a differential on $\E_A^{\bullet,\bullet}$, so the pair $(\E_A^{\bullet,\bullet},\bar\partial_A + \nabla_{\phi_A}(-))$ gives an element of $\mathrm{TWD}^\D(\E_0^\bullet,A)$, defining a natural transformation 
$$\mathbf{mc}: \mathrm{Def}(L^\bullet,-) \Rightarrow \mathrm{TWD}^\D(\E_0^\bullet,-)$$
of functors of Artin rings from the deformation functor of $L^\bullet$, introduced in Chapter~\ref{curved-Lie-alg-sec}.

\begin{lemma}\label{DefL-TWDD-iso}\note{DefL-TWDD-iso}
 The map $\mathbf{mc}$ is a well-defined natural isomorphism. In particular, $L^\bullet$ controls the deformation functor $\mathrm{GDef}^\D(\E_0^\bullet,-)$.
\end{lemma}
\begin{proof}
 If $\phi$ and $\phi'$ are gauge equivalent Maurer--Cartan elements in $\m_A \cdot L_A^1$, then an element $\theta \in \m_A \cdot L_A^0$ with $\mathrm{exp}_\theta * \phi = \phi'$ defines a gauge transform of $\E_A^{\bullet,\bullet}$ which transforms $\bar\partial$ into $\bar\partial'$. Thus, $\mathbf{mc}$ is well-defined.
 For injectivity, let $\phi$ and $\phi'$ be two Maurer--Cartan elements with $\mathbf{mc}(\phi) = \mathbf{mc}(\phi')$. Then there is some automorphism of $\E_A^{\bullet,\bullet}$ which transforms $\bar\partial$ into $\bar\partial'$. By Lemma~\ref{bg-TW-defo-autom-lemma}, this automorphism must be a gauge transform $\mathrm{exp}_\theta$ for some $\theta \in \m_A \cdot L_A^0 = H^0(X_0,\m_A \cdot \E_A^{T,0})$. By $Z$-faithfulness, we find $\phi' = \mathrm{exp}_\theta * \phi$ from Lemma~\ref{gauge-action} (its variant for $\cP$-algebras), so $\phi$ and $\phi'$ are gauge equivalent as elements of $\m_A \cdot L_A^1$ and thus define the same element in $\mathrm{Def}(L^\bullet,A)$. For surjectivity, we can assume that $\cH^{\bullet,\bullet} = \E_A^{\bullet,\bullet}$ in any pair $(\cH^{\bullet,\bullet},\bar\partial)$ by Lemma~\ref{basic-prediff-lemma}. Since predifferentials form a torsor under $\m_A \cdot \E_A^{T,1}$, we have an element $\phi \in \m_A \cdot H^0(X_0,\E_A^{T,1}) = \m_A \cdot L_A^1$ with $\bar\partial = \bar\partial_A + \nabla_\phi(-)$. Since $\bar\partial$ is a differential by assumption, $\phi$ must satisfy the extended Maurer--Cartan equation due to $Z$-faithfulness. But then $(\cH^{\bullet,\bullet},\bar\partial)$ is the image of $\phi$ under $\mathbf{mc}$.
\end{proof}

\begin{cor}
 In Situation~\ref{geom-G-calc-sitn}, we have a $\Lambda$-linear curved two-sided Gerstenhaber calculus 
 $$(PV_{X_0/\Lambda}^{\bullet,\bullet},DR_{X_0/\Lambda}^{\bullet,\bullet})$$
 with $\mathrm{GDef}^\D(\G_0^\bullet,\A_0^\bullet,-) \cong \mathrm{Def}(L^\bullet,-)$ for $L^\bullet = PV^{-1,\bullet}_{X_0/\Lambda}$.
\end{cor}

\begin{cor}
 In Situation~\ref{enh-gen-log-sm-sitn}, we have a $\Lambda$-linear curved two-sided Gerstenhaber calculus 
 $$(PV_{X_0/\Lambda}^{\bullet,\bullet},DR_{X_0/\Lambda}^{\bullet,\bullet})$$
 with controls enhanced generically log smooth deformations of $f_0: X_0 \to S_0$, i.e., $\mathrm{ELD}^\D_{X_0/S_0} \cong \mathrm{Def}(L^\bullet,-)$ for $L^\bullet = PV^{-1,\bullet}_{X_0/\Lambda}$.
\end{cor}

 To obtain the second open cover $\U$ in Situation~\ref{enh-gen-log-sm-sitn}, we may choose any affine open cover $\U$. To see this, we have to show admissibility with respect to $\U \cup \V$. Since $V_{\alpha_0} \cap ... \cap V_{\alpha_m} \to X_0$ is an affine morphism and $U_{i_0} \cap ... \cap U_{i_n}$ is affine, their intersection is affine as well, so all coherent sheaves are acyclic on this intersection.

\begin{cor}
 In Situation~\ref{enh-vec-bdl-defo-sitn}, we have a $\Lambda$-linear curved Lie--Rinehart pair 
 $$LRP^{\bullet,\bullet}_{X_0/\Lambda}(\E_0)$$
 which controls enhanced generically log smooth deformations of $f_0: X_0 \to S_0$ together with a vector bundle, i.e., we have $\mathrm{ELD}^\D_{X_0/S_0}(\E_0) \cong \mathrm{Def}(L^\bullet,-)$ for $L^\bullet = LRP^{T,\bullet}_{X_0/\Lambda}(\E_0)$.
\end{cor}

As above, an affine open cover $\U$ satisfies the condition that $\U \cup \V$ must be $\E_0$-admissible.

\section{The Calabi--Yau case}

Suppose we are in Situation~\ref{geom-G-calc-sitn}, and assume that $f_0$ is virtually Calabi--Yau, i.e., $\A_0^d \cong \cO_{X_0}$ via the choice of an appropriate global section $\omega_0 \in \Gamma(X_0,\A_0^d)$. Let $(\cP\V_k^{\bullet,\bullet},\D\R_k^{\bullet,\bullet})$ be the characteristic sheaf over $A_k = \Lambda/\m_\Lambda^{k + 1}$. By the canonical map $\A_0^d \to \D\R_0^{d,0}$, we consider $\omega_0$ as an element $\hat\omega_0$ of $\D\R_0^{d,0}$. By assumption, 
$$\kappa_0: \: \G^p_0 \to \A_0^{p + d}, \quad \theta \mapsto (\theta \ \invneg \ \omega_0),$$
is an isomorphism of $\cO_{X_0}$-modules. Applying the Thom--Whitney functor $\TW^q(-)$ to this map yields the contraction $\hat\kappa_0$ with $\hat\omega_0$ (when we use positive signs instead of $(-1)^q$). Thus, each $\hat\kappa_0: \cP\V_0^{p,q} \to \D\R_0^{p + d,q}$ is an isomorphism. Since each $\D\R_{k + 1}^{d,0} \to \D\R_k^{d,0}$ is surjective, we can lift $\hat\omega_0$ order by order and obtain a compatible system of elements $\hat\omega_k \in \D\R_k^{d,0}$. It is now easy to see that the contraction maps 
$$\hat\kappa_k: \: \cP\V_k^{p,q} \to \D\R_k^{p + d,q}, \quad \theta \mapsto (\theta \ \invneg\ \hat\omega_k),$$
are isomorphisms of sheaves of $A_k$-modules as well, using the description of the kernel as $I \otimes_\kk \D\R_0^{p,q}$. Now Proposition~\ref{BV-calc-construction} turns each $(\cP\V_k^{\bullet,\bullet},\D\R_k^{\bullet,\bullet})$ into a sheaf of curved two-sided Batalin--Vilkovisky calculi. This shows the following result.

\begin{lemma}\label{PV-DR-BV-struc}\note{PV-DR-BV-struc}
 In Situation~\ref{geom-G-calc-sitn}, assume furthermore that $f_0$ is virtually Calabi--Yau. Then 
 $$(PV_{X_0/\Lambda}^{\bullet,\bullet},DR_{X_0/\Lambda}^{\bullet,\bullet})$$
 can be endowed with a (non-canonical) structure of $\Lambda$-linear curved two-sided Batalin--Vilkovisky calculus.
\end{lemma}
\begin{proof}
 Take global sections and then the inverse limit along the $A_k$. The structure is not canonical because it depends both on the choice of $\omega_0$ and on the choice of the liftings $\hat\omega_k$ of $\hat\omega_0$.
\end{proof}

\begin{cor}\label{gen-log-sm-BV-struc}\note{gen-log-sm-BV-struc}
 In Situation~\ref{enh-gen-log-sm-sitn}, if $f_0: X_0 \to S_0$ is log Calabi--Yau, then $$(PV_{X_0/\Lambda}^{\bullet,\bullet},DR_{X_0/\Lambda}^{\bullet,\bullet})$$ can be endowed with a (non-canonical) structure of $\Lambda$-linear curved two-sided Batalin--Vilkovisky calculus. 
\end{cor}

\section{Summary for generically log smooth families}

For the reader's convenience, we summarize the theory in the case of a generically log smooth family. Let $Q$ be a sharp toric monoid, let $\Lambda = \kk\llbracket Q\rrbracket$, and let $f_0: X_0 \to S_0$ be a log Gorenstein generically log smooth family. Let $\V$ be an admissible open cover of $X_0$ in the sense of Definition~\ref{admissible-def}, and let $\D$ be a system of deformations subordinate to $\V$. In particular, all local deformations $V_{\alpha;A} \to S_A$ have the base change property. Let $\U$ be another admissible open cover of $X_0$ with the property that $\U \cup \V$ is admissible as well; for example this is satisfied if $\U$ consists of affine open subsets. We have a $\Lambda$-linear curved two-sided Gerstenhaber calculus
\begin{equation}\label{char-G-calc-summary}
 (PV_{X_0/\Lambda}^{\bullet,\bullet},DR_{X_0/\Lambda}^{\bullet,\bullet})
\end{equation}
as characteristic algebra, and with the $\Lambda$-linear curved Lie algebra 
$$L_{X_0/\Lambda}^\bullet := PV_{X_0/\Lambda}^{-1,\bullet},$$
we have a sequence 
$$\mathrm{LD}^\D_{X_0/S_0} \cong \mathrm{GDef}^\D(\E_0^\bullet,-) \cong \mathrm{TWD}^\D(\E_0^\bullet,-) \cong \mathrm{Def}(L^\bullet,-)$$
of isomorphisms of deformation functors, i.e., $L^\bullet$ controls the deformation functor $\mathrm{LD}^\D_{X_0/S_0}$. If $\phi \in \m_A \cdot L_A^1$ is a Maurer--Cartan element corresponding to a generically log smooth family $f_A: X_A \to S_A$ of type $\D$, then we have acyclic resolutions 
$$\V_{X_A/S_A}^p \to (\cP\V_A^{p,\bullet},\bar\partial_A + \nabla_\phi(-)), \quad \W_{X_A/S_A}^i \to (\D\R_A^{i,\bullet},\bar\partial_A + \nabla_\phi(-))$$
so that we find 
$$H^q(X_A,\Theta_{X_A/S_A}^p) = H^q(X_A,\V_{X_A/S_A}^p) \cong H^q(PV_{X_0/A}^{p,\bullet},\bar\partial_A + \nabla_\phi(-))$$
and 
$$H^j(X_A,\W_{X_A/S_A}^i) \cong H^j(DR_{X_0/A}^{i,\bullet},\bar\partial_A + \nabla_\phi(-)).$$
Similarly, we have a resolution 
$$(\W_{X_A/S_A}^\bullet,\partial) \to (\bigoplus_{i + j = \bullet}\D\R_A^{i,j},\: \partial + \bar\partial_A + \nabla_\phi(-))$$
with acyclic pieces so that we have 
$$\HH^k(X_A,\W_{X_A/S_A}^\bullet) \cong H^k(\bigoplus_{i + j = \bullet} DR_{X_0/A}^{i,j}, \: \partial + \bar\partial_A + \nabla_\phi(-)).$$
By the computation in Lemma~\ref{hypercohom-comparison}, we have an isomorphism 
$$H^k(\bigoplus_{i + j = \bullet} DR_{X_0/A}^{i,j}, \: \partial + \bar\partial_A + \nabla_\phi(-)) \cong H^k(\bigoplus_{i + j = \bullet} DR_{X_0/A}^{i,j}, \: \partial + \bar\partial_A + \ell_A \ \invneg \ (-))$$
so that the hypercohomology is, to a certain extent, independent of the Maurer--Cartan element $\phi$.

If \eqref{char-G-calc-summary} is quasi-perfect, then the cohomologies $H^k(X_A,\W^i_{X_A/S_A})$ and the hypercohomology $\HH^k(X_A,\W^\bullet_{X_A/S_A})$ are flat $A$-modules, and their formation commutes with base change. The Hodge--de Rham spectral sequence 
$$E_1^{pq} = H^q(X_A,\W^p_{X_A/S_A}) \Rightarrow \HH^{p + q}(X_A,\W^\bullet_{X_A/S_A})$$
degenerates at $E_1$.

If \eqref{char-G-calc-summary} is quasi-perfect, and $f_0: X_0 \to S_0$ is log Calabi--Yau, then also the cohomology $H^k(X_A,\V^p_{X_A/S_A})$ is a flat $A$-module, and its formation commutes with base change. Furthermore, the deformation functor $\mathrm{LD}^\D_{X_0/S_0}$ is unobstructed.

If $f_0: X_0 \to S_0$ is proper, then all cohomologies and hypercohomologies are finitely generated $A$-modules.



\chapter[The Gerstenhaber calculus of log toroidal families][Gerstenhaber calculus of log toroidal families]{The Gerstenhaber calculus of log toroidal families}\label{perfect-G-calc-sec}\note{perfect-G-calc-sec}

Let $Q$ be a sharp toric monoid, let $\Lambda = \CC\llbracket Q\rrbracket$, and let $f_0: X_0 \to S_0$ be a generically log smooth family. Let $A_Q = \Spec (Q \to \CC[Q])$, and let $a_A: S_A \to A_Q$ be the canonical chart of $S_A$ for $A \in \mathbf{Art}_\Lambda$. In \cite[Defn.~4.1]{FFR2021}, we have defined that $f_0: X_0 \to S_0$ is \emph{log toroidal}\index{log toroidal family} with respect to $a_0: S_0 \to A_Q$ if it admits a local model $A_{P,\F} \to A_Q$ as specified in \cite{FFR2021}.\footnote{Pay attention to the difference between being log toroidal and being log toroidal with respect to $a_0: S_0 \to A_Q$.} Similarly, when $f_A: X_A \to S_A$ is a generically log smooth deformation of $f_0: X_0 \to S_0$, we say that it is log toroidal with respect to $a_A: S_A \to A_Q$ if it has such a local model. 
In this chapter, we work in the following situation.

\begin{sitn}\label{proper-log-toroidal-sitn}\note{proper-log-toroidal-sitn}
  We have a proper generically log smooth family $f_0: X_0 \to S_0$ of relative dimension $d \geq 1$ which is log toroidal with respect to $a_0: S_0 \to A_Q$ and log Gorenstein. We fix an admissible open cover $\V = \{V_\alpha\}_\alpha$ of $X_0$ in the sense of Definition~\ref{admissible-def}, and $\D$ is a system of deformations subordinate to $\V$. We assume that all deformations in $\D$ are log toroidal with respect to $a_A: S_A \to A_Q$.\footnote{Neither do we require that at every point, there is a unique local model, nor do we require that every local model on some deformation extends to a thickening of the deformation. We just require that around every point on each deformation, there is some local model.} In particular, they have automatically the base change property by \cite[Cor.~7.9]{FFR2021}. We fix another open cover $\U = \{U_i\}_i$ of $X_0$ such that $\U \cup \V$ is admissible. 
\end{sitn}

The goal of this chapter is to prove the following result.\footnote{This section has been written before we became aware of the left contraction $\vdash$ that characterizes a two-sided Gerstenhaber calculus. The left contraction $\vdash$ is not relevant for this chapter, so all Gerstenhaber calculi are one-sided.}

\begin{thm}\label{perfect-G-calc-log-toroidal}\note{perfect-G-calc-log-toroidal}
 In the above situation, the characteristic $\Lambda$-linear curved Gerstenhaber calculus $(PV_{X_0/\Lambda}^{\bullet,\bullet},DR_{X_0/\Lambda}^{\bullet,\bullet})$ is perfect.\index{Gerstenhaber calculus!perfect} In particular, if $f_0: X_0 \to S_0$ is log Calabi--Yau, then $\mathrm{LD}^\D_{X_0/S_0}$ is unobstructed.
\end{thm}
\begin{proof}
 The degeneration of the Hodge--de Rham spectral sequence of $f_0: X_0 \to S_0$ is \cite[Thm.~1.9]{FFR2021}. The (hyper-)cohomology $H^k(DR_{X_0/\CC}^\bullet,d) \cong \HH^k(X_0,W_{X_0/S_0}^\bullet)$ is finitely generated because $X_0$ is proper. If we assume for a moment that $H^k(DR_{X_0/A}^\bullet,d) \to H^k(DR_{X_0/\CC}^\bullet,d)$ is surjective, then Proposition~\ref{Artin-complex-base-change} yields that $H^k(DR_{X_0/A}^\bullet,d) \otimes_A \CC \to H^k(DR_{X_0/\CC}^\bullet,d)$ is an isomorphism; thus, Nakayama's Lemma, which holds for modules over the Artinian local ring $A$ without any finiteness assumption,\footnote{See Chapter~\ref{spectral-sequence-A-sec}.} yields that $H^k(DR_{X_0/A}^\bullet,d)$ is finitely generated---just take a finitely generated submodule which surjects onto $H^k(DR_{X_0/\CC}^\bullet,d)$. The difficult part is to show the surjectivity of $H^k(DR_{X_0/A}^\bullet,d) \to H^k(DR_{X_0/\CC}^\bullet,d)$. This is the content of Chapter~\ref{surj-proof-sec} below. For the unobstructedness, we use Corollary~\ref{gen-log-sm-BV-struc}.
\end{proof}

\begin{ex}
 Let $(V,Z,s)$ be a well-adjusted triple such that the associated generically log smooth family $f_0: X_0 \to S_0$ is a log toroidal family of elementary Gross--Siebert type as defined in Definition~\ref{log-tor-GS-type}. Let $\V$ and $\U$ be affine open covers. Let $\D$ be a system of unisingular deformations obtained from Corollary~\ref{sys-defo-exists}. Assume that $f_0: X_0 \to S_0$ is log Calabi--Yau. Then $\mathrm{LD}_{X_0/S_0}^\D = \mathrm{LD}_{X_0/S_0}^{uni}$ is unobstructed.
\end{ex}

In the remainder of this chapter, we prove the surjectivity of 
\begin{equation}\label{DR-surj-eqn}
 H^k(DR_{X_0/A}^\bullet,d) \to H^k(DR_{X_0/\CC}^\bullet,d).
\end{equation}
If a Maurer--Cartan solution exists, this is a consequence of \cite[Thm.~1.10]{FFR2021}, respective a variant thereof. However, in the non--Calabi--Yau case, the statement is true even if no Maurer--Cartan solution exists, and in the Calabi--Yau case, we need to know the surjectivity before we can conclude that a Maurer--Cartan solution exists. Essentially, this result is \cite[Lemma~4.18]{ChanLeungMa2023}. Our proof will be essentially the same, slightly more conceptual; the point is that our construction of $(PV_{X_0/\Lambda}^{\bullet,\bullet},DR_{X_0/\Lambda}^{\bullet,\bullet})$ is very similar to the one in \cite{ChanLeungMa2023} but technically not quite the same, and we want to make sure that our algebraic version is actually perfect in the sense of Definition~\ref{G-calc-q-perf-def}, allowing a proof of unobstructedness in the Calabi--Yau case.

\par\vspace{\baselineskip}

A \emph{monoid ideal} $K \subseteq Q$ is a subset such that $k \in K$ and $q \in Q$ implies $k + q \in K$. If $K \subset Q$ is a proper\footnote{Proper means $K \not= Q$.} monoid ideal with $Q \setminus K$ finite, then $A_K := \CC[Q]/\CC[K]$ is in $\mathbf{Art}_\Lambda$. Because the Artinian local rings $A_k = \CC\llbracket Q\rrbracket/\m_Q^{k + 1}$ are of this form for $(Q \setminus \{0\})^{k + 1}$, and every $A \in \mathbf{Art}_\Lambda$ admits a map $A_k \to A$ for some $k \geq 0$, it is sufficient to prove the surjectivity in \eqref{DR-surj-eqn} for $A = A_k$ and in particular for $A = A_K$.

\section{Absolute differential forms}

Let $\mathbf{0} := \Spec(0 \to \CC)$. Then $A_Q \to \mathbf{0}$ is a saturated log smooth morphism of relative dimension $r := \mathrm{rk}(Q^{gp})$. The log de Rham complex is 
$$\Omega^\bullet_{A_Q/\bnull} = \bigoplus_{k = 0}^r \CC[Q] \otimes_\CC \bigwedge^k_\CC (Q^{gp} \otimes_\ZZ \CC)$$
with de Rham differential $\partial(z^q \otimes \alpha) = z^q \otimes (q \wedge \alpha)$ on global sections. For every $ A \in \mathbf{Art}_\Lambda$, we have a log de Rham complex $\Omega^\bullet_{S_A/\bnull}$ of the composite $S_A \to A_Q \to \bnull$ as well. When $K \subset Q$ is a monoid ideal with $Q \setminus K$ finite and $A = A_K := \CC[Q]/\CC[K]$, then this complex has a particularly simple form. Namely, in this case, \cite[IV, Cor.~2.3.3]{LoAG2018} yields $\Omega^1_{A_Q/\bnull} \otimes_{\CC[Q]} A_K \cong \Omega_{S_K/\bnull}^1$, where $S_K := \Spec (Q \to A_K)$, i.e., we have 
$$\Omega_{S_K/\bnull}^\bullet = \bigoplus_{k = 0}^r A_K \otimes_\CC \bigwedge_\CC^k(Q^{gp} \otimes_\ZZ \CC)$$
as graded $A_K$-module. Since the de Rham differential must be compatible, we find $\partial(z^q \otimes \alpha) = z^q \otimes (q \wedge \alpha)$; now $S_K$ is a punctual scheme, so this actually describes the full de Rham complex of sheaves.\footnote{The example $Q = 0$ and $A_\eps = \CC[\eps]/(\eps^2)$ shows that this description of $\Omega^\bullet_{S_A/\bnull}$ is not true for a general $A$ which is not of the form $\CC[Q]/\CC[K]$. Namely, in this case, we have $\Omega^1_{A_Q/\bnull} = 0$ but $\Omega^1_{S_\eps/\bnull} = \CC \cdot \partial \eps \not= 0$. In particular, $\Omega_{S_A/\bnull}^1$ is not locally free in general.} We say that $\Omega^\bullet_{S_K/\bnull}$ is the \emph{absolute de Rham complex} on $S_K$.

Let $f: X_K \to S_K$ be a generically log smooth deformation of type $\D$. On $U_K$, we have the de Rham complex $\Omega^\bullet_{U_K/\bnull}$ of the composite $X_K \to S_K \to \bnull$. Then we define the \emph{absolute de Rham complex} of $X_K \to S_K$ by 
$$W^\bullet_{X_K/\bnull} := j_*\Omega^\bullet_{U_K/\bnull}.$$
Of course, the de Rham differential $\partial$ is $\CC$-linear but not $A_K$-linear. On $U_K$, we have the sequence
$$0 \to f^*\Omega^1_{S_K/\bnull} \to \Omega^1_{U_K/\bnull} \to \Omega^1_{U_K/S_K} \to 0$$
which is exact and locally split by \cite[IV, Thm.~3.2.3]{LoAG2018}. In particular, $\Omega^1_{U_K/\bnull}$ is locally free of rank $d + r$. Taking exterior powers and the direct image under $j: U_K \to X_K$, we obtain an exact sequence 
$$0 \to f^*\Omega^1_{S_K/\bnull} \wedge W^{k - 1}_{X_K/\bnull} \to W^k_{X_K/\bnull} \to W^k_{X_K/S_K} \to 0,$$
where the right hand map is surjective by the local computation in \cite[Cor.~7.11, Cor.~7.12]{FFR2021}, and where the left hand side is \emph{defined} as 
$$j_*\mathrm{im}(f^*\Omega^1_{S_K/\bnull} \otimes \Omega^{k - 1}_{U_K/\bnull} \to \Omega^k_{U_K/\bnull}, \enspace f^*\mu \otimes \alpha \mapsto f^*\mu \wedge \alpha) \:\subseteq\: W^k_{X_K/\bnull}.$$
A careful local computation using  \cite[Cor.~7.11, Cor.~7.12]{FFR2021} shows that this is actually equal to the image of 
$$f^*\Omega^1_{S_K/\bnull} \otimes W^{k - 1}_{X_K/\bnull} \to W^k_{X_K/\bnull}.$$
Then we define a decreasing filtration of $W^\bullet_{X_K/\bnull}$ by 
$$F^sW^k_{X_K/\bnull} := \mathrm{im}(f^*\Omega^s_{S_K/\bnull} \otimes W^{k - s}_{X_K/\bnull} \to W^k_{X_K/\bnull}).$$
We have $F^0W^\bullet_{X_K/\bnull} = W^\bullet_{X_K/\bnull}$ and $F^{r + 1}W^\bullet_{X_K/\bnull} = 0$. Each $F^sW^\bullet_{X_K/\bnull}$ is closed under the de Rham differential $\partial$.

When $\mathrm{Gr}^sW^k_{X_K/\bnull} := F^sW^k_{X_K/\bnull}/F^{s + 1}W^k_{X_K/\bnull}$ is the quotient of the filtration, then we obtain a commutative diagram 
\[
 \xymatrixcolsep{1.5em}\xymatrix{
   0 \ar[r] & f^*\Omega^s_{S_K/\bnull} \otimes F^1W^{k - s}_{X_K/\bnull} \ar[r] \ar@{->>}[d] & f^*\Omega^s_{S_K/\bnull} \otimes W^{k - s}_{X_K/\bnull} \ar[r] \ar@{->>}[d] & f^*\Omega^s_{S_K/\bnull} \otimes W^{k - s}_{X_K/S_K} \ar[r] \ar@{->>}[d]^{\tau^k_s} & 0 \\
   0 \ar[r] & F^{s + 1}W^k_{X_K/\bnull} \ar[r] & F^s W^k_{X_K/\bnull} \ar[r] & \mathrm{Gr}^sW^k_{X_K/\bnull} \ar[r] & 0. \\
 }
\]
Using the local direct sum decomposition of $W^k_{X_K/\bnull}$ on $U_K$, we can show that $\tau_s^k$ (the vertical map on the right hand side) is an isomorphism on $U_K$, and then it must be an isomorphism on $X_K$ because the source is $Z$-closed. Then also the target $\mathrm{Gr}^sW^k_{X_K/\bnull}$ must be $Z$-closed. Furthermore, a diagram chase shows that 
$$(-1)^s \cdot \tau_s^{k + 1}(f^*\mu \otimes \partial(\alpha)) = \partial\,\tau_s^k(f^*\mu \otimes \alpha)$$
because the second summand of $\partial (f^*\mu \wedge \alpha)$ is inside $F^{s + 1}W^k_{X_K/\bnull}$. Thus, 
$$\tau_s: f^{-1}\Omega^s_{S_K/\bnull} \otimes_{A_K} W^\bullet_{X_K/S_K}[-s] \to \mathrm{Gr}^sW^\bullet_{X_K/\bnull}$$
is an isomorphism of complexes, where the convention for $[-s]$ is that the differential acquires a factor $(-1)^{-s}$.

The log morphism $U_K \to \bnull$ has an $\cO_{X_K}$-bilinear contraction map 
$$\invneg\ : \Theta^p_{U_K/\bnull} \times \Omega^i_{U_K/\bnull} \to \Omega^{i - p}_{U_K/\bnull}.$$
Via the inclusion $\Theta^p_{U_K/S_K} \subseteq \Theta^p_{U_K/\bnull}$, this gives rise to the contraction map 
$$\invneg\ : \G^p_{X_K/S_K} \times W^i_{X_K/\bnull} \to W^{p + i}_{X_K/\bnull}.$$
Then we can define a Lie derivative 
$$\cL: \G^p_{X_K/S_K} \times W^i_{X_K/\bnull} \to W^{p + i + 1}_{X_K/\bnull}$$
via the Lie--Rinehart homotopy formula. Together with the obvious $\wedge$-product, they turn $(\G^\bullet_{X_K/S_K},W^\bullet_{X_K/\bnull})$ into an \emph{absolute Gerstenhaber calculus}\index{Gerstenhaber calculus!absolute} in the sense of the following definition. We include directly also the bigraded and curved versions in this definition, which we use below. Beyond the scope of this article, absolute Gerstenhaber calculi are useful for a deeper Hodge-theoretic study of the spaces carrying them, for example via logarithmic semi-infinite variations of Hodge structures as in \cite[\S 6]{ChanLeungMa2023}.

\begin{defn}\label{abs-G-calc-defn}\note{abs-G-calc-defn}
 Let $Q$ be a sharp toric monoid with $r := \mathrm{rk}(Q^{pg})$, and let $K \subset Q$ be a proper monoid ideal with $Q \setminus K$ finite. Let $(\cS,\C,\cO,\bC)$ be a context where $\C$ is the constant sheaf on the site $\cS$ with stalk $A_K = \CC[Q]/\CC[K]$.
 \begin{enumerate}[label=(\alph*)]
  \item An \emph{absolute Gerstenhaber calculus} of dimension $d \geq 1$
 $$(\G^\bullet,\wedge,1_G,[-,-],\: \K^\bullet,\wedge,1_K,\partial,\:\invneg\:,\cL,\:\epsilon, F^\bullet\K^\bullet)$$
 consists of:
 \begin{itemize}
  \item a Gerstenhaber algebra $(\G^\bullet,\wedge,1_G,[-,-])$ of dimension $d$;
  \item an $\cO$-module $\K^i \in \bC$ of degree $i$ for every $0 \leq i \leq d + r$; for $i$ outside this range, we set $\K^i = 0$ whenever necessary;
  \item an $\cO$-bilinear product 
  $-\wedge - : \K^{i} \times \K^{i'} \to \K^{i+i'}$
  and a global section $1_K \in \K^{0}$ such that 
  $$(\alpha \wedge \beta) \wedge \gamma = \alpha \wedge (\beta \wedge \gamma); \quad \alpha \wedge \beta = (-1)^{|\alpha||\beta|}\beta \wedge \alpha; \quad 1_K \wedge \alpha = \alpha; $$
  \item a $\CC$-linear\footnote{It is not necessarily $A_K$-linear.} map $\partial: \K^{i} \to \K^{i + 1}$ with $\partial^2 = 0$, satisfying the derivation rule
  $$\partial(\alpha \wedge \beta) = \partial(\alpha) \wedge \beta + (-1)^{|\alpha|} \alpha \wedge \partial(\beta),$$
  the \emph{absolute de Rham differential};\index{de Rham differential!absolute}
  \item an $\cO$-bilinear map 
  $\invneg \ : \G^{p} \times \K^{i} \to \K^{p + i}$
  such that 
  $$1_G \ \invneg \ \alpha = \alpha \quad \mathrm{and} \quad (\theta \wedge \xi) \ \invneg \ \alpha = \theta \ \invneg \ (\xi \ \invneg \ \alpha);$$
  it is called the \emph{absolute contraction map};
  \item a $\CC$-bilinear map $\cL_{-}(-): \G^{p} \times \K^{i} \to \K^{p + i + 1}$ such that 
  $$\cL_{[\theta,\xi]}(\alpha) = \cL_\theta(\cL_\xi(\alpha)) - (-1)^{(|\theta| + 1)(|\xi| + 1)}\cL_\xi(\cL_\theta(\alpha));$$
  it is called the \emph{absolute Lie derivative};
  \item an $A_K$-linear injective map $\epsilon: f^{-1}\Omega^\bullet_{S_K/\bnull} \to (\K^\bullet,1_K,\wedge,\partial)$ of differential graded algebras with $\epsilon(1) = 1_K$; here, $f^{-1}\Omega^i_{S_K/\bnull}$ is the constant sheaf on $\cS$ with stalk $\Omega^i_{S_K/\bnull}$; we assume that $f^{-1}\Omega^i_{S_K/\bnull} \otimes_{A_K} \cO \to \K^i$ is injective as well and consider $f^{-1}\Omega^i_{S_K/\bnull} \otimes_{A_K} \cO$ as a subsheaf of $\K^i$ via this map; we write $f^*\Omega^i_{S_K/\bnull} := f^{-1}\Omega^i_{S_K/\bnull} \otimes_{A_K} \cO$ for short; these notations are justified by considering $f: (\cS,\C) \to (\{*\},A_K)$ as a map of ringed sites;
  \item a decreasing filtration $F^s\K^\bullet$ of $\K^\bullet$ by $\cO$-submodules, where $F^s\K^i$ is defined as the image of $f^*\Omega^s_{S_K/\bnull} \otimes \K^{i - s} \to \K^i, \: f^*\mu \otimes \alpha \mapsto f^*\mu \wedge \alpha$.
 \end{itemize}
 They need to have the following properties:
 \begin{itemize}
   \item the mixed Leibniz rule, the Lie--Rinehart homotopy formula, and the formulae for contraction with $\theta \in \G^0$ respective $\theta \in \G^{-1}$ hold as stated in Definition~\ref{Gerstenhaber-calc-def}; the map $\lambda: \G^0 \to \K^0, \: \theta \mapsto \theta \ \invneg\ 1_K$, is an isomorphism of $\cO$-modules with $\lambda(\theta \wedge \xi) = \lambda(\theta) \wedge \lambda(\xi)$;
   \item for $\theta \in \G^p$, $\mu \in \Omega^s_{S_K/\bnull}$, and $\alpha \in \K^i$, we have the commutation relation 
   \begin{equation}\label{comm-rel}
    [\theta \ \invneg\ ,\: \epsilon(\mu) \wedge(-)](\alpha) := \theta \ \invneg\ (\epsilon(\mu) \wedge \alpha) - (-1)^{|\theta||\mu|} \epsilon(\mu) \wedge (\theta \ \invneg \ \alpha) = 0;
   \end{equation}
   in particular, $\cL$ is $A_K$-linear in the second variable but not necessarily in the first one;
  \item the filtration $F^s\K^\bullet$ is compatible with $\wedge$,  $\partial, \ \invneg\ ,$ and $\cL$ in that we have induced maps 
  $$\wedge: F^s\K^i \times F^{s'}\K^{i'} \to F^{s + s'}\K^{i + i'}, \quad \partial: F^s\K^i \to F^s\K^{i + 1},$$
  $$\invneg\ : \G^p \times F^s\K^i \to F^s\K^{p + i}, \quad \cL: \G^p \times F^s\K^i \to F^s\K^{p + i + 1};$$
  \item let $\A^\bullet := F^0\K^\bullet/F^1\K^\bullet$; we assume that $\A^\bullet$ is concentrated in degrees $[0,d]$, and that $\A^i \in \bC$; for $a \in A_K$, we have automatically $\partial\,\epsilon(a) \in F^1\K^1$; thus, the differential on $\A^\bullet$ is $A_K$-linear;
  \item the induced map
  $$\tau_s: \Omega^s_{S_K/\bnull} \otimes_{A_K} \A^\bullet[-s] \to \mathrm{Gr}^s\K^\bullet$$ 
  is an isomorphism of complexes for all $s$.
 \end{itemize}
 The contraction map and the Lie derivative descend to $\A^\bullet$; since all necessary relations are satisfied, the pair $(\G^\bullet,\A^\bullet)$ then becomes an ordinary Gerstenhaber calculus of dimension $d$.
 
  \item A \emph{bigraded absolute Gerstenhaber calculus} of dimension $d \geq 1$
   $$(\G^{\bullet,\bullet},\wedge,1_G,[-,-],\: \K^{\bullet,\bullet},\wedge,1_K,\partial,\:\invneg\:,\cL,\:\epsilon, F^\bullet\K^{\bullet,\bullet})$$
   consists of:
 \begin{itemize}
  \item a bigraded Gerstenhaber algebra $(\G^{\bullet,\bullet},\wedge,1_G,[-,-])$ dimension $d$ in $(\cS,\C,\cO,\bC)$;
  \item for every $0 \leq i \leq d + r$ and every $j \geq 0$, an $\cO$-module $\K^{i,j} \in \bC$ of bidegree $(i,j)$ and total degree $i + j$; for $(i,j)$ outside this range, we set $\K^{i,j} = 0$;
  \item an $A_K$-bilinear product $-\wedge - : \K^{i,j} \times \K^{i',j'} \to \K^{i+i',j + j'}$
  and an element $1_K \in \K^{0,0}$ such that 
  $$(\alpha \wedge \beta) \wedge \gamma = \alpha \wedge (\beta \wedge \gamma); \quad \alpha \wedge \beta = (-1)^{|\alpha||\beta|}\beta \wedge \alpha; \quad 1_K \wedge \alpha = \alpha; $$
  \item a $\CC$-linear map $\partial: \K^{i,j} \to \K^{i + 1,j}$ with $\partial^2 = 0$ and satisfying the derivation rule 
  $$\partial(\alpha \wedge \beta) = \partial(\alpha) \wedge \beta + (-1)^{|\alpha|} \alpha \wedge \partial(\beta),$$
  the \emph{absolute de Rham differential};
  \item an injective degree-preserving $A_K$-linear  homomorphism $\epsilon: f^{-1}\Omega_{S_K/\bnull}^\bullet \to \K^{\bullet,0}$ of differential graded-commutative algebras (with $\epsilon(1) = 1$); furthermore, we assume that the induced map $f^*\Omega^i_{S_K/\bnull} \to \K^{i,0}$ is injective;
  \item an $\cO$-bilinear map 
  $\invneg \ : \G^{p,q} \times \K^{i,j} \to \K^{p + i,q + j}$
  such that $1 \ \invneg \ \alpha = \alpha$ and $(\theta \wedge \xi) \ \invneg \ \alpha = \theta \ \invneg \ (\xi \ \invneg \ \alpha)$,
  the \emph{absolute contraction map};
  \item an $A_K$-bilinear map $\cL_{-}(-): \G^{p,q} \times \K^{i,j} \to \K^{p + i + 1,q + j}$ such that 
  $$\cL_{[\theta,\xi]}(\alpha) = \cL_\theta(\cL_\xi(\alpha)) - (-1)^{(|\theta| + 1)(|\xi| + 1)}\cL_\xi(\cL_\theta(\alpha)),$$
  the \emph{absolute Lie derivative};
  \item a decreasing filtration $F^s\K^{\bullet,\bullet}$ of $\K^{\bullet,\bullet}$ by $\cO$-submodules, where $F^s\K^{i,j}$ is the image of $f^*\Omega^s_{S_K/\bnull} \otimes \K^{i - s,j} \to \K^{i,j}, \: f^*\mu \otimes \alpha \mapsto f^*\mu \wedge \alpha$.
  \end{itemize}
  They need to have the following properties:
  \begin{itemize}
  \item the mixed Leibniz rule, the Lie--Rinehart homotopy formula, and the formulae for contraction with $\theta \in \G^{0,q}$ and $\theta \in \G^{-1,q}$ hold as stated in Definition~\ref{Gerstenhaber-calc-def}; the map $\lambda: \G^{0,q} \to \K^{0,q}, \theta \mapsto \theta \ \invneg \ 1_K$, is an $\cO$-linear isomorphism with $\lambda(\theta \wedge \xi) = \lambda(\theta) \wedge \lambda(\xi)$;
  \item for $\theta \in \G^{p,q}$, $\mu \in \Omega^s_{S_K/\bnull}$, and $\alpha \in \K^{i,j}$, we have the commutation relation 
   \begin{equation}\label{comm-rel-bg}
    [\theta \ \invneg\ ,\: \epsilon(\mu) \wedge(-)](\alpha) := \theta \ \invneg\ (\epsilon(\mu) \wedge \alpha) - (-1)^{|\theta||\mu|} \epsilon(\mu) \wedge (\theta \ \invneg \ \alpha) = 0;
   \end{equation}
   \item the filtration $F^s\K^{\bullet,\bullet}$ is compatible with $\wedge$, $\partial, \ \invneg\ ,$ and $\cL$ in that we have induced maps 
   $$\wedge: F^s\K^{i,j} \times F^{s'} \K^{i',j'} \to F^{s + s'}\K^{i + i',j + j'},\quad \partial: F^s\K^{i,j} \to F^s\K^{i + 1,j},$$ 
   $$\invneg\ :\: \G^{p,q} \times F^s\K^{i,j} \to F^s\K^{p + i,q + j}, \quad \cL:\: \G^{p,q} \times F^s\K^{i,j} \to F^s\K^{p + i + 1, q + j};$$
  \item let $\A^{\bullet,\bullet} := F^0\K^{\bullet,\bullet}/F^1\K^{\bullet,\bullet}$; we assume that $\A^{\bullet,\bullet}$ is concentrated in degrees $[0,d]$ for the first variable, and that $\A^{i,j} \in \bC$; for $a \in A_K$, we have automatically $\partial\,\epsilon(a) \in F^1\K^{1,0}$; thus, the differential on $\A^{\bullet,\bullet}$ is $A_K$-linear;
  \item the induced map
  $$\tau_{s,j}: f^{-1}\Omega^s_{S_K/\bnull} \otimes_{A_K} \A^{\bullet,j}[-s] \to \mathrm{Gr}^s\K^{\bullet,j}$$ 
  is an isomorphism of complexes for all $s$.
 \end{itemize}
 It is called \emph{bounded} if $\G^{p,q} = 0$ for $q >> 0$ and $\K^{i,j} = 0$ for $j >> 0$.
 \item A \emph{curved absolute Gerstenhaber calculus} of dimension $d \geq 1$ is a bigraded absolute Gerstenhaber calculus together with an $A_K$-linear map $\bar\partial: \G^{p,q} \to \G^{p,q + 1}$, an element $\ell \in I_K \cdot \G^{-1,2}$, and an $A_K$-linear map $\bar\partial: \K^{i,j} \to \K^{i,j + 1}$ such that $(\G^{\bullet,\bullet},\bar\partial,\ell)$ is a curved Gerstenhaber algebra and the map $\bar\partial: \K^{i,j} \to \K^{i,j + 1}$ satisfies the derivation rules
 $$\bar\partial(\alpha \wedge \beta) = \bar\partial\alpha \wedge\beta + (-1)^{|\alpha|}\alpha \wedge\bar\partial\beta, \quad \bar\partial(\theta \ \invneg\ \alpha) = (\bar\partial\theta) \ \invneg \ \alpha + (-1)^{|\theta|} \theta \ \invneg\ \bar\partial\alpha$$
 as well as $\bar\partial^2(\alpha) = \cL_\ell(\alpha)$ and $\partial\bar\partial + \bar\partial\partial = 0$. Moreover, we assume that $\bar\partial \epsilon(\mu) = 0$ for $\mu \in f^{-1}\Omega^s_{S_K/\bnull}$.
 \item A \emph{differential bigraded absolute Gerstenhaber calculus} is a curved absolute Gerstenhaber calculus with $\ell = 0$.
 \end{enumerate}

\end{defn}

In the above situation, we write $\K^i_{X_K/S_K} := W^i_{X_K/\bnull}$.

\begin{lemma}
 The pair $(\G^\bullet_{X_K/S_K},\K^\bullet_{X_K/S_K})$ is an absolute Gerstenhaber calculus in the context $\mathfrak{Coh}(X_K/S_K)$.
\end{lemma}
\begin{proof}
 Since the absolute differential forms $\Omega^1_{U_K/\bnull}$ on $U_K$ are locally free of rank $d + r$, the pieces $\K^i_{X_K/S_K}$ are in the range $0 \leq i \leq d + r$. From \cite[Cor.~7.12]{FFR2021}, we find that $W^i_{X_K/\bnull}$ is flat over $S_K$ because, in the local model, it is a free $A_K$-module.\footnote{Note that, in \cite[Cor.~7.12]{FFR2021}, we have $E_K = E + (Q \setminus K)$, and that $e \in H$ if and only if $e + q \in H$ for $e \in E$ and $q \in Q \setminus K$.} The $\wedge$-product and the de Rham differential on $\K^\bullet_{X_K/S_K}$ have the required properties because $\K^\bullet_{X_K/S_K}$ is the direct image of the de Rham complex of $U_K \to \bnull$. Similarly, the properties of the contraction $\invneg$ and the Lie derivative $\cL$ are inherited from $U_K \to \bnull$. The map $\epsilon: f^{-1}\Omega^\bullet_{S_K/\bnull} \to \K^\bullet_{X_K/S_K}$ comes via the direct image from $U_K$ from the adjunction of $\Omega^\bullet_{S_K/\bnull} \to f_*j_*\Omega^\bullet_{U_K/\bnull}$. Here, we use that $j_*(f^{-1}\Omega^i_{S_K/\bnull})|_{U_K} = f^{-1}\Omega^i_{S_K/\bnull}$. Since $f^*\Omega^1_{S_K/\bnull} \to \Omega^1_{U_K/\bnull}$ is locally split injective, $f^*\Omega^i_{S_K/\bnull} \to W^i_{X_K/\bnull}$ is injective as well. The inclusion $A_K \to \cO_{X_K}$ is injective because every stalk of $\cO_{X_K}$ is an $A_K$-algebra which is flat and hence free as an $A_K$-module. Since $f^{-1}\Omega^i_{S_K/\bnull}$ is a free $A_K$-module, also the induced map $f^{-1}\Omega^i_{S_K/\bnull} \to f^*\Omega^i_{S_K/\bnull}$ is injective; hence, $\epsilon$ is injective. The filtration $F^s\K^\bullet_{X_K/S_K}$ is defined as specified in the definition. The mixed Leibniz rule and the formulae for the contraction with $\theta \in \G^0_{X_K/S_K}$ and $\theta \in \G^{-1}_{X_K/S_K}$ are inherited from the Gerstenhaber calculus of $U_K \to \bnull$. The Lie--Rinehart homotopy formula holds by the definition of $\cL$. The map $\lambda: \G^0_{X_K/S_K} \to \K^0_{X_K/S_K}$ is obviously an isomorphism of $\cO_{X_K}$-algebras because both are equal to $\cO_{X_K}$. To show the commutation relation \eqref{comm-rel}, first note that it holds for $p = 0$. For $p = -1$ and $s = 0$, we have $\theta \ \invneg\ \epsilon(\mu) = 0$ by degree reasons. For $s = 1$, we have $\theta \ \invneg\ \epsilon(\mu) = 0$ because $\theta$ is a relative derivation of $X_K \to S_K$. Since $\Omega^\bullet_{S_K/\bnull}$ is generated in degree $1$, the relation $\theta \ \invneg \ \epsilon(\mu) = 0$ follows for all $s$ from the formula for $\theta \ \invneg \ (\alpha \wedge \beta)$. Then the same formula yields the commutation relation \eqref{comm-rel} for $p = -1$ and all $s$. Then, on $U_K$, the commutation relation follows for all $p$ from $(\theta \wedge \xi) \ \invneg \ \alpha = \theta \ \invneg \ (\xi \ \invneg \ \alpha)$ because, there, $\G^{p}_{X_K/S_K}$ is generated by $\G^{-1}_{X_K/S_K}$. Finally, on $X_K$, the relation is inherited from $U_K$ via direct image. Every element of $F^s\K^i_{X_K/S_K}$ is locally a sum of elements of the form $\epsilon(\mu) \wedge \alpha$ for $\mu \in \Omega^s_{S_K/\bnull}$ and $\alpha \in \K^{i - s}_{X_K/S_K}$. Then direct computations show that $F^s\K^\bullet_{X_K/S_K}$ is closed under $\partial$, $\invneg$, and $\cL$. We have already seen above that the remaining statements hold.
\end{proof}

\section{The characteristic absolute Gerstenhaber calculus}\label{char-abs-G-calc-sec}\note{char-abs-G-calc-sec}

In this section, we study Thom--Whitney resolutions of absolute Gerstenhaber calculi, and we construct an analog of the characteristic sheaf of curved Gerstenhaber calculi $(\cP\V_A^{\bullet,\bullet},\D\R_A^{\bullet,\bullet})$ in absolute Gerstenhaber calculi. 

Let $f: X_K \to S_K$ be a generically log smooth deformation of $f_0: X_0 \to S_0$ of type $\D$. Since the Thom--Whitney resolution is a $\CC$-linear construction, we can apply it to the absolute Gerstenhaber calculus $(\G^\bullet_{X_K/S_K},\K^\bullet_{X_K/S_K})$ as well.

\begin{lemma}
 Let $(\G_{X_K/S_K}^{\bullet,\bullet},\K_{X_K/S_K}^{\bullet,\bullet})$ be the Thom--Whitney resolution of the absolute Gerstenhaber calculus $(\G^\bullet_{X_K/S_K},\K^\bullet_{X_K/S_K})$ with operations according to the sign conventions in Chapter~\ref{TW-reso-sec}.
 \begin{enumerate}[label=\emph{(\alph*)}]
  \item $(\G_{X_K/S_K}^{\bullet,\bullet},\K_{X_K/S_K}^{\bullet,\bullet})$ is a differential bg absolute Gerstenhaber calculus in $\mathfrak{Coh}(X_K/S_K)$.
  \item $F^s\K^{\bullet,\bullet}_{X_K/S_K}$ is the Thom--Whitney resolution of $F^s\K^\bullet_{X_K/S_K}$.
  \item $\A^{\bullet,\bullet}_{X_K/S_K} := F^0\K^{\bullet,\bullet}_{X_K/S_K}/F^1\K^{\bullet,\bullet}_{X_K/S_K}$ is the Thom--Whitney resolution of $\A_{X_K/S_K}^\bullet$. 
 \end{enumerate}
\end{lemma}
\begin{proof}
 We just give some brief indication. The map $\epsilon: f^{-1}\Omega^\bullet_{S_K/\bnull} \to \K^{\bullet,0}_{X_K/S_K}$ is the composition of the original $\epsilon$ with the injection $\K^\bullet_{X_K/S_K} \to \K^{\bullet,0}_{X_K/S_K}$. The filtered piece $F^s\K^i$ is the image of the map 
 $$\bigwedge^s(Q^{gp} \otimes_\ZZ \CC) \otimes_\CC \K^{i - s} \to \K^i, \quad \mu \otimes \alpha \mapsto \epsilon(\mu) \wedge \alpha.$$
 On the one hand, we can consider this as an $\cO_{X_K}$-linear map between two (plain, non-graded) coherent sheaves; then we can apply the Thom--Whitney resolution, which is an exact functor for each $j$, and obtain that the Thom--Whitney resolution $\TW^j(F^s\K^i) := C^{j,0}_\TW(F^s\K^i(\U))$ is the image of the map 
 $$\bigwedge^s(Q^{gp} \otimes_\ZZ \CC) \otimes_\CC \TW^j(\K^{i-s}) \to \TW^j(\K^s),\: \mu \otimes (a_n \otimes \alpha_n)_n \mapsto (a_n \otimes (\epsilon(\mu) \wedge \alpha_n))_n.$$
 On the other hand, $F^s\K^{i,j}$ is, by definition, the image of the map 
 $$\bigwedge^s(Q^{gp} \otimes_\ZZ \CC) \otimes_\CC \K^{i - s,j} \to \K^{i,j},$$
 $$\mu \otimes (a_n \otimes \alpha_n)_n \mapsto (1 \otimes \epsilon(\mu))_n \wedge (a_n \otimes \alpha_n)_n  = (-1)^{js} ((1 \wedge a_n) \otimes (\epsilon(\mu) \wedge \alpha_n))_n,$$
 where the $\wedge$-product on the right has to be formed according to the conventions for bilinear maps, as they hold for the construction of the $\wedge$-product on $\K_{X_K/S_K}^{\bullet,\bullet}$. Although the signs differ, the two maps have the same image, so $F^s\K^{i,j}_{X_K/S_K}$ is the ($j$-th piece of the) Thom--Whitney resolution of $F^s\K^i_{X_K/S_K}$, embedded via $\TW^j(-)$ applied to the embedding $F^s\K^i \to \K^i$ sitting in degree $i$. Then, by exactness, $\TW^j(\A^i) \cong F^0\K^{i,j}/F^1\K^{i,j}$. This isomorphism is compatible with all operations (that are compatible with $\K^i \to \A^i$) because $\TW^j(\A^i)$ and $\TW^j(\K^i)$ sit in the same degrees and hence have the same sign conventions. With this preparation, we find that the map $\tau_{s,j}$ in the definition of a bg absolute Gerstenhaber calculus is an isomorphism of complexes because of the analogous condition on $\K^\bullet_{X_K/S_K}$. We define $\ell = 0$. The remaining conditions are more or less straightforward.
\end{proof}

Every infinitesimal automorphism $\varphi$ of $f: X_K \to S_K$ induces an automorphism $(T_\varphi^\bullet,d_\varphi^\bullet)$ of the absolute Gerstenhaber calculus $(\G^\bullet_{X_K/S_K},\K^\bullet_{X_K/S_K})$. Since $\G^\bullet_{X_K/S_K}$ is the usual Gerstenhaber algebra of $f: X_K \to S_K$, the induced automorphism there is $T^\bullet_\varphi = \mathrm{exp}_{-\theta}$ for a unique $\theta \in I_K \cdot \G^{-1}_{X_K/S_K}$, where $I_K \subset A_K$ is the kernel of $A_K \to \CC$. By the second proof of Lemma~\ref{geom-auto-gauge-trafo-corr}, we find that $d^\bullet\varphi = \mathrm{exp}_{-\theta}$ as well, where $\mathrm{exp}_{-\theta}$ is constructed via the absolute Lie derivative on $\K^\bullet_{X_K/S_K}$.\footnote{The first proof of Lemma~\ref{geom-auto-gauge-trafo-corr} is not applicable here because $\K^i_{X_K/S_K}$ is even on the strict and smooth locus locally not generated by elements of the form $\partial g$ for $g \in \cO_{X_K}$.}

A gauge transform $\mathrm{exp}_{-\theta}$ of $(\G^\bullet_{X_K/S_K},\K^\bullet_{X_K/S_K})$ induces a gauge transform of the Thom--Whitney resolution $(\G^{\bullet,\bullet}_{X_K/S_K},\K_{X_K/S_K}^{\bullet,\bullet})$ given by 
$(1 \otimes (-\theta))_n \in I_K \cdot \G^{-1,0}_{X_K/S_K}$.

In Situation~\ref{proper-log-toroidal-sitn}, we can apply this construction, using the open cover $\U$ for the Thom--Whitney resolution, to the local deformations $V_{\alpha;A_K} \to S_K$. Since this will be sufficient, we restrict to the powers of the maximal ideal $K = (Q \setminus \{0\})^k$ and write $A_k$, $S_k$, and $V_{\alpha;k}$. We obtain differential bigraded absolute Gerstenhaber calculi $(\G_{\alpha;k}^{\bullet,\bullet},\K_{\alpha;k}^{\bullet,\bullet})$ together with comparison isomorphisms
$$\TW(\psi_{\alpha\beta;k})^*: \G_{\beta;k}^{\bullet,\bullet}|_{\alpha\beta} \to \G_{\alpha;k}^{\bullet,\bullet}|_{\alpha\beta}, \quad \K_{\beta;k}^{\bullet,\bullet}|_{\alpha\beta} \to \K_{\alpha;k}^{\bullet,\bullet}|_{\alpha\beta}$$
as well as restriction maps as in Chapter~\ref{bg-TW-defo-P-alg-sec}. The cocycles of $\TW(\psi_{\alpha\beta;k})^*$ are gauge transforms.

We have a characteristic curved Gerstenhaber algebra $\cP\V_k^{\bullet,\bullet}$ in the sense of Definition~\ref{char-curved-sheaf-defn}. By definition, it comes with local isomorphisms 
$$\chi_{\alpha;k}^*:\: \G_{\alpha;k}^{\bullet,\bullet} \to \cP\V_k^{\bullet,\bullet}|_\alpha$$
of bigraded Gerstenhaber algebras, compatible with restriction maps. On overlaps, we have that 
$$\TW(\psi_{\beta\alpha;k})^* \circ (\chi_{\alpha;k}^*)^{-1} \circ \chi_{\beta;k}^*: \: \G_{\beta;k}^{\bullet,\bullet}|_{\alpha\beta} \to \G_{\beta;k}^{\bullet,\bullet}|_{\alpha\beta}$$
is the gauge transform $\mathrm{exp}_{\theta_{\alpha\beta;k}}$ for a---since we have strict faithfulness---unique element $\theta_{\alpha\beta;k} \in I_K \cdot \G_{\beta;k}^{-1,0}|_{\alpha\beta}$. The predifferential $\bar\partial_k$ on $\cP\V_k^{\bullet,\bullet}$ satisfies 
$$(\chi_{\alpha;k}^*)^{-1} \circ \bar\partial_k|_\alpha \circ \chi_{\alpha;k}^* = \bar\partial_{\alpha;k} + \nabla_{\phi_{\alpha;k}}(-)$$
for unique elements $\phi_{\alpha;k} \in I_K \cdot \G_{\alpha;k}^{-1,1}$. All these data are compatible with restriction maps because they are constructed via an order-by-order lift along $A_{k + 1} \to A_k$.

We use these data to construct the characteristic curved sheaf of absolute Gerstenhaber calculi.

\begin{defn}
 In the above situation, the sheaf $\D\A_k^{i,j}$ is given by $$\Gamma(W,\D\A_k^{i,j}) = \{s_\alpha \in \Gamma(W \cap V_\alpha,\K_{\alpha;k}^{i,j}) \ | \  \TW(\psi_{\alpha\beta;k})^*(s_\alpha|_{\alpha\beta}) = \mathrm{exp}_{\theta_{\alpha\beta;k}}(s_\beta|_{\alpha\beta})\},$$
 i.e., by the gluing of the pieces $\K_{\alpha;k}^{i,j}$, identified along $\TW(\psi_{\alpha\beta;k})^* \circ \mathrm{exp}_{\theta_{\alpha\beta;k}}^{-1}$.
\end{defn}

The gluings satisfy the cocycle condition because, after transporting the gauge transforms along the maps $\TW(\psi_{\alpha\beta;k})^*$, which commute with gauge transforms, the questions comes down to comparing the $\theta_{\alpha\beta;k}$ with the elements $o_{\alpha\beta\gamma;k}$ which control the cocycles of $\TW(\psi_{\alpha\beta;k})^*$. Now the cocycle condition is satisfied for $\cP\V_k^{\bullet,\bullet}$, so it must be satisfied for $\D\A_k^{i,j}$ as well. We also have induced restriction maps $\D\A_{k + 1}^{i,j} \to \D\A_k^{i,j}$.

We endow $\D\A_k^{\bullet,\bullet}$ with the (bigraded) operations $\wedge$, $\invneg$, $\partial$, and $\cL$. They are defined locally and commute with both $\TW(\psi_{\alpha\beta;k})^*$ and gauge transforms, so they are well-defined on $\D\A_k^{\bullet,\bullet}$. We also endow $\D\A_k^{\bullet,\bullet}$ with a predifferential $\bar\partial_k$ of bidegree $(0,1)$, which is given on $V_\alpha$ by $\bar\partial_{\alpha;k} + \nabla_{\phi_{\alpha;k}}(-)$. This is well-defined because it is so on the Gerstenhaber algebra $\cP\V_k^{\bullet,\bullet}$. Namely, $\TW(\psi_{\alpha\beta;k})^*$ commutes with all predifferentials, and for the gauge transform part of the comparison, the equations for the transform of $\phi_{\alpha;k}$ into $\phi_{\beta;k}$ (in Lemma~\ref{gauge-action-BV}) are the same on $\D\A_k^{\bullet,\bullet}$ and $\cP\V_k^{\bullet,\bullet}$. Moreover, all operations are compatible with restriction maps. This shows:

\begin{lemma}
 The pair $(\cP\V_k^{\bullet,\bullet},\D\A_k^{\bullet,\bullet})$ forms a sheaf of curved absolute Gerstenhaber calculi.
\end{lemma}
\begin{proof}
 As discussed above, we have globally defined sheaves, operations, and constants. For $\mu \in \Omega^s_{S_K/\bnull}$, the commutation relation \eqref{comm-rel-bg} shows that $\theta \ \invneg \ \epsilon(\mu) = 0$ on $\K_{\alpha;k}^{s,1}$ for $\theta \in \G^{-1,1}_{\alpha;k}$. Thus, we have $\cL_\theta(\epsilon(\mu)) = 0$ by the Lie--Rinehart homotopy formula, and this implies $\epsilon(\mu) \in \D\A_k^{s,0}$, giving the map $\epsilon$. The constant $\ell$ already exists globally as part of $\cP\V_k^{\bullet,\bullet}$. Now we have all the data required by Definition~\ref{abs-G-calc-defn}.
 
 Each $(\G_{\alpha;k}^{\bullet,\bullet},\K_{\alpha;k}^{\bullet,\bullet})$ is a curved absolute Gerstenhaber calculus when endowed with the predifferential $\bar\partial_{\alpha;k} + \nabla_{\phi_{\alpha;k}}(-)$ and the constant $\ell_{\alpha;k} := \bar\partial_{\alpha;k}(\phi_{\alpha;k}) +  \frac{1}{2}[\phi_{\alpha;k},\phi_{\alpha;k}]$. The condition $\bar\partial\epsilon(\mu) = 0$ is preserved under changing the predifferential by $\nabla_{\phi}(-)$ because of the commutation relation \eqref{comm-rel-bg}.
 Since $(\cP\V_k^{\bullet,\bullet},\D\A_k^{\bullet,\bullet})$ is locally isomorphic to this curved absolute Gerstenhaber calculus, and since all properties required in Definition~\ref{abs-G-calc-defn} are local, it is a curved absolute Gerstenhaber calculus as well.
\end{proof}

\section{The proof of surjectivity}\label{surj-proof-sec}\note{surj-proof-sec}

In this section, we show that the map in \eqref{DR-surj-eqn} is surjective for $A = A_k$ when we are in Situation~\ref{proper-log-toroidal-sitn}. For $Q = 0$, nothing is to show. For $Q = \NN$, we employ an old trick which Steenbrink ascribes in \cite{Steenbrink1976} to Katz, referring to \cite{SGA7-2}. It also has been used in \cite[Thm.~4.1]{GrossSiebertII}, and subsequently in \cite[Ass.~4.15]{ChanLeungMa2023} and in \cite[Thm.~1.10]{FFR2021}, where we prove the surjectivity in the case of an actual log toroidal deformation. Finally, the case of a general $Q$ can be reduced to $Q = \NN$. 

So let us first assume that $Q = \NN$. By Lemma~\ref{gen-acyc-TW-lemma}, the sheaves $\D\R_0^{i,j}$ are $\Gamma$-acyclic since $\U$ is admissible. Then each $\D\R_k^{i,j}$ must be $\Gamma$-acyclic as well because of the exact sequence
\begin{equation}\label{DR-ext-alg}
 0 \to I \otimes_\CC \D\R_0^{i,j} \to \D\R_{k + 1}^{i,j} \to \D\R_{k}^{i,j} \to 0.
\end{equation}
Since $DR_{X_0/A_k}^{i,j} = \Gamma(X_0,\D\R_k^{i,j})$, we have $H^n(DR_{X_0/A_k}^\bullet,d) = \HH^n(X_0,(\D\R_k^\bullet,d))$, so it is sufficient to prove the surjectivity in (actual) hypercohomology. Next, we form a complex 
$$\D\A_k^n[u] := \bigoplus_{s = 0}^\infty \D\A_k^n \cdot u^s$$
with differential $e_d(\alpha \cdot u^s) := d(\alpha) \cdot u^s + s\bar\rho \wedge \alpha \cdot u^{s - 1}$. Here, $\bar\rho \in \Gamma(X_0,\D\A_k^1)$ is the following element: Since $Q = \NN$, we have an element $1$ in the sheaf of monoids $\M_{S_k}$ on $S_k$. Applying the universal log derivation, we obtain $[1] \in \Omega^1_{S_k/\bnull} = \CC[\NN] \otimes_\CC (\NN^{gp} \otimes_\ZZ \CC)$, and then $\bar\rho := \epsilon([1])$. The operator $d$ in the formula is our differential $d = \partial + \bar\partial + \ell \ \invneg \ (-)$. Then we consider the composition 
$$\pi: (\D\A_k^\bullet[u],e_d) \to (\D\A_k^\bullet,d) \to (\D\R_k^\bullet,d) \to (\D\R_0^\bullet,d).$$
Each of them is compatible with the differential, and each of them is surjective (the first map is the projection onto the $u^0$-summand). Thus, we can define a complex $(\R^\bullet,e_d)$ by the exact sequence 
$$0 \to (\R^\bullet,e_d) \to (\D\A_k^\bullet[u],e_d) \to (\D\R_0^\bullet,d) \to 0.$$
It is then sufficient to prove $\HH^n(X_0,\R^\bullet) = 0$ to conclude the surjectivity in \eqref{DR-surj-eqn}. This is the main goal of the remainder of this section. We have to overcome three main difficulties: First, the main argument works only for complex analytic spaces, not schemes, secondly, we have no global deformation $X_k$ for which $\D\A_k^\bullet[u]$ is quasi-coherent, so analytification needs special care, and thirdly, we have to analytify the differential operator $e_d$, so we cannot just take a pull-back along $\varphi: X^{an} \to X$, even locally.

\subsubsection*{The local model}

Let $(\NN \subset P,\F)$ be an elementary log toroidal datum in the sense of \cite[Defn.~3.1]{FFR2021}; it gives rise to a log morphism $f: A_{P,\F} \to A_\NN$, which serves as an \'etale local model for a generically log smooth deformation of type $\D$ in Situation~\ref{proper-log-toroidal-sitn}. Let $k \geq 0$, and let $f: L_k \to S_k$ be the base change of the local model to $S_k = \Spec (\NN \to \CC[t]/(t^{k + 1})$. Let $K = (\NN^+)^{k + 1}$ be the corresponding monoid ideal. 

Let $(\G^\bullet_{k},\K^\bullet_{k})$ be the absolute Gerstenhaber calculus of $f: L_k \to S_k$. Then we define a complex $(\K_{k}^\bullet[u],e)$ with $\K_{k}^n[u] := \bigoplus_{s = 0}^\infty \K_{k}^n \cdot u^s$ and $e(\alpha \cdot u^s) := \partial(\alpha) \cdot u^s + s\bar\rho \wedge \alpha \cdot u^{s - 1}$ similar to the above one with $\bar\rho = \epsilon([1])$. We define $(\R_{k}^\bullet,e)$ as the kernel in the sequence 
\begin{equation}\label{local-model-R}
 0 \to (\R_{k}^\bullet,e) \to (\K_{k}^\bullet[u],e) \to (\A^\bullet_{0},\partial) \to 0. 
\end{equation}
Let $\tilde f: \tilde L_k \to \tilde S_k$ be the analytification of $f: L_k \to S_k$, where we denote the analytification with $\tilde{-}$ in order to not load our notation even more with the heavy $(-)^{an}$. The pieces of the complexes in \eqref{local-model-R} are (quasi-)coherent sheaves, and the maps between them are $\cO_{L_k}$-linear. Thus, we can analytify them. The de Rham differential $\partial$ on $\A_0^\bullet$ is a first-order differential operator, so we can analytify it as well in a unique way by Proposition~\ref{analytify-diff-op-1} and obtain $\tilde \partial$. But $e$ is a first-order differential operator as well, so we find a unique analytification $\tilde e$. The uniqueness properties in Proposition~\ref{analytify-diff-op-1} show that 
$$0 \to (\tilde\R_{k}^\bullet,\tilde e) \to (\tilde\K_{k}^\bullet[u],\tilde e) \to (\tilde \A^\bullet_{0},\tilde\partial) \to 0$$
is a short exact sequence of complexes (!) whose pieces are quasi-coherent analytic sheaves, and whose differentials are analytic differential operators of first order. 

\begin{prop}
 The complex $(\tilde \R_k^\bullet,\tilde e)$ has no cohomology at the stalk at $0 \in \tilde L_k$.
\end{prop}
\begin{proof}
 In principle, this comes down to a calculation which is already in \cite[Thm.~4.1]{GrossSiebertII}. There, the computation is done on the global sections on $L_k$ of the algebraic counterparts, and it is wrongly concluded that this would yield $\Gamma$-acyclicity of the global algebraic complex. In \cite[Lemma~12.1]{FFR2021}, we have remedied this situation by working on the stalks of the analytification, which needs some additional consideration of convergence questions. However, strictly speaking, the proof of \cite[Lemma~12.1]{FFR2021} is still incomplete because we made an implicit assumption about the form of $\tilde\partial$ and $\tilde e$ on the analytic stalks, which we did not prove back then. So here comes what is still missing.
 
 The remaining gap can be closed by using the sequence topology on analytic stalks, which we review in Chapter~\ref{analyt-preliminaries-sec}. In the discussion after \cite[Lemma~7.13]{FFR2021}, we have shown that the local analytic ring at $0 \in \tilde L_k$ is 
$$\cO_{\tilde L_k,0} = \left\{ \sum_{e \in E_K} \alpha_e z^e \in \CC\llbracket E_K\rrbracket \ \middle| \ \mathrm{sup}_{e \in E_K \setminus 0}\left\{\frac{\mathrm{log}|\alpha_e|}{h(e)}\right\} < \infty \right\}$$
with the notations from there, i.e., $E_K = P \setminus (P + K)$ and $h: P \to \NN$ is a local homomorphism of sharp toric monoids. It is now quite easy to show that any sequence of finite truncations of $\alpha = \sum_{e \in E_K}\alpha_ez^e$ which exhausts $E_K$ actually converges to $\alpha$ in the sequence topology. Namely, this can be shown directly for the stalk of $(\Spec \CC[\NN^r])^{an}$ when described as in \cite[Lemma~7.13]{FFR2021}; then this property descends along a local surjection $\phi: \NN^r \to P$ to our sharp toric monoid $P$ and the stalk of $\tilde A_P$; and finally, this property descends along the closed embedding $\tilde L_k \subset \tilde A_P$ of complex analytic spaces. Furthermore, for a stalk of a coherent analytic sheaf as in \cite[Lemma~7.14]{FFR2021}, the same is true. Namely, each such stalk is, in the sequence topology, a closed subset of $\cO_{\tilde L_k,0}^{\oplus r}$ carrying the product topology. Then it is also true for the quasi-coherent analytic sheaf $\tilde\K_k^n[u]$. By Proposition~\ref{analytify-diff-op-1} and Lemma~\ref{diff-op-continuity}, the analytifications $\tilde\partial$ and $\tilde e$ are continuous for the sequence topologies; thus, we can evaluate them on the stalks by evaluating them on finite truncations, and then taking the limit. However, the finite truncations come from sections of the algebraic quasi-coherent sheaves on $L_k$; thus, the analytic differential operators $\tilde \partial$ and $\tilde e$ actually have the form used in our proof of \cite[Lemma~12.1]{FFR2021}.
\end{proof}

\subsubsection*{The transfer to $f: V_{\alpha;k} \to S_k$}

Consider $f: V_{\alpha;k} \to S_k$, and let $(\G^\bullet_{\alpha;k},\K^\bullet_{\alpha;k})$ be the absolute Gerstenhaber calculus. Then we define a complex $(\K_{\alpha;k}^\bullet[u],e)$ with $\K_{\alpha;k}^n[u] := \bigoplus_{s = 0}^\infty \K_{\alpha;k}^n \cdot u^s$ and $e(\alpha \cdot u^s) := \partial(\alpha) \cdot u^s + s\bar\rho \wedge \alpha \cdot u^{s - 1}$ as for the local model. We define $(\R_{\alpha;k}^\bullet,e)$ as the kernel of $(\K_{\alpha;k}^\bullet[u],e) \to (\A^\bullet_{\alpha;0},\partial)$.
Let $\tilde f: \tilde V_{\alpha;k} \to \tilde S_k$ be the analytification of $f: V_{\alpha;k} \to S_k$. As above, we obtain an analytification 
\begin{equation}\label{an-log-toroidal-R}
 0 \to (\tilde\R_{\alpha;k}^\bullet,\tilde e) \to (\tilde \K_{\alpha;k}^\bullet[u],\tilde e) \to (\tilde \A^\bullet_{\alpha;0},\tilde\partial) \to 0.
\end{equation}
By assumption, $f: V_{\alpha;k} \to S_k$ is, locally in the \'etale topology, isomorphic to $f: L_k \to S_k$ for some elementary log toroidal datum $(\NN \subset P,\F)$, which, in general, depends on the point $v \in V_{\alpha;k}$. By \cite[Cor.~4.16]{FeltenThesis}, if $v \in V_{\alpha;k}(\CC)$ is a $\CC$-valued point, we can arrange the \'etale roof such that $v = 0$ in $L_k(\CC)$.

If $f: X_k \to S_k$ is a generically log smooth family, and $g: Y_k \to X_k$ is a strict \'etale morphism, then we have $g^*\A^i_{X_k/S_k} \cong \A^i_{Y_k/S_k}$, $g^*\G^p_{X_k/S_k} \cong \G^p_{Y_k/S_k}$, and $g^*\K^i_{X_k/S_k} \cong \K^i_{Y_k/S_k}$ canonically. These maps are compatible with $g^{-1}$ applied to all operations in the (absolute or relative) Gerstenhaber calculus. Using this together with Proposition~\ref{analytify-diff-op-1}\footnote{The point is that the two analytifications of $\partial$ and $e$ either on $\tilde X_k$ or on $\tilde Y_k$ agree via $\tilde g$.} and the fact that $\tilde g: \tilde Y_k \to \tilde X_k$ is a local isomorphism in the Euclidean topology, we find that the stalk of the exact sequence \eqref{an-log-toroidal-R} for $Y_k \to S_k$ at $y \in \tilde Y_k$ is the same as the stalk of the exact sequence \eqref{an-log-toroidal-R} for $X_k \to S_k$ at $\tilde g(y) \in \tilde X_k$. Applying this along the \'etale roof of the local model at $v \in \tilde V_{\alpha;k}$, we find that $(\tilde\R_{\alpha;k}^\bullet, \tilde e)$ is an acyclic complex.\footnote{Note that it does not matter that we may change the log smooth open subset of the generically log smooth family in comparing $V_{\alpha;k}$ with its local models, as is allowed in \cite[Defn.~4.1]{FFR2021}. Namely, the absolute and relative Gerstenhaber calculi are independent of the choice of open subset of log smoothness.}

Applying the Thom--Whitney resolution (with respect to $\U$) to the original algebraic version of \eqref{an-log-toroidal-R} yields a short exact sequence 
\begin{equation}\label{TW-reso-ses}
 0 \to (\TW^\bullet(\R^\bullet_{\alpha;k}),e,\bar\partial) \to (\TW^\bullet(\K_{\alpha;k}^\bullet[u]),e,\bar\partial) \to (\TW^\bullet(\A_{\alpha;0}^\bullet),\partial,\bar\partial) \to 0
\end{equation}
of sheaves of double complexes, where we apply our sign conventions of Chapter~\ref{TW-reso-sec} in the formation of $\partial$ and $e$ on the resolution. We can analytify this short exact sequence as well because it consists of quasi-coherent sheaves with first-order differential operators between them.\footnote{The Thom--Whitney construction does not commute with analytification, i.e., when we apply the analytic analog of the Thom--Whitney construction to $\F^{an}$, this is in general not the same as the analytification of $\TW^\bullet(\F)$. The reason for this is that direct images from open subsets do not commute with analytification. Consider for example the structure sheaf on $\bAA^1 \subseteq \PP^1$.} Each horizontal row
$$\R_{\alpha;k}^i \to \TW^\bullet(\R_{\alpha;k}^i,\bar\partial)$$
is a resolution in the category of quasi-coherent sheaves. Thus, also the analytification 
$$\tilde\R_{\alpha;k}^i \to \widetilde\TW^\bullet(\R_{\alpha;k}^i,\tilde{\bar\partial})$$
is a resolution in the category of quasi-coherent analytic sheaves. In the vertical direction, we have analytic first-order differential operators $\tilde e$ such that $\tilde{\bar\partial} \tilde e + \tilde e\tilde{\bar\partial} = 0$. In particular, we have a quasi-isomorphism 
$$(\tilde\R_{\alpha;k}^\bullet,\tilde e) \to (\mathrm{Tot}^\bullet\widetilde\TW^\bullet(\R^\bullet_{\alpha;k}),\tilde e + \tilde\db) =: \widetilde{\mathrm{Tot}}^\bullet(\R^\bullet_{\alpha;k})$$
of complexes of sheaves so that the latter complex is acyclic.

When $\phi \in \m_k \cdot \G^{-1,1}_{\alpha;k}$, then we can modify the (pre-)differential $\bar\partial$ to obtain a predifferential $\db_\phi := \db + \cL_\phi(-)$. In the short exact sequence \eqref{TW-reso-ses}, only the horizontal differentials change from $\db$ to $\db_\phi$. Setting $E := e^\phi \in \mathrm{Tot}^0(\TW^\bullet(\G^\bullet_{\alpha;k}))$, we obtain a diagram of exact sequences of total complexes 
\[
 \resizebox{12cm}{!}{
 \xymatrix{
  {\begin{array}{c}
    (\mathrm{Tot}^\bullet\TW^\bullet(\R_{\alpha;k}^\bullet),  e + \db)
   \end{array}}
 \ar@{^{(}->}[r] & (\mathrm{Tot}^\bullet\TW^\bullet(\K_{\alpha;k}^\bullet[u]), e + \bar\partial) \ar@{->>}[r] & (\mathrm{Tot}^\bullet\TW^\bullet(\A_{\alpha;0}^\bullet),\partial + \db)\\
  {\begin{array}{c}
    (\mathrm{Tot}^\bullet\TW^\bullet(\R_{\alpha;k}^\bullet), \\  e + \db_\phi + \ell_\phi \ \invneg \ (-))
   \end{array}}
 \ar@{^{(}->}[r] \ar[u]^{\Phi_E}_\cong & {\begin{array}{c}
            (\mathrm{Tot}^\bullet\TW^\bullet(\K_{\alpha;k}^\bullet[u]),\\ e + \db_\phi + \ell_\phi \ \invneg \ (-))
           \end{array}}
 \ar@{->>}[r] \ar[u]^{\Phi_E}_\cong & (\mathrm{Tot}^\bullet\TW^\bullet(\A_{\alpha;0}^\bullet),\partial + \db) \ar@{=}[u]\\
 }
 }
\]
similar to Lemma~\ref{hypercohom-comparison}. Thus, the analytification of the complex on the lower left side is acyclic. Furthermore, the restriction of our complex $(\R^\bullet,e_d) \subseteq (\D\A_k^\bullet[u],e_d)$ above to $V_{\alpha;k}$ is isomorphic to the lower left complex once we take $\phi = \phi_{\alpha;k}$.

\subsubsection*{From local to global analytification}

We wish to find a global analytification $(\tilde\R^\bullet,\tilde e_d)$ of $(\R^\bullet,e_d)$ which is locally the one coming from $\tilde V_{\alpha;k} \to V_{\alpha;k}$. Then $(\tilde\R^\bullet,\tilde e_d)$ must be $\Gamma$-acyclic because it is acyclic, and we wish to conclude from this that $(\R^\bullet,e_d)$ is $\Gamma$-acyclic as well. The first part is quite easy. On underlying spaces, we have a global analytification map $\varphi: |\tilde X_0| \to |X_0|$. Then, on each $V_\alpha$, we can analytify separately and obtain a map 
$$\varphi^{-1}(\R^\bullet,e_d)|_\alpha \to (\tilde\R_\alpha^\bullet,\tilde e_d),$$
where the right hand side is the analytification of $(\R^\bullet,e_d)|_\alpha$ along $\tilde V_{\alpha;k} \to V_{\alpha;k}$ by definition. On overlaps $V_\alpha \cap V_\beta$, we obtain comparison isomorphisms which are compatible with the differentials. Since these comparison isomorphisms come from the tensor product representation, they fit into a commutative diagram 
\[
 \xymatrix{
   \varphi^{-1}(\R^\bullet,e_d)|_{\alpha\beta} \ar@{=}[d] \ar[r] & \varphi^{-1}(\R^\bullet,e_d)|_{\alpha\beta} \otimes_{\varphi^{-1}\cO_{\alpha;k}|_{\alpha\beta}} \varphi^{-1}\tilde\cO_{\alpha;k}|_{\alpha\beta} \ar[d]^\cong \\
   \varphi^{-1}(\R^\bullet,e_d)|_{\alpha\beta} \ar[r] & \varphi^{-1}(\R^\bullet,e_d)|_{\alpha\beta} \otimes_{\varphi^{-1}\cO_{\beta;k}|_{\alpha\beta}} \varphi^{-1}\tilde\cO_{\beta;k}|_{\alpha\beta}
 }
\]
 with the identity on the left. Thus, also the cocycles of the comparison isomorphism fit into such a diagram with the identity on the left. However, on the analytic stalks, the cocycles are locally bounded and hence  continuous for the sequence topology; since the image of $\varphi^{-1}\R^\bullet|_\alpha$ is dense in $\tilde\R^\bullet_\alpha$, this implies that the cocycles are the identity because finitely generated subsheaves of $\R^\bullet_\alpha$ are coherent, and the sequence topology on stalks of coherent sheaves is Hausdorff, cf.~the proof of Lemma~\ref{locally-bounded-uniqueness}.\footnote{Note that the cocycles are not $\tilde\cO_{k;\alpha}|_{\alpha\beta\gamma}$-linear because the cocycles are, in general, not the identity on the structure sheaf. Nonetheless, they are locally bounded operators in the sense of Chapter~\ref{analyt-preliminaries-sec}, and they are continuous on finitely generated subsheaves.} Thus, we have a global analytification $(\tilde R^\bullet, \tilde e_d)$ of $(\R^\bullet, e_d)$, and $(\tilde \R^\bullet,e_d)$ is acyclic, hence also $\Gamma$-acyclic.
 
 \subsubsection*{Comparison of cohomologies}
 
 If we have a global scheme structure, i.e., a gluing of the pieces $V_{\alpha;k}$, then we can conclude that $(\R^\bullet,e_d)$ is acyclic by comparing the two spectral sequences of the stupid filtrations and applying the classical GAGA comparison of sheaf cohomologies of proper schemes $X$, which holds not only for coherent but also for quasi-coherent sheaves $\F$ by Lemma~\ref{qcoh-gaga}.
 Since we do not have a global scheme structure a priori, more care is needed. Nonetheless, we have a map of spectral sequences\footnote{Cf.~also \cite[Lemma~2.46]{FeltenThesis} for a discussion of the construction.}
 $$(E_1^{p,q} := H^q(X_0,\R^p) \Rightarrow \HH^{p + q}(X_0,\R^\bullet)) \to (\tilde E_r^{p,q} := H^q(\tilde X_0,\tilde\R^p) \Rightarrow \HH^{p + q}(\tilde X_0,\tilde\R^\bullet)),$$
 and we know already that the abutment on the right hand side is $0$.
 
 First, by Lemma~\ref{char-curved-sheaf-prop}, we know that $\D\R_k^{i,j}$ is $\Gamma$-acyclic. Now $\D\A_k^{i,j}$ has a decreasing filtration $F^s\D\A_k^{i,j}$ whose subquotients are isomorphic to $f^{-1}\Omega^s_{S_k/\bnull} \otimes_{A_k} \D\R_k^{i,j}$. Thus, $\D\A_k^{i,j}$ is $\Gamma$-acyclic as well. Since $X_0$ is Noetherian, direct limits commute with cohomology; thus, $\D\A_k^{i,j}[u]$ is $\Gamma$-acyclic, and then $\R^n$ must be $\Gamma$-acyclic as well. For $\ell \geq 2$, we have directly $H^\ell(X_0,\R^n) = 0$, and for $\ell = 1$, this follows from the surjectivity of 
 $$H^0(X_0,\D\A_k^{i,j}[u]) \to H^0(X_0,\D\R_0^{i,j}).$$
 Analogously to $\tilde\R^\bullet$, we can also analytify $\D\R_k^{\bullet,\bullet}$ and $\D\A_k^{\bullet,\bullet}$ globally, yielding $\widetilde{\D\R}_k^{\bullet,\bullet}$ and $\widetilde{\D\A}_k^{\bullet,\bullet}$. The sheaf $\widetilde{\D\R}_0^{i,j}$ is the usual analytification of the $\Gamma$-acyclic quasi-coherent sheaf $\D\R_0^{i,j}$ on $X_0$, so it is $\Gamma$-acyclic by Lemma~\ref{qcoh-gaga}. Similar to the algebraic case, this implies that first $\widetilde{\D\R}_k^{i,j}$ and then also $\widetilde{\D\A}_k^{i,j}$ is $\Gamma$-acyclic. Since cohomology commutes with filtered colimits on the compact Hausdorff space $\tilde X_0$ by \cite[Thm.~4.12.1]{Godement1973}, also $\widetilde{\D\A}_k^{i,j}[u]$ is $\Gamma$-acyclic. Then also $\tilde\R^p$ must be $\Gamma$-acyclic because $\widetilde{\D\A}_k^p[u] \to \widetilde{\D\R}_0^p$ is surjective on the level of global sections. This shows that the maps $E_1^{p,q} \to \tilde E_1^{p,q}$ of the above map of spectral sequences is an isomorphism for $q \geq 1$. For the case $q = 0$, first observe that 
 $$H^0(X_0,\D\R_0^{i,j}) \to H^0(\tilde X_0,\widetilde{\D\R}_0^{i,j})$$
 is an isomorphism by Lemma~\ref{qcoh-gaga}. Using \eqref{DR-ext-alg} and its analytic counterpart, we can show that 
 $$H^0(X_0,\D\R_k^{i,j}) \to H^0(\tilde X_0,\widetilde{\D\R}_k^{i,j})$$
 is an isomorphism by induction on $k$. Using the filtration, we can show similarly that 
 $$H^0(X_0,\D\A_k^{i,j}) \to H^0(\tilde X_0,\widetilde{\D\A}_k^{i,j})$$
 is an isomorphism, and then $E_1^{p,q} \to \tilde E_1^{p,q}$ must be an isomorphism for $q = 0$ as well, where we use again that, on both spaces $X_0$ and $\tilde X_0$, cohomology commutes with filtered colimits, in order to obtain the isomorphism on the level of $\D\A_k^p[u]$. This shows $\HH^n(X_0,(\R^\bullet,e_d)) = 0$ for all $n \geq 0$, concluding the proof in the case $Q = \NN$.

 \subsubsection*{The case of a general $Q$}
 
 To reduce this case to $Q = \NN$, we use an idea of Chan--Leung--Ma, see \cite[\S 4.3.2]{ChanLeungMa2023}, which we also used in the case of a globally given log toroidal family in \cite{FeltenThesis}. 
 
 We have to show the surjectivity of $\HH^n(X_0,(\D\R_k^\bullet,d)) \to \HH^n(X_0,(\D\R_0^\bullet,d))$. Let $K = (Q^+)^{k + 1}$ be the monoid ideal corresponding to $A_k$. Then we can find a finite decreasing filtration 
 $$Q^+ = I_0 \supset I_1 \supset ... \supset K$$
 as in \cite[Lemma~8.11]{FeltenThesis}, i.e., each $I_n$ is a monoid ideal, we have $\mathrm{dim}_\CC(\CC[I_n]/\CC[I_{n + 1}]) = 1$, and for each $n$, there is a monoid homomorphism $h_n: Q \to \NN$ with $I_n = h_n^{-1}((i,\infty))$ and $I_{n + 1} = h_n^{-1}((i + 1,\infty))$ for some $i \geq 0$. Then it is sufficient to show the surjectivity for the restriction along $\CC[Q]/\CC[I_{n + 1}] \to \CC[Q]/\CC[I_{n}]$. From now on, let $I = I_{n + 1}$ and $J = I_n$ for some $n$, let $A' = \CC[Q]/\CC[I]$ and $A = \CC[Q]/\CC[J]$, and let $S' = \Spec (Q \to A')$ and $S = \Spec (Q \to A)$. Let $h = h_n$, let $B' = \CC[t]/(t^{i + 1})$ and $B = \CC[t]/(t^{i})$ for the value of $i$ corresponding to $n$, and let $T = \Spec (\NN \to \CC[t]/(t^i))$ and $T' = \Spec (\NN \to \CC[t]/(t^{i + 1}))$. Furthermore, let $T_0 = \Spec (\NN \to \CC)$. Then $h$ induces a commutative diagram 
 \[
  \xymatrix{
  T_0 \ar[r] \ar[d]^{b_0} & T \ar[r] \ar[d]^b & T' \ar[d]^{b'} \\
  S_0 \ar[r] & S \ar[r] & S' \\
  }
 \]
 of log schemes. By \cite[Prop.~4.7]{FeltenThesis}, the base change $g_0: Y_0 \to T_0$ of $f_0: X_0 \to S_0$ along $T_0 \to S_0$ is a log toroidal family with respect to $T_0 \to A_\NN$. These two log toroidal families have the same underlying space, and the induced maps $\A^\bullet_{X_0/S_0} \to \A^\bullet_{Y_0/T_0}$ and $\G^\bullet_{X_0/S_0} \to \G^\bullet_{Y_0/T_0}$ (via pull-back of maps $\A^1_{X_0/S_0} \to \cO_{X_0}$) are isomorphisms.
 
 Let $V_{\alpha;I}$ and $V_{\alpha;J}$ be the local models over $S'$ respective $S$ of the system of deformations $\D$. They induce a kind of system of deformations on $T$ and $T'$ via pull-back along $b$ and $b'$. We denote the pull-backs by $V_{\alpha;i - 1}$ and $V_{\alpha;i}$. Again by \cite[Prop.~4.7]{FeltenThesis}, they are log toroidal families. The induced maps $\A^\bullet_{\alpha;J} \otimes_A B \to \A^\bullet_{\alpha;i - 1}$ and $\A^\bullet_{\alpha;I} \otimes_{A'} B' \to \A^\bullet_{\alpha;i}$ are isomorphisms by applying Lemma~\ref{bijective-in-fibers} to the sources. Then we also have induced isomorphisms $\G^\bullet_{\alpha;J} \otimes_A B \to \G^\bullet_{\alpha;i - 1}$ and $\G^\bullet_{\alpha;I} \otimes_{A'} B' \to \G^\bullet_{\alpha;i}$. 
 Let $(\cP\V_I^{\bullet,\bullet},\D\R_I^{\bullet,\bullet})$ be a choice of a characteristic sheaf of curved Gerstenhaber calculi over $S'$. Then 
 $$(\cP\V_i^{\bullet,\bullet},\D\R_i^{\bullet,\bullet}) := (\cP\V_I^{\bullet,\bullet} \otimes_{A'} B', \D\R_I^{\bullet,\bullet} \otimes_{A'} B')$$
 is a sheaf of curved Gerstenhaber calculi as well, and we define $(\cP\V_{i - 1}^{\bullet,\bullet},\D\R_{i - 1}^{\bullet,\bullet})$ similarly as the base change along $b: T \to S$. When denoting the Thom--Whitney resolution of $(\G_{\alpha;i}^\bullet,\A_{\alpha;i}^\bullet)$ by $(\G_{\alpha;i}^{\bullet,\bullet},\A_{\alpha;i}^{\bullet,\bullet})$ as usual, we have isomorphisms 
 $$(\G_{\alpha;i}^{\bullet,\bullet},\A_{\alpha;i}^{\bullet,\bullet}) \cong (\G_{\alpha;I}^{\bullet,\bullet},\A_{\alpha;I}^{\bullet,\bullet}) \otimes_{A'} B' \xrightarrow{\chi_{\alpha;I}^* \otimes_{A'} B'} (\cP\V_i^{\bullet,\bullet},\D\R_i^{\bullet,\bullet})|_\alpha $$
 of bg (!) Gerstenhaber calculi since the Thom--Whitney resolution commutes with affine base change. Ignoring the induced predifferential $\bar\partial_i$, $(\cP\V_i^{\bullet,\bullet},\D\R_i^{\bullet,\bullet})$ is a bigraded Thom--Whitney deformation in the sense of Definition~\ref{bg-TW-defo-defn}. However, $\bar\partial_i$ is also a predifferential on it in the sense of Definition~\ref{bg-TW-defo-prediff}. The same is true for $(\cP\V_{i - 1}^{\bullet,\bullet},\D\R_{i - 1}^{\bullet,\bullet})$, which is a bigraded Thom--Whitney deformation with predifferential $\bar\partial_{i - 1}$. Furthermore, we have a restriction map between them which is compatible with all data.
 
 The above discussion for $Q = \NN$ applies to $(\cP\V_i^{\bullet,\bullet},\D\R_i^{\bullet,\bullet})$. This shows that the map 
 $$\HH^n(X_0,(\D\R_i^\bullet,d)) \to \HH^n(X_0,(\D\R_0^\bullet,d))$$
 is surjective, where $d = \partial + \bar\partial + \ell \ \invneg \ (-)$, and where $\D\R_0^\bullet$ is the total complex of the Thom--Whitney resolution of either $\A_{X_0/S_0}^\bullet$ or $\A_{Y_0/T_0}^\bullet$, which coincide. Then also 
 $$\HH^n(X_0,(\D\R_i^\bullet,d)) \to \HH^n(X_0,(\D\R_{i - 1}^\bullet,d))$$
 must be surjective by Nakayama's lemma since both are free $B'$ respective $B$-modules with the same base change to $\CC$. After this preparation, a diagram chase as in the proof of \cite[Lemma~8.9]{FeltenThesis} completes the proof of surjectivity along the base change $\CC[Q]/\CC[I] \to \CC[Q]/\CC[J]$, and thus the proof of Theorem~\ref{perfect-G-calc-log-toroidal}.



\chapter{Deforming line bundles}\label{fiber-bundle-constr-sec}\note{fiber-bundle-constr-sec}

Suppose we are in the following situation:

\begin{sitn}\label{gen-log-sm-fiber-bundle-sitn}\note{gen-log-sm-fiber-bundle-sitn}
 We have a sharp toric monoid $Q$, and we set $\Lambda = \kk\llbracket Q\rrbracket$. We have a torsionless enhanced generically log smooth family $f_0: X_0 \to S_0$ of relative dimension $d \geq 1$ together with a line bundle $\cL_0$ on $X_0$, and $\V = \{V_\alpha\}_\alpha$ is an admissible open cover such that $\cL_0|_\alpha$ is isomorphic to $\cO_{X_0}|_\alpha$ via a trivializing section $s_{0;\alpha} \in \cL_0|_\alpha$. Then we have coordinate transformation functions $u_{0;\alpha\beta} \in \Gamma(V_\alpha \cap V_\beta,\cO_{X_0}^*)$ with $s_{0;\alpha} = u_{0;\alpha\beta} \cdot s_{0;\beta}$. We fix an enhanced system of deformations $\D$ for $f_0$ subordinate to $\V$.
\end{sitn}

 Below, we construct a $\PP^1$-bundle $p_0: P_0(\cL_0) \to X_0$ together with an enhanced system of deformations $\D(\cL_0)$ of the enhanced generically log smooth family $g_0: P_0(\cL_0) \to S_0$ subordinate to $\V(\cL_0) := \{p_0^{-1}(V_\alpha)\}_\alpha$ and an isomorphism 
 $$\mathrm{ELD}^\D_{X_0/S_0}(\cL_0) \Rightarrow \mathrm{ELD}^{\D(\cL_0)}_{P_0(\cL_0)/S_0}$$
 of deformation functors. The map $p_0: P_0(\cL_0) \to X_0$ is obviously projective, and over $U_0 \subseteq X_0$, it is log smooth and satisfies $\Omega^1_{P_0(\cL_0)/X_0} \cong \cO_{P_0(\cL_0)}$. Thus, if $f_0: X_0 \to S_0$ is proper respective log Calabi--Yau, then $g_0: P_0(\cL_0) \to S_0$ is proper respective log Calabi--Yau. Moreover, we will see in Lemma~\ref{log-toroidal-fiber-bundle} below that $g_0: P_0(\cL_0) \to S_0$ as well as its deformations are log toroidal families if this is the case for $f_0: X_0 \to S_0$ and its deformations.
 
 \begin{thm}\label{proper-log-CY-log-toroidal-unobstr-pairs}\note{proper-log-CY-log-toroidal-unobstr-pairs}
  In Situation~\ref{gen-log-sm-fiber-bundle-sitn}, assume that $\kk = \CC$, that $f_0: X_0 \to S_0$ is proper and log Calabi--Yau, that $f_0: X_0 \to S_0$ is log toroidal with respect to $a_0: S_0 \to A_Q$, and that each local model $V_{\alpha;A} \to S_A$ is log toroidal with respect to $a_A: S_A \to A_Q$. Then the deformation functor $\mathrm{LD}^\D_{X_0/S_0}(\cL_0)$ is unobstructed. 
 \end{thm}
 \begin{proof}
  We apply Theorem~\ref{perfect-G-calc-log-toroidal} to the log toroidal family $g_0: P_0(\cL_0) \to S_0$.
 \end{proof}
 
 \begin{rem}
  The idea for the construction of $P_0(\cL_0) \to S_0$ is taken from \cite{Iacono2021defPairs}, where the classical case of a smooth algebraic variety $X_0$ is treated. While unobstructedness of pairs was already known in the classical case, the primary goal of that article is to show that the dg Lie algebra controlling the deformations of the pair $(X_0,\cL_0)$ is homotopy abelian, a stronger result which implies the unobstructedness. Here, we content ourselves with proving the unobstructedness.
 \end{rem}
 
 \begin{rem}
  Deformations of pairs $(X_0,\cL_0)$ for $Q = \NN$ have been also studied in \cite{CLM2022pairs}. Our study of the curved Lie--Rinehart pair controlling the deformations of $(X_0,\cL_0)$ has been inspired by and is essentially contained in that work. However, this work does not study the $\PP^1$-bundle $P_0(\cL_0)$, and hence they do not obtain the above unobstructedness result. Instead, their main result \cite[Thm.~1.1]{CLM2022pairs} is a relation between the deformation theory of a pair $(X_0,\F_0^\bullet)$ with a vector bundle $\F_0$, and the  deformation theory of $(X_0,\mathrm{det}(\F_0))$. With Theorem~\ref{proper-log-CY-log-toroidal-unobstr-pairs}, we can remove their condition that $(X_0,\mathrm{det}(\F_0))$ must be (more or less) unobstructed in \cite[Thm.~1.1]{CLM2022pairs} because we know it holds. Presumably, this also simplifies \cite{ChanMaSuen2022}, where they apply \cite[Thm.~1.1]{CLM2022pairs}.
 \end{rem}

\section{The construction of $P_A(\cL_A)$}

In Situation~\ref{gen-log-sm-fiber-bundle-sitn}, let $Y_0 \subseteq X_0$ be an open subset of the form $V_{\alpha_1} \cap ... \cap V_{\alpha_r}$ (possibly $Y_0 = X_0$), and let $f_A: (Y_A,\cL_A) \to S_A$ be an enhanced  generically log smooth deformation with a line bundle of type $\D$. We have a $\PP^1$-bundle 
$$\PP(\cO_{Y_A} \oplus \cL_A) := \mathrm{Proj}\ \mathrm{Sym}^\bullet (\cO_{Y_A} \oplus \cL_A)$$
over $Y_A$. On $Y_{A;\alpha} := Y_A \cap V_\alpha$, we choose a lift $s_{A;\alpha}$ of $s_{0;\alpha}$, which is possible due to our assumption on the form of $Y_0$. This lift trivializes $\cL_A|_\alpha$, and we have coordinate transformation functions $u_{A;\alpha\beta}$ satisfying $s_{A;\alpha} = u_{A;\alpha\beta} \cdot s_{A;\beta}$, which are a lift of $u_{0;\alpha\beta}$. In particular, we have 
$$\mathrm{Sym}^\bullet(\cO_{Y_A} \oplus \cL_A)|_\alpha \cong \cO_{Y_{A;\alpha}}[R_\alpha,S_\alpha]$$
where $R_\alpha$ corresponds to $1 \in \cO_{Y_A}$ and $S_\alpha$ corresponds to $s_{A;\alpha} \in \cL_A$. Thus, 
$$W_\alpha := \PP(\cO_{Y_A} \oplus \cL_A)|_\alpha \cong Y_{A;\alpha} \times \PP^1,$$
and we decompose $W_\alpha = W'_\alpha \cup W''_\alpha$ into two affine patches $W'_\alpha = \{S_\alpha \not= 0\}$ with coordinate $x_\alpha = R_\alpha/S_\alpha$ and $W''_\alpha = \{R_\alpha \not= 0\}$ with coordinate $y_\alpha = S_\alpha/R_\alpha$. The coordinate transformation on $W'_\alpha \cap W'_\beta$ is given by $x_\alpha = u_{A;\alpha\beta}^{-1} \cdot x_\beta$, and the coordinate transformation on $W''_\alpha \cap W''_\beta$ is given by $y_\alpha = u_{A;\alpha\beta} \cdot y_\beta$; since $W'_\alpha \cap W_\beta = W'_\beta$ and $W''_\alpha \cap W_\beta = W''_\beta$, this together with $y_\alpha = x_\alpha^{-1}$ on $W'_\alpha \cap W''_\alpha$ is enough to describe all coordinate transformations.

We see from these coordinate transformations that 
$$\Delta_{0,A} := \{x_\alpha = 0\}, \quad \Delta_{\infty,A} := \{y_\alpha = 0\}$$
are well-defined closed subschemes of $\PP(\cO_{Y_A} \oplus \cL_A)$; they are sections of the projection, and they are the complements of $W''_\alpha$ respective $W'_\alpha$. We denote their ideal sheaves by $\I_{0,A}$ and $\I_{\infty,A}$. They are line bundles, so they give rise to a Deligne--Faltings log structure $\gamma_1: \I_{0,A} \to \cO_\PP,\: \gamma_2: \I_{\infty,A} \to \cO_\PP$ in the sense of \cite[III, Defn.~1.7.1]{LoAG2018}. This gives rise to a log scheme over $\underline{Y}_A$, the underlying scheme of $Y_A$ endowed with the trivial log structure, which we denote by 
$$p'_A: P'_A(\cL_A) \to \underline{ Y}_A.$$

\begin{lemma}\label{pA-log-smooth}\note{pA-log-smooth}
 The morphism $p_A'$ is smooth and log smooth of relative dimension $1$, and we have a global isomorphism $\Omega^1_{P_A'(\cL_A)/\underline{Y}_A} \cong \cO_{P_A'(\cL_A)}$.
\end{lemma}
\begin{proof}
 The formation of the associated log structure from a Deligne--Faltings log structure commutes with pull-back of both types of log structures. Thus, for each $\alpha$, we have a Cartesian diagram
 \[
  \xymatrix{
   P'_A(\cL_A)|_\alpha \ar[r]^{\pi} \ar[d] & \PP^1(0 + \infty) \ar[d] \\
   \underline{Y}_{A;\alpha} \ar[r] & \Spec \kk \\
  }
 \]
 of log schemes, where $\PP^1(0 + \infty)$ denotes $\PP^1$ endowed with the log structure coming from the Deligne--Faltings log structure of the ideals of the two points $0$ and $\infty$. By \cite[III, Prop.~1.7.3]{LoAG2018}, the latter log schemes is equal to $\PP^1$ endowed with the divisorial log structure of the two points. Thus, the morphism on the right is smooth and log smooth, and so is its (local) base change $p_A'$.
 
 Let $x$ and $y$ be the two coordinates on $\PP^1$. Then we have $\pi^*x = x_\alpha$ and $\pi^*y = y_\alpha$. On $\bAA^1_x(0) \subseteq \PP^1(0 + \infty)$, we have an element $\hat x \in \M$ in the monoid sheaf mapping to $x \in \cO$. When $\delta_{\PP^1}$ is the log part of the universal derivation, we have $\Omega^1_{\bAA^1_x(0)/\kk} \cong \cO \cdot \delta_P(\hat x)$. In particular, when denoting by $\hat x_\alpha$ the element in $\M_{P'_A(\cL_A)}|_{W'_\alpha}$ induced from $\hat x$, we have 
 $$\Omega^1_{P'_A(\cL_A)/\underline{Y}_A}|_{W'_\alpha} \cong \cO_{W'_\alpha} \cdot \delta_{P/Y}(\hat x_\alpha).$$
 Similarly, we have 
 $$\Omega^1_{P'_A(\cL_A)/\underline{Y}_A}|_{W''_\alpha} \cong \cO_{W''_\alpha} \cdot \delta_{P/Y}(\hat y_\alpha).$$
 Since $y = x^{-1}$ on $\bAA^1_x(0) \cap \bAA^1_y(\infty)$, we have $\hat y_\alpha = \hat x_\alpha^{-1}$ on $W'_\alpha \cap W''_\alpha$; thus, $\delta_{P/Y}(\hat x_\alpha) = - \delta_{P/Y}(\hat y_\alpha)$. On overlaps $W'_\alpha \cap W'_\beta$, we have $x_\alpha = u_{A;\alpha\beta}^{-1} \cdot x_\beta$; it follows from the construction of the log structure associated with a Deligne--Faltings log structure that we have $\hat x_\alpha = u_{A;\alpha\beta}^{-1} \cdot \hat x_\beta$ as well. Thus, $\delta_{P/Y}(\hat x_\alpha) = \delta_{P/Y}(u_{A;\alpha\beta}^{-1}) + \delta_{P/Y}(\hat x_\beta) = \delta_{P/Y}(\hat x_\beta)$; the latter equality follows from $(p_A')^{-1}\cO^*_{Y_A}$-linearity of the universal relative derivation. Similarly, we have $\delta_{P/Y}(\hat y_\alpha) = \delta_{P/Y}(\hat y_\beta)$. In particular, there is a global section $\delta$ of $\Omega^1_{P'_A(\cL_A)/\underline{Y}_A}$ with $\delta|_{W'_\alpha} = \delta_{P/Y}(\hat x_\alpha)$ and $\delta|_{W''_\alpha} = -\delta_{P/Y}(\hat y_\alpha)$. This is the trivialization in the last assertion of the lemma. Note also that $\delta$ is independent of the choice of $s_{A;\alpha}$.
\end{proof}

We define the map $p_A: P_A(\cL_A) \to Y_A$ of (non-enhanced) generically log smooth families by the Cartesian diagram 
\[
 \xymatrix{
  P_A(\cL_A) \ar[r] \ar[d]^{p_A} & P'_A(\cL_A) \ar[d]^{p'_A} \\
  Y_A \ar[r] & \underline{Y}_A. \\
 }
\]
 The space $P_A(\cL_A)$ carries a log structure on $p_A^{-1}(U_A)$; there, the map $p_A$ is smooth and log smooth. The composition with $f_A: Y_A \to S_A$ turns $g_A: P_A(\cL_A) \to S_A$ into a generically log smooth family over $S_A$. We write $\Omega^1_{P_A(\cL_A)/Y_A}$ for the relative differential forms, rather than $\W^1_{P_A(\cL_A)/Y_A}$, since $p_A$ is, as a base change of $p_A'$, in some sense log smooth. We denote the global  generator constructed in Lemma~\ref{pA-log-smooth} by $\gamma = \delta_{P/Y}(\hat x_\alpha) = -\delta_{P/Y}(\hat y_\alpha)$.
 
 The direct image from $p_A^{-1}(U_A)$ gives an exact sequence 
 \begin{equation}\label{fiber-bundle-extension}
  0 \to p_A^*\W^1_{Y_A/S_A} \to \W^1_{P_A(\cL_A)/S_A} \to \Omega^1_{P_A(\cL_A)/Y_A} \to 0.
 \end{equation}
 Namely, $p_A$ is flat, so $p_A^*\W^1_{Y_A/S_A}$ is already $Z$-closed. Except for surjectivity, the sequence is exact because $j_*$ is left exact. The map on the right is surjective because $\delta_{P/S}(\hat x_\alpha)$ is a preimage of $\gamma = \delta_{P/Y}(\hat x_\alpha)$ on $W'_\alpha \cap p_A^{-1}(U_A)$, and similarly for $W''_\alpha$. We also find 
 $$\W^{d + 1}_{P_A(\cL_A)/S_A} \cong p_A^*\W^d_{Y_A/S_A}$$ because this holds on $p_A^{-1}(U_A)$ since $\Omega^1_{P_A(\cL_A)/Y_A} \cong \cO_{P_A(\cL_A)}$, and both sides are $Z$-closed; in particular, due to our log Gorenstein assumption, they are line bundles.
 
 Next, we turn $g_A: P_A(\cL_A) \to S_A$ into an \emph{enhanced} generically log smooth family. The exact sequence \eqref{fiber-bundle-extension} is locally split, and any two splittings differ by a section of 
 $$\cH om(\Omega^1_{P_A(\cL_A)/Y_A},p_A^*\W^1_{Y_A/S_A}).$$
 Inside $p_A^*\W^1_{Y_A/S_A}$, we have $p_A^*\A^1_{Y_A/S_A}$, and the local splittings constructed above and given by $\gamma \mapsto \delta_{P/S}(\hat x_\alpha) = -\delta_{P/S}(\hat y_\alpha)$ all differ by a section of $\cH om(\Omega^1_{P_A(\cL_A)/Y_A},p_A^*\A^1_{Y_A/S_A})$. Thus, we have a class of \emph{distinguished splittings}, which differ by a section in that $\cO_{P_A(\cL_A)}$-module. Given a local distinguished splitting $B_1: \Omega^1_{P_A(\cL_A)/Y_A} \to \W^1_{P_A(\cL_A)/S_A}$, we define 
 $$\A^i_{P_A(\cL_A)/S_A} := p_A^*\A^i_{Y_A/S_A} + p_A^*\A^{i - 1}_{Y_A/S_A} \wedge B_1(\Omega^1_{P_A(\cL_A)/Y_A}) \subseteq \W^i_{P_A(\cL_A)/S_A}.$$
 This is independent of the choice of distinguished splitting $B_1$. At this point, the discussion is analogous to the one given later in Chapter~\ref{log-prereg-sec} in more detail, so we just summarize quickly the situation. We have a diagram 
 \[
 \xymatrix{
  0 \ar[r] & p_A^*\W^i_{Y_A/S_A} \ar[r]^-{E_i} & \W^i_{P_A(\cL_A)/S_A} \ar[r]^-{Q_i} & p_A^*\W^{i - 1}_{Y_A/S_A} \otimes \Omega^1_{P_A(\cL_A)/Y_A} \ar[r] & 0 \\
  0 \ar[r] & p_A^*\A^i_{Y_A/S_A} \ar[r]^-{E_i} \ar@{^{(}->}[u] & \A^i_{P_A(\cL_A)/S_A} \ar[r]^-{Q_i} \ar@{^(->}[u] & p_A^*\A^{i - 1}_{Y_A/S_A} \otimes \Omega^1_{P_A(\cL_A)/Y_A} \ar[r] \ar@{^(->}[u] & 0 \\
 }
\]
 where both rows are locally split exact sequences. A distinguished local splitting $B_1$ induces a splitting 
 $$T_i: \A^i_{P_A(\cL_A)/S_A} \to p_A^*\A^i_{Y_A/S_A}$$
 which satisfies $T_i(\alpha) \wedge T_j(\beta) = T_{i + j}(\alpha \wedge \beta)$. Dualizing the diagram, we obtain a locally split exact sequence 
 $$0 \to p_A^*\V^{p + 1} \otimes \Theta^1_{P_A(\cL_A)/Y_A} \xrightarrow{I_p} \V^p_{P_A(\cL_A)/S_A} \xrightarrow{F_p} p_A^*\V^p_{Y_A/S_A} \to 0;$$
 when dualizing the splitting $T_{-p}$ of $E_{-p}$, we obtain a splitting 
 $$S_p: p_A^*\V^p_{Y_A/S_A} \to \V^p_{P_A(\cL_A)/S_A}$$
 of $F_p$. These functions satisfy a large number of identities given in Lemma~\ref{formulae-LXS}. Then we define 
 $$\G^p_{P_A(\cL_A)/S_A} := I_p(p_A^*\G^{p + 1}_{Y_A/S_A} \otimes \Theta^1_{P_A(\cL_A)/Y_A}) + S_p(p_A^*\G^p_{Y_A/S_A}) \subseteq \V^p_{P_A(\cL_A)/S_A},$$
 which is independent of the original choice of distinguished splitting $B_1$. The proof of Proposition~\ref{modif-is-enhanced} shows that we obtain an enhanced generically log smooth family by setting 
 $$\G\C^\bullet_{P_A(\cL_A)/S_A} := (\G^\bullet_{P_A(\cL_A)/S_A}, \, \A^\bullet_{P_A(\cL_A)/S_A}).$$
 
 \vspace{\baselineskip}
 
 The construction of $p_A: P_A(\cL_A) \to Y_A$ commutes with base change along $B' \to B$. Namely, the formation of the $\PP^1$-bundle $\PP(\cO_{Y_A} \oplus \cL_A)$ commutes with base change, the formation of the ideals $\I_{0,A}$ and $\I_{\infty,A}$ commutes with base change, the formation of the Deligne--Faltings log structure and its associated log structure commutes with base change, and the pull-back along $Y_A \to \underline{Y}_A$ commutes with base change. From the above diagram with $E_i$ and $Q_i$, we see that the formation of $\G\C^\bullet_{P_A(\cL_A)/S_A}$ commutes with base change as well. Thus, the formation of $g_A: P_A(\cL_A) \to S_A$ as an enhanced generically log smooth family commutes with base change.
 
 In particular, we have an enhanced generically log smooth family $g_0: P_0(\cL_0) \to S_0$, and $g_A: P_A(\cL_A) \to S_A$ is a deformation thereof over $p_0^{-1}(Y_0) \subseteq P_0(\cL_0)$. 
 
 When $\G\C^\bullet_{Y_A/S_A} = \V\,\W^\bullet_{Y_A/S_A}$, then $\G\C^\bullet_{P_A(\cL_A)/S_A} = \V\,\W^\bullet_{P_A(\cL_A)/S_A}$ as well so that we do not leave the setting of generically log smooth families in the case where we have started in this setting.

 \section{Isomorphisms and automorphisms}
 
 Let $f: (Y_A,\cL_A) \to S_A$ and $\tilde f: (\tilde Y_A,\tilde\cL_A) \to S_A$ be two enhanced generically log smooth deformations with a line bundle of type $\D$, and assume that $\varphi = (\phi,\Phi,\psi): (Y_A,\cL_A) \to (\tilde Y_A,\tilde\cL_A)$ is an isomorphism over $f_0: (Y_0,\cL_0) \to S_0$. Then we have an induced isomorphism 
 $$P_A(\varphi):\: P_A(\cL_A) \xrightarrow{\cong} \tilde P_A(\tilde\cL_A)$$
 of enhanced generically log smooth families which is compatible with $(\phi,\Phi): Y_A \cong \tilde Y_A$ and with the identity on $(Y_0,\cL_0)$. This construction is functorial for compositions of isomorphisms, and it is compatible with base change along $B' \to B$.
 
 More generally, we also have this construction for isomorphisms of the underlying (non-enhanced) generically log smooth families. Before going to the enhanced case, we study automorphisms in the non-enhanced case. So let $\varphi = (\phi,\Phi,\psi)$ be an automorphism of $f: (Y_A,\cL_A) \to S_A$ as a non-enhanced generically log smooth family with a line bundle. Let us write $\psi(s_{A;\alpha}) = v_\alpha \cdot s_{A;\alpha}$. This determines $v_\alpha \in \Gamma(Y_{A;\alpha},\cO_{Y_A}^*)$ uniquely, and we have $v_\alpha|_0 = 1$. First, $\varphi$ induces an automorphism of $\mathrm{Sym}^\bullet(\cO_{Y_A} \oplus \cL_A)$, and this in turn induces an automorphism of $\PP(\cO_{Y_A} \oplus \cL_A)$, which we denote by $\hat\phi$. We have $\hat\phi(x_\alpha) = v_\alpha^{-1} \cdot x_\alpha$ on $W'_\alpha$ and $\hat\phi(y_\alpha) = v_\alpha \cdot y_\alpha$ on $W''_\alpha$. We also have an induced automorphism $\hat\phi_0$ of the ideal $\I_{0,A}$ and an induced automorphism $\hat\phi_\infty$ of the ideal $\I_{\infty,A}$. Together, they define an automorphism of the Deligne--Faltings log structure, and hence an automorphism $(\hat\phi,\hat\Phi')$ of the log scheme $P_A'(\cL_A)$. By construction, we have a commutative diagram 
 \[
  \xymatrix{
   \I_{0,A}^* \ar[r] \ar[d]^{\hat\phi_0} & \M_{P'_A(\cL_A)} \ar[d]^{\hat\Phi'} \ar[r] & \cO_{P'_A(\cL_A)} \ar[d]^{\hat\phi} \\
  \I_{0,A}^* \ar[r] & \M_{P'_A(\cL_A)} \ar[r] & \cO_{P'_A(\cL_A)}  \\
  }
 \]
 and a similar one for $\I_{\infty,A}^*$. On $W'_\alpha$, the element $x_\alpha$ is a generator of $\I_{0,A}$, so we have $x_\alpha \in \I_{0,A}^*$. Thus, we find $\hat\Phi'(\hat x_\alpha) = v_\alpha^{-1} + \hat x_\alpha$ on $W'_\alpha$, and similarly $\hat\Phi'(\hat y_\alpha) = v_\alpha + \hat y_\alpha$ on $W''_\alpha$. On $p_A^{-1}(U_0) \cap W'_\alpha$, we obtain $\hat\Phi(\hat x_\alpha) = v_\alpha^{-1} + \hat x_\alpha$, and on $p_A^{-1}(U_0) \cap W''_\alpha$, we obtain $\hat\Phi(\hat y_\alpha) = v_\alpha + \hat y_\alpha$, both after base change along $Y_A \to \underline{Y}_A$, where $\hat\Phi$ is the log part of the induced automorphism on $P_A(\cL_A)$.
 
 We have a canonical isomorphism 
 $$p_A^*\cL_A \xrightarrow{\cong} \I_{\infty,A} \otimes \I_{0,A}^{-1}, \quad s_{A;\alpha} \mapsto \begin{cases}
               1 \otimes x_\alpha^{-1}, & W'_\alpha \\ 
               y_\alpha \otimes 1, & W''_\alpha                                                                                      
                                                                                    \end{cases}.
$$
 When we apply $\hat\phi_\infty \otimes \hat\phi_0^{-1}$ on the right, then we obtain an induced automorphism $\hat\psi: p_A^*\cL_A \to p_A^*\cL_A$ with $\hat\psi(a \cdot s) = \hat\phi(a) \cdot \hat\psi(s)$ and $\hat\psi(s_{A;\alpha}) = v_\alpha \cdot s_{A;\alpha}$. We can reconstruct $\hat\psi$ from $(\hat\phi,\hat\Phi)$ since $\hat\Phi(\hat y_\alpha) = v_\alpha + \hat y_\alpha$.
 
 We construct a map 
 $$\rho: (p_A)_*\A ut_{P_A(\cL_A)/P_0(\cL_0)} \to \A ut_{(Y_A,\cL_A)/(Y_0,\cL_0)}$$
 in the other direction. Let $(\hat\phi,\hat\Phi)$ be an automorphism of $g_A: P_A(\cL_A) \to S_A$. Since $(p_A)_*\cO_{P_A(\cL_A)} = \cO_{Y_A}$, we can define $\phi := (p_A)_*\hat\phi$. There is a derivation $(\hat D,\hat\Delta)$ such that $(\hat\phi,\hat\Phi) = \mathrm{exp}(\hat D,\hat\Delta)$. Thus, $\hat\Phi$ is of the form 
 $$\hat\Phi(m) = m + \alpha^{-1}\left(\sum_{n = 0}^\infty\frac{[\hat\Delta(m) + \hat D]^n(1)}{n!}\right).$$
 In particular, this also holds for 
 $$(p_A)_*\hat\Phi: (p_A)_*\M_{P_A(\cL_A)} \to (p_A)_*\M_{P_A(\cL_A)},$$
 and when we use the same formula with $\hat\Delta$ replaced by $\M_{Y_A} \to (p_A)_*\M_{P_A(\cL_A)} \to \cO_{Y_A}$ and $\hat D$ replaced by $(p_A)_*\hat D$, we obtain a map $\Phi: \M_{Y_A} \to \M_{Y_A}$ which is compatible with $(p_A)_*\hat\Phi$ along $\M_{Y_A} \to (p_A)_*\M_{P_A(\cL_A)}$. In particular, the pair $(\phi,\Phi)$ is a log morphism, and it is in fact an automorphism because we can apply the same construction to the inverse of $(\hat\phi,\hat\Phi)$, and because this construction preserves compositions and the identity.
 
 Given $(\hat\phi,\hat\Phi)$, there is a unique $v_\alpha \in \Gamma(W''_\alpha \cap p_A^{-1}(U_0), \cO_{P_A(\cL_A)}^*)$ with $\hat\Phi(\hat y_\alpha) = \hat y_\alpha + v_\alpha$. It can in fact be extended to $W_\alpha \cap p_A^{-1}(U_A)$ such that $\hat\Phi(\hat x_\alpha) = \hat x_\alpha + v_\alpha^{-1}$ on $W'_\alpha$. Since $(p_A)_*\cO_{P_A(\cL_A)} = \cO_{Y_A}$, we have indeed $v_\alpha \in \Gamma(Y_{A;\alpha},\cO_{Y_A}^*)$. A direct computation based on the transformation behavior of $\hat y_\alpha$ yields $v_\alpha = \hat\phi(u_{A;\alpha\beta}) \cdot u_{A;\alpha\beta}^{-1} \cdot v_\beta$. This shows that 
 $$\hat\psi: p_A^*\cL_A \to p_A^*\cL_A, \quad a \cdot s_{A;\alpha} \mapsto \hat\phi(a) \cdot v_\alpha \cdot s_{A;\alpha},$$
 is well-defined. Moreover, we have $\hat\psi(a \cdot s) = \hat\phi(a) \cdot \hat\psi(s)$. As a consequence of the projection formula, we have $(p_A)_*p_A^*\cL_A = \cL_A$. Then we set $\psi := (p_A)_*\hat\psi$. Since this $\psi$ forms, together with $\phi$ and $\Phi$, in fact an automorphism of $f_A: (Y_A,\cL_A) \to S_A$ over $f_0: (Y_0,\cL_0) \to S_0$, this completes the construction of $\rho$.

 We observe that $\rho(P_A(\phi,\Phi,\psi)) = (\phi,\Phi,\psi)$. For $\phi$ and $\psi$, this is straightforward. For $\Phi$, the easiest way to see this is by noting that the forgetful map $(\phi,\Phi) \to \phi$ is injective because of the density of the strict locus; namely, on the strict locus, the forgetful map is bijective, and the automorphism sheaf has injective restrictions to the strict locus. This shows that 
 $$P_A: \A ut_{(Y_A,\cL_A)/(Y_0,\cL_0)} \to (p_A)_*\A ut_{P_A(\cL_A)/P_0(\cL_0)}$$
 is injective, and that $\rho$ is surjective.

 \begin{prop}\label{PA-rho-isom}\note{PA-rho-isom}
  The maps $P_A(-)$ and $\rho$ are inverse isomorphisms of groups.
 \end{prop}

 In order to prove this, we study derivations first. The exact sequence \eqref{fiber-bundle-extension} above is locally split. Thus, its dual sequence 
 $$0 \to \cO_{P_A(\cL_A)} \to \Theta^1_{P_A(\cL_A)/S_A} \to \cH om(p_A^*W^1_{Y_A/S_A},\cO_{P_A(\cL_A)}) \to 0$$
 is locally split exact as well. The right hand side is canonically isomorphic to $p_A^*\Theta^1_{Y_A/S_A}$.
 
 In the $\PP^1$-bundle $p_A: P_A(\cL_A) \to Y_A$, we have 
 $$(p_A)_*\cO_{P_A(\cL_A)} = \cO_{Y_A}, \quad R^q(p_A)_*\cO_{P_A(\cL_A)} = 0 \enspace \mathrm{for} \enspace q \geq 1.$$
 Furthermore, for \emph{every} coherent sheaf $\F$ on $Y_A$, we have the projection formula 
 $$(p_A)_*p_A^*\F = \F, \quad R^q(p_A)_*p_A^*\F = 0 \enspace \mathrm{for} \enspace q \geq 1.$$
 This is a consequence of the very general projection formula \cite[Prop.~3.9.4]{Lipman2009}. Thus, we obtain an exact sequence 
 $$0 \to \cO_{Y_A} \xrightarrow{\iota} (p_A)_*\Theta^1_{P_A(\cL_A)/S_A} \xrightarrow{\pi} \Theta^1_{Y_A/S_A} \to 0.$$
 \begin{prop}\label{Atiyah-comp-isom}\note{Atiyah-comp-isom}
  This exact sequence is isomorphic to the Atiyah sequence \eqref{AE} before Lemma~\ref{Atiyah-ext-lemma}.
 \end{prop}
 \begin{proof}
  Let $\UU \subseteq Y_A$ be an open subset, and let $(D,\Delta) \in \Gamma(p_A^{-1}(\UU),\Theta^1_{P_A(\cL_A)/S_A})$ be a derivation. Then $\pi(D,\Delta)$ is given by the adjoint maps of $p_A^{-1}\cO_{Y_A} \to \cO_{P_A}$ and $p_A^{-1}\M_{Y_A} \to \cO_{P_A}$, using that $(p_A)_*\cO_{P_A} = \cO_{Y_A}$. Given such $(D,\Delta)$, we can furthermore form a map $E: p_A^*\cL_A \to p_A^*\cL_A$ given by 
 $$E(a \cdot s_{A;\alpha}) := D(a) \cdot s_{A;\alpha} + a \cdot \Delta(\hat y_\alpha) \cdot s_{A;\alpha}$$
 on $p_A^{-1}(\UU) \cap W_\alpha$. Although $\hat y_\alpha$ is only defined on $W''_\alpha$, the element $\Delta(\hat y_\alpha)$ is well-defined on $W_\alpha$ because we have $\Delta(\hat y_\alpha) = - \Delta(\hat x_\alpha)$ on $W'_\alpha \cap W''_\alpha$. The map $E$ is well-defined because it turns out that 
 $$E(a \cdot s_{A;\alpha}) = E(a \cdot u_{A;\alpha\beta} \cdot s_{A;\beta})$$
 using the formula for $W_\alpha$ on the left and the one for $W_\beta$ on the right. Furthermore, for $s \in p_A^*\cL_A$ and $a \in \cO_{P_A(\cL_A)}$, we have $E(a \cdot s) = D(a) \cdot s + a \cdot E(s)$. Since $(p_A)_*p_A^*\cL_A = \cL_A$, we find that sections on $p_A^{-1}(\UU)$ give rise to a derivation of $f: (Y_A,\cL_A) \to S_A$. In other words, we have defined a map 
 $$\eta: \enspace (p_A)_*\Theta^1_{P_A(\cL_A)/S_A} \to \Theta^1_{Y_A/S_A}(\cL_A).$$
 It is easy to see that this map is $\cO_{Y_A}$-linear. Furthermore, a direct computation shows that the diagram 
 \[
  \xymatrix{
   0 \ar[r] & \cO_{Y_A} \ar[r] \ar@{=}[d] & (p_A)_*\Theta^1_{P_A(\cL_A)/S_A} \ar[r] \ar[d]^\eta & \Theta^1_{Y_A/S_A} \ar[r] \ar@{=}[d] & 0 \\
   0 \ar[r] & \cO_{Y_A} \ar[r] & \Theta^1_{Y_A/S_A}(\cL_A) \ar[r] & \Theta^1_{Y_A/S_A} \ar[r]  & 0 \\
  }
 \]
 is not only commutative on the right but also commutative on the left. Hence $\eta$ is an isomorphism.
 \end{proof}

 \begin{proof}[Proof of Proposition~\ref{PA-rho-isom}]
  It is quite easy to show that 
  $$\m_A \cdot (p_A)_*\Theta^1_{P_A(\cL_A)/S_A} = (p_A)_*(\m_A \cdot \Theta^1_{P_A(\cL_A)/S_A}).$$
  Thus, we have a diagram 
  \[
   \xymatrix{
    (p_A)_*\A ut_{P_A(\cL_A)/P_0(\cL_0)} \ar[r]^\rho & \A ut_{(Y_A,\cL_A)/(Y_0,\cL_0)} \\
    \m_A \cdot (p_A)_*\Theta^1_{P_A(\cL_A)/S_A} \ar[u]^{(p_A)_*\mathrm{exp}}_{\cong} \ar[r]^\eta_\cong & \m_A \cdot \Theta^1_{Y_A/S_A}(\cL_A) \ar[u]^{\mathrm{exp}}_{\cong}. \\
   }
  \]
  This diagram is commutative. To see this, we have to show that 
  $$(\phi_1,\Phi_1,\psi_1) := \rho(\mathrm{exp}(\hat D,\hat \Delta)) = \mathrm{exp}(\eta(\hat D,\hat\Delta)) =: (\phi_2,\Phi_2,\psi_2).$$
  The equality $\phi_1 = \phi_2$ is straightforward. Again, by the density of the strict locus, we also find $\Phi_1 = \Phi_2$. For $\psi_1 = \psi_2$, it suffices to show $\mathrm{exp}(\hat E) = \hat \psi$, where $\hat E$ is defined by the formula in the proof of Proposition~\ref{Atiyah-comp-isom}, and $\hat\psi$ is constructed from $(\hat\phi,\hat\Phi)$ as described above. Thus, we have to show $\mathrm{exp}(\hat E)(s_{A;\alpha}) = v_\alpha \cdot s_{A;\alpha}$. On the one hand side, we have 
  $$v_\alpha = \sum_{n = 0}^\infty \frac{[\hat\Delta(\hat y_\alpha) + \hat D]^n(1)}{n!}.$$
  On the other hand side, we have 
  $$\hat E(a \cdot s_{A;\alpha}) = (\hat\Delta(\hat y_\alpha) \cdot a + D(a)) \cdot s_{A;\alpha},$$
  proving the claim by induction. It is clear that the two vertical arrows are isomorphisms. Since the formation of $\eta$ commutes with base change along $A \to \kk$, also the lower horizontal map is an isomorphism. Thus, $\rho$ is an isomorphism, and so is $P_A(-)$ since $\rho \circ P_A(-) = \mathrm{id}$. Finally, to see that the maps are group homomorphisms, it is sufficient to note that $P_A(-)$ is a group homomorphism by the functorial nature of its construction.
 \end{proof}

 Now we turn to the enhanced case. From the definitions, we find that 
 $$\Gamma^{1}_{P_A(\cL_A)/S_A} = (F_{-1})^{-1}(p_A^*\Gamma^{1}_{Y_A/S_A})$$
 so that we have 
 $$(p_A)_*\Gamma^{1}_{P_A(\cL_A)/S_A} = \pi^{-1}(\Gamma^{1}_{Y_A/S_A}).$$ 
 Similarly, we have 
 $$\Gamma^{1}_{Y_A/S_A}(\cL_A) = q^{-1}(\Gamma^{1}_{Y_A/S_A})$$
 where $q: \Theta^1_{Y_A/S_A}(\cL_A) \to \Theta^1_{Y_A/S_A}$ is the surjection in the Atiyah extension. Thus, we obtain an induced isomorphism 
 $$\eta: (p_A)_*\Gamma^{1}_{P_A(\cL_A)/S_A} \xrightarrow{\cong} \Gamma^{1}_{Y_A/S_A}(\cL_A).$$
 From the proof of Proposition~\ref{PA-rho-isom}, we see that $\rho$ gives rise to a one-to-one correspondence between gauge transforms of $g_A: P_A(\cL_A) \to S_A$ and gauge transforms of $f_A: (Y_A,\cL_A) \to S_A$. In particular, if $\varphi$ is a gauge transform, then $P_A(\varphi)$ is a gauge transform as well.

 \section{The enhanced system of deformations $\D(\cL_0)$}
 
 We use the construction of $P_A(\cL_A)$ to form an enhanced system of deformations $\D(\cL_0)$ for $g_0: P_0(\cL_0) \to S_0$, subordinate to $\V(\cL_0) := \{p_0^{-1}(V_\alpha)\}_\alpha$. We endow each local model $V_{\alpha;A} \to S_A$ of $\D$ with the line bundle $\cL_{\alpha;A} := \cO_{V_{\alpha;A}}$. For the restriction maps along some $B' \to B$, we take the obvious map $\cL_{\alpha;B'} \to \cL_{\alpha;B}$. For the restriction to $B = \kk$, we use the identification $\cL_0|_\alpha \cong \cO_{X_0}|_\alpha$ via $s_{0;\alpha} \in \cL_0|_\alpha$. For the comparison maps, we start from $\cL_{\alpha;0}|_{\alpha\beta} \cong \cL_0|_{\alpha\beta} \cong \cL_{\beta;0}|_{\alpha\beta}$ and lift this order by order to an isomorphism $\cL_{\alpha;k}|_{\alpha\beta} \cong \cL_{\beta;k}|_{\alpha\beta}$.\footnote{In general, the identification $\cL_{\alpha;k}|_{\alpha\beta} = \cO_{V_{\alpha;k}}|_{\alpha\beta} \cong \cO_{V_{\beta;k}}|_{\alpha\beta} = \cL_{\beta;k}|_{\alpha\beta}$ does not work because it is not compatible with our choice on $k = 0$.} For a general $A$, we take an appropriate pull-back from some $A_k$. Now we form $P_{\alpha;A} := P_A(\cL_{\alpha;A})$. They come with restriction maps along $B' \to B$, and on overlaps, we have isomorphisms $P_{\alpha;A}|_{\alpha\beta} \cong P_{\beta;A}|_{\alpha\beta}$. The cocycles on overlaps are inner automorphisms because $P_A(\varphi)$ is a gauge transform whenever $\varphi$ is a gauge transform. The open cover $\V(\cL_0)$ can be seen to be admissible as specified in Definition~\ref{enhanced-sys-of-defo} by using that a distinguished splitting $B_1$ exists on every $p_0^{-1}(V_\alpha)$, using the general projection formula mentioned above, and using the appropriate Leray spectral sequence. This gives the enhanced system of deformations $\D(\cL_0)$. 
 
 If $f: (Y_A,\cL_A) \to S_A$ is an enhanced deformation of type $\D$, its restriction to $Y_0 \cap V_\alpha$ is isomorphic to $(V_{\alpha;A},\cL_{\alpha;A}) \to S_A$ via a specified isomorphism which differ by an inner automorphism on $V_\alpha \cap V_\beta$. Thus, we have an isomorphism from $P_A(\cL_A)|_\alpha$ to $P_{\alpha;A}$, differing by an inner automorphism on $V_\alpha \cap V_\beta$. In other words, $P_A(\cL_A) \to S_A$ is an enhanced generically log smooth deformation of $g_0: P_0(\cL_0) \to S_0$ of type $\D(\cL_0)$. This defines a natural transformation 
 $$\mathrm{ELD}^\D_{X_0/S_0}(\cL_0) \Rightarrow \mathrm{ELD}^{\D(\cL_0)}_{P_0(\cL_0)/S_0}.$$
 
 \begin{lemma}\label{line-bundle-fiber-bundle-defo-isom-gen-log-sm}\note{line-bundle-fiber-bundle-defo-isom-gen-log-sm}
  This is an isomorphism of deformation functors.
 \end{lemma}
 \begin{proof}
  Since $P_A(-)$ induces a bijection on automorphisms, this follows from tracking the gluing like in the proof of \cite[Prop.~4.2]{Felten2022}.
 \end{proof}

\par\vspace{\baselineskip}

An important special case is when both $f_0: X_0 \to S_0$ is log toroidal with respect to $a_0: S_0 \to A_Q$, and each local model $V_{\alpha;A} \to S_A$ is log toroidal with respect to $a_A: S_A \to A_Q$. In this situation, also $g_A: P_A(\cL_A) \to S_A$ is log toroidal with respect to $a_A: S_A \to A_Q$. To see this, first let $(Q \subset P,\F)$ be an elementary log toroidal datum in the sense of \cite[Defn.~3.1]{FFR2021}. Let $\hat P := P \times \NN$, and let $\hat \F := \{F \times \NN \ | \ F \in \F\} \cup \{P \times 0\}$. Mimicking the construction of $P_A(\cL_A)$ for $A_{P,\F}$, we form a commutative diagram 
\[
 \xymatrixrowsep{1em}\xymatrix{
  & A_{\hat P,\hat \F} \ar[dd] \ar[rr] \ar[dl]^p & & \underline{A}_{P,\F} \times A_\NN \ar[dl] \ar[dd] & & \\
  A_{P,\F} \ar[dd] \ar[rr] & & \underline{A}_{P,\F} \ar[dd] & & & \\
  & A_Q \times A_\NN \ar[dl] \ar[rr] & & \underline{A}_Q \times A_\NN \ar[dl] \ar[rr] & & A_\NN \ar[dl] \\
  A_Q \ar[rr] & & \underline{A}_Q \ar[rr] & & \Spec(\kk) & \\
 }
\]
 of log schemes. In this diagram, the log structure is defined everywhere on every entry, but it is not necessarily coherent. Since $A_Q \times A_\NN = A_{Q \oplus \NN}$, the lower two horizontal squares are Cartesian. Also the middle vertical square is Cartesian. Let $x$ be the coordinate of $A_\NN$ on the very right. By \cite[III, Prop.~1.7.3]{LoAG2018}, $\underline{A}_Q \times A_\NN$ is the log scheme with divisorial log structure defined by $\{x = 0\}$, and $\underline{A}_{P,\F} \times A_\NN$ carries also the divisorial log structure defined by $\{x = 0\}$. This shows that the arrows emanating from $A_{\hat P,\hat \F}$ are well-defined and give rise to a commutative diagram. By \cite[Cor.~3.11]{FFR2021}, on $A_{P,\F}$, we have a log smooth locus $U_{P/Q} = U_1 \cup U_2$ such that $A_{P,\F}|_{U_1} = A_P|_{U_1}$, and $A_{P,\F}|_{U_2} \to A_Q$ is strict and smooth. One can then show that the left vertical square is Cartesian on $U_{P/Q}$ by analyzing $U_1$ and $U_2$ separately. For $U_1$, one uses that $A_{P_1 \oplus P_2} \cong A_{P_1} \times A_{P_2}$, and for $U_2$ one uses Lemma~\ref{smooth-map-div-log-str} below, a result which seems to be missing in \cite{LoAG2018} although similar results are discussed. Now also the upper horizontal square is Cartesian on $U_{P/Q}$. Finally, the log structure on $\underline{A}_{P,\F} \times A_\NN$ is the one associated with the Deligne--Faltings log structure coming from the ideal $(x)$ because the map to $A_\NN$ is strict. Thus, on $p^{-1}(U_{P/Q})$, the map $p: A_{\hat P,\hat\F} \to A_{P,\F}$ is precisely the analogue of either $W'_\alpha$ or $W''_\alpha$ in the construction of $P_A(\cL_A)$.
 
 \begin{lemma}\label{smooth-map-div-log-str}\note{smooth-map-div-log-str}
  Let $Q$ be a sharp toric monoid, and let $p: X \to A_Q$ be a strict and smooth map. Then $X$ carries the divisorial log structure defined by $p^{-1}(D_Q)$.
 \end{lemma}
 \begin{proof}
  First assume that we have a factorization via an \'etale map $r: X \to A_Q \times \bAA^r$ and the projection $q: A_Q \times \bAA^r \to A_Q$. Following the proof of \cite[III, Thm.~1.9.4]{LoAG2018}, it is sufficient to show that 
  $$q^{-1}\underline\Gamma_{D_Q}(Div_{A_Q}^+) \to \underline\Gamma_{D_Q \times \bAA^r}(Div_{A_Q \times \bAA^r}^+)$$
  is an isomorphism in order to show that $A_Q \times \bAA^r$ carries the divisorial log structure defined by $D_Q \times \bAA^r$. Here, $\underline\Gamma_Y(Div_X^+)$ denotes the sheaf of effective Cartier divisors with support in $Y \subset X$. Now the argument in the proof of \cite[III, Lemma~1.6.7]{LoAG2018} shows that this is the case. Since every irreducible component of $D_Q \times \bAA^r$ is pure of codimension $1$ and normal, hence geometrically unibranch, we can apply \cite[III, Prop.~1.6.5]{LoAG2018} to the \'etale map $r: X \to A_Q \times \bAA^r$ and obtain, after untangling the definitions, that $X$ carries the divisorial log structure defined by $Y := r^{-1}(D_Q \times \bAA^r)$. Because every smooth map $p: X \to A_Q$ can, locally in the \'etale topology, be factorized as above, $X$ carries the divisorial log structure defined by $p^{-1}(D_Q)$ in the \'etale topology. Another application of \cite[III, Prop.~1.6.5]{LoAG2018} yields that this is the same as the divisorial log structure in the Zariski topology.
 \end{proof}

With this preparation, we show:

\begin{lemma}\label{log-toroidal-fiber-bundle}\note{log-toroidal-fiber-bundle}
 In Situation~\ref{gen-log-sm-fiber-bundle-sitn}, assume that $f_0: X_0 \to S_0$ is log toroidal with respect to $a_0: S_0 \to A_Q$, and that each local model $V_{\alpha;A} \to S_A$ is log toroidal with respect to $a_A: S_A \to A_Q$. Let $f_A: (Y_A,\cL_A) \to S_A$ be a generically log smooth deformation with a line bundle of type $\D$. Then $g_A: P_A(\cL_A) \to S_A$ is log toroidal with respect to $a_A: S_A \to A_Q$.
\end{lemma}
\begin{proof}
 If $A_{P,\F} \to A_Q$ is a local model of $V_{\alpha;A} \to S_A$, then $A_{\hat P,\hat\F} \to A_Q$ is a local model of $P_A(\cO_{V_{\alpha;A}})$ constructed over $V_{\alpha;A}$. Thus, $P_A(\cL_A)$ is log toroidal with respect to $a_A: S_A \to A_Q$.
\end{proof}



\chapter{Algebraic degenerations}\label{alg-degen-sec}\note{alg-degen-sec}

In this chapter, we study algebraic families instead of infinitesimal or formal ones. These algebraic families are obtained by lifting an ample line bundle to arbitrary order on a projective log Calabi--Yau space $f_0: X_0 \to S_0$, which is often possible by the results of Chapter~\ref{fiber-bundle-constr-sec}. We restrict our attention to the case $Q = \NN$, which is both simple and the most important case in practice. We work over $\kk = \CC$ for simplicity.

\begin{sitn}\label{alg-degen-sitn}\note{alg-degen-sitn}
 Let $\Lambda = \CC\llbracket t\rrbracket$, and let $f_0: X_0 \to S_0$ be a torsionless enhanced generically log smooth family of relative dimension $d$. Assume that $f_0: X_0 \to S_0$ is projective, and let $\cL_0$ be an ample line bundle on $X_0$. Assume that $X_0$ is connected. Let $\V = \{V_\alpha\}_\alpha$ be an $\cL_0$-admissible open cover of $X_0$, and let $\D$ be an enhanced system of deformations subordinate to $\D$. Assume that $\mathrm{ELD}^\D_{X_0/S_0}(\cL_0)$ is unobstructed. The family $f_0: X_0 \to S_0$ may or may not be log Calabi--Yau.
\end{sitn}
\begin{ex}
 Theorem~\ref{proper-log-CY-log-toroidal-unobstr-pairs} shows that we are in Situation~\ref{alg-degen-sitn} when the enhanced generically log smooth family is an actual generically log smooth family, all local models are log toroidal with respect to $a_A: S_A \to A_\NN$, and $f_0: X_0 \to S_0$ is log Calabi--Yau.
\end{ex}

In Situation~\ref{alg-degen-sitn}, let us fix a deformation $f_k: (X_k,\cL_k) \to S_k$ for every $k \geq 0$. Let $N > 0$ be such that $\cL_0^{\otimes N}$ is very ample and $H^n(X_0,\cL_0^{\otimes N}) = 0$ for $n \geq 1$, and let $\M_k := \cL_k^{\otimes N}$. Then $\M_0$ defines a closed embedding $\Phi_0: X_0 \to \PP^s$ for $s = h^0(X_0,\M_0) - 1$. By lifting the generators of $\M_0$, we obtain a surjection $\cO_{X_k}^{\oplus (s + 1)} \to \M_k$, which defines a morphism $\Phi_k: X_k \to \PP^s_{S_k}$. The formation of this morphism is compatible with base change so that $\Phi_k \times_{S_k} S_0 = \Phi_0$. This shows that $\Phi_k$ is a closed immersion, and that $\M_k$ is very ample.

The limit of the flat deformations $f_k: X_k \to S_k$ is a formal scheme $f_\infty: \mathfrak{X} \to \mathfrak{S}$ over $\mathfrak{S} = \mathrm{Spf}\,\CC\llbracket t\rrbracket$. Let us recall the proof that $\mathfrak{X}$ is a Noetherian formal scheme, as given in \cite[Prop.~21.1]{Hartshorne2010}: Let $W \subseteq X_0$ be affine, and let $B_k = \Gamma(W,\cO_{X_k})$. By \cite[09B8]{stacks}, the limit $B_\infty = \varprojlim_k B_k$ is $t$-adically complete, and $(W,\cO_{\mathfrak{X}}|_W)$ is the formal completion of $\Spec B_\infty$ in $\Spec B_0$. It thus remains to show that $B_\infty$ is Noetherian. Since $f_0: X_0 \to S_0$ is of finite type, we can find a surjection $A_0[\underline x] \to B_0$. We can lift the images of the variables to any order so that we obtain maps $P_k := A_k[\underline x] \to B_k$. By \cite[09ZW]{stacks}, this map is surjective. Thus, we have a surjective homomorphism $P_\infty := \varprojlim_k A_k[\underline x] \to B_\infty$.
Let $A = \CC\lsem t\rsem$. Then $P_\infty$ is the $t$-adic completion of $A[\underline x]$. In particular, $P_\infty$ is Noetherian, and so is its quotient $B_\infty$.\footnote{An earlier version of the manuscript has a mistake at this point. When denoting by $I_k$ the kernel of $A_k[\underline x] \to B_k$, it is true that $I_{k + 1} \otimes_{A_{k + 1}} A_k = I_k$. However, it may not be true that $I = \{f \in A[\underline x] \ |\ \forall k: \ f|_k \in I_k\}$ is an ideal with $I \otimes_A A_k = I_k$---the ideal $I$ might not surject onto $I_k$. In this case, it is no longer true that we would have $(A[\underline x]/I) \otimes_A A_k = B_k$. It is plausible that there are indeed affine (!) formal families $(f_k: X_k \to S_k)_k$ which are not the completion of any scheme of finite type over $\Spec \kk\lsem t\rsem$, but we currently do not have a proven counterexample.}

The closed immersions $\Phi_k$ give rise to a closed immersion $\Phi_\infty: \mathfrak{X} \to \mathfrak{P}^s$ of formal schemes over $\mathfrak{S}$, as can be seen by using \cite[Lemma~10.14.4]{EGAI}. Here, $\mathfrak{P}^s$ is the limit of $\PP^s_{S_k} \to S_k$, and it is also equal to the formal completion of $\PP^s_S \to S$ in $t \cdot \cO_{\PP^s_S}$, where $S = \Spec \CC\llbracket t\rrbracket$. Now \cite[Cor.~5.1.8]{EGAIII-1} states that closed subschemes of $\PP^s_S$ are in one-to-one correspondence with closed formal subschemes of $\mathfrak{P}^s$. Thus, there is a closed subscheme $\Phi: X \to \PP^s_S$ whose completion in $X_0 = X \cap \PP^s_0$ is $\mathfrak{X}$; it induces the closed immersions $\Phi_k: X_k \to \PP^s_{S_k}$ over $S_k$. This gives rise to the map $f: X \to S$, which is of finite type and proper, and which induces $f_k: X_k \to S_k$ after base change. It follows from \cite[0523]{stacks} that $X$ is flat over $S$ at all points in $X_0$. Since the flat locus is open and $f: X \to S$ is a closed morphism, this implies that $f: X \to S$ is flat. By \cite[045U]{stacks}, the Cohen--Macaulay locus of $f: X \to S$ is open, and it contains $X_0$, so we find that $f: X \to S$ is a Cohen--Macaulay morphism, and $X$ is Cohen--Macaulay since the base $S$ is so. Using \cite[02NM]{stacks}, we can show that $f: X \to S$ is of relative dimension $d$. Since $t$ is a non-zero divisor due to flatness, the Krull intersection theorem together with the reducedness of $X_0$ shows that $X$ is reduced at points in $X_0$. Then $X$ is reduced because the support of any section is closed. The generic fiber $X_\eta$ is reduced because it is the localization of $X$ in $t$. It follows from \cite[055J]{stacks} that $X_\eta$ is connected since $X_0$ is connected.
Thus, we have shown:

\begin{lemma}\label{algebraization-lemma}\note{algebraization-lemma}
 In Situation~\ref{alg-degen-sitn}, there is a morphism $f: X \to S$ of finite type over $S = \Spec \CC\llbracket t\rrbracket$ which induces, via base change, the underlying morphism of schemes of some enhanced generically log smooth deformation $f_k: X_k \to S_k$ of type $\D$. The morphism $f: X \to S$ is separated, flat, projective, Cohen--Macaulay, of relative dimension $d$, and has reduced connected fibers. The total space $X$ is connected, reduced, and Cohen--Macaulay.
\end{lemma}

Next, we investigate smoothness, integrality, and normality of the generic fiber $X_\eta$. The following definition of \emph{formal smoothings} seems to appear first in Tziolas' article \cite[Defn.~11.6]{Tziolas2010}, although the concept is probably much older, cf.~also \cite[0E7T]{stacks}. In \cite[p.~202]{Hartshorne2010}, Hartshorne defines the closely related notion of being \emph{formally smoothable}, and on page 214, he claims that this definition is new.

\begin{defn}\label{formal-smoothing-defn}\note{formal-smoothing-defn}
 Let $S_0 = \Spec \kk$, and let $f_0: X_0 \to S_0$ be a separated scheme of finite type, reduced, connected, Cohen--Macaulay of dimension $d \geq 1$. Let $S_k = \Spec \kk[t]/(t^{k + 1})$, and let $(f_k: X_k \to S_k)_k$ be a compatible family of flat deformations of $f_0: X_0 \to S_0$. Then $(f_k)_k$ is a \emph{formal smoothing} of $f_0: X_0 \to S_0$ if there is some $m \geq 0$ such that $t^m$ is in the $d$-th Fitting ideal $\mathrm{Fitt}_d(\Omega^1_{X_m/S_m})$ of the classical K\"ahler differentials $\Omega^1_{X_m/S_m}$ of $f_m: X_m \to S_m$. In this case, the property holds for all $k \geq m$.
\end{defn}

Note that the notion of a formal smoothing is \'etale local, i.e., when we have a system of compatible surjective \'etale maps $g_k: Y_k \to X_k$, then $(X_k \to S_k)_k$ is a formal smoothing if and only if $(Y_k \to S_k)_k$ is a formal smoothing. For a formal smoothing, we have:

\begin{lemma}\label{smooth-generic-fiber}\note{smooth-generic-fiber}
 Let $S = \Spec \kk\llbracket t\rrbracket$. In the situation of Definition~\ref{formal-smoothing-defn}, let $f: X \to S$ be a flat and separated morphism of finite type inducing the formal smoothing $(f_k)_k$. Then there is an open subset $W \subseteq X$ with $W \cap X_0 = X_0$ such that $W_\eta$ is smooth.
\end{lemma}
\begin{proof}
 This is a variant of \cite[0E7T]{stacks}, cf.~also \cite[Prop.~11.8]{Tziolas2010}.
\end{proof}

The converse holds as well, i.e., if $f: X \to S$ is a separated morphism of finite type, and if $X_\eta$ is smooth, then the induced system $(f_k: X_k \to S_k)_k$ is a formal smoothing.

The point now is that saturated log smooth maps give rise to formal smoothings in codimension $1$. Let us say that a log smooth and saturated morphism $f_0: X_0 \to S_0$ is \emph{para-smooth}\footnote{From Greek $\pi\alpha\rho\acute\alpha$, meaning \emph{next to},\,\emph{nearby}.}\index{para-smooth!log smooth morphism} if the locally unique log smooth deformations constitute a formal smoothing.

\begin{lemma}\label{para-smooth-locus}\note{para-smooth-locus}
 Let $f_0: U_0 \to S_0$ be a separated log smooth and saturated morphism over the standard log point $S_0 = \Spec(\NN \to \CC)$. Then there is an open subset $\tilde U_0 \subseteq U_0$ which is para-smooth, and such that $U_0 \setminus \tilde U_0$ has codimension $\geq 2$ in $U_0$.
\end{lemma}
\begin{proof}
 A saturated and log smooth morphism $f_0: U_0 \to S_0$ has local models $A_P \to A_\NN$ for saturated injections $h: \NN \to P$. Let $\rho = h(1)$, and let $R \subseteq P$ be the face generated by $\rho$. Then we have a factorization $A_P \to A_R \to A_\NN$. The fibers of $A_R \to A_\NN$ over $t \not= 0$ are smooth because $\NN \to R$ is vertical. The fibers over $z^\rho \not= 0$ of $A_P \to A_R$ are given by $A_{P/R}$, a normal scheme. When we remove the closure of the singular locus in $A_{P/R} \times A_R^*$ from $A_P$, then the complement is a deformation to a smooth general fiber, and moreover the removed locus has the right codimension.
\end{proof}
\begin{cor}
 In Situation~\ref{alg-degen-sitn}, there is a para-smooth open subset $\tilde U_0 \subseteq U_0 \subseteq X_0$ with $\mathrm{codim}(X_0 \setminus \tilde U_0,X_0) \geq 2$. Moreover, there is an open subset $\tilde U \subseteq X$ with $\tilde U \cap X_0 = \tilde U_0$ and such that $\tilde U_\eta$ is smooth. In particular, both $X$ and $X_\eta$ are normal.
\end{cor}
\begin{proof}
 The family $(f_k: \tilde U_k \to S_k)_k$ is a formal smoothing so that we can find an open subset $\tilde U \subseteq X$ such that $\tilde U \cap X_0 = \tilde U_0$ and $\tilde U_\eta$ is smooth. Recall that $X$ is Cohen--Macaulay. To show that $X$ is normal, let $x \in X$ be such that $\mathrm{dim}(\cO_{X,x}) = 1$. We have to show that $\cO_{X,x}$ is regular. If $x \in X_0$, then $\cO_{X_0,x}$ is reduced and of dimension $0$ because $f: X \to S$ is flat, hence $\cO_{X_0,x}$ is a field, and $t \in \cO_{X,x}$ generates the maximal ideal. Using the Krull intersection theorem, we find that $\cO_{X,x}$ is integral, and then it is a discrete valuation ring, hence regular. In the case $x \in X_\eta$, let $C = \overline{\{x\}}$ be the closure in $X$. Then $C$ is irreducible of dimension $d$. Since $f: X \to S$ is closed, we find $C \cap X_0 \not= \emptyset$. Let $c \in C \cap X_0$ be a closed point. The formula in \cite[02JS]{stacks} shows that $\mathrm{dim}_c(C \cap X_0) \geq d - 1$. Thus, we have $C \cap \tilde U_0 \not=\emptyset$, and hence $x \in \tilde U$. This shows that $\cO_{X,x}$ is regular, and hence that $X$ is normal. Since $X_\eta$ is a localization of $X$, it is normal as well.
\end{proof}

\subsubsection*{Singularities of the generic fiber}

Another consequence of Lemma~\ref{smooth-generic-fiber} and its converse is that, in Situation~\ref{alg-degen-sitn}, the smoothness of $X_\eta$ depends only on the system of deformations $\D$ if $f_0: X_0 \to S_0$ is proper.

\begin{defn}
 A system of deformations $\D$ is \emph{para-smooth} if $\{V_{\alpha;k}\}_k$ is a formal smoothing of $V_{\alpha;0}$.\index{para-smooth!system of deformations}
\end{defn}

In Situation~\ref{alg-degen-sitn}, if $f_0: X_0 \to S_0$ is proper, and $\D$ is para-smooth, then $X_\eta$ is smooth.

\begin{cor}\index{standard Gross--Siebert type}
 In Situation~\ref{alg-degen-sitn}, assume additionally that $f_0: X_0 \to S_0$ is proper, and that both $f_0: X_0 \to S_0$ and all deformations in $\D$ are log toroidal of standard Gross--Siebert type. Then the generic fiber $X_\eta$ is smooth.
\end{cor}
\begin{proof}
 We use \cite[Prop.~2.2]{GrossSiebertII}.
\end{proof}

We can also strengthen the formal and analytic smoothing result \cite[Thm.~1.1]{FFR2021} to an algebraic version.

\begin{cor}
 Let $V$ be a connected projective normal crossing space, assume that $\T^1_V$ is globally generated, and that $V$ is Calabi--Yau, i.e., $\omega_V \cong \cO_V$. Then there is a flat projective morphism $f: X \to S$ with $f^{-1}(0) = V$ and a connected smooth projective generic fiber $X_\eta$. 
\end{cor}
\begin{proof}
 By \cite[Prop.~6.9]{FFR2021}, we can endow $V$ with the structure of a log toroidal family $f_0: X_0 \to S_0$ of standard Gross--Siebert type.
\end{proof}
\begin{rem}
 In view of Chapter~\ref{log-modif-sec}, this remains true when $\omega_V^{-1}$ admits a log regular section $s_0$ with respect to the log structure $f_0: X_0 \to S_0$. Indeed, the system of deformations $\D(s_0)$ remains para-smooth although the log structure changes. An example of this is \cite[Thm.~1.1]{FFR2021}, which we have worked out in a variant in Theorem~\ref{smoothing-nc-spaces}.
\end{rem}

In a sense, the singularities of the generic fiber are always determined by $\D$, not only in the para-smooth case. The key insight is the following variant of Artin's approximation.

\begin{lemma}\label{rel-Artin-approx}\note{rel-Artin-approx}
 Let $R = \CC\llbracket t\rrbracket$ and $S = \Spec R$. Let $f: X \to S$ and $f': X' \to S$ be two morphisms of finite type, and assume that we have a system of compatible isomorphisms $\varphi_k: X_k \xrightarrow{\cong} X'_k$. Let $x \in X_0 \subset X$ be a point in the central fiber, and let $x' = \varphi_0(x) \in X'_0 \subset X'$. Let $c \geq 0$. Then there is a scheme $V$ of finite type over $S$ together with a point $v \in V$ and two \'etale $S$-morphisms $g: V \to X$ and $h: V \to X'$ with the following properties: $g(v) = x$ and we have an isomorphism $\kappa(x) \cong \kappa(v)$ of residue fields; $h(v) = x'$ and we have an isomorphism $\kappa(x') \cong \kappa(v)$ of residue fields; we have $\varphi_c \circ g_c = h_c$ for the base change $g_c: V_c \to X_c$ and $h_c: V_c \to X'_c$.
\end{lemma}
\begin{proof}
 The base ring $R$ is an excellent discrete valuation ring, so we can apply \cite[Thm.~1.10]{Artin1969}. We may assume that $X$ is affine; let $B := \Gamma(X,\cO_X) = R[y_1,...,y_n]/(f_1,...,f_m)$. We set 
 $$A := \cO_{X,x}^h, \quad A' := \cO_{X',x'}^h,$$
 the Henselian local rings, i.e., the local rings in the \emph{Nisnevich} topology. Note that for $A_k := A \otimes_R R/(t^{k + 1})$ and $A'_k$ analogously, we have $A_k = \cO_{X_k,x}^h$ and $A'_k = \cO_{X'_k,x'}^h$. The isomorphisms $\varphi_k$ induce compatible isomorphisms $\phi_k: A_k \cong A'_k$. In particular, we obtain composite homomorphisms $\psi_k: B \to A \to A_k \to A'_k$. In other words, we have a formal solution of the system of equations $f_j(\hat y_1,...,\hat y_n) = 0$ with values in the $t$-completion $\hat A' = \varprojlim A'_k$ of $A'$. By \cite[Thm.~1.10]{Artin1969}, we can find $y_1,...,y_n \in A'$ which satisfy $f_j(y_1,...,y_n) = 0$ and $y_i \equiv \hat y_i \mod{t^{c + 1}}$. Thus, we have a homomorphism $\psi: B \to A'$ which coincides with $\psi_k$ after mapping to $A'_c$ (but not necessarily for $k > c$). Now we can find an \'etale map $h: U' \to X'$ from an affine $S$-scheme $U'$ together with a point $u' \in U'$ such that $h(u') = x'$ and we have an isomorphism $\kappa(x') \cong \kappa(u')$, and such that we have a map $\chi: B \to \Gamma(U',\cO_{U'}) =: B'$ which coincides with $\psi: B \to A'$ after composition with $B' \to A'$. By shrinking $U'$ in the Zariski topology, we may assume that $B' \to A'$ is injective. Since $\chi: B \to B'$ induces an isomorphism of residue fields and of completions at $x$ and $u'$, we can assume that $\chi: B \to B'$ is \'etale after further shrinking $U'$ in the Zariski topology, by \cite[Prop~17.6.3]{EGAIV-4}. It remains to show that $\varphi_c \circ g_c = h_c$, or more precisely, that $\chi_c: B_c \to B'_c$ equals the composition 
 $$B_c \xrightarrow{\varphi_{c*}} \cO_{X'_c} \xrightarrow{h_c^*} B'_c.$$ By construction, the two maps are equal after composing with $B'_c \to A'_c$. By shrinking $U'_c$ in the Zariski topology, we can assume that $B'_c \to A'_c$ is injective. Then we choose $V$ as an open subset of $U'$ which, after base change to $S_c$, gives rise to the shrunk $U'_c$.
\end{proof}
\begin{rem}
 This result is different from the similar \cite[Cor.~2.6]{Artin1969} in Artin's original article in that the latter does not show that we recover the original isomorphism over $S_c$.
\end{rem}

\begin{cor}
 In Situation~\ref{alg-degen-sitn}, assume that we have, for each open $V_\alpha$, morphisms $U_\alpha \to S$ of finite type with $U_\alpha \times_S S_k = V_\alpha$ as schemes. Then the singularities of the generic fiber $X_\eta$ of $f: X \to S$ are \'etale equivalent to the singularities of $U_{\alpha;\eta}$.
\end{cor}
\begin{proof}
 Use that $f_0: X_0 \to S_0$ is proper to show that the result holds globally on $X_\eta$.
\end{proof}

This is particularly interesting for log toroidal families of elementary Gross--Siebert type since, in this case, we have complete control over the generic fiber of the local models, as explained in Lemma~\ref{elem-simplex-gen-fib} and Corollary~\ref{elem-GS-gen-fib-rigid}. We need the following preparation.

\begin{lemma}\label{etale-map-lift}\note{etale-map-lift}
 Let $S = \Spec \CC\llbracket t\rrbracket$, and let $f: X \to S$ be a morphism of finite type between affine schemes. Let $g_0: Y_0 \to X_0$ be an \'etale morphisms of finite type between affine schemes. Then there is an \'etale morphism $g: Y \to X$ of finite type between affine schemes such that $g$ induces $g_0: Y_0 \to X_0$ over $S_0$.
\end{lemma}
\begin{proof}
 Let $B = \Gamma(X,\cO_X)$, let $B_0 = \Gamma(X_0,\cO_{X_0})$, and let $C_0 = \Gamma(Y_0,\cO_{Y_0})$. Since $g_0: Y_0 \to X_0$ is \'etale, we have $C_0 = B_0[x_1,...,x_n]/(f_1,...,f_n)$ with $\Delta := \mathrm{det}(\frac{\partial f_j}{\partial x_i})$ invertible in $C_0$. Let $f_1',...,f_n' \in B[x_1,...,x_n]$ be arbitrary lifts of $f_1,...,f_n$, let $\Delta' := \mathrm{det}(\frac{\partial f'_j}{\partial x_i})$, and set 
 $$C := \big(B[x_1,...,x_n]/(f_1',...,f_n')\big)_{\Delta'}.$$
 Then $B \to C$ is \'etale and induces $B_0 \to C_0$ after base change.
\end{proof}

\begin{cor}
 In Situation~\ref{alg-degen-sitn}, assume that $f_0: X_0 \to S_0$ is proper, and assume that both $f_0: X_0 \to S_0$ and all deformations in $\D$ are log toroidal of elementary Gross--Siebert type. Then the generic fiber $X_\eta$ has toroidal singularities which are smooth in codimension $3$, locally rigid, terminal, Gorenstein, and abelian quotient singularities.\footnote{$\QQ$-factoriality does not translate to $X_\eta$ because this property is not \'etale local.}
\end{cor}
\begin{proof}
 Let $f: X \to S$ be the algebraic degeneration. By subdividing $\V$, we may assume that $\V$ is an affine cover of $X_0$. Let $W_\alpha \subseteq X$ be an affine open subset with $W_\alpha \cap X_0 = V_\alpha$. Around a point $x \in V_\alpha$, we may find an \'etale roof $Y_0 \to V_\alpha$, $Y_0 \to L_0$ where $L_0$ is the central fiber of a local model of elementary Gross--Siebert type. Let $Y \to W_\alpha$ be an \'etale morphism obtained from $Y_0 \to W_\alpha$ via Lemma~\ref{etale-map-lift}, and let $Y' \to L$ be one obtained from $Y_0 \to L_0$. Here, $L \to S$ is the local model of elementary Gross--Siebert type. Since infinitesimal log toroidal deformations of elementary Gross--Siebert type are locally unique, we have $Y_k \cong Y_k'$ for all $k \geq 0$. Now Lemma~\ref{rel-Artin-approx} shows that $W_\alpha$ and $L$ are \'etale locally isomorphic after possibly shrinking $W_\alpha$. Then the generic fiber has the stated properties on the new $W_\alpha$. Since $f_0: X_0 \to S_0$ is proper, the new $W_\alpha$ collectively cover $X$ and thus $X_\eta$.
\end{proof}

In other words, the generic fiber $X_\eta$ has very mild singularities, and furthermore, they cannot be improved by a flat deformation of $X_\eta$.

\begin{rem}
 In the preprint \cite{Ruddat2019local}, Ruddat gives a variant of Lemma~\ref{rel-Artin-approx} in which the isomorphism $\varphi_k$ does not need to be given for all $k \geq 0$. Instead, one fixes the first family $f: X \to S$, and then there is a number $N$, depending on $f: X \to S$, such that the statement of Lemma~\ref{rel-Artin-approx} holds when $\varphi_k$ is given for $k \leq 4N + 1$, for any flat $f': X' \to S$ of finite type. In order for this statement to hold, one needs the crucial assumption that the generic fiber of the first given family $f: X \to S$ is \emph{locally rigid}, i.e., $\T^1_{X_\eta/\eta} = 0$ for the first tangent sheaf in the sense of Lichtenbaum--Schlessinger. As we have just seen, this assumption is satisfied in the case of elementary Gross--Siebert type. The relevance of Ruddat's version is that, by \cite[Thm.~B.1]{RuddatSiebert2020}, one can find, for every $c \geq 0$, an \emph{analytic} family over a small disk $\Delta$ which coincides with the analytification of $f_c: X_c \to S_c$ after base change to $S_c$. Then the general fiber of this family must have the same singularities as the general fiber of the analytification of the local model of elementary Gross--Siebert type. The advantage of this picture is that we do not need to have an ample line bundle on $X_0$. However, for algebraic degenerations arising from Situation~\ref{alg-degen-sitn}, the weaker Lemma~\ref{rel-Artin-approx} is enough.
\end{rem}

\subsubsection*{Comparing two log structures}

On the one hand side, the central fiber $f_0: U_0 \to S_0$ and every deformation $f_k: U_k \to S_k$ carries a structure of a log smooth log morphism. On the other hand side, we can also define a log structure on $f: X \to S$ from its geometry. On the base $S$, the compactifying log structure defined by $t = 0$ coincides with the log structure coming from $S = \Spec(\NN \to \CC\llbracket t\rrbracket)$. On the total space, first assume that $f_0: U_0 \to S_0$ is vertical. Then $f_0: U_0 \to S_0$ is, locally in the \'etale topology, isomorphic to $V(\sigma) \times \bAA^{d - r} \to S_0$ for $V(\sigma)$ arising from Construction~\ref{prototype-constr}. This map is the central fiber of $U_S(\sigma) \times_S \bAA^{d-r}_S \to S$, whose log structure coincides with the compactifying log structure defined by $\{t = 0\} = V(\sigma) \times \bAA^{d - r}$ as explained in Chapter~\ref{toroidal-cr-sp-sec}. Thus, one may expect that the compactifying log structure on $X$ defined by $X_0$ coincides, after base change to $S_k$, with the given log structure on $f_k: U_k \to S_k$. In the general case, when $f_0: U_0 \to S_0$ may not be vertical, we have the \emph{horizontal locus} $H_k \subseteq U_k$, constructed in \cite[Constr.~2.19]{FeltenThesis}, outside of which $f_k: U_k \to S_k$ is vertical. The horizontal locus is flat over $S_k$, and its formation commutes with (strict) base change. Since $f_k: X_k \to S_k$ is a Cohen--Macaulay morphism, we can choose a canonical compactification $\bar H_k$ of $H_k$ in $X_k$ by applying $j_*$ to the ideal sheaf of $H_k \subseteq U_k$. In good cases, but possibly not always, the formation of $\bar H_k$ commutes with base change.\footnote{We currently do not have a counterexample, but we also see no reason why this should be true in general. Note that the condition only depends on the system of deformations $\D$.} In this case, we have a formal closed subscheme $\overline{\mathfrak{H}} \subseteq \mathfrak{X}$, and hence a unique closed subscheme $\bar H \subseteq X$ which induces $\bar H_k$ over $S_k$. Then one may expect that the compactifying log structure on $X$ induced by $X_0 \cup \bar H$ coincides, after base change, with the given log structure on $f_k: U_k \to S_k$.

Unfortunately, we do not yet know if the two log structures really coincide. So far, we can only provide the following partial result.\footnote{A stronger result will appear in the published version.}

\begin{lemma}\label{log-str-comp}\note{log-str-comp}
 In Situation~\ref{alg-degen-sitn}, assume that $f_0: U_0 \to S_0$ is vertical. The inclusion $X_0 \subset X$ defines a compactifying log structure on $X$ together with a structure of log morphism to $S$. Then the restriction of this log morphism to $U_0$ coincides with the original given structure of a log smooth log morphism $f_0: U_0 \to S_0$.
\end{lemma}
\begin{rem}
 We do not yet know if this is also true for the thickenings $f_k: U_k \to S_k$, although this seems plausible. We certainly know that the two are locally isomorphic, but it is not (yet) clear if they are also globally isomorphic.
\end{rem}
\begin{proof}
  Let $W \subseteq X$ be an affine open subset such that $W_0 := W \cap X_0 \subseteq U_0$. Let $g_0: V_0 \to W_0$ be an \'etale morphism with $V_0$ affine, and assume that we have another \'etale morphism $h_0: V_0 \to V(\sigma) \times \bAA^{d - r}$. When we apply Lemma~\ref{etale-map-lift} to $g_0$, we obtain an \'etale morphism $g: V \to W$, and when we apply the lemma to $h_0$, then we obtain an \'etale morphism $h: V' \to U_S(\sigma) \times_S \bAA^{d - r}_S$. We apply Lemma~\ref{rel-Artin-approx} to $V' \to U_S(\sigma) \times_S \bAA^{d - r}_S \to S$. Since both $V_k \to S_k$ and $V'_k \to S_k$ are log smooth deformations of the affine log smooth log scheme $V_0 \to S_0$, we have compatible isomorphisms $V_k \cong V'_k$ over $S_k$ up to arbitrary order. Then we can find \'etale neighborhoods $\bar V \to V$ and $\bar V' \to V'$ of any given point $v \in V_0$ together with an isomorphism $\phi: \bar V \cong \bar V'$ over $S$ which is compatible with $V_0 = V_0'$ on the central fiber. This shows the following: We can find an \'etale map $g: Y \to X$ with $g(Y) \cap X_0 = U_0$ such that the pull-back along $g_k: Y_k \to U_k$ of the given log structure is isomorphic to the pull-back along $g_k: Y_k \to U_k$ of the log structure induced as the compactifying log structure of $X_0 \subseteq X$. In other words: The two log structures are locally isomorphic in the \'etale topology.

  On $U_0$, we have the structure of a toroidal crossing space by taking the ghost sheaf of the log structure. Thus, we obtain a section $s \in \cL\cS_{U_0}$. Locally, the ghost sheaf of the log structure coming from $X_0 \subseteq X$ is isomorphic to $\cP$. Since $\cP$ has no non-trivial automorphisms, the two ghost sheaves coincide. Thus, $X_0 \subseteq X$ defines a section of $\cL\cS_{U_0}$ as well. The two sections must be locally the same, so they are in fact globally the same. In other words, the two log structures coincide on $U_0$.
\end{proof}



\chapter{Modifications of the log structure}\label{log-modif-sec}\note{log-modif-sec}
\index{modification of a log structure}

The main unobstructedness results apply to log Calabi--Yau spaces $f_0: X_0 \to S_0$. If, however, the anti-canonical bundle $\A^d_{X_0/S_0}$ is effective, then we can, in some cases, use a section $s_0$ of it to modify the log structure on $X_0$ and obtain a log Calabi--Yau space $f_0 \circ h_0: X_0(s_0) \to S_0$, whose deformation theory is closely related to the deformation theory of $f_0: X_0 \to S_0$. We work in the generality of enhanced generically log smooth families; while this situation is considerably more subtle than the case of plain generically log smooth families, it seems that this generality is necessary for some applications. The most general setup for this chapter is the following:

\begin{sitn}\label{log-modif-sitn}\note{log-modif-sitn}
 Let $f: X \to S$ be an enhanced generically log smooth family of relative dimension $d$ with Gerstenhaber calculus $\G\C^\bullet_{X/S}$, endowed with maps $\varpi^\bullet: \G^\bullet_{X/S} \to \V^\bullet_{X/S}$ and $\varpi^\bullet: \A^\bullet_{X/S} \to \W^\bullet_{X/S}$. Recall that $\varpi^\bullet$ must be an isomorphism on $U$, and that each $\A^i_{X/S}$ and $\G^p_{X/S}$ must be flat over $S$. By definition, $f: X \to S$ is log Gorenstein, and we have $\A^d_{X/S} = \W^d_{X/S}$. Let $\cL$ be a line bundle on $X$, and let $s \in \cL$ be a global section.
\end{sitn}

\section{Log pre-regular sections and $f \circ p: L(X) \to S$}\label{log-prereg-sec}\note{log-prereg-sec}

In this section, we study log pre-regular sections $s \in \cL$, and we investigate the generically log smooth family $f \circ p: L(X) \to S$ used to define the modification $f \circ h: X(s) \to S$. We turn $f \circ p: L(X) \to S$ into an enhanced generically log smooth family.

\begin{defn}\label{prereg-defn}\note{prereg-defn}\index{section!log pre-regular}
 In Situation~\ref{log-modif-sitn}, we say that $s$ is a \emph{log pre-regular section} if:
 \begin{itemize}
  \item the induced map $s: \cO \to \cL$ is injective;
  \item the effective Cartier divisor $H = \mathrm{div}(s)$ is flat over $S$; we write $i: H \to X$ for the embedding;
  \item the intersection $Z \cap H \subseteq H$ has codimension $\geq 2$ in every fiber of $f \circ i: H \to S$.
 \end{itemize}
 In this case, we denote the underlying scheme $\underline X$ of $X$, endowed with the Deligne--Faltings log structure defined by $s^\vee: \cL^\vee \to \cO_X$, by $\underline X(s)$. We write $X(s)$ for the fiber product\footnote{Formed in the category of all log schemes. The log structure on $\underline X(s)$ is fine and saturated; thus, $\underline X(s) \to \underline X$ is a saturated morphism, and $U(s)$ is fine and saturated as well.} of $X \to \underline X \leftarrow \underline X(s)$; this space carries a log structure on $U \subseteq X$. We denote the induced map by $h: X(s) \to X$; this is a map of schemes everywhere, and a map of log schemes on $U$.
\end{defn}

The notion of a log pre-regular section $s \in \cL$ is stable under base change; namely, since $\cO_H = \mathrm{coker}(\cL^\vee \to \cO_X)$ is flat over $S$, injectivity of $\cL^\vee \to \cO_X$ is preserved under base change. Similarly, the construction of $h: X(s) \to X$ commutes with base change.
We have a second, equivalent construction of $h: X(s) \to X$, which is more useful in studying $f \circ h: X(s) \to S$. For the sake of explicit computations, let us introduce a finite affine open cover $\V = \{V_\alpha\}_\alpha$ of $X$ such that $\cL|_\alpha$ is trivial. Let $e_\alpha \in \cL|_\alpha$ be a trivializing section, and let $s = s_\alpha \cdot e_\alpha$ with $s_\alpha \in \cO_X|_\alpha$. We denote the coordinate transforms by $\gamma_{\alpha\beta} \in \cO_X^*|_{\alpha\beta}$, i.e., $e_\alpha = \gamma_{\alpha\beta}e_\beta$. We denote the dual trivializing sections by $e_\alpha^\vee \in \cL^\vee|_\alpha$; then the coordinate transform on $\cL^\vee$ is $e_\alpha^\vee = \gamma_{\alpha\beta}^{-1}e_\alpha^\vee$. We have $s^\vee(e_\alpha^\vee) = s_\alpha$ under the map $s^\vee: \cL^\vee \to \cO_X$. The function $s_\alpha$ transforms as $s_\alpha = \gamma_{\alpha\beta}^{-1}s_\beta$. Let us denote the coordinate rings by $R_\alpha := \Gamma(V_\alpha,\cO_X)$.

Let 
$$L = \mathrm{Spec}_{\cO_X}\bigoplus_{\ell \geq 0} (\cL^\vee)^{\otimes \ell}$$
be the total space of $\cL$, and let $p: L \to X$ be the projection. On $p^{-1}(U)$, we endow $L$ with the log structure pulled back from $U \subseteq X$. The coordinate ring of $L$ on $W_\alpha := p^{-1}(V_\alpha)$ is $R_\alpha[x_\alpha]$ with $x_\alpha$ corresponding to $e_\alpha^\vee \in \cL|_\alpha$. On overlaps, we have $x_\alpha = \gamma_{\alpha\beta}^{-1}x_\beta$. The projection $p: L \to X$ is given by $R_\alpha \to R_\alpha[x_\alpha]$. Let $o: X \to L$ be the map corresponding to $0 \in \cL$, which is given by $R_\alpha[x_\alpha] \to R_\alpha, \: x_\alpha \mapsto 0$. The ideal sheaf of $o: X \to L$ is $\eps_o: p^*\cL^\vee \hookrightarrow \cO_L$ with $p^*e_\alpha^\vee \mapsto x_\alpha$. This gives rise to a Deligne--Faltings log structure on the underlying space $\underline L$ of $L$, which we denote by $\underline L(X)$. As above, we can form the fiber product $L(X)$ of $L \to \underline L \leftarrow \underline L(X)$, which is a log scheme on $p^{-1}(U)$. By abuse of notation, we will write $p: L(X) \to X$ as well.

Let $s: X \to L$ be the map corresponding to $s \in \cL$; it is given by $R_\alpha[x_\alpha] \to R_\alpha, \: x_\alpha \mapsto s_\alpha$. The ideal sheaf of $s: X \to L$ is $\eps_s: p^*\cL^\vee \hookrightarrow \cO_L$ with $p^*e_\alpha^\vee \mapsto x_\alpha - s_\alpha$. Then we define a log scheme $X'(s)$ by pulling back the log structure from $L(X)$ along $s: X \to L$. This log structure is defined on $s^{-1}(p^{-1}(U)) = U$. Since $s^*\eps_o: \cL^\vee \to \cO_X$ is precisely $s^\vee: \cL^\vee \to \cO_X$, and since the pull-back of Deligne--Faltings structures is the same as the pull-back of the associated log structures, it is easy to see that this second construction coincides with the first one in Definition~\ref{prereg-defn}.

\begin{lemma}
 In the above situation, we have:
 \begin{enumerate}[label=\emph{(\alph*)}]
  \item The map $p: L \to X$ is strict and smooth of relative dimension $1$. We have $\Omega^1_{L/X} \cong p^*\cL^\vee$.
  \item The map $p: L(X) \to X$ is smooth and log smooth of relative dimension $1$. We have $\Omega^1_{L(X)/X} = \cO_L \cdot \gamma_s$, where we have $\gamma_s|_{U \cap V_\alpha} = \delta_{L(X)/X}(m_\alpha)$ in a local chart.
 \end{enumerate}
\end{lemma}
\begin{proof}
 It is clear that $p: L \to X$ is strict and smooth. The notation $\Omega^1_{L/X}$ means the classical relative differentials. They are, on $W_\alpha$, generated by $dx_\alpha$. Since we have $dx_\alpha = \gamma_{\alpha\beta}^{-1}\cdot dx_\beta$ on overlaps, we find $\Omega^1_{L/X} \cong p^*\cL^\vee$. Since $\underline p: \underline L(X) \to \underline X$ is locally a base change of $A_\NN \to \Spec \ZZ$, it is smooth and log smooth. Let us denote by $\underline m_\alpha \in \M_{\underline L(X)}(W_\alpha)$ the section induced by $x_\alpha \in \eps_0(p^*\cL^\vee)$ via the construction of the associated log structure of a Deligne--Faltings structure. Then $\Omega^1_{\underline L(X)/\underline X}$ is locally free of rank $1$ and, on $W_\alpha$, generated by $\delta(\underline m_\alpha)$, where $\delta$ denotes the log part of the universal derivation. Due to $\underline m_\alpha = \gamma_{\alpha\beta}^{-1}\underline m_\beta$, we find $\delta(\underline m_\alpha) = \delta(\underline m_\beta)$ on overlaps, so $\Omega^1_{\underline L(X)/\underline X} \cong \cO_L$. Now $p: L(X) \to X$ is a base change of $\underline p: \underline L(X) \to \underline X$, so we have $\Omega^1_{L(X)/X} \cong \Omega^1_{\underline L(X)/\underline X}$ where the former sheaf is defined, and we consider $\Omega^1_{L(X)/X}$ to be defined globally via this isomorphism. This is obviously the same as taking the direct image from $p^{-1}(U)$.
\end{proof}

The composition $f \circ p: L(X) \to S$ is clearly a generically log smooth family. We now turn to the problem of endowing it with a Gerstenhaber calculus $(\G^\bullet_{L(X)/S},\A^\bullet_{L(X)/S})$ which turns it into an enhanced generically log smooth family, compatibly with $f: X \to S$. We set $\A^0_{L(X)/S} := \cO_L$. Consider the exact sequence 
\begin{equation}\label{W1LXS-seq}
 0 \to p^*\W^1_{X/S} \to \W^1_{L(X)/S} \to \Omega^1_{L(X)/X} \to 0.
\end{equation}\note{W1LXS-seq}It is clearly an exact sequence on $p^{-1}(U)$. Since $p$ is flat, $p^*\W^1_{X/S}$ is reflexive and hence $p^{-1}(Z)$-closed. Since $\Omega^1_{L(X)/X}$ is $p^{-1}(Z)$-closed as well, we can define the sequence as the direct image from $p^{-1}(U)$. If $m_\alpha \in \Gamma(p^{-1}(U) \cap W_\alpha,\M_{L(X)})$ is the image of $\underline m_\alpha$ under $L(X) \to \underline L(X)$, then $\delta(m_\alpha) \in \Gamma(p^{-1}(U) \cap W_\alpha,\W^1_{L(X)/S})$ is a preimage of the generator of $\Omega^1_{L(X)/X}$, so the map is surjective. Since $\Omega^1_{L(X)/X}$ is free, the exact sequence is locally split. Any two splittings differ by a map in $\cH om(\Omega^1_{L(X)/X},p^*\W^1_{X/S})$. There is, however, a set of distinguished splittings such that any two distinguished splittings differ by a map in $\cH om(\Omega^1_{L(X)/X},p^*\A^1_{X/S})$. One distinguished splitting is given by 
$$\gamma_s = \delta_{L(X)/X}(m_\alpha) \mapsto \delta_\alpha := \delta_{L(X)/S}(m_\alpha).$$
On overlaps, we have 
$$\delta_{L(X)/S}(\gamma_{\alpha\beta}^{-1}\cdot m_\beta) = \gamma_{\alpha\beta} \cdot d_{L(X)/S}(\gamma_{\alpha\beta}^{-1}) + \delta_{L(X)/S}(m_\beta).$$
Because of $d_{L(X)/S}(\gamma_{\alpha\beta}^{-1}) \in p^*\A^1_{X/S}$, the difference is given by a map to $p^*\A^1_{X/S} \subseteq p^*\W^1_{X/S}$. Thus, the class of distinguished splittings is globally consistent. If we modify the trivializing section $e_\alpha$ by a unit $u_\alpha$, then a similar computation shows that the class of distinguished splittings is independent of the choice of $e_\alpha$. If $B_1: \Omega^1_{L(X)/X} \to \W^1_{L(X)/S}$ is some distinguished splitting, then we set 
$$\A^1_{L(X)/S} := p^*\A^1_{X/S} \oplus B_1(\Omega^1_{L(X)/X}) \subseteq \W^1_{L(X)/S}.$$
This is independent of the choice of $B_1$, and thus globally well-defined; it fits into a split exact sequence 
$$0 \to p^*\A^1_{X/S} \to \A^1_{L(X)/S} \to \Omega^1_{L(X)/X} \to 0.$$
For $i \geq 2$, we set 
$$\A^i_{L(X)/S} := p^*\A^i_{X/S} + p^*\A^{i - 1}_{X/S} \wedge B_1(\Omega^1_{L(X)/X}) \subseteq \W^i_{L(X)/X},$$
where, again, $B_1: \Omega^1_{L(X)/X} \to \W^1_{L(X)/S}$ is some local distinguished splitting. This is independent of the choice of $B_1$ and hence globally well-defined. We have a diagram
\[
 \xymatrix{
  0 \ar[r] & p^*\W^i_{X/S} \ar[r]^-{E_i} & \W^i_{L(X)/S} \ar[r]^-{Q_i} & p^*\W^{i - 1}_{X/S} \otimes \Omega^1_{L(X)/X} \ar[r] & 0 \\
  0 \ar[r] & p^*\A^i_{X/S} \ar[r]^-{E_i} \ar@{^{(}->}[u] & \A^i_{L(X)/S} \ar[r]^-{Q_i} \ar@{^(->}[u] & p^*\A^{i - 1}_{X/S} \otimes \Omega^1_{L(X)/X} \ar[r] \ar@{^(->}[u] & 0. \\
 }
\]
First, we obtain both rows as exact sequences on $p^{-1}(U)$ via the canonical construction. The upper row is then obtained as the direct image from $p^{-1}(U)$; it is exact on the right because we can construct explicit preimages via a splitting $B_1$ from above. It follows from the definition of $\A^i_{L(X)/S}$ that the lower row is well-defined as a sequence of subsheaves of the upper row, and it is easy to see that it is exact. A distinguished splitting $B_1$ as above gives rise to a simultaneous (local) splitting of both rows. We denote the induced splitting on the left hand side by 
$$T_i: \W^i_{L(X)/S} \to p^*\W^i_{X/S};$$
it restricts to a map $T_i: \A^i_{L(X)/S} \to p^*\A^i_{X/S}$.

\begin{lemma}
 We have $T_i(\alpha) \wedge T_j(\beta) = T_{i + j}(\alpha \wedge \beta)$.
\end{lemma}
\begin{proof}
 Let $\delta := B_1(\gamma_s)$, depending on our choice of $B_1$. Then, locally, every element $\alpha \in\A^i_{L(X)/S}$ can be written uniquely as $\alpha = E_i(\alpha_1) + E_{i - 1}(\alpha_2) \wedge \delta$ with $\alpha_1 \in p^*\A^i_{X/S}$ and $\alpha_2 \in p^*\A^{i - 1}_{X/S}$. Then $T_i(E_i(\alpha_1) + E_{i - 1}(\alpha_2) \wedge \delta) = \alpha_1$. We have 
\begin{align}
 &(E_i(\alpha_1) + E_{i - 1}(\alpha_2) \wedge \delta) \wedge  (E_j(\beta_1) + E_{j - 1}(\beta_2) \wedge \delta) \nonumber \\ 
 =\ &E_{i + j}(\alpha_1 \wedge \beta_1) + E_{i + j - 1}(\alpha_1 \wedge \beta_2 + \alpha_2 \wedge \beta_1) \wedge \delta, \nonumber
\end{align}
and hence $T_i(\alpha) \wedge T_j(\beta) = T_{i + j}(\alpha \wedge \beta)$.
\end{proof}

The maps $E_i$ and $Q_i$ are independent of $B_1$. To capture the dependence of $T_i$ from $B_1$, let $\delta = B_1(\gamma_s)$ and $\delta' = B_1'(\gamma_s)$. Then there is a unique $\eps \in p^*\A^1_{X/S}$ with $\delta' = \delta + E_1(\eps)$. Now a direct computation yields $T_i'(\alpha) = T_i(\alpha) - Q_i(\alpha) \wedge \eps$, where we consider $Q_i(\alpha) \in p^*\W^{i - 1}_{X/S}$ by identifying the generator $\gamma_s$ of $\Omega^1_{L(X)/X}$ with $1$.

We denote the reflexive Gerstenhaber algebra of polyvector fields on $f \circ p: L(X) \to S$ by $\V^\bullet_{L(X)/S}$. By dualizing the split exact sequence above, we get a split exact sequence 
\begin{equation}\label{Gamma-LXS-seq}
 0 \to p^*\V^{p + 1}_{X/S} \otimes \Theta_{L(X)/X}^{1} \xrightarrow{I_p} \V^p_{L(X)/S} \xrightarrow{F_p} p^*\V^p_{X/S} \to 0
\end{equation}
\note{Gamma-LXS-seq}of reflexive sheaves. In the middle and on the right, we identify these sheaves with the duals via the contraction map.  We have $\Theta_{L(X)/X}^{1} = \cO_L \cdot \gamma_s^\vee$ for the formal dual of $\gamma_s$, and we identify on the left via 
$$p^*\V^{p + 1}_{X/S} \otimes \Theta^{1}_{L(X)/X} \to \cH om(p^*\W^{-p + 1}_{X/S} \otimes \Omega^1_{L(X)/X},\cO_L), \enspace \theta \otimes \gamma_s^\vee \mapsto (\alpha \otimes \gamma_s \mapsto (-1)^{p + 1} \theta \ \invneg \ \alpha).$$
We define an element $\delta^\vee \in \V^{-1}_{L(X)/S}$, a priori depending on the choice of $B_1$, by setting $\delta^\vee \ \invneg \ \delta = 1$ and $\delta^\vee \ \invneg \ \ E_1(\alpha) = 0$. However, we also have $\delta^\vee \ \invneg \ (\delta + E_1(\eps)) = 1$, so $\delta^\vee$ is actually independent of the choice of $B_1$. Dualizing the splitting $T_{-p}$ of $E_{-p}$ gives a splitting 
$$S_p: p^*\V^p_{X/S} \to \V^p_{L(X)/S}$$
of $F_p$. 

\begin{lemma}\label{formulae-LXS}\note{formulae-LXS}
 The following formulae hold for any distinguished splitting $B_1$:
 \begin{enumerate}[label=\emph{(\alph*)}]
  \item\label{deltavee} $\delta^\vee \ \invneg \ E_i(\alpha) = 0$ for $\alpha \in p^*\W^i_{X/S}$;
  \item\label{delta} $S_p(\theta) \vdash \delta = 0$ for $\theta \in p^*\V^p_{X/S}$;
  \item\label{EFdual} $\theta \ \invneg \ E_i(\alpha) = F_{-i}(\theta) \ \invneg \ \alpha$ for $\theta \in \V^{-i}_{L(X)/S}$, $\alpha \in p^*\W^i_{X/S}$;
  \item\label{Ip-comp} $I_p(F_{p + 1}(\theta) \otimes \gamma_s^\vee) = \theta \wedge \delta^\vee$ for $\theta \in \V^{p + 1}_{L(X)/S}$;
  \item\label{Fmult} $F_p(\theta) \wedge F_q(\xi) = F_{p + q}(\theta \wedge \xi)$ for $\theta \in \V^p_{L(X)/S}$, $\xi \in \V^q_{L(X)/S}$;
  \item\label{EF-adjunction} $\theta \ \invneg\ E_i(\alpha) = E_{p + i}(F_p(\theta) \ \invneg \ \alpha)$ for $\theta \in \V^p_{L(X)/S}$, $\alpha \in p^*\W^i_{X/S}$;
  \item\label{FE-dual-adjunction} $F_{p + i}(\theta \vdash E_i(\alpha)) = F_p(\theta) \vdash \alpha$ for $\theta \in \V^p_{L(X)/S}$, $\alpha \in p^*\W^i_{X/S}$;
  \item\label{ST-dual} $\theta \ \invneg \ T_i(\alpha) = S_{-i}(\theta) \ \invneg \ \alpha$ for $\theta \in p^*\V^{-i}_{X/S}$, $\alpha \in \W^i_{L(X)/S}$;
  \item\label{Smult} $S_p(\theta) \wedge S_q(\xi) = S_{p + q}(\theta \wedge \xi)$ for $\theta \in p^*\V^{p}_{X/S}$, $\xi \in p^*\V^q_{X/S}$;
  \item\label{ST-adjunction} $\theta \ \invneg \ T_i(\alpha) = T_{p + i}(S_p(\theta) \ \invneg \ \alpha)$ for $\theta \in p^*\V^p_{X/S}$, $\alpha \in \W^i_{L(X)/S}$;
  \item\label{SE-comp} $S_p(\theta) \ \invneg \ E_i(\alpha) = E_{p + i}(\theta \ \invneg \ \alpha)$ for $\theta \in p^*\V^{p}_{X/S}$, $\alpha \in p^*\W^i_{X/S}$;
  \item\label{SEd-comp} $S_p(\theta) \ \invneg \ (E_{i - 1}(\beta) \wedge \delta) = E_{p + i - 1}(\theta \ \invneg \ \beta) \wedge \delta$ for $\theta \in p^*\V^{p}_{X/S}$, $\beta \in p^*\W^{i - 1}_{X/S}$;
  \item $S_p(\theta) \vdash E_i(\alpha) = S_{p + i}(\theta \vdash \alpha)$ for $\theta \in p^*\V^p_{X/S}$, $\alpha \in p^*\W^i_{X/S}$;
  \item $(S_{p + 1}(\xi) \wedge \delta^\vee) \vdash E_i(\alpha) = (-1)^i S_{p + 1}(\xi \vdash \alpha) \wedge \delta^\vee$ for $\xi \in p^*\V^{p + 1}_{X/S}$, $\alpha \in p^*\W^i_{X/S}$.
 \end{enumerate}
\end{lemma}
\begin{proof}
 We start with \emph{\ref{deltavee}}. If $i = 0$, then we have $\delta^\vee \ \invneg \ E_i(\alpha) = 0$ for degree reasons. If $i = 1$, then we have $\delta^\vee \ \invneg \ E_i(\alpha) = 0$ by definition. Since $|\delta^\vee| = -1$, we have a formula for computing $\delta^\vee \ \invneg \ E_{i + j}(\alpha \wedge \beta)$ in terms of $\delta^\vee \ \invneg \ E_i(\alpha)$ and $\delta^\vee \ \invneg \ E_j(\beta)$, which yields $\delta^\vee \ \invneg \ E_i(\alpha) = 0$ for every $\alpha$ which is locally a $\wedge$-product of elements in degree $1$. This holds everywhere on $p^{-1}(U)$; since $p^*\W^i_{X/S}$ has injective restrictions to $p^{-1}(U)$, the claim follows. The proof of \emph{\ref{delta}} is similar, starting with $T_1(\delta) = 0$ and using \emph{\ref{ST-dual}}.
 
 The formula in \emph{\ref{EFdual}} holds by the definition of $F_{-i}$ as the dual of $E_i$.
 
 To see \emph{\ref{Ip-comp}}, contract both sides with $\alpha = E_{-p}(\alpha_1) + E_{-p + 1}(\alpha_2) \wedge \delta$, using the formula of \emph{\ref{EFdual}} and our identification above.
 
 We show \emph{\ref{Fmult}} at a stalk at $x \in p^{-1}(U)$. On the one hand side, we have 
 $$F_{-1}(\theta_r) \wedge ... \wedge F_{-1}(\theta_1) \ \invneg \ (\alpha_1 \wedge ... \wedge \alpha_r) = \sum_{\sigma \in S_r} (-1)^{\mathrm{sgn}(\sigma)} \prod_{j = 1}^r F_{-1}(\theta_j) \ \invneg\ \alpha_{\sigma(j)};$$
 on the other hand side, we have
 \begin{align}
  F_{-r}(\theta_r \wedge ... \wedge \theta_1) \ \invneg \ (\alpha_1 \wedge ... \wedge \alpha_r) &= (\theta_r \wedge ... \wedge \theta_1) \ \invneg \ E_1(\alpha_1) \wedge ... \wedge E_1(\alpha_r) \nonumber \\
  &= \sum_{\sigma \in S_r} (-1)^{\mathrm{sgn}(\sigma)} \prod_{j = 1}^r \theta_j \ \invneg\ E_1(\alpha_{\sigma(j)}). \nonumber
 \end{align}
 Since $p^*\W^r_{X/S}$ is locally free at $x \in p^{-1}(U)$, this implies $F_{-1}(\theta_1) \wedge ... \wedge F_{-1}(\theta_r) = F_{-r}(\theta_1 \wedge ... \wedge \theta_r)$, and since $p^*\V^{-1}_{X/S}$ is locally free at $x \in p^{-1}(U)$, the claim follows.
 
 It suffices to show the formula in \emph{\ref{EF-adjunction}} on $p^{-1}(U)$. The formula is obvious for $p = 0$; for $p = -1$ and $i = 0$, it is trivial, and for $p = -1$ and $i = 1$, it is the very definition of the map $F_{-1}$. We obtain the formula for $p = -1$ and arbitrary $i$ via the product formula for right contractions. Using \emph{\ref{Fmult}}, we generalize to $p \leq -2$. The proof of \emph{\ref{FE-dual-adjunction}} is analogous.
 
 The formula in \emph{\ref{ST-dual}} is nothing but the definition of $S_{-i}$ as the dual of $T_i$. Then \emph{\ref{Smult}} is analogous to \emph{\ref{Fmult}}, and \emph{\ref{ST-adjunction}} is analogous to \emph{\ref{EF-adjunction}}.
 
 The remaining four formulae are now easy once we use $T_1(\delta) = 0$.
\end{proof}

We set 
$$\G^{p}_{L(X)/S} :=\, I_p(p^*\G^{p + 1}_{X/S} \otimes \V^{-1}_{L(X)/X}) + S_p(p^*\G^p_{X/S}) \, \subseteq \V^p_{L(X)/S}.$$
A careful computation shows that, under a change of distinguished splitting $B_1$, we have $S'_p(\theta) = S_p(\theta) - I_p((\theta \vdash \eps) \otimes \gamma_s^\vee)$, where again $\eps \in p^*\A^1_{X/S}$ is such that $E_1(\eps) = B'_1(\gamma_s) - B_1(\gamma_s)$. Since $\theta \vdash \eps \in p^*\G^{p + 1}_{X/S}$ if $\theta \in p^*\G^p_{X/S}$, we find that $\G^p_{L(X)/S}$ is independent of the choice of $B_1$, and hence globally well-defined.

Every element $\theta \in \G^p_{L(X)/S}$ can be uniquely written as 
$$\theta = S_{p + 1}(\theta_1) \wedge \delta^\vee + S_p(\theta_2)$$
with $\theta_1 \in p^*\G^{p + 1}_{X/S}$ and $\theta_2 \in p^*\G^p_{X/S}$ once a local distinguished splitting $B_1$ is chosen.

\begin{prop}\label{modif-is-enhanced}\note{modif-is-enhanced}
 The family $f \circ p: L(X) \to S$, endowed with 
 $$\G\C^\bullet_{L(X)/S} = (\G^\bullet_{L(X)/S},\A^\bullet_{L(X)/S}),$$
 is an enhanced generically log smooth family and log Gorenstein.
\end{prop}
\begin{proof}
 By construction, we have $\A^0_{L(X)/S} = \W^0_{L(X)/S} = \cO_L$. By assumption, we have $\A^d_{X/S} = \W^d_{X/S}$, and this is a line bundle. Since $\W^{d + 1}_{L(X)/S} = p^*\W^d_{X/S} \otimes \Omega^1_{L(X)/X}$ as well as $\A^{d + 1}_{L(X)/S} = p^*\A^d_{L(X)/S} \otimes \Omega^1_{L(X)/S}$, we find $\A^{d + 1}_{L(X)/S} = \W^{d + 1}_{L(X)/S}$, and this is a line bundle.
 
 We show that $\A^\bullet_{L(X)/S} \subseteq \W^\bullet_{L(X)/S}$ is a differential graded subalgebra. Since $E_i: p^*\W^i_{X/S} \to \W^i_{L(X)/S}$ is compatible with the $\wedge$-product, we see that $\A^\bullet_{L(X)/S} \subseteq \W^\bullet_{L(X)/S}$ is closed under the $\wedge$-product. On $W_\alpha$, we have 
$$d_{L(X)/S}(x_\alpha) = x_\alpha \cdot \delta_{L(X)/S}(m_\alpha) \in B_1(\Omega^1_{L(X)/X})$$
for the original distinguished splitting $B_1$, which we have constructed first. Thus, we have $\partial(\cO_L) \subseteq \A^1_{L(X)/S} \subseteq \W^1_{L(X)/S}$; another direct computation yields that $\partial(\A^i_{L(X)/S}) \subseteq \A^{i + 1}_{L(X)/S}$, so $\A^\bullet_{L(X)/S}$ is a complex.

$\G^\bullet_{L(X)/S}$ is closed under the $\wedge$-product. Namely, let $\theta = S_{p + 1}(\theta_1) \wedge \delta^\vee + S_p(\theta_2)$ and $\xi = S_{q + 1}(\xi_1) \wedge \delta^\vee + S_q(\xi_2)$; then we have 
 $$(S_{p + 1}(\theta_1) \wedge \delta^\vee + S_p(\theta_2)) \wedge (S_{q + 1}(\xi_1) \wedge S_q(\theta_2)) = S_{p + q + 1}(\theta_1 \wedge \xi_2 + \xi_1 \wedge \theta_2) \wedge \delta^\vee + S_{p + q}(\theta_2 \wedge \xi_2)$$
 so that $\theta \wedge \xi \in \G^{p + q}_{L(X)/S}$. Similarly, we have 
 \begin{align}
  (S_{p + 1}(\theta_1) &\wedge \delta^\vee + S_p(\theta_2)) \ \invneg \ (E_i(\alpha_1) + E_{i - 1}(\alpha_2) \wedge \delta) \nonumber \\
  =\ &E_{p + i}((\theta_2 \ \invneg \ \alpha_1) + (-1)^{i - 1}(\theta_1 \ \invneg \ \alpha_2)) + E_{p + i - 1}(\theta_2 \ \invneg \ \alpha_2) \wedge \delta, \label{rcont-formula}
 \end{align}
 showing that $\theta \ \invneg \ \alpha \in \A^{p + i}_{L(X)/S}$ for $\theta \in \G^p_{L(X)/S}$ and $\alpha \in \A^i_{L(X)/S}$. By the Lie--Rinehart homotopy formula, $\G\C^\bullet_{L(X)/S}$ is closed under the Lie derivative $\cL_{-}(-)$. From \eqref{rcont-formula}, we also see that $\G\C^\bullet_{L(X)/S}$ is locally Batalin--Vilkovisky. Namely, if $\omega \in \A^d_{X/S}$ is a local volume form on $X$, then $E_d(p^*\omega) \wedge \delta \in \A^{d + 1}_{L(X)/S}$ is a local volume form on $L(X)$. Contracting with this form yields 
 $$(S_{p + 1}(\theta_1) \wedge \delta^\vee + S_p(\theta_2)) \ \invneg \ E_d(p^*\omega) \wedge \delta = E_{p + d + 1}((-1)^d\theta_1 \ \invneg \ p^*\omega) + E_{p + d}(\theta_2 \ \invneg \ p^*\omega) \wedge \delta,$$
 so this induces an isomorphism $\G^p_{L(X)/S} \cong \A^{p + d + 1}_{L(X)/S}$. Now the mixed Leibniz rule yields that $\G^\bullet_{L(X)/S}$ is closed under $[-,-]$. For, we have $[\theta,\xi] \in \G^{p + q + 1}_{L(X)/S}$ if and only if
 $$[\theta,\xi] \ \invneg \ (E_d(p^*\omega) \wedge \delta) \in \A^{p + q + d + 2}_{L(X)/S},$$
 and the latter is the case by evaluating the term with the mixed Leibniz rule, and using closedness under $\cL_{-}(-)$.
 We have 
 \begin{align}
  (S_{p + 1}&(\theta_1) \wedge \delta^\vee + S_p(\theta_2)) \vdash (E_i(\alpha_1) + E_{i - 1}(\alpha_2) \wedge \delta) \nonumber \\
  &= (-1)^i S_{p + i + 1}(\theta_1 \vdash \alpha_1) \wedge \delta^\vee + S_{p + i}(\theta_2 \vdash \alpha_1 + (-1)^{i - 1} \theta_1 \vdash \alpha_2), \nonumber
 \end{align}
 so $\G\C^\bullet_{L(X)/S}$ is closed under the left contraction $\vdash$.
 
 Each term $\G^p_{L(X)/S}$ and $\A^i_{L(X)/S}$ is flat over $S$ because of the locally split exact sequences these terms fit in. On $p^{-1}(U)$, the Gerstenhaber calculus $\G\C^\bullet_{L(X)/S}$ coincides with the reflexive Gerstenhaber calculus by construction. Finally, $\G^p_{L(X)/S}$ and $\A^i_{L(X)/S}$ are $p^{-1}(Z)$-pure in every fiber since they decompose locally as a direct sum of terms of the form $p^*\G^p_{X/S}$ and $p^*\A^i_{X/S}$, and they are $p^{-1}(Z)$-pure in every fiber by assumption.
\end{proof}

\section{Log quasi-regular and log regular sections}

In this section, we introduce the notions of a log quasi-regular section $s \in \cL$ and a log regular section $s \in \cL$. The latter will be the correct notion to study modifications $f \circ h: X(s) \to S$ in a deformation-theoretic setting. If the enhanced generically log smooth family is a plain generically log smooth family, i.e., its Gerstenhaber calculus is reflexive, then the two notions coincide.\footnote{We do not need the base change property for this to hold.}

Let $s \in \cL$ be a log pre-regular section, and let 
$$X(s) \xrightarrow{e} N(s) \xrightarrow{k} L(X)$$
be the first infinitesimal neighborhood, i.e., it is the closed subscheme of $L(X)$ defined by the square of the ideal sheaf $\eps_s: p^*\cL^\vee \to \cO_L, \, p^*e_\alpha^\vee \mapsto x_\alpha - s_\alpha,$ of $X(s) \subset L(X)$, and on $N(s) \cap p^{-1}(U)$, it is endowed with the inverse image of the log structure on $L(X)$. As topological spaces, we have $|X(s)| = |N(s)|$ and $|N(s) \cap p^{-1}(U)| = |U|$. We have an exact sequence 
$$0 \to \cL^\vee \xrightarrow{e_\alpha^\vee \mapsto x_\alpha - s_\alpha} \cO_{N(s)} \to \cO_{X(s)} \to 0,$$
which exhibits $\cL^\vee$ as the ideal sheaf of $X(s) \subset N(s)$. Let us write $\eps_N: \cL^\vee \to \cO_{N(s)}$ for the corresponding map.
A \emph{splitting} of $e: X(s) \to N(s)$ is a morphism $r: N(s) \to X(s)$ over $S$ with $r \circ e = \mathrm{id}_{X(s)}$; more precisely, $r: N(s) \to X(s)$ is supposed to be a morphism of schemes everywhere, and a morphism of log schemes on $U$. Local splittings form a sheaf on the topological space $X$, which we denote by 
$$\mathrm{Sp}_{X(s)/N(s)}.$$
Explicitly, a splitting consists of two maps $r^*: \cO_{X(s)} \to \cO_{N(s)}$ and $r^*: \M_{X(s)}|_U \to \M_{N(s)}|_U$. There is an action of $\V^{-1}_{X(s)/S} \otimes \cL^\vee = \D er_{X(s)/S}(\cL^\vee)$ on $\mathrm{Sp}_{X(s)/N(s)}$, which is, when $\theta = (D,\Delta)$ is a log derivation with values in $\cL^\vee$, explicitly given by
$$(\theta \odot r)^*(a) = r^*(a) + \eps_N \circ D(a), \quad (\theta \odot r)^*(m) = (1 + \eps_N \circ \Delta(m)) \cdot r^*(m).$$
Here, we consider $\V^\bullet_{X(s)/S} := j_*\Theta^{-\bullet}_{U(s)/S}$ as the complex of \emph{$Z$-closed} polyvector fields, as $U(s) \to S$ may not be log smooth. First, this action is given on $U$, but since both $
\V^{-1}_{X(s)/S} \otimes \cL^\vee$ and $\mathrm{Sp}_{X(s)/N(s)}$ are $Z$-closed\footnote{If we have a splitting on $W \cap U$ for some open subset $W \subseteq X$, then we can extend the classical part of the morphism to $W$ since we have $\cO_{X(s)}|_W = j_*\cO_{X(s)}|_{W \cap U}$ and $\cO_{N(s)}|_W = j_*\cO_{N(s)}|_{W \cap U}$; the latter holds because $\cO_{N(s)}$ is an extension of $\cO_{X(s)}$ by $\cL^\vee$.}, we obtain the action on $X$ as the direct image from $U$. By \cite[IV, Thm.~2.2.2]{LoAG2018}, $\mathrm{Sp}_{X(s)/N(s)}|_U$ is a pseudo-torsor over $\V^{-1}_{U(s)/S} \otimes \cL^\vee$. Then $\mathrm{Sp}_{X(s)/N(s)}$ is automatically a pseudo-torsor over $\V^{-1}_{X(s)/S} \otimes \cL^\vee$.

\begin{defn}\label{log-qreg-defn}\note{log-qreg-defn}\index{section!log quasi-regular}
 In Situation~\ref{log-modif-sitn}, let $s \in \cL$ be a log pre-regular section. Then $s$ is a \emph{log quasi-regular section} if $\mathrm{Sp}_{X(s)/N(s)}$ is a torsor over $\V^{-1}_{X(s)/S} \otimes \cL^\vee$, i.e., if local splittings exist everywhere.
\end{defn}

It is clear that every base change of a log quasi-regular section is again a log quasi-regular section. The proof of \cite[IV, Thm.~3.2.2]{LoAG2018} shows that $U(s) \to S$ is log smooth if and only if $s|_U$ is a log quasi-regular section. In particular, if $s \in \cL$ is a log quasi-regular section, then $f \circ h: X(s) \to S$ is a generically log smooth family. We have the following criterion for log quasi-regularity: 

\begin{lemma}\label{log-qreg-crit}\note{log-qreg-crit}
 Let $s \in \cL$ be a log pre-regular section. Then $s^*
 \W^1_{L(X)/S}$ is reflexive, and the canonical exact sequence of the embedding $X(s) \subset L(X)$ extends to a (not necessarily exact) sequence 
 $$0 \to \cL^\vee \xrightarrow{G_1} s^*\W^1_{L(X)/S} \xrightarrow{R_1} \W^1_{X(s)/S} \to 0$$
 with $\W^1_{X(s)/S} := j_*\Omega^1_{U(s)/S}$, even if $U(s) \to S$ is not log smooth.
 The section $s \in \cL$ is log quasi-regular if and only if this sequence is exact and locally split.
\end{lemma}
\begin{proof}
 We consider the diagram
\[
 \xymatrix{
  0 \ar[r] & \cL^\vee \ar[r]^-{\bar d_{X(s)}} & s^*\W^1_{L(X)/S} \ar[r]^-{ds^*} & \W^1_{X(s)/S} \ar@{.>}@/^1pc/[d]^{dr^*} \ar[r] & 0 \\
  & k^*\I_{N(s)} \ar[u]^0 \ar[r]^-{\bar d_{N(s)}} & k^*\W^1_{L(X)/S} \ar[u]^\pi \ar[r] & \W^1_{N(s)/S} \ar@{-->}[lu]^\phi \ar[u]^{de^*} \ar[r] & 0 \\
 }
\]
of sheaves on the topological space $X$. On $U$, the upper row is the canonical exact sequence of the strict closed immersion $U(s) \subset L(X)|_{p^{-1}(U)}$. It is exact on $U$ in the middle and on the right by \cite[IV, Prop.~2.3.2]{LoAG2018}. From the locally split exact sequence \eqref{W1LXS-seq}, it follows that $s^*\W^1_{L(X)/S}$ is reflexive. Hence we can define the upper row on $X$ as its direct image from $U$. The sheaf $\I_{N(s)}$ is the ideal sheaf of $N(s) \subset L(X)$; on $U$, the lower row is the canonical exact sequence of this embedding. It is also possible to show that $k^*p^*\W^1_{X/S}$ is $Z$-closed, hence $k^*\W^1_{L(X)/S}$ is $Z$-closed, and we can define the lower row on $X$ as the direct image from $U$. A direct computation shows that $\pi \circ \bar d_{N(s)} = 0$; hence, on $U$, $\pi$ descends to a map $\phi: \W^1_{N(s)/S} \to s^*\W^1_{L(X)/S}$. Since both source and target are $Z$-closed, this map can be extended to be defined on $X$.\footnote{Note that we have not shown surjectivity of the lower row on the right.}

First assume that $s \in \cL$ is log quasi-regular. Then, on $U$, the upper row is exact and locally split by \cite[IV, Prop.~2.3.2]{LoAG2018}. In particular, on $X$, it is exact on the left and in the middle. For a local splitting $r \in \mathrm{Sp}_{X(s)/N(s)}$, we have $ds^* \circ \phi \circ dr^* = \mathrm{id}$, first on $U$, and then on $X$. Thus, $ds^*$ is surjective on $X$, and the upper row is locally split exact everywhere. 

Conversely, assume that the sequence is exact and locally split. Let $\mathrm{Sp}'$ be the sheaf of splittings of this exact sequence. Under the assumption, this is a $\cH om(\W^1_{X(s)/S},\cL^\vee)$-torsor. The above construction yields a map 
$$\eta: \mathrm{Sp}_{X(s)/N(s)} \to \mathrm{Sp}'$$
of sheaves. A careful computation (using that elements of the form $\delta(m)$ generate $\Omega^1_{U(s)/S}$) shows that $\eta$ is equivariant for the $
\V^{-1}_{X(s)/S} \otimes \cL^\vee$-pseudo-torsor structure on the left and the $\cH om(\W^1_{X(s)/S},\cL^\vee)$-torsor structure on the right, with the obvious identification. By \cite[IV, Thm.~3.2.2]{LoAG2018}, $\mathrm{Sp}_{X(s)/N(s)}$ is a torsor on $U$. Thus, $\eta$ is an isomorphism on $U$. If $\rho \in \mathrm{Sp}'$ is a local section on an open subset $W \subseteq X$, then $\eta^{-1}(\rho|_{W \cap U})$ can be canonically extended to $W$; thus, $s \in \cL$ is a log quasi-regular section.
\end{proof}

Let $s \in \cL$ be a log quasi-regular section. From \eqref{Gamma-LXS-seq}, we find that $s^*\V^{-1}_{X/S}$ is locally free on $U$ and reflexive; thus $s^*\V^{-1}_{L(X)/S} \cong \cH om(s^*\W^1_{L(X)/S},\cO_X)$, and we obtain a locally split exact sequence 
$$0 \to \V^{-1}_{X(s)/S} \xrightarrow{J_{-1}} s^*\V^{-1}_{L(X)/S} \xrightarrow{H_{-1}} \cL \to 0$$
by dualizing the exact sequence of Lemma~\ref{log-qreg-crit}. The map $s^*\G^{-1}_{L(X)/S} \to s^*\V^{-1}_{L(X)/S}$ is injective, and after setting 
$$\G^{-1}_{X(s)/S} := (J_{-1})^{-1}(s^*\G^{-1}_{L(X)/S}) \subseteq \V^{-1}_{X(s)/S},$$
we have an exact sequence 
$$0 \to \G^{-1}_{X(s)/S} \to s^*\G^{-1}_{L(X)/S} \to \cL.$$
It is natural to ask for this map to be surjective, and, as it turns out, this is a very useful condition in many respects.

\begin{defn}\label{log-reg-defn}\note{log-reg-defn}\index{section!log regular}
 In Situation~\ref{log-modif-sitn}, let $s \in \cL$ be a log quasi-regular section. Then $s$ is a \emph{log regular section} if the canonical map $s^*\G^{-1}_{L(X)/S} \subseteq s^*\V^{-1}_{L(X)/S} \to \cL$ is surjective.
\end{defn}

Since the formation of the map $s^*\G^{-1}_{L(X)/S} \to \cL$ commutes with base change, the condition of being a log regular section is stable under base change. If $s \in \cL$ is a log regular section, then the exact sequence is locally split because $\cL$ is a line bundle.

If $r \in \mathrm{Sp}_{X(s)/N(s)}$ is a splitting, let us denote the induced splitting of $R_1: s^*\W^1_{L(X)/S} \to \W^1_{X(s)/S}$ by $C_1[r]$, and the induced splitting of $G_1: \cL^\vee \to s^*\W^1_{L(X)/S}$ by $U_1[r]$. With this notation, we construct a map 
$$\psi: \mathrm{Sp}_{X(s)/N(s)} \to \cH om(\W^1_{X/S},\cL^\vee)$$
as 
$$\psi(r) = U_1[r] \circ E_1: \enspace \W^1_{X/S} \xrightarrow{E_1} s^*\W^1_{L(X)/S} \xrightarrow{U_1[r]} \cL^\vee.$$
For $r' = \theta \odot r$ with $\theta \in \V^{-1}_{X(s)/S} \otimes \cL^\vee = \cH om(\W^1_{X(s)/S},\cL^\vee)$, we find 
$$U_1[r'](\alpha) - U_1[r](\alpha) = - \theta(R_1(\alpha))$$
for $\alpha \in s^*\W^1_{L(X)/S}$. Thus, 
$$\psi(r')(\beta) - \psi(r)(\beta) = -\theta(R_1 \circ E_1(\beta))$$
for $\beta \in \W^1_{X/S}$, so $\psi$ is $\V^{-1}_{X(s)/S} \otimes \cL^\vee$-equivariant when we let it act on $\cH om(\W^1_{X/S},\cL^\vee)$ via the negative of the embedding via the canonical map $dh^* = R_1 \circ E_1: \W^1_{X/S} \to \W^1_{X(s)/S}$, i.e., via $-Th_* \otimes \cL^\vee: \V^{-1}_{X(s)/S} \otimes \cL^\vee \to \V^{-1}_{X/S} \otimes \cL^\vee$. This map is injective since it is an isomorphism on $X \setminus H$, which is dense due to our log pre-regularity assumption.

\begin{lemma}
 Let $s \in \cL$ be a log quasi-regular section. Then the following statements are equivalent:
 \begin{enumerate}[label=\emph{(\alph*)}]
  \item\label{log-reg-via-surj} $s$ is a log regular section, i.e., $s^*\G^{-1}_{L(X)/S} \to \cL$ is surjective;
  \item\label{log-reg-via-sp} everywhere on $X$, there is locally a splitting $r \in \mathrm{Sp}_{X(s)/N(s)}$ such that 
  $$\psi(r) \in \G^{-1}_{X/S} \otimes \cL^\vee \subseteq \V^{-1}_{X/S} \otimes \cL^\vee.$$
 \end{enumerate}
\end{lemma}
\begin{proof}
 First, assume that $s \in \cL$ is a log regular section. Then we can find locally an element $\theta_\alpha \in s^*\G^{-1}_{L(X)/S}$ with $H_{-1}(\theta_\alpha) = e_\alpha$. In particular, this defines a splitting of $H_{-1}$ with $e_\alpha \mapsto \theta_\alpha$, and due to the isomorphism $\eta: \mathrm{Sp}_{X(s)/N(s)} \to \mathrm{Sp}'$, we can find a splitting $r \in \mathrm{Sp}_{X(s)/N(s)}$ such that $e_\alpha \mapsto \theta_\alpha$ is given by $V_{-1}[r]$, the dual of $U_1[r]$. Since $F_{-1}(\theta_\alpha) \in \G^{-1}_{X/S}$, we have 
 $$F_{-1} \circ V_{-1}[r]: \cL \to \G^{-1}_{X/S}.$$
 This implies $\psi(r) = U_1[r] \circ E_1 \in \G^{-1}_{X/S} \otimes \cL^\vee$, as desired. Conversely, if $\psi(r) \in \G^{-1}_{X/S} \otimes \cL^\vee$, then $F_{-1} \circ V_{-1}[r](e_\alpha) \in \G^{-1}_{X/S}$, so $\theta_\alpha := V_{-1}[r] \in s^*\G^{-1}_{L(X)/S}$ by the variant of \eqref{Gamma-LXS-seq} for $\G^\bullet$. Then $H_{-1}(\theta_\alpha) = e_\alpha$, so $s^*\G^{-1}_{L(X)/S} \to \cL$ is surjective, and $s$ is a log regular section.
\end{proof}

Let 
$$\psi^{-1}(\V^{-1}_{X/S} \otimes \cL^\vee) =: \ \widetilde{\mathrm{Sp}}  \ \subseteq \mathrm{Sp}_{X(s)/N(s)}$$
be the subsheaf of those splittings $r \in \mathrm{Sp}_{X(s)/N(s)}$ with $\psi(r) \in \G^{-1}_{X/S} \otimes \cL^\vee$. We have an induced map 
$$Th_*: \G^{-1}_{X(s)/S} \xrightarrow{J_{-1}} s^*\G^{-1}_{L(X)/S} \xrightarrow{F_{-1}} \G^{-1}_{X/S},$$
and under this map, we have 
$$\psi(\theta \odot r) = \psi(r) - (Th_* \otimes \cL^\vee)(\theta)$$
for $\theta \in \G^{-1}_{X(s)/S} \otimes \cL^\vee$, i.e., $\G^{-1}_{X(s)/S} \otimes \cL^\vee$ acts on $\widetilde{\mathrm{Sp}}$. If $r', r \in \widetilde{\mathrm{Sp}}$ are two local sections, then there is some $\theta \in \V^{-1}_{X(s)/S} \otimes \cL^\vee$ with $r' = \theta \odot r$. Since $\psi(\theta \odot r) = \psi(r) - (Th_* \otimes \cL^\vee)(\theta)$, we find $(Th_* \otimes \cL^\vee)(\theta) \in \G^{-1}_{X/S} \otimes \cL^\vee$. This implies $(J_{-1} \otimes \cL^\vee)(\theta) \in s^*\G^{-1}_{L(X)/S} \otimes \cL^\vee$, and hence $\theta \in \G^{-1}_{X(s)/S} \otimes \cL^\vee$ more or less by definition. Thus, $\widetilde{\mathrm{Sp}}$ is an $\G^{-1}_{X(s)/S} \otimes \cL^\vee$-torsor. We summarize this as follows:

\begin{prop}
 Let $s \in \cL$ be a log regular section. Then there is a distinguished submodule $\G^{-1}_{X(s)/S} \subseteq \V^{-1}_{X(s)/S}$ of log derivations on $f \circ h: X(s) \to S$, depending only on $s \in \cL$ and the enhanced generically log smooth family $f: X \to S$, together with a canonical $\G^{-1}_{X(s)/S} \otimes \cL^\vee$-torsor $\widetilde{\mathrm{Sp}}$ of special splittings of $e: X(s) \to N(s)$.
\end{prop}

Using $\widetilde{\mathrm{Sp}}$, we give yet another equivalent characterization of log regularity.

\begin{lemma}\label{special-log-der}\note{special-log-der}
 Let $s \in \cL$ be a log quasi-regular section. Then $s$ is log regular if and only if, locally on each $V_\alpha$, there is a log derivation $\theta = (D,\Delta) \in \G^{-1}_{X/S}$ with $D(s_\alpha) = 1 + a \cdot s_\alpha$ for some (locally defined) function $a \in \cO_X$.
\end{lemma}
\begin{proof}
 First assume that $s$ is log regular. Recall that $i: H \to X$ is the inclusion. We consider the diagram 
 \[
  \xymatrix{
   0 \ar[r] & i^*\W^1_{X/S} \ar[r]^-{i^*E_1} & i^*s^*\W^1_{L(X)/S} \ar[r]^-{i^*Q_1} & i^*s^*\Omega^1_{L(X)/X} \ar[r] & 0 \\
   0 \ar[r] & i^*\cL^\vee \ar@{-->}[u]^{M} \ar[r]^-{i^*G_1} & i^*s^*\W^1_{L(X)/S} \ar[r]^-{i^*R_1} \ar@{=}[u] & i^*s^*\W^1_{X(s)/S} \ar[r] & 0 \\
  }
 \]
 with two locally split exact rows. Since $i^*Q_1 \circ i^*G_1(i^*e_\alpha^\vee) = 0$, there is a map $M: i^*\cL^\vee \to i^*\W^1_{X/S}$ as indicated. For any splitting $U_1[r]$ of $G_1$, we have $i^*U_1[r] \circ M = \mathrm{id}_{i^*\cL^\vee}$.
 Now let $r \in \widetilde{\mathrm{Sp}}$ be a local splitting, and let $(D,\Delta) \in \G^{-1}_{X/S}$ be such that $\psi(r) = (D,\Delta) \otimes e_\alpha^\vee$. Then we have $D(s_\alpha) \otimes e_\alpha^\vee = U_1[r](ds_\alpha)$, and since $M(i^*e_\alpha^\vee) = -i^*ds_\alpha$, we have 
 $$i^*D(s_\alpha) \otimes i^*e_\alpha^\vee = -i^*U_1[r] \circ M(i^*e_\alpha^\vee) = -i^*e_\alpha^\vee.$$
 Thus, $1 + D(s_\alpha) \in \mathrm{Ann}(i^*\cL^\vee) = \I_H = (s_\alpha)$, and hence we can write $D(s_\alpha) = -1 - a \cdot s_\alpha$ for some $a \in \cO_X$. Replacing $(D,\Delta)$ with $(-D,-\Delta)$ yields the desired log derivation.
 
 Conversely, let $\theta = (D,\Delta)$ be a log derivation with $D(s_\alpha) = 1 + a \cdot s_\alpha$. Let 
 $$\xi := -a \cdot s^*\delta^\vee - S_{-1}(\theta)$$
 where $S_{-1}$ is the splitting associated with the original distinguished splitting $B_1(\gamma_s) = \delta_\alpha$. We have $\xi \in s^*\G^{-1}_{L(X)/S}$ because of $\theta \in \G^{-1}_{X/S}$. Since $H_{-1}: s^*\V^{-1}_{L(X)/S} \to \cL$ is the dual of $G_1: \cL^\vee \to s^*\W^1_{L(X)/S}$, and since $G_1(e_\alpha^\vee) = s_\alpha \cdot s^*\delta^\vee - ds_\alpha$, we have 
 $$H_{-1}(\xi)(e_\alpha^\vee) = \xi \ \invneg \ G_1(e_\alpha^\vee) = 1.$$
 Thus, $H_{-1}(\xi) = e_\alpha$, and $s^*\G^{-1}_{L(X)/S} \to \cL$ is surjective.
\end{proof}
\begin{rem}
 Note that this criterion is independent of the choice of $s_\alpha$: For another choice $u_\alpha \cdot s_\alpha$ with $u_\alpha$ invertible, we find $D(u_\alpha \cdot s_\alpha) = 1 + (u_\alpha \cdot a + D(u_\alpha)) \cdot s_\alpha$.
\end{rem}

\section{The Gerstenhaber calculus on $f \circ h: X(s) \to S$}

Let $s \in \cL$ be a log regular section. The exact sequence of Lemma~\ref{log-qreg-crit} yields a locally split exact sequence 
$$0 \to \W^{i - 1}_{X(s)/S} \otimes \cL^\vee \xrightarrow{G_i} s^*\W^i_{L(X)/S} \xrightarrow{R_i} \W^i_{X(s)/S} \to 0.$$
Namely, on $U$, we obtain this sequence as usual; since $s^*\W^i_{L(X)/S}$ is reflexive, we can extend it via the direct image to $X$; and finally, the right hand side is surjective because 
$$s^*\Omega^i_{L(X)/S}|_U \to \Omega^i_{U(s)/S}$$
is locally split on $X$ (!). Note that $s^*\A^i_{L(X)/S} \subseteq s^*\W^i_{L(X)/S}$.
We define 
$$\A^i_{X(s)/S} := R_i(s^*\A^i_{L(X)/S}) \subseteq \W^i_{X(s)/S}.$$
Dually, we have a locally split exact sequence 
$$0 \to \V^p_{X(s)/S} \xrightarrow{J_p} s^*\V^p_{L(X)/S} \xrightarrow{H_p} \V^{p + 1}_{X(s)/S} \otimes \cL \to 0$$
since $s^*\V^p_{L(X)/S} \cong \cH om(s^*\W^{-p}_{L(X)/S},\cO_X)$. Note that $s^*\G^p_{L(X)/S} \subseteq s^*\V^p_{L(X)/S}$. We define 
$$\G^p_{L(X)/S} := J_p^{-1}(s^*\G^p_{L(X)/S}) \subseteq \V^p_{X(s)/S},$$
extending our above definition of $\G^{-1}_{X(s)/S}$. These definitions make sense for log quasi-regular sections $s \in \cL$, but we need log regularity in order for them to be well-behaved.

Obviously, we have well-defined comparison maps 
$$dh^* = R_i \circ E_i: \A^i_{X/S} \to \A^i_{X(s)/S}, \quad Th_* = F_p \circ J_p: \G^p_{X(s)/S} \to \G^p_{X/S},$$
which are associated with $h: X(s) \to X$. Recall that $o: X \to L$ is the zero section. Since $L \setminus o(X) = L(X) \setminus o(X)$, the morphism $h: X(s) \to X$ is an isomorphism on $X \setminus H$, so $dh^*$ and $Th_*$ are isomorphisms on $X \setminus H$ on the level of $\V^\bullet$ and $\W^\bullet$. This remains true for $\G^\bullet$ and $\A^\bullet$ under the definition we have given:

\begin{lemma}\label{X-H-isom}\note{X-H-isom}
 The maps $dh^*: \A^i_{X/S} \to \A^i_{X(s)/S}$ and $Th_*: \G^p_{X(s)/S} \to \G^p_{X/S}$ are isomorphisms on $X \setminus H$.
\end{lemma}
\begin{proof}
 Both maps are injective because they are isomorphisms on the level of $\V^\bullet$ and $\W^\bullet$, so it suffices to show surjectivity. If $\alpha \in \A^i_{X(s)/S}$ inside $X \setminus H$, then, locally, we can find $\beta = E_i(\beta_1) + E_{i - 1}(\beta_2) \wedge \delta \in s^*\A^i_{L(X)/S}$ with $R_i(\beta) = \alpha$. Now 
 $$R_i(\beta) = R_i \circ E_i(\beta_1) + R_{i - 1} \circ E_{i - 1}(\beta_2) \wedge R_1(\delta)$$
 with $R_1(\delta) = s_\alpha^{-1} \cdot R_1 \circ E_1(ds_\alpha)$,\footnote{Here we take $\delta = B_1(\gamma_s)$ for our original distinguished splitting $B_1$ used to define the class of distinguished splittings.}\footnote{In the computation, we use $R_i(\alpha) \wedge R_j(\beta) = R_{i + j}(\alpha \wedge \beta)$, which holds by  definition of $R_i$.} so we have 
 $$\alpha = R_i(\beta) = R_i \circ E_i(\beta_1 + s_\alpha^{-1} \cdot \beta_2 \wedge ds_\alpha),$$
 i.e., $dh^*$ is surjective. For the surjectivity of $Th_*$, let $\theta \in \G^p_{X/S}$. Since $Th_*: \V^p_{X(s)/S} \to \V^p_{X/S}$ is an isomorphism on $X \setminus H$, we can find $\theta' \in \V^p_{X(s)/S}$ with $Th_*(\theta') = \theta$. Let $$S_p: \V^p_{X/S} \to s^*\V^p_{L(X)/S}$$
 be the splitting of $F_p$ associated with the original distinguished splitting $B_1$.
 Let $i = -p$ and $\alpha \in s^*\W^i_{L(X)/S}$. On the one hand side, we have 
 $$J_p(\theta') \ \invneg \ (E_i(\alpha_1) + E_{i - 1}(\alpha_2) \wedge \delta) = \theta \ \invneg \ \alpha_1 + \theta \ \invneg \ (s_\alpha^{-1} \cdot \alpha_2 \wedge ds_\alpha).$$
 On the other hand side, we have 
 $$S_p(\theta) \ \invneg \ (E_i(\alpha_1) + E_{i - 1}(\alpha_2) \wedge \delta) = \theta \ \invneg \ \alpha_1.$$
 Thus, we have 
 \begin{align}
  (J_p(\theta')\ -\ &S_p(\theta)) \ \invneg \ (E_i(\alpha_1) + E_{i - 1}(\alpha_2) \wedge \delta) = \theta \ \invneg \ (s_\alpha^{-1} \cdot \alpha_2 \wedge ds_\alpha) \nonumber \\
  &= (-1)^{i - 1} s_\alpha^{-1} \cdot (\theta \vdash ds_\alpha) \ \invneg \ \alpha_2 = s_\alpha^{-1} \cdot I_p((\theta \vdash ds_\alpha) \otimes \gamma_s^\vee) \ \invneg \ \alpha. \nonumber
 \end{align}
 Since $S_p(\theta) \in s^*\G^p_{L(X)/S}$ and $s_\alpha^{-1} \cdot I_p((\theta \vdash ds_\alpha) \otimes \gamma_s^\vee) \in s^*\G^p_{L(X)/S}$, we have $J_p(\theta') \in s^*\G^p_{L(X)/S}$, and hence $\theta' \in \G^p_{X(s)/S}$.
\end{proof}

We work again on the whole of $X$. We have 
$$G_1(1 \otimes e_\alpha^\vee) = s_\alpha \cdot s^*\delta - d_{X/S}(s_\alpha),$$
i.e., $G_1(1 \otimes e_\alpha^\vee) \in s^*\A^1_{L(X)/S}$, so we find $G_i(\A^{i - 1}_{X(s)/S} \otimes \cL^\vee) \subseteq s^*\A^i_{L(X)/S}$ since $s^*\A^\bullet_{L(X)/S} \subseteq s^*\W^\bullet_{L(X)/S}$ is closed under the $\wedge$-product. Thus, we have a sequence 
\begin{equation}\label{GR-A-seq}
 0 \to \A^{i - 1}_{X(s)/S} \otimes \cL^\vee \xrightarrow{G_i} s^*\A^i_{L(X)/S} \xrightarrow{R_i} \A^i_{X(s)/S} \to 0.
\end{equation}\note{GR-A-seq}
It is exact on the left and on the right by construction.

\begin{lemma}
 If $s \in \cL$ is a log regular section, then the sequence \eqref{GR-A-seq} is exact in the middle and locally split.
\end{lemma}
\begin{proof}
 We start with exactness in the middle. By construction, we have $R_i \circ G_i = 0$. From Lemma~\ref{X-H-isom}, it is not so hard to see that \eqref{GR-A-seq} is exact in the middle on $X \setminus H$. So let us work with the stalks at some $x \in H$. Let $\alpha \in s^*\A^i_{L(X)/S}$ be such that $R_i(\alpha) = 0$. From the exactness of the corresponding sequence for $\W^\bullet$, we find that there is some $\beta \in s^*\W^{i - 1}_{L(X)/S}$ with $\beta \wedge G_1(1 \otimes e_\alpha^\vee) = \alpha$. After decomposing $\beta = E_{i - 1}(\beta_1) + E_{i - 2}(\beta_2) \wedge \delta$ with $\beta_1 \in \W^{i - 1}_{X/S}$ and $\beta_2 \in \W^{i - 2}_{X/S}$, we have 
 $$\alpha = - E_i(\beta_1 \wedge ds_\alpha) + E_{i - 1}(s_\alpha \cdot \beta_1 + \beta_2 \wedge ds_\alpha) \wedge s^*\delta.$$
 This shows $\beta_1 \wedge ds_\alpha \in \A^i_{X/S}$ and $s_\alpha \cdot \beta_1 + \beta_2 \wedge ds_\alpha \in \A^{i - 1}_{X/S}$. Let $\theta \in \G^{-1}_{X/S}$ be a log derivation as obtained from Lemma~\ref{special-log-der}, i.e., $\theta \ \invneg \ ds_\alpha = 1 + a \cdot s_\alpha =: u$. Since we are at the stalk at $x \in H$, the function $u \in \cO_{X,x}$ is invertible. We set 
 $$\tilde\beta_1 := (-1)^{i - 1} u^{-1} \cdot \theta \ \invneg\ (\beta_1 \wedge ds_\alpha) \in \A^{i - 1}_{X/S};$$
 then we have $\tilde\beta_1 \wedge ds_\alpha = \beta_1 \wedge ds_\alpha$. Furthermore, we have 
 $$((-1)^i u^{-1} \cdot s_\alpha \cdot \theta \ \invneg \ (\beta_1 - \tilde\beta_1)) \wedge ds_\alpha = s_\alpha \cdot (\beta_1 - \tilde\beta_1),$$
 in other words, if $\tilde\gamma \in \W^{i - 1}_{X/S}$ is the left hand side without the $ds_\alpha$-term, then $(\tilde\gamma + \beta_2) \wedge ds_\alpha \in \A^{i - 1}_{X/S}$. After setting 
 $$\tilde\beta_2 = (-1)^{i - 2}u^{-1} \cdot \theta \ \invneg \ ((\tilde\gamma + \beta_2) \wedge ds_\alpha) \in \A^{i - 2}_{X/S},$$
 we have $s_\alpha \cdot \tilde\beta_1 + \tilde\beta_2 \wedge ds_\alpha = s_\alpha \cdot \beta_1 + \beta_2 \wedge ds_\alpha$. Thus, 
 $$\tilde\beta := E_{i - 1}(\tilde\beta_1) + E_{i - 2}(\tilde\beta_2) \wedge s^*\delta \in s^*\A^{i - 1}_{L(X)/S},$$
 and $\tilde\beta \wedge G_1(1 \otimes e_\alpha^\vee) = \alpha$. In other words, $G_i(R_{i - 1}(\tilde\beta) \otimes e_\alpha^\vee) = \alpha$, and \eqref{GR-A-seq} is exact in the middle.
 
 To show that the sequence is locally split, let $\theta = (D,\Delta) \in \G^{-1}_{X/S}$ be such that $D(s_\alpha) = 1 + a \cdot s_\alpha$, and let $\xi = - a \cdot s^*\delta^\vee - S_{-1}(\theta) \in s^*\G^{-1}_{L(X)/S}$. Then we obtain a splitting of $G_i$ as 
$$U_i: s^*\A^i_{L(X)/S} \to \A^{i - 1}_{X(s)/S} \otimes \cL^\vee, \quad \alpha \mapsto (-1)^{i - 1}R_{i - 1}(\xi \ \invneg \ \alpha) \otimes e_\alpha^\vee.$$
If $\beta \in \A^{i - 1}_{X(s)/S}$, then $G_i(\beta \otimes e_\alpha^\vee) = \tilde\beta \wedge G_1(e_\alpha^\vee)$ for any lift $\tilde\beta \in s^*\A^{i - 1}_{L(X)/S}$ with $R_{i - 1}(\tilde\beta) = \beta$, so $$U_i \circ G_i(\beta \otimes e_\alpha^\vee) = R_{i - 1}((-1)^{i - 1}(\xi \ \invneg \ \tilde\beta) \wedge G_1(e_\alpha^\vee) + \tilde\beta \wedge (\xi \ \invneg \ G_1(e_\alpha^\vee))) \otimes e_\alpha^\vee = \beta \otimes e_\alpha^\vee$$
since $\xi \ \invneg \ G_1(e_\alpha^\vee) = 1$.
\end{proof}

Next, we study the dual of the sequence \eqref{GR-A-seq}. If $\theta \in s^*\V^p_{L(X)/S}$ is such that $H_p(\theta) = \xi \otimes e_\alpha$ for some $\xi \in \V^{p + 1}_{X(s)/S}$, then we have 
$$\xi \ \invneg \ R_{i - 1}(\beta) = (\theta \vdash G_1(e_\alpha^\vee)) \ \invneg \ \beta$$
for every $\beta \in s^*\W^{i - 1}_{L(X)/S}$ for $i = -p$. In other words, we have 
$J_{p + 1}(\xi) = \theta \vdash G_1(e_\alpha^\vee)$. If $\theta \in s^*\G^p_{L(X)/S}$, then $\theta \vdash G_1(e_\alpha^\vee) \in s^*\G^{p + 1}_{L(X)/S}$, and hence $\xi \in \G^{p + 1}_{X(s)/S}$ by definition. Thus, we have a well-defined sequence 
\begin{equation}\label{JH-G-seq}
 0 \to \G^p_{X(s)/S} \xrightarrow{J_p} s^*\G^p_{L(X)/S} \xrightarrow{H_p} \G^{p + 1}_{X(s)/S} \otimes \cL \to 0,
\end{equation}\note{JH-G-seq}
which is exact on the left and in the middle.

\begin{lemma}
 The sequence \eqref{JH-G-seq} is exact on the right and locally split.
\end{lemma}
\begin{proof}
 We construct a local splitting of $H_p$. Let $\theta_\alpha = (D_\alpha,\Delta_\alpha) \in \G^{-1}_{X/S}$ be such that $D_\alpha(s_\alpha) = 1 + a \cdot s_\alpha$, and let $\xi_\alpha := - a \cdot s^*\delta^\vee - S_{-1}(\theta_\alpha) \in s^*\G^{-1}_{L(X)/S}$.\footnote{Note that this does not only depend on $\alpha$.} Let $\theta \in \G^{p + 1}_{X(s)/S}$. Since $H_p$ is surjective for $\V^\bullet$, there is some $\xi \in \V^p_{L(X)/S}$ with $H_p(\xi) = \theta \otimes e_\alpha$, and hence $J_{p + 1}(\theta) = \xi \vdash G_1(e_\alpha^\vee)$. We define the splitting by 
 $$V_p: \G^{p + 1}_{X(s)/S} \otimes \cL \to s^*\G^p_{L(X)/S}, \quad \theta \otimes e_\alpha \mapsto J_{p + 1}(\theta) \wedge \xi_\alpha.$$
 Then $H_p \circ V_p(\theta \otimes e_\alpha) = \theta' \otimes e_\alpha$ for some $\theta'$ with 
 $$J_{p + 1}(\theta') = (J_{p + 1}(\theta) \wedge \xi_\alpha) \vdash G_1(e_\alpha^\vee) = - (\xi \vdash G_1(e_\alpha^\vee)) \vdash G_1(e_\alpha^\vee) + J_{p + 1}(\theta) \wedge (\xi_\alpha \vdash G_1(e_\alpha^\vee)), $$
 i.e., $\theta' = \theta$ since $\xi_\alpha \vdash G_1(e_\alpha^\vee) = 1$. Thus $V_p$ is a splitting of $H_p$.
\end{proof}

With this preparation, we show:

\begin{prop}\label{WdXs-comp}\note{WdXs-comp}
 Let $s \in \cL$ be a log regular section. Assume that the pieces of $\G^p_{X/S}$ and $\A^i_{X/S}$ are torsionless in every fiber. Then $f \circ h: X(s) \to S$ is a log Gorenstein generically log smooth family. The relative log canonical bundle is 
 $$\W^d_{X(s)/S} \cong \W^d_{X/S} \otimes \cL.$$
 When we endow $f \circ h: X(s) \to S$ with $(\G^\bullet_{X(s)/S},\A^\bullet_{X(s)/S})$, this is an enhanced generically log smooth family whose pieces are torsionless in every fiber. The formation of the Gerstenhaber calculus commutes with base change in $S$.
\end{prop}
\begin{proof}
 Considering the exact sequence \eqref{GR-A-seq} for $i = d + 1$ together with its variant for $\W^\bullet$ and the embedding, we find that 
 $$\varpi^{d + 1} \otimes \cL^\vee: \A^d_{X(s)/S} \otimes \cL^\vee \to \W^d_{X(s)/S} \otimes \cL^\vee$$
 is an isomorphism, and that both sides are isomorphic to the line bundle $s^*\W^{d + 1}_{L(X)/S} \cong \W^d_{X/S} \otimes s^*\Omega^1_{L(X)/X} \cong \W^d_{X/S}$. The exact sequences \eqref{GR-A-seq} and \eqref{JH-G-seq} show that each piece $\G^p_{X(s)/S}$ and $\A^i_{X(s)/S}$ is flat over $S$, and that they are torsionless in every fiber since this holds for $\G^p_{X/S}$ and $\A^i_{X/S}$, and hence for $s^*\G^p_{L(X)/S}$ and $s^*\A^i_{L(X)/S}$.
 
 Because $J_0$ and $R_0$ are isomorphisms, we have $\A^0_{X(s)/S} = \cO_X$ and $\G^0_{X(s)/S} = \cO_X$. Since $R_i$ is compatible with the $\wedge$-product, $\A^\bullet_{X(s)/S} \subseteq \W^\bullet_{X(s)/S}$ is closed under the $\wedge$-product. To see that $\A^\bullet_{X(s)/S}$ is a complex, we consider the diagram 
 \[
  \xymatrix{
   \A^i_{L(X)/S} \ar[r] \ar@{^(->}[d] & s_*s^*\A^i_{L(X)/S} \ar[r] \ar@{^(->}[d] & s_*\A^i_{X(s)/S} \ar@{^(->}[d] & \\
   \W^i_{L(X)/S} \ar[r] & s_*s^*\W^i_{L(X)/S} \ar[r] & s_*\W^i_{X(s)/S} \ar@{=}[r] & j_*s_*\Omega^i_{U(s)/S}. \\
  }
 \]
 The lower composition is compatible with the de Rham differential $\partial$ since it is so on $p^{-1}(U)$. The upper row remains surjective. Then the claim follows from the fact that $\A^\bullet_{L(X)/S} \subseteq \W^\bullet_{L(X)/S}$ is a subcomplex.
 
 The graded subsheaf $\G^\bullet_{X(s)/S} \subseteq \V^\bullet_{X(s)/S}$ is closed under the $\wedge$-product because $J_p$ is compatible with the $\wedge$-product. Similar to the proof of Lemma~\ref{formulae-LXS}, we can show that 
 $$R_{p + i}(J_p(\theta) \ \invneg \ \alpha) = \theta \ \invneg \ R_i(\alpha)$$
 for $\theta \in \V^p_{X(s)/S}$ and $\alpha \in s^*\W^i_{L(X)/S}$. Using this formula, we see that $\G\C^\bullet_{X(s)/S}$ is closed under the right contraction $\invneg$. Similarly, we have $J_p(\theta) \vdash \alpha = J_{p + i}(\theta \vdash R_i(\alpha))$,\footnote{Note that this holds for arbitrary $\alpha \in s^*\W^i_{L(X)/S}$---the left contraction with an element in the image of $J_p$ is always in the image of $J_{p + i}$.} so $\G\C^\bullet_{X(s)/S}$ is closed under the left contraction $\vdash$.
 
 Next, we show that $\G\C^\bullet_{X(s)/S}$ is locally Batalin--Vilkovisky. Let $\omega \in \A^d_{X(s)/S}$ be a local volume form, and let $\tilde \omega \in s^*\A^d_{L(X)/S}$ be such that $R_d(\tilde\omega) = \omega$. A local volume form of $s^*\A^{d + 1}_{L(X)/S}$ is now given by $\hat\omega = \tilde\omega \wedge G_1(e_\alpha^\vee)$. Given $\omega$, this form is independent of the chosen lift $\tilde\omega$. We have to show that 
 $$\kappa_\omega: \G^p_{X(s)/S} \to \A^{p + d}_{X(s)/S}, \quad \theta \mapsto \theta \ \invneg\ \omega,$$
 is an isomorphism. By construction, the corresponding map on the level of $\V^p_{X(s)/S}$ and $\W^{p + d}_{X(s)/S}$ is an isomorphism. In particular, our $\kappa_\omega$ is injective, and for $\alpha \in \A^{p + d}_{X(s)/S}$, we can find $\theta \in \V^p_{X(s)/S}$ with $\theta \ \invneg \ \omega = \alpha$. We have to show that $\theta \in \G^p_{X(s)/S} \subseteq \V^p_{X(s)/S}$, i.e., $J_p(\theta) \in s^*\G^p_{L(X)/S}$. Since $\G\C^\bullet_{L(X)/S}$ is locally Batalin--Vilkovisky, it is sufficient to show $J_p(\theta) \ \invneg\ \hat\omega \in s^*\A^{p + d + 1}_{L(X)/S}$. If $\tilde \alpha \in s^*\A^{p + d}_{L(X)/S}$ is a lift of $\alpha$, then we find $R_{p + d}(J_p(\theta) \ \invneg \ \tilde\omega) = R_{p + d}(\tilde \alpha)$ from the above formulae. In particular, there is some $\beta \in \W^{p + d - 1}_{X(s)/S} \otimes \cL^\vee$ with $J_p(\theta) \ \invneg \ \tilde\omega = G_{p + d}(\beta) + \tilde\alpha$. By induction on $p$, we can show that 
 $$J_p(\theta) \ \invneg\ \hat\omega = (J_p(\theta) \ \invneg \ \tilde\omega) \wedge G_1(e_\alpha^\vee)$$
 although there is in general no easy formula for the action of the right contraction on a $\wedge$-product on the right. Thus, $J_p(\theta) \ \invneg \ \hat\omega = \tilde\alpha \wedge G_1(e_\alpha^\vee) \in s^*\A^{p + d + 1}_{L(X)/S}$, as desired.
 
 Now we obtain that $\G\C^\bullet_{X(s)/S}$ is closed under the Lie derivative $\cL_{-}(-)$ from the Lie--Rinehart homotopy formula, and that $\G\C^\bullet_{X(s)/S}$ is closed under the bracket $[-,-]$ from the mixed Leibniz rule.
 
 Considering the exact sequence \eqref{GR-A-seq}, we see by induction on $i$ that the formation of $\A^i_{X(s)/S}$ commutes with base change because the formation of $s^*\A^i_{L(X)/S}$ commutes with base change. Since $\G\C^\bullet_{X(s)/S}$ is locally Batalin--Vilkovisky, then the same is true for $\G^p_{X(s)/S}$.
\end{proof}

\section{Thickenings of log regular sections}

In this section, we show that a thickening $s \in \cL$ of a log regular section $s_0 \in \cL_0$ is again a log regular section. In other words, the condition of log regularity does not impose additional restrictions on infinitesimal deformations once we require it on the central fiber. We work in the following situation:

\begin{sitn}\label{log-modif-defo-sitn}\note{log-modif-defo-sitn}
 Let $S_0 = \Spec (Q \to \kk)$, and let $f_0: X_0 \to S_0$ be an enhanced generically log smooth family of relative dimension $d$ with Gerstenhaber calculus $\varpi^\bullet: (\G^\bullet_{X_0/S_0},\A^\bullet_{X_0/S_0}) \subseteq (\V^\bullet_{X_0/S_0},\W^\bullet_{X_0/S_0})$. Let $\cL_0$ be a line bundle on $X_0$, and let $s_0$ be a global section. We consider infinitesimal deformations $f_A: X_A \to S_A$ of $f_0: X_0 \to S_0$ with Gerstenhaber calculus $\varpi^\bullet: (\G^\bullet_{X_A/S_A},\A^\bullet_{X_A/S_A}) \subseteq (\V^\bullet_{X_A/S_A},\W^\bullet_{X_A/S_A})$. Then $\cL$ is a line bundle on $X_A$ with a chosen isomorphism $\cL|_0 = \cL_0$, and $s \in \cL$ is a global section with $s|_0 = s_0$.
\end{sitn}

First, we prove our claim for log pre-regular sections. 

\begin{lemma}
 Assume that $s_0 \in \cL_0$ is a log pre-regular section. Then $s \in \cL$ is a log pre-regular section.
\end{lemma}
\begin{proof}
 It is clear that $\mathrm{codim}(Z \cap H,H) \geq 2$ in every fiber since $f: X \to S$ and $f_0: X_0 \to S_0$ have the same fibers. The open subset $X_0 \setminus H_0 \subseteq X_0$ is scheme-theoretically dense, and by \cite[Rem.~4.12]{FeltenThesis} in the author's thesis, then the same is true for $X_A \setminus H_A \subseteq X_A$, so $\cO_{X_A} \to j_*\cO_{X_A \setminus H_A}$ is injective. Thus, $s^\vee: \cL^\vee \to \cO_{X_A}$ is injective. Now let $\mathbf{M}$ be the class of all finitely generated $A$-modules $M$ such that $\cL^\vee \otimes_A M \to \cO_{X_A} \otimes_A M$ remains injective. By assumption, we have $A/\m_A \in \mathbf{M}$, and the same argument that shows $\cL^\vee \to \cO_{X_A}$ injective also shows $A/I \in \mathbf{M}$ for all ideals $I \subseteq \m_A \subseteq A$. Since both $\cL^\vee$ and $\cO_{X_A}$ are flat over $S_A$, we find that every extension $M$ of two modules $M',M'' \in \mathbf{M}$ is in $\mathbf{M}$ as well. Since every finitely generated module $M$ has a filtration by submodules such that each quotient is of the form $A/I$ for some ideal $I \subseteq A$, we find that $\mathbf{M}$ is the class of all finitely generated $A$-modules. Then the sequence 
 $$0 \to \cL^\vee \to \cO_{X_A} \to \cO_{H_A} \to 0$$
 is universally exact over $A$, and since $\cO_{X_A}$ is flat over $A$, we find that $\cO_{H_A}$ is flat over $A$ by \cite[058P]{stacks}.
\end{proof}

On the log smooth locus $U_A$, it is easy to see that $s$ is log regular if $s_0$ is log regular. First, there is a splitting in $\mathrm{Sp}_{X_A(s)/N_A(s)}$ on $U_A \setminus H_A$, given by $p: L_A \to X_A$, so $G_1: \cL^\vee \to s^*\W^1_{L_A(X_A)/S_A}$ is injective. Since $\Omega^1_{U_0(s_0)/S_0}$ is locally free, the same is true for $\Omega^1_{U_A(s)/S_A}$, so the exact sequence in Lemma~\ref{log-qreg-crit} is locally split on $U_A$. On the log singular locus, this result is considerably more difficult to prove. We start with a lemma.

\begin{lemma}\label{log-qreg-surj}\note{log-qreg-surj}
 In Situation~\ref{log-modif-sitn}, let $s \in \cL$ be a log quasi-regular section. Let $\Q$ be the cokernel in the exact sequence 
 $$0 \to \V^{-1}_{X(s)/S} \xrightarrow{Th_*} \V^{-1}_{X/S} \xrightarrow{q} \Q \to 0.$$
 Let $V \subseteq X$ be an affine open subset. Then the induced map 
 $$\Gamma(V \cap U, \V^{-1}_{X/S}) \to \Gamma(V \cap U, \Q)$$
 is surjective.
\end{lemma}
\begin{proof}
 First, we consider the following diagram:
 \[
  \xymatrix{
  & 0 & 0 & 0 & \\
  0 \ar[r] & \cL^\vee \ar[r] \ar[u] & s^*\Omega^1_{L(X)/X} \ar[u] \ar[r] & j_*\Omega^1_{U(s)/U} \ar[u] \ar[r] & 0 \\
  0 \ar[r] & \cL^\vee \ar[u]^\cong \ar[r] & s^*\W^1_{L(X)/S} \ar[u] \ar[r] & \W^1_{X(s)/S} \ar[u] \ar[r] & 0 \\
  0 \ar[r] & 0 \ar[r] \ar[u] & \W^1_{X/S} \ar[u] \ar[r]^\cong & \W^1_{X/S} \ar[u] \ar[r] & 0 \\
  & 0 \ar[u] & 0 \ar[u] & 0 \ar[u] & \\
  }
 \]
 The middle row is exact by Lemma~\ref{log-qreg-crit} since $s \in \cL$ is a log quasi-regular section. The upper left horizontal map is an isomorphism on $X \setminus H$ and hence injective since both the source and the target are line bundles. On $U$, the upper row is exact in the middle and on the right by construction since $U(s) \to L(U) := L(X)|_{p^{-1}(U)}$ is a strict closed immersion. Since the upper left horizontal map is equal to $s^\vee: \cL^\vee \to \cO_X$ under $\cO_X \cong s^*\Omega^1_{L(X)/X}, \: 1 \mapsto s^*\gamma_s$, its cokernel is $\cO_H$. Since $s_\alpha$ is a non-zero divisor, the flat map $f \circ i: H \to S$ has Cohen--Macaulay fibers, i.e., we have $j_*\cO_{U \cap H} = \cO_H$. Thus, the map $\cO_H \to j_*\Omega^1_{U(s)/U}$ induced from the upper row is not only an isomorphism on $U$ but on $X$; in particular, the upper row is exact. Similar arguments show that the right column is exact. We obtain the surjectivity on the top since we already know that the middle column is exact and locally split, i.e., the upper left composition in the upper right square is surjective, so the right map must be surjective as well. Let us write $\cO_H$ for $j_*\Omega^1_{U(s)/U}$ from now on. By dualizing the above diagram with $\cH om(-,\cO_X)$, we obtain the following diagram:
 \[
  \xymatrix{
    \E xt^1(\cO_H,\cO_X) \ar[d] & \ar@{->>}[l] \cH om(\cL^\vee,\cO_X) \ar[d]^<<<{\cong} & \ar[l] \Theta^1_{L(X)/X} \ar@{^(->}[d] \ar[l] & \ar[l] 0 \ar[d] \\
    0_1 & \ar[l] \cH om(\cL^\vee,\cO_X) \ar[d] & \ar[l] s^*\V^{-1}_{L(X)/S} \ar[d] \ar@{-->}[ddr]_>>>>>>>>>{B} \ar@{-->}[ull]^>>>>>>>>{A}  & \ar@{_(->}[l] \V^{-1}_{X(s)/S} \ar[d] \\
    & 0 & \ar[l] \V^{-1}_{X/S} \ar[d] & \ar[l]_<<<<<{\cong} \V^{-1}_{X/S} \ar[d] \\
    & & 0_2 & \ar[l] \E xt^1(\cO_H,\cO_X) \ar[d] \\
    & & & \E xt^1(\W^1_{X(s)/S},\cO_X) \\
  }
 \]
 The spaces in the diagram at the places of $0_1$ and $0_2$ may not actually be $0$, but the horizontal map to $0_1$ and the vertical map to $0_2$ are surjective because they come from dualizing a locally split exact sequence. Thus, we can put a $0$ in their places in the diagram. From the upper row, we find $\E xt^1(\cO_H,\cO_X) \cong \cO_H \otimes \cL$. This sheaf satisfies $j_*(\cO_H \otimes \cL)|_U = \cO_H \otimes \cL$, so the upper horizontal left map is surjective for sections on $V \cap U$. Similarly, the map $s^*\V^{-1}_{L(X)/S} \to \cL$ is surjective for sections on $V \cap U$, so the map $A$ is surjective for sections on $V \cap U$. The maps $A$ and $B$ have the same source and target, and both have the same kernel $\Theta^1_{L(X)/X} + \V^{-1}_{X(s)/S}$. The map $A$ is surjective, and the map $B$ is surjective over $U$ since $\W^1_{X(s)/S}$ is locally free there. Thus, on $U$, there is unique automorphism $\gamma$ of $\E xt^1(\cO_H,\cO_X)$ with $B = \gamma \circ A$; in particular, the map $B$ is surjective for sections on $V \cap U$. Since the canonical map $\Q \to \E xt^1(\cO_H,\cO_X)$ is an isomorphism on $U$, we find that $\V^{-1}_{X/S} \to \Q$ is surjective for sections on $V \cap U$.
\end{proof}
\begin{cor}
 We have $\Q \cong \E xt^1(\cO_H,\cO_X) \cong \cO_H \otimes \cL$.
\end{cor}
\begin{proof}
 The map $\Gamma(V,\Q) \to \Gamma(V \cap U,\Q)$ is both injective because $\Q$ is a quotient of a reflexive sheaf by a reflexive subsheaf, and it is surjective as an easy application of Lemma~\ref{log-qreg-surj}. Thus, $\Q \to \E xt^1(\cO_H,\cO_X)$ is an isomorphism.
\end{proof}

With this preparation, we can prove that, in Situation~\ref{log-modif-defo-sitn}, an infinitesimal thickening $s$ of a log regular section $s_0$ is again a log regular section. Note that the proof below is not correct if we assume only log quasi-regular because the map $\V^{-1}_{X_{A'}/S_{A'}} \to \V^{-1}_{X_A/S_A}$ need not be surjective.

\begin{prop}
 In Situation~\ref{log-modif-defo-sitn}, if $s_0 \in \cL_0$ is a log regular section, then $s \in \cL$ is a log regular section.
\end{prop}
\begin{proof}
 We prove the result by induction over small extensions $A' \to A$ with kernel $I \subseteq A'$. So let $f': X_{A'} \to S_{A'}$ be a deformation of $f: X_A \to S_A$ as an enhanced generically log smooth family, and assume that we have a deformation $\cL'$ of $\cL$ and a section $s' \in \cL'$ with $s'|_A = s$. We consider the following diagram:
 \[
  \resizebox{14cm}{!}{
  \xymatrixcolsep{0.6em}\xymatrix{
    & & \V^{-1}_{X_A(s)/S_A} \otimes \cL^\vee \ar@{}[d]^{\circlearrowleft} & & \V^{-1}_{X_{A'}(s')/S_{A'}} \otimes \cL^\vee \ar@{}[d]^{\circlearrowleft} \ar[ll] & \V^{-1}_{X_0(s_0)/S_0} \otimes \cL_0^\vee \otimes I \ar[l] \\
   & & \mathrm{Sp}_{X_A(s)/N_A(s)} \ar@{^(->}[ddd]^\psi & & \mathrm{Sp}_{X_{A'}(s')/N_{A'}(s')} \ar@{^(->}[ddd]^{\psi'} \ar[ll] & \\
   & \G^{-1}_{X_A(s)/S_A} \otimes \cL^\vee \ar[uur]^\subseteq \ar@{}[d]^{\circlearrowleft} & & \G^{-1}_{X_{A'}(s')/S_{A'}} \otimes \cL^\vee \ar[uur]^\subseteq \ar@{}[d]^{\circlearrowleft} \ar@{->>}[ll] & & \G^{-1}_{X_0(s_0)/S_0} \otimes \cL_0^\vee \otimes I  \ar@{_(->}[ll]&  \\
   &\widetilde{\mathrm{Sp}}_{X_A(s)/N_A(s)} \ar[uur]_\subseteq \ar@{^(->}[ddd]^\psi & & \widetilde{\mathrm{Sp}}_{X_{A'}(s')/N_{A'}(s')} \ar[ll] \ar[uur]_\subseteq \ar@{^(->}[ddd]^{\psi'} & & \\
   & & \V^{-1}_{X_A/S_A} \otimes \cL^\vee & & \V^{-1}_{X_{A'}/S_{A'}} \otimes \cL'^\vee \ar[ll] & \\
   & & \\
   & \G^{-1}_{X_A/S_A} \otimes \cL^\vee \ar[uur]_\subseteq & & \G^{-1}_{X_{A'}/S_{A'}} \otimes \cL'^\vee \ar@{->>}[ll] \ar[uur]_\subseteq  & & \\
  }
  }
 \]
 Let $V \subseteq X$ be a (small) affine open subset. Since $s \in \cL$ is log regular, we have a section $r \in \widetilde{\mathrm{Sp}}_{X_A(s)/N_A(s)}$ over $V$. Let $\U = \{U_i\}_i$ be an affine open cover of $V \cap U$. Since the restriction map 
 $$\widetilde{\mathrm{Sp}}_{X_{A'}(s')/N_{A'}(s')} \to \widetilde{\mathrm{Sp}}_{X_A(s)/N_A(s)}$$
 is surjective on $U$,\footnote{This is because of the torsor structure that we have; we do, however, not yet know the surjectivity on $X$ because, there, the source might be empty.} we can find lifts $r'_i \in \widetilde{\mathrm{Sp}}_{X_{A'}(s')/N_{A'}(s')}$ of $r$ on $U_i$ if the $U_i$ are small enough. On the other hand, we can find an element $g' \in \G^{-1}_{X_{A'}/S_{A'}} \otimes \cL'^\vee$ over $V$ with $g'|_A = \psi(r)$. Since $r'_i|_A = r'_j|_A$, we have, due to the torsor structure, an element $\theta_{ij} \in \G^{-1}_{X_0(s_0)/S_0} \otimes \cL_0^\vee \otimes I$ over $U_i \cap U_j$ with $r'_j = \theta_{ij} \odot r'_i$. Then 
 $$\psi'(r'_j) - \psi'(r'_i) = Th_*(\theta_{ij}) \in \G^{-1}_{X_0/S_0} \otimes \cL_0^\vee \otimes I.$$
 On the other hand side, we also have $g'_i := \psi'(r'_i) - g' \in \G^{-1}_{X_0/S_0} \otimes \cL_0^\vee \otimes I$, and $g'_j - g'_i = \psi'(r_j') - \psi'(r'_i)$. When we denote the cohomology class represented by $\theta_{ij}$ as 
 $$[\theta] \in H^1(U \cap V,\: \G^{-1}_{X_0(s_0)/S_0} \otimes \cL_0^\vee \otimes I),$$
 this shows $Th_*[\theta] = 0 \in H^1(U \cap V,\: \G^{-1}_{X_0/S_0} \otimes \cL_0^\vee \otimes I)$. Lemma~\ref{log-qreg-surj} implies that $Th_*$ is injective as a map on $H^1(U \cap V,-)$, i.e., $[\theta] = 0$. Then there are sections $\theta_i \in \G^{-1}_{X_0(s_0)/S_0} \otimes \cL_0^\vee \otimes I$ over $U_i$ with $\theta_j - \theta_i = \theta_{ij}$. Then the local splittings $\tilde r'_i := (-\theta_i) \odot r'_i$ glue to a splitting $r'$ on $V \cap U$ with $r'|_A = r$, i.e., $s' \in \cL'$ is a log quasi-regular section. Since the formation of $s'^*\G^{-1}_{L_{A'}(X_{A'})/S_{A'}} \to \cL'$ commutes with base change, $s'$ is in fact a log regular section.
\end{proof}

\section{Infinitesimal automorphisms and isomorphisms}

We study infinitesimal automorphisms and isomorphisms in the following situation:

\begin{sitn}
 Let $S_0 = \Spec (Q \to \kk)$, and let $f_0: X_0 \to S_0$ be an enhanced generically log smooth family of relative dimension $d$ with Gerstenhaber calculus 
 $$\varpi^\bullet: (\G^\bullet_{X_0/S_0},\A^\bullet_{X_0/S_0}) \subseteq (\V^\bullet_{X_0/S_0},\W^\bullet_{X_0/S_0}).$$
 Note that we assume $\varpi^\bullet$ to be injective. Let $\cL_0$ be a line bundle on $X_0$, and let $s_0$ be a \emph{log regular} global section. We fix an infinitesimal deformation $f: X \to S$ of $f_0: X_0 \to S_0$ over $S = S_A$ with Gerstenhaber calculus $\varpi^\bullet: (\G^\bullet_{X/S},\A^\bullet_{X/S}) \subseteq (\V^\bullet_{X/S},\W^\bullet_{X/S})$. 
We have a line bundle $\cL$ on $X$ with a chosen isomorphism $\cL|_0 = \cL_0$, and $s \in \cL$ is a global section with $s|_0 = s_0$. Furthermore, we have a not necessarily small extension $A' \to A$ with kernel $I \subset A'$, and we consider a thickening $f': X' \to S'$ of $f: X \to S$ over $S' = S_{A'}$ as an enhanced generically log smooth family. We have a line bundle $\cL'$ on $X'$ with chosen isomorphism $\cL'|_A = \cL$, and we consider global sections $s' \in \cL'$ with $s'|_A = s$.
\end{sitn}

Let $f_1': X_1' \to S'$ and $f_2': X_2' \to S'$ be two thickenings of $f: X \to S$ as an enhanced generically log smooth family, and assume that we have line bundles $\cL_1'$ and $\cL_2'$ with sections $s_1'$ and $s_2'$, both being thickenings of $s \in \cL$. If $\varphi: X_1' \xrightarrow{\cong} X_2'$ is an isomorphism of enhanced generically log smooth families over $f: X \to S$, i.e., $(d\varphi^*,T\varphi_*)$ is compatible with the chosen Gerstenhaber calculi $\G\C^\bullet_{X_i'/S'}$ inside $(\V^\bullet_{X'_i/S'},\W^\bullet_{X_i'/S'})$, and if we are given an isomorphism $\psi: \cL_2' \cong \varphi_*\cL_1'$ with $\psi(s_2') = s_1'$, then we have a commutative diagram 
\[
 \xymatrix{
  X_1'(s_1') \ar[d]^{\varphi_s}_\cong \ar[r] & L'_1(X'_1) \ar[d]^\psi_\cong \ar[r] & L'_1 \ar[r] \ar[d]^\psi_\cong & X'_1 \ar[d]^\varphi_\cong \\
  X_2'(s_2') \ar[r] & L'_2(X'_2) \ar[r] & L'_2 \ar[r] & X'_2 \\
 }
\]
of isomorphisms which induce the identity after restriction to $S$. Note that the isomorphism $\varphi_s$ on the left hand side is an isomorphism of enhanced generically log smooth families, i.e., compatible with the Gerstenhaber calculus $\G\C^\bullet_{X_i'(s_i')/S'}$.

We compute these maps in the case of an inner automorphism of $f': X' \to S'$ over $f: X \to S$, with the same $\cL'$ but allowing two different sections $s_1'$ and $s_2'$. Recall that the automorphisms of $f': X' \to S'$ as a non-enhanced generically log smooth family are classified by log derivations in the kernel $I \cdot \V^{-1}_{X'/S'}$ of the possibly non-surjective map $\V^{-1}_{X'/S'} \to \V^{-1}_{X/S}$. The induced automorphism of the reflexive Gerstenhaber calculus is $\mathrm{exp}_{-\theta}$ where $\theta = (D,\Delta)$ is the relative log derivation. If $\theta \in I \cdot \G^{-1}_{X'/S'} \subseteq I \cdot \V^{-1}_{X'/S'}$, which is the kernel of the surjective map $\G^{-1}_{X'/S'} \to \G^{-1}_{X/S}$, then we say that $\theta$ induces an \emph{inner automorphism} of the enhanced generically log smooth family; in this case, $\G\C^\bullet_{X/S}$ is invariant under the gauge transform $\mathrm{exp}_{-\theta}$.

When we want to take the map $\psi: \cL' \to \cL'$ into account, then we need, in addition to $(D,\Delta)$, an $f'^{-1}(\cO_{S'})$-linear map $u: \cL' \to \cL'$ with $u(a \cdot e) = D(a) \cdot e + a \cdot u(e)$. The classifying space of such triples $(D,\Delta,u)$ fits into the Atiyah extension 
$$0 \to \cO_{X'} \to \V^{-1}_{X'/S'}(\cL') \to \V^{-1}_{X'/S'} \to 0,$$
which is locally split exact. Those triples $(D,\Delta,u)$ in $I \cdot \V^{-1}_{X'/S'}(\cL')$ give the infinitesimal automorphisms of $f': (X',\cL') \to S'$ as a non-enhanced generically log smooth family with a line bundle. When we consider only such $(D,\Delta,u)$ with $(D,\Delta) \in \G^{-1}_{X'/S'}$, then we obtain the modified Atiyah extension 
$$0 \to \cO_{X'} \to \G^{-1}_{X'/S'}(\cL') \to \G^{-1}_{X'/S'} \to 0,$$
which is locally split exact as well. The triples $(D,\Delta,u)$ in $I \cdot \G^{-1}_{X'/S'}(\cL')$ are precisely those with $(D,\Delta) \in I \cdot \G^{-1}_{X'/S'}$, and they give rise to an (inner) automorphism of $f': (X',\cL') \to S'$ as an enhanced generically log smooth family with a line bundle.

For $(D,\Delta,u) \in \G^{-1}_{X'/S'}(\cL')$, we have, by definition,  
$$\phi: \enspace \cO_{X'} \to \cO_{X'}, \quad a \mapsto \sum_{n = 0}^\infty \frac{D^n(a)}{n!} = a + D(a) + \frac{1}{2}D^2(a) + ... \ ,$$
as well as $\Phi: \M_{U'} \to \M_{U'}$ defined by the more complicated formula in Chapter~\ref{inf-auto}. On $\cL'$, we have 
$$\psi: \enspace \cL' \to \cL', \quad e \mapsto \sum_{n = 0}^\infty \frac{u^n(e)}{n!} = e + u(e) + \frac{1}{2}u^2(e) + ... \ . $$
We use the notations from Chapter~\ref{log-prereg-sec} on $f': X' \to S'$, i.e., $e_\alpha$ is a local trivializing section of $\cL'$ etc. When we define $w_\alpha \in \cO_{X'}$ by $u(e_\alpha) = w_\alpha \cdot e_\alpha$, then we have $\psi(e_\alpha) = v_\alpha \cdot e_\alpha$ with 
$$v_\alpha = 1 + \sum_{n = 1}^\infty \frac{[D + w_\alpha]^{n - 1}(w_\alpha)}{n!} = 1 + w_\alpha + \frac{D(w_\alpha) + w_\alpha^2}{2} + ... \ .$$
On the level of $\cO_{L'}$, we have $\psi: R_\alpha[x_\alpha] \to R_\alpha[x_\alpha]$ with $\psi(a) = \phi(a)$ for $a \in R_\alpha$ and $\psi(x_\alpha) = v_\alpha^{-1} \cdot x_\alpha$. For the embedding $\eps_o: p'^*\cL' \to \cO_{L'}, \: p'^*e_\alpha^\vee \mapsto x_\alpha$, we find that it is compatible with $\psi$ when we map 
$$p'^*\cL' \to p'^*\cL', \quad p'^*e_\alpha^\vee \mapsto v_\alpha^{-1} \cdot p'^*e_\alpha^\vee,$$
as is natural. Thus, we have an isomorphism of Deligne--Faltings structures, and hence an isomorphism $\psi: \underline L'(X') \to \underline L'(X')$, yielding the isomorphism $\psi: L'(X') \cong L'(X')$ over $\psi: L' \to L'$.

Let us write $s_1' = s_{\alpha;1} \cdot e_\alpha$ and $s_2' = s_{\alpha;2} \cdot e_\alpha$. Since $\psi(s_2') = s_1'$, we find $\phi(s_{\alpha;2}) = v_\alpha^{-1} \cdot s_{\alpha;1}$. This confirms that we have a commutative diagram 
\[
 \xymatrixcolsep{4em}\xymatrix{
  X'(s_1') \ar[d]_{v_\alpha^{-1} \cdot s_{\alpha;1} \leftmapsto s_{\alpha;2}} \ar[r]^{s_{\alpha;1} \leftmapsto x_\alpha} & L'(X') \ar[d]^{v_\alpha^{-1} \cdot x_\alpha \leftmapsto x_\alpha} \\
  X'(s_2') \ar[r]^{s_{\alpha;2} \leftmapsto x_\alpha} & L'(X'), \\
 }
\]
and, because the log structure is defined as the inverse image from $L'(X')$, a compatible isomorphism $\varphi_s: X'(s_1') \cong X'(s_2')$ of log schemes (and in fact enhanced generically log smooth families since our construction is functorial on that level). This allows us to construct both isomorphisms between different choices of $s'$ and automorphisms of each $X'(s')$.

First, we show that, locally, $X'(s_1') \cong X'(s_2')$ for any two choices $s_1'$ and $s_2'$ with $s_i'|_A = s$. In other words, up to (non-unique local) isomorphism, $X'(s')$ is independent of the choice of $s'$.

\begin{lemma}\label{s-independent}\note{s-independent}
 Let $s_1'$ and $s_2'$ be two sections of $\cL'$ with $s_i'|_A = s$. Then there is locally an element $\theta = (D,\Delta,u) \in I \cdot \G^{-1}_{X'/S'}(\cL')$ with $\psi(s_2') = s_1'$.
\end{lemma}
\begin{proof}
 First, we show the statement for a small extension, so assume that $I^2 = 0$, and more precisely that $I = (i)$ with $i^2 = 0$. We want to have $\psi(s_2') = s_1'$, which is equivalent to 
 $$s_{\alpha;2} + D(s_{\alpha;2}) = (1 - w_\alpha) \cdot s_{\alpha;1},$$
 where $u(e_\alpha) = w_\alpha \cdot e_\alpha$. Let $s_{\alpha;2} = s_{\alpha;1} + i \cdot x$, and 
 let $(D',\Delta') \in I \cdot \G^{-1}_{X'/S'}$ with $D'(s_{\alpha;2}) = 1 + y \cdot s_{\alpha;2}$, which exists by Lemma~\ref{special-log-der}. After setting $(D,\Delta) := -i \cdot x \cdot (D',\Delta')$, the above equation becomes 
 $$i \cdot x \cdot y \cdot s_{\alpha;2} = w_\alpha \cdot s_{\alpha;2}$$
 since $w_\alpha \cdot s_{\alpha;1} = w_\alpha \cdot s_{\alpha;2}$. Let $u'$ be arbitrary such that $(D,\Delta,u') \in I \cdot \G^{-1}_{X'/S'}(\cL')$, and let $w_\alpha'$ be such that $u'(e_\alpha) = w_\alpha' \cdot e_\alpha$. Then setting $u := u' + (i \cdot x \cdot y - w_\alpha')$ yields the desired element $(D,\Delta,u) \in I \cdot \G^{-1}_{X'/S'}(\cL')$. In the general case, we apply an inductive argument over small extensions. When $A' = A_n \to ... \to A_0 = A$ is a decomposition into small extensions, then we use the flatness and compatibility with base change of $\G^{-1}_{X'/S'}$ to lift the constructed isomorphism from $A_k$ to $A_{k + 1}$, thus producing the situation that $s_1'|_{A_k} = s_2'|_{A_k}$.
\end{proof}
\begin{rem}
 Note that this proof does not work for $\V^{-1}_{X'/S'}$ since the base change property may be violated, i.e., we may have constructed an isomorphism in a step $A_{k + 1} \to A_k$ which does not lift to $A'$.
\end{rem}

After we have established that there is, up to an isomorphism which can be explicitly constructed, locally only one $X'(s')$, we study the automorphisms of $f': X'(s') \to S'$.

\begin{lemma}\label{s-invariant-der}\note{s-invariant-der}
 Let $\theta = (D,\Delta,u) \in I \cdot \G^{-1}_{X'/S'}(\cL')$. Then we have $\psi(s') = s'$ if and only if $u(s') = 0$.
\end{lemma}
\begin{proof}
 We have $\psi(s') = \sum_{n = 0}^\infty \frac{u^n(s')}{n!}$, so $\psi(s') = s'$ is equivalent to $\sum_{n = 1}^\infty \frac{u^n(s')}{n!} = 0$. If $u(s') = 0$, then obviously $\psi(s') = s'$. Conversely, assume that $u(s') \not= 0$ but $\psi(s') = s'$. Then there is some $m > 0$ with $u(s') \in I^m \cdot \cL'$ but $u(s') \notin I^{m + 1} \cdot \cL'$. However, $u^2(s') \in I^{m + 1} \cdot \cL'$ because $(D,\Delta,u) \in I \cdot \G^{-1}_{X'/S'}(\cL')$. Now $\sum_{n = 1}^\infty \frac{u^n(s')}{n!} = 0$ implies $u(s') \in I^{m + 1} \cdot \cL'$, a contradiction.
\end{proof}

In view of Lemma~\ref{s-invariant-der}, we write 
$$\G^{-1}_{X'/S'}(\cL',s') := \{(D,\Delta,u) \in \G^{-1}_{X'/S'}(\cL') \ | \ u(s') = 0\},$$
and analogously for $\V^{-1}_{X'/S'}(\cL',s')$. Thus, the automorphisms of $f': X'(s') \to S'$ which come from $X'$ are in $I \cdot \G^{-1}_{X'/S'}(\cL') \cap \G^{-1}_{X'/S'}(\cL',s')$.\footnote{At this point, we do not yet know that $I \cdot \G^{-1}_{X'/S'}(\cL') \cap \G^{-1}_{X'/S'}(\cL',s') = I \cdot \G^{-1}_{X'/S'}(\cL',s')$; however, we will see this below. The problem is that we do not know here that the injectivity of $\G^{-1}_{X'/S'}(\cL',s') \to \G^{-1}_{X'/S'}(\cL')$ is preserved under base change.} A priori, they are \emph{outer} automorphisms of $f': X'(s') \to S'$, but we will show now that they are in fact inner automorphisms, i.e., we construct a map 
$$\mu: I \cdot \G^{-1}_{X'/S'}(\cL') \cap \G^{-1}_{X'/S'}(\cL',s') \to I \cdot \G^{-1}_{X'(s')/S'}$$
such that $\psi_s: X'(s') \cong X'(s')$ is the gauge transform induced by $\mu(D,\Delta,u)$.

First, we have to compute the gauge transform on $(\V^\bullet_{L'(X')/S'},\W^\bullet_{L'(X')/S'})$ induced by $\psi: L'(X') \cong L'(X')$. In order to do so, we start by comparing the Atiyah extension with the exact sequence \eqref{Gamma-LXS-seq}. Unfortunately, we have not found a direct functorial comparison map, and we have to construct it explicitly on charts.

\begin{lemma}\label{Atiyah-comp-map}\note{Atiyah-comp-map}
 We have an isomorphism 
 \[
  \xymatrix{
   0 \ar[r] & p'^*\cO_{X'} \ar[r] \ar[d]^{1 \mapsto -\gamma_s^\vee} & p'^*\V^{-1}_{X'/S'}(\cL') \ar[r] \ar[d]^c & p'^*\V^{-1}_{X'/S'} \ar[r] \ar@{=}[d] & 0 \\
  0 \ar[r] & \Theta^1_{L'(X')/X'} \ar[r] & \V^{-1}_{L'(X')/S'} \ar[r] & p'^*\V^{-1}_{X'/S'} \ar[r] & 0 \\
  }
 \]
 of short exact sequences.\footnote{The map $1 \mapsto -\gamma_s^\vee$ rather than $1 \mapsto \gamma_s^\vee$ is, though somewhat surprising, not a mistake.} This isomorphism is compatible with $\G^{-1}$, i.e., when replacing $\V^{-1}$ with $\G^{-1}$, we still have an isomorphism of short exact sequences.
\end{lemma}
\begin{proof}
 Given $(D,\Delta) \in \V^{-1}_{X'/S'}$ on $V_\alpha$, we can find a unique $u_\alpha$ such that $u_\alpha(e_\alpha) = 0$ and $(D,\Delta,u_\alpha) \in \V^{-1}_{X'/S'}(\cL')$. Namely, start with an arbitrary $u$, and then set $u_\alpha(e) := u(e) - w_\alpha \cdot e$, where $u(e_\alpha) = w_\alpha \cdot e_\alpha$. This defines a local splitting 
$$K: \enspace \V^{-1}_{X'/S'} \to \V^{-1}_{X'/S'}(\cL')$$
of the Atiyah extension. Obviously, it splits the Atiyah extension for $\G^{-1}$ instead of $\V^{-1}$ as well. If $K': \V^{-1}_{X'/S'} \to \V^{-1}_{X'/S'}(\cL')$ is the analogous splitting on $V_\beta$, then we have 
$$K'(D,\Delta) - K(D,\Delta) = \Delta(\gamma_{\alpha\beta})$$
on overlaps $V_\alpha \cap V_\beta$, where $\gamma_{\alpha\beta}$ is the transition function with $e_\alpha = \gamma_{\alpha\beta}e_\beta$, and we consider $\Delta(\gamma_{\alpha\beta})$ as the multiplication operator on $\cL'$, i.e., embedded via $\cO_{X'} \to \V^{-1}_{X'/S'}(\cL')$.

On the other hand side, we also have our original distinguished splitting $B_1$ of \eqref{W1LXS-seq}. Let $B_1$ be the splitting on $V_\alpha$, and let $B'_1$ be the splitting on $V_\beta$. Let 
$$S_{-1},\: S_{-1}': \enspace p'^*\V^{-1}_{X'/S'} \to \V^{-1}_{L'(X')/S'}$$
be the two induced splittings of \eqref{Gamma-LXS-seq}. Then a slightly tedious direct computation shows that 
$$S_{-1}'(p'^*\theta) - S_{-1}(p'^*\theta) = -\Delta(\gamma_{\alpha\beta}) \cdot \delta^\vee$$
for $\theta = (D,\Delta) \in \V^{-1}_{X'/S'}$. Now we define the comparison map 
$$c: \enspace p'^*\V^{-1}_{X'/S'}(\cL') \to \V^{-1}_{L'(X')/S'}, \quad a + b \cdot p'^*K(D,\Delta) \mapsto -a \cdot \delta^\vee + b \cdot S_{-1}(p'^*(D,\Delta)).$$
The above formulae for $K$ and $K'$ respective $S_{-1}$ and $S_{-1}'$ show that $c$ is well-defined. The same computation also shows that $c$ is independent of the choice of $e_\alpha$ as a local trivialization of $\cL'$, and hence canonical. It is obvious that the diagram in the statement above commutes with this definition of $c$. Since the maps on the left and on the right are isomorphisms, the map in the middle is an isomorphism as well. By construction, $c$ maps $p'^*\G^{-1}_{X'/S'}(\cL')$ into $\G^{-1}_{L'(X')/S'}$. Since we still have a morphism of short exact sequences with isomorphisms on the sides, $c$ restricted to $p'^*\G^{-1}_{X'/S'}(\cL')$ is an isomorphism as well, mapping onto $\G^{-1}_{L'(X')/S'}$.
\end{proof}

With this comparison map $c$, we get the expected gauge transform.

\begin{lemma}
 Let $\theta = (D,\Delta,u) \in I \cdot \V^{-1}_{X'/S'}(\cL')$. Then the gauge transform on $\V\,\W^\bullet_{L'(X')/S'}$ induced by $\psi: L'(X') \to L'(X')$ is given by $\mathrm{exp}_{-c(p'^*\theta)}$. In particular, if $\theta \in I \cdot \G^{-1}_{X'/S'}(\cL')$, then the gauge transform preserves the Gerstenhaber subcalculus $\G\C^\bullet_{L'(X')/S'}$.
\end{lemma}
\begin{proof}
 We write $\psi: L'(X') \to L'(X')$ as above for the morphism induced by $(D,\Delta,u)$. First, we show that $\psi^*(b) = \mathrm{exp}_{-c(p'^*\theta)}(b)$ for functions $b \in \cO_{L'}$. For a function $a \in \cO_{X'}$, we have $[-c(p'^*\theta),p'^*a] = p'^*D(a)$. Thus, we have 
 $$\psi^*(p'^*a) = p'^*\varphi^*(a) = \sum_{n = 0}^\infty \frac{p'^*D^n(a)}{n!} = \mathrm{exp}_{-c(p'^*\theta)}(p'^*a).$$
 Since $[S_{-1}(p'^*(D,\Delta)),x_\alpha] = -S_{-1}(p'^*(D,\Delta)) \ \invneg \ (x_\alpha \cdot \delta_\alpha) = 0$, we find 
 $$[-c(p'^*\theta),x_\alpha] = -p'^*(w_\alpha) \cdot x_\alpha$$
 where $w_\alpha$ is such that $u(e_\alpha) = w_\alpha \cdot e_\alpha$. This shows 
 $$\mathrm{exp}_{-c(p'^*\theta)}(x_\alpha) = \left(1 + \sum_{n = 1}^\infty(-1) \cdot \frac{[D - p'^*w_\alpha]^{n - 1}(p'^*w_\alpha)}{n!}\right) \cdot x_\alpha = v_\alpha^{-1} \cdot x_\alpha,$$
 where the second equality is as in the proof of Lemma~\ref{geom-auto-gauge-trafo-corr}, i.e., $\psi^*(x_\alpha) = \mathrm{exp}_{-c(p'^*\theta)}(x_\alpha)$. Since $\cO_{L'}$ is, as a ring, locally generated by $\cO_{X'}$ and $x_\alpha$, we obtain our claim on $\cO_{L'}$.
 
 From here, we proceed as in the proof of Lemma~\ref{geom-auto-gauge-trafo-corr}. First, we obtain $d\psi^*(\partial a) = \mathrm{exp}_{-c(p'^*\theta)}(\partial a)$ for functions $a \in \cO_{L'}$, and then we have $d^1\psi^* = \mathrm{exp}_{-c(p'^*\theta)}$ first on the strict locus of $f': L'(X') \to S'$ and then everywhere. Both sides are compatible with the $\wedge$-product, so we get the equality in all degrees. Afterward, we get $T^1\psi^* = \mathrm{exp}_{-c(p'^*\theta)}$ from the formula $T^1\psi^*(\xi) \ \invneg \ \alpha = \mathrm{exp}_{-c(p'^*\theta)}(\xi) \ \invneg \ \alpha$, which is obtained from the definition of $T^1\psi^*$ by applying $\mathrm{exp}_{-c(p'^*\theta)}$ in the cases where we already know our claimed formula. Finally, we obtain all degrees $T^p\psi^*$ similarly to $d^i\psi^*$.
\end{proof}

Since 
$$H_{-1}(s'^*c(p'^*\theta)) = (s'^*c(p'^*\theta) \ \invneg \ G_1(e_\alpha^\vee)) \cdot e_\alpha$$
under the map $H_{-1}: s'^*\V^{-1}_{L'(X')/S'} \to \cL'$, a direct computation shows that 
$$H_{-1}(s'^*c(p'^*\theta)) = -(w_\alpha \cdot s_\alpha + D(s_\alpha)) \cdot e_\alpha = -u(s),$$
where $\theta = (D,\Delta,u)$, and $w_\alpha$ is such that $u(e_\alpha) = w_\alpha \cdot e_\alpha$. Thus, we have $H_{-1}(s'^*c(p'^*\theta)) = 0$ if and only if $u(s) = 0$, i.e., if and only if $\theta \in \V^{-1}_{X'/S'}(\cL',s')$. If in fact $\theta \in \G^{-1}_{X'/S'}(\cL')$, then we obtain $s'^*c(p'^*\theta) \in \G^{-1}_{X'(s')/S'}$. Finally, by considering the maps to the variants for $f: X \to S$, we find $s'^*c(p'^*\theta) \in I \cdot \G^{-1}_{X'(s')/S'}$. This defines our map 
$$\mu: I \cdot \G^{-1}_{X'/S'}(\cL') \cap \G^{-1}_{X'/S'}(\cL',s') \to I \cdot \G^{-1}_{X'(s')/S'}, \quad \theta \mapsto s'^*c(p'^*\theta).$$

\begin{lemma}\label{gauge-transform-on-X-s}\note{gauge-transform-on-X-s}
 Let $\theta = (D,\Delta,u) \in I \cdot \V^{-1}_{X'/S'}(\cL') \cap \V^{-1}_{X'/S'}(\cL',s')$. Then the gauge transform on $\V\,\W^\bullet_{X'(s')/S'}$ induced by $\varphi_s: X'(s') \cong X'(s')$ is given by $\mathrm{exp}_{-\mu(\theta)}$. If 
 $$\theta \in I \cdot \G^{-1}_{X'/S'}(\cL') \cap \G^{-1}_{X'/S'}(\cL',s'),$$
 then the gauge transform preserves the Gerstenhaber subcalculus $\G\C^\bullet_{X'(s')/S'}$.
\end{lemma}
\begin{proof}
 Instead of tracking the actions through the construction, we apply the following trick: Recall that $h: X'(s') \to X'$ is an isomorphism over $X' \setminus H'$. In particular, we have a commutative diagram $h \circ \varphi_s = \varphi \circ h$ of isomorphisms on $X' \setminus H'$. Under the identification given by $h$, we have $(d\varphi_s^*,T\varphi_s^*) = (d\varphi^*,T\varphi^*)$; the latter is the gauge transform $\mathrm{exp}_{-\theta}$. Expanding our definition of $\mu$, we find $Th_*\mu(D,\Delta,u) = (D,\Delta)$, so, on $X' \setminus H'$, the automorphism $(d\varphi_s^*,T\varphi_s^*)$ is given by the gauge transform $\mathrm{exp}_{-Th^*(D,\Delta)} = \mathrm{exp}_{-\mu(\theta)}$. Now $(d\varphi_s^*,T\varphi_s^*)$ and $\mathrm{exp}_{-\mu(\theta)}$ are two automorphisms of $\V\,\W^\bullet_{X'(s')/S'}$ which coincide on the scheme-theoretically dense open subset $X' \setminus H'$, so they must be equal due to local freeness on $U'$ and reflexivity of $\V^p$ and $\W^i$.
\end{proof}

As in the proof of Lemma~\ref{gauge-transform-on-X-s}, we consider now the map 
$$Th_*: I \cdot \G^{-1}_{X'(s')/S'} \to I \cdot \G^{-1}_{X'/S'}.$$
By construction, we have $Th_* \circ \mu(D,\Delta,u) = (D,\Delta)$. To exploit this map, we establish the following basic facts about $\V^{-1}_{X'/S'}(\cL',s') \to \V^{-1}_{X'/S'}$. Parts \emph{\ref{sD-us-equality}} and \emph{\ref{svuv-Dsv-equality}} are not needed for our argument, but we found them worth mentioning here.

\begin{lemma}\label{VLs-basics}\note{VLs-basics}
 The following hold:
 \begin{enumerate}[label=\emph{(\alph*)}]
  \item\label{VLs-V-inj} the map $\V^{-1}_{X'/S'}(\cL',s') \to \V^{-1}_{X'/S'}$ is injective;
  \item\label{VLs-V-criterion} $(D,\Delta) \in \V^{-1}_{X'/S'}$ is in the image of the map if and only if $D(s_\alpha) = a \cdot s_\alpha$ for some local function $a \in \cO_{X'}$, i.e., if and only if $D$ preserves the ideal $\cL'^\vee \subseteq \cO_{X'}$ of $H' \subset X'$.
  \item\label{sD-us-equality} if $(D,\Delta,u) \in \V^{-1}_{X'/S'}(\cL',s')$, then $s' \circ D = u \circ s'$ where $s': \cO_{X'} \to \cL'$ is the map induced by the section $s'$;
  \item\label{svuv-Dsv-equality} if $(D,\Delta,u) \in \V^{-1}_{X'/S'}(\cL',s')$, then there is a unique map $u^\vee: \cL'^\vee \to \cL'^\vee$ with $s'^\vee \circ u^\vee = D \circ s'^\vee$, where $s'^\vee: \cL'^\vee \to \cO_{X'}$ is the canonical map associated with the section $s'$; this map $u^\vee$ is $f'^{-1}(\cO_{S'})$-linear and satisfies $u^\vee(a \cdot e^\vee) = D(a) \cdot e^\vee + a \cdot u^\vee(e^\vee)$.
 \end{enumerate}
\end{lemma}
\begin{proof}
 First we show \emph{\ref{VLs-V-inj}}. Let $(D,\Delta,u)$ and $(D,\Delta,u')$ be two elements of $\V^{-1}_{X'/S'}(\cL',s')$. Then there is some $a \in \cO_{X'}$ with $v(e) = u(e) + a \cdot e$.  Since $u(s) = v(s) = 0$, we obtain $a \cdot s = 0$; since each $s_\alpha$ is a non-zero divisor, we get $a = 0$. 
 For \emph{\ref{VLs-V-criterion}}, first observe that, if $(D,\Delta,u) \in \V^{-1}_{X'/S'}(\cL',s')$, then 
 $$0 = u(s_\alpha \cdot e_\alpha) = D(s_\alpha) \cdot e_\alpha + s_\alpha \cdot u(e_\alpha),$$
 so $D(s_\alpha) = a \cdot s_\alpha$ for some $a \in \cO_{X'}$. Conversely, if $(D,\Delta) \in \V^{-1}_{X'/S'}$ with $D(s_\alpha) = a \cdot s_\alpha$, we can find first some locally defined $u: \cL' \to \cL'$ with $(D,\Delta,u) \in \V^{-1}_{X'/S'}(\cL')$. Let $x \in \cO_{X'}$ be such that $u(e_\alpha) = x \cdot e_\alpha$. Then $\hat u(e) := u(e) - (a + x) \cdot e$ defines a preimage of $(D,\Delta)$ since $\hat u(s) = 0$.
 For \emph{\ref{sD-us-equality}}, note that $u(b \cdot s) = D(b) \cdot s + b \cdot u(s) = D(b) \cdot s$. 
 For \emph{\ref{svuv-Dsv-equality}}, a map $u^\vee$ with the desired property must satisfy 
 $$s_0^\vee(u^\vee(b \cdot e_\alpha^\vee)) = b \cdot D(s_\alpha) + s_\alpha \cdot D(b) = s_\alpha \cdot (b \cdot a + D(b)),$$
 where again $D(s_\alpha) = a \cdot s_\alpha$. Thus, we must have $u^\vee(b \cdot e_\alpha^\vee) = (b \cdot a + D(b)) \cdot e_\alpha^\vee$. A direct computation shows that this is independent of the choice of $e_\alpha$ and hence well-defined on overlaps; another direct computation shows the claimed relations.
\end{proof}

From part \emph{\ref{VLs-V-inj}} of Lemma~\ref{VLs-basics} it follows that $\mu$ is injective. Now let $\theta' = (D',\Delta') \in I \cdot \G^{-1}_{X'(s')/S'}$, and let $(D,\Delta) =  Th_*(D',\Delta')$. Then $D = D'$ as a derivation on $\cO_{X'}$, and we have $D(s_\alpha) = D'(s_\alpha) = s_\alpha \cdot \Delta'(m_\alpha)$, i.e., we have some $(D,\Delta,u) \in \V^{-1}_{X'/S'}(\cL',s')$ lifting $(D,\Delta)$ by part \emph{\ref{VLs-V-criterion}} of Lemma~\ref{VLs-basics}. When we look into the actual construction, then we find $(D,\Delta,u) \in I \cdot \G^{-1}_{X'/S'}(\cL') \cap \G^{-1}_{X'/S'}(\cL',s')$, i.e., $(D,\Delta,u)$ is in the domain of $\mu$, and we have $\mu(D,\Delta,u) = \theta'$ since $Th_*$ is injective. Thus, $\mu$ is an isomorphism.

In the same way as $\mu$, we can also define 
$$\tilde\mu: \enspace \G^{-1}_{X'/S'}(\cL',s') \to \G^{-1}_{X'(s')/S'}, \quad \theta \mapsto s'^*c(p'^*\theta).$$
Just as $\mu$, also $\tilde\mu$ is an isomorphism. Because the formation of $\G^{-1}_{X'(s')/S'}$ commutes with base change, so does the formation of $\G^{-1}_{X'/S'}(\cL',s')$. This implies that 
$$I \cdot \G^{-1}_{X'/S'}(\cL') \cap \G^{-1}_{X'/S'}(\cL',s') = I \cdot \G^{-1}_{X'/S'}(\cL',s').$$

\begin{cor}\label{VLs-autom}\note{VLs-autom}
 Every element $\theta = (D,\Delta,u) \in I \cdot \G^{-1}_{X'/S'}(\cL',s')$ induces a (geometric) automorphism of $f' \circ h': X'(s') \to S'$ whose action on the Gerstenhaber calculus is the gauge transform $\mathrm{exp}_{-\mu(\theta)}$, and every inner automorphism of $f' \circ h': X'(s') \to S'$ is of this form for a unique $\theta$.
\end{cor}

\section{Enhanced systems of deformations}

Suppose we are in the following situation:

\begin{sitn}
 Let $f_0: X_0 \to S_0$ be an enhanced generically log smooth family of relative dimension $d$ over $S_0 = \Spec (Q \to \kk)$. Assume that $\varpi^\bullet: \G\C_{X_0/S_0}^\bullet \to \V\,\W^\bullet_{X_0/S_0}$ is injective. Let $\D$ be an enhanced system of deformations of $f_0: X_0 \to S_0$, subordinate to an affine open cover $\V = \{V_\alpha\}$. Let $\cL_0$ be a line bundle on $X_0$, and let $s_0 \in \cL_0$ be a log regular global section. Assume that $\cL_0|_\alpha$ is trivial.
\end{sitn}

We construct an enhanced system of deformations $\D(s_0)$ for $f_0 \circ h_0: X_0(s_0) \to S_0$. Let us write $\cO_{\alpha;A}$ for the structure sheaf on $V_{\alpha;A}$. We choose 
\begin{enumerate}[label=(\arabic*)]
 \item a trivialization $e_{\alpha;0}$ of $\cL_0|_\alpha$.
\end{enumerate}
Then we choose, for every $\alpha$,
\begin{enumerate}[label=(\arabic*),resume]
 \item  sections $s_{\alpha;A} \in \cO_{\alpha;A}$ 
\end{enumerate}
which are compatible with restrictions, and such that $s_{\alpha;0} \cdot e_{\alpha;0} = s|_\alpha$.  We set 
$$\tilde V_{\alpha;A} := V_{\alpha;A}(s_{\alpha;A}),$$
the modification of the local model $V_{\alpha;A}$ by the section $s_{\alpha;A} \in \cO_{\alpha;A}$. 
This gives a collection of generically log smooth deformations of $X_0(s_0)|_\alpha$ which is compatible with base change. Next, for every $\alpha \not= \beta$, we choose a collection of
\begin{enumerate}[label=(\arabic*),resume]
 \item isomorphisms $\gamma^*_{\alpha\beta;A}: \cO_{\beta;A}|_{\alpha\beta} \cong \cO_{\alpha;A}|_{\alpha\beta}$
\end{enumerate}
 which is compatible with restrictions, and which induces the isomorphism\footnote{Now we have $e_\beta = \gamma_{\alpha\beta}e_\alpha$ rather than our previous formula $e_\alpha = \gamma_{\alpha\beta} e_\beta$, which does not fit in well with our conventions about enhanced systems of deformations.} 
$$\gamma^*_{\alpha\beta;0}: (\cO_{X_0}|_\beta)|_{\alpha\beta} \cong (\cL_0|_\beta)|_{\alpha\beta} = (\cL_0|_\alpha)|_{\alpha\beta} \cong (\cO_{X_0}|_\alpha)_{\alpha\beta}$$
---which comes from our choice of $e_{\alpha;0}$---on the central fiber. These isomorphisms must be $\cO$-linear for $\psi_{\alpha\beta;A}: V_{\alpha;A}|_{\alpha\beta} \cong V_{\beta;A}|_{\alpha\beta}$. Additionally, we assume that $(\gamma_{\alpha\beta;A}^*)^{-1} = \gamma_{\beta\alpha;A}^*$. By applying Lemma~\ref{s-independent} inductively along $A_k$, we can find
\begin{enumerate}[label=(\arabic*),resume]
 \item $\theta_{\alpha\beta;A} = (D_{\alpha\beta;A},\Delta_{\alpha\beta;A},u_{\alpha\beta;A}) \in \m_A \cdot \G^{-1}_{\alpha;A}(\cO_{\alpha;A})|_{\alpha\beta}$
\end{enumerate}
with $\mathrm{exp}_{-\theta_{\alpha\beta;A}}(\gamma_{\alpha\beta;A}^*(s_{\beta;A})) = s_{\alpha;A}$ and $\theta_{\alpha\beta;A} = -\theta_{\beta\alpha;A}$ under the identification given by $(\psi^*_{\alpha\beta;A},\gamma^*_{\alpha\beta;A})$, and which are compatible with restrictions. Then we replace $\psi_{\alpha\beta;A}$ with $\hat\psi_{\alpha\beta;A} := \psi_{\alpha\beta;A} \circ \mathrm{exp}(-\theta_{\alpha\beta;A})$, where $\mathrm{exp}(-\theta)$ is the automorphism of enhanced generically log smooth families which acts via $\mathrm{exp}_{-\theta}$ on the Gerstenhaber calculus. The new cocycles are gauge transforms in $\m_A \cdot \G^{-1}_{\alpha;A}|_{\alpha\beta\gamma}$ as well, so we still have an enhanced system of deformations. The new $\hat\psi^*_{\alpha\beta;A}$ together with $\mathrm{exp}_{-\theta_{\alpha\beta;A}} \circ \gamma^*_{\alpha\beta;A}$ yields the comparison isomorphism 
$$\tilde\psi_{\alpha\beta;A}: \enspace \tilde V_{\alpha;A}|_{\alpha\beta} \cong \tilde V_{\beta;A}|_{\alpha\beta}$$
for the enhanced system of deformations $\D(s_0)$. These isomorphisms are compatible with restrictions. By construction, they also preserve the Gerstenhaber calculi. Also by construction, the cocycles are in $\m_A \cdot \G^{-1}_{V_{\alpha;A}(s_{\alpha;A})/S_A}|_{\alpha\beta\gamma}$. Thus, we have indeed constructed an enhanced system of deformations $\D(s_0)$ for $f_0 \circ h_0: X_0(s_0) \to S_0$. It gives rise to a deformation functor 
$$\mathrm{ELD}_{X_0(s_0)/S_0}^{\D(s_0)}: \mathbf{Art}_Q \to \mathbf{Set}.$$

\begin{lemma}
 This deformation functor is, up to canonical isomorphism, independent of the choices that we made in the construction of $\D(s_0)$.
\end{lemma}
\begin{proof}
 Let $\D(s_0)$ and $\D'(s_0)$ be two choices of enhanced systems of deformations which come out of the above construction for different choices of the parameters. Let us denote the local models of the two systems by $\tilde V_{\alpha;A}$ respective $\tilde V'_{\alpha;A}$. Then there is a (non-unique) isomorphism $\bar\chi_\alpha: \tilde V_{\alpha;A} \to \tilde V'_{\alpha;A}$ which comes from Lemma~\ref{s-independent}. An enhanced generically log smooth deformation $\tilde X_A \to S_A$ of $f_0: X_0(s_0) \to S_0$ of type $\D(s_0)$ comes with isomorphisms $\chi_\alpha: \tilde X_A|_\alpha \cong \tilde V_{\alpha;A}$. We define the image of our comparison isomorphism between the deformation functors by taking the enhanced generically log smooth family $\tilde X_A \to S_A$ together with the comparison isomorphisms $\bar\chi_\alpha \circ \chi_\alpha: \tilde X_A|_\alpha \cong \tilde V'_{\alpha;A}$. This is again an enhanced generically log smooth deformation because the two comparison isomorphisms $\tilde\psi_{\alpha\beta;A}$ and $\tilde\psi'_{\alpha\beta;A}$ differ by gauge transforms. Any two choices of $\bar\chi_\alpha$ differ by a gauge transform, so the map is independent of the choice of $\bar\chi_\alpha$ on the level of the deformation functor since the images are equivalent as enhanced generically log smooth deformations. The map is an isomorphism since we can define the analogous map in the opposite direction, and the composition is the identity.
\end{proof}

We can define a similar deformation functor 
$$\mathrm{ELD}_{X_0/S_0}^\D(\cL_0,s_0): \mathbf{Art}_Q \to \mathbf{Set}$$
which classifies enhanced generically log smooth deformations of $f_0: X_0 \to S_0$ of type $\D$ together with a line bundle $\cL$ deforming $\cL_0$, and a section $s \in \cL$ with $s|_0 = s_0$. We have the obvious map 
$$\mathrm{ELD}_{X_0/S_0}^\D(\cL_0,s_0) \to \mathrm{ELD}_{X_0(s_0)/S_0}^{\D(s_0)}$$
of deformation functors. By Lemma~\ref{s-invariant-der} and Corollary~\ref{VLs-autom}, the objects classified by the two deformation functors have the same automorphisms. Then the map is an isomorphism of deformation functors with the argument employed, for example, in Proposition~\ref{LDD-GDefD-iso}. On the other hand side, we have also a forgetful morphism 
$$\mathrm{ELD}_{X_0/S_0}^\D(\cL_0,s_0) \xrightarrow{\Phi} \mathrm{ELD}_{X_0/S_0}^\D(\cL_0) \xrightarrow{\Psi} \mathrm{ELD}_{X_0/S_0}^\D$$
of deformation functors. Here, the middle term denotes enhanced generically log smooth deformations with a line bundle.

\begin{lemma}
 The following statements hold:
 \begin{enumerate}[label=\emph{(\alph*)}]
  \item If $H^1(X_0,\cL_0) = 0$, then $\Phi$ is smooth and, in particular, surjective.
  \item If $H^2(X_0,\cO_{X_0}) = 0$, then $\Psi$ is smooth and, in particular, surjective.
  \item If $H^1(X_0,\cO_{X_0}) = 0$, then $\Psi$ is injective.
  \item If $\cL_0 = (\W^d_{X_0/S_0})^\vee$ (respective $\W^d_{X_0/S_0}$), then $\Psi$ admits a splitting 
  $$\Sigma:\enspace \mathrm{ELD}_{X_0/S_0}^\D \to \mathrm{ELD}_{X_0/S_0}^\D(\cL_0)$$
  given by $\cL_A = (\W^d_{X_A/S_A})^\vee$ (respective $\W^d_{X_A/S_A}$). In particular, $\Psi$ is surjective.
 \end{enumerate}
\end{lemma}

For us, the most interesting case is $\cL_0 = (\W^d_{X_0/S_0})^\vee$ since, then, $f_0 \circ h_0: X_0(s_0) \to S_0$ is log Calabi--Yau by Proposition~\ref{WdXs-comp}.

\begin{cor}\label{original-modif-unob-equiv}\note{original-modif-unob-equiv}
 Let $\cL_0 = (\W^d_{X_0/S_0})^\vee$. If $H^1(X_0,\cL_0) = 0$ and either $H^1(X_0,\cO_{X_0}) = 0$ or $H^2(X_0,\cO_{X_0}) = 0$, then $\mathrm{ELD}_{X_0/S_0}^\D$ is unobstructed if and only if $\mathrm{ELD}_{X_0(s_0)/S_0}^{\D(s_0)}$ is unobstructed. If we have only $H^1(X_0,\cL_0) = 0$, then the unobstructedness of $f_0 \circ h_0: X_0(s_0) \to S_0$ still implies the unobstructedness of $f_0: X_0 \to S_0$.
\end{cor}

\begin{ex}
 Let $f_0: X_0 \to S_0$ be a vertical enhanced generically log smooth family which is log Fano, i.e., $\omega_{X_0/S_0}^\vee = (\W^d_{X_0/S_0})^\vee$ is an ample line bundle. If the Gorenstein scheme $X_0$ satisfies Kodaira vanishing for the canonical bundle, then the conditions of the corollary are satisfied. This is, for example, the case if $X_0$ has semi-log-canonical singularities, cf.~\cite[Cor.~6.6]{KSS2010} and the dual statement in the sense of \cite[III, Thm.~7.6]{Hartshorne1977}.\footnote{While Kodaira vanishing in the sense of $H^q(X,L^{-1}) = 0$ for $0 \leq q \leq d - 1$ and an ample line bundle $L$ needs that $X$ is Cohen--Macaulay, Fujino has given a generalization of the dual statement $H^q(X,\omega_X \otimes L) = 0$ for $q \geq 1$ without the Cohen--Macaulay assumption on $X$, see \cite[Thm.~1.8]{Fujino2014} and  \cite[Thm.~1.3]{fujino2015kodaira}.} In particular, this is the case for a toroidal crossing space $(V,\cP,\bar\rho)$ by Lemma~\ref{slc-sing-tor-cr}.
\end{ex}

\section{Log regular sections in practice}

We start with an elementary result that helps us to study log regularity of sections $s \in \cL$ on local models.

\begin{lemma}
 Let $f: X \to S$ be an enhanced generically log smooth family, and let $g: X' \to X$ be an \'etale cover. Then $s \in \cL$ is log regular if and only if $g^*s \in g^*\cL$ is log regular.
\end{lemma}
\begin{proof}
 Injectivity of $\cL^\vee \to \cO_X$, flatness of $\cO_H$ over the base (\cite[0584]{stacks}), and having codimension $\geq 2$ for $H \cap Z \subseteq H$ are all properties which are equivalent on $X$ and $X'$ under an \'etale cover $g: X' \to X$, so $s \in \cL$ is log pre-regular if and only if $g^*s \in g^*\cL$ is log pre-regular. The formation of the sequence in Lemma~\ref{log-qreg-crit} commutes with the faithfully flat map $g: X' \to X$, so it is locally split exact on $X$ if and only if it is locally split exact on $X'$. Thus, $s \in \cL$ is log quasi-regular if and only if $g^*s \in g^*\cL$ is log quasi-regular. Since also the formation of the map $s^*\G^{-1}_{L(X)/S} \to \cL$ commutes with $g: X' \to X$, the claim follows.
\end{proof}

Next, we show that the construction of \cite[Lemma~6.10]{FFR2021} yields log regular sections in our sense, so our theory is applicable in that setting. Let $g: H \to S$ be a torsionless enhanced generically log smooth family. We obtain an enhanced generically log smooth family $f: X \to S$ by setting $X = H \times \bAA^1$; when $q: H \times \bAA^1 \to H$ is the projection, then the two-sided Gerstenhaber calculus is given by 
$$\A^i_{X/S} = q^*\A^i_{H/S} \oplus q^*\A^{i - 1}_{H/S} \wedge dx \quad \mathrm{and} \quad \G^p_{X/S} = q^*\G^p_{H/S} \oplus q^*\G^{p + 1}_{H/S} \wedge \partial_x,$$ where $x$ is the variable in $\bAA^1$. Now we set $\cL = \cO_X$ and $s = x$. 

\begin{lemma}\label{log-reg-constr}\note{log-reg-constr}
 Consider the enhanced generically log smooth family $f: H \times \bAA^1_x \to S$. Then $x \in \cO_X =: \cL$ is a log regular section.
\end{lemma}
\begin{proof}
 This section is obviously log pre-regular, and we have $X(s) = H \times A_\NN$. The first exact sequence for $s^*\W^1_{L(X)/S}$ becomes 
$$0 \to q^*\W^1_{H/S} \oplus \cO_X \cdot dx \to q^*\W^1_{H/S} \oplus \cO_X \cdot dx \oplus \cO_X \cdot s^*\delta_\alpha \to \cO_X \cdot s^*\gamma_s \to 0.$$
Then the second sequence is 
$$0 \to \cL^\vee \xrightarrow{e_\alpha^\vee \mapsto x \cdot s^*\delta_\alpha - dx} q^*\W^1_{H/S} \oplus \cO_X \cdot dx \oplus \cO_X \cdot s^*\delta_\alpha \xrightarrow{R_1}  q^*\W^1_{H/S} \oplus \cO_X \cdot \frac{dx}{x} \to 0.$$
The map on the left is obviously injective, so we already know that the sequence is exact on the left and in the middle from its construction.
We have $R_1(dx) = x \cdot \frac{dx}{x}$ and $R_1(s^*\delta_\alpha) = \frac{dx}{x}$, and on $q^*\W^1_{H/S}$, the map is the identity; in particular, $R_1$ is surjective, the sequence is locally split, and $s \in \cL$ is log quasi-regular. The dual exact sequence is 
$$0 \to q^*\V^{-1}_{H/S} \oplus \cO_X \cdot x\partial_x \to q^*\V^{-1}_{H/S} \oplus \cO_X \cdot \partial_x \oplus \cO_X \cdot s^*\delta^\vee \xrightarrow{H_{-1}} \cL \to 0.$$
Here, $H_{-1}(\partial_x) = e_\alpha$ and $H_{-1}(s^*\delta^\vee) = x \cdot e_\alpha$. Thus, $H_{-1}$ remains surjective even after restricting to $s^*\G^{-1}_{L(X)/S} \subseteq s^*\V^{-1}_{L(X)/S}$, so that $s \in \cL$ is indeed log regular.
\end{proof}

At least in the log smooth case, a converse is true: When $f: X \to S$ is log smooth, and $s \in \cL$ a log regular section, then the zero locus $H$ of $s$ is log smooth as well; then both $X$ and $H \times \bAA^1$ have the same local models around points $x \in H$, so $f: X \to S$ together with $s \in \cL$ arises \'etale locally from this construction. We do not know if the converse is true in general.

\begin{defn}\label{log-transversal-defn}\note{log-transversal-defn}\index{section!log transversal}
 In Situation~\ref{log-modif-sitn}, we say that a log regular section $s \in \cL$ is \emph{log transversal} if $f: X \to S$ together with $s \in \cL$ arises from the above construction \'etale locally around every point $x \in H$.\footnote{We leave it to the reader to construct the two-sided Gerstenhaber calculus on $H \subset X$ to turn it into an enhanced generically log smooth family so that the comparison between $X$ and $H \times \bAA^1$ makes sense in that case.}
\end{defn}

In the log transversal case, if $H \to S$ is log toroidal, then both $f: X \to S$ and $f \circ h: X(s) \to S$ are log toroidal.\footnote{It is probably also true that, if $f: X \to S$ is log toroidal in this case, then also $H \to S$ is log toroidal.}

\begin{opbm}
 Find an example of $f: X \to S$ with a log regular section $s \in \cL$ which is not log transversal, or show that log regularity implies log transversality.
\end{opbm}

\vspace{\baselineskip}

Here are two examples which are not log regular.

\begin{ex}
 Let $f_0: X_0 \to S_0$ be the central fiber of $xy = tz$. Let $\cL_0 = \cO_{X_0}$, and let $s = z$. Then $s \in \cL$ is not even log pre-regular since $H \cap Z$ has codimension $1$ in $H$. Also the criterion in Lemma~\ref{log-qreg-crit} is violated. Namely, we have $s^*\W^1_{L_0(X_0)/S_0} = \W^1_{X_0/S_0} \oplus \cO_{X_0} \cdot s^*\delta_\alpha$. The map $G_1: \cL^\vee \to s^*\W^1_{L_0(X_0)/S_0}, \: e_\alpha^\vee \mapsto z \cdot s^*\delta_\alpha - dz$, can be computed explicitly since we know $\W^1_{X_0/S_0}$ explicitly by \cite[Prop.~7.3]{FFR2021}. It turns out that $G_1$ is injective, but the cokernel is not reflexive, so the map $R_1$ in Lemma~\ref{log-qreg-crit} is not surjective.
\end{ex}

\begin{ex}
 Let $f_0: X_0 \to S_0$ be the central fiber of $xy = tzw$ (see also Example~\ref{xy-tzw-example}), let $\cL = \cO_{X_0}$, and $s = z + w$. The family $f_0: X_0 \to S_0$ is log toroidal. We have $H_0 = \Spec \kk[x,y,z,w]/(xy,z + w)$. Since $H_0 \cap Z = \{0\}$, the section $s \in \cL$ is log pre-regular. The derivations of the family $f: X \to S$ given by $xy = tzw$ can be computed as the kernel $\T_{X/S} = \Theta^1_{X/S}$ of the matrix 
 $$R = \begin{pmatrix}
        y & x & -tz & -tw \\
       \end{pmatrix}: \enspace \cO_X^{\oplus 4} \to \cO_X.$$
 When $\theta \in \T_{X/S}$, then $\theta \ \invneg \ (dz + dw)$ is the image of the matrix 
 $$M = \begin{pmatrix}
        0 & 0 & 1 & 1 \\
       \end{pmatrix}: \enspace \T_{X/S} \to \cO_X.$$
 Thus, $dz + dw = M^\vee(1)$ for the dual matrix $M^\vee: \cO_X \to \W^1_{X/S} = \cH om(\T_{X/S},\cO_X)$. This allows us to compute $dz + dw \in \W^1_{X_0/S_0}$ as the image $M_0^\vee(1)$, and then we can compute the map 
 $$G_1: \enspace \cL^\vee \to s^*\W^1_{L_0(X_0)/S_0} = \cO_{X_0} \cdot s^*\delta_\alpha \oplus \W^1_{X_0/S_0}, \quad e_\alpha^\vee \mapsto (z + w) \cdot s^*\delta_\alpha - d(z + w),$$
 explicitly. It turns out that $G_1$ is injective, and the cokernel is reflexive. Thus, the sequence in Lemma~\ref{log-qreg-crit} is exact. However, the exact sequence is not locally split. Namely, if this were the case, then the dual $G_1^\vee$ would be surjective, which is not the case. Thus, $s = z + w \in \cO_{X_0}$ is not log (quasi-)regular.
\end{ex}

Finally, we do not yet know in general under which conditions global log regular sections exist.

\begin{opbm}
 Give criteria on $f_0: X_0 \to S_0$ for the existence of a log regular section $s_0 \in \cL_0$. For example, is it true that a global log regular section $s_0 \in \cL_0$ exists if $f_0: X_0 \to S_0$ is log smooth, and $\cL_0$ is very ample? How to deal with the ample case when $\cL_0$ is not necessarily globally generated?
\end{opbm}

\section{Smoothing normal crossing spaces}\label{smoothing-nc-sec}\note{smoothing-nc-sec}

As an application of the theory developed in this chapter, we discuss a variant of \cite[Thm.~1.1]{FFR2021}, the original motivation to develop the theory in this chapter. Recall that a normal crossing space $V$ carries a canonical structure of a toroidal crossing space $(V,\cP,\bar\rho)$, that we have a natural injection $\eta: \cL\cS_V \to \T^1_V$ into the first tangent sheaf $\T^1_V = \E xt^1(\Omega^1_V,\cO_V)$, and that the latter is a line bundle on the double locus $D = V_{sing}$. The normal crossing space $V$ admits a filtration according to the rank of $\cP$, which is discussed in Chapter~\ref{toroidal-cr-sp-sec}.

\begin{thm}[Smoothing normal crossing spaces]\label{smoothing-nc-spaces}\note{smoothing-nc-spaces}\index{normal crossing space!smoothing}\index{Gross--Siebert type!standard Gross--Siebert type}
 Let $V/\CC$ be a proper normal crossing space such that every open stratum $S^\circ \in [\cS_kV]$ is quasi-projective for every $k \geq 0$. Assume that $\omega_V^\vee$ is globally generated as a line bundle on $V$, and that $\T^1_V$ is globally generated as a line bundle on $D = V_{sing}$. Then there is a closed subset $Z \subseteq V$, a section $s \in \Gamma(V \setminus Z,\cL\cS_V)$, and a section $e_0 \in \Gamma(V,\omega_V^\vee)$ such that:
 \begin{enumerate}[label=\emph{(\roman*)}]
  \item $(V,Z,s)$ is a well-adjusted triple;
  \item the associated generically log smooth family $f_0: X_0 \to S_0$ is log toroidal of standard Gross--Siebert type;
  \item $e_0 \in \omega_V^\vee$ is a log regular section;
  \item the modification $g_0: X_0(e_0) \to S_0$ is log Calabi--Yau and log toroidal, and every deformation in $\D(e_0)$ is log toroidal as well, where $\D$ is the system of log toroidal deformations of standard Gross--Siebert type for $f_0: X_0 \to S_0$, and $\D(e_0)$ is its modification.
 \end{enumerate}
 In particular, $V$ admits a formal smoothing. If $V$ is projective, then $V$ admits an algebraic smoothing $f: X \to \Spec \CC\llbracket t\rrbracket$.
\end{thm}

The local models of $f_0: X_0 \to S_0$ will be given by 
$$M(r;q) = \Spec \CC[x_0,x_1,...,x_r,t,u_1,...,u_q]/(x_0 \cdot ... \cdot x_r - t\cdot u_1)$$
for $r \geq 1$ and $q \geq 1$. This is indeed a local model of standard Gross--Siebert type. Namely, we choose $M'_\RR = \RR^r$, the standard simplex 
$$\tau = \mathrm{Conv}(0,e_1,...,e_r) \subseteq M'_\RR,$$
and $\Delta_1 = \tau$, $\Delta_2 = ... = \Delta_q = 0$. Then we have 
$$\check\psi_0(n) = \check \psi_1(n) = -\mathrm{inf}\{0,n_1,...,n_r\}$$
and $\check\psi_2 = ... = \check\psi_q = 0$. Thus, for the monoid 
$$P = \{n + a_0e_0^* + \sum_{j = 1}^q a_je_j^* \ | \ a_i \geq \check\psi_i(n) \},$$
the monoid algebra $\CC[P]$ is generated by 
$$x_0 = z^{(-1,...,-1;1;1,0,...,0)}, \enspace x_i = z^{(0,...,1,...,0;0;0, ...,0)}, \enspace t = z^{(0,...,0;1;0,...,0)}, \enspace u_i = z^{(0,...,0;0;0,...,1,...,0)}.$$
Finally, it is easy to check that $\Delta_+$ is a standard simplex. 

Let $M_0 = M(r;q) \times_{\bAA^1} \Spec \CC$ be the central fiber. The exact sequence 
$$0 \to \cO_{M_0} \xrightarrow{1 \mapsto dt} \cO_{M_0}\langle dx_0,...,dx_r\rangle \to \Omega^1_{M_0} \to 0$$
of the usual log smooth deformation $x_0 \cdot ... \cdot x_r = t$ defines a surjection $\cO_{M_0} \to \T^1_{M_0}$. The image is the reference section $s_0 \in \cL\cS_{M_0} \subseteq \T^1_{M_0}$. Now a direct computation shows that the extension class of $x_0 \cdot ... \cdot x_r = tu_1$ is $s = u_1 \cdot s_0$, so outside the zero locus of $s$, this is also the class of the induced log structure.

Using the reference log structure on $M_0$, we get 
$$\Omega^{r + q}_{M_0/S_0} = \cO_{M_0} \cdot \frac{dx_1}{x_1} \wedge ... \wedge \frac{dx_r}{x_r} \wedge du_1 \wedge ... \wedge du_q =: \cO_{M_0} \cdot \eps_0$$
Since $\omega_{M_0}$ is independent of the chosen log structure (and equal to the canonical bundle of the underlying Gorenstein scheme), we find $\omega_{M_0} = \cO_{M_0} \cdot \eps_0$, and thus $\omega_{M_0}^\vee = \cO_{M_0} \cdot \eps_0^\vee$. From Lemma~\ref{log-reg-constr}, we see that $u_2\cdot \eps_0^\vee$ is a log regular (indeed log transversal) section of $\omega_{M_0}^\vee$ (of course assuming $q \geq 2$ now). This will be the local model for the modified and log Calabi--Yau family $g_0: X_0(e_0) \to S_0$.

\begin{lemma}\label{Vse-transv}\note{Vse-transv}
 The normal crossing space $M_0$ together with $s = u_1 \cdot s_0 \in \T^1_{M_0}$ and $e_0 = u_2 \cdot \eps_0^\vee \in \omega_{M_0}^\vee$ has the following properties:
 \begin{enumerate}[label=\emph{(\roman*)}]
  \item on every open stratum $S^\circ \in [\cS_kM_0]$ for $k \geq 0$, the intersection $E \cap S^\circ \subset S^\circ$ is a smooth effective Cartier divisor for $E = \mathrm{div}(e_0)$;
  \item on every open stratum $S^\circ \in [\cS_kM_0]$ for $k \geq 1$, the intersection $Z \cap S^\circ \subset S^\circ$ is a smooth effective Cartier divisor for $Z = \mathrm{div}(s)$;
  \item for every open stratum $S^\circ \in [\cS_kM_0]$ for $k \geq 1$, the intersection $Z \cap E \cap S^\circ$ is a smooth effective Cartier divisor both on $Z \cap S^\circ$ and on $E \cap S^\circ$.
 \end{enumerate}
\end{lemma}
\begin{proof}
 All statement are clear.
\end{proof}
\begin{defn}
 We say that a triple $(V,s,e_0)$ with a normal crossing space $V$, a section $s \in \T^1_V$, and a section $e_0 \in \omega_V^\vee$ is \emph{well-formed} if the statements of Lemma~\ref{Vse-transv} hold.
\end{defn}

In the situation of Theorem~\ref{smoothing-nc-spaces}, we construct a well-formed triple $(V,s,e_0)$. Note the following version of Bertini's theorem, which follows from the method of the proof of \cite[III, Cor.~10.9]{Hartshorne1977}.

\begin{thm}[Bertini]\index{Bertini's theorem}
 Let $X/\CC$ be a smooth quasi-projective variety, let $\cL$ be a line bundle on $X$, and let $\mathfrak{d} \subseteq H^0(X,\cL)$ be a finite-dimensional linear system without base points, i.e., $\mathfrak{d} \otimes_\CC \cO_X \to \cL$ is surjective. Then there is a dense Zariski open subset $\mathfrak{u} \subseteq \mathfrak{d}$ such that $\mathrm{div}(e) \subset X$ is a smooth effective Cartier divisor for every $e \in \mathfrak{u}$, possibly empty or with several connected components.
\end{thm}

Note that the statement is also true for a homomorphism $\mathfrak{d} \to H^0(X,\cL)$ which need not be injective.

\begin{cor}
 In the situation of Theorem~\ref{smoothing-nc-spaces}, there are global sections $s \in \T^1_V$ and $e_0 \in \omega_V^\vee$ such that $(V,s,e_0)$ is a well-formed triple.
\end{cor}
\begin{proof}
 Each open stratum $S^\circ \in [\cS_kV]$ is a smooth quasi-projective variety. We apply Bertini's theorem to all open strata $S^\circ$ separately. First, we construct $e_0 \in \omega_V^\vee$ with the desired property by intersecting the relevant open subsets of $H^0(V,\omega_V^\vee)$ for all open strata $S^\circ$. Then we can construct a dense open subset $\mathfrak{u} \subseteq H^0(D,\T^1_V)$ such that the condition on $Z \cap S^\circ \subset S^\circ$ is satisfied. Since also $E \cap S^\circ$ decomposes into smooth quasi-projective varieties, we achieve that $Z \cap E \cap S^\circ \subset E \cap S^\circ$ is a smooth effective Cartier divisor by possibly shrinking $\mathfrak{u}$. Then $Z \cap E \cap S^\circ \subset Z \cap S^\circ$ must be a smooth effective Cartier divisor as well.
\end{proof}
\begin{rem}
 In \cite[Thm.~1.1]{FFR2021}, it was assumed instead that $D = V_{sing}$ itself is projective (while not assuming that $V$ is projective), and that an appropriate section $e_0 \in \omega_V^\vee$ is already given.\footnote{This is the meaning of the term \emph{effective} anti-canonical bundle in \cite[Thm.~1.1]{FFR2021}.} Then we do not need the quasi-projectivity of the strata in $[\cS_0V]$, and for $S^\circ \in [\cS_kV]$ for $k\geq 1$, it is automatic.
\end{rem}

We investigate the local structure of well-formed triples $(V,s,e_0)$. Recall that, given a local ring $R = \cO_{X,x}$ of a smooth scheme $X/\CC$ at a $\CC$-valued point $x \in X$, we have 
$$\Omega^1_{R/\CC} \cong \bigoplus_{k = 1}^n R \cdot dy_k$$
for any regular system of parameters $y_1,...,y_n \in \m_R$. Thus, for every $R$-module $M$ and every sequence $m_1,...,m_n \in M$, there is a unique $\CC$-linear derivation $D: R \to M$ with $D(y_k) = m_k$. We write $\partial_i$ for the unique derivation with $\partial_i(y_k) = \delta_{ik}$. Recall the following basic fact:

\begin{lemma}
 Let $R = \cO_{X,x}$ be a regular local ring of dimension $n \geq 1$, essentially of finite type over $\CC$, and let $y_1,...,y_n \in \m_R$ be a regular system of parameters. For an element $f \in \m_R$, the following statements are equivalent:
 \begin{enumerate}[label=\emph{(\roman*)}]
  \item $R/(f)$ is a regular local ring of dimension $n - 1$;
  \item $f \notin \m_R^2$;
  \item there is a $\CC$-linear derivation $D: R \to R$ with $D(f) \notin \m_R$;
  \item there is an index $1 \leq i \leq n$ with $\partial_i(f) \notin \m_R$.
 \end{enumerate}
\end{lemma}
\begin{proof}
 This essentially follows from the discussion in \cite[07PD]{stacks}.
\end{proof}

With this preparation, we also get:

\begin{lemma}
 Let $R = \cO_{X,x}$ be a regular local ring of dimension $n \geq 2$, essentially of finite type over $\CC$, and let $y_1,...,y_n \in \m_R$ be a regular system of parameters. For elements $f,g \in \m_R$, the following statements are equivalent:
 \begin{enumerate}[label=\emph{(\roman*)}]
  \item both $R/(f)$ and $R/(g)$ are regular local rings of dimension $n - 1$, and $R/(f,g)$ is a regular local ring of dimension $n - 2$;
  \item there are indices $1 \leq i,j \leq n$ such that $\partial_i(f) \notin \m_R$, $\partial_j(g) \notin \m_R$, and $\partial_i(f)\partial_j(g) - \partial_i(g)\partial_j(f) \notin \m_R$.
 \end{enumerate}
\end{lemma}
\begin{proof}
 Let $f_k := [\partial_k(f)] \in R/\m_R = \kk$ and $g_k := [\partial_k(g)] \in R/\m_R = \kk$. Then we have $[f] = \sum_{k = 1}^n f_k [y_k]$ and $[g] = \sum_{k = 1}^n g_k[y_k]$ in $\m_R/\m_R^2$. First suppose that we have indices $i,j$ with $f_i \not= 0$, $g_j \not= 0$, and $f_ig_j - f_jg_i \not= 0$. In particular, we have $i \not= j$, and also $f \not= 0$, $g \not= 0$. By the above lemma, $R/(f)$ and $R/(g)$ are regular local rings of dimension $n - 1$. For notational simplicity, suppose that $i = 1$ and $j = 2$. Note that $f,y_2,...,y_n$ is a regular system of parameters for $R/(f)$, so $\bar y_2,...,\bar y_n \in \m_{R/(f)}$ is a regular system of parameters for $R/(f)$. Now 
 $$[\bar g] = \sum_{k = 1}^n g_k[\bar y_k] = \sum_{k = 2}^n (g_k - g_1f_k/f_1)[\bar y_k]$$
 because $0 = [\bar f] = \sum_{k = 1}^n f_k[\bar y_k]$ and $f_1 \not= 0$. Since $g_2f_1 - g_1f_2 \not= 0$, we have $[\bar g] \not= 0 \in \m_{R(f)}/\m_{R/(f)}^2$, so $\bar g \notin \m_{R/(f)}^2$, and hence $R/(f,g)$ is regular of dimension $n - 2$.
 
 Conversely, assume the regularity statement. Suppose that there is some $f_i \not= 0$ such that $g_i = 0$. Because $R/(g)$ is regular of dimension $n - 1$, there is some $g_j \not= 0$ with $j \not= i$. Then $f_ig_j - f_jg_i = f_ig_j \not= 0$, so we are finished. Similarly, when there is some $g_j \not= 0$ with $f_j = 0$, our claim follows. Thus, we may assume that $f_i = 0$ if and only if $g_i = 0$, and that there is some index $j$ with $f_j,g_j \not= 0$. Now suppose that the claim is wrong. In particular, for each $i$, we have $f_i = 0$ or $f_ig_j = f_jg_i$, i.e., $f_i/f_j = g_i/g_j$. Since $f_i = 0$ implies $g_i = 0$, we always have $f_i/f_j = g_i/g_j$. But then $[f]/f_j = [g]/g_j$ in $\m_R/\m_R^2$. This shows $\bar g \in \m_{R/(f)}^2$, contradicting the regularity of $R/(f,g)$ (under the assumption that this ring has dimension $n - 2$).
\end{proof}

Let $p: Y \to X$ be a morphism between two schemes of finite type over $\CC$. Then $p: Y \to X$ is \'etale if and only if the induced map $p^*: \widehat\cO_{X,p(y)} \to \widehat\cO_{Y,y}$ of complete local rings is an isomorphism for all $\CC$-valued points $y \in Y$. Namely, by \cite[0C4G]{stacks}, the map $p^*:\cO_{X,p(y)} \to \cO_{Y,y}$ is flat, and then $p: Y \to X$ is \'etale at $y$ by \cite[02GU, (6)]{stacks}. This allows us to show the following structure result for well-formed triples. The proof essentially follows a part of \cite[Thm.~2.6]{GrossSiebertII}.

\begin{prop}\label{smoothing-nc-local-model}\note{smoothing-nc-local-model}\index{Gross--Siebert type!standard Gross--Siebert type}
 Let $(V,s,e_0)$ be a well-formed triple, and let $v \in D = V_{sing} \subset V$ be a $\CC$-valued point.
 \begin{enumerate}[label=\emph{(\roman*)}]
  \item Assume that $s(v) = 0$ and $e_0(v) = 0$. Then there is an \'etale neighborhood $\chi: (W,w) \to (V,v)$ of $v$ together with an \'etale morphism $\phi: W \to M_0(r;q)$
 to 
 $$M_0(r;q) = \Spec \CC[x_0,...,x_r,u_1,...,u_q]/(x_0 \cdot ... \cdot x_r)$$
 for $q \geq 2$ with $\phi(w) = 0$ such that $\chi^*(s) = \phi^*(u_1 \cdot s_0) \in \T^1_W$ and $\chi^*(e_0) = v \cdot \phi^*(u_2 \cdot \eps_0^\vee) \in \omega_W^\vee$ for some invertible function $v \in \cO_W^*$, where $s_0 \in \T^1_{M_0}$ and $\eps_0^\vee \in \omega_{M_0}^\vee$ are as above.
 \item Assume that $s(v) = 0$ and $e_0(v) \not= 0$. Then there is an \'etale neighborhood $\chi: (W,w) \to (V,v)$ of $v$ together with an \'etale morphism $\phi: W \to M_0(r;q)$ for $q \geq 1$ with $\phi(w) = 0$ such that $\chi^*(s) = \phi^*(u_1 \cdot s_0) \in \T^1_W$.
 \end{enumerate}
\end{prop}
\begin{proof}
 Note that the statement makes sense because $p^*\T^1_X = \T^1_Y$ and $p^*\omega_X^\vee = \omega_Y^\vee$ for an \'etale morphism $p: Y \to X$. We show the first statement, and the proof of the second statement is similar. Let $T^\circ \in [\cS_kV]$ be the open stratum with $v \in T^\circ$. Then we set $r = k$ and $q = d - k$, where $d = \mathrm{dim}(V)$. Since $V$ is a normal crossing space, we can find an \'etale neighborhood $\chi: (W,w) \to (V,v)$ together with an \'etale morphism $\psi: W \to M_0(r;q) =: M_0$ with $\psi(w) = 0$. By shrinking $W$, we can assume that $W$ is affine. Since $s_0$ trivializes $\T^1_{M_0}$, and $\eps_0^\vee$ trivializes $\omega_{M_0}^\vee$, we can find functions $f,g \in \cO_W$ with $\chi^*s = f\cdot \psi^*(s_0)$ and $\chi^*e_0 = g \cdot \psi^*(\eps_0^\vee)$. We have $f(w) = 0$ and $g(w) = 0$. Let us denote the stratum of $W$ by $T^\circ$ as well; by shrinking $W$, we can assume that $T^\circ \subset W$ is a (smooth) closed subscheme. Let us denote the images of $f,g$ in $\bar R := \cO_{T^\circ}$ by $\bar f,\bar g$. By the well-formedness assumption, $\bar R/\bar f$, $\bar R/\bar g$, and $\bar R/(\bar f,\bar g)$ are regular local rings of dimension $q - 1$ respective $q - 2$. Now $y_1 := \psi^*u_1,...,y_q := \psi^*u_q$ forms a regular system of parameters of $\bar R$. Thus, we can find indices $1 \leq i,j \leq q$ with $\partial_i(\bar f) \notin \m_{\bar R}$, $\partial_j(\bar g) \notin \m_{\bar R}$, and $\partial_i(\bar f)\partial_j(\bar g) - \partial_i(\bar g)\partial_j(\bar f) \notin \m_{\bar R}$. For simplicity of notation, let us assume $i = 1$ and $j = 2$. Now we define a morphism 
 $\phi: W \to M_0(r;q)$ by $\phi^*(u_1) = f$, $\phi^*(u_2) = g$, and $\phi^*(u_k) = \psi^*(u_k)$ for $k \geq 3$ as well as $\phi^*(x_k) = \psi^*(x_k)$. Note that $\phi^*(u_k) \in \m_{W,w}$ for all $1 \leq k \leq q$ as well as $\phi^*(x_k) \in \m_{W,w}$, so we have $\phi(w) = 0$. Since $f,g,y_3,...,y_q$ is a system of parameters for $\bar R$---here we use the condition on the derivations---, we find that $\phi^*(\m_0)$ generates $\m_{w} \subseteq \cO_{W,w}$. Thus, the induced map $\phi^*: \widehat\cO_{M_0,0} \to \widehat\cO_{W,w}$ of complete local rings is surjective. However, we also have an isomorphism $\psi^*: \widehat\cO_{M_0,0} \cong \widehat\cO_{W,w}$ from our first \'etale morphism $\psi$, so $\phi^* \circ (\psi^*)^{-1}$ is a surjective endomorphism of a Noetherian ring, hence an isomorphism. Since $\phi: W \to M_0(r;q)$ is of finite type, it is \'etale at $w$, hence \'etale after further shrinking $W$.
 
 Now let $\pi: M_0(r;q) \to M_0(r;0)$ be the projection. On the one hand side, we have $s_0 = \pi^*(\tilde s_0)$ for the corresponding section on $M_0(r;0)$. On the other hand side, we have $\pi \circ \phi = \pi \circ \psi$. Thus, we have $\chi^*(s) = f \cdot \psi^*(s_0) = \phi^*(u_1) \cdot \phi^*(s_0)$. The case of $\eps_0^\vee$ is more complicated. We have $\eps_0 = \pi^*\tilde\eps_0 \otimes du_1 \wedge ... \wedge du_q$, so 
 \begin{align}
  \phi^*(\eps_0) &= (\pi \circ \phi)^*\tilde\eps_0 \otimes df \wedge dg \wedge d\psi^*(u_3) \wedge ... \wedge d\psi^*(u_q) \nonumber \\
  &= (\pi \circ \psi)^*\tilde\eps_0 \otimes (\partial_1(f)\partial_2(g) - \partial_2(f)\partial_1(g)) d\psi^*(u_1) \wedge ... \wedge d\psi^*(u_q)  \nonumber \\  
  &= (\partial_1(f)\partial_2(g) - \partial_2(f)\partial_1(g)) \cdot \psi^*(\eps_0) \nonumber
 \end{align}
 Here, $\partial_k: \cO_W \to \cO_W$ is the induced derivation from $\partial_{u_k}: \cO_{M_0} \to \cO_{M_0}$. These derivations restrict to $T^\circ$, so after shrinking $W$, we know that the factor is invertible on $W$. Then we find $\chi^*(e_0) = g \cdot \psi^*(\eps_0^\vee) = v \cdot \phi^*(u_2) \cdot \phi^*(\eps_0^\vee)$ for an invertible function $v$.
\end{proof}

\begin{proof}[Proof of Theorem~\ref{smoothing-nc-spaces}]
 $(V,Z,s)$ is obviously a well-adjusted triple. From the local model of Proposition~\ref{smoothing-nc-local-model}, we see that the log structure on $f_0: X_0 \to S_0$ is log toroidal of standard Gross--Siebert type. Namely, the map $\eta: \cL\cS_V \to \T^1_V$ is compatible with \'etale pull-back. Outside the double locus $D$, the section $e_0 \in \omega_V^\vee$ is log regular because it is the strict embedding of a smooth divisor in a strict log smooth space. Inside $D$, Proposition~\ref{smoothing-nc-local-model} shows that $e_0$ is log regular up to multiplication with a unit, but then $e_0 \in \omega_V^\vee$ itself is log regular because this notion is invariant under isomorphisms of the triple $(V,\cL,e_0)$. The modification $g_0: X_0(e_0) \to S_0$ is log toroidal because the pull-back from $M_0(r;q)$ is log toroidal, and because this pull-back is isomorphic to the pull-back from $V$ since the two sections of $\omega_V^\vee$ differ only by a unit locally. Then $g_0: X_0(e_0) \to S_0$ is log Calabi--Yau because the chosen line bundle is the anti-canonical bundle $\omega_V^\vee$. The thickenings of $u_2$ on the infinitesimal log toroidal deformations of $M_0(r;q)$ are log transversal so that the modification in $u_2$ is log toroidal as well. When pulling these deformations back to $W$ along $\phi: W \to M_0(r;q)$, we obtain local log toroidal deformations of $g_0: X_0(e_0) \to S_0$ which coincide with the modifications of the log toroidal deformations of $f_0: X_0 \to S_0$. Thus, the deformations of type $\D(e_0)$ are log toroidal. The formal smoothing is obtained from the unobstructedness of $\mathrm{LD}^{\D(e_0)}_{X_0(e_0)/S_0}$ by Theorem~\ref{perfect-G-calc-log-toroidal} via the map $\Psi \circ \Phi$ to $\mathrm{LD}^\D_{X_0/S_0}$. The algebraic smoothing in the projective case is obtained from the unobstructedness of $\mathrm{LD}^{\D(e_0)}_{X_0(e_0)/S_0}(\cL_0)$ for a very ample line bundle $\cL_0$ as in Chapter~\ref{alg-degen-sec}.
\end{proof}

\begin{rem}
 In the projective case, the algebraic degeneration $f: X \to S = \Spec \CC\llbracket t\rrbracket$ endowed with the divisorial log structure from the central fiber is a log toroidal family of standard Gross--Siebert type, as follows from the method of Lemma~\ref{log-str-comp} and the discussion before it. Thus, we have a well-behaved log de Rham complex $\W^\bullet_{X/S}$ on $f: X \to S$, whose Hodge--de Rham spectral sequence degenerates at $E_1$ by \cite[Cor.~8.7]{FeltenThesis}. Since the central fiber of this log morphism $f: X \to S$ coincides with $f_0: X_0 \to S_0$ by Lemma~\ref{log-str-comp}, we can compute the Hodge numbers of the smoothing $X_\eta$ as the log Hodge numbers of $f_0: X_0 \to S_0$.
\end{rem}


\part{Additional material}


\chapter{From the fibers to the total space}\label{fiber-total-space-sec}\note{fiber-total-space-sec}

We collect results about going from the fibers of a map to the total space. Let $\Lambda \to R$ be a flat ring homomorphism of finite type between Noetherian rings inducing a morphism of schemes $f: X \to S$, and let $M$ be a finitely generated $R$-module inducing a coherent sheaf $\F$ on $X$.

\begin{lemma}
 Assume that $\F_s = 0$ on $X_s$ for all points $s \in S$. Then $\F = 0$. If $f: X \to S$ maps closed points to closed points (for example if $S$ is Jacobson), then it is sufficient to check closed points $s \in S$.
\end{lemma}
\begin{proof}
 Let $x \in X$ be a point with image $s = f(x) \in S$. Then we have a factorization 
 $$x = \Spec \kappa(x) \xrightarrow{i_{x,s}} X_s \xrightarrow{i_s} X$$
 of $i_x: x \to X$. Thus, we have $i_x^*\F = i_{x,s}^*\F_s = 0$, so $\F_x/(\m_x \cdot \F_x) = 0$. Since $\F_x$ is a finitely generated $\cO_{X,x}$-module, Nakayama's lemma implies $\F_x = 0$. Then $\F = 0$ follows from \cite[Prop.~3.8]{AM1969}. The second claim is then clear.
\end{proof}
\begin{rem}
 If $\F$ is not finitely generated, the statement is false, at least the part about the closed points. Let $\Lambda = R = \CC[x]$, and let $M$ be the field of fractions of $\CC[x]$. Then $i_x^*M = 0$ for all closed points $x \in X = \bAA^1$.
\end{rem}

\begin{lemma}\label{zero-in-thick-fibers}\note{zero-in-thick-fibers}
 Let $m \in M = \Gamma(X,\F)$, and assume that $m|_{X_A} = 0$ for all fibers $X_A \to \Spec A$ over Artinian local rings $A = \cO_{S,s}/\m_s^{k + 1}$ for $k \geq 0$ and points $s \in S$. Then $m = 0$.
\end{lemma}
\begin{proof}
 Assume that $m \not= 0$. Then there is a point $x \in X$ with $m \not= 0$ in $\F_x$. Since $\F_x$ is a finitely generated $\cO_{X,x}$-module and $\cO_{X,x}$ is a Noetherian local ring, the Krull intersection theorem implies that we can find $k + 1 \geq 1$ such that $m \notin \m_x^{k + 1} \cdot \F_x$. Let $B = \cO_{X,x}/\m_x^{k + 1}$; then $m \not= 0$ in $\F \otimes_R B$. Since the inclusion $\Spec B \to X$ factors through $X_A$ with $A = \cO_{S,f(x)}/\m_{f(x)}^{k + 1}$, this is a contradiction to our assumption.
\end{proof}

Let $Z \subseteq X$ be a closed subset. Following \cite[Ch.~IV, \S5.9]{EGAIV-2}, a coherent sheaf $\F$ on $X$ is \emph{$Z$-pure} if $H^0(X,\F) \to H^0(X \setminus Z, \F)$ is injective, and \emph{$Z$-closed} if $H^0(X,\F) \to H^0(X \setminus Z,\F)$ is bijective. By \cite[Ch.~IV, Prop.~5.10.2]{EGAIV-2}, a coherent sheaf $\F$ is $Z$-pure if and only if $\mathrm{depth}_{\cO_{X,z}}(\F_z) \geq 1$ for every $z \in Z$, and $Z$-closed if and only if $\mathrm{depth}_{\cO_{X,z}}(\F_z) \geq 2$ for every $z \in Z$. In demonstrating how to go from the fibers to the total space with these properties, we use that 
$$\mathrm{depth}_B(N) = \mathrm{depth}_A(A) + \mathrm{depth}_{B \otimes_A k}(N \otimes_A k)$$
for a local homomorphism $A \to B$ of Noetherian local rings, the residue field $k$ of $A$, and a finitely generated $B$-module $N$ which is flat over $A$. This statement is a special case of \cite[Ch.~IV, Prop.~6.3.1]{EGAIV-2}.

\begin{lemma}\label{injective-in-fibers}\note{injective-in-fibers}
 Let $\F$ be flat over $S$. Let $U \subseteq X$ be an open subset, and assume that $H^0(X_s,\F_s) \to H^0(U_s,\F_s)$ is injective for all points $s \in S$. Then $H^0(X,\F) \to H^0(U,\F)$ is injective.
\end{lemma}
\begin{proof}
 Let $Z := X \setminus U$. We have to show $\mathrm{depth}_{\cO_{X,z}}(\F_z) \geq 1$ for all $z \in Z$. For a point $z \in Z$, let $s := f(z)$, and let $X_s$ be the fiber over $s$. Then we have $z \in X_s$ and $\cO_{X_s,z} = \cO_{X,z} \otimes_{\cO_{S,s}} \kappa(s)$. By assumption, we have $\mathrm{depth}_{\cO_{X_s,z}}((\F_s)_z) \geq 1$. Since $\mathrm{depth}({\cO_{S,s}}) \geq 0$, we find $\mathrm{depth}_{\cO_{X,z}}(\F_z) \geq 1$.
\end{proof}

\begin{lemma}\label{bijective-in-fibers}\note{bijective-in-fibers}
 Let $\F$ be flat over $S$. Let $U \subseteq X$ be an open subset, and assume that 
 $H^0(X_s,\F_s) \to H^0(U_s,\F_s)$
 is bijective for all points $s \in S$. Then $H^0(X,\F) \to H^0(U,\F)$ is bijective.
\end{lemma}
\begin{proof}
 The proof is the same as for the previous lemma, with $1$ replaced by $2$.
\end{proof}



\chapter{Multilinear differential operators}\label{multilin-diff-op-sec}\note{multilin-diff-op-sec}

Let $f: X \to S$ be a morphism of schemes, and let $\F$ and $\G$ be quasi-coherent $\cO_X$-modules. Following \cite[0G3P]{stacks} or \cite[\S 16.8]{EGAIV-4}, a \emph{differential operator of order $k$} is, inductively, an $f^{-1}(\cO_S)$-linear map of sheaves $D: \F \to \G$ such that $D_r(f) := D(rf) - rD(f): \F \to \G$ is a differential operator of order $k - 1$ for every $r \in \cO_X$. For $k = 0$, a differential operator of order $0$ is, by definition, nothing but an $\cO_X$-linear homomorphism. Here, we give a generalization for multilinear operators 
$$\mu: \F_1 \times ... \times \F_n \to \G,$$
the cases of most interest for us being the Lie bracket $[-,-]$ and the Lie derivative $\cL$ in a Gerstenhaber calculus.
Due to lack of reference, we have to develop everything from scratch, although the theory is very similar to the classical case $n = 1$. The theory works in great generality, without any Noetherianity or other finiteness assumptions. It comes in two very similar flavors, one for a total degree $k \geq 0$, and one for a tuple $(k_1,...,k_n)$ as degree.

\section{Multilinear differential operators in commutative algebra}
\index{multilinear differential operators!on rings}

Let $\phi: A \to R$ be a ring homomorphism, and let $F_1, ..., F_n$ as well as $G$ be $R$-modules. 

\begin{defn}
 Let 
 $$\mu: F_1 \times ... \times F_n \to G$$
 be an $A$-multilinear map. Then $\mu$ is a \emph{multilinear differential operator of total order $0$} if it is $R$-multilinear.
 For $1 \leq \ell \leq n$ and $r \in R$, we set $L_r: G \to G, \: g \mapsto rg,$ and
 $$L_{r;\ell}: F_1 \times ... \times F_n \to F_1 \times ... \times F_n, \quad (f_1,...,f_n) \mapsto (f_1, ..., rf_\ell, ...,f_n). $$
 For $r \in R$ and $1 \leq \ell \leq n$, we define 
 $$\Psi_{r;\ell}(\mu) :=  \mu \circ L_{r;\ell} - L_r \circ \mu: \enspace F_1 \times ... \times F_n \to G,$$
 $$\quad (f_1,...,f_n) \mapsto \mu(f_1,...,rf_\ell,...,f_n) - r\mu(f_1,...,f_n).$$
 These operations commute, i.e., $\Psi_{r;\ell} \circ \Psi_{s;m}(\mu) = \Psi_{s;m} \circ \Psi_{r;\ell}(\mu)$.
 Then, for $k \geq 1$, we say that $\mu$ is a \emph{multilinear differential operator of total order $k$} if $\Psi_{r;\ell}(\mu)$ is a multilinear differential operator of total order $k - 1$ for every $r \in R$ and every $1 \leq \ell \leq n$. We denote the set of multilinear differential operators of order $k$ by $\mathrm{Diff}^k_{R/A}(F_1,...,F_n;G)$. We consider it as an $R$-module via $r \cdot \mu := L_r \circ \mu$.
 
 Similarly, we say that $\mu$ is a \emph{multilinear differential operator of order $(0,...,0)$} if it is $R$-linear. Then, for a tuple $\underline k = (k_1,...,k_n)$ with $k_\ell \geq 0$ for all $1 \leq \ell \leq n$, we say that $\mu$ is a \emph{multilinear differential operator of order $\underline k$} if $\Psi_{r;\ell}(\mu)$ is a multilinear differential operator of order $(k_1,...,k_\ell - 1,...,k_n)$ for all $r \in R$ and all $1 \leq \ell \leq n$ with $k_\ell \geq 1$, and if it is $R$-linear in the $\ell$-th entry for $\ell$ with $k_\ell = 0$. We denote the set of multilinear differential operators of order $\underline k$ by $\mathrm{Diff}^{\underline k}_{R/A}(F_1,...,F_n;G)$. It is an $R$-module via $r \cdot \mu := L_r \circ \mu$.
\end{defn}

Multilinear differential operators of total order $k$ and order $\underline k$ are closely related. If $\mu$ is a multilinear differential operator of total order $k$, then it is also of order $(k,...,k)$; if $\mu$ is of order $\underline k = (k_1,...,k_n)$, then it is also of total order $|\underline k| := \sum_{\ell = 1}^n k_\ell$. In the following, we work out the theory for total order $k$, and then we state without proof the variant for order $\underline k$.

\vspace{\baselineskip}

When 
$$\nu: G_1 \times ... \times G_n \to H$$
is a multilinear differential operator of total order $k_0$, and 
$$\mu_i: F_{i1} \times ... \times F_{im_i} \to G_i$$
are multilinear differential operators of total order $k_i$, then 
$$\nu \circ (\mu_1,...,\mu_n): F_{11} \times ... \times F_{nm_n} \to H$$
is a multilinear differential operator of total order $k = k_0 + \sum_{i = 1}^n k_i$. Namely, this is true for $k_0 = k_i = 0$, and we have 
$$\Psi_{r;i\ell}(\nu \circ (\mu_1,...,\mu_n)) = \Psi_{r;i}(\nu) \circ (\mu_1,...,\mu_n) + \nu \circ (\mu_1,...,\Psi_{r;\ell}(\mu_i), ...,\mu_n).$$
Here, $\Psi_{r;i\ell}$ is applied to the $F_{i\ell}$-entry. We call $\nu \circ (\mu_1,...\mu_n)$ the \emph{multi-composition}.\index{multi-composition} Multi-composition yields an $A$-multilinear map
\begin{align}
 \Gamma_{R/A}: \enspace \mathrm{Diff}^{k_0}_{R/A}(G_1,...,G_n;H) &\times \mathrm{Diff}^{k_1}_{R/A}(F_{11},...,F_{1m_1};G_1) \times ... \nonumber \\
 ... \times \mathrm{Diff}^{k_n}_{R/A}&(F_{n1},...,F_{nm_n};G_n) \to \mathrm{Diff}^{k_0 + k_1 + ... + k_n}_{R/A}(F_{11},...,F_{nm_n};H). \nonumber
\end{align}

\subsection{An analog of the module of principal parts}

Let $\mu \in \mathrm{Diff}^k_{R/A}(F_1,...,F_n;G)$. Then for all sequences $r_1,...,r_k \in R$ and $\ell_1,...,\ell_k \in \{1,...,n\}$, we have 
$$\sum_{I \subseteq \{1,...,k\}} \prod_{i \notin I}(-r_i) \mu\left(f_1 \cdot\prod_{i \in I, \ell_i = 1} r_i,\ ...,\ f_n \cdot \prod_{i \in I,\ell_i = n} r_i\right) = 0$$
for all $f_1 \in F_1,...,f_n \in F_n$, where the sum is over all subsets $I \subseteq \{1,...,k\}$. Conversely, if an $A$-multilinear map 
$$\mu: F_1 \times ... \times F_n \to G$$
satisfies the above equation for all $r_1,...,r_k$ and $\ell_1,...,\ell_k$, then $\mu \in \mathrm{Diff}^k_{R/A}(F_1,...,F_n;G)$.

Our first aim now is to construct an analog of the module of principal parts. Let us consider the free $R$-module
$$\mathrm{Free}_R(F_1,...,F_n) := \bigoplus_{(f_1,...,f_n) \in F_1 \times ... \times F_n} R \cdot [f_1,...,f_n]$$
on generators $[f_1,...,f_n]$; inside it, let $\mathrm{I}^k_{R/A}(F_1,...,F_n)$ be the $R$-submodule generated by 
$$[f_1,...,f_\ell + f'_\ell, ...,f_n] - [f_1,...,f_\ell, ..., f_n] - [f_1,...,f_\ell',...,f_n]$$
and 
$$[f_1,...,a\cdot f_\ell, ...,f_n] - a\cdot[f_1,...,f_\ell,...,f_n]$$
for $a \in A$ as well as 
\begin{equation}\label{HPF}\tag{HPF}
 \sum_{I \subseteq \{0,...,k\}} \prod_{i \notin I}(-r_i) \left[f_1 \cdot\prod_{i \in I, \ell_i = 1} r_i,\ ...,\ f_n \cdot \prod_{i \in I,\ell_i = n} r_i\right]
\end{equation}
for $r_0,...,r_{k} \in R$, $1 \leq \ell_0,...,\ell_k \leq n$, and $f_1 \in F_1,...,f_n \in F_n$. Here, \eqref{HPF} stands for ``higher product formula''. Let 
$$\mathrm{P}^k_{R/A}(F_1,...,F_n) := \mathrm{Free}_R(F_1,...,F_n) / \mathrm{I}^k_{R/A}(F_1,...,F_n)$$ be the quotient.

\begin{lemma}
 The map 
 $$\mu^u_{R/A}: F_1 \times ... \times F_n \to \mathrm{P}^k_{R/A}(F_1,...,F_n), \quad (f_1,...,f_n) \mapsto [f_1,...,f_n],$$
 is an $A$-multilinear differential operator of total order $k$. It is universal in the sense that every $A$-multilinear differential operator of total order $k$
 $$\mu : F_1 \times ... \times F_n \to G$$
 factors uniquely through an $R$-linear homomorphism 
 $$h: \mathrm{P}^k_{R/A}(F_1,...,F_n) \to G$$
 as $\mu = h \circ \mu^u_{R/A}$. In particular, $\mathrm{Diff}^k_{R/A}(F_1,...,F_n;G) = \mathrm{Hom}_R(\mathrm{P}^k_{R/A}(F_1,...,F_n),G)$.
\end{lemma}
\begin{proof}
 $\mu_u$ is an $A$-multilinear differential operator of total order $k$ because all required relations are satisfied by construction. These relations are preserved when postcomposing with $R$-linear homomorphisms so that $h \circ \mu^u_{R/A}$ is always an $A$-multilinear differential operator of total order $k$. This defines a map 
 $$\mathrm{Hom}_R(\mathrm{P}^k_{R/A}(F_1,...,F_n),G) \to \mathrm{Diff}^k_{R/A}(F_1,...,F_n;G).$$
 When $\mu$ is an element on the right, then we can set $h_\mu[f_1,...,f_n] := \mu(f_1,...,f_n)$; due to the relations for $\mu$, this is well-defined, and we have $h_\mu \circ \mu^u_{R/A} = \mu$. If $h \circ \mu^u_{R/A} = h' \circ \mu^u_{R/A}$, then $h[f_1,...,f_n] = h'[f_1,...,f_n]$, so $h = h'$ since this is an $R$-generating system of the module $\mathrm{P}^k_{R/A}(F_1,...,F_n)$.
\end{proof}
\begin{var}
 There is a universal $A$-multilinear differential operator of order $\underline k$
 $$\mu^u_{R/A}: F_1 \times ... \times F_n \to \mathrm{P}^{\underline k}_{R/A}(F_1,...,F_n), \quad (f_1,...,f_n) \mapsto [f_1,...,f_n].$$
 In particular, $\mathrm{Diff}^{\underline k}_{R/A}(F_1,...,F_n;G) = \mathrm{Hom}_R(\mathrm{P}^{\underline k}_{R/A}(F_1,...,F_n),G)$.
\end{var}

Next, we exhibit a canonical generating system for $\mathrm{P}^k_{R/A}(F_1,...,F_n)$; this allows us to conclude that $\mathrm{P}^k_{R/A}(F_1,...,F_n)$ is finitely generated in good cases. Let $X \subset R$ be a generating system of $R$ as an $A$-\emph{algebra}, and let
$$M(X) := \{\prod_{i = 1}^\ell x_i \ | \ x_1,...,x_\ell \in X\} \cup \{1\}$$
 be the set of monomial expressions in $X$. If $E_i$ is a system of generators of $F_i$ as an $R$-module, then $\mathrm{P}^k_{R/A}(F_1,...,F_n)$ will be generated, as an $R$-module, by elements $[p_1e_1,...,p_ne_n]$ with $e_i \in E_i$ and $p_i \in M(X)$. However, a subset of these generators is already sufficient:

\begin{lemma}\label{principal-parts-fg}\note{principal-parts-fg}
 Let $M^n(k;X)$ be the set of vectors $(p_1,...,p_n) \in M(X)^n$ with $\sum \mathrm{deg}(p_i) \leq k$. Then $\mathrm{P}^k_{R/A}(F_1,...,F_n)$ is generated, as an $R$-module, by $[p_1e_1,...,p_ne_n]$ with $(p_1,...,p_n) \in M^n(k;X)$ and $e_i \in E_i$. In particular, if $\phi: A \to R$ is of finite type, and each $F_i$ is a finitely generated $R$-module, then $\mathrm{P}^k_{R/A}(F_1,...,F_n)$ is a finitely generated $R$-module as well.
\end{lemma}
\begin{proof}
 For $k = 0$, we have $M(0;X) = \{1\}$ and $\mathrm{P}^0_{R/A}(F_1,...,F_n) = F_1 \otimes_R ... \otimes_R F_n$, so the statement is true. For $k \geq 1$, let $G(k) \subseteq \mathrm{P}^k_{R/A}(F_1,...,F_n)$ be the submodule generated by the specified elements. Then, by definition, $[p_1e_1,...,p_ne_n] \in G(k)$ for $(p_i)_i \in M^n(k;X)$ and $e_i \in E_i$. We show by induction that, for all $\ell \geq k$, we have $[p_1e_1,...,p_ne_n] \in G(k)$ for $(p_i)_i \in M(\ell;X)$ and $e_i \in E_i$. We can assume that $\prod_{i = 1}^n p_i$ has degree $\geq k + 1$; then we can find $(p_i')_i \in M^n(\ell - k - 1;X)$ and monomials $q_i$ such that $\prod_{i = 1}^n q_i$ has degree $k + 1$ and $p_i'q_i = p_i$. By using formula \eqref{HPF}, we can express $[p_1e_1,...,p_ne_n]$ in terms of the form $Q \cdot [P_1e_1,...,P_ne_n]$ with $Q \in M(X)$ and $(P_i)_i \in M(\ell - 1;X)$. By induction hypothesis, these terms are contained in $G(k)$, so $[p_1e_1,...,p_ne_n] \in G(k)$ as well. Since $\mathrm{P}^k_{R/A}(F_1,...,F_n)$ is generated, as an $R$-module, by $[p_1e_1,...,p_ne_n]$ with $p_i \in M(X)$ and $e_i \in E_i$ for all $k$, our claim follows.
\end{proof}

\begin{var}
 Let $M^n(\underline k;X)$ be the set of vectors $(p_1,...,p_n) \in M(X)^n$ with $\mathrm{deg}(p_\ell) \leq k_\ell$. Then $\mathrm{P}^{\underline k}_{R/A}(F_1,...,F_n)$ is generated, as an $R$-module, by $[p_1e_1,...,p_ne_n]$ with $(p_1,...,p_n) \in M^n(\underline k;X)$ and $e_i \in E_i$. In particular, if $\phi: A \to R$ is of finite type, and each $F_i$ is a finitely generated $R$-module, then $\mathrm{P}^{\underline k}_{R/A}(F_1,...,F_n)$ is a finitely generated $R$-module as well.
\end{var}

\subsection{Localization on $R$}

In order to study the behavior of multilinear differential operators on sheaves, we study localization, so let $S \subset R$ be a multiplicative system. We start with a rather strong uniqueness result of extensions from $R$ to $S^{-1}R$, which is enforced by \eqref{HPF}.

\begin{lemma}\label{diff-loc-uniqueness}\note{diff-loc-uniqueness}
 Let $F_1,...,F_n$ be $R$-modules, and let $G_S$ be an $S^{-1}R$-module. Let 
 $$\mu,\nu: S^{-1}F_1 \times ... \times S^{-1}F_n \to G_S$$
 be two $A$-multilinear differential operators of total order $k$, and assume that 
 $$\mu(f_1,...,f_n) = \nu(f_1,...,f_n)$$
 for $f_i \in F_i$. Then $\mu = \nu$.
\end{lemma}
\begin{proof}
 We prove the statement by induction on $k$. For $k = 0$, the statement follows from $S^{-1}R$-linearity. So assume that we know the statement for some $k$, and let $\mu,\nu$ be $A$-multilinear differential operators of total order $k + 1$ which satisfy $\mu(f_1,...,f_n) = \nu(f_1,...,f_n)$ for $f_i \in F_i$. For $1 \leq \ell \leq n$ and $s \in S$, both $\Psi_{s;\ell}(\mu)$ and $\Psi_{s;\ell}(\nu)$ are $A$-multilinear differential operators of total order $k$; by induction hypothesis, we have $\Psi_{s;\ell}(\mu) = \Psi_{s;\ell}(\nu)$ since they are equal for $f_i \in F_i$. For $f_i \in F_i$ and $s_1 \in S$, we have 
 \begin{align}
  \mu(f_1/s_1,f_2,...,f_n) &= s_1^{-1} \cdot(\mu(f_1,f_2,...,f_n) - \Psi_{s;\ell}(\mu)(f_1/s_1,f_2,...,f_n)) \nonumber \\
  &= s_1^{-1} \cdot(\nu(f_1,f_2,...,f_n) - \Psi_{s;\ell}(\nu)(f_1/s_1,f_2,...,f_n)) \nonumber \\
  &= \nu(f_1/s_1,f_2,...,f_n). \nonumber
 \end{align}
 Repeating this argument inductively over $\ell$, we show that $\mu(f_1/s_1,...,f_\ell/s_\ell,f_{\ell + 1},...,f_n) = \nu(f_1/s_1,...,f_\ell/s_\ell,f_{\ell + 1},...,f_n)$ for all $\ell$ and hence $\mu = \nu$.
\end{proof}

If $\mu \in \mathrm{Diff}^k_{R/A}(F_1,...,F_n;G)$ is given, then an extension over $S^{-1}R$ is not only unique if it exists but actually exists. This is the computationally most intricate part of the basic theory of multilinear differential operators. Our proof closely follows \cite[0G36]{stacks}, which is the same statement for $n = 1$.

\begin{lemma}\label{diff-ext-loc}\note{diff-ext-loc}
 Let $S \subset R$ be a multiplicative system. Then, for every $k \geq 0$, there is a unique map
 $$\rho_k: \mathrm{Diff}^k_{R/A}(F_1,...,F_n;G) \to \mathrm{Diff}^k_{S^{-1}R/A}(S^{-1}F_1,...,S^{-1}F_n;S^{-1}G)$$
 such that 
 \[
  \xymatrix{
  F_1 \times ... \times F_n \ar[r]^-\mu \ar[d] & G \ar[d] \\
  S^{-1}F_1 \times ... \times S^{-1}F_n \ar[r]^-{\rho_k(\mu)} & S^{-1}G \\
  }
 \]
 commutes. The map is compatible among all $k$ in the sense that $\rho_k(\mu) = \rho_{k + 1}(\mu)$ if both are defined. The map is compatible with the multiplication operators in that, for $r \in R$ and $1 \leq \ell \leq n$, we have $L_r \circ \rho_k(\mu) = \rho_k(L_r \circ \mu)$ and $\rho_k(\mu) \circ L_{r;\ell} = \rho_k(\mu \circ L_{r;\ell})$; in particular, we have $\rho_{k - 1}(\Psi_{r;\ell}(\mu)) = \Psi_{r;\ell}(\rho_k(\mu))$.
\end{lemma}
\begin{proof}
 In this proof, we use the shorter notation $\mu_{r;\ell} := \Psi_{r;\ell}(\mu)$. The uniqueness follows from Lemma~\ref{diff-loc-uniqueness}. We show existence by induction on $k$. For $k = 0$, we obtain $\rho_0(\mu)$ as the localization $S^{-1}\mu$. For the induction step, we construct $\hat\mu = \rho_k(\mu)$ in several steps. Let $\hat F_i$ be the image of $F_i$ in $S^{-1}F_i$. First, we define 
 $$\hat\mu_0: \hat F_1 \times ... \times \hat F_n \to S^{-1}G \quad \mathrm{via} \quad \hat\mu_0(f_1,...,f_n) := \mu(f_1,...,f_n) \in S^{-1}G.$$
 We have to show that this is well-defined. So assume that $f_\ell = f_\ell' \in S^{-1}F_\ell$, i.e., $sf_\ell = sf_\ell'$ for some $s \in S$. Then we have 
 \begin{align}
  ss'\mu(f_1,...,f_\ell,...,f_n) &= s'\mu(f_1,...,sf_\ell,...,f_n) - s'\mu_{s;\ell}(f_1,...,f_\ell,...,f_n) \nonumber \\
  &= s'\mu(f_1,...,sf_\ell',...,f_n) - s'\mu_{s;\ell}(f_1,...,f_\ell',...,f_n) \nonumber \\
  &= ss'\mu(f_1,...,f_\ell',...,f_n) \nonumber
 \end{align}
 for some $s' \in S$ with $s'\mu_{s;\ell}(f_1,...,f_\ell,...,f_n) = s'\mu_{s;\ell}(f_1,...,f_\ell',...,f_n)$, which exists by induction hypothesis. Thus, $\hat\mu_0(f_1,...,f_n)$ is well-defined. It is easy to see that $\hat\mu_0$ is $A$-multilinear.
 
 For the next step, let $\hat\mu_{r;1} := \rho_{k - 1}(\mu_{r;1})$ for $r \in R$, which exists by the induction hypothesis. Note that 
 $$\hat\mu_{r;1}(f_1,...,f_n) = \hat\mu_0(rf_1,...,f_n) - r\hat\mu_0(f_1,...,f_n)$$
 for $f_i \in \hat F_i$.
 Then we define a map 
 $$\hat\mu_1: S^{-1}F_1 \times \hat F_2 \times ... \times \hat F_n \to S^{-1}G$$
 with the formula
 $$\hat\mu_1(f_1/s_1,f_2,...,f_n) := s_1^{-1} \cdot (\hat\mu_0(f_1,f_2,...,f_n) - \hat\mu_{s_1;1}(f_1/s_1,f_2,...,f_n)).$$
To show that it is well-defined, assume $f_1/s_1 = f_1'/s_1'$. Without loss of generality, we can assume that $f_1' = f_1t$ and $s_1' = s_1t$ for some $t \in S$. We have 
$$\mu_{s_1t;1} \circ L_{s_1;1} - L_{s_1} \circ \mu_{s_1t;1} = \mu_{s_1;1} \circ L_{s_1t;1} - L_{s_1t} \circ \mu_{s_1;1}$$
as operators in $\mathrm{Diff}^{k - 1}_{R/A}(F_1,...,F_n;G)$ by direct computation. Applying $\rho_{k - 1}$, the same equation holds for $\mu$ replaced with $\hat\mu$. We also have 
$$s_1t\hat\mu_0(f_1,...,f_n) -s_1\hat\mu_0(tf_1,...,f_n) = \hat\mu_{s_1;1}(tf_1,...,f_n) - \hat\mu_{s_1t;1}(f_1,...,f_n)$$
by direct computation. From this, we obtain 
$$\hat\mu_1(f_1/s_1,f_2,...,f_n) = \hat\mu_1(f'_1/s_1',f_2,...,f_n)$$
with the above definition applied to either side, and hence $\hat\mu_1(f_1/s_1,f_2,...,f_n)$ is well-defined in $S^{-1}G$. It is easy to see that $\hat\mu_1$ is $A$-multilinear.

Now let $r \in R$. Then we have $(\mu_{r;1})_{s_1;1} = (\mu_{s_1;1})_{r;1}$; consequently, we have $(\hat\mu_{r;1})_{s_1;1} = (\hat\mu_{s_1;1})_{r;1}$. Then we obtain 
\begin{align}
 (\hat\mu_1)_{r;1}(f_1/s_1,...,f_n) :&= \hat\mu_1(rf_1/s_1,...,f_n) - r\hat\mu_1(f_1/s_1,...,f_n) \nonumber\\ 
 &= \frac{1}{s_1}(\hat\mu_0)_{r;1}(f_1,...,f_n) - \frac{1}{s_1}(\hat\mu_{s_1;1})_{r;1}(f_1/s_1,...,f_n) \nonumber \\
 &= \frac{1}{s_1}\hat\mu_{r;1}(f_1,...,f_n) - \frac{1}{s_1}(\hat\mu_{r;1})_{s_1;1}(f_1/s_1,...,f_n) \nonumber \\
 &= \hat\mu_{r;1}(f_1/s_1,...,f_n). \nonumber
\end{align}
For $s \in S$, we have 
\begin{align}
 (\hat\mu_1)_{1/s;1}(f_1/s_1,...,f_n) :&= \hat\mu_1(f_1/(s_1s),...,f_n) - \frac{1}{s}\hat\mu_1(f_1/s_1,...,f_n) \nonumber \\
 &= -\frac{1}{ss_1}\hat\mu_{ss_1;1}(f_1/(s_1s),...,f_n) + \frac{1}{ss_1}\hat\mu_{s_1;1}(f_1/s_1,...,f_n) \nonumber \\ 
 &= - (L_{1/s} \circ \hat\mu_{s;1} \circ L_{1/s;1})(f_1/s_1,...,f_n) \nonumber 
\end{align}
by applying $\hat\mu_{s_1s;1} = \hat\mu_{s_1;1} \circ L_{s;1} + L_{s_1} \circ \hat\mu_{s;1}$, which holds since it holds over $R$, and then we apply $\rho_{k - 1}$.

In the next step, we construct inductively a map 
$$\hat\mu_p: S^{-1}F_1 \times ... \times S^{-1}F_p \times \hat F_{p + 1} \times ... \times \hat F_n \to S^{-1}G$$
by the formula 
$$\hat\mu_p(f_1/s_1,...,f_p/s_p,...,f_n) := s_p^{-1} \cdot (\hat\mu_{p - 1}(f_1/s_1,...,f_p,...,f_n) - \hat\mu_{s_p;p}(f_1/s_1,...,f_p/s_p,...,f_n)),$$
where, as above, $\hat\mu_{s_p;p} := \rho_{k - 1}(\mu_{s_p;p})$. Similar to the case $\hat\mu_1$, we obtain that $\hat\mu_p$ is well-defined and $A$-multilinear. Also similar to the case $\hat\mu_1$, we can show that 
$$(\hat\mu_p)_{r;p} = \hat\mu_{r;p} \quad \mathrm{and} \quad (\hat\mu_p)_{1/s;p} = -L_{1/s} \circ \hat\mu_{s;p} \circ L_{1/s;p}.$$
We wish to show these equations as well for $p$ replaced with $1 \leq \ell \leq p - 1$, i.e., 
$$(\hat\mu_p)_{r;\ell} = \hat\mu_{r;\ell} \quad \mathrm{and} \quad (\hat\mu_p)_{1/s;\ell} = -L_{1/s} \circ \hat\mu_{s;\ell} \circ L_{1/s;\ell},$$
where again $\hat\mu_{r;\ell} := \rho_{k - 1}(\mu_{r;\ell})$.
They are not yet shown because $\hat\mu_p$ has a larger domain than $\hat\mu_\ell$. By induction on $p$, we can assume that we already know these equations for $\hat\mu_{p - 1}$. Then the claim follows from two straightforward computations.

We set $\hat\mu := \hat\mu_n$; it is a well-defined $A$-multilinear map which makes the diagram in the statement of the Lemma commute. For $r \in R$, we have $(\hat\mu)_{r;\ell} = \rho_{k - 1}(\mu_{r;\ell})$, so this is an $A$-multilinear differential operator of total order $k - 1$ by the induction hypothesis. Similarly, $(\hat\mu)_{1/s;\ell}$ is an $A$-multilinear differential operator of total order $k - 1$ for $s \in S$. Then 
$$(\hat\mu)_{r/s;\ell} = (\hat\mu)_{r;\ell} \circ L_{1/s;\ell} + L_r \circ (\hat\mu)_{1/s;\ell}$$
is an $A$-multilinear differential operator of total order $k - 1$ as well. Thus, $\hat\mu$ is an $A$-multilinear differential operator of total order $k$.
The compatibility claims all follow from Lemma~\ref{diff-loc-uniqueness}.
\end{proof}

\begin{rem}
 Localization commutes with multi-composition. Namely, the diagram in Lemma~\ref{diff-ext-loc} commutes with $\nu \circ (\mu_1,...\mu_n)$ and $\rho(\nu) \circ (\rho(\mu_1),...,\rho(\mu_n))$, so uniqueness shows that 
 $$\rho(\nu \circ (\mu_1,...,\mu_n)) = \rho(\nu) \circ (\rho(\mu_1),...,\rho(\mu_n)).$$
\end{rem}
\begin{var}
 In the situation of Lemma~\ref{diff-ext-loc}, if $\mu$ is a multilinear differential operator of order $\underline k = (k_1,...,k_n)$, then $\rho_k(\mu)$ is a multilinear differential operator of order $\underline k$, where $k = \sum_{\ell = 1}^n k_\ell$.
\end{var}

With this preparation, we can show that the formation of $\mathrm{P}^k_{R/A}(F_1,...,F_n)$ commutes with localization.

\begin{lemma}\label{diff-loc-compat}\note{diff-loc-compat}
 We have a canonical isomorphism
 $$\Phi_k: S^{-1}\mathrm{P}^k_{R/A}(F_1,...,F_n) \to \mathrm{P}^k_{S^{-1}R/A}(S^{-1}F_1,...,S^{-1}F_n)$$
 of $S^{-1}R$-modules.
\end{lemma}
\begin{proof}
 First note that $S^{-1}\mathrm{Free}_R(F_1,...,F_n) = \mathrm{Free}_{S^{-1}R}(F_1,...,F_n)$. Then we get an $S^{-1}R$-homomorphism 
 $$\tilde\Phi_k: \mathrm{Free}_{S^{-1}R}(F_1,...,F_n) \to \mathrm{P}^k_{S^{-1}R/A}(S^{-1}F_1,...,S^{-1}F_n)$$
 by $\tilde\Phi_k[f_1,...,f_n] := [f_1,...,f_n]$. It descends to a map $\Phi_k$ from $S^{-1}\mathrm{P}^k_{R/A}(F_1,...,F_n)$ since every relation in $S^{-1}\mathrm{I}^k_{R/A}(F_1,...,F_n)$ is also in $\mathrm{I}^k_{S^{-1}R/A}(S^{-1}F_1,...,S^{-1}F_n)$.
 
 Next, Lemma~\ref{diff-ext-loc} gives us an $A$-multilinear differential operator 
 $$\nu_k := \rho_k(\mu^u_{R/A}): S^{-1}F_1 \times ... \times S^{-1}F_n \to S^{-1}\mathrm{P}^k_{R/A}(F_1,...,F_n)$$
 of total order $k$. Thus, we have a unique $S^{-1}R$-homomorphism 
 $$h_k: \mathrm{P}^k_{S^{-1}R/A}(S^{-1}F_1,...,S^{-1}F_n) \to S^{-1}\mathrm{P}^k_{R/A}(F_1,...,F_n)$$
 with $h_k \circ \mu^u_{S^{-1}R/A} = \nu_k$. On the other hand side, we also have $\Phi_k \circ \nu_k = \mu^u_{S^{-1}R/A}$ by Lemma~\ref{diff-loc-uniqueness} since they are equal on $F_1 \times ... \times F_n$. Then the universal property of $\mu^u_{S^{-1}R/A}$ shows that $\Phi_k \circ h_k = \mathrm{Id}$; in particular, $h_k$ is injective. Now $S^{-1}\mathrm{P}^k_{R/A}(F_1,...,F_n)$ is, as an $S^{-1}R$-module, generated by $\nu_k(f_1,...,f_k)$ for $f_1 \in F_1,...,f_n \in F_n$. Thus, $h_k$ is also surjective. But then $\Phi_k$ must be an isomorphism as well.
\end{proof}
\begin{var}
 For $\underline k = (k_1,...,k_n)$, we have a canonical isomorphism
 $$\Phi_{\underline k}: S^{-1}\mathrm{P}^{\underline k}_{R/A}(F_1,...,F_n) \to \mathrm{P}^{\underline k}_{S^{-1}R/A}(S^{-1}F_1,...,S^{-1}F_n)$$
 of $S^{-1}R$-modules.
\end{var}

\subsection{Base change}

Let $\phi: A \to R$ and $\psi: A \to B$ be ring homomorphisms, and set $T := R \otimes_A B$. Given an $A$-multilinear differential operator $\mu \in \mathrm{Diff}^k_{R/A}(F_1,...,F_n;G)$, we can form the $A$-multilinear map 
$$\mu \otimes_A B: \enspace F_1 \otimes_A B \times ... \times F_n \otimes_A B \to G \otimes_A B,$$
which is in fact a multilinear differential operator for $T/B$. It is clear that this construction commutes with multi-compositions. We have the analog of the well-known base change result for $\Omega^1_{R/A}$.

\begin{lemma}\label{P-base-change}\note{P-base-change}
 In the above situation, we have a canonical isomorphism 
 $$C^k_{B/A}: \mathrm{P}^k_{R/A}(F_1,...,F_n) \otimes_A B \to \mathrm{P}^k_{T/B}(F_1 \otimes_A B, ..., F_n \otimes_A B)$$
 of $T$-modules.
\end{lemma}
\begin{proof}
 The proof is very similar to that of Lemma~\ref{diff-loc-compat}. We have $\mathrm{Free_R}(F_1,...,F_n) \otimes_A B = \mathrm{Free}_T(F_1,...,F_n)$, and then we obtain the map $C^k_{B/A}$ by preservation of relations. On the other hand, we have a $B$-multilinear differential operator 
 $$\mu_{R/A}^u \otimes_A B: F_1 \otimes_A B \times ... \times F_n \otimes_A B \to \mathrm{P}^k_{R/A}(F_1,...,F_n) \otimes_A B$$
 which gives rise to a map 
 $$h_k: \mathrm{P}^k_{T/B}(F_1 \otimes_A B, ..., F_n \otimes_A B) \to \mathrm{P}^k_{R/A}(F_1,...,F_n) \otimes_A B$$
 with $h_k \circ \mu_{T/B}^u = \mu^u_{R/A} \otimes_A B$. By the universal property of $\mu^u_{T/B}$, we have $C^k_{B/A} \circ h_k = \mathrm{Id}$, so $h_k$ is injective. However, as a $T$-module, the target is generated by $(\mu^u_{R/A} \otimes_A B)(f_1,...,f_n)$ for $f_i \in F_i$, so $h_k$ is surjective as well. Then $C^k_{B/A}$ is an isomorphism.
\end{proof}
\begin{var}
 Let $\psi: A \to B$ be a ring homomorphism, and let $T := R \otimes_A B$. Let $\underline k = (k_1,...,k_n)$. Then we have a canonical isomorphism 
 $$C^{\underline k}_{B/A}: \mathrm{P}^{\underline k}_{R/A}(F_1,...,F_n) \otimes_A B \to \mathrm{P}^{\underline k}_{T/B}(F_1 \otimes_A B, ..., F_n \otimes_A B)$$
 of $T$-modules.
\end{var}

\subsection{Localization on $A$}

For the construction of a sheaf, we also need the following basic compatibility. Recall that $\Spec (A) \to \Spec(A')$ is an open immersion if and only if $A' \to A$ is flat, of finite presentation, and an epimorphism of rings. This is more general than being a localization in a multiplicative system $T \subset A'$.

\begin{lemma}\label{loc-A-compat}\note{loc-A-compat}
 Let $\psi: A' \to A$ be flat, of finite presentation, and an epimorphism of rings. Then
 $$\mathrm{P}^k_{R/A'}(F_1,...,F_n) = \mathrm{P}^k_{R/A}(F_1,...,F_n).$$
\end{lemma}
\begin{proof}
 Let $\phi' := \phi \circ \psi$. Then, using \cite[04VN]{stacks}, we obtain a Cartesian diagram 
 \[
  \xymatrix{
   A \ar[r]^-{\cong} & A \otimes_{A'} A \ar[r] & R \otimes_{A'} A  \\
   A' \ar[r] \ar[u] & A \ar[r] \ar[u]^{\cong} & R  \ar[u]^{\cong} \\
  }
 \]
 of rings. Then we have 
 \begin{align}
  \mathrm{P}^k_{R/A}(F_1,...,F_n) &= \mathrm{P}^k_{R/A}(F_1 \otimes_{A'} A,...,F_n \otimes_{A'} A) =  \mathrm{P}^k_{R/A'}(F_1,...,F_n) \otimes_{A'} A \nonumber \\
  &= \mathrm{P}^k_{R/A'}(F_1,...,F_n) \otimes_R R = \mathrm{P}^k_{R/A'}(F_1,...,F_n) \nonumber 
 \end{align}
 by Lemma~\ref{P-base-change}.
\end{proof}
\begin{var}
 In the setting of Lemma~\ref{loc-A-compat}, we have
 $$\mathrm{P}^{\underline k}_{R/A'}(F_1,...,F_n) = \mathrm{P}^{\underline k}_{R/A}(F_1,...,F_n)$$
 for $\underline k = (k_1,...,k_n)$.
\end{var}

\subsection{\'Etale localization of multilinear differential operators}

First, we given an alternative, more geometric description of $\mathrm{P}^k_{R/A}(F_1,...,F_n)$ in the case of a total degree $k$. Let us consider the exact sequence 
\begin{equation}\label{diagonal-embedding}
 0 \to I_\Delta \to R \otimes_A R \otimes_A ... \otimes_A R \xrightarrow{r_0 \otimes ... \otimes r_n \,\mapsto\, r_0 \cdot ... \cdot r_n} R \to 0
\end{equation}
which corresponds to the diagonal embedding. The kernel $I_\Delta$ is generated by elements of the form 
$$1 \otimes 1 \otimes ... \otimes r \otimes ... \otimes 1 - r \otimes 1 \otimes ... \otimes 1 \otimes ... \otimes 1.$$
In particular, for $k \geq 0$, the power $I_\Delta^{k + 1}$ is generated by elements of the form 
$$\sum_{I \subseteq \{0,...,k\}}\left(\prod_{i \notin I} (-r_i)\right) \otimes \left(\prod_{i \in I,\ell_i = 1}r_i\right) \otimes ... \otimes \left(\prod_{i \in I,\ell_i = n}r_i\right)$$
for $r_0,...,r_k \in R$ and $1 \leq \ell_0, ..., \ell_k \leq n$. Taking the tensor product of the exact sequence \eqref{diagonal-embedding} over $T = R \otimes_A R \otimes_A ... \otimes_A R$ with $B = R \otimes_A F_1 \otimes ... \otimes F_n$, we obtain
$$I_\Delta^{k + 1} \otimes_T B \to R \otimes_A F_1 \otimes_A ... \otimes_A F_n \to (T/I_\Delta^{k + 1}) \otimes_T B \to 0.$$
Comparing with the definition of $\mathrm{P}^k_{R/A}(F_1,...,F_n)$, we find a canonical isomorphism 
$$(T/I_\Delta^{k + 1}) \otimes_T (R \otimes_A F_1 \otimes_A ... \otimes_A F_n) \cong \mathrm{P}^k_{R/A}(F_1,...,F_n).$$
Now let $\psi: R \to R'$ be a ring map. It is easy to see that we have a canonical induced homomorphism of $R'$-modules 
$$R' \otimes_R \mathrm{P}^k_{R/A}(F_1,...,F_n) \to \mathrm{P}^k_{R'/A}(R' \otimes_R F_1,...,R' \otimes_R F_n), \enspace [f_1,...,f_n] \mapsto [1 \otimes f_1,...,1 \otimes f_n].$$
This map fits into a commutative diagram
\[
 \xymatrix{
  F_1 \times ... \times F_n \ar[d] \ar[r] & \mathrm{P}^k_{R/A}(F_1,...,F_n) \ar[r] & R' \otimes_R \mathrm{P}^k_{R/A}(F_1, ..., F_n) \ar[d] \\
  R' \otimes_R F_1 \times ... \times R' \otimes_R F_n \ar[rr] & & \mathrm{P}^k_{R'/A}(R' \otimes_R F_1,...,R' \otimes_R F_n) \\
 }
\]
\begin{lemma}\label{general-diff-loc}\note{general-diff-loc}
 Assume that the right vertical map is an isomorphism. Let $G'$ be an $R'$-module, and let $G$ be the $R'$-module $G'$ considered as an $R$-module via $\psi: R \to R'$. Then, for every $\mu \in \mathrm{Diff}^k_{R/A}(F_1,...,F_n;G)$, there is a unique $\nu \in \mathrm{Diff}^k_{R'/A}(R' \otimes_R F_1,...,R' \otimes_R F_n;G')$ such that 
 \[
  \xymatrix{
   F_1 \times ... \times F_n \ar[d] \ar[r]^-\mu & G \ar@{=}[d] \\
   R' \otimes_R F_1 \times ... \times R' \otimes_R F_n \ar[r]^-\nu & G' \\
  }
 \]
 commutes.
\end{lemma}
\begin{proof}
 This follows from $\mathrm{Hom}_R(E,G) = \mathrm{Hom}_{R'}(R' \otimes_R E,G')$ for $E = \mathrm{P}^k_{R/A}(F_1,...,F_n)$.
\end{proof}

Now the conditions of Lemma~\ref{general-diff-loc} are satisfied if $\psi: R \to R'$ is \'etale.

\begin{prop}\label{etale-diff-loc}\note{etale-diff-loc}
 Assume that $\psi: R \to R'$ is \'etale, in particular of finite presentation. Then 
 $$R' \otimes_R \mathrm{P}^k(F_1,...,F_n) \to \mathrm{P}^k_{R'/A}(R' \otimes_R F_1,...,R' \otimes_R F_n)$$
 is an isomorphism.
\end{prop}
\begin{proof}
 We work geometrically. Let $S = \Spec A$, $X = \Spec R$, and $X' = \Spec R'$. Let $g: X' \to X$ and $f: X \to S$ be the induced maps. Let $M = X \times_S X \times_S ... \times_S X$, and we define $M'$ analogously. We consider the commutative diagram 
 \[
  \xymatrix{
   & & \Delta_k' \ar[d] \ar[dl] & \Delta'_0 \ar[d] \ar[l] \ar@/^3em/[ddd]^g \\
   & M' \ar[dl]_{q'} \ar[d]^{h'} & M' \times_M \Delta_k \ar[d] \ar[l] & M' \times_M \Delta_0 \ar[d] \ar[l] \\
   X' \ar[d]^g & X' \times_S X \times_S ... \times_S X \ar[d]^h \ar[l] & X' \times_X \Delta_k \ar[d] \ar[l] & X' \times_X \Delta_0 \ar[d] \ar[l] \\
   X & M \ar[l]^q & \Delta_k \ar[l] & \Delta_0 \ar[l] \\
  }
 \]
 where $q: M \to X$ and $q': M' \to X'$ are the projections onto the first factors, $\Delta_0 \to M$ and $\Delta_0' \to M'$ are the diagonal embeddings, the lower and middle squares are Cartesian, and $\Delta_k \to M$ as well as $\Delta'_k \to M'$ are closed immersions defined by $\I_\Delta^{k + 1}$ respective $\I_{\Delta'}^{k + 1}$. The lower and middle vertical arrows are \'etale. Since $g$ is \'etale as well, we find that $\Delta_0' \to M' \times_M \Delta_0$ is \'etale. Since this map is also a closed immersion because $\Delta_0' \to X'$ is an isomorphism, we find that $\Delta_0' \to M' \times_M \Delta_0$ is the embedding of a connected component. Note that $M' \times_M \Delta_0 \to M'$ is defined by $\cO_{M'} \cdot \I_{\Delta}$, and $\Delta_0' \to M'$ is defined by $\I_{\Delta'}$. Thus, these two ideals coincide on the open subset $U' := M' \setminus (M' \times_M \Delta_0 \setminus \Delta'_0)$, and hence $\cO_{M'} \cdot \I_\Delta^{k + 1}$ coincides with $\I_{\Delta'}^{k + 1}$ on $U'$. Thus, $\Delta_k' \to M' \times_M \Delta_k$ is also the inclusion of a connected component; in particular, this map is an \'etale closed immersion. Then the base change $P \to M' \times_M \Delta_0$ of this map to $M' \times_M \Delta_0$ is an \'etale closed immersion, and $\Delta_0' \to P$ is \'etale. This map is also a closed immersion because $\Delta_0' \to X' \times_X \Delta_0$ is an isomorphism. It is then easy to see that the upper right square is Cartesian as well. Now $\Delta'_k \to X' \times_X \Delta_k$ is a closed immersion because it is the thickening of a closed immersion, see \cite[09ZW]{stacks}, and then it must be an isomorphism because it is \'etale and a homeomorphism. In a nutshell, the outer square in 
 \[
  \xymatrix{
   X' \ar[d]^g & M' \ar[l]^{q'} \ar[d]^p & \Delta'_k \ar[d]^{p_k} \ar[l]^{i'} \\
   X & M \ar[l]^q & \Delta_k. \ar[l]^i \\
  }
 \]
 is Cartesian. After setting $B = R \otimes_A F_1 \otimes_A ... \otimes_A F_n$ as a module on $M$ and $B' = R' \otimes_A (R' \otimes_R F_1) \otimes_A ... \otimes_A (R' \otimes_R F_n)$ as a module on $M'$, we have 
 \begin{align}
  g^*\mathrm{P}^k_{R/A}(F_1,...,F_n) &\cong g^*(q \circ i)_*i^*B \cong (q' \circ i')_*p_k^*i^*B \nonumber \\
  &\cong (q' \circ i')_*(i')^*B' \cong \mathrm{P}^k_{R'/A}(R' \otimes_R F_1,...,R' \otimes_R F_n) \nonumber
 \end{align}
 because $g$ is \'etale,\footnote{Note that flat base change holds when the other morphism is qcqs; since $\Delta_k \to X$ is a morphism of affine schemes, this is always true.} and this isomorphism coincides with the canonical isomorphism.
\end{proof}

\'Etale localization is compatible with multi-compositions and with base change $A \to B$.

\section{Multilinear differential operators on schemes}
\index{multilinear differential operators!on schemes}

Let $f: X \to S$ be a morphism of schemes, and let $\F_1,...,\F_n$ as well as $\G$ be quasi-coherent $\cO_X$-modules.

\begin{defn}
 Let 
 $$\mu: \F_1 \times ... \times \F_n \to \G$$
 be an $f^{-1}(\cO_S)$-multilinear map. We say that it is a \emph{multilinear differential operator of total order $0$} if it is $\cO_X$-multilinear. Then we say inductively that it is a \emph{multilinear differential operator of total order $k$} if, for every open $V \subseteq X$, every $1 \leq \ell \leq n$, and every $r \in \Gamma(V,\cO_X)$, the map
 $$\Psi_{r;\ell}(\mu) := \mu \circ L_{r;\ell} - L_r \circ \mu: \enspace \F_1|_V \times ... \times \F_n|_V \to \G|_V$$
 is a multilinear differential operator of total order $k - 1$. Multilinear differential operators of total order $k$ form a sheaf $\Diff^k_{X/S}(\F_1,...,\F_n;\G)$ of $\cO_X$-modules via $r \cdot \mu := L_r \circ \mu$.
 
 Similarly, we say that $\mu$ is a \emph{multilinear differential operator of order $(0,...,0)$} if it is $\cO_X$-multilinear. Then, for a tuple $\underline k = (k_1,...,k_n)$ with $k_\ell \geq 0$ for all $1 \leq \ell \leq n$, we say that $\mu$ is a \emph{multilinear differential operator of order $\underline k$} if $\Psi_{r;\ell}(\mu)$ is a multilinear differential operator of order $(k_1,...,k_\ell - 1,...,k_n)$ for all open $V \subseteq X$, all $r \in \Gamma(V,\cO_X)$, and all $1 \leq \ell \leq n$ with $k_\ell \geq 1$, and if it is $\cO_X$-linear in the $\ell$-th entry for $\ell$ with $k_\ell = 0$. We denote the sheaf of multilinear differential operators of order $\underline k$ by $\Diff^{\underline k}_{X/S}(\F_1,...,\F_n;\G)$. It is an $\cO_X$-module via $r \cdot \mu := L_r \circ \mu$.
\end{defn}

\begin{prop}\label{multilin-diff-op-shvs}\note{multilin-diff-op-shvs}
 In the above situation, there is a quasi-coherent sheaf $\cP^k_{X/S}(\F_1,...,\F_n)$ on $X$ together with an $f^{-1}(\cO_S)$-multilinear differential operator 
 $$\mu_{X/S}^u: \F_1 \times ... \times \F_n \to \cP^k_{X/S}(\F_1,...,\F_n)$$
 which induces an isomorphism $\cH om_{\cO_X}(\cP^k_{X/S}(\F_1,...,\F_n),\G) \cong \Diff_{X/S}^k(\F_1,...,\F_n;\G)$ for every quasi-coherent sheaf $\G$ via $h \mapsto h \circ \mu^u_{X/S}$. If $b: T \to S$ is a morphism of schemes and $c: Y := X \times_S T \to X$ is the fiber product, then we have a canonical isomorphism 
 $$C^k_{T/S}: c^*\cP^k_{X/S}(\F_1,...,\F_n) \cong \cP^k_{Y/T}(c^*\F_1,...,c^*\F_n)$$
 which induces a pull-back map 
 $$c^*: \enspace \Diff^k_{X/S}(\F_1,...,\F_n;\G) \to c_*\Diff^k_{Y/T}(c^*\F_1,...,c^*\F_n;c^*\G)$$
 for every quasi-coherent sheaf $\G$.
\end{prop}
\begin{proof}
 First, we fix an affine open $U \subseteq X$ such that there is some affine open $V \subseteq S$ with $U \subseteq f^{-1}(V)$. Let $A = \Gamma(V,\cO_S)$ and $R = \Gamma(U,\cO_X)$. If $S \subset R$ is a multiplicative system, let $U_S := \Spec (S^{-1}R) \subseteq U$. We set 
 $$\Gamma(U_S,\cP^k_{U/S}(\F_1,...,\F_n)) := \mathrm{P}^k_{S^{-1}R/A}(S^{-1}F_1,...,S^{-1}F_n)$$
 where $F_i := \Gamma(U,\F_i)$. By Lemma~\ref{loc-A-compat}, this is independent of the choice of $V$. By Lemma~\ref{diff-loc-compat}, we can set $\cP^k_{U/S}(\F_1,...,\F_n)$ to be the quasi-coherent sheaf associated with $\mathrm{P}^k_{R/A}(F_1,...,F_n)$. By Lemma~\ref{diff-ext-loc}, we obtain an $A$-multilinear differential operator 
 $$\mu_{U/S}^u: \F_1|_U \times ... \times \F_n|_U \to \cP^k_{U/S}(\F_1,...\F_n)$$
 first on every $U_S$, and then, by the sheaf property, on every open subset of $U$. Another application of Lemma~\ref{loc-A-compat} together with a careful topological argument shows that $\mu^u_{U/S}$ is $f^{-1}(\cO_S)$-multilinear. By induction on $k$, we show that a map is a multilinear differential operator of total order $k$ on the level of sheaves if and only if it is on all affine opens $U_S$, so $\mu_{U/S}^u$ is indeed an $f^{-1}(\cO_S)$-multilinear differential operator of total order $k$. This also shows 
 $$\Gamma(U_S,\Diff^k_{X/S}(\F_1,...,\F_n;\G)) = \mathrm{Diff}^k_{S^{-1}R/A}(S^{-1}F_1,...,S^{-1}F_n;S^{-1}G).$$
 In particular, the homomorphism  
 $$\cH om_{\cO_U}(\cP^k_{U/S}(\F_1,...,\F_n),\G) \to \Diff_{U/S}^k(\F_1|_U,...,\F_n|_U;\G|_U);$$
 induced by $\mu_{U/S}^u$ is an isomorphism.
 
 If $U_1, U_2 \subseteq X$ are two affine open subsets as above, then, by the universal property, $\cP^k_{U_1/S}(\F_1,...,\F_n;\G)$ and $\cP^k_{U_2/S}(\F_1,...,\F_n;\G)$ are canonically isomorphic on the overlap $U_1 \cap U_2$; in particular, there is a unique gluing $\cP^k_{X/S}(\F_1,...,\F_n;\G)$ to a quasi-coherent sheaf on $X$ together with a multilinear differential operator  $\mu_{X/S}^u$ of total order $k$ into it; $\mu_{X/S}^u$ has the desired universal property.
 
 Consider a base change along $b: T \to S$ as in the statement of the proposition. Then we obtain a multilinear differential operator of total order $k$
 $$\nu: \F_1 \times ... \times \F_n \to c_*c^*\F_1 \times ... \times c_*c^*\F_n \to c_*\cP^k_{Y/T}(c^*\F_1,...,c^*\F_n)$$
 relative to $X/S$. Thus, we have unique homomorphism 
 $$h: \cP^k_{X/S}(\F_1,...,\F_n) \to c_*\cP^k_{Y/T}(c^*\F_1,...,c^*\F_n)$$
 with $h \circ \mu_{X/S}^u = \nu$. The adjoint map is the map $C^k_{B/A}$ of Lemma~\ref{P-base-change} on appropriate affine open subsets, so it is an isomorphism; this is the map $C^k_{T/S}$ above. To form the pull-back map $c^*$ on the level of multilinear differential operators, we use the natural map 
 $$c^*\cH om(\cP^k_{X/S}(\F_1,...,\F_n),\G) \to \cH om(c^*\cP^k(\F_1,...,\F_n),c^*\G),$$
 which need not be an isomorphism.\footnote{Already the example of derivations with values in $\cO_X$ shows that $c^*$ does not necessarily induce an isomorphism $c^*\Diff^k_{X/S}(\F_1,...,\F_n;\G) \to \Diff^k_{Y/T}(c^*\F_1,...,c^*\F_n;c^*\G)$.}
\end{proof}
\begin{rem}
 The pull-back map $c^*$ is compatible with multi-compositions. This is because $c^*$ is given by $(-) \otimes_A B$ on affine open subsets $\Spec R \subseteq X$, which is compatible with multi-compositions.
\end{rem}

\begin{var}\label{multilin-diff-op-shvs-var}\note{multilin-diff-op-shvs-var}
 Proposition~\ref{multilin-diff-op-shvs} holds with $k$ replaced by $\underline k = (k_1,...,k_n)$ as well.
\end{var}

\begin{lemma}\label{pp-fg-shvs}\note{pp-fg-shvs}
 Assume that $f: X \to S$ is locally of finite type, and that $\F_1,...,\F_n$ are quasi-coherent sheaves of finite type. Then $\cP^k_{X/S}(\F_1,...,\F_n)$ and $\cP^{\underline k}_{X/S}(\F_1,...,\F_n)$ are quasi-coherent sheaves of finite type.
\end{lemma}
\begin{proof}
 Let $U \subseteq X$ be an affine open as in the proof of Proposition~\ref{multilin-diff-op-shvs}. By further shrinking $U$ and $V$, we can assume that $R$ is a finitely generated $A$-algebra. By \cite[01PB]{stacks}, each $R$-module $F_i$ is finitely generated. Thus, by Lemma~\ref{principal-parts-fg}, $\mathrm{P}^k_{R/A}(F_1,...,F_n)$ is a finitely generated $R$-module, and then $\cP^k_{X/S}(\F_1,...,\F_n)$ is of finite type.
\end{proof}

We also have \'etale localization on the level of schemes.

\begin{lemma}\label{etale-diff-loc-shvs}\note{etale-diff-loc-shvs}\index{multilinear differential operators!\'etale localization}
 In the above situation, let $g: X' \to X$ be \'etale, in particular locally of finite presentation. Then we have a canonical isomorphism 
 $$g^*\cP^k_{X/S}(\F_1,...,\F_n) \to \cP^k_{X'/S}(g^*\F_1,...,g^*\F_n)$$
 which induces a pull-back map 
 $$g^*: \Diff^k_{X/S}(\F_1,...,\F_n;\G) \to g_*\Diff^k_{X'/S}(g^*\F_1,...,g^*\F_n;g^*\G)$$
 for every quasi-coherent sheaf $\G$. This pull-back map is compatible with multi-compositions and base change.
\end{lemma}



\chapter{Spectral sequences}\label{spectral-seq-sec}\note{spectral-seq-sec}

\section{A criterion for $E_1$-degeneration over $\kk$}\label{spectral-seq-k-sec}\note{spectral-seq-k-sec}

Here, we give a criterion for a spectral sequence to degenerate at the page $E_1$, which we use in the proof of Lemma~\ref{QMC-unob-lemma}. The criterion goes at least back to \cite[Defn.~4.13]{KKP2008}, where it is only implicit; it is also claimed explicitly just after \cite[Ass.~5.4]{ChanLeungMa2023}, but none of the two references give a proof. Therefore, we provide the proof here. In fact, we give a more precise criterion which also applies to some form of partial degeneration as defined in Definition~\ref{degeneration-level-k} below.

Let $(B^{\bullet,\bullet},\partial,\bar\partial)$ be a double complex of $\kk$-vector spaces, bounded below in both indices. Let us first review the construction of the spectral sequence. We have inclusions 
$$I_r^{p,q} \subseteq K_r^{p,q} \subseteq B^{p,q}$$
such that $E^{p,q}_{r + 1} = K_r^{p,q}/I_r^{p,q}$, starting with 
$$K_0^{p,q} = \mathrm{ker}(\bar\partial: B^{p,q} \to B^{p,q + 1}), \quad I_0^{p,q} = \mathrm{im}(\bar\partial: B^{p,q - 1} \to B^{p,q}).$$
For $r \geq 1$, an \emph{$r$-zigzag} is a sequence $\alpha_\bullet = (\alpha_1,...,\alpha_{r})$ with $\alpha_i \in B^{p - 1 + i, q + 1 - i}$ and $\bar\partial\alpha_1 = 0$ as well as $\bar\partial\alpha_{i + 1} + \partial\alpha_i = 0$ for $1 \leq i \leq r - 1$. Then we have, for $r \geq 0$, that $\alpha \in K_r^{p,q}$ if and only if there is an $(r + 1)$-zigzag $\alpha_\bullet$ with $\alpha_1 = \alpha$. The differential is given by 
$$d_{r + 1}[\alpha] = [\partial\alpha_{r + 1}] \in E_{r + 1}^{p + r + 1,q - r}.$$
These claims can be proven by induction on $r$, using that $d_{r + 1}[\alpha] \in I_{r}^{p + r + 1, q - r} \subseteq I_{r + 1}^{p + r + 1,q - r}$ if and only if there is an element $\beta^0$ and a sequence of $i$-zigzags $\beta^i_\bullet$ of increasing length such that $0 = d_{r + 1}[\alpha] + \bar\partial\beta^0 + \sum_i \partial\beta^i_i$.

\begin{defn}\label{degeneration-level-k}\note{degeneration-level-k}
 We say that the first spectral sequence of $(B^{\bullet,\bullet},\partial,\bar\partial)$ degenerates at $E_1$ \emph{in $[k - 1,k]$}\index{degeneration!in$[k - 1,k]$} if $d_r^{p,q} = 0$ for $p + q = k - 1$ and $r \geq 1$.
 This is equivalent to $K_0^{p,q} = K_1^{p,q} = ... = K_\infty^{p,q}$ for $p + q = k - 1$. We say that the spectral sequence degenerates \emph{at level $k$}\index{degeneration!at $k$} if it degenerates in $[k - 1,k]$ and in $[k,k + 1]$. This is equivalent to $E^{p,q}_1 = E_2^{p,q} = ... = E^{p,q}_\infty$.
\end{defn}

For the criterion, we form a new double complex $(B^{\bullet,\bullet}\fpsh,\hb \dl,\db)$ of $\kk\fpsh$-modules with 
$$B^{p,q}\fpsh := \left\{\sum_{i = 0}^\infty b_i\hb^i \ \middle| \ b_i \in B^{p,q}\right\},$$
i.e., in general $B^{p,q}\fpsh \not= B^{p,q} \otimes_\kk \kk\fpsh$. They are only equal if $B^{p,q}$ has finite dimension, which we do not assume. However, concerning the total complexes, we have an equality
$$\bigoplus_{p + q = k} B^{p,q}\fpsh = B^k\fpsh := \left\{\sum_{i = 0}^\infty b_i\hb^i \ \middle| \ b_i \in B^k = \bigoplus_{p + q = k} B^{p,q}\right\}$$
because the direct sum is finite. Note that we have modified $\dl$ to $\hb \dl$.

Since $\kk\fpsh$ is a discrete valuation ring, a $\kk\fpsh$-module is flat if and only if it is torsion-free.

\begin{prop}\label{spec-seq-alternative-formulation}\note{spec-seq-alternative-formulation}
 In the above situation, the first spectral sequence of $(B^{\bullet,\bullet},\dl,\db)$ degenerates at $E_1$ in $[k - 1,k]$ for $k \in \ZZ$ if and only if $H^k(B^\bullet\fpsh,\db + \hb \dl)$ is a flat $\kk\fpsh$-module. In particular, the spectral sequence degenerates at $E_1$ if and only if $H^k(B^\bullet\llbracket \hbar \rrbracket, \bar\partial + \hbar\partial)$ is a flat $\kk\llbracket\hbar\rrbracket$-module for all $k$.
\end{prop}
\begin{proof}
 First assume that the first spectral sequence of $(B^{\bullet,\bullet},\dl,\db)$ degenerates at $E_1$ in $[k - 1, ]$, i.e., we have $K^{p,q}_0 = ... = K^{p,q}_\infty$ for $p + q = k - 1$. Consider the double complex $\tilde B^{\bullet,\bullet} := (B^{\bullet,\bullet}\fpsh,\dl,\bar \dl)$ with the unaltered differentials $\dl$ and $\bar\dl$. Then we have $\tilde K_r^{p,q} = K_r^{p,q}\fpsh$ and $\tilde I_r^{p,q} = I_r^{p,q}\fpsh$. 
 
 Next, we consider the double complex $\check B^{\bullet,\bullet} = (B^{\bullet,\bullet}\fpsh,\hb\dl,\bar\dl)$ with the modified differential. We have $\check d_0 = \tilde d_0$, so we find $\check K_0^{p,q} = \tilde K_0^{p,q}$. We already know that $\tilde K_0^{p,q} = \tilde K_1^{p,q} = ... = \tilde K^{p,q}_\infty$ for $p + q = k - 1$. If $\tilde\alpha \in \tilde K^{p,q}_r$, then we can find an $(r + 1)$-zigzag $\tilde\alpha_\bullet$ with $\tilde\alpha_1 = \tilde\alpha$. Setting $\check\alpha_i = \hbar^{i - 1}\tilde\alpha_i$, we obtain an $(r + 1)$-zigzag $\check\alpha_\bullet$ for $\check\alpha := \tilde\alpha$ with respect to the modified double complex $\check B^{\bullet,\bullet}$. Thus, we have $\check\alpha \in \check K^{p,q}_r$, and hence $\check K^{p,q}_r = \tilde K^{p,q}_r = \check K^{p,q}_0$. This shows $\check K^{p,q}_0 = ... = \check K^{p,q}_\infty$ for $p + q = k - 1$, hence $\check I^{p,q}_1 = \check I^{p,q}_2 = ... = \check I^{p,q}_\infty$ for $p + q = k$. In particular, $\check E^{p,q}_\infty$ is a $\kk\fpsh$-submodule of $\check E^{p,q}_1 = \tilde E^{p,q}_1 = E^{p,q}_1\fpsh$ so that $\check E^{p,q}_\infty$ is torsion-free and hence flat. The abutment $H^k(B^\bullet\fpsh, \db + \hb\dl)$ is a flat $\kk\fpsh$-module since it has a filtration by the flat $\kk\fpsh$-modules $\check E^{p,q}_\infty$.

 Conversely, assume that the first spectral sequence of $(B^{\bullet,\bullet},\dl,\bar\dl)$ does \emph{not} degenerate at $E_1$ in $[k - 1,k]$. Let $\alpha \in K^{p,q}_{r - 1}$ be such that $\alpha \notin K^{p,q}_r$ for some $r \geq 1$ and $p + q = k - 1$. Then there is an $r$-zigzag $\alpha_\bullet$ with $\alpha_1 = \alpha$ but no such $(r + 1)$-zigzag. In particular, there is no $\gamma \in B^{p + r,q - r}$ such that $\db\gamma = \beta := -\partial\alpha_r$. Note that $\partial\beta = 0$ and $\bar\partial\beta = 0$, so also $(\bar\partial + \hb\partial)(\beta) = 0$, i.e., $\beta$ defines a class $[\beta]_{tot} \in H^{k}(B^\bullet\fpsh,\db + \hb\dl)$. We have $[\beta]_{tot} \not= 0$ because otherwise there would be a $\gamma$ with $\bar\partial\gamma = \beta$ as above. 
 Now let 
 $$\alpha' := \sum_{i = 1}^r\hb^{i} \alpha_i.$$
 A direct computation shows that $(\db + \hb\dl)(\alpha') = \hb^{r + 1}\beta$. In other words, we have $\hb^{r + 1}[\beta]_{tot} = 0$ so that $H^{k}(B^\bullet\fpsh,\db + \hb\dl)$ has $\hb$-torsion and is not flat.
\end{proof}

\section{Spectral sequences over Artinian local rings}\label{spectral-sequence-A-sec}\note{spectral-sequence-A-sec}

We prove Proposition~\ref{spectral-sequence-Artin-ring} below about the behavior of spectral sequences over Artinian local rings under base change. In this section, $A$ is an Artinian local $\kk$-algebra with residue field $\kk$ and maximal ideal $\m_A \subset A$.

As pointed out in \cite{Wahl1976}, Nakayama's lemma holds for a general $A$-module $M$, not just for finitely generated ones. More precisely, if $M$ is an $A$-module, and $N \subseteq M$ is a submodule such that the induced map $N \to M_0 := M \otimes_A \kk$ is surjective, then $N = M$. 

By \cite[051G]{stacks}, an $A$-module $M$ is flat if and only if it is projective if and only if it is free. By \cite[Lemma~0.2.1]{Wahl1976}, in an exact sequence 
$$0 \to M' \to M \to M'' \to 0$$
of $A$-modules, if two of them are flat, so is the third. In particular, if we have an exact sequence of finite length, and all modules but one are flat, the remaining one is flat as well.
The following result is \cite[Thm.~A.1]{Wahl1976}.

\begin{prop}\label{Artin-complex-base-change}\note{Artin-complex-base-change}
 Let $C^\bullet$ be a complex of flat $A$-modules (not necessarily bounded), and let 
 $$\phi_A^q: H^q(C^\bullet) \otimes_A \kk \to H^q(C^\bullet \otimes_A \kk)$$
 be the canonical map. Then:
 \begin{enumerate}[label=\emph{(\alph*)}]
  \item If $\phi_A^q$ is surjective, then it is an isomorphism.
  \item If $H^q(C^\bullet)$ is flat over $A$, then $\phi_A^q$ is injective.
  \item If $\phi_A^q$ is injective, and $\phi_A^{q - 1}$ is an isomorphism, then $\phi_A^q$ is an isomorphism.
  \item Any two of the following statements together imply the third:
  \begin{enumerate}[label=\emph{(\roman*)}]
   \item $\phi_A^q$ is an isomorphism.
   \item $\phi_A^{q - 1}$ is an isomorphism.
   \item $H^q(C^\bullet)$ is flat over $A$.
  \end{enumerate}
 \end{enumerate}
\end{prop}

This allows us to prove the following result for spectral sequences. Recall that we assume $\partial\bar\partial + \bar\partial\partial = 0$ in a double complex.

\begin{prop}\label{spectral-sequence-Artin-ring}\note{spectral-sequence-Artin-ring}
 Let $(C^{\bullet,\bullet},\partial,\bar\partial)$ be a double complex of flat $A$-modules, bounded from below in both variables. Let $(C^\bullet,\partial + \bar\partial)$ be the associated total complex. Let $C_0^{\bullet,\bullet} := C^{\bullet,\bullet} \otimes_A \kk$, and assume that the first spectral sequence 
 $$'\!E_{0;1}^{p,q} = H^q(C_0^{p,\bullet},\bar\partial) \Rightarrow H^{p + q}_0 := H^{p + q}(C_0^\bullet,\partial + \bar\partial)$$
 associated with $C_0^{\bullet,\bullet}$ degenerates at $E_1$. Assume furthermore that the restriction map 
 $$H^n_A := H^{n}(C^\bullet,\partial + \bar\partial) \to H^n_0 := H^{n}(C_0^\bullet,\partial + \bar\partial)$$
 is surjective for all $n$. Let 
 $$'\!E_{A;1}^{p,q} = H^q(C^{\bullet,\bullet},\bar\partial) \Rightarrow H^{p + q}_A = H^{p + q}(C^\bullet,\partial + \bar\partial)$$
 be the first spectral sequence associated with $C^{\bullet,\bullet}$. We have an induced map $'\!E_{A;r} \to {'\!E}_{0;r}$ of spectral sequences.
 Then the following hold:
 \begin{enumerate}[label=\emph{(\alph*)}]
  \item The filtered pieces $F^pH^n_A$ of $'\!E_{A;r}$ are flat $A$-modules, and the canonical maps 
  $$\rho^p: F^pH^n_A \to F^pH^n_0$$
  are surjective, inducing an isomorphism $F^pH_A^n \otimes_A \kk \cong F^pH_0^n$.
  \item Each $'\!E_{A;r}^{p,q}$ is a flat $A$-module, and the canonical map $'\!E_{A;r}^{p,q} \to {'\!E}_{0;r}^{p,q}$ is surjective, inducing an isomorphism ${'\!E}_{A;r}^{p,q} \otimes_A \kk \cong {'\!E}_{0;r}^{p,q}$. In particular, $H^q(C^{p,\bullet},\bar\partial)$ is a flat $A$-module, and $H^q(C^{p,\bullet},\bar\partial) \to H^q(C_0^{p,\bullet},\bar\partial)$ is surjective, inducing an isomorphism $H^q(C^{p,\bullet},\bar\partial) \otimes_A \kk \cong H^q(C_0^{p,\bullet},\bar\partial)$.
  \item The spectral sequence $'\!E_{A;r}$ degenerates at $E_1$. 
 \end{enumerate}
\end{prop}
\begin{proof}
 Since $A \to \kk$ can be split into a succession of small extensions, we may prove the result by induction on the length of $A$. So let $A \to \bar A$ be a small extension, and assume that the result holds for $\bar A$. Note that $\bar C^{\bullet,\bullet} := C^{\bullet,\bullet} \otimes_A \bar A$ satisfies the assumptions. We obtain an exact sequence 
 $$0 \to C_0^{\bullet,\bullet} \otimes_\kk I \to C^{\bullet,\bullet} \to \bar C^{\bullet,\bullet} \to 0$$
 of double complexes, where $I \subset A$ is the kernel of $A \to \bar A$. First, we show that 
 $$\rho^p: F^pH^n_A \xrightarrow{r^p} F^pH^n_{\bar A} \to F^pH^n_0$$
 is surjective. By the induction hypothesis, it is sufficient to show that $r^p$ is surjective. Let $d = \partial + \bar\partial$ for short. For $p << 0$, we have $F^pH^n = H^n$, so in this case $r^p$ is surjective. To show that in fact all restrictions $r^p$ are surjective, assume the contrary and let $p$ be the lowest value such that $r^p$ is not surjective. Let $\bar c = (\bar c^{i,j})_{i \geq p}$ with $\bar c^{i,j} \in \bar C^{i,j}$ and $d(\bar c) = 0$ be a representative of a class $[\bar c] \in F^pH^n(\bar C^\bullet,d)$ which is not in the image of $r^p$. By assumption, $r^{p - 1}$ is surjective. Thus, we can find a $(\tilde c^{i,j})_{i \geq p - 1}$ with $\tilde c^{i,j} \in C^{i,j}$ and $d(\tilde c) = 0$ such that $[\tilde c]|_{\bar A} = [\bar c]$. This means that $\tilde c|_{\bar A} + d(\bar b) = \bar c$ for some $\bar b \in \bar C^{n - 1}$. Since $C^{n - 1} \to \bar C^{n - 1}$ is surjective, we can find a lift $b \in C^{n - 1}$ of $\bar b$. When we set $c := \tilde c + db \in C^n$, then this element satisfies $d(c) = 0$ and $c|_{\bar A} = \bar c$. In particular, for $i < p$, we have $c^{i,j} \in I \cdot C^{i,j} = C_0^{i,j} \otimes_\kk I$. We construct a new element $e \in I \cdot C^n$. For $i < p$, we set $e^{i,j} = c^{i,j}$. Let $m$ be the smallest index with $e^{m,n-m} \not= 0$. By the construction of the differentials $d_r$ of the spectral sequence $'\!E_{0;r}$, 
 $$\pm\bar\partial e^{m + r,n - m - r} = \mp \partial e^{m + r - 1,n - m - r + 1}$$
 represents $d_r[e^{m,n - m}]$ until we reach $e^{p - 1,n - p + 1}$. However, by assumption, $'\!E_{0;r}$ degenerates at $E_1$ so that $d_r[e^{m,n - m}] = 0$. Thus, we can construct all further $e^{i,j}$ for $i \geq p$ by choosing a preimage in $C_0^{i,j} \otimes_\kk I$ of $-\partial e^{i - 1, j + 1}$ under $\bar\partial$, which exists because $[-\partial e^{i - 1,j + 1}] = \pm d_r[e^{m,n - m}]$ is the zero class. This element $e \in C_0^n \otimes_\kk I$ satisfies $d(e) = 0$, and we have $(c - e)^{i,j} = 0$ for $i < p$. Thus, $d(c - e) = 0$, and $c - e$ defines a class in $F^pH^n_A$ with $r^p[c - e] = [\bar c]$. Hence, $r^p$ is surjective.
 
 Now since $F^pH^n_A \to F^pH^n_0$ is surjective, the induced map 
 $$'\!E_{A;\infty}^{p,q} = F^pH_A^{p + q}/F^{p + 1}H^{p + q}_A \to F^pH^{p + q}_0/F^{p + 1}H^{p + q}_0 = {'\!E}_{0;\infty}^{p,q}$$
 is surjective as well. By assumption, we have $'\!E_{0;\infty}^{p,q} = {'\!E}_{0;1}^{p,q}$, and $'\!E_{A;\infty}^{p,q} = Z_{A;\infty}^{p,q}/B_{A;\infty}^{p,q}$ is a subquotient of $'\!E_{A;1}^{p,q}$ for $A$-submodules $B_{A;\infty}^{p,q} \subseteq Z_{A;\infty}^{p,q} \subseteq {'\!E}_{A;1}^{p,q}$. In particular, $Z_{A;\infty}^{p,q} \to {'\!E}_{0;1}^{p,q}$ is surjective, and Nakayama's lemma (as discussed at the beginning of the section) shows that $Z_{A;\infty}^{p,q} = {'\!E}_{A;1}^{p,q}$. Then $d_1: {'\!E}_{A;1}^{p,q} \to {'\!E}_{A;1}^{p + 1,q}$ must be the zero map, for otherwise $Z_{A;\infty}^{p,q} \subsetneq {'\!E}_{A;1}^{p,q}$. Since this holds for all $(p,q)$, we have $d_1 = 0$. Repeating this argument step by step, we find $d_r = 0$ for all $r \geq 1$; in particular, $'\!E_{A;r}$ degenerates at $E_1$. 
 
 Now we have $'\!E_{A;\infty}^{p,q} = {'\!E}_{A;1}^{p,q}$ due to degeneration at $E_1$. Thus, the induced map $'\!E_{A;1}^{p,q} \to {'\!E}_{0;1}^{p,q}$ is surjective. Proposition~\ref{Artin-complex-base-change} applied to $(C^{p,\bullet},\bar\partial)$ yields that $'\!E_{A;1}^{p,q} \otimes_A \kk \cong {'\!E}_{0;1}^{p,q}$ is an isomorphism, and that $'\!E_{A;1}^{p,q}$ is a flat $A$-module. Since $'\!E_{A;r}^{p,q} = {'\!E}_{A;1}^{p,q}$, they are also flat and surject to $'\!E_{0;r}^{p,q}$, inducing isomorphisms. 
 
 Since $'\!E_{A;\infty}^{p,q} \cong {'\!E}_{A;1}^{p,q}$ is flat, and since $H^n_A$ is flat as well by Proposition~\ref{Artin-complex-base-change} applied to $(C^\bullet,d)$, the filtered pieces $F^pH^n_A$ must be flat.  Finally, we show $F^pH^n_A \otimes_A \kk \cong F^pH^n_0$ by induction on $p$.
 For $p << 0$, we have $F^pH^n_A = H^n_A$, so it follows from Proposition~\ref{Artin-complex-base-change} applied to $(C^\bullet,d)$. Then we have an exact sequence 
 $$0 \to F^{p + 1}H^n_A \to F^pH^n_A \to {'\!E}_{A;\infty}^{p,n - p} \to 0$$
 with only flat modules so that it remains exact after applying $(-) \otimes_A \kk$. This shows that $F^{p + 1}H_A^n \otimes_A \kk \to F^{p + 1}H^n_0$ is injective, and surjectivity follows from surjectivity of $F^{p + 1}H^n_A \to F^{p + 1}H^n_0$.
\end{proof}

\begin{cor}\label{spectral-sequence-Artin-ring-base-change}\note{spectral-sequence-Artin-ring-base-change}
 In the above situation, let $A \to B$ be a homomorphism of Artinian local $\kk$-algebras with residue field $\kk$. Then the induced maps 
 $$H^q(C^{p,\bullet},\bar\partial) \otimes_A B \to H^q(C^{p,\bullet} \otimes_A B,\bar\partial)$$
 and $F^pH^n_A \otimes_A B \to F^pH^n_B$ are isomorphisms of $B$-modules.
\end{cor}
\begin{proof}
 Let $M_A$ be any of the modules over $A$ in the statement, and let $M_B$ respective $M_0$ be their variant over $B$ respective $\kk$. Since $M_A \to M_0$ is surjective, the induced map $M_A \otimes_A B \to M_0$ is surjective as well; then the image of $M_A \otimes_A B \to M_B$ surjects onto $M_0 \cong M_B \otimes_B \kk$, so $M_A \otimes_A B \to M_B$ is surjective by Nakayama's lemma. Since $M_B$ is a flat $B$-module, the inclusion of the kernel $K$ of $M_A \otimes_A B \to M_B$ is universally injective. Thus $K \otimes_B \kk = 0$ and hence $K = 0$.
\end{proof}



\chapter{Analytification}\label{analyt-sec}\note{analyt-sec}

Let $X/\CC$ be a scheme locally of finite type. By Serre's famous article \cite{SerreGAGA} and subsequent work, we have a complex analytic space $X^{an}$ together with a map $\varphi: X^{an} \to X$ of locally $\CC$-ringed spaces which is universal among all morphisms of locally $\CC$-ringed spaces from a complex analytic space $\Z$ to $X$; see \cite[XII]{Grothendieck2003} for a nice exposition. In this chapter, we prove the existence of a unique analytification of first-order differential operators between quasi-coherent sheaves in Proposition~\ref{analytify-diff-op-1}. Furthermore, we extend the classical comparison of the algebraic and analytic cohomologies on a proper scheme $X$ from coherent sheaves to quasi-coherent sheaves in Lemma~\ref{qcoh-gaga}. For neither of the two results we could find a reference. Both results are needed to show Theorem~\ref{perfect-G-calc-log-toroidal} in Chapter~\ref{perfect-G-calc-sec}.

We write $\cO_X^{an}$ for the structure sheaf of $X^{an}$. This notation is less heavy than $\cO_{X^{an}}$, and it is correct in that $\cO_{X^{an}} \cong \varphi^*\cO_X$.

\section{Preliminaries}\label{analyt-preliminaries-sec}\note{analyt-preliminaries-sec}

\textbf{The sequence topology.} As described in \cite{GR1971}, every ring of convergent power series $K_n := \CC\{x_1,...,x_n\}$ carries a special topology induced from an exhaustion by Banach algebras, the \emph{sequence topology}.\index{sequence topology} More precisely, for every $t = (t_1,...,t_n) \in \RR^n_+$, we have a map 
$$|| \cdot ||_t: \CC\llbracket x_1,...x_n\rrbracket \to \RR_{\geq 0} \cup \{\infty\}, \quad \sum_{\nu} a_\nu x^\nu \mapsto \sum_\nu |a_\nu| \cdot t^\nu;$$
then $B_t\{x_1,...,x_n\} := \{f \in \CC\llbracket  x_1,...,x_n\rrbracket \ | \ || f ||_t < \infty\}$ is a Banach algebra, and $K_n$ is the union of these for all $t \in \RR^n_+$.

The sequence topology can be extended in a canonical way to all analytic $\CC$-algebras $A$ such that every ideal $I \subseteq A$ is closed, and in every surjection $\phi: A \to A/I$ of analytic $\CC$-algebras, the sequence topology on $A/I$ is the quotient topology of the one on $A$. Every homomorphism $\phi: A \to B$ of analytic $\CC$-algebras is continuous. Furthermore, it can be canonically extended to all finitely generated $A$-modules $M$ such that $A^{\oplus n}$ carries the product topology, in every surjection $h: M \to N$, the sequence topology on $N$ is the quotient of the one on $M$, and all homomorphisms $h: M \to N$ of finitely generated $A$-modules are continuous. Every injection $h: M \to N$ exhibits $M$ as a closed subset of $N$. The sequence topology is always Hausdorff.

We extend the sequence topology to arbitrary $A$-modules as follows: Every $A$-module $M$ is the direct limit of the system $M_\alpha$ of its finitely generated submodules. Then we endow $M$ with the colimit topology of this directed system. This construction has the following properties:

\begin{lemma}
 Let $A$ be an analytic $\CC$-algebra, and let $M$ be an $A$-module. 
 \begin{enumerate}[label=\emph{(\alph*)}]
  \item Let $(M_i)_{i \in I}$ be a directed system of finitely generated $A$-modules with colimit $M$. Then the colimit topology of the system $(M_i)_i$ coincides with the colimit topology of the system $(M_\alpha)_\alpha$.
  \item Let $h: M \to N$ be $A$-linear. Then $h$ is continuous for the sequence topologies on $M$ and $N$.
  \item If $p: M \to N$ is $A$-linear and surjective, then the sequence topology on $N$ is the quotient of the sequence topology on $M$.
  \item If $i: M \to N$ is $A$-linear and injective, then $M$ is a closed subset of $N$, and the sequence topology on $M$ is the induced topology from the sequence topology on $N$.
 \end{enumerate}
\end{lemma}
\begin{proof}
 All statements are more or less straightforward. Let $M_{\alpha(i)}$ be the image of $M_i$ under the map $f_i: M_i \to M$ of the directed system $(M_i)_i$, and let $\bar f_i: M_i \to M_{\alpha(i)}$ be the induced map. Let $U \subseteq M$ be a subset. Then $f_i^{-1}(U) = \bar f_i^{-1}(U \cap M_{\alpha(i)})$. Since the sequence topology on $M_{\alpha(i)}$ is the quotient topology of the sequence topology on $M_i$, we have that $U \cap M_{\alpha(i)}$ is open in $M_{\alpha(i)}$ if and only if $f_i^{-1}(U)$ is open in $M_i$. Now if $U$ is open in the sequence topology on $M$, then $U \cap M_{\alpha(i)}$ is open in $M_{\alpha(i)}$, thus $f_i^{-1}(U)$ is open in $M_i$, and hence $U$ is open in the colimit topology of $(M_i)_i$. Conversely, if $U$ is open in the colimit topology of $(M_i)_i$, then $U \cap M_{\alpha(i)}$ is open in $M_{\alpha(i)}$ for all $i$. Since $M$ is the colimit of $(M_i)_i$, for every $M_\alpha$, there is some $i$ with $M_\alpha \subseteq M_{\alpha(i)}$. Then $U \cap M_\alpha$ is open in $M_\alpha$ because $M_\alpha \to M_{\alpha(i)}$ is continuous, so $U$ is open in the sequence topology on $M$. The proofs of the other statements are similar.
\end{proof}
\begin{rem}
 Every open in the product topology on $M \times N = M \oplus N$ is open in the sequence topology. However, the converse is not clear.\footnote{In general, filtered colimits do not commute with finite products in the category of topological spaces. This holds for compactly generated Hausdorff spaces (maybe under some additional hypotheses on the filtered system and the maps between the spaces), but, in general, the sequence topology even on a finitely generated $A$-module $M$ is not compactly generated.} Correspondingly, it is not clear if the sequence topology on $M$ is Hausdorff if $M$ is not finitely generated.
\end{rem}

\par\vspace{\baselineskip}

\noindent\textbf{Quasi-coherent analytic sheaves.}\index{analytic sheaf!quasi-coherent}
Let $\X$ be a complex analytic space. By definition, a sheaf $\F$ of $\cO_\X$-modules is coherent if it is locally of finite presentation. We say that it is \emph{quasi-coherent} if it admits locally some presentation, and it is \emph{locally generated by sections} if we can find an open cover $\{\U_i\}_i$ of $\X$ such that $H^0(\U_i,\F) \otimes_\CC \cO_{\U_i} \to \F|_{\U_i}$ is surjective. 

Following \cite[Defn.~1.1.6]{KashiwaraKawai1981}, we say that a sheaf $\F$ of $\cO_\X$-modules is \emph{pseudo-coherent}\index{analytic sheaf!pseudo-coherent}\footnote{Compare this also with the usage of the term in \cite{BourbakiAC12}. In \cite{stacks}, the term is used for a different concept.} if, for every open subset $\U \subseteq \X$, every $\cO_\X$-submodule $\E \subseteq \F|_\U$ which is locally of finite type is actually coherent. 

Renaming for a moment the concept of quasi-coherence in \cite[Defn.~2.1.1]{Conrad2006}, we say that a sheaf $\F$ of $\cO_\X$-modules is \emph{locally ind-coherent} if we can find an open cover $\{\U_i\}_i$ of $\X$ such that each $\F|_{\U_i}$ is a (small filtered) direct limit of a system $\{\F_{i,\alpha}\}$ of coherent $\cO_{\U_i}$-modules. It follows from the proof of \cite[Lemma~2.1.8]{Conrad2006} that the image of an $\cO_\X$-linear map $h: \E \to \F$ from a coherent sheaf $\E$ to a locally ind-coherent sheaf $\F$ is coherent. Thus, every locally ind-coherent sheaf is pseudo-coherent.

Cartan's Theorem A states that $H^0(\X,\F) \otimes_\CC \cO_\X \to \F$ is surjective if $\X$ is a Stein space and $\F$ a coherent analytic sheaf.\footnote{One might be tempted to believe that a finite number of global sections in $H^0(X,\F)$ would be sufficient to generate $\F$. However, this is not true. For an example, take a sheaf of ideals $\I$ in $\cO_\CC$ generated by a sequence of functions as in the example in \cite[p.~181]{Henriksen1952}. The ideal sheaf $\I$ is coherent because, on every bounded domain $U \subseteq \CC$, finitely many of the generators are sufficient. But globally, there is no finite number of sections generating $\I$ because they would also generate $\I(\CC) \subseteq \cO_\CC(\CC)$, which is not the case because this ideal is not finitely generated.} Thus, after refining the open cover $\{\U_i\}$ in the definition of a locally ind-coherent sheaf $\F$ such that each $\U_i$ is a Stein space, we find that a locally ind-coherent sheaf $\F$ is also locally generated by sections. Conversely, if a sheaf $\F$ of $\cO_\X$-modules is both pseudo-coherent and locally generated by sections, then it is locally ind-coherent.

By \cite[Lemma~2.1.9]{Conrad2006}, kernels, cokernels, extensions, and tensor products of locally ind-coherent sheaves are locally ind-coherent. In particular, a locally ind-coherent\index{analytic sheaf!(locally) ind-coherent} sheaf is quasi-coherent in the usual sense because the kernel of the surjection $H^0(\U_i,\F) \otimes_\CC \cO_{\U_i} \to \F|_{\U_i}$ is locally generated by sections. Conversely, a quasi-coherent analytic sheaf $\F$ is locally the cokernel of a map between locally ind-coherent sheaves, so every quasi-coherent sheaf is locally ind-coherent. Thus, the two notions are equivalent.

If $\F$ is a quasi-coherent analytic sheaf on $\X$, then, for every $x \in \X$, we endow the stalk $\F_x$ with the sequence topology constructed above. If $h: \F \to \G$ is an $\cO_\X$-linear map of quasi-coherent analytic sheaves, then the induced map $h: \F_x \to \G_x$ is continuous.

If $X/\CC$ is a scheme locally of finite type and $\F$ is a quasi-coherent sheaf, then $\F^{an}$ is a quasi-coherent analytic sheaf.

\par\vspace{\baselineskip}

\noindent\textbf{A GAGA result for quasi-coherent sheaves.}\index{analytic sheaf!GAGA} Let $X/\CC$ be a scheme of finite type with analytification $X^{an}$. For every $\cO_X$-module $\F$, there is a canonical and functorial map 
\begin{equation}\label{cohom-comp}
 H^p(X,\F) \to H^p(X^{an},\F^{an}). 
\end{equation}
As is well-known, this map is an isomorphism if $X/\CC$ is proper and $\F$ is coherent. However, it is true for quasi-coherent sheaves as well.\footnote{Although Lemma~\ref{qcoh-gaga} is in the preliminaries section, it is not needed in the proof of Proposition~\ref{analytify-diff-op-1} below. Instead, we need it directly in the proof of Theorem~\ref{perfect-G-calc-log-toroidal}.}

\begin{lemma}\label{qcoh-gaga}\note{qcoh-gaga}
 Let $X/\CC$ be proper, and let $\F$ be a quasi-coherent $\cO_X$-module. Then the map in \eqref{cohom-comp} is an isomorphism of $\CC$-vector spaces.
\end{lemma}
\begin{proof}
 Since $X$ is Noetherian, $\F$ is the direct limit of its coherent subsheaves. Thus, we have a directed system $\{\F_\alpha\}$ of coherent sheaves with colimit $\F$. Since $(-)^{an}$ is left adjoint to $\varphi_*$, it preserves colimits. Thus, $\F^{an}$ is the colimit of the directed system $\{\F_\alpha^{an}\}$ of coherent analytic sheaves. Since the map in \eqref{cohom-comp} is functorial, we get a commutative diagram 
 \[
  \xymatrix{
   \varinjlim_\alpha H^p(X,\F_\alpha) \ar[r] \ar[d] & H^p(X,\F) \ar[d] \\
   \varinjlim_\alpha H^p(X^{an},\F_\alpha^{an}) \ar[r] & H^p(X^{an},\F^{an}) \\
  }
 \]
 of cohomology $\CC$-vector spaces. The upper horizontal arrow is an isomorphism by \cite[01FF]{stacks} because $X$ is a Noetherian topological space. The left vertical arrow is an isomorphism because \eqref{cohom-comp} is an isomorphism for each coherent sheaf $\F_\alpha$. Since $X^{an}$ is a compact Hausdorff space, the lower horizontal arrow is an isomorphism by \cite[Thm.~4.12.1]{Godement1973}; namely, cohomology with compact support is the same as cohomology on a compact space.
\end{proof}

\par\vspace{\baselineskip}

\noindent\textbf{Density.} Let $X$ be an affine scheme of finite type over $\CC$, with analytification $X^{an}$. Let $\F$ be a quasi-coherent sheaf on $X$, and let $x \in X^{an}$ be a point. Then we have an induced map $q: H^0(X,\F) \to \F_{\varphi(x)} \to \F_x^{an}$.

\begin{lemma}\label{density-lemma}\note{density-lemma}
 The image of $q: H^0(X,\F) \to \F_x^{an}$ is dense in $\F_x^{an}$ for the sequence topology.
\end{lemma}
\begin{proof}
 First, consider the map $q: P_n := \CC[x_1,...,x_n] \to \CC\{x_1,...,x_n\} =: K_n$. Let $A \subseteq K_n$ be a closed subset which contains the image of $q$. Then, for every $t \in \RR^n_+$, the intersection $A \cap B_t\{x_1,...,x_n\}$ is a closed subset of $B_{t;n} := B_t\{x_1,...,x_n\}$ which contains the image of $q$ as well. If $f \in B_{t;n}$, set $f_k := \sum_{|\nu| \leq k} a_\nu x^\nu \in \CC[x_1,...,x_n] \subseteq A \cap B_{t;n}$, where $|\nu| := \nu_1 + ... + \nu_n$. Then $(f_k)_k$ converges to $f$, so $f \in A \cap B_{t;n}$ because $A$ is closed, and thus $A \cap B_{t;n} = B_{t;n}$. This implies $A = K_n$.
 
 Next, let $R = \Gamma(X,\cO_X)$ be the coordinate ring of $X$. Then we can find a presentation $R = \CC[x_1,...,x_n]/I$. After a coordinate transformation, we can assume that $x \in X$ corresponds to $\{x_1 = ... = x_n = 0\}$. The analytic local ring of $X^{an}$ at $x$ is $K_n/I \cdot K_n$, and we have a commutative diagram 
 \[
  \xymatrix{
   \CC[x_1,...,x_n] \ar[d] \ar[r] & \CC\{x_1,...,x_n\} \ar[d] \\
   \CC[x_1,...,x_n]/I \ar[r] & K_n/I \cdot K_n. \\
  }
 \]
 Since $K_n \to K_n/I \cdot K_n$ is continuous and surjective, we find that the image of $\CC[x_1,...x_n]/I$ is dense.
 
 Considering the product topology on $\cO_{X,x}^{an,\oplus k}$, we obtain the claim for finite free $\cO_X$-modules. Then, we obtain it for coherent sheaves $\F$ from some surjection $\cO_X^{\oplus k} \to \F$ since $\cO_{X,x}^{an,\oplus k} \to \F_x^{an}$ is continuous and surjective. Finally, if $\F$ is quasi-coherent, we have $\F = \varinjlim \F_\alpha$ for the coherent subsheaves $\F_\alpha$ of $\F$, and at the same time, we have $\F_x^{an} = \varinjlim \F_{\alpha,x}^{an}$. Since the image of $H^0(X,\F_\alpha) \to \F_{\alpha,x}^{an}$ is dense, we obtain the claim for $\F$ because the sequence topology on $\F_x^{an}$ is the colimit topology of the sequence topologies on the $\F_{\alpha,x}^{an}$.
\end{proof}

\par\vspace{\baselineskip}

\noindent\textbf{Locally bounded operators.}\index{operator!locally bounded}
For technical reasons, we need to introduce the following notion. Let $\F$ and $\G$ be two pseudo-coherent analytic sheaves on $\X$. Then a map $h: \F \to \G$ of sheaves of sets, not necessarily $\cO_\X$-linear, is called \emph{locally bounded} if the following holds: For every open subset $\U \subseteq \X$ and every coherent analytic subsheaf $\E$ of $\F|_\U$, there is an open cover $\{\U_i\}$ of $\U$ and a coherent analytic subsheaf $\E'_i$ of $\G|_{\U_i}$ such that $h$ maps $\E|_{\U_i}$ into $\E'_i \subseteq \G|_{\U_i}$.

\par\vspace{\baselineskip}

\noindent\textbf{Derivations and analytification.} If $f: \X \to \cS$ is a morphism of complex analytic spaces, then there is a coherent analytic sheaf $\Omega^1_{\X/\cS}$ together with a canonical $f^{-1}(\cO_\cS)$-linear derivation 
$$d^{an}: \cO_\X \to \Omega^1_{\X/\cS}.$$
Unlike the simpler algebraic case, it is only universal among $f^{-1}(\cO_\cS)$-linear derivations from $\cO_\X$ into a coherent analytic sheaf $\E$, but (most likely) not among derivations into an arbitrary $\cO_\X$-module. According to \cite[Exp.~14]{SemCartan13}, which unfortunately does not discuss the universal property, the canonical derivation can be constructed as follows: Let 
$$\Delta: \X \to \cP := \X \times_\cS \X$$
be the diagonal, and let $\I$ be its ideal sheaf as a closed analytic subspace. The two projections give rise to two maps $\mathrm{pr}_1^*,\mathrm{pr}_2^*: \cO_\X \to \cO_\cP$, whose composition with $\cO_\cP \to \cO_\Delta$ is the same. Then $d^{an} = \mathrm{pr}_2^* - \mathrm{pr}_1^*$ is their difference, considered as a map $\cO_\X \to \I/\I^2$. The latter is supported on $\Delta \subseteq \cP$, so it can be considered as a sheaf on $\X$.

Now let $f: X \to S$ be a morphism of finite type between schemes $X$ and $S$ which are of finite type over $\CC$. Let $f^{an}: X^{an} \to S^{an}$ be the analytification. The (now for all $\cO_X$-modules) universal derivation 
$$d: \cO_X \to \Omega^1_{X/S}$$
arises from the same construction via the algebraic diagonal $\Delta: X \to P := X \times_S X$. Thus, we have a commutative diagram 
\[
  \xymatrix{
   \varphi_*\cO_X^{an} \ar[r]^{\varphi_*d^{an}} & \varphi_*\Omega^1_{X^{an}/S^{an}} \\
   \cO_X \ar[u] \ar[r]^d & \Omega^1_{X/S} \ar[u] \\
  }
 \]
 and an isomorphism $\varphi^*\Omega^1_{X/S} \cong \Omega^1_{X^{an}/S^{an}}$.

\par\vspace{\baselineskip}

\noindent\textbf{First-order differential operators.}\index{first-order differential operator!analytification} Let $f: \X \to \cS$ be a morphism of complex analytic spaces. Let $\F$ and $\G$ be two quasi-coherent sheaves on $\X$. Then a \emph{differential operator of first order relative to $\X/\cS$} is an $f^{-1}(\cO_\cS)$-linear map $D: \F \to \G$ such that, for every local section $a \in \cO_\X$, the map $D_a: \F \to \G, \: f \mapsto D(af) - aD(f)$, is $\cO_\X$-linear. These operators form a sheaf $\Diff^1_{\X/\cS}(\F,\G)$ of $\cO_\X$-modules under the $\cO_\X$-action given by $(a \cdot D)(f) := a \cdot D(f)$.

\begin{lemma}
 Let $\F$ and $\G$ be coherent analytic sheaves. Then $\Diff^1_{\X/\cS}(\F,\G)$ is coherent.
\end{lemma}
\begin{proof}
 Let $\tilde\Omega^1_{\X/\cS}$ be the sheaf of K\"ahler differential forms of the morphism $f: \X \to \cS$ of locally ringed spaces. Its universal property gives rise to a canonical $\cO_\X$-linear homomorphism $\chi: \tilde \Omega^1_{\X/\cS} \to \Omega^1_{\X/\cS}$ to the usual analytic differential forms which induces an isomorphism 
 $$\cH om(\Omega^1_{\X/\cS},\G) \to \cH om(\tilde\Omega^1_{\X/\cS},\G)$$
 for every coherent analytic sheaf $\G$ by the universal property of $\Omega^1_{\X/\cS}$. By \cite[0G3V]{stacks}, applied to the morphism $f: \X \to \cS$ of locally ringed spaces, we obtain a short exact sequence 
 $$0 \to \tilde\Omega^1_{\X/\cS} \otimes \F \to \tilde\cP^1_{\X/\cS}(\F) \to \F \to 0$$
 of $\cO_\X$-modules with the sheaf of principal parts in the middle. Applying $\cH om(-,\G)$, we find a long exact sequence
 $$0 \to \cH om(\F,\G) \xrightarrow{a} \Diff^1_{\X/\cS}(\F,\G) \xrightarrow{b} \cH om(\tilde\Omega^1_{\X/\cS} \otimes \F,\G) \xrightarrow{c} \E xt^1(\F,\G).$$
 From the tensor-hom adjunction, we find that the third sheaf is coherent. Since the last one is coherent as well, so is the kernel of the map $c$ between them. But then the image of $b$ is coherent, and thus the sheaf of first-order differential operators is an extension of coherent sheaves, so coherent itself.
\end{proof}

We use our notion of locally bounded operators in the following result.

\begin{lemma}\label{diff-op-continuity}\note{diff-op-continuity}
 Let $D: \F \to \G$ be a locally bounded first-order differential operator relative to $\X/\cS$ between quasi-coherent analytic sheaves. Then, for every $x \in \X$, the induced map $D: \F_x \to \G_x$ is continuous for the sequence topologies on $\F_x$ and $\G_x$.
\end{lemma}
\begin{proof}
 We start with the case where both $\F$ and $\G$ are coherent. In this case, every operator is locally bounded, so this condition is empty. By \cite[III,\ \S 4]{GR1971}, a derivation $D: \cO_{\X,x} \to \G_x$ is continuous for the sequence topologies if $\G$ is coherent. Every first-order differential operator $D: \cO_{\X,x} \to \G_x$ can be decomposed as $D = D_0 + D_1$ with an $\cO_{\X,x}$-linear map $D_0(a) := aD(1)$, and a derivation $D_1(a) := D(a) - D_0(a)$. Since both $D_0$ and $D_1$ as well as the summation map $\G_x \times \G_x \to \G_x$ are continuous, we find that every first-order differential operator $D: \cO_{\X,x} \to \G_x$ is continuous. Now if $\F$ is coherent, we can find a surjection $\pi: \cO_{\X,x}^{\oplus n} \to \F_x$. The sequence topology on $\F_x$ is the quotient topology along $\pi$, so $D$ is continuous if and only if $D \circ \pi$ is continuous. The sum $\cO_{\X,x}^{\oplus n}$ carries the product topology, so $D$ is continuous if and only if the maps $D \circ \pi_i$, with $\pi_i: \cO_{\X,x} \to \cO_{\X,x}^{\oplus n} \to \F_x$ the $n$ summands, are continuous. However, they are first-order differential operators, so they are continuous. In the more general quasi-coherent case, we write $\F_x = \varinjlim \F_{\alpha,x}$. Let $U \subseteq \G_x$ be an open subset. By our definition of the sequence topology on $\F_x$, we have to show that $D^{-1}(U) \cap \F_{\alpha,x}$ is open for all $\alpha$. Since $D$ is locally bounded by assumption, there is a finitely generated submodule $\G_{\alpha,x}$ of $\G_x$ which contains $D(\F_{\alpha,x})$. Then $D^{-1}(U) \cap \F_{\alpha,x} = D^{-1}(\G_{\alpha,x} \cap U) \cap \F_{\alpha,x} = D_\alpha^{-1}(\G_{\alpha,x} \cap U)$ for $D_\alpha: \F_{\alpha,x} \to \G_{\alpha,x}$. Since both $D_\alpha$ and $\G_{\alpha,x} \to \G_x$ are continuous, the claim follows.
\end{proof}

\section{Analytification of first-order differential operators}

Let $f: X \to S$ be a morphism of finite type between schemes $X$ and $S$ which are of finite type over $\CC$.\footnote{Although some of the theory works for locally Noetherian schemes as well, we restrict here to the Noetherian case as we shall only be interested in Noetherian schemes anyway. Then we can use that every quasi-coherent sheaf on a scheme $X$ is the colimit of its coherent subsheaves.} Let $\F$ and $\G$ be two quasi-coherent sheaves on $X$. Then a differential operator of first order relative to $X/S$ is an $f^{-1}(\cO_S)$-linear map $D: \F \to \G$ such that, for every local section $a \in \cO_X$, the map $D_a: \F \to \G, \: f \mapsto D(af) - aD(f)$, is $\cO_X$-linear. In this section, we prove the following result.

\begin{prop}\label{analytify-diff-op-1}\note{analytify-diff-op-1}\index{first-order differential operator!analytification}
 In the above situation, we have the following:
 \begin{enumerate}[label=\emph{(\alph*)}]
  \item\label{analyt-exist} There is a unique locally bounded differential operator 
 $$D^{an}: \F^{an} \to \G^{an}$$
 of first order relative to $X^{an}/S^{an}$ such that 
 $$\varphi_*D^{an} \circ c_\F = c_\G \circ D: \quad \F \to \varphi_*\G^{an}$$
 where $c_\F: \F \to \varphi_*\F^{an}$ and $c_\G: \G \to \varphi_*\G^{an}$ are the adjunction maps. 
 \item Equivalently, we can characterize $D^{an}$ as the unique locally bounded differential operator $D^{an}: \F^{an} \to \G^{an}$ of first order relative to $X^{an}/S^{an}$ such that 
 $$D^{an} \circ c'_\F = c'_\G \circ \varphi^{-1}D: \quad \varphi^{-1}\F \to \G^{an}$$
 where $c'_\F: \varphi^{-1}\F \to \F^{an}$ and $c'_\G: \varphi^{-1}\G \to \G^{an}$ are the adjunction maps.
 \item\label{analyt-precompose} If $\E$ is quasi-coherent and $h: \E \to \F$ is $\cO_X$-linear, then $D^{an} \circ h^{an} = (D \circ h)^{an}$.
 \item\label{analyt-postcompose} If $\cH$ is quasi-coherent and $h: \G \to \cH$ is $\cO_X$-linear, then $h^{an} \circ D^{an} = (h \circ D)^{an}$.
 \item\label{analyt-two-diff-comp}  If $E: \G \to \cH$ is another differential operator of first order relative to $X/S$ between quasi-coherent sheaves, then $E^{an} \circ D^{an} = 0$ if and only if $E \circ D = 0$.
 \end{enumerate}
\end{prop}

We will obtain this as a consequence of a number of preliminary results. 

\begin{lemma}\label{locally-bounded-uniqueness}\note{locally-bounded-uniqueness}
 Let $\F$ and $\G$ be two quasi-coherent sheaves on $X$, and let $h_1,h_2: \F^{an} \to \G^{an}$ be two locally bounded maps of sheaves of sets such that $h_1,h_2: \F^{an}_x \to \G^{an}_x$ are continuous for the sequence topologies. Assume that $\varphi_*h_1 \circ c_\F = \varphi_*h_2 \circ c_\F$ as maps $\F \to \varphi_*\F^{an} \to \varphi_*\G^{an}$. Then $h_1 = h_2$. In particular, if $h: \F \to \G$ is $\cO_X$-linear, then $h^{an}: \F^{an} \to \G^{an}$ is the unique $\cO_X^{an}$-linear map with $\varphi_*h^{an} \circ c_\F = c_\G \circ h$.
\end{lemma}
\begin{proof}
 It suffices to show that $h_1 = h_2$ at the stalks $\F_x^{an} \to \G_x^{an}$. Let $U \subseteq X$ be an affine open subset with $x \in U$. Since $\varphi_*h_1 \circ c_\F = \varphi_*h_2 \circ c_\F$, the compositions of the two maps at the stalks with $H^0(U,\F) \to \F_x^{an}$ are equal. Now let $A_x \subseteq \F_x^{an}$ be the subset where $h_1$ and $h_2$ are equal, i.e., we have just seen that the image of $q: H^0(U,\F) \to \F_x^{an}$ is contained in $A_x$. Let $M \subseteq \F_x^{an}$ be a finitely generated $\cO_{X^{an},x}$-submodule.\footnote{The proof is so subtle because we do not know if the sequence topology on $\G_x^{an}$ is Hausdorff.} Since $\F$ is pseudo-coherent, we can find, locally around $x$, a coherent subsheaf $\E \subseteq \F$ with $\E_x = M$. Since both $h_1$ and $h_2$ are locally bounded, we can find, again locally around $x$, another coherent subsheaf $\E' \subseteq \G$ such that both $h_1$ and $h_2$ map $\E$ into $\E'$. Let $N \subseteq \G_x^{an}$ be the stalk of $\E'$ at $x$. Then we have two induced continuous maps $h_1,h_2: M \to N$; continuity of these maps follows from the fact that the sequence topology on $N$ is the induced subspace topology from $\G_x^{an}$. Since $N$ is finitely generated, and hence its sequence topology is Hausdorff, the set where $h_1$ and $h_2$ agree on $M$ is closed, i.e., $A_x \cap M$ is closed. Then $A_x \subseteq \F_x^{an}$ is closed by the definition of the sequence topology on $\F_x^{an}$. Since $A_x$ contains a dense subset of $\F_x^{an}$, we have $A_x = \F_x^{an}$, and hence $h_1 = h_2$.
\end{proof}

\begin{prop}
 Let $D: \cO_X \to \F$ be a derivation relative to $X/S$ with values in a coherent (!) sheaf $\F$. Then there is a unique derivation $D^{an}: \cO_X^{an} \to \F^{an}$ relative to $X^{an}/S^{an}$ satisfying $\varphi_*D^{an} \circ c_\cO = c_\F \circ D$ for the adjunction maps $c_\cO: \cO \to \varphi_*\cO_X^{an}$ and $c_\F: \F \to \varphi_*\F^{an}$.
\end{prop}
\begin{proof}
 Consider the diagram 
 \[
  \xymatrix{
   \varphi_*\cO_X^{an} \ar[r]^{\varphi_*d^{an}} & \varphi_*\Omega^1_{X^{an}/S^{an}} \ar[r]^{\varphi_*h^{an}} & \varphi_*\F^{an} \\
   \cO_X \ar[u] \ar[r]^d & \Omega^1_{X/S} \ar[u] \ar[r]^h & \F. \ar[u] \\
  }
 \]
 We define $D^{an} := h^{an} \circ d^{an}$. This is a derivation over $X^{an}/S^{an}$ which makes the diagram commutative. If we have another derivation $D'$ with this property, then it factors as $D' = h' \circ d^{an}$. Using that $d: \cO_X \to \Omega^1_{X/S}$ is universal also for derivations with values in $\varphi_*\F^{an}$, we can show  that $h' = h^{an}$. Then we have $D' = D^{an}$.
\end{proof}

We now obtain Proposition~\ref{analytify-diff-op-1} in the special case where both $\F$ and $\G$ are coherent.

\begin{prop}\label{analytify-diff-op-coherent}\note{analytify-diff-op-coherent}
 Let $D: \F \to \G$ be a differential operator of first order relative to $X/S$ between coherent sheaves. Then there is a unique differential operator $D^{an}: \F^{an} \to \G^{an}$ of first order relative to $X^{an}/S^{an}$ satisfying $\varphi_*D^{an} \circ c_\F = c_\G \circ D$.
\end{prop}
\begin{proof}
 By definition, for every $a \in \cO_X$, the map $H_a(f) := D(af) - aD(f)$ is $\cO_X$-linear. This defines a sheaf homomorphism 
 $$H: \cO_X \to \cH om(\F,\G).$$
 By direct computation, this is a derivation relative to $X/S$. Thus, we have an analytification 
 $$H^{an}: \cO_X^{an} \to \cH om(\F,\G)^{an} \xrightarrow{\cong} \cH om(\F^{an},\G^{an}),$$
 where the right hand isomorphism comes from the fact that $\varphi: X^{an} \to X$ is flat and $\F$ is finitely presented (otherwise, it is just a map). Then we define 
 $$D^{an}: \varphi^{-1}\F \otimes_{\varphi^{-1}\cO_X} \cO_{X}^{an} \to \varphi^{-1}\G \otimes_{\varphi^{-1}\cO_X} \cO_X^{an}, \quad f \otimes a \mapsto D(f) \otimes a + H_a(f \otimes 1).$$
 If $b \in \varphi^{-1}\cO_X$, then $D^{an}(f \otimes ba) = D^{an}(fb \otimes a)$, showing that the map is well-defined. It is also a differential operator of order $1$ relative to $X^{an}/S^{an}$. Furthermore, it satisfies $\varphi_*D^{an} \circ c_\F = c_\G \circ D$. Then, one may show uniqueness by using the uniqueness of $H$ and of the composition $\varphi^{-1}\F \to \F^{an} \to \G^{an}$.
\end{proof}

Before we can prove Proposition~\ref{analytify-diff-op-1}, we need a final preparation.

\begin{lemma}\label{alg-diff-op-bounded}\note{alg-diff-op-bounded}
 Let $D: \F \to \G$ be a differential operator of first order relative to $X/S$ between quasi-coherent sheaves. Then $D$ is globally bounded, i.e., for every coherent subsheaf $\E \subseteq \F$, the image $D(\E)$ is contained in a coherent subsheaf $\E' \subseteq \G$.
\end{lemma}
\begin{proof}
 As a consequence of \cite[01PF]{stacks}, it is sufficient to work on an affine open subset $U \subseteq X$. Let $e_1,...,e_n$ be a set of generators of $\E$, and let $a_1,...,a_m$ be a set of generators of $\cO_X(U)$ as a $\CC$-algebra. Let $\E' \subseteq \G$ be the submodule generated by $D(e_j)$ for $1 \leq j \leq n$ and $D(a_ie_j)$ for $1 \leq i \leq m$, $1 \leq j \leq n$. Since $D$ is $\CC$-linear and satisfies $D(abf) - aD(bf) - bD(af) + abD(f) = 0$ for $a,b \in \cO_X$, $f \in \F$, we find that $D(\E(U))$ is contained in $\E'(U)$. Then there is a unique first-order differential operator $E: \E \to \E'$ with $\Gamma(U,E) = \Gamma(U,D)$ by Proposition~\ref{multilin-diff-op-shvs}. Applying Lemma~\ref{diff-loc-uniqueness} to standard open subsets of $U$ and, if necessary, enlarging $\E'$ for each standard open individually, we find that $D|_U = E$. Thus, $D(\E) \subseteq \E'$.
\end{proof}

\begin{proof}[Proof of Proposition~\ref{analytify-diff-op-1}]
 We start with \emph{\ref{analyt-exist}}.
 Let $\{\F_\alpha\}_\alpha$ be a directed system of coherent subsheaves of $\F$ with colimit $\F$. For each $\alpha$, let $\G_\alpha$ be the intersection of all (quasi-)coherent subsheaves of $\G$ which contain the image of $D_\alpha := D|_{\F_\alpha}: \F_\alpha \to \G$. Then $\G_\alpha$ is a coherent subsheaf of $\G$ by Lemma~\ref{alg-diff-op-bounded}, and for $\alpha \leq \beta$, we have $\G_\alpha \subseteq \G_\beta$. Proposition~\ref{analytify-diff-op-coherent} yields a differential operator $D_\alpha^{an}: \F_\alpha^{an} \to \G_\alpha^{an}$ for each $\alpha$. For $\alpha \leq \beta$, let us write $i_{\alpha\beta}: \F_\alpha \to \F_\beta$ respective $j_{\alpha\beta}: \G_\alpha \to \G_\beta$ for the inclusions. Then we have $D_\beta^{an} \circ i_{\alpha\beta}^{an} = j_{\alpha\beta}^{an} \circ D_\alpha^{an}$ because both are the unique analytification of the differential operator $D_\beta \circ i_{\alpha\beta} = j_{\alpha\beta} \circ D_\alpha$. Now $\F^{an}$ is the direct limit of $\{\F_\alpha^{an}\}_\alpha$ in the category of abelian sheaves on $X^{an}$, so there is an induced map $D^{an}: \F^{an} \to \G^{an}$ coming from the compositions $\F^{an}_\alpha \to \G^{an}_\alpha \to \G^{an}$. It is a differential operator of first order relative to $X^{an}/S^{an}$ because its restriction to each $\F_\alpha^{an} \subseteq \F^{an}$ is. It satisfies $\varphi_*D^{an} \circ c_\F = c_\G \circ D$ because the compositions of the two maps with $i_{\alpha\beta}: \F_\alpha \to \F$ are the same for all $\alpha$, and $\F$ is the direct limit of $\{\F_\alpha\}_\alpha$ in the category of abelian sheaves on $X$.
 
 To show that $D^{an}$ is locally bounded, let $\U \subseteq X^{an}$ be an open subset, and let $\E \subseteq \F^{an}|_\U$ be a coherent analytic subsheaf. Let $\{\U_i\}_i$ be an open cover of $\U$ such that each $\E|_{\U_i}$ is finitely generated. By refining $\{\U_i\}_i$ if necessary, we can assume that we have, for each index $i$, an index $\alpha_i$ with $\E|_{\U_i} \subseteq \F_{\alpha_i}^{an}|_{\U_i}$. Namely, at every point $x \in \U_i$, the germs of the finitely many generators of $\E|_{\U_i}$ must all be contained in the stalk $\F^{an}_{\alpha,x}$ for some $\alpha$. Then $D^{an}$ maps $\E|_{\U_i}$ into the coherent analytic subsheaf $\G^{an}_{\alpha_i}|_{\U_i}$ of $\G^{an}|_{\U_i}$. Thus, $D^{an}$ is locally bounded. Uniqueness follows from Lemma~\ref{locally-bounded-uniqueness}, using Lemma~\ref{diff-op-continuity}.

 For \emph{\ref{analyt-precompose}} and \emph{\ref{analyt-postcompose}}, first note that the statement holds if all involved modules are coherent. Then, to see \emph{\ref{analyt-precompose}}, use a representation of $\E$ as a direct limit of coherent subsheaves $\E_\alpha$, let $\F_\alpha := h(\E_\alpha)$, and let $\G_\alpha$ be constructed from this $\F_\alpha$ like in the construction of $D^{an}$ above. To see \emph{\ref{analyt-postcompose}}, start with a representation of $\F$ as a direct limit of coherent subsheaves $\F_\alpha$, let $\G_\alpha$ be as above, and let $\cH_\alpha := h(\G_\alpha)$. Then use that this $\cH_\alpha$ is the smallest submodule of $\cH$ which contains $h \circ D(\F_\alpha)$.
 
 For \emph{\ref{analyt-two-diff-comp}}, note that $E^{an} \circ D^{an}$ is locally bounded and induces continuous maps $\F_x^{an} \to \cH_x^{an}$ on the stalks. If $E \circ D = 0$, then $\varphi_*E^{an} \circ \varphi_*D^{an} \circ c_\F = 0$, so $E^{an} \circ D^{an} = 0$ by Lemma~\ref{locally-bounded-uniqueness}. For the converse, first note that $c_\cH: \cH \to \varphi_*\cH^{an}$ is injective. Namely, using the Krull intersection theorem, we find that $\E_{\varphi(x)} \to \E^{an}_x$ is injective for $\E$ coherent; then, however, it is easy to generalize to quasi-coherent $\E$ using the direct limit description. Now the injectivity of $c_\cH$ implies $E \circ D = 0$.
\end{proof}



\chapter{Toroidal crossing spaces}
\label{toroidal-cr-sp-sec}\note{toroidal-cr-sp-sec}

In this chapter, we give a synopsis of the basic theory of toroidal crossing spaces. The first appearance of the name and the essential idea of the concept is by Schr\"oer and Siebert in \cite{SchroerSiebert2006}, which has a slightly different focus than we have. Most theory was developed by Gross and Siebert in \cite{GrossSiebertI}, but toroidal crossing spaces do not show up as a concept and a definition there. Finally, in \cite{FFR2021}, we have used toroidal crossing spaces as a concept, but the treatment is rather sloppy. As of now, toroidal crossing spaces are the single most important source of generically log smooth families, the main subject of this text.

A toroidal crossing space is a generalization of a \emph{normal} crossing space which is particularly suitable to the study of degenerations and more natural from the perspective of logarithmic geometry. 

Let $R$ be a discrete valuation $\kk$-algebra with residue field $\kk$, choose a uniformizer $t \in \m_R$, and write $S = \Spec R$.
A normal crossing space $V$ has local models $\{z_1 \cdot ... \cdot z_r = 0\}$. Every semistable degeneration $f: X \to S$ (with central fiber $V$) becomes a log smooth and saturated as well as vertical\footnote{In the sense of \cite[I, Defn.~4.3.1]{LoAG2018}.} morphism once we endow both the base and the total space with the compactifying log structure defined by $t = 0$, as usual in the \'etale topology, as also the local models are in the \'etale topology. This induces a log structure $\M_{X_0}$ on $V$, turning it into a log scheme $X_0 = (V,\M_{X_0})$, and a log smooth morphism to $S_0 = \Spec (\NN \to \kk)$. Different semistable degenerations to $V$ may induce different log structures on $V$, but they all have the same ghost sheaf, namely $\cP := \nu_*\underline\NN_{\tilde V}$ where $\nu: \tilde V \to V$ is the normalization. The image of the generator $1_{S_0} \in \M_{S_0}$ in $\cP$ is always $\nu_*(1_{\tilde V})$, where $1_{\tilde V} \in \underline\NN_{\tilde V}$ is the global section which is equal to $1$ in every stalk. The isomorphism classes of log structures on $V$ together with the structure of a log morphism to $S_0$ which can arise from local semistable degenerations to $V$ form not only a presheaf but indeed a sheaf $\cL\cS_V$ on $V$, which turns out to be isomorphic to the subsheaf of local generators inside the line bundle $\T^1_V = \E xt^1(\Omega^1_V,\cO_V)$ on the double locus $D = V_{sing}$ of $V$. $d$-semistability of $V$, a well-known necessary condition for the existence of a \emph{global} semistable degeneration to $V$, then becomes equivalent to the existence of a global section in $\cL\cS_V$. Generically log smooth families over $S_0$ now arise from sections of $\cL\cS_V$ on $V \setminus Z$ which cannot be extended across the log singular locus $Z \subseteq X$.

Toroidal crossing spaces arise when we replace semistable degenerations with the more general degenerations $f: X \to S$ which become log smooth, saturated, and vertical once we endow source and target with the compactifying log structure defined by $t = 0$.

\begin{defn}\index{toroidal crossing degeneration}
 A \emph{toroidal crossing degeneration} is a separated morphism of finite type $f: X \to S$ of some constant relative dimension $d$ which becomes log smooth, saturated, and vertical once we endow $X$ and $S$ with the compactifying log structure defined by $t = 0$. In particular, the log structure $\M_X$ is fine and saturated, the morphism $f: X \to S$ is flat, the generic fiber $X_\eta$ is smooth, the central fiber $X_0$ is Gorenstein, and the total space $X$ is normal and Cohen--Macaulay.
\end{defn}

The following construction yields such degenerations:

\begin{constr}\label{prototype-constr}\note{prototype-constr}\index{toroidal crossing space!prototype}
 Let $N \cong \ZZ^r$ be a lattice with dual $M = \mathrm{Hom}(N,\ZZ)$, and set $N_\RR = N \otimes_\ZZ \RR$. Let $\sigma \subseteq N_\RR$ be a lattice polytope of full dimension $r$; in particular, it is bounded. The cone over $\sigma \times \{1\}$ is 
 $$C(\sigma) = \{(sn,s) \ | n \in \sigma, s \geq 0 \ \} \subseteq N_\RR \times \RR ;$$
 it gives rise to a sharp toric monoid $P_\sigma := C(\sigma)^\vee \cap (M \times \ZZ)$, which consists of the lattice points in the dual cone $C(\sigma)^\vee$. 
 The monoid $P_\sigma$ defines the affine toric variety 
 $$U(\sigma) = \Spec \kk[P_\sigma];$$ 
 it is Gorenstein according to the criterion in \cite[p.~126]{Oda1988} because we have $\mathrm{int}(P_\sigma) = \rho_\sigma + P_\sigma$. Here, $\mathrm{int}(P_\sigma)$ denotes the \emph{interior}---the elements which are not in any proper face of $P_\sigma$---, and $\rho_\sigma = (0,1) \in P_\sigma \subseteq M \times \ZZ$ is the so-called \emph{Gorenstein degree}. The toric boundary of $U(\sigma)$ is 
 $$V(\sigma) := \Spec \kk[P_\sigma]/(z^{\rho_\sigma}) = \{z^{\rho_\sigma} = 0\}  \subseteq U(\sigma)$$
 since the ideal of the toric boundary---i.e., $\kk[\mathrm{int}(P_\sigma)]$---is generated by $z^{\rho_\sigma}$. We obtain a family
 $$U(\sigma) \to \bAA^1_t, \quad t \mapsto z^{\rho_\sigma},$$
 with central fiber $V(\sigma)$ and smooth generic fiber.
 We denote the canonical log structure of $U(\sigma)$---it is the divisorial log structure of the inclusion $V(\sigma) \subseteq U(\sigma)$---by $\M_{U(\sigma)}$; then $(U(\sigma),\M_{U(\sigma)}) \to A_\NN$ is a log smooth, saturated, and vertical family, and $(V(\sigma),\M_{V(\sigma)}) \to S_0$ is log smooth, saturated, and vertical as well. We denote the base change of $U(\sigma) \to \bAA^1_t$ to $S = \Spec R$ along $\kk[t] \ni t \mapsto t \in R$ by 
 $$U_S(\sigma) \to S.$$
 On $U_S(\sigma)$, the induced log structure from $U(\sigma)$ coincides with the compactifying log structure defined by $t = 0$,\footnote{To see this, use \cite[I, Prop.~1.6.3]{LoAG2018} together with the proof of \cite[I, Lemma~1.6.7]{LoAG2018}. Use that a reflexive sheaf of rank $1$ on $U(\sigma)$ which is locally free outside $V(\sigma)$ is a line bundle if and only if its pull-back to $U_S(\sigma)$ is a line bundle.} as is the case on $S$.
\end{constr} 
\begin{ex}\label{two-comp-prototype}\note{two-comp-prototype}
 Let $N = \ZZ$ and $\sigma = [0,k] \subseteq \RR = \ZZ \otimes_\ZZ \RR$ for $k \geq 1$. Then $P_\sigma \subseteq M \oplus \ZZ$ is generated by the three vectors $(-1,k),(0,1),(1,0)$. Thus, $U(\sigma) = \Spec \kk[x,y,t]/(xy - t^k)$ is the two-dimensional $A_{k - 1}$-singularity. The central fiber of the family is $V(\sigma) \cong \Spec \kk[x,y]/(xy)$; in particular, it does not depend on $k$.
\end{ex}

The name \emph{toroidal crossing degeneration} is justified by the analogy with normal crossing degenerations through the following result. In other words, a toroidal crossing degeneration has \emph{toric} crossings \'etale locally.

\begin{lemma}
 Let $f: X \to S$ be a separated morphism of finite type. Then $f: X \to S$ is a toroidal crossing degeneration if and only if it is, locally in the \'etale topology on $X$, isomorphic to families of the form $U_S(\sigma) \times_S \bAA^{d - r}_S \to S$.\footnote{Note that this is independent of the choice uniformizer $t \in \m_R$ as a different choice differs by an automorphism of the base $S$.}
\end{lemma}
\begin{proof}
 We have already seen that every family of the specified form is a toroidal crossing degeneration. For the converse, the local model is clear around points $x \in X_\eta$ since $X_\eta$ is smooth. If $x \in X_0$, let $h: \NN \to P$ be the map on ghost stalks. By assumption, $P$ is a sharp toric monoid. By \cite[IV, Thm.~3.3.1]{LoAG2018}, $f: X \to S$ has a local model by $A_h \times \bAA^{d - r}: A_P \times \bAA^{d - r} \to A_\NN$. It thus suffices to show that $A_h: A_P \to A_\NN$ is of the form $U(\sigma) \to \bAA^1_t$. The map $h: \NN \to P$ is injective by \cite[IV, Thm.~3.3.1]{LoAG2018}. Verticality implies $h(1) \in \mathrm{int}(P)$, and saturatedness implies that $K_h = h(1) + P \subseteq \mathrm{int}(P)$ is a radical ideal. Then $K_h$ is the intersection of the prime ideals containing it, hence $K_h = \mathrm{int}(P)$. Thus, $P$ is a Gorenstein toric monoid, which are all of the form $P_\sigma$.
\end{proof}

As in the case of normal crossings, the local structure of the central fiber of a toroidal crossing degeneration $f: X \to S$ is dictated by the local models $U_S(\sigma) \to S$. Thus, they are all of the following form:

\begin{defn}\index{toroidal crossing singularities}
 A \emph{space with toroidal crossing singularities} is a separated scheme $V/\kk$ of finite type, pure of some dimension $d$, which is, locally in the \'etale topology, isomorphic to local models of the form $V(\sigma) \to \kk$. In particular, $V$ is reduced and Gorenstein.
\end{defn}

Our goal is now to understand toroidal crossing degenerations to a given space $V$ with toroidal crossing singularities.
When studying semistable degenerations $f: X \to S$ to a normal crossing space $V$, then the local model $z_1 \cdot ... \cdot z_r = t$ of $f: X \to S$ around a point $x \in V$ is dictated by the geometry of $V$: the number $r$ is precisely the number of \'etale local components of $V$ which meet at $x$. Thus, in a way, all semistable degenerations to $V$ are the same around $x \in V$ even though they do not induce the same log structure on $V$. Example~\ref{two-comp-prototype} shows that this is no longer true for a space with toroidal crossing singularities: each toroidal crossing degeneration $\{xy = t^k\}$ has the same central fiber $\{xy = 0\}$, but the total spaces have non-isomorphic singularities. However, to have a well-behaved theory analogous to the case of normal crossing spaces, we wish to prescribe the toroidal crossing degeneration up to \'etale local isomorphy, i.e., fix the local model at every point. Now a local model is always of the form $A_P \times \bAA^{d - r} \to A_\NN$ so that it can be recovered from the induced map on ghost sheaves of the log structure (plus the dimension to recover $d - r$ where $r$ is the rank of $P$). Thus, we endow $V$ with a sheaf of monoids $\cP$, which is supposed to be the ghost sheaf of the log structure induced on the central fiber, and with a global section $\bar\rho \in \cP$, which is supposed to be the image of the generator $1_{S_0} \in \M_{S_0}$ under the induced map of log schemes. Then \cite[IV, Thm.~3.3.1]{LoAG2018} implies that any toroidal crossing degeneration whose induced map in the central fiber on the level of ghost sheaves is $\NN \to \cP, \: 1 \mapsto \bar\rho,$ has, around each point $\bar v \in V$, the same local model $A_P \times \bAA^{d - r} \to A_\NN$ with $P = \cP_{\bar v}$. A \emph{toroidal crossing space} is such a triple $(V,\cP,\bar\rho)$. In the following sections, we compile the basic theory of $(V,\cP,\bar\rho)$.

\vspace{\baselineskip}

The following result is sometimes useful.

\begin{lemma}\index{semi-log-canonical singularities}\label{slc-sing-tor-cr}\note{slc-sing-tor-cr}
 Let $V$ be a space with toroidal crossing singularities. Then $V$ has semi-log-canonical singularities.
\end{lemma}
\begin{proof}
 Our reference for semi-log-canonical singularities is \cite{Kollar2013}. Since $V$ is demi-normal and Gorenstein, what we have to show is that $(\bar V,\bar D)$ is log canonical, where $\nu: \bar V \to V$ is the normalization of $V$, and $\bar D \subset \bar V$ is the conductor locus. The construction of $(\bar V,\bar D)$ commutes with \'etale morphisms $f: Y \to V$, and the condition of having log canonical singularities is local in the \'etale topology. Thus, it is sufficient to show that $V(\sigma)$ has semi-log-canonical singularities. The normalization $\bar V(\sigma)$ of $V(\sigma)$ is the disjoint union of the irreducible components of $V(\sigma)$ because they are normal. The conductor locus $D(\sigma) \subseteq V(\sigma)$ is the locus where at least two components meet with its reduced scheme structure, and $\bar D(\sigma) \subseteq \bar V(\sigma)$ is the toric boundary of each irreducible component with its reduced scheme structure. Thus, $(\bar V(\sigma),\bar D(\sigma))$ is log canonical by \cite[Cor.~11.4.25]{Cox2011}, and $V(\sigma)$ is semi-log-canonical.
\end{proof}
\begin{cor}\label{tor-cr-Kodaira-van}\note{tor-cr-Kodaira-van}
 Let $V$ be a space with toroidal crossing singularities which is projective and of dimension $d$. Let $\cL$ be an ample line bundle on $V$. Then $H^k(V,\omega_V \otimes \cL) = 0$ for $k \geq 1$ and $H^k(V,\cL^\vee) = 0$ for $0 \leq k \leq d - 1$. Here, $\omega_V$ is the canonical bundle of the Gorenstein scheme $V$.
\end{cor}
\begin{proof}
 The first statement is explicit in \cite[Thm.~1.8]{Fujino2014} (for slc singularities which may not be Cohen--Macaulay), and the second statement is explicit in \cite[Cor.~6.6]{KSS2010} (over $\kk = \CC$, for weakly slc singularities which are required to be Cohen--Macaulay). Both statements are equivalent in our case by \cite[III, Thm.~7.6]{Hartshorne1977}.
\end{proof}

\section{Spaces with a toric ghost sheaf}

This is the first device we need to define on our way towards toroidal crossing spaces. It is a space $V$ together with a sheaf of monoids $\cP$ that might occur as the ghost sheaf of a log structure. First, we need to specify which sheaves $\cP$ can occur as the ghost sheaves of a fine and saturated log structure.

\begin{defn}
 A sheaf of monoids $\cP$ in the \'etale topology of $V$ is \emph{toric} if for every $\bar v \in V$, the stalk $\cP_{\bar v}$ is a toric monoid---i.e., fine, saturated, and $\cP_{\bar v}^{gp}$ is torsion-free---and there is an \'etale neighborhood $U$ and a surjection $P_U \to \cP|_U$ from a constant sheaf of monoids which is an isomorphism on the stalks at $\bar v$.
\end{defn}

With this definition, a log scheme $(V,\M)$ is fine and saturated if and only if $\overline\M$ is a toric sheaf of monoids. This follows from the argument given in \cite[Prop.~3.7]{GrossSiebertI}. Unfortunately, there in the original setting, the argument is wrong; namely, if we assume $\overline\M$ only to be fine---as Gross--Siebert does---the exact sequence in the proof of \cite[Prop.~3.7]{GrossSiebertI} does not necessarily split. Adding the saturatedness hypothesis remedies this situation (cf.~also \cite[Thm.~1.2.7]{LoAG2018}). 

\begin{defn}\index{toric ghost sheaf}\index{toric ghost sheaf!space with a toric ghost sheaf}
 A \emph{space with a toric ghost sheaf} is a pair $(V,\cP)$ where $V$ is a reduced variety and $\cP$ is a toric sheaf of monoids.
\end{defn}

\begin{ex}\label{prototype-toric-ghost}
 Let $V(\sigma)$ be as in Construction~\ref{prototype-constr}. Then with $\cP_\sigma := i^{-1}\overline\M_{U(\sigma)}$, we obtain a space with a toric ghost sheaf $(V(\sigma),\cP_\sigma)$. Here, $i: V(\sigma) \to U(\sigma)$ denotes the inclusion. 
\end{ex}

Our goal is to classify all log structures on $V$ that have the given sheaf $\cP$ as the ghost sheaf. We consider the log structure as a triple $(\M,\alpha,q)$ where $\M$ is a sheaf of monoids, $\alpha: \M \to \cO_V$ is a homomorphism turning $\M$ into a log structure, and $q: \M \to \cP$ is a homomorphism establishing $\cP$ as the ghost sheaf; an isomorphism between $(\M,\alpha,q)$ and $(\M',\alpha',q')$ is an isomorphism $\varphi: \M \cong \M'$ which is compatible with all data. Isomorphism classes of log structures do \emph{not} form a sheaf, so we define:

\begin{defn}
 Let $(V,\cP)$ be a space with a toric ghost sheaf. Then $\hat\cL_V$ is the sheaf of equivalence classes of log structures $(\M,\alpha,q)$ where two log structures are \emph{equivalent} if they are locally isomorphic.\footnote{The sections of $\hat\cL_V$ are not in general log structures on their locus of definition; rather, they are log structures on the open subsets of a cover which are equivalent on overlaps.}
\end{defn}

To compute the stalk $\widehat\cL_{V,\bar v}$ at a geometric point $\bar v \in V$, we construct a canonical injection 
$$\iota_{V,\bar v}: \widehat\cL_{V,\bar v} \to \E xt^1(\cP^{gp}, \cO_V^*)_{\bar v}$$
and determine its image. Here, $\cP^{gp}$ is the sheaf of groups associated to $\cP$. Note that the target is \emph{not} the extension group $\mathrm{Ext}^1(\cP^{gp}_{\bar v},\cO^*_{V,\bar v})$.

For $\iota_{V,\bar v}$, we actually have two constructions, which yield the same map. Firstly, a germ $[(\M,\alpha,q)] \in \widehat\cL_{V,\bar v}$---represented by a log structure defined in a neighborhood of $\bar v$---induces a short exact sequence 
$$1 \to \cO_V^* \to \M^{gp} \xrightarrow{q^{gp}} \cP^{gp} \to 0$$
of abelian groups and thus an extension class in $\E xt^1(\cP^{gp}, \cO_V^*)_{\bar v}$. Injectivity follows from the argument in \cite[Prop.~3.11, Cor.~3.12]{GrossSiebertI}. The second construction is more involved.

\begin{constr}
 
We set $P := \cP_{\bar v}$, which is a toric monoid. Because $\cP$ is a toric sheaf of monoids, there is---possibly after shrinking $V$ to an appropriate \'etale neighborhood---a surjection $P_V \to \cP$ which is an isomorphism at $\bar v$. Here, $P_V$ denotes the constant sheaf with stalk $P$. Groupification yields the surjection $P^{gp}_V \to \cP^{gp}$ of abelian sheaves, whose kernel $\R$ is called the \emph{relation sheaf}. Since $\E xt^1(P^{gp}_V,\cO_V^*) = 0$, we get a presentation 
\begin{equation}\label{ext-present}
 \cH om(P^{gp}_V,\cO_V^*) \to \cH om(\R,\cO_V^*) \to \E xt^1(\cP^{gp},\cO_V^*) \to 0.
\end{equation}
The stalk $\cH om(\R,\cO_V^*)_{\bar v}$ has an explicit description: An element $p \in P^{gp}$ induces an element in $\Gamma(V,\cP^{gp})$; its \emph{support} is 
$$\mathrm{supp}(p) = \{\bar x \in V \ | \ 0 \not= p_{\bar x} \in \cP_{\bar x}^{gp} \} \subseteq V,$$
which is already a closed subset of $V$, so we do not need to take its closure to obtain the support. For $n \in \ZZ \setminus \{0\}$, we have $\mathrm{supp}(np) = \mathrm{supp}(p)$, and for $p,q \in P^{gp}$ we have 
$$(V \setminus \mathrm{supp}(p)) \cap (V \setminus \mathrm{supp}(q)) \subseteq (V \setminus \mathrm{supp}(p + q)).$$
We say $p$ and $q$ are \emph{equifacial} if for every face $F \subseteq P$ it holds that $p + q \in F^{gp}$ implies $p \in F^{gp}$ and $q \in F^{gp}$. In this case, the above inequality is an equality.
According to \cite{GrossSiebertI}, after writing $j_p: V \setminus \mathrm{supp}(p) \subseteq V$ for the inclusion, the stalk $\cH om(\R,\cO_V^*)_{\bar v}$ is 
$$\left\{ (h_p)_{p \in P^{gp}} \ | \ h_p \in (j_{p*}\cO^*_{V \setminus \mathrm{supp}(p)})_{\bar v}, h_p \cdot h_q = h_{p + q} \ \mathrm{on} \ (V \setminus \mathrm{supp}(p)) \cap (V \setminus \mathrm{supp}(q))\right\};$$
at this point, there is a minor mistake in the statement of \cite[Prop.~3.14]{GrossSiebertI} where they write $V \setminus \mathrm{supp}(p + q)$ instead of $(V \setminus \mathrm{supp}(p)) \cap (V \setminus \mathrm{supp}(q))$. Equivalently, we can drop the $h_p$ for $p \notin P$ and write 
$$\cH om(\R,\cO_V^*)_{\bar v} = \left\{ (h_p)_{p \in P} \ | \ h_p \in (j_{p*}\cO^*_{V \setminus \mathrm{supp}(p)})_{\bar v}, \ h_p \cdot h_q = h_{p + q} \ \mathrm{on} \ V \setminus \mathrm{supp}(p + q)\right\}$$
for the stalk. Now let $[(\M,\alpha,q)] \in \hat\cL_{V,\bar v}$. The surjection $q^{gp}: \M^{gp}_{\bar v} \to \cP^{gp}_{\bar v}$ of stalks splits because $\cP_{\bar v}$ is toric; the splitting then induces locally around $\bar v$ a splitting $\sigma: \cP \to \M$ of the sheaf homomorphism $\M \to \cP$. The composition as in the diagram
\[
 \xymatrix{
  P_V \ar@{>>}[d] \ar@/^1pc/[rrd]^\varphi & & \\
  \cP \ar@/^/[r]^\sigma & \M \ar@/^/[l]^q \ar[r]^{\alpha} & \cO_V \\
 }
\]
yields a map $\varphi: P \to \Gamma(V,\cO_V)$; then $\varphi(p)|_{V\setminus\mathrm{supp}(p)}$ is an invertible function. Indeed, we have $p_{\bar x} = 0 \in \cP_{\bar x}^{gp}$ if and only if $\varphi(p)_{\bar x} \in \cO_{V,\bar x}^*$.  Thus, $\varphi(p)$ defines an element $h_p$ of $(j_{p*}\cO^*_{V \setminus \mathrm{supp}(p)})_{\bar v}$, and we get $(h_p)_{p \in P} \in \cH om(\R,\cO_V^*)_{\bar v}$. The image in $\E xt^1(\cP^{gp},\cO_V^*)_{\bar v}$ is independent of the choice of splitting $\sigma$; this yields our second construction of $\iota_{V,\bar v}$, which coincides with the first one.
\end{constr}

We determine the image of $\iota_{V,\bar v}$. Assume first that $\xi = [(h_p)_p]$ comes from a log structure $(\M,\alpha,q)$. Then each $h_p$ extends by $0$ to $V$---this means that there are $\tilde h_p := \varphi(p) \in \cO_{V,\bar v}$ such that $\tilde h_p \mapsto h_p$ under $\cO_{V,\bar v} \to (j_{p*}\cO_{V \setminus \mathrm{supp}(p)})_{\bar v}$ and $\tilde h_p \mapsto 0$ under $\cO_{V,\bar v} \to \cO_{\mathrm{supp}(p),\bar v}$.
Such an extension $\tilde h_p$ of $h_p$ is unique if it exists; in fact, the map 
$$\cO_{V,\bar v} \to \cO_{\mathrm{supp}(p),\bar v} \times (j_{p*}\cO_{V \setminus \mathrm{supp}(p)})_{\bar v}$$
is injective because $V$ is reduced. Conversely, if each $h_p$ admits an extension $\tilde h_p$ by $0$, then $P \to \cO_V, \ p \mapsto \tilde h_p$ is a chart of a log structure (in a neighborhood of $\bar v$) because it is a homomorphism due to uniqueness of $\tilde h_p$. Then $\xi = [(h_p)_p]$ is the image of this log structure under $\iota_{V,\bar v}$.

\begin{lemma}[cf.~\cite{GrossSiebertI},~3.14]
 A germ $\xi = [(h_p)_p] \in \E xt^1(\cP^{gp},\cO_V^*)_{\bar v}$ is in the image of 
 $$\iota_{V,\bar v}: \widehat\cL_{V,\bar v} \to \E xt^1(\cP^{gp},\cO_V^*)_{\bar v}$$
 if and only if every $h_p$ extends to $V$ by $0$. This property does not depend on the choice of representative $(h_p)_p \in \cH om(\R,\cO_V^*)_{\bar v}$.
\end{lemma}

We introduce an equivalence relation on the germs of log structures---the $\xi = [(h_p)_p]$ that extend by $0$---called the \emph{type}. Let $V(p) = \overline{V \setminus \mathrm{supp}(p)}$.

\begin{defn}[Type]\index{type!log structures of the same type}
 Two germs $\xi = [(h_p)_p], \xi' = [(h'_p)_p]$ that extend by $0$ are \emph{of the same type} if for every $p \in P$, we can find $e_p \in \cO^*_{V(p),\bar v}$ such that $h'_p = e_ph_p \in (j_{p*}\cO^*_{V \setminus \mathrm{supp}(p)})_{\bar v}$.
\end{defn}

Note the canonical restriction map $\cO^*_{V(p),\bar v} \to (j_{p*}\cO^*_{V \setminus \mathrm{supp}(p)})_{\bar v}$, which we apply to $e_p$. There is no condition relating $e_p$ and $e_q$; moreover, there is no condition for $p \in P^{gp} \setminus P$. Of course, being of the same type is an equivalence relation.

For later use in studying toroidal crossing spaces, we give the following lemma; it gives a nice---though rather peculiar---construction of two log structures which are of the same type. This result is implicit in \cite{GrossSiebertI}.

\begin{lemma}\label{same-type-lemma}
 Let $V(\sigma)$ be as in Example~\ref{prototype-toric-ghost}, and let $\tau_1,\tau_2: U \to V(\sigma)$ be two \emph{smooth} morphisms---with the same source and target---with $\tau_1(\bar u) = \tau_2(\bar u) = 0 \in V(\sigma)$ for a point $\bar u \in U$. Assume that we have an isomorphism $\varphi: \tau_1^{-1}\cP_\sigma \cong \tau_2^{-1}\cP_\sigma$ of toric ghost sheaves which respects the two isomorphisms $P_\sigma \cong (\tau_1^{-1}\cP_\sigma)_{\bar u}$ and $P_\sigma \cong (\tau_2^{-1}\cP_\sigma)_{\bar u}$. In particular, $(U,\tau_1^{-1}\cP_\sigma)$ is a space with a toric ghost sheaf.  Then the two log structures $(\tau_1)^*_{log}\M_{V(\sigma)}$ and $(\tau_2)^*_{log}\M_{V(\sigma)}$---induced from $\M_{V(\sigma)}$ via the two maps---are of the same type in $\bar u$.
\end{lemma}

\section{Pre-ghost structures}
We turn from the \emph{absolute} problem of classifying log structures to the \emph{relative} problem of classifying log morphisms to $S_0 = \Spec (\NN \to \kk)$. Once we fix a log structure $\M$ on $V$, the datum of a log morphism to $S_0$ is equivalent to the datum of a global section $\rho \in \Gamma(V,\M)$ with $\alpha(\rho) = 0$. At the level of ghost sheaves, this yields $\bar\rho := [\rho] \in \Gamma(V,\overline\M)$. When classifying log morphisms, we assume $\bar\rho \in \Gamma(V,\cP)$ to be fixed.

\begin{defn}\index{pre-ghost structure}\index{pre-ghost structure!space with a pre-ghost structure}
 A \emph{space with a pre-ghost structure} is a triple $(V,\cP,\bar\rho)$ where $(V,\cP)$ is a space with a toric ghost sheaf, and $\bar\rho \in \Gamma(V,\cP)$ is a global section which is nowhere nilpotent, i.e., for every $\bar v \in V$ and every $n > 0$, we have $n \cdot \bar\rho_{\bar v} \not= 0 \in \cP_{\bar v}$. We assume furthermore that $\bar\rho$ is primitive, i.e., for every $\bar v \in V$, we \emph{cannot} write $\bar\rho_{\bar v} = nq \in \cP_{\bar v}$ with $q \in \cP_{\bar v}$ and $n > 0$.
\end{defn}
\begin{rem}
 The notion of pre-ghost structure is not explicit in \cite{GrossSiebertI}; moreover, $\bar\rho$ is not explicitly assumed to be primitive in the corresponding discussion. The effect of primitivity is that $\cP^{gp}_{\bar v}/\ZZ\bar\rho_{\bar v}$ is a free abelian group; that will simplify our arguments below. We do not know if the condition is necessary.
\end{rem}
\begin{ex}\label{prototype-preghost}
 Let $V(\sigma)$ be as in Construction~\ref{prototype-constr}. According to Example~\ref{prototype-toric-ghost}, it can be enhanced to a space with a toric ghost sheaf $(V(\sigma),\cP_\sigma)$; the element $\rho_\sigma \in P_\sigma$ defines a global section $\bar\rho_\sigma \in \cP_\sigma$, which yields a space with a pre-ghost structure $(V(\sigma),\cP_\sigma,\bar\rho_\sigma)$.
\end{ex}

A log morphism from $V$ to $S_0$ is a quadruple $(\M,\alpha,q,\rho)$ where $\alpha: \M \to \cO_V$ is a log structure, $q: \M \to \cP$ exhibits $\cP$ as the ghost sheaf, and $\rho \in \Gamma(V,\M)$ satisfies $q(\rho) = \bar\rho$ and $\alpha(\rho) = 0$. 

\begin{defn}
 Let $(V,\cP,\bar\rho)$ be a space with a pre-ghost structure. Then $\widehat{\cL\cS}_V$ is the sheaf of equivalence classes of log morphisms $(\M,\alpha,q,\rho)$ where two log morphisms are \emph{equivalent} if they are locally isomorphic on $V$.
\end{defn}

Let $\xi \in \widehat{\cL\cS}_{V,\bar v}$ be a germ, given by a log morphism $(\M,\alpha,q,\rho)$ defined in a neighborhood of $\bar v \in V$. It gives rise to an extension
$$0 \to \cO_V^* \to \M^{gp}/\rho \to \cP^{gp}/\bar\rho \to 0;$$
thus, we obtain a canonical map
$$\bar\iota_{V,\bar v}: \widehat{\cL\cS}_{V,\bar v} \to \E xt^1(\cP^{gp}/\bar\rho,\cO_V^*)_{\bar v},$$
which is injective. We can reconstruct the log structure $(\M,\alpha,q)$ from the extension by pull-back along $\cP^{gp} \to \cP^{gp}/\bar\rho$. We can reconstruct $\rho$ as well; namely, we get the subsheaf $\ZZ\rho \subseteq \M^{gp}$ as the kernel of the map to $\M^{gp}/\rho$, and we choose the ``correct'' generator $\rho$ (instead of the wrong one $-\rho$) because only one of them maps to $\cP \subseteq \cP^{gp}$. We have a commutative diagram 
\begin{equation}
 \label{abs-rel-embedding-square}
 \xymatrix{
  \widehat{\cL\cS}_{V,\bar v} \ar[r]^-{\bar\iota_{V,\bar v}} \ar[d] & \E xt^1(\cP^{gp}/\bar\rho,\cO_V^*)_{\bar v} \ar[d] \\
  \widehat{\cL}_{V,\bar v} \ar[r]^-{\iota_{V,\bar v}} & \E xt^1(\cP^{gp},\cO_V^*)_{\bar v} \\
 }
\end{equation}
where the left vertical arrow is the forgetful map $(\M,\alpha,q,\rho) \mapsto (\M,\alpha,q)$.

The explicit description of $\E xt^1(\cP^{gp},\cO_V^*)_{\bar v}$ in the absolute case has an analog for the sheaf $\E xt^1(\cP^{gp}/\bar\rho,\cO_V^*)_{\bar v}$ in the relative case. Because the surjection $P_V^{gp} \to \cP^{gp}$ is an isomorphism on the stalk at $\bar v$, there is---for $V$ small enough and connected---a unique $\tilde\rho \in P^{gp} = \Gamma(V,P_V^{gp})$ with $\tilde\rho \mapsto \bar\rho \in \Gamma(V,\cP^{gp})$. This gives a short exact sequence
$$0 \to \R \to P_V^{gp}/\tilde\rho \to \cP^{gp}/\bar\rho \to 0$$
with the same sheaf of relations $\R$ as above.
Using $\E xt^1(P_V^{gp}/\tilde\rho,\cO_V^*) = 0$, we find a morphism 
\[
 \xymatrix{
 \cH om(P_V^{gp}/\tilde\rho,\cO_V^*) \ar[r] \ar[d] & \cH om(\R,\cO_V^*) \ar[r] \ar@{=}[d] & \E xt^1(\cP^{gp}/\bar\rho,\cO_V^*) \ar[d] \ar[r] & 0 \\
 \cH om(P_V^{gp},\cO_V^*) \ar[r] & \cH om(\R,\cO_V^*) \ar[r] & \E xt^1(\cP^{gp},\cO_V^*) \ar[r] & 0 \\
 }
\]
of presentations. Thus, a germ of a log morphism $(\M,\alpha,q,\rho)$ can be represented by functions $(h_p)_p \in \cH om(\R,\cO_V^*)_{\bar v}$ as above---we just divide by a smaller subgroup. The result is:

\begin{lemma}
 A germ $\xi = [(h_p)_p] \in \E xt^1(\cP^{gp}/\bar\rho,\cO_V^*)_{\bar v}$ is in the image of 
 $$\bar\iota_{V,\bar v}: \widehat{\cL\cS}_{V,\bar v} \to \E xt^1(\cP^{gp}/\bar\rho,\cO_V^*)_{\bar v} = \mathrm{coker}(\cH om(P_V^{gp}/\tilde\rho,\cO_V^*)_{\bar v} \to \cH om(\R,\cO_V^*)_{\bar v})$$
 if and only if every $h_p$ extends to $V$ by $0$. This property does not depend on the choice of representative $(h_p)_p \in \cH om(\R,\cO_V^*)_{\bar v}$.
\end{lemma}
\begin{cor}
 The square \eqref{abs-rel-embedding-square} is Cartesian, and the forgetful map $\widehat{\cL\cS}_{V,\bar v} \to \widehat\cL_{V,\bar v}$ is surjective.
\end{cor}

\begin{rem}
 Intuitively, the reader might feel that the surjectivity would conflict with the additional condition $\alpha(\rho) = 0$ in our definition of $\widehat{\cL\cS}_V$. But because $\mathrm{supp}(\bar\rho) = V$, we have $h_{\bar\rho} = 0 \in (j_{\bar\rho *}\cO_\emptyset^*)_{\bar v}$; in particular, $h_{\bar\rho} = 0$ as a function after extending by $0$.
\end{rem}

\section{Ghost structures}

Ghost structures are not just pre-ghost structures that enjoy a special property; they encode more information. Once we have a ghost structure, we can specify a subsheaf $\cL\cS_V \subseteq \widehat{\cL\cS}_V$ whose sections (i) correspond to actual log morphisms, not just equivalence classes,\footnote{There is one globally defined log morphism on the locus of definition of $s \in \cL\cS_V$, not just on the open subsets of a cover, and then only equivalent on overlaps.} and (ii) correspond to log morphisms which are log smooth, not just some morphisms.

\begin{defn}\index{ghost structure}\index{ghost structure!space with a ghost structure}
 A \emph{space with a ghost structure} $V^g$ consists of a space with a pre-ghost structure $(V,\cP,\bar\rho)$ and 
 \begin{itemize}
  \item an \'etale cover $\{\pi_i: U_i \to V\}_{i = 1,...,m}$
  \item a choice of geometric point $\bar v_i \in U_i$ for each $i = 1, ..., m$.
 \end{itemize}
 Then $P_i := \cP_{\bar v_i}$ is a toric monoid, and $\bar\rho_i := \bar\rho_{\bar v_i} \in P_i$; they define a log map $f_i: (V_i,\M_i) \to S_0$ as follows: The homomorphism $\NN \to P_i, \ 1 \mapsto \bar\rho_i$ defines a log morphism 
 $$A_{P_i} := \Spec (P_i \to \kk[P_i]) \to \Spec (\NN \to \kk[\NN]) =: A_\NN;$$
 then we obtain $f_i: (V_i,\M_i) \to S_0$ as the fiber product along the inclusion $S \to A_\NN$. The underlying space $V_i$ is the toric 
 divisor $\{z^{\bar\rho_i} = 0\} \subseteq \Spec \kk[P_i]$. Forgetting the log structure, we obtain the space with a pre-ghost structure $(V_i,\overline\M_i,\bar\rho_i)$. Now, as part of the datum, we have
 \begin{itemize}
  \item \emph{smooth} morphisms $\sigma_i: U_i \to V_i$ with $\sigma_i(\bar v_i) = 0 \in \Spec\kk[P_i]$.
 \end{itemize}
 We assume that there is an isomorphism $\phi_i: \pi_i^{-1}(\cP) \cong \sigma_i^{-1}(\overline\M_i)$ of sheaves of monoids on $U_i$ which is on the stalk at $\bar v_i$ compatible with the two identifications $\cP_{\bar v_i} = P_i$---by definition of $P_i$---and $\overline\M_{i,0} = P_i$---by construction of $\M_i$---; in particular, $\phi_i(\bar\rho) = \sigma_i^*\bar\rho_i$. Such an isomorphism is unique if it exists because two isomorphisms would coincide on $\cP_{\bar v_i}$, and because there is a surjection $P_i \to \cP|_{U_i}$ of sheaves of monoids. Finally, we assume that the two log structures $(\sigma_i)^*_{log}\M_i$ (on $U_i$) and $(\sigma_j)^*_{log}\M_j$ (on $U_j$) are of the same type on overlaps $U_i \times_V U_j$; this means, the log structures are of the same type in every point.
\end{defn}
\begin{defn}
 Let $V^g$ be a space with a ghost structure. The subsheaf $\cL\cS_V \subseteq \widehat{\cL\cS}_V$ consists of equivalence classes of log morphisms $(\M,\alpha,q,\rho)$ such that in every point, the log structure $(\M,\alpha,q)$ is of the same type as prescribed by the ghost structure---i.e., of the same type as $(\sigma_i)^*_{log}\M_i$.
\end{defn}

A \emph{log smooth structure} of \emph{ghost type} $V_g$ is a log morphism $(\M,\alpha,q,\rho) \in \cL\cS_V$ such that the corresponding map $(V,\M) \to S_0$ is log smooth; the key point about ghost structures is that---under one more technical hypothesis---they correspond precisely with sections of $\cL\cS_V$. The purpose of all the above structures is to ensure that $(V,\M) \to S_0$ is log smooth.

\begin{prop}[\cite{GrossSiebertI},~3.20]
 Let $V^g$ be a space with a ghost structure, and assume that $\cP_{\bar\eta}^{gp}$ is generated by $\bar\rho_{\bar\eta}$ at geometric generic points $\bar\eta$ of irreducible components of $V$. Then for an \'etale open $U \subseteq V$, sections in $\Gamma(U,\cL\cS_V)$ correspond to actual log structures $(\M,\alpha,q,\rho)$ on $U$, and the log morphism is log smooth.
\end{prop}

\section{Toroidal crossing spaces}

Now we are prepared to define toroidal crossing spaces.

\begin{defn}[Toroidal crossing space]\label{tor-cross-defn}\index{toroidal crossing space}
 A \emph{toroidal crossing space} is a space with a pre-ghost structure $(V,\cP,\bar\rho)$ such that for every point $\bar v \in V$, there is a polytope $\sigma \subseteq N_\RR$ such that $\cP_{\bar v} = P_\sigma$ and $\bar\rho_{\bar v} = \rho_\sigma$ where $\bar\rho_{\bar v} \in \cP_{\bar v}$ is the germ, and $\rho_\sigma \in P_\sigma$ is the Gorenstein degree. Moreover, there is an \'etale neighborhood $\pi: U \to V$ of $\bar v$ and a \emph{smooth} morphism $\tau: U \to V(\sigma)$ with $\tau(\bar v) = 0$ such that we have an isomorphism 
 $$\phi: \cP|_U \to \tau^{-1}(\cP_\sigma)$$ of toric ghost sheaves which respects the canonical identifications of the respective stalks with $P_\sigma$. In particular, there is at most one such isomorphism; if it exists (we assume this here), then it is compatible with $\bar\rho_{\bar v}$.
\end{defn}

To turn a toroidal crossing space $(V,\cP,\bar\rho)$ into a space with a ghost structure, we choose points $\bar v_1,...,\bar v_m$ of $V$ such that the corresponding \'etale neighborhoods $\pi_i: U_i \to V$ of Definition~\ref{tor-cross-defn} form a cover. After writing $\sigma_i$ for the polytope associated to $\bar v_i$, we obtain smooth morphisms $\tau_i: U_i \to V(\sigma_i)$ with $\tau_i(\bar v_i) = 0$ as required. The two induced log morphisms are of the same type on $U_i \times_V U_j$ due to Lemma~\ref{same-type-lemma}; namely, for a point $\bar v \in U_i \times_V U_j$ with polytope $\tilde\sigma$, the two log morphisms are induced by two smooth morphisms to the same space $V(\tilde\sigma)$, as follows from the following Lemma.

\begin{lemma}\label{local-model-at-point}
 Consider $V(\sigma) \to S_0$ as a log morphism, and let $\bar v \in V(\sigma)$ be a $k$-valued point with associated polytope $\tilde\sigma$, i.e., we have $\overline\M_{V(\sigma),\bar v} = P_{\tilde\sigma}$. Then there is an open neighborhood $\bar v \in W \subseteq V(\sigma)$ and a smooth and strict morphism $s: W \to V(\tilde \sigma)$ with $s(\bar v) = 0$.
\end{lemma}

A priori, the subsheaf $\cL\cS_V \subseteq \widehat{\cL\cS}_V$ might depend on the choices we made to turn $V$ into a space with a ghost structure. However, it is in fact \emph{independent} of the choices; namely, the argument which proves that the two log structures are of the same type also applies for choosing other points, other \'etale neighborhoods, and other smooth morphisms to $V(\sigma_i)$.

\begin{cor}
 Let $(V,\cP,\bar\rho)$ be a toroidal crossing space. Then there is a well-defined subsheaf $\cL\cS_V \subseteq \widehat{\cL\cS}_V$ of log smooth morphisms.
\end{cor}

\begin{ex}\label{prototype-tor-cr-space}\note{prototype-tor-cr-space}
 Our prototypes of toroidal crossing spaces from Example~\ref{prototype-preghost} are toroidal crossing spaces. This is not a tautology, but needs Lemma~\ref{local-model-at-point}.
\end{ex}

\begin{ex}[Normal crossing spaces]\label{normal-crossing-toroidal-crossing}\note{normal-crossing-toroidal-crossing}\index{toroidal crossing space!normal crossing space}
 A normal crossing space $V$ has a canonical structure of a toroidal crossing space. Namely, let $\nu: \tilde V \to V$ be the normalization map, and let $\underline\NN_{\tilde V}$ be the constant sheaf on $\tilde V$ with stalk $\NN$, formed in the \'etale topology; it has a distinguished global section $1_{\tilde V} \in \Gamma(\tilde V,\underline\NN_{\tilde V})$. Now the space with a pre-ghost structure
 $$(V,\nu_*\underline\NN_{\tilde V},\bar\rho_V := \nu_*1_{\tilde V})$$
 is a toroidal crossing space. All ghost stalks are of the form $\NN^r$ with $\bar\rho_{\bar v} = (1,...,1)$ and $r \leq d = \mathrm{dim}\ V$. 
\end{ex}

\begin{rem}
 A normal crossing space $V$ might carry other structures of a toroidal crossing space besides the canonical one. For example, taking $\sigma = [0,k] \subseteq \RR$, we get a toroidal crossing space $V(\sigma)$ with underlying space $\{xy = 0\} \subseteq \bAA^2_{x,y}$; for $k = 1$, this is the canonical structure of a toroidal crossing space, but for $k \geq 2$, it is another one.
\end{rem}

\begin{ex}[Toroidal embeddings]\label{toroidal-emb}
 Let $U \subseteq X$ be a toroidal embedding---of the smooth $U$ in the normal variety $X$---in the sense of \cite{KKMS1973}, and write $V := X \setminus U = \bigcup_i E_i$ for the decomposition of the boundary in irreducible components; assume that $U \subseteq X$ is \emph{without self-intersections}, i.e., each $E_i$ is a normal variety. We endow $X$ with the divisorial log structure $\M_X$ induced from $V \subseteq X$; a priori, we should carry out this construction in the \'etale topology, but because $U \subseteq X$ is without self-intersections, we get the same answer in the Zariski topology, cf.~\cite[III.~Prop.~1.6.5]{LoAG2018}. We have an isomorphism $\overline\M_X \cong \underline\Gamma_V(\mathrm{Div}_X^+)$, the sheaf of effective Cartier divisors with support in $V$, formed in the Zariski topology. When we set 
 $$\cP := \overline\M_X|_V =  \Gamma_V(\mathrm{Div}_X^+)|_V,$$ 
 then $(V,\cP)$ is a space with a toric ghost sheaf. If $X$ is \emph{Gorenstein}, then $V \subseteq X$ itself is an effective Cartier divisor. It gives a global section $\bar\rho \in \cP$; by definition of a toroidal embedding, now $(V,\cP,\bar\rho)$ is a toroidal crossing space in our sense. In \cite{KKMS1973}, a \emph{conical polyhedral complex} is associated to the toroidal embedding $U \subseteq X$; its cones are dual to the monoids of effective Cartier divisors on an open neighborhood $\mathrm{Star}(Y)$ of a stratum of $\bigcup_i E_i$, so we may consider this conical polyhedral complex a precursor to the structure of a toroidal crossing space.
\end{ex}

\begin{rem}
 The definition and theory of a toroidal crossing space by Schr\"oer--Siebert in \cite{SchroerSiebert2006} is slightly different. They are not so much interested in log morphisms, but in log structures that we classify with $\hat\cL_V$. Moreover, they restrict to \emph{tame} toroidal crossing spaces in the sense of our definition below.
\end{rem}

\section{The geometry and stratification of $V$}\index{toroidal crossing space!stratification}

Let $V = (V,\cP,\bar\rho)$ be a toroidal crossing space. If $\bar v$ is a geometric point over $v \in V$, then $\cP_{\bar v}$ depends only on $v$ and not on the choice of $\bar v$. This is because, on a roof $V \leftarrow U \rightarrow V(\sigma)$ as in the definition of a toroidal crossing space, we have $\cP|_U = \tau^{-1}(\cP_\sigma)$, i.e., the stalks of the restriction of $\cP$ to the Zariski topology on $U$ do not change under a further \'etale open $\pi': U' \to U$. It is, however, not always true that $\cP_{\bar v} = \cP^{Zar}_v$ when $\cP^{Zar}$ is the restriction of $\cP$ to the Zariski topology of $V$, as the example of the nodal cubic with $\cP$ as constructed in Example~\ref{normal-crossing-toroidal-crossing} shows. Since $\cP_{\bar v}$ is independent of the geometric point $\bar v$ over $v$, the following loci are well-defined, for $m \geq 0$:
\begin{align}
 \U_mV &:= \{v \in V \ | \ \mathrm{rk}(\cP_{\bar v}) \leq m + 1\} \nonumber \\
 \C_mV &:= \{v \in V \ | \ \mathrm{rk}(\cP_{\bar v}) \geq m + 1\} \nonumber \\
 \cS_mV &:= \{v \in V \ | \ \mathrm{rk}(\cP_{\bar v}) = m + 1\} \enspace = \enspace \U_mV \cap \C_mV \nonumber 
\end{align}
It is easy to see from the local models that $\U_mV \subseteq V$ is open, and $\C_mV \subseteq V$ is closed. Hence, $\cS_mV \subseteq V$ is locally closed. We also see from the local models that $\U_0V = \cS_0V$ is smooth, and that its complement $\C_1V$ is the locus where at least two \'etale local components meet, which is also equal to the non-normal locus of $V$. On $\cS_1V$, we have only double normal crossing singularities given by $\{xy = 0\}$; the locus $\C_2V$ contains more complicated singularities, and here always meet at least three \'etale local components.

We derive from the local models as well that $\cS_mV$ is smooth of dimension $d - m$, where $d = \mathrm{dim}(V)$. We denote the set of irreducible components of $\cS_mV$ by $[\cS_mV]$. An irreducible component $S^\circ \in [\cS_mV]$ is called an \emph{open stratum} of $(V,\cP,\bar\rho)$, and its closure $S = \overline{S^\circ}$ is called a \emph{closed stratum}. The closed strata in $[\cS_0V]$ are precisely the irreducible components of $V$; we also denote them by $V_1, ..., V_c$.

If $v \in \cS_1V$, then we must have 
$$\cP_{\bar v} \cong \langle (-1,k),(0,1),(1,0)\rangle \subseteq \ZZ^2$$
with $\bar\rho_{\bar v} = (0,1)$ for some $k \geq 1$. The corresponding polytope is $\sigma = [0,k] \subseteq \RR$. We get a function 
$$\kappa: \enspace \cS_1V\to \NN, \quad v \mapsto k,$$
which is locally constant because the structure of toroidal crossing space is locally around $\bar v$ pulled back from $V(\sigma)$---no change of $k$ is possible. Thus, we consider it as a function 
$$\kappa: \enspace [\cS_1V] \to \NN.$$
We say that $k = \kappa(D^\circ)$ is the \emph{kink} of the open stratum $D^\circ \in [\cS_1V]$ of codimension $1$. When we consider a piecewise linear function $h: \ZZ \to \ZZ$ with 
$$\cP_{\bar v} \cong \{(m_1,m_2) \in \ZZ^2 \ | \ m_2 \geq h(m_1)\},$$
i.e., $h(m) = -k\cdot m$ for $m \leq 0$ and $h(m) = 0$ for $m \geq 0$, then $k$ becomes the change of slope of $h$---hence the name \emph{kink}. We say that a toroidal crossing space $(V,\cP,\bar\rho)$ is \emph{tame} if the kink is always $k = 1$.

In analogy with normal crossing spaces, we say that a toroidal crossing space $(V,\cP,\bar\rho)$ is \emph{simple} if all closed strata $V_i \in [\cS_0V]$ are normal, i.e., if all irreducible components $V_1,...,V_c$ are normal. Simple toroidal crossing spaces admit special charts:

\begin{lemma}\label{special-chart-tor-cr}\note{special-chart-tor-cr}
 Let $(V,\cP,\bar\rho)$ be a simple toroidal crossing space, and let $v \in V$. Then there is an \'etale map $\pi: U \to V$, a geometric point $\bar u \in U$ with $\pi(u) = v$, and a smooth map $\tau: U \to V(\sigma)$ with $\tau(u) = 0$ as in the definition of a toroidal crossing space, with the following property: The map $\tau: U \to V(\sigma)$ induces a bijection between the irreducible components of $V(\sigma)$ and of $U$. Every irreducible component of $U$ contains $u \in U$. The map $\pi: U \to V$ induces a bijection between the irreducible components of $U$ and those irreducible components of $V$ which contain $v$. 
\end{lemma}
\begin{proof}
 We leave the rather easy proof of the first two statements to the reader. The most interesting part is the last statement. Let $u_1, ..., u_e$ be the generic points of the irreducible components $U_1,...,U_e$ of $U$. Since $\pi: U \to V$ is \'etale, each $v_i := \pi(u_i)$ has a local ring of dimension $0$, so they are generic points of irreducible components of $V$. Let us denote them by $V_i$. If there are two indices $i,j$ with $v_i = v_j$, then $U_i \cup U_j \subseteq \pi^{-1}(V_i)$. However, by assumption, $V_i$ is normal, so $\pi^{-1}(V_i)$ is normal, and it cannot contain two different irreducible components which both contain $u \in U$. Thus, we have an injection from the irreducible components of $U$ to those irreducible components of $V$ which contain $v$. It must be surjective because any specialization $\eta \rightsquigarrow v$ from the generic point of an irreducible component of $V$ can be lifted along $\pi: U \to V$.
\end{proof}
\begin{cor}
 Let $(V,\cP,\bar\rho)$ be a simple toroidal crossing space. Then every closed stratum $S \in [\cS_mV]$ is normal.
\end{cor}
\begin{proof}
 Let $\eta$ be the generic point of $S^\circ$, and let $V_{i_1},...,V_{i_e}$ be the irreducible components of $V$ which contain $\eta$. Let $W = \bigcap_{k = 1}^e V_{i_k}$, and let $w \in W$ be a point. Choose $\pi: U \to V$ and $\tau: U \to V(\sigma)$ as in Lemma~\ref{special-chart-tor-cr}. Then $\pi^{-1}(W)$ is the intersection of some of the irreducible components of $U$, and hence it is equal to $\tau^{-1}(W_\sigma)$, where $W_\sigma \subseteq V(\sigma)$ is the corresponding intersection of irreducible components of $V(\sigma)$. Thus, $W$ is normal. When we apply this argument to $w = \eta$, then, on the one hand side, $\pi^{-1}(W)$ is the intersection of all irreducible components of $U$. On the other hand side, $\pi^{-1}(S^\circ)$ is the locus where, on $U$, the rank of $\cP_{\bar v}$ is maximal, and hence this is equal to $\tau^{-1}(\{0\})$. Thus, we have $\pi^{-1}(W) = \pi^{-1}(S^\circ)$, showing that $S \subseteq W$ is a maximal closed subset with respect to inclusion, i.e., an irreducible component. Hence, $S$ is normal.
\end{proof}

In a simple toroidal crossing space $(V,\cP,\bar\rho)$, each closed stratum $S \in [\cS_mV]$ is a toroidal variety, and its boundary $\partial S = S \setminus S^\circ$ turns it into a toroidal embedding without self-intersections.

\begin{ex}
 Let $V$ be a nodal cubic with node $v \in V$, considered as a  toroidal crossing space by Example~\ref{normal-crossing-toroidal-crossing}. Then $\U_0V = \cS_0V = V \setminus \{v\}$ is irreducible although it has, \'etale locally around $v$, two strata of codimension $0$. Since $V = \overline{V \setminus \{v\}}$ is not normal, $V$ is not simple. We have $\C_1V = \cS_1V = \{v\}$.
\end{ex}

\section{Toroidal crossing degenerations to $V$}

Let $R$ be a discrete valuation $\kk$-algebra with residue field $\kk$, and $S = \Spec R$. Let $f: X \to S$ be a toroidal crossing degeneration. Example~\ref{prototype-tor-cr-space} shows that $V = X_0$ endowed with $\cP := \overline\M_{X_0}$ and the corresponding global section $\bar\rho \in \cP$ is a toroidal crossing space. Conversely, when $(V,\cP,\bar\rho)$ is a toroidal crossing space, and $f: X \to S$ is a toroidal crossing degeneration, and we have an isomorphism $V \cong X_0$ of schemes, then there is at most one map $q: \M_{X_0} \to \cP$ which turns $(X_0,\M_{X_0}) \to S_0$ into a section of $\cL\cS_V$. Namely, any two such maps differ by an automorphism of $\cP$, and when we analyze the supports of sections of $\cP$ in the local models, we find that the identity is the only automorphism of $\cP$ as a sheaf of monoids. Thus, the following definition makes sense.

\begin{defn}
 Let $(V,\cP,\bar\rho)$ be a toroidal crossing space. Then a \emph{toroidal crossing degeneration to $V$} is a toroidal crossing degeneration $f: X \to S$ together with an isomorphism $V \cong X_0$ as schemes such that there is a map $q: \M_{X_0} \to \cP$ which turns the log morphism $f_0: X_0 \to S_0$ into a section of $\cL\cS_V$. 
\end{defn}

This is equivalent to that $(\overline\M_{X_0},f^*(1_{S_0}))$ is locally isomorphic to $(\cP,\bar\rho)$ as a sheaf of monoids with a distinguished section. Namely, since $(\cP,\bar\rho)$ has no non-trivial automorphisms, the local isomorphisms glue to a global isomorphism, and then we obtain a section of $s \in \widehat{\cL\cS}_V$. The argument which shows that $\cL\cS_V \subseteq \widehat{\cL\cS}_V$ is well-defined now shows that in fact $s \in \cL\cS_V$.

Locally, every section $s \in \cL\cS_V$ arises from a toroidal crossing degeneration---at least when $R$ is complete. By the Cohen structure theorem, we have $R = \kk\lsem t\rsem$ in this case.

\begin{lemma}
	Let $R = \kk\lsem t\rsem$, and let $(V,\cP,\bar\rho)$ be a toroidal crossing space. Let $s \in \cL\cS_V$ be a section. Then every geometric point $\bar v \in V$ has an \'etale neighborhood $W$ such that there is a toroidal crossing degeneration $f: X \to S$ to $V|_W$ which induces $s \in \cL\cS_V$.\footnote{The proof of a somewhat stronger statement in an earlier version of the manuscript is wrong.}
\end{lemma}
\begin{proof}
	Let $f_0: X_0 \to S_0$ be the log morphism corresponding to $s \in \cL\cS_V$. Then $f_0: X_0 \to S_0$ is log smooth, saturated, and vertical. Every geometric point $\bar v \in V$ admits a strict \'etale neighborhood $g_0: W_0 \to X_0$ such that there is a strict \'etale morphism 
	$$h_0: W_0 \to V(\sigma) \times \bAA^{d - r} =: L_0.$$ By shrinking $W_0$, we can assume that it is affine. Lemma~\ref{etale-map-lift} shows that there is an \'etale morphism of finite type $h: W \to L$ of schemes, where $L := U_S(\sigma) \times \bAA^{d - r}$ is the base change of the local model to $S = \Spec \CC\lsem t\rsem$. Now $W \to S$ is a toroidal crossing degeneration, and the divisorial log structure defined by $t = 0$ is the given log structure on $W_0$ after restriction to the central fiber.
\end{proof}

This justifies the choice of $\cL\cS_V \subseteq \widehat{\cL\cS}_V$: every toroidal crossing degeneration to $V$ whose ghost sheaf is isomorphic to $(\cP,\bar\rho)$ induces a section of $\cL\cS_V$, and every section of $\cL\cS_V$ arises locally from a toroidal crossing degeneration.

\section{The local description of $\cL\cS_V$}\label{sec-local-LS}

In \cite[Thm.~3.22]{GrossSiebertI}, Gross--Siebert gives a description of the sheaf $\cL\cS_V$ for the prototypes of toroidal crossing spaces, which we briefly explain here. Let $\sigma \subseteq N_\RR$ be a polytope as in Construction~\ref{prototype-constr}, and let $V = (V,\cP,\bar\rho) = V(\sigma) \times \GG_m^{d-r}$ be the toroidal crossing space which we obtain from the prototype $(V(\sigma),\cP_\sigma,\bar\rho_\sigma)$ by pulling back the structure along the smooth projection $V \to V(\sigma)$. If $\tau \subseteq \sigma$ is a face, then the dual face $\check\tau \subseteq M_\RR$ defines an irreducible closed toric stratum $V_\tau \subseteq V$ with $V_\tau \cong \Spec k[(\check \tau \cap M) \oplus \ZZ^{d - r}]$; these strata are glued along lower-dimensional strata. The irreducible components of $V$ are $\{V_v \ | \ v \in \sigma \mathrm{\ is\ a\ vertex}\}$. For each face $\tau \subseteq \sigma$, we denote by $\tau^\parallel \subseteq N$ the intersection of the tangent space in $N_\RR$ at $\tau$ with the lattice $N$. For edges $\omega \subseteq \sigma$, this tangent space is one-dimensional, i.e., $\omega^\parallel \cong \ZZ$; thus, we can (and do) choose primitive generators $d_\omega \in \omega^\parallel$. We get a choice of vertices $v^+_\omega$ and $v^-_\omega$ of $\omega$ (such that $d_\omega$ points from $v^-_\omega$ to $v^+_\omega$) and an orientation on the edge $\omega$. For each two-dimensional face $\tau \subseteq \sigma$, we choose a \emph{sign vector} 
$$\epsilon_\tau: \{\mathrm{edges\ of\ }\sigma\} \to \{-1,0,1\}$$
such that $\epsilon_\tau(\omega) = 0$ if and only if $\omega \not\subseteq \tau$, and $\sum_\omega \epsilon_\tau(\omega)\omega$ is an oriented boundary of $\tau$. With these choices, Gross--Siebert finds:

\begin{thm}[\cite{GrossSiebertI},~3.22]\label{LS-local-description}\note{LS-local-description}
 The sheaf $\cL\cS_V$ is isomorphic to the subsheaf of $\bigoplus_{\mathrm{dim}\ \omega = 1} \cO_{V_\omega}^*$ defined as follows. If $U \subseteq V$ is an open subset, then $\Gamma(U,\cL\cS_V)$ consists of $(f_\omega)$ such that, for every two-dimensional face $\tau$ of $\sigma$, we have 
 $$\prod_{\mathrm{dim}\,\omega \, =\, 1} d_\omega \otimes f_\omega^{\epsilon_\tau(\omega)}|_{V_\tau} = 1 \in N \otimes_\ZZ \Gamma(U,\cO_{V_\tau}^*).$$
\end{thm}

The functions $f_\omega \in \cO_{V_\omega}^*$ in the theorem are referred to as \emph{slab functions}.\index{slab function} Gross--Siebert finds in \cite[Thm.~3.28]{GrossSiebertI} that, in their setting, the local descriptions can be glued, and the slab functions are sections of global line bundles $\N_\omega$ on the (normalizations of) strata of codimension $1$ of $X_0(B,\cP,s)$.

\begin{rem}
 In the general case, the global nature of the line bundles $\N_\omega$ is not yet entirely clear. In current work in progress (\cite{CortiRuddatLog}, personal communication), Corti and Ruddat define, for each closed codimension-$1$ stratum $\rho \in [\cS_1V]$ of a simple toroidal crossing space, a \emph{wall bundle} $\cL_\rho$ on $\rho$, which will be the global analog of $\N_\omega$ above. Then Theorem~\ref{LS-local-description} essentially globalizes. This will allow us to construct sections of $\cL\cS_V$ more easily in a general global setting because the $\cL_\rho$ are coherent sheaves, as opposed to the more complicated $\E xt^1(\cP^{gp}/\bar\rho,\cO_V^*)$. An important application is the construction of resolutions of log singularities. Given a toroidal crossing space $(V,\cP,\bar\rho)$ together with a section $s \in \Gamma(V \setminus Z,\cL\cS_V)$, one wishes to construct a proper birational map $\pi: Y \to V$ such that $Y$ carries the structure of a saturated log smooth log morphism, and such that $\pi: Y \to V$ is an isomorphism over $V \setminus Z$. Given a good understanding of the behavior of $\cL\cS_V$, one can construct a resolution as follows: Let us assume that $s \in \Gamma(V \setminus Z,\cL\cS_V)$ is given by functions $f_\rho \in \cL_\rho$ which vanish precisely in $Z$. Then one constructs a modification $\pi: Y \to V$ such that, on $Y$, each $f_\rho$ extends to a nowhere vanishing section of the new $\cL_\rho'$. At least in good situations, this yields a section of $\cL\cS_Y$ which extends $s$ from $V \setminus Z = Y \setminus \pi^{-1}(Z)$ to the whole of $Y$.
 
 For applications in deformation theory, one sometimes wants to consider \emph{crepant} resolutions of log singularities, often of affine log singularities, because then we have $\Theta^1_{Y/S_0} \cong \Omega^{d - 1}_{Y/S_0}$. When insisting that $\pi: Y \to V$ should be crepant, examples suggest (again personal communication) that it is not always possible to have that $Y$ is a toroidal crossing space with a section of $\cL\cS_Y$. Instead, some singularity like $\{xy = 0\} \subset \frac{1}{r}(1,-1,a,-a)$ will persist already on the level of the underlying space of $Y$. This has lead Corti and Ruddat to investigate \emph{generically} toroidal crossing spaces,\index{toroidal crossing space!generically} which are (more or less) spaces with a pre-ghost structure $(V,\cP,\bar\rho)$ such that the local models of toroidal crossing spaces do not necessarily exist everywhere but only around the generic points of the strata. Here, one assumes that a stratification is a priori given, which coincides with the stratification from the toroidal crossing space structure where both are defined. Corti and Ruddat impose a compatibility condition, called \emph{viability},\index{toroidal crossing space!viability} which is sufficient to construct the wall bundles $\cL_\rho$, and to define a sheaf $\cL\cS_V$ which classifies some log morphisms to the standard log point $S_0$. Due to the possibility of coherent log singularities that are already build in into $(V,\cP,\bar\rho)$, this approach is far more general and flexible than the classical notion of toroidal crossing spaces which we discuss in this chapter, allowing to construct log structures on spaces that do not admit the structure of a toroidal crossing space in our sense. Details and precise statements will appear in \cite{CortiRuddatLog}.
\end{rem}

\vspace{\baselineskip}

Since $\U_1V \cap V_\omega \subseteq V_\omega$ is schematically dense, we deduce from Theorem~\ref{LS-local-description}:

\begin{cor}\label{purity-LS}
 Let $(V,\cP,\bar\rho)$ be a toroidal crossing space. Then, for each (Zariski) open $U \subseteq V$, the restriction map $\Gamma(U, \cL\cS_V) \to \Gamma(U \cap \U_1V,\cL\cS_V)$ is injective.
\end{cor}

\section{The map $\cL\cS_V \to \T^1_V$}

In Chapter~\ref{sec-local-LS}, we gave Gross--Siebert's local description of $\cL\cS_V$, whose globalization is not yet completely clear. In this chapter, we explain another approach to a global description of $\cL\cS_V$, which is particularly useful for normal crossing spaces. More generally, to obtain useful information from this approach, we need the toroidal crossing space to be tame as defined above. The reference for this approach is \cite{FFR2021}; the ideas trace back to Schr\"oer--Siebert's work \cite{SchroerSiebert2006}.

Let $(V,\cP,\bar\rho)$ be a toroidal crossing space, and let $U \subseteq V$ be an affine open subset. Let $(\M,\alpha,q,\rho)$ be a log morphism that represents a class in $\Gamma(U,\cL\cS_V)$; thus, we have a log smooth morphism $U \to S_0$. We set $S_1 := \Spec(\NN \to \kk[t]/(t^2))$ where $1 \mapsto t$; it is a first order log thickening of $S_0$. By log smooth deformation theory, there is---up to non-unique isomorphism---a unique log smooth deformation $U_1 \to S_1$. In particular, forgetting the log structure, we get a closed embedding $i: \underline U \subseteq \underline U_1$ of schemes; it gives rise to an extension
\begin{equation}\label{LS-extension}
 0 \to \cO_U \to i^*\Omega^1_{\underline U_1} \to \Omega^1_{\underline U} \to 0
\end{equation}
of sheaves on $U$. Here, $\Omega^1_{\underline U_1}$ are the \emph{absolute} K\"ahler differential forms of $\underline U_1$; the left inclusion is given by $1 \mapsto i^*dt$. The isomorphism class of the extension is an element in $\Gamma(U,\E xt^1(\Omega^1_{\underline U},\cO_U))$; this yields a map
$$\eta_V: \cL\cS_V \to \E xt^1(\Omega^1_{\underline V},\cO_V) =: \T^1_V$$
of sheaves of sets. Because the log smooth deformation $U_1 \to S_1$ is only unique up to isomorphism, so is the extension; thus, a global section of $\cL\cS_V$ gives rise to a section of $\E xt^1(\Omega^1_{\underline V},\cO_V)$, but not to a global extension in $\mathrm{Ext}^1(\Omega^1_{\underline V},\cO_V)$.

The target $\T^1_V$ is a coherent sheaf, but the source $\cL\cS_V$ has, up to now, no similar structure. However, we easily endow it with an $\cO_V^*$-action by setting 
$$\lambda \cdot [(\M,\alpha,q,\rho)] := [(\M,\alpha,q,\lambda^{-1}\rho)]$$
---i.e., we change $\rho \in \M$ to $\lambda^{-1}\rho$ without changing its image $\bar\rho \in \cP$. Note that we invert the function $\lambda \in \cO_V^*$. The target $\T^1_V$ has a canonical $\cO_V^*$-action as well---it is a coherent sheaf.

\begin{prop}[\cite{FFR2021},~5.1]\label{eta-equivariance}
 The map $\eta_V: \cL\cS_V \to \T^1_V$ is $\cO_V^*$-equivariant.
\end{prop}

\begin{rem}
 At the level of $\bar\iota_{V,\bar v}: \widehat{\cL\cS}_{V,\bar v} \to \E xt^1(\cP^{gp}/\bar\rho,\cO_V^*)_{\bar v}$, we see the $\cO^*_{V,\bar v}$-action as follows: If $(h_p)_p$ represents a germ $M \in \cL\cS_{V,\bar v}$ and $\phi: P^{gp} \to \cO^*_{V,\bar v}$ is a homomorphism, then $(\phi(p)h_p)_p$ represents $\phi(\bar\rho)^{-1}\cdot M$.
\end{rem}

In order to be useful to describe sections of $\cL\cS_V$, we want the map $\eta_V: \cL\cS_V \to \T^1_V$ to be injective; however, this is not always the case.

\begin{ex}\label{one-dim-non-inj}
 This is an example of a one-dimensional toroidal crossing space such that $\eta_V$ is not injective. Let $N = \ZZ$ and $\sigma = [0,2] \subseteq \RR = N_\RR$. We denote the associated toroidal crossing space by $(V,\cP,\bar\rho)$; it consists of two intersecting lines $L_x$ and $L_y$. In the interiors of the lines, the stalk of $\cP$ is $\NN$; in the intersection point, we have $\cP_0 = \langle (-1,2),(1,0)\rangle \subseteq \ZZ^2$. For every $\lambda \in \kk^*$, the log smooth family 
 $$\Spec \kk[x,y,t]/(xy - \lambda t^2) \to \Spec \kk[t], \quad t \mapsto t,$$
 induces a section $s_\lambda \in \Gamma(V,\cL\cS_V)$. If $\lambda \not= \mu$, then also $s_\lambda \not= s_\mu$ 
 However, we have 
 $$\eta_V(s_\lambda) = \eta_V(s_\mu) = 0$$
 because the unique first order log smooth deformation has $\Spec \kk[x,y,t]/(xy,t^2)$
 as underlying space. Thus $\eta_V: \cL\cS_V \to \T_V^1$ is not injective; in fact, it is the zero map because every section of $\cL\cS_V$ is of the form $s_\lambda$.
\end{ex}

In order to find a criterion for injectivity of $\eta_V: \cL\cS_V \to \T_V^1$, we consider (at a point $\bar v \in V$) the forgetful map 
$$\cL\cS_{V,\bar v} \subseteq \widehat{\cL\cS}_{V,\bar v} \to \hat\cL_{V,\bar v};$$
it is invariant under the $\cO^*_{V,\bar v}$-action since this action does not affect the underlying log structure. Conversely, when two germs $M,M' \in \cL\cS_{V,\bar v}$ have the same underlying log structure, then they differ only by an invertible function. Thus, the fibers of the forgetful map are precisely the $\cO^*_{V,\bar v}$-orbits. In general, there might be many orbits, but, for $\cP_{\bar v} \cong \NN^r$, the situation is particularly simple.

\begin{lemma}
 Let $(V,\cP,\bar\rho)$ be a toroidal crossing space, and let $\bar v \in V$ be a point with $\cP_{\bar v} \cong \NN^r$. Then $\cO^*_{V,\bar v}$ acts transitively on $\cL\cS_{V,\bar v}$.
\end{lemma}
\begin{proof}
 The statement is in \cite[Lemma 5.2]{FFR2021}. In a nutshell, this holds because $\NN^r$ is free, so homomorphisms from $\NN^r$ can be constructed by specifying values on a basis. 
\end{proof}

The stalk of the ghost sheaf in Example~\ref{one-dim-non-inj} is not free; it is in fact $\cP_0 = \langle (-1,2),(1,0)\rangle \subseteq \ZZ^2$. But if instead $\cP_{\bar v} \cong \NN^r$, then the map $\eta_V: \cL\cS_V \to \T_V^1$ turns out to be injective.

\begin{lemma}[\cite{FFR2021},~5.2]
 Let $(V,\cP,\bar\rho)$ be a toroidal crossing space, and let $\bar v \in V$ be a geometric point with $\cP_{\bar v} \cong \NN^r$. Then:
 \begin{enumerate}[label=\emph{(\roman*)}]
  \item For $M \in \cL\cS_{V,\bar v}$, the map $\mu_M: \cO_{V,\bar v} \to \T^1_{V,\bar v}$ is surjective.
  \item The map $\eta_{V,\bar v}: \cL\cS_{V,\bar v} \to \T^1_{V,\bar v}$ is injective.
  \item The image of $\eta_{V,\bar v}$ is $(\T_{V,\bar v}^1)^* \subseteq \T^1_{V,\bar v}$, the elements that generate $\T_{V,\bar v}^1$ as an $\cO_{V,\bar v}$-module. 
 \end{enumerate}
\end{lemma}

Injectivity is not restricted to toroidal crossing spaces with $\cP_{\bar v} \cong \NN^r$ for all $\bar v \in V$; it extends to tame toroidal crossing spaces as defined above.

\begin{thm}
 If $(V,\cP,\bar\rho)$ is a \emph{tame} toroidal crossing space, then $\eta_V: \cL\cS_V \to \T_V^1$ is injective. 
\end{thm}
\begin{proof}
 As in \cite{FFR2021}, this follows from Corollary~\ref{purity-LS}.
\end{proof}

\section{Generically log smooth families from $\cL\cS_V$}

Given a projective or just proper toroidal crossing space $(V,\cP,\bar\rho)$, there is often no global section of $\cL\cS_V$ to endow it with a global log structure, cf.~\cite{GrossSiebertI}. Even if there is one, it is often enlightening for the (flat) deformation theory of $V$ to allow certain log singularities. In our theory, we achieve this by selecting a \emph{log singular locus} $Z \subseteq V$ of codimension $\geq 2$ and a section $s \in \Gamma(V \setminus Z,\cL\cS_V)$. Since $V$ is Cohen--Macaulay, this gives rise to a generically log smooth family over $S_0$, which we denote by $(V,Z,s)$. In order to have a well-behaved theory, we impose a condition:

\begin{defn}\label{well-adj-def}\note{well-adj-def}
 The triple $(V,Z,s)$ is \emph{well-adjusted} if $Z \subseteq \C_1V$ is contained in the double locus and the intersection $Z \cap S$ with a closed stratum $S \in [\cS_mV]$ is properly contained in $S$. For simplicity, we also require that $V$ is pure of dimension $d$.
\end{defn}
\begin{rem}
 We imagine $Z$ to show inadequate behavior, claiming space that it does not deserve, if this condition is violated; hence the name.
\end{rem}

The codimension condition \eqref{CC} is automatic if $(V,Z,s)$ is well-adjusted; $Z$ may be empty or non-reduced. As we will see below, the well-adjustedness guarantees a nice interaction between the log structure and the underlying space. We also need the following result.

\begin{lemma}
 Let $f_0: X_0 \to S_0$ be the generically log smooth family associated with a well-adjusted triple $(V,Z,s)$. Then $f_0: X_0 \to S_0$ is log Gorenstein.
\end{lemma}
\begin{proof}
 The underlying scheme $V$ is Gorenstein, so its dualizing sheaf $\omega_V$ is a line bundle. Since the log morphism $f_0: U_0 \to S_0$ is vertical, we have $\W^d_{U_0/S_0} \cong \omega_V|_{U_0}$ by \cite[Prop.~2.8]{FFR2021}. Thus $\W^d_{X_0/S_0} \cong \omega_V$ is a line bundle.
\end{proof}



\chapter{Log toroidal families of Gross--Siebert type}\label{elem-GS-type-sec}\note{elem-GS-type-sec}

The goal of this chapter is to give a justification for working with generically log smooth families instead of log smooth families, which would be technically simpler, and in particular, to give a justification for our definition of a system of deformations. We show that systems of deformations are not just a theoretical gadget but actually occur in 'nature'. The following results are known to the experts, already contained in publications, but probably have been somewhat mysterious to the outsiders. The interpretation of $\B_{X_0/S_0}$ given below is probably new, as well as the extension of the deformation theory to more general situations and the basic results, although we do not use them here.

\section{Log toroidal families}

Log toroidal families are generically log smooth families which admit specific local models, controlled by \emph{elementary log toroidal data}, short ETDs. Log toroidal families have been introduced in \cite{FFR2021} and are studied extensively in the author's thesis \cite{FeltenThesis}. Here, we give a brief summary for the reader's convenience.

An ETD $(Q \subset P,\F)$ consists of a saturated injection $Q \to P$ of sharp toric monoids and a collection $\F$ of facets, i.e., maximal faces, in $P$ such that all facets which do not contain $Q$ are contained in $\F$. Then $A_Q = \Spec (Q \to \ZZ[Q])$ is the affine toric variety $\Spec \ZZ[Q]$ endowed with the divisorial log structure from the full toric boundary $D_Q$. The facets in $\F$ give rise to a divisor in $\Spec \ZZ[P]$ which is a part of the toric boundary. We denote $\Spec \ZZ[P]$ endowed with the divisorial log structure from this divisor by $A_{P,\F}$. Then the inclusion $Q \to P$ defines a log morphism $f: A_{P,\F} \to A_Q$. In the discussion before \cite[Prop.~3.9]{FFR2021}, we construct an open subset $U_{P/Q} \subseteq A_{P,\F}$, only depending on $Q \to P$, such that $f: A_{P,\F} \to A_Q$ becomes a generically log smooth family. We say that $f: A_{P,\F} \to A_Q$ is an \emph{elementary log toroidal family}. Then, a \emph{log toroidal family}, is a generically log smooth family which admits local models by elementary log toroidal families in the following sense.

\begin{defn}[Local models]\label{local-model-def}\note{local-model-def}
 Let $f: X \to S$ be a generically log smooth family. For a geometric 
 point $\bar s \in S$, a \emph{base chart} at $\bar s$ is a strict 
 \'etale morphism $(\tilde S, \bar s) \to (S, \bar s)$ and a map
 $a: \tilde S \to A_Q$ given by a chart $Q \to \M_{\tilde S}$ of the log structure which is neat at $\bar s$.
 For a geometric point $\bar x \in X$ and a base chart at $f(\bar x)$, a \emph{local model} at $\bar x$ is a diagram
\begin{equation}
\tag{LM}\label{LM}
\quad \begin{aligned}
  \xymatrix@R-2pc{
   & (V,g^{-1}(U)) \ar[ldd]_g \ar@{.>}[ddr]^h \ar[dddd] & & \\
   \\
   (X,U) \ar[dddd]_f & & (L, U_L) \ar[ddl] \ar[ddr]^c & \\ 
   \\
   & \tilde S \ar[ldd] \ar[ddr]^a & & (A_{P,\F}, U_{P/Q}) \ar[ddl]  \\
   \\
   S & & A_Q & \\
  } 
\end{aligned}
\end{equation}
where $g: V \to X$ is an \'etale neighbourhood of $\bar x$ (of underlying schemes) and the bottom right diagonal map is given by an ETD $(Q \subset P,\F)$.
The solid arrows are morphisms of schemes and log morphisms on the specified opens, whereas $h: V \to L$
 is an \'etale morphism only of underlying schemes. The bottom right diamond is Cartesian, in particular $U_L=c^{-1}(U_{P/Q})$. Moreover, we have an open $\tilde U \subset V$ 
 satisfying \eqref{CC}, such that $\tilde U \subset g^{-1}(U) \cap h^{-1}(U_L)$ and there is an isomorphism $g^*\M_X \cong h^*\M_L$ of the two log structures on $\tilde U$ such that the composed maps to $\tilde S$ coincide. Finally, we have $c \circ h(\bar x) = 0 \in A_{P,\F}$.
 
 For a base chart $S \leftarrow \tilde S \to A_Q$ and a Zariski 
 open $W \subset X$, a \emph{local model} is a diagram \eqref{LM} as above with $g(V) = W$ (and no requirement on some $\bar x \in X$).
\end{defn}

Note that the specified open subset of log smoothness in the family does not need to coincide with the open subset $U_{P/Q}$ of the local model.

\begin{defn}[Log toroidal families]\label{toroid-def}\note{toroid-def}
 A \emph{log toroidal family} is a generically log smooth family 
 $f: X \to S$ such that there is a Zariski open cover $X = \bigcup_i W_i$ 
 and local models for the $W_i$ (over various base charts 
 $S \leftarrow \tilde S_i \to A_{Q_i}$).
 \end{defn}
 
If $S \cong \Spec (Q \to B)$, then $\tilde S = S$ and $a: S \to A_Q$ given 
by the chart $Q \to B$ is a base chart. We say $f: X \to S$ is 
\emph{log toroidal with respect to $a: S \to A_Q$} if we can choose 
the local models over this base chart. In the setup of infinitesimal deformation theory, we will be always in this situation.

For more information on log toroidal families, the reader may consult \cite{FFR2021,FeltenThesis}.

\section{Local models of Gross--Siebert type}\label{local-model-GS-type-sec}\note{local-model-GS-type-sec}

They are specific ETDs which arise from the Gross--Siebert program; we consider them also as examples of well-adjusted triples $(V,Z,s)$; we will then later study well-adjusted triples that are locally isomorphic to the local models of Gross--Siebert type.

\subsubsection*{The construction of the local model}

The reference for the construction is \cite[Constr.~2.1]{GrossSiebertII}; it starts from a tuple 
$$(M',N',\tau,\Delta_1,...,\Delta_q)$$ where $M' \cong \ZZ^d$ is a lattice, $N'$ is its dual lattice, $\tau \subset M'_\RR$ is a convex lattice polytope of full dimension $d$, and $\Delta_1, ..., \Delta_q \subset M'_\RR$ are convex lattice polytopes which may not be of full dimension. Here, $q = 0$ is allowed, i.e., there may not be any $\Delta_i$. For convenience, we also set $\Delta_0 := \tau$.
 We denote the dual fan of $\Delta_i$ in $N'_\RR$ by $\check\Sigma_i$, including $\Delta_0 = \tau$. We assume that 
\begin{center}
 $\check\Sigma_0$ is a subdivision of each $\check\Sigma_i$.
\end{center}
Each $\Delta_i$ gives rise to a piecewise linear function
$$\check\psi_i(n) := - \mathrm{inf}\{\langle m,n\rangle \ | \ m \in \Delta_i\}$$
on $N'_\RR$. On each cone $\sigma \in \check\Sigma_i$, the function $\check\psi_i$ is linear; since $\check\Sigma_0$ is a subdivision of each $\check\Sigma_i$, the function $\check\psi_i$ is linear, in particular, on each $\sigma \in \check\Sigma_0$. 

Gross--Siebert defines a sharp toric monoid
$$P' = \{n + a_0e_0^* \in N' \oplus \ZZ \ | \ a_0 \geq \check\psi_0(n)\},$$
which is exactly what we get when we apply Construction~\ref{prototype-constr} to $\tau$. The Gorenstein degree is here $\rho' = (0,1) \in N' \oplus \ZZ$; it induces an injective and saturated homomorphism $\theta': \NN \to P', \: 1 \mapsto \rho',$ of sharp toric monoids. 

There is a canonical subset $E' \subset P'$ such that every $p' \in P'$ has a unique decomposition $p' = e' + k\rho'$ for $e' \in E$ and $k \in \NN$. Explicitly, we have 
$$E' = \{n + a_0e_0^* \ | \ a_0 = \check\psi_0(n)\}.$$
A face $F \subseteq P'$ is called \emph{essential} if $F \subseteq E'$; $E'$ is the union of the essential faces. Under the isomorphism 
$$\phi': \enspace N' \to E', \quad n \mapsto (n,\check\psi_0(n)),$$
the essential faces in $P'$ correspond exactly to the toric monoids in 
$$\E_0 := \{N' \cap \sigma \ | \sigma \in \check\Sigma_0\},$$
the set of intersections between cones in $\check\Sigma_0$ and the lattice $N'$. Since $\rho'$ generates the interior of $P'$, every face in $P'$ is essential.

As in Construction~\ref{prototype-constr}, we obtain a log smooth and saturated morphism; this time, we denote it by $f': \bAA_{P'} \to \bAA^1_t$. It defines a toroidal crossing space $V(\tau)$; it is stratified by toric strata of the form $V_{F'} := \Spec \kk[F'] \subset \bAA_{P'}$ for essential faces $F' \subseteq E'$. Our well-adjusted triple $(V,Z,s)$ will live on the toroidal crossing space $V := V(\tau) \times \bAA^q$. 

To construct $(V,Z,s)$, we use the remaining polytopes $\Delta_1,...\Delta_q$. They give rise to a sharp toric monoid
$$P := \{n + a_0e_0^* + \sum_{i = 1}^q a_ie_i^* \in N' \oplus \ZZ \oplus \ZZ^{q} \ | \ \forall i: a_i \geq \check\psi_i(n)\}.$$
It is Gorenstein with Gorenstein degree $s = \sum_{i = 0}^q e_i^*$.
However, for the construction, Gross--Siebert uses the distinguished element $\rho = e_0^* \in P$, defining an injective and saturated homomorphism $\theta: \NN \to P,\: 1 \mapsto \rho$. The well-adjusted triple $(V,Z,s)$ will be obtained as the central fiber of 
$$f: \bAA_P := \Spec \kk[P] \to \bAA^1_t,$$
where the total space $\bAA_P$ is endowed with the compactifying log structure coming from the central fiber.

The map $f: \bAA_P \to \bAA^1_t$ is a generically log smooth family. Over $\bAA^1_t \setminus \{0\}$, the log structure is trivial. The family is trivial as well, with fiber $\Spec \kk[P_\rho/\ZZ\rho]$ where
$$P_\rho/\ZZ\rho = \{n + \sum_{i = 1}^q a_ie_i^* \in N' \oplus \ZZ^q \ | \ a_i \geq \check\psi_i(n)\}.$$
After setting
$$\Delta_+ := \mathrm{Conv}\left(\bigcup_{i = 1}^q \Delta_i \times \{e_i\}\right) \subset M'_\RR \oplus \RR^q$$
with $K_+$ the cone spanned by $\Delta_+$, we have $P_\rho/\ZZ\rho = K_+^\vee \cap (N' \oplus \ZZ^q)$. In general, the family is not smooth over $\bAA^1_t \setminus \{0\}$; we will later impose additional conditions on $\Delta_+$ that bound the 'badness' of the singularities in the general fiber.

To describe the central fiber $f^{-1}(0)$, we use the canonical set
$$E = \{n + \sum_{i = 0}^q a_ie_i^* \ | \ a_0 = \check\psi_0(n)\}$$
such that every $p \in P$ has a unique decomposition $p = e + k\rho$ with $e \in E$ and $k \geq 0$. Then $f^{-1}(0)$ is stratified by toric strata of the form $V_F := \Spec \kk[F] \subset \bAA_P$ for \emph{essential faces} $F \subseteq E$.
The bijection
$$\phi: \enspace N' \oplus \NN^q \to E, \quad (n,a_1,...,a_q) \mapsto n + \check\psi_0(n)e_0^* + \sum_{i = 1}^q (a_i + \check\psi_i(n))e_i^*,$$
allows us to describe the essential faces:
If $F' \in \E_0$ and $G \subseteq \NN^q$ is a face, then $\phi(F' + G)$ is an essential face of $P$. Every essential face of $P$ is of this form for unique $F' \in \E_0$ and $G \subseteq \NN^q$. We denote the set of essential faces by $\E_0 \oplus \NN^q$.

The bijection $\phi$ induces an isomorphism of underlying schemes between the central fiber of $f^{-1}(0)$ and $V(\tau) \times \bAA^q$. In particular, it has a stratification coming from $V(\tau)$. If $\rho$ is a face of $\tau$ and $F' = \sigma_\rho \cap N'$ for the cone $\sigma_\rho \in \check\Sigma_0$ associated with $\rho$, then we write $V_\rho := \Spec \kk[F'] \times \bAA^q$ for the corresponding stratum.

It is worthwhile to have a closer look at the log singular locus inside the central fiber. We denote the set of edges in $\tau$ by $\Omega(\tau)$. For $1 \leq i \leq q$,
$$\Omega_i := \{\omega \ | \ \sigma_\omega \in \check\Sigma_i\} \subseteq \Omega(\tau)$$
is the subset of those edges whose corresponding cones $\sigma_\omega$ are contained in $\check\Sigma_i$; this is the case if and only if the function $\check\psi_i$ bends along $\sigma_\omega$, i.e., $\check\psi_i$ is not linear on the union of the two adjacent maximal cones of $\check\Sigma_0$. Let us write $F'_\omega = N' \cap \sigma_\omega$ for the associated toric monoid in $N'$, and let 
$$F_{\omega;i} := F'_\omega \oplus \{(a_1,...,a_q) \in \NN^q \ | \ a_i = 0\} \in \E_0 \oplus \NN^q.$$
Via $\phi: N' \oplus \NN^q \cong E$, this defines a closed subset $\Spec \kk[F_{\omega;i}]$ of the stratum $V_\omega$ of codimension $1$. Gross--Siebert defines 
$$Z_i := \bigcup_{\omega \in \Omega_i} \Spec \kk[F_{\omega;i}] \qquad \mathrm{and} \qquad Z := \bigcup_{i = 1}^q Z_i.$$
Then $Z$ is pure of dimension $d + q - 2$, and each component $\Spec \kk[F_{\omega;i}]$ is affine toric. Since the second summand in $F'_\omega$ is properly contained in $\NN^q$, the intersection $Z \cap V_\rho$ is a proper subset of the stratum $V_\rho = \Spec \kk[F'_\rho \oplus \NN^q]$ for every face $\rho \subset \tau$.

The complement $U = \bAA_P \setminus Z$ is a union of open subsets of the form $U_F = \Spec \kk[P_F]$ for faces $F \subseteq P$ and the monoid localization $P_F = P + F^{gp}$. We have to take those faces $F$ which either contain $\rho$ or which are essential and satisfy $F \not\subset F_{\omega;i}$ for all $1 \leq i \leq q$ and $\omega \in \Omega_i$. Let us denote the latter set of faces by $\F$ for short.

If $\rho \in F$, then $U_F \subseteq f^{-1}(\bAA^1_t \setminus \{0\})$ does not contain any point of the central fiber. There might be additional singular points that are not yet captured by $Z$ as defined above---so we have to enlarge $Z$ in order to obtain a generically log smooth family---, but they are all accounted for by $\Delta_+$. If $F \in \F$, decompose $F = \phi(F' + G)$ with $F' \in \E_0$ and $G \subseteq \NN^q$ a face. A careful computation yields an isomorphism $P_F \cong P'_{F'} \oplus \NN^q$ compatibly with $\rho$ and $\rho'$ so that $f: \bAA_P \to \bAA^1_t$ is log smooth on $U_F$. In particular, $Z$ captures all log singularities of $f: \bAA_P \to \bAA^1_t$ inside $f^{-1}(0)$.

Recall that, in order to define a section of $\cL\cS_V$ on the toroidal crossing space $V = V(\tau) \times \bAA^q$, we must exhibit an isomorphism between the ghost sheaves of $\bAA_P$ and $\bAA_{P'} \times \bAA^q$ on the two central fibers---identified via $\phi$---which is compatible with the maps to the base $\bAA^1_t$, and which turns the two log structures into log structures of the same type. Both is achieved by $P_F \cong P'_{F'} \oplus \NN^q$ as follows: We have a commutative diagram
\[
 \xymatrix{
 U_F \ar[r] \ar[d]^\cong & \bAA_P \ar[r]^{f} & \bAA^1_t \ar@{=}[d] \\
 U_{F'} \times \bAA^q \ar[r] & \bAA_{P'} \times \bAA^q \ar[r]& \bAA^1_t \\
 }
\]
of log schemes. It induces an isomorphism $f^{-1}(0) \cap U_F \cong V \cap U_{F'}$ on the central fiber; the underlying morphism of schemes is \emph{not} the restriction of $f^{-1}(0) \cong V(\tau) \times \bAA^q$ via $\phi$ but differs from it by an automorphism compatible with the pre-ghost structure on $V$. Taking this automorphism into account, we obtain---only on the central fiber, not the whole of $U_F$---the correct identifications of the ghost sheaf, and it also implies that both log structures are of the same type because this is always the case if two log structures are obtained from the same one along two different \'etale maps that are compatible with each other on the level of the ghost sheaves in a suitable sense.

\subsubsection*{The local model in codimension $1$}

Let us first study an example of the construction.

\begin{ex}\label{codim-1-monoid}\note{codim-1-monoid}
 Let $\ell \geq 1$, and let $m_i \geq 0$ for $1 \leq i \leq q$ for some $q \geq 1$. Then we can take $M' = \ZZ$ and $\tau = [0,\ell]$, $\Delta_i = [0,m_i]$. The functions are $\check\psi_0(n) = -\mathrm{inf}\{\ell n,0\}$ and $\check\psi_i(n) = -\mathrm{inf}\{m_in,0\}$. Then we have by definition
 $$P = \{n + \sum_{i = 0}^q a_ie_i^* \ | \ \forall i: a_i \geq \check\psi_i(n)\}.$$
 Let $e = 1$ be the generator of $N'$. Then
 $$g_x := e,\quad g_y := -e + \ell e_0^* + \sum_{i = 1}^q m_ie_i^*, \quad g_t := e_0^*, \quad g_i := e_i^* \enspace \mathrm{for}\enspace i = 1, ..., q$$
 are elements of $P$, and they generate it as a monoid. With $x = z^{g_x}$ etc., we obtain
 $$\kk[P] = \kk[x,y,t,z_1,...,z_q]/(xy - t^\ell \prod_{i = 1}^q z_i^{m_i}).$$
 The central fiber is $\kk[x,y,z_1,...,z_q]$, and the log singular locus is given, with a natural possibly non-reduced scheme structure, by $\{\prod_{i = 1}^q z_i^{m_i} = 0\}$. We denote this family (with a possible extension) by 
 $$L(\ell; m_1,...,m_q; s) := L(\ell; m_1,...,m_q) \times \bAA^s := \Spec \kk[P] \times \bAA^s \to \bAA^1_t.$$
\end{ex}

Let us go back to the general case. For every $\omega \in \Omega(\tau)$, we have an open subset $U_\omega := U_{F_\omega} = \Spec \kk[P_{F_\omega}]$ with $F_\omega := \phi(F'_\omega)$. Together, they cover the locus $\U_1V$ on the central fiber where at most two components intersect. With 
$$\check\psi_{i;\omega}(n) := \mathrm{sup}\{\check\psi_i(n + f) - \check\psi_i(f) \ | \ f \in \phi(F'_\omega)\},$$
we have 
$$P_\omega := P_{F_\omega} = \{n + \sum_{i = 0}^q a_ie_i^* \ | \ \forall i: a_i \geq \check\psi_{i;\omega}(n)\}.$$
Let $\sigma^+$ and $\sigma^-$ be the two maximal cones which are adjacent to $\sigma_\omega$ in $\check\Sigma_0$, and let $F^+ = N' \cap \sigma^+$ and $F^- = N' \cap \sigma^-$. Let $H^+ = F^+ + F_\omega'^{gp}$ and $H^- = F^- + F_\omega'^{gp}$. Then $\check\psi_{i;\omega}$ is linear on both $H^+$ and $H^-$ for all $0 \leq i \leq q$. If $h \in H^+$ is an element with $H^+ = F_\omega'^{gp} \oplus h \cdot \NN$ and $H^- = F_\omega'^{gp} \oplus (-h) \cdot \NN$---such an element exists---, then we find an isomorphism
$$Q \oplus F_\omega'^{gp} \xrightarrow{\cong} P_\omega, \quad g_x \mapsto h + \sum_{i = 0}^q \check\psi_{i;\omega}(h)e_i^*,\: g_y \mapsto -h + \sum_{i = 0}^q \check\psi_{i;\omega}(-h)e_i^*,\: g_t \mapsto e_0^*, \: g_i \mapsto e_i^*,$$
where $Q$ is the monoid $P$ of Example~\ref{codim-1-monoid} with $\ell = \check\psi_{0;\omega}(h) + \check\psi_{0;\omega}(-h) \geq 1$ and $m_i = \check\psi_{i;\omega}(h) + \check\psi_{i;\omega}(-h) \geq 0$. We have $m_i \geq 1$ if and only if $\omega \in \Omega_i$. Summarizing the discussion, we find that $f: \bAA_P \to \bAA^1_t$ is (locally) isomorphic to $L(\ell;m_1,...,m_q;d-1) \to \bAA^1_t$ on $U_\omega$. In particular, the central fiber is locally isomorphic to $L_0(\ell;m_1,...,m_q;d-1)$ on $\U_1V$ where $L_0(-)$ has the obvious meaning.

\subsubsection*{The case where $\Delta_+$ is an elementary simplex or a standard simplex}

In \cite{GrossSiebertII}, the case of the construction where $\Delta_+$ is an elementary simplex or a standard simplex in $M'_\RR \oplus \RR^q$ (of some dimension; it cannot have full dimension) plays a special role because then $f: \bAA_P \to \bAA^1_t$ is particularly well-behaved. To avoid any confusion, we fix the following definition.

\begin{defn}\label{simplex-type-defn}\note{simplex-type-defn}
 Let $M$ be a lattice, and $P \subseteq M_\RR$ be a lattice polytope, not necessarily of full dimension.
 \begin{enumerate}[label=(\roman*)]
  \item We say that $P$ is a \emph{simplex} if the number of vertices of $P$ is equal to $\mathrm{dim}(P) + 1$, the minimal number possible.
  \item We say that $P$ is an \emph{elementary simplex}\index{simplex!elementary} if it is a simplex, and it contains no other lattice points than its vertices.
  \item We say that $P$ is a \emph{standard simplex}\index{simplex!standard} if it is an elementary simplex, and if, for some (equivalently any) numbering $v_0,...,v_d$ of its vertices, we can extend $v_1 - v_0, ..., v_d - v_0$ to a basis of the lattice $M$. This is equivalent to that $v_1 - v_0,...,v_d - v_0$ is a basis of the monoid $A_\ZZ(P) - v_0$ where $A(P)$ is the real affine space spanned by $P$, and $A_\ZZ(P) := A(P) \cap M$.\footnote{To see the equivalence of the two formulations, consider the torsion in the quotient of $M$ respective $A_\ZZ(P)$ by the sublattice generated by $v_1 - v_0, ..., v_d - v_0$.}
 \end{enumerate}
\end{defn}
\begin{ex}
 $\mathrm{Conv}((0,1,0),(1,1,0),(1,0,0),(0,0,2)) \subseteq \RR^3$ is an elementary simplex which is not a standard simplex.
\end{ex}

We study the standard case first.

\begin{lemma}[\cite{GrossSiebertII}, Prop.~2.2]\label{standard-simplex-gen-fib}\note{standard-simplex-gen-fib}
 The general fiber of $f: \bAA_P \to \bAA^1_t$ is smooth if and only if $\Delta_+$ is a standard simplex.
\end{lemma}
\begin{proof}
 The general fiber is $\Spec \kk[P_\rho/\ZZ\rho]$ with $P_\rho/\ZZ\rho = K_+^\vee \cap (N' \oplus \ZZ^q)$. Thus, the general fiber is smooth if and only if $K_+$ is a regular cone. The primitive ray generators of $K_+$ are precisely the vertices $v_0,...,v_d$ of $\Delta_+$; they form part of a basis of $M' \oplus \ZZ^q$ if and only if $\Delta_+$ is a standard simplex. For the non-trivial direction, note that $M' \oplus \ZZ^q = \ZZ v_0 \oplus \{(m,a_1,...,a_q) \ | \ \sum_{i = 1}^q a_i = 1\}$.
\end{proof}

When $\Delta_+$ is an elementary simplex, the singularities of the general fiber are very mild.

\begin{lemma}[\cite{GrossSiebertII}, Prop.~2.2]\label{elem-simplex-gen-fib}\note{elem-simplex-gen-fib}
 Assume that $\Delta_+$ is an elementary simplex. Then the general fiber of $f: \bAA_P \to \bAA^1_t$ is normal, Gorenstein, terminal, $\QQ$-factorial, smooth in codimension $3$, and has abelian quotient singularities.
\end{lemma}
\begin{proof}
 As a toric variety, the general fiber $\Spec \kk[P_\rho/\ZZ\rho]$ is normal and Cohen--Macaulay. We obtain from the criterion at the beginning of \cite[\S 6]{Altmann1995} that it is Gorenstein because the primitive ray generators of $K_+$ are precisely the vertices of $\Delta_+$. Then \cite[Prop.~11.4.12]{Cox2011} shows that the general fiber has terminal singularities. A dimension count yields that $\Spec \kk[P_\rho/\ZZ\rho]$ is a simplicial toric variety, and hence it is $\QQ$-factorial and has abelian quotient singularities. Then \cite[Prop.~11.4.22]{Cox2011} yields that $\Spec \kk[P_\rho/\ZZ\rho]$ is, as a simplicial Gorenstein terminal toric variety, smooth in codimension $3$.\footnote{$\QQ$-factoriality is the only one of these properties which is only Zariski local but not \'etale local. In particular, we cannot conclude that a space which has the general fiber as an \'etale local model is $\QQ$-factorial.}
\end{proof}

In particular, we can apply the following well-known theorem of Altmann, which is proven in \cite[\S 6]{Altmann1995}:

\begin{thm}\label{Altmann-rigid}\note{Altmann-rigid}\index{rigidity!affine toric variety}
 Let $Y$ be a $\QQ$-Gorenstein affine toric variety. Assume that $Y$ is smooth in codimension $3$. Then $Y$ is rigid, i.e., $\T^1_Y = 0$.
\end{thm}
\begin{cor}\label{elem-GS-gen-fib-rigid}\note{elem-GS-gen-fib-rigid}
 Assume that $\Delta_+$ is an elementary simplex. Then the general fiber of the morphism $f: \bAA_P \to \bAA^1_t$ is rigid.
\end{cor}

Thus, the general fiber does not admit any non-trivial local deformation.

Next, we investigate the local model in codimension $1$ in the elementary case. Assume that $\Delta_+$ is an elementary simplex. Since, for $1 \leq i \leq q$, each $\Delta_i$ is a face of $\Delta_+$, they must be elementary simplices as well. Since each vertex of $\Delta_i$ shows up in $\Delta_+$, we find $\sum_{i = 1}^q \mathrm{dim}(\Delta_i) \leq d = \mathrm{dim}(\tau)$.
Let $\omega \in \Omega(\tau)$ be an edge, and set 
$$m_{i;\omega} := \check\psi_{i;\omega}(h) + \check\psi_{i;\omega}(-h), \quad m_\omega := \sum_{i = 1}^q m_{i;\omega},$$
where we maintain the notations from the previous section. As stated above, we have $m_{i;\omega} \not= 0$ if and only if $\omega \in \Omega_i$. In this case, $\Delta_i$ has an edge $\omega_i$ which is parallel to $\omega$; the lattice length of this edge is $m_{i;\omega}$, i.e., the edge contains $m_{i;\omega} + 1$ lattice points, including the two vertices. Since each $\Delta_i$ is an elementary simplex, we have $m_{i;\omega} \leq 1$. Now assume that we have two indices $i \not= j$ with both $m_{i;\omega} \not= 0$ and $m_{j;\omega} \not= 0$. Let $\omega_k^{\pm}$ be the two vertices of the edge $\omega_k$ with $k = i,j$. Then the four vertices $\omega_i^\pm \times e_i$ and $\omega_j^\pm \times e_j$ of $\Delta_+$ lie in a common plane, contradicting the assumption that $\Delta_+$ is a simplex. Thus, $m_\omega \leq 1$. This means that the central fiber of $f: \bAA_P \to \bAA^1_t$ is particularly simple on $\U_1V$ because it is locally isomorphic to the central fiber of 
$$\Spec \CC[x,y,t,z_1,...,z_q,u_1,...,u_s]/(xy - t^\ell z_i) \to \bAA^1_t$$
for some $1 \leq i \leq q$ and $s = d - 1$. The equation $xy = t^\ell \prod_{i = 1}^q z_i^{m_i}$ simplifies to $xy = t^\ell z_i$.

\subsubsection*{Some additional examples}

We have already seen in Section~\ref{examples-intro-sec} some examples of this construction. Here are some additional examples. In particular, we will see that there are log toroidal families of elementary Gross--Siebert type which are not of standard Gross--Siebert type, a phenomenon that appears only in relative dimension $\geq 4$.

\begin{figure}
 \begin{mdframed}
  \begin{center}
   \includegraphics[scale=0.8]{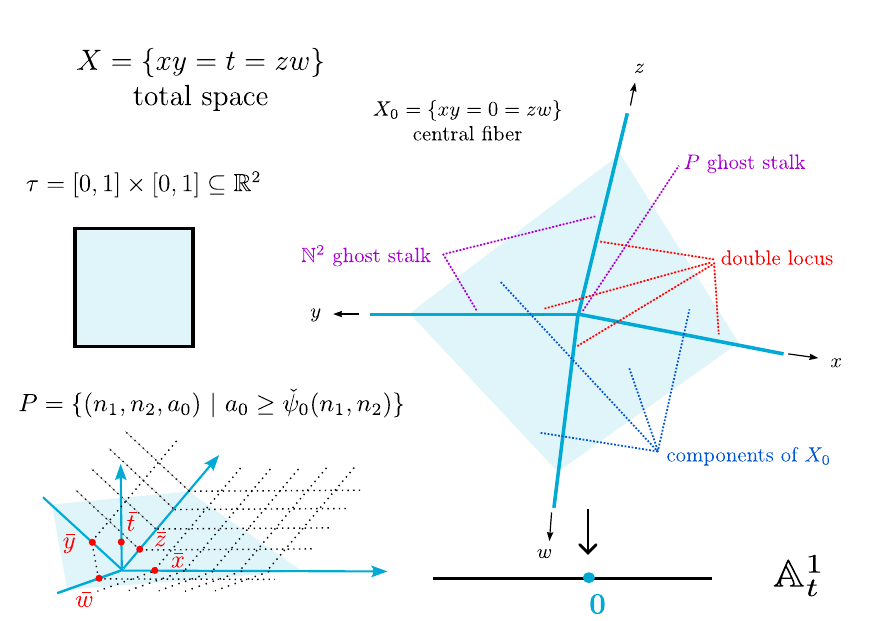}
  \end{center}
  \caption{Example~\ref{four-surfaces-example}}
 \end{mdframed}
\end{figure}

\begin{ex}\label{four-surfaces-example}\note{four-surfaces-example}
 We set $\tau = [0,1] \times [0,1] \subseteq \RR^2$ and $q = 0$. Then 
 $$\check\psi_0(n_1,n_2) = -\mathrm{inf}\{0,n_1,n_2,n_1 + n_2\}.$$
 The monoid 
 $$P = \{(n_1,n_2,a_0) \ | \ a_0 \geq \check \psi_0(n_1,n_2)\}$$
 is generated by 
 $$\bar x = (1,0,0), \enspace \bar y = (-1,0,1), \enspace \bar z = (0,1,0), \enspace \bar w = (0,-1,1).$$
 We also set $\bar t = (0,0,1)$. Thus, we have an isomorphism $\kk[P] \cong \kk[x,y,z,w]/(xy - zw)$. Indeed, the surjection must be an isomorphism because both sides are integral of the same dimension $3$. This is the same monoid as in Example~\ref{xy-tz-example}, but now the map is given by $t = xy = zw$. This family is log smooth and vertical. The central fiber consists of four copies of $\bAA^2$. Each of them intersects two other components in a copy of $\bAA^1$, and the third other component in a single point $\{0\}$. The intersection of all four components is this single point $\{0\}$. Obviously, the central fiber is not a normal crossing space, but it is a toroidal crossing space.
\end{ex}

\begin{figure}
 \begin{mdframed}
  \begin{center}
   \includegraphics[scale=0.8]{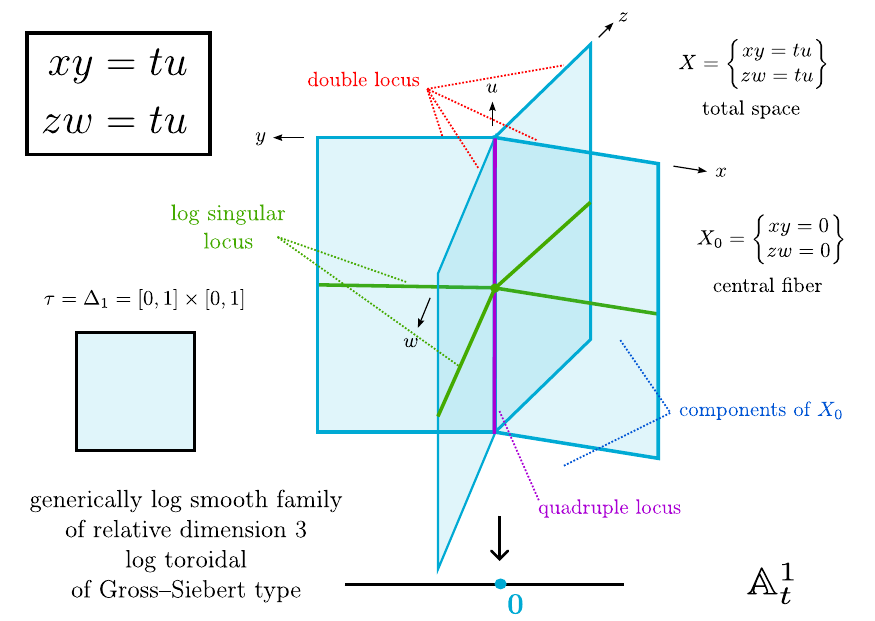}
  \end{center}
  \caption{Example~\ref{four-lines-log-sing-example}}
 \end{mdframed}
\end{figure}

\begin{ex}\label{four-lines-log-sing-example}\note{four-lines-log-sing-example}
 We take again $\tau = [0,1] \times [0,1] \subseteq \RR^2$, but this time, we set $q = 1$ and $\Delta_1 = \tau$. Then we have 
 $$\check \psi_0(n_1,n_2) = - \mathrm{inf}\{0,n_1,n_2,n_1 + n_2\} = \check \psi_1(n_1,n_2).$$
 The monoid 
 $$P = \{(n_1,n_2,a_0,a_1) \ | \ a_i \geq \check\psi_i(n_1,n_2), \ i = 0,1\}$$
 is generated by 
 \begin{center}
  \begin{tabular}{lll}
  $\bar x = (1,0,0,0)$, & $\bar y = (-1,0,1,1)$, & $\bar t = (0,0,1,0)$, \\
  $\bar z = (0,1,0,0)$, & $\bar w = (0,-1,1,1)$, & $\bar u = (0,0,0,1)$.
 \end{tabular}
 \end{center}
 Thus, we have $\kk[x,y,z,w,t,u]/(xy - tu, zw - tu) \cong \kk[P]$. The nearby fiber has an $A_1$-threefold singularity so that the family is not log toroidal of standard Gross--Siebert type. In fact, $\Delta_+ \cong [0,1] \times [0,1]$ is not even a simplex. In the central fiber, the log singular locus is given by $u = 0$ inside the double locus. It consists of four lines, one in each component of $X_0$, which meet at $0$ inside the quadruple locus.
\end{ex}

\begin{figure}
 \begin{mdframed}
  \begin{center}
   \includegraphics[scale=0.8]{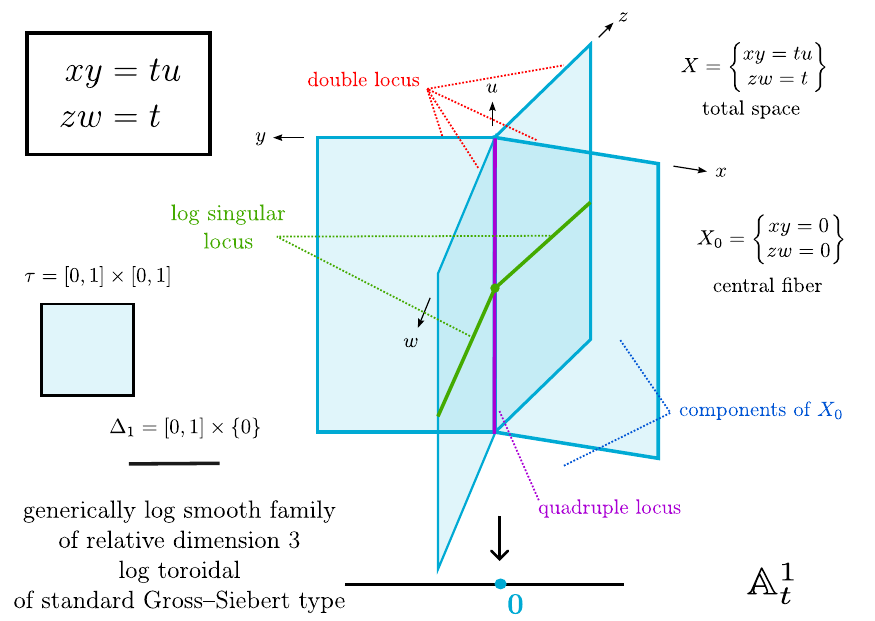}
  \end{center}
  \caption{Example~\ref{two-lines-log-sing-example}}
 \end{mdframed}
\end{figure}

\begin{ex}\label{two-lines-log-sing-example}\note{two-lines-log-sing-example}
 We take again $\tau = [0,1] \times [0,1] \subseteq \RR^2$ and $q = 1$ as well, but this time, we set $\Delta_1 = [0,1] \times \{0\} \subseteq \RR^2$. Then we have 
 $$\check\psi_0(n_1,n_2) = -\mathrm{inf}\{0,n_1,n_2,n_1 + n_2\}, \quad \check\psi_1(n_1,n_2) = -\mathrm{inf}\{0,n_1\}.$$
 The monoid 
 $$P = \{(n_1,n_2,a_0,a_1) \ | \ a_i \geq \check\psi_i(n_1,n_2), \ i = 0,1\}$$
 is generated by 
 \begin{center}
  \begin{tabular}{lll}
  $\bar x = (1,0,0,0)$, & $\bar y = (-1,0,1,1)$, & $\bar t = (0,0,1,0)$, \\
  $\bar z = (0,1,0,0)$, & $\bar w = (0,-1,1,0)$, & $\bar u = (0,0,0,1)$.
 \end{tabular}
 \end{center}
 The equations are slightly different now: We obtain $xy = tu$ and $zw = t$. Now the nearby fiber is smooth so that $f: X := \bAA_P \to \bAA^1_t$ is log toroidal of standard Gross--Siebert type. In fact, we have $\Delta_+ \cong [0,1]$, a standard simplex. As a scheme, the central fiber is the as in the previous Example~\ref{four-lines-log-sing-example}. However, in this example, the log singular locus $Z$ consists only of two lines in diametrally opposed components of the double locus, i.e., the other two components of the double locus contain only the intersection point $0$ of the two lines in the quadruple locus as log singular locus. When taking the equations $xy = t(u + 1)$ and $zw = t(u - 1)$, then we can even pull the four branches of the previous Example~\ref{four-lines-log-sing-example} apart into two times two branches.
\end{ex}

\begin{ex}\label{elementary-not-standard-example}\note{elementary-not-standard-example}
 This is an example of a log toroidal family which is of elementary Gross--Siebert type but not of standard Gross--Siebert type. We start with the quadrangle
 $$\tau = \mathrm{Conv}((1,0),(0,1),(-1,0),(0,-1)) \subseteq \RR^2,$$
 so we have 
 $$\check\psi_0(n_1,n_2) = -\mathrm{inf}\{n_1,-n_1,n_2,-n_2\} = \mathrm{sup}\{|n_1|,|n_2|\}.$$
 Thus, 
 $$P' = \{(n_1,n_2,a_0) \ | \ a_0 \geq \check\psi_0(n_1,n_2)\}$$
 is the cone over a square with edge length $2$. This monoid has nine generators and $20$ relations (according to a computation in Macaulay2), so we will not describe the family in terms of generators and relations. To obtain our example of a family which is of elementary but not standard Gross--Siebert type, we set $q = 2$ and take the two diagonal intervals $\Delta_1 = \mathrm{Conv}((0,0),(1,1))$ and $\Delta_2 = \mathrm{Conv}((0,0),(1,-1))$. Then 
 $$\check\psi_1(n_1,n_2) = -\mathrm{inf}\{0,n_1 + n_2\}, \quad \check\psi_2(n_1,n_2) = -\mathrm{inf}\{0,n_1 - n_2\},$$
 and the family is given by the monoid 
 $$P = \{(n_1,n_2,a_0,a_1,a_2) \ | \ a_i \geq \check\psi_i(n_1,n_2), \ i = 0,1,2\}.$$
 By definition, we have 
 \begin{align}
  \Delta_+ &= \mathrm{Conv}\left(\bigcup_{q = 1}^2 \Delta_i \times \{e_i\}\right) \nonumber \\
  &= \mathrm{Conv}((0,0,1,0),(1,1,1,0),(0,0,0,1),(1,-1,0,1)) \subseteq \RR^4. \nonumber
 \end{align}
 Thus, $\Delta_+$ is contained in the affine space 
 $$(0,0,1,0) + \RR \cdot (1,0,0,0) + \RR \cdot (0,1,0,0) + \RR \cdot (0,0,-1,1).$$
 After the affine transformation given by these vectors, we have 
 $$\Delta_+ \cong \mathrm{Conv}((0,0,0),(1,1,0),(0,0,1),(1,-1,1)) \subseteq \RR^3.$$
 This is an elementary simplex, but the differences of these points do not form a basis of $\ZZ^3 \subseteq \RR^3$, i.e., this is not a standard simplex.
\end{ex}

\section{Log toroidal families of (elementary) Gross--Siebert type}

A log toroidal family of (elementary) Gross--Siebert type is a log toroidal family whose local models arise from the construction in the previous section, with $\Delta_+$ an elementary simplex in the elementary case. We give a slightly modified definition here for the case of well-adjusted triples.

\begin{defn}\label{log-tor-GS-type}\note{log-tor-GS-type}
 Let $(V,Z,s)$ be a well-adjusted triple, and let $f_0: X_0 \to S_0$ be the associated generically log smooth family. Let $A \in \mathbf{Art}_\NN$. A generically log smooth deformation $f: X_A \to S_A$ of $f_0: X_0 \to S_0$ with log smooth locus $U_A$ is called a \emph{log toroidal family of Gross--Siebert type} if we can find finitely many commutative diagrams
 \[
  \xymatrix{
   X_A \ar[d]^{f_A} & V_i \ar[l]_{g_i} \ar[r]^{h_i} & L_i \ar[r] \ar[d] & \bAA_{P_i} \ar[d]^{f_i}\\
   S_A \ar@{=}[rr] & & S_A \ar[r] & \bAA^1_t\\
  }
 \]
 where $f_i: \bAA_{P_i} \to \bAA^1_t$ is a local model with log singular locus $Z_i$ associated with data $(M'_i,N'_i,\tau_i,\Delta_{1;i}, ..., \Delta_{q_i;i})$, the map $S_A \to \bAA^1_t$ is the canonical morphism, $L_i$ is the fiber product of generically log smooth families, $h_i: V_i \to L_i$ is \'etale, $g_i: V_i \to X_A$ is \'etale, we have $h_i^{-1}(L_i \setminus Z_i) = g_i^{-1}(U_A)$, both $g_i$ and $h_i$ are strict \'etale on the locus where the log structures are defined, and $X_A = \bigcup_i g_i(V_i)$ is an open covering. We say it is a \emph{log toroidal family of elementary Gross--Siebert type}\index{Gross--Siebert type!elementary Gross--Siebert type} if all local models can be chosen such that $\Delta_{+;i}$ is an elementary simplex. We say it is a \emph{log toroidal family of standard Gross--Siebert type}\index{Gross--Siebert type!standard Gross--Siebert type} if all local models can be chosen such that $\Delta_{+;i}$ is a standard simplex.
\end{defn}

 A couple of explanations are in order. In the first step, the definition should be read with $A = \kk$, i.e., it gives a definition for when the generically log smooth family $f_0: X_0 \to S_0$ associated with $(V,Z,s)$ is a log toroidal family of (elementary/standard) Gross--Siebert type. As opposed to the general case, we require the local models over the base chart $S_0 \to A_\NN$, and we insist that the log singular locus $Z$ on $f_0: X_0 \to S_0$ coincides with the log singular locus in the local model which we have specified above---no change of the log singular locus is allowed here, unlike in our general definition above.
 
 In the second step, the definition should be read with a general $A$. By base change, if $f_A: X_A \to S_A$ is a log toroidal family of (elementary/standard) Gross--Siebert type, then $f_0: X_0 \to S_0$ is as well. Thus, in the second step, we can assume that we already have a log toroidal family of (elementary/standard) Gross--Siebert type, and we are defining its deformations. The local models of the deformation are not required to be the same as the ones exhibited for the central fiber. So if one has two different local models of (elementary) Gross--Siebert type, and they happen to define the same central fiber, then both define valid and different log toroidal deformations of (elementary/standard) Gross--Siebert type. We will see, however, later that, in the elementary case at least, these deformations are locally unique up to non-unique isomorphism, in the same way as log smooth deformations are.

\section{Unisingular deformations}

In \cite{GrossSiebertII}, Gross and Siebert show that toric log Calabi--Yau spaces admit a well-behaved deformation theory. This deformation theory works because toric log Calabi--Yau spaces admit local models as constructed above with $\Delta_+$ an elementary simplex. We guide the reader through the argument and formulate the results in their natural generality of log toroidal families of elementary Gross--Siebert type. The theory that we spell out here is certainly of interest for more general singularities, but then deformations will be, in general, no longer locally unique.

\subsubsection*{The unisingular deformation functor}

The definition of unisingular deformations is inspired by the local model $xy = t^\ell z_i$ that we obtained above for $\Delta_+$ an elementary simplex. As a variant of our $L(\ell;m_1,...,m_q;s)$, let 
$$L(\ell) := \Spec k[x,y,z,t]/(xy - t^\ell z) \to \bAA^1_t$$
with the divisorial log structures induced by $t = 0$; it becomes generically log smooth once we remove the point $Z(\ell) = \{p_0\}$ in the central fiber $L(\ell)_0 = \Spec k[x,y,z]/(xy)$. To adapt to arbitrary dimension of $f_0: X_0 \to S_0$, we write $L(\ell;s) := L(\ell) \times \bAA^s$.

\begin{defn}\label{unisingular-defn}\note{unisingular-defn}
 Let $(V,Z,s)$ be a well-adjusted triple with associated generically log smooth family $f_0: X_0 \to S_0$. Then $(V,Z,s)$ is \emph{unisingular} if every point $\bar v \in Z \cap \cS_1V$ admits an \'etale neighborhood $\pi: W_0 \to X_0$ and an \'etale morphism $\tau: W_0 \to L(\ell;s)_0$ with $\pi^{-1}(Z) = \tau^{-1}(Z(\ell;s)_0)$ that can be enhanced to a strict \'etale morphism $W_0 \setminus \pi^{-1}(Z) \to L(\ell;s)_0 \setminus Z(\ell;s)_0$ over $S_0$.
\end{defn}

\begin{rem}
 We require the condition only for $\bar v \in Z \cap \cS_1V$. Outside of $Z$, no condition is necessary because there the family is log smooth. For points of $Z$ in deeper strata like $\cS_2V$, we do not require any condition. Furthermore, for every $\bar v \in Z \cap \cS_1V$, there is only one possible value of $n$ for which we have an isomorphism as above. Namely, on $L(\ell)_0$, the parameter $\ell$ can be reconstructed from the stalk of the ghost sheaf at points in $\{x = 0, y = 0, z \not= 0\}$, and on $V$, the parameter $\ell$ can be reconstructed from the stalks of $\cP$ in points $\bar v \in \cS_1V$. Since $\cP$ is (locally) constant on strata $\cS \subseteq \cS_1V$, also the possible $\ell$ is locally constant on $\cS_1V$. 'uni' in the name unisingular is Latin for 'one'; it refers to the exponent of $z$ in the equation $xy = t^\ell z$.
\end{rem}

\begin{ex}
 Let $f: \bAA_P \to \bAA^1_t$ be a local model of elementary Gross--Siebert type, coming from data $(M',N',\tau,\Delta_1,...,\Delta_q)$ with $\Delta_+$ an elementary simplex. Then $V = V(\tau) \times \bAA^q$ is a toroidal crossing space, and the log singular locus $Z$ of the central fiber of $f: \bAA_P \to \bAA^1_t$ together with the log structure $s \in \cL\cS_V(V \setminus Z)$ forms a well-adjusted triple $(V,Z,s)$. Let $f_0: X_0 \to S_0$ be the associated generically log smooth family. The above computation of $f_0: X_0 \to S_0$ on $\U_1V$ shows that $(V,Z,s)$ is unisingular. If $\Delta_+$ is not an elementary simplex, then $(V,Z,s)$ might not be unisingular.
\end{ex}

\begin{ex}
 Let $(V,Z,s)$ be a well-adjusted triple such that the associated generically log smooth family $f_0: X_0 \to S_0$ is a log toroidal family of elementary Gross--Siebert type. Then $(V,Z,s)$ is unisingular. Namely, if we denote the toroidal crossing space in the local model $f_i: \bAA_{P_i} \to \bAA^1_t$ of Definition~\ref{log-tor-GS-type} by $V_i$, then we have $g_i^{-1}(\U_1V) = h_i^{-1}(\U_1V_i)$. Thus, the loci where we need to have the local model $L_0(\ell;s)$ coincide on $V$ and on the local model of $f_0: X_0 \to S_0$---and, on the latter, we have them by the previous example.
\end{ex}

\begin{defn}
 Let $(V,Z,s)$ be a unisingular well-adjusted triple. Then a \emph{unisingular deformation} of $f_0: X_0 \to S_0$ is a generically log smooth deformation $f_A: X_A \to S_A$ such that for every $\bar v \in Z \cap \cS_1V$, there is an \'etale neighborhood $\pi: W_A \to X_A$ and an \'etale morphism $\tau: W_A \to L(\ell;s)_A$ with $\pi^{-1}(Z) = \tau^{-1}(Z(\ell;s)_0)$ which can be enhanced to a strict and smooth log morphism $W_A \setminus \pi^{-1}(Z) \to L(\ell;s)_A \setminus Z(\ell;s)_0$ over $S_A$. We denote the functor of isomorphism classes of unisingular deformations by 
$$\mathrm{LD}_{X_0/S_0}^{uni}: \mathbf{Art}_\NN \to \mathbf{Set}.$$
\end{defn}
\begin{rem}
 Again, we have no condition on deeper strata.
\end{rem}
\begin{ex}
 Let $(V,Z,s)$ be a well-adjusted triple such that $f_0: X_0 \to S_0$ is a log toroidal family of elementary Gross--Siebert type. Then every generically log smooth deformation $f_A: X_A \to S_A$ which is a log toroidal family of elementary Gross--Siebert type is a unisingular deformation.
\end{ex}

\begin{lemma}
 The functor 
$$\mathrm{LD}_{X_0/S_0}^{uni}: \mathbf{Art}_\NN \to \mathbf{Set}$$
of unisingular deformations is a deformation functor, i.e., it satisfies $(H_0)$, $(H_1)$, and $(H_2)$.
\end{lemma}
\begin{proof}
 The condition $(H_0)$ is clear. For $(H_1)$, let $B = A'' \times_A A'$ for some morphism $A' \to A$ and a surjective morphism $A'' \to A$, let $X_A \to S_A$ be a unisingular deformation, and let $X_{A'} \to S_{A'}$, $X_{A''} \to S_{A''}$ be unisingular liftings (they come with a \emph{choice} of map $X_A \to X_{A'}$, $X_A \to X_{A''}$, which is not encoded in the deformation functor). We have to show that the generically log smooth family 
 $$X_{A'} \sqcup_{X_A} X_{A''} := (|X_0|,\ \cO_{X_{A'}} \times_{\cO_{X_A}} \cO_{X_{A''}},\ \M' \times_\M \M'')$$
 is unisingular. After appropriate shrinking of the \'etale neighborhoods of the point $\bar v \in Z \cap \cS_1V$, this amounts to showing that the canonical map 
 $$L(n)_{A'} \sqcup_{L(n)_A} L(n)_{A''} \to L(n)_B$$
 is an isomorphism where $L(n)_B := L(n) \times_{\bAA^1} S_B$ etc. This is true on the level of $\cO$ because both are flat thickenings of $L(n)_{A'}$, cf.~also \cite[Cor.~3.6]{Schlessinger1968}; it is true on the level of $\M$ because it holds for $\cO^*$ and $\overline\M$. Finally, $(H_2)$ follows from $(H_1)$ because $\mathrm{LD}_{X_0/S_0}^{gen}$ satisfies $(H_2)$. 
\end{proof}

\subsubsection*{Unisingular deformations on $\tilde U_0 \subseteq X_0$}

The power of unisingular deformations relies on the existence of unique deformations outside the log singular locus in deeper strata of codimension $\geq 2$. 

\begin{prop}[\cite{GrossSiebertII}, Lemma~2.15]
 Let $(V,Z,s)$ be a unisingular well-adjusted triple with associated generically log smooth family $f_0: X_0 \to S_0$, let $U_0 = X_0 \setminus Z$ be the log smooth locus, and let $W_0 \subseteq X_0$ be an affine open contained in 
$$\tilde U_0 := \U_1V \cup U_0 = U_0 \cup (Z \cap \cS_1V).$$
 Then there is a unisingular deformation $W_A \to S_A$, which is unique up to non-unique isomorphism. Moreover, when $S_A \to S_{A'}$ is a small thickening by an ideal $I \subseteq A'$ and a unisingular deformation $\tilde U_A \to S_A$ is given, then 
\begin{itemize}
 \item the automorphisms of a lifting $\tilde U_{A'} \to S_{A'}$ are in $H^0(\tilde U_0, \Theta^1_{\tilde U_0/S_0} \otimes I)$;
 \item the different liftings are classified by $H^1(\tilde U_0, \Theta^1_{\tilde U_0/S_0} \otimes I)$;
 \item the obstruction to the existence of a lifting is in $H^2(\tilde U_0,\Theta^1_{\tilde U_0/S_0} \otimes I)$.
\end{itemize}
\end{prop}
\begin{proof}
 First, we have to show that any two unisingular deformations $W_A,W'_A$ are isomorphic in an \'etale neighborhood of $\bar v \in \tilde U_0$. If $\bar v \in U_0$, this is obvious, and if $\bar v \in Z \cap \cS_1V$, then it follows with the proof in \cite[Lemma 2.15]{GrossSiebertII} since they both have the same local model $L(n;r)_A$.  Since automorphisms of a lifting (as a generically log smooth family) are classified by $\Theta^1_{\tilde U_0/S_0} \otimes I$, the rest follows from the classical theory of smooth deformations.
\end{proof}

\subsubsection*{Unisingular deformations of affine spaces}

We study liftings of unisingular deformations on affine spaces.

\begin{prop}[\cite{GrossSiebertII}, Lemma~2.16]
 Let $(V,Z,s)$ be a unisingular well-adjusted triple with associated generically log smooth family $f_0: X_0 \to S_0$. Assume that $V$ is \emph{affine}. Let $f_A: X_A \to S_A$ be a unisingular deformation of $f_0: X_0 \to S_0$, and let $S_A \to S_{A'}$ be a small extension by an ideal $I \subseteq A'$. Assume that a lifting $f_{A'}: X_{A'} \to S_{A'}$ exists. Then \emph{all} liftings have the same underlying scheme $X_{A'}$.
\end{prop}
\begin{proof}
 Let $Y_{A'} \to S_{A'}$ be another lifting. By the previous proposition, after restriction to $\tilde U_0$, the difference to $X_{A'} \to S_{A'}$ is captured by an element 
 $$\theta \in H^1(\tilde U_0, \Theta^1_{X_0/S_0} \otimes I) \subseteq \mathrm{Ext}^1_{\cO_{\tilde U_0}}(W^1_{\tilde U_0/S_0} \otimes I, \cO_{\tilde U_0}).$$
 Here, $W^1$ denotes the reflexive differential forms as in Chapter~\ref{W-sec}. According to \cite[Thm.~4.4]{Vistoli1999}, the difference of the underlying flat deformations of $\tilde U_0$ is captured by an element
 $$\underline\theta \in \mathrm{Ext}^1_{\cO_{\tilde U_0}}(\Omega^1_{\underline{\tilde U_0}/k} \otimes I,\cO_{\tilde U_0}).$$
 Denoting $\psi$ the natural map between the two extension groups, we have $\underline\theta = \psi(\theta)$. Following \cite[Lemma~2.16]{GrossSiebertII}, we have $\psi(\theta) = 0$ by a diagram chase which uses that $(V,Z,s)$ is well-adjusted and that $V$ is affine. In particular, $\cO_{Y_{A'}}|_{\tilde U_0} \cong \cO_{X_{A'}}$, and this isomorphism extends to the whole space.
\end{proof}
\begin{rem}
 The proof of the proposition fails if we drop the assumption that $V$ is affine. This uniqueness holds only locally. Note also that we do not prove the existence of the first lifting---indeed, there might be no lifting at all. If the lifting always exists, then $\mathrm{LD}_{X_0/S_0}^{uni}$ is unobstructed (by definition).
\end{rem}

In fact, the proof of \cite[Lemma~2.16]{GrossSiebertII} even shows that $\theta$ is in the kernel of the map 
$$H^1(\tilde U_0, \Theta^1_{X_0/S_0} \otimes I) \to H^1(\tilde U_0, \T_{X_0/S_0} \otimes I)$$
induced by the embedding of $\Theta^1_{X_0/S_0}$ into the classical differential forms $\T_{X_0/S_0}$;
by the long exact sequence of 
$$0 \to \Theta^1_{X_0/S_0} \to \T_{X_0/S_0} \to \B_{X_0/S_0} \to 0$$
(where $\B_{X_0/S_0}$ is defined as the quotient), $\theta$ is in the cokernel
$$T(X_0/S_0) \otimes I \ := \ \mathrm{coker}\left(H^0(\tilde U_0, \T_{X_0/S_0} \otimes I) \to H^0(\tilde U_0, \B_{X_0/S_0} \otimes I)\right).$$
Conversely, every $\theta \in T(X_0/S_0) \otimes I$ actually gives rise to a unisingular lifting of $X_A \to S_A$ to $S_{A'}$ because the extension of the structure sheaf to the whole space is already given.

\begin{cor}
 In the above situation---in particular, $V$ is affine and one lifting is given---the set of all unisingular liftings is a torsor under $T(X_0/S_0) \otimes I$. In particular, if $T(X_0/S_0)$ is of finite dimension, then $\mathrm{LD}_{X_0/S_0}^{uni}$ has a hull.
\end{cor}

\begin{ex}\label{affine-T-is-0}\note{affine-T-is-0}
 Let $f_0: X_0 \to S_0$ be the central fiber of a local model $f: \bAA_P \to \bAA^1_t$ of elementary Gross--Siebert type, i.e., $\Delta_+$ is an elementary simplex. Then the argument at the end of the proof of \cite[Prop.~2.16]{GrossSiebertII}, using \cite[Lemma~2.14]{GrossSiebertII}, shows that $T(X_0/S_0) = 0$. Thus unisingular liftings are unique if they exist. Conversely, the local model provides a unisingular deformation over each $A \in \mathbf{Art}_\NN$, so that there is at least one unisingular deformation for each $A \in \mathbf{Art}_\NN$. By induction over small extensions, every unisingular deformation over $A$ is isomorphic to that given one. In particular, $\mathrm{LD}_{X_0/S_0}^{uni} = \{*\}$. The hull is $\kk\llbracket t\rrbracket$, and the miniversal, in fact universal, family is given by pull-back of $f: \bAA_P \to \bAA^1_t$ to $\kk\llbracket t\rrbracket$.
\end{ex}

\begin{ex}
 Let us give just one example beyond the original context of local models of elementary Gross--Siebert type. Let 
 $$X = \Spec \kk[x,y,z,w,u,t]/(xy - w^2 - t^2,\: zw - tu),$$
 and consider the map $f: X \to \bAA^1_t$. The central fiber has three irreducible components $\{x = w = 0\}$, $\{y = w = 0\}$, and $\{z = 0\}$; any two of them intersect in a copy of $\bAA^2$, and the intersection of all three of them is a line $\bAA^1_u$. The log singular locus of the central fiber $f: X_0 \to S_0$ is two lines $z^2 = u^2$ in $\{x = y = w = 0\}$ and one line $u = 0$ in each of the two other strata of codimension $1$. With appropriate choices of the remaining data, we obtain a unisingular triple. An explicit computation in Macaulay2 yields 
 $$\mathrm{dim}_\kk\: T(X_0/S_0) = 1.$$
 In particular, $\mathrm{LD}_{X_0/S_0}^{uni}$ has a hull and thus a miniversal family. Unfortunately, I haven't yet been able to find this family. I have tried several families over $\Spec \kk[t,s]$ with $s$ the parameter corresponding to $T(X_0/S_0)$, but none of them had a bijective (equivalently non-zero) Kodaira--Spencer map (defined in an appropriate sense). Maybe $\mathrm{LD}_{X_0/S_0}^{uni}$ is obstructed so that the hull is not $\kk\llbracket t,s\rrbracket$.
\end{ex}

\subsubsection*{Unisingular deformations of families of elementary Gross--Siebert type}

Let $(V,Z,s)$ be a well-adjusted triple such that $f_0: X_0 \to S_0$ is a log toroidal family of elementary Gross--Siebert type. Let $\B = \B_{X_0/S_0}$. On an affine open $V_0 \subseteq X_0$, we can compute $T(V_0/S_0)$ as 
$$T(V_0/S_0) = \mathrm{coker}(\Gamma(V_0,\B) \to \Gamma(V_0 \cap \tilde U_0,\B))$$
because $\Gamma(V_0,\T_{X_0/S_0}) \to \Gamma(V_0,\B)$ is surjective. After writing $j: \tilde U_0 \to X_0$ for the inclusion, we form the (quasi-coherent) cokernel 
$$\B \to j_*\B|_{\tilde U_0} \to \C_{X_0/S_0} \to 0$$
and get $T(V_0/S_0) = \Gamma(V_0,\C_{X_0/S_0})$. When we form $\C$ on the local model, then we get $\C = 0$ by Example~\ref{affine-T-is-0}. Since the formation of $\C$ commutes with flat morphisms, we obtain $\C_{X_0/S_0} = 0$ locally in the \'etale topology of $X_0$ by pull-back along $h_i: V_i \to L_i$. Because $\C_{X_0/S_0}$ is quasi-coherent, we get $\C_{X_0/S_0} = 0$. Thus, $T(V_0/S_0) = 0$, and unisingular liftings on affine opens on $X_0$ are always unique.

Every unisingular deformation $f_A: X_A \to S_A$ is in fact a log toroidal family of elementary Gross--Siebert type. This is proved via induction over small extensions in $\mathbf{Art}_\NN$. If $f': X_{B'} \to S_{B'}$ is a unisingular deformation, then its pull-back to $B$ along a small extension $B' \to B$ must admit local models. These local models can be lifted to $B'$, and the lifts must be locally isomorphic to $f': X_{B'} \to S_{B'}$. Thus, the latter family admits local models as well.

If $f: X_B \to S_B$ is a unisingular deformation, then the local models show that $X_B$ can be covered by \'etale opens where the deformation can be lifted to $B'$. This is not yet the existence of a unique deformation over every affine open, but we can get this from the standard theory of \cite[Expos\'e~III]{Grothendieck2003}, applied to our context.

\begin{prop}
 Let $(V,Z,s)$ be a well-adjusted triple, and assume that the associated generically log smooth family $f: X_0 \to S_0$ is a log toroidal family of elementary Gross--Siebert type. Let $B' \to B$ be a small extension with kernel $I$, and assume that we have a unisingular deformation $f: X_B \to S_B$. Then:
 \begin{enumerate}[label=\emph{(\roman*)}]
  \item The automorphisms of a given lifting $f': X_{B'} \to S_{B'}$ lie in $H^0(X_0,\Theta^1_{X_0/S_0}) \otimes_\kk I$.
  \item Given one lifting, the set of all liftings is a torsor under $H^1(X_0,\Theta^1_{X_0/S_0}) \otimes_\kk I$.
  \item The obstruction to the existence of a lifting is in $H^2(X_0,\Theta^1_{X_0/S_0}) \otimes_\kk I$.
 \end{enumerate}
\end{prop}
\begin{proof}
 Let $j: U_0 \to X_0$ be the inclusion. Using Lemma~\ref{sheaf-of-autom}, we find that the automorphisms of a lifting lie in $j_*(\Theta^1_{U_0/S_0} \otimes_\kk I)$. This is equal to $\Theta^1_{X_0/S_0} \otimes_\kk I$. The rest follows from general theory (use a \v{C}ech cover in the \'etale topology).
\end{proof}

\begin{cor}\label{sys-defo-exists}\note{sys-defo-exists}
 Let $f_0: X_0 \to S_0$ be as in the theorem. Then it admits a system of deformations $\D$ as defined in Definition~\ref{sys-of-defo}.
\end{cor}
\begin{proof}
 The central fiber $f_0: X_0 \to S_0$ is log Gorenstein because it comes from a toroidal crossing space. Let $\{V_\alpha\}_\alpha$ be an affine cover. For each $k$, we can choose inductively a unisingular deformation $V_{\alpha;k}$ over $S_k$ which is compatible with restrictions along $S_k \to S_{k + 1}$. They have the base change property by \cite[Thm.~8.2]{FFR2021} since the local models have it. Given $\alpha, \beta$, we have $V_{\alpha;0}|_{\alpha\beta} = V_{\beta;0}|_{\alpha\beta}$. Then, by induction on $k$ and by using the uniqueness of unisingular deformations an the affine open $V_\alpha \cap V_\beta$, we can lift this to an isomorphism $\psi_{\alpha\beta;k}: V_{\alpha;k}|_{\alpha\beta} \cong V_{\beta;k}|_{\alpha\beta}$ which is compatible with the restrictions along $S_k \to S_{k + 1}$. Every general $A \in \mathbf{Art}_Q$ admits a canonical map $S_A \to S_k$ for all $k >> 0$; they are compatible with $S_k \to S_{k + 1}$ and with every map $S_B \to S_{B'}$. Then we can define $V_{\alpha;A}$, $\rho_{\alpha;BB'}$, and $\psi_{\alpha\beta;A}$ by pull-back.
\end{proof}

\subsubsection*{The Interpretation of $\B_{X_0/S_0}$}

We provide an intrinsic interpretation of the role of $\B := \B_{X_0/S_0}$, which came out of a long exact sequence in the previous section. This material seems new; it is, as far as I see, not contained in the source \cite{GrossSiebertII} of the rest of the material. Let $f_0: X_0 \to S_0$ be the family associated to a unisingular well-adjusted triple, let $f_A: X_A \to S_A$ be a thickening, let $S_A \to S_{A'}$ be a small thickening by $I \subseteq A'$, and assume that we have a lifting $f_{A'}: X_{A'} \to S_{A'}$. On $\tilde U_0$, we introduce a \emph{presheaf} $\D$ of generically log smooth deformations which are locally isomorphic to the given deformation $f_{A'}: X_{A'} \to S_{A'}$:

\begin{defn}
 Let $W_0 \subseteq \tilde U_0$. Then a section in $\Gamma(W_0,\D)$ is an isomorphism class of quadruples $(\M,\alpha,\pi,\rho)$ where $\alpha: \M \to \cO_{X_{A'}}|_{W_0 \cap U_0}$ is a log structure on $W_0 \cap U_0$, $\pi: \M \to \M_{X_A}|_{W_0 \cap U_0}$ defines a strict closed embedding, and $\rho \in \Gamma(W_0 \cap U_0, \M)$ satisfies $\pi(\rho) = f_A^*(1)$ for $1 \in \M_{S_A}$ and $\alpha(\rho) = f_{A'}^*(t)$. These data are required to fit locally on $W_0$---not only on $W_0 \cap U_0$---into a commutative diagram
 \[
  \xymatrix{
   \M \ar[r]^{\Phi}_{\cong} \ar[d]^\alpha & \M_{X_{A'}}|_{W_0 \cap U_0} \ar[d] \\
   \cO_{X_A'}|_{W_0 \cap U_0} \ar[r]^{\phi}_{\cong} & \cO_{X_A'}|_{W_0 \cap U_0} \\
  }
 \]
 which defines an isomorphism of liftings of $f_A: X_A \to S_A$. An isomorphism between $(\M,\alpha,\pi,\rho)$ and $(\M',\alpha',\pi',\rho')$ is an isomorphism $\Phi: \M \to \M'$ of sheaves of monoids such that $\alpha' \circ \Phi = \alpha$, $\pi' \circ \Phi = \pi$, and $\Phi(\rho) = \rho'$.

\end{defn}

\begin{rem}
 The presheaf $\D$ is a sheaf on $\tilde U_0$.
\end{rem}
\begin{proof}
It suffices to show that $(\M,\alpha,\pi,\rho)$ has no non-trivial automorphisms. Every automorphism is in particular an automorphism of the log smooth lifting $(X_{A'}|_{W_0 \cap U_0},\M) \to S_{A'}$, i.e., it is given by a log derivation
$$\theta \in H^0(U_0 \cap W_0, \Theta^1_{X_0/S_0} \otimes I) = H^0(W_0, \Theta^1_{X_0/S_0} \otimes I)$$
as is discussed in \cite[Lemma~2.10]{GrossSiebertII} and more generally in \cite{Felten2022}. On the structure sheaf, the automorphism is trivial, so $\theta$ maps to $0$ under the forgetful map $\Theta^1_{X_0/S_0} \otimes I \to \T_{X_0/S_0} \otimes I$; the latter is injective, so $\theta = 0$.
\end{proof}

Let $\I = I \cdot \cO_{X_{A'}}$ be the kernel of the map $\cO_{X_{A'}} \to \cO_{X_A}$. Let $\A ut$ be the sheaf of automorphisms of $X_{A'}$ as a \emph{scheme} which induce the identity on $X_A$ and are compatible with $f_{A'}: X_{A'} \to S_{A'}$. If $\phi: \cO_{X_{A'}} \to \cO_{X_{A'}}$ is such an automorphism, then we obtain a derivation 
$$\delta_\phi = \phi - \mathrm{id}: \cO_{X_0} \to \I \cong \cO_{X_0} \otimes_k I;$$
this defines an isomorphism $\A ut \cong \T_{X_0/S_0} \otimes_k I$ of sheaves of groups. Now $\A ut$ acts on $\D$ via composition:
$$\A ut \times \D \to \D, \quad (\phi, (\M,\alpha,\pi,\rho)) \mapsto (\M,\phi \circ \alpha, \pi,\rho).$$

\begin{lemma}
 The action $\A ut \times \D \to \D$ induces an action $(\B \otimes_k I) \times \D \to \D$ via the sequence
 $$0 \to \Theta^1_{X_0/S_0} \otimes_k I \to \T_{X_0/S_0} \otimes_k I \to \B \otimes_k I \to 0.$$
 Under this action, $\D$ is a trivial $(\B \otimes_k I)$-torsor.
\end{lemma}
\begin{proof}
 First, we show that the action descends to $\B \otimes_k I$. Let $W_0 \subseteq \tilde U_0$ be an open, let $f': X' \to S_{A'}$ be a deformation representing a class $\gamma \in \Gamma(W_0,\D)$, and assume that $\phi \in \Gamma(W_0,\A ut)$ is in the image of $\Theta^1_{X_0/S_0} \otimes_k I \to \T_{X_0/S_0} \otimes_k I$; this means that we can find $\Phi: \M|_{W_0 \cap U_0} \to \M|_{W_0 \cap U_0}$ such that $(\phi,\Phi)$ is an automorphism of $X'$, cf.~\cite[Lemma~2.10]{GrossSiebertII}. Now the diagram 
 \[
  \xymatrix{
   \M \ar[r]^\Phi \ar[d]^{\phi \circ \alpha} & \M \ar[d]^\alpha\\
   \cO_{X_{A'}} \ar@{=}[r] & \cO_{X_{A'}} \\
  }
 \]
 shows that $\phi \cdot \gamma = \gamma$ in $\Gamma(W_0,\D)$, so the action descends. The arguments that $\D$ is a torsor under $\B \otimes_k I$ are similar. It is trivial because the given lifting $f_{A'}: X_{A'} \to S_{A'}$ defines a global section in $\Gamma(\tilde U_0,\D)$.
\end{proof}


\newpage

\backmatter



\chapter{Appendix}

\newpage

\section{The script for Example~\ref{base-change-violation-t3-w3}}
\label{t3-w3-script-sec}\note{t3-w3-script-sec}

We provide an annotated Macaulay2 script to compute that not all infinitesimal automorphisms lift in Example~\ref{base-change-violation-t3-w3}.

\noindent\hrulefill

\lstinputlisting[breaklines]{"generically-log-smooth-fam-whose-autom-do-not-lift-arXiv.m2"}

\noindent\hrulefill


\newpage


\newpage

\section{Notations for enhanced generically log smooth families}
 
 Let $f: X \to S$ be an enhanced generically log smooth family. Then we use the following notations:
 
 \vspace{\baselineskip}
 
 \begin{tabular}{p{2cm}l}
  $\W^i_{X/S}$ & reflexive relative log differential forms \\
  $\V^p_{X/S}$ & reflexive polyvector fields in negative degrees \\
  $\V\,\W^\bullet_{X/S}$ & reflexive two-sided Gerstenhaber calculus \\
  $\A^i_{X/S}$ & distinguished differential forms \\
  $\G^p_{X/S}$ & distinguished polyvector fields \\
  $\G\C^\bullet_{X/S}$ & two-sided Gerstenhaber calculus of distinguished forms and fields \\
  $\varpi^\bullet$ & embedding $\G\C^\bullet_{X/S} \to \V\,\W^\bullet_{X/S}$ \\
  $\Theta^1_{X/S}$ & relative log derivations \\
  $\Gamma^1_{X/S}$ & distinguished log derivations, considered inside $\Theta^1_{X/S}$ \\
  $\T_{X/S}$ & relative classical derivations \\
  $\T^1_{X/S}$ & classical first relative tangent sheaf (from cotangent complex) \\
 \end{tabular}

\section{Formulae in the curved case}\label{overview-form-sec}\note{overview-form-sec}

In this section, we give an overview over our definitions in the curved case. For the bigraded case, just omit the predifferential, and for the singly graded case, take the total degree. In this section, $\Lambda$ is a complete local Noetherian $\kk$-algebra with residue field $\kk$.

\section*{$\Lambda$-linear curved Lie algebras}

\begin{itemize}
 \item for every $\mathbf{i \geq 0}$, the object $L^i$ is a \textbf{flat} $\Lambda$-module which is \textbf{complete} with respect to the $\m_\Lambda$-adic topology; if $\theta \in L^i$, then its \textbf{total degree} is $|\theta| = i - 1$ (!);
 \item $\Lambda$-bilinear \textbf{Lie bracket} $[-,-]: L^i \times L^j \to L^{i + j}$ such that 
 $$[\theta,\xi] = -(-1)^{(|\theta| + 1)(|\xi| + 1)}[\xi,\theta];$$
 \textbf{Jacobi identity}
  $$[\theta,[\xi,\eta]] = [[\theta,\xi],\eta] + (-1)^{(|\theta| + 1)(|\xi| + 1)}[\xi,[\theta,\eta]];$$
  \item $\Lambda$-linear \textbf{predifferential} $\bar\partial: L^i \to L^{i + 1}$ with 
  $\bar\partial[\theta,\xi] = [\bar\partial\theta,\xi] + (-1)^{|\theta| + 1}[\theta,\bar\partial \xi]$;
  \item $\ell \in \m_\Lambda \cdot L^2$ with $\bar\partial^2(\theta) = [\ell,\theta]$ and $\bar\partial(\ell) = 0$.
\end{itemize}

\section*{$\Lambda$-linear curved Lie--Rinehart algebras}
\begin{itemize}
 \item for every $\mathbf{i \geq 0}$, the objects $F^i$ and $T^i$ are \textbf{flat} $\Lambda$-modules which are \textbf{complete} with respect to the $\m_\Lambda$-adic topology; if $a \in F^i$, then its \textbf{total degree} is $|a| = i$; if $\theta \in T^i$, then its \textbf{total degree} is $|\theta| = i - 1$;
 \item $\Lambda$-bilinear \textbf{product} $-\wedge -: F^i \times F^j \to F^{i + j}$ and a \textbf{unit element} $1 \in F^0$ with 
 $$a \wedge (b \wedge c) = (a \wedge b) \wedge c, \quad a \wedge b = (-1)^{|a||b|} b \wedge a, \quad 1 \wedge a = a;$$
 \item $\Lambda$-bilinear \textbf{multiplication} $\ast: F^i \times T^j \to T^{i + j}$ with 
 $$(a \wedge b) \ast \theta = a \ast (b \ast \theta), \quad 1 \ast \theta = \theta;$$
 \item $\Lambda$-bilinear \textbf{derivative} $\nabla^F: T^i \times F^j \to F^{i + j}$ with $a \ast \nabla^F_\theta(b) = \nabla^F_{a \ast \theta}(b)$;
 \item $\Lambda$-bilinear \textbf{Lie bracket} $[-,-] = \nabla^T: T^i \times T^j \to T^{i + j}$ with 
 $$[\theta,\xi] = -(-1)^{(|\theta| + 1)(|\xi| + 1)}[\xi,\theta];$$
 \item for $P = F,T$, the two \textbf{Jacobi identities}
 $$\nabla^P_{[\theta,\xi]}(p) = \nabla^P_\theta \nabla^P_\xi(p) - (-1)^{(|\theta| + 1)(|\xi| + 1)} \nabla^P_\xi\nabla^P_\theta(p);$$
 \item for $P = F,T$, the two \textbf{odd Poisson identities}
  $$\nabla^P_\theta(a \ast^P p) = \nabla_\theta(a) \ast^P p + (-1)^{(|\theta| + 1)|a|}a \ast^P \nabla^P_\theta(p);\footnote{$\ast^F = \wedge$ and $\ast^T = \ast$}$$
  \item two $\Lambda$-linear \textbf{predifferentials} $\bar\partial: F^i \to F^{i + 1}$ and $\bar\partial: T^i \to T^{i + 1}$ satisfying the four \textbf{derivation rules}
  $$\bar\partial\nabla^P_\theta(p) = \nabla^P_{\bar\partial\theta}(p) + (-1)^{|\theta| + 1}\nabla^P_\theta(\bar\partial p)$$
  and 
  $$\bar\partial(a \ast^P p) = \bar\partial(a) \ast^P p + (-1)^{|a|} a \ast^P \bar\partial p;$$
  \item $\ell \in \m_\Lambda$ with $\bar\partial^2(a) = \nabla^F_\ell(a)$ and $\bar\partial^2(\theta) = \nabla^T_\ell(\theta) = [\ell,\theta]$ as well as $\bar\partial(\ell) = 0$.
\end{itemize}

\section*{$\Lambda$-linear Lie--Rinehart pairs}

A $\Lambda$-linear Lie--Rinehart algebra plus:

\begin{itemize}
 \item for every $\mathbf{i \geq 0}$, the object $E^i$ is a \textbf{flat} $\Lambda$-module which is \textbf{complete} with respect to the $\m_\Lambda$-adic topology; if $e \in E^i$, then its \textbf{total degree} is $|e| = i$;
 \item $\Lambda$-bilinear \textbf{multiplication} $\ast^E: F^i \times E^j \to E^{i + j}$ with $(a \wedge b) \ast^E e = a \ast^E (b \ast^E e)$ and $1 \ast^E e = e$;
 \item $\Lambda$-bilinear \textbf{derivative} $\nabla^E: T^i \times E^j \to E^{i + j}$ with $a \ast^E \nabla^E_\theta(e) = \nabla^E_{a \ast \theta}(e)$;
 \item \textbf{Jacobi identity} 
 $$\nabla^E_{[\theta,\xi]}(e) = \nabla^E_\theta\nabla^E_\xi(e) - (-1)^{(|\theta| + 1)(|\xi| + 1)}\nabla^E_\xi\nabla^E_\theta(e);$$
 \item \textbf{derivation rule}
 $$\nabla^E_\theta(a \ast^E e) = \nabla^F_\theta(a) \ast^E e + (-1)^{(|\theta| + 1)|a|}a \ast^E \nabla^E_\theta(e);$$
 \item $\Lambda$-linear \textbf{predifferential} $\bar\partial: E^i \to E^{i + 1}$ satisfying the \textbf{derivation rules}
 $$\bar\partial\nabla^E_\theta(e) = \nabla^T_{\bar\partial\theta}(e) + (-1)^{|\theta| + 1}\nabla_\theta^E(\bar\partial e)$$
 and 
 $$\bar\partial(a \ast^E e) = \bar\partial(a) \ast^E e + (-1)^{|a|}a \ast^E \bar\partial(e)$$
 as well as $\bar\partial^2(e) = \nabla^E_\ell(e)$.
 
\end{itemize}

\newpage

\section*{$\Lambda$-linear curved Gerstenhaber algebras and calculi}

\paragraph{$\Lambda$-linear curved Gerstenhaber \textsl{algebras}}

\begin{itemize}
  \item for every $\mathbf{-d \leq p \leq 0}$ and $\mathbf{q \geq 0}$, the object $G^{p,q}$ is a \textbf{flat} $\Lambda$-module which is \textbf{complete} with respect to the $\m_\Lambda$-adic topology; if $\theta \in G^{p,q}$, then its \textbf{total degree} is $|\theta| = p + q$;
  \item $\Lambda$-bilinear $\wedge$-\textbf{product} $-\wedge-: G^{p,q} \times G^{p',q'} \to G^{p + p',q + q'}$
 and $\mathbf{1} \in G^{0,0}$ such that 
  $$\theta \wedge (\xi \wedge \eta) = (\theta \wedge \xi) \wedge \eta; \qquad \theta \wedge \xi = (-1)^{|\theta||\xi|} \xi \wedge \theta; \qquad 1 \wedge \theta = \theta;$$
  unadorned powers $\theta^n := \theta \wedge ... \wedge \theta$;
 \item $\Lambda$-bilinear \textbf{Lie bracket} $[-,-]: G^{p,q} \times G^{p',q'} \to G^{p + p' + 1,q + q'}$
 such that
  $$[\theta,\xi] = - (-1)^{(|\theta| + 1)(|\xi| + 1)}[\xi,\theta]; \qquad [\theta,1] = 0;$$ \textbf{Jacobi identity}
  $$[\theta,[\xi,\eta]] = [[\theta,\xi],\eta] + (-1)^{(|\theta| + 1)(|\xi| + 1)}[\xi,[\theta,\eta]];$$
  in particular, for $\varphi$ with $|\varphi|$ even: \quad $[\varphi,[\varphi,\xi]] = [\frac{1}{2}[\varphi,\varphi],\xi]$;
  \item \textbf{odd Poisson identities} 
  \begin{align}
   [\theta,\xi \wedge \eta] &= [\theta,\xi] \wedge \eta + (-1)^{(|\theta| + 1)|\xi|}\xi \wedge [\theta,\eta], \nonumber \\
   [\theta \wedge \xi, \eta] &= \theta \wedge [\xi,\eta] + (-1)^{(|\eta| + 1)|\xi|}[\theta,\eta] \wedge \xi; \nonumber
  \end{align}
  in particular, for $\varphi$ with $|\varphi|$ even, we have $[\varphi^n,\theta] = n[\varphi,\theta] \wedge \varphi^{n - 1}$ for $n \geq 1$;
  \item $\Lambda$-linear \textbf{predifferential}
  $\bar\partial: G^{p,q} \to G^{p,q + 1}$ with \quad $\bar\partial(1) = 0$ \quad and \textbf{derivation rules}
  $$\bar\partial[\theta,\xi] = [\bar\partial\theta,\xi] + (-1)^{|\theta| + 1}[\theta,\bar\partial \xi] \quad \mathrm{and} \quad \bar\partial(\theta\wedge\xi) = \bar\partial\theta \wedge \xi + (-1)^{|\theta|}\theta\wedge\bar\partial \xi;$$
  in particular, $\bar\partial(\varphi^n) = n\cdot \bar\partial\varphi \wedge \varphi^{n - 1}$ for $|\varphi|$ even;
  \item $\ell \in \m_\Lambda\cdot G^{-1,2}$\quad with \quad $\bar\partial^2(\theta) = [\ell,\theta]$;  \qquad $\bar\partial(\ell) = 0$; \qquad $\ell \wedge \ell = 0$.
 \end{itemize}
 
\paragraph{$\Lambda$-linear curved \textsl{two-sided} Gerstenhaber calculi}\hspace{1cm}

\vspace{.2cm}

\noindent A $\Lambda$-linear curved one-sided Gerstenhaber \textbf{calculus} (next page) plus:

\begin{itemize}
 \item $\Lambda$-bilinear \textbf{left contraction} $\vdash: G^{p,q} \times A^{i,j} \to G^{p + i, q + j}$ satisfying $\theta \vdash 1 = \theta$ and \\ $\theta \vdash (\alpha \wedge \beta) = (\theta \vdash \alpha) \vdash \beta$;
 \item for $\alpha \in A^{0,j}$, the identity $\theta \vdash \alpha = \theta \wedge (1 \vdash \alpha)$;
 \item for $\alpha \in A^{1,j}$, the identity 
 $$(\theta \wedge \xi) \vdash \alpha = (-1)^{|\xi||\alpha|}(\theta \vdash \alpha) \wedge \xi + \theta \wedge (\xi \vdash \alpha);$$
 \item for $\theta \in G^{-i,q}$ and $\alpha \in A^{i,j}$, the identity $\lambda(\theta \vdash \alpha) = (-1)^{i}\,\theta \ \invneg \ \alpha$;
  \item for $\omega \in A^{d,j}$, the identity $(\theta \vdash \alpha) \ \invneg \ \omega = (-1)^{|\alpha||\omega|} (\theta \ \invneg \ \omega) \wedge \alpha$;
  \item for $\omega^\vee \in G^{-d,q}$, the identity $\omega^\vee \vdash (\theta \ \invneg \ \alpha) = (-1)^{|\omega^\vee||\theta|}\, \theta \wedge (\omega^\vee \vdash \alpha)$;
  \item for $\theta \in G^{-1,q}$, the \textbf{special left mixed Leibniz rule}
  $$[\theta,\xi \vdash \alpha] = [\theta,\xi] \vdash \alpha + (-1)^{(|\theta| + 1)|\xi|}\,\xi \vdash \cL_\theta(\alpha);$$
  \item derivation rule $\bar\partial(\theta \vdash \alpha) = \bar\partial\theta \vdash \alpha + (-1)^{|\theta|}\, \theta \vdash \bar\partial\alpha$.
\end{itemize}

\newpage

\paragraph{$\Lambda$-linear curved \textsl{one-sided} Gerstenhaber calculi}\hspace{1cm}

\vspace{.2cm}

\noindent A $\Lambda$-linear curved Gerstenhaber algebra plus:

\begin{itemize}
  \item for every $\mathbf{0 \leq i \leq d}$ and $\mathbf{j \geq 0}$, the object $A^{i,j}$ is a \textbf{flat} $\Lambda$-module which is \textbf{complete} with respect to the $\m_\Lambda$-adic topology; if $\alpha \in A^{i,j}$, then its \textbf{total degree} is $|\alpha| = i + j$;
  \item $\Lambda$-bilinear $\wedge$-\textbf{product} $-\wedge - : A^{i,j} \times A^{i',j'} \to A^{i+i',j+j'}$
  and $\mathbf{1} \in A^{0,0}$ such that 
  $$(\alpha \wedge \beta) \wedge \gamma = \alpha \wedge (\beta \wedge \gamma); \quad \alpha \wedge \beta = (-1)^{|\alpha||\beta|}\beta \wedge \alpha; \quad 1 \wedge \alpha = 1; $$
  \item $\Lambda$-linear \textbf{de Rham differential} $\partial: A^{i,j} \to A^{i + 1,j}$ with $\partial^2 = 0$,\quad $\partial(1) = 0$, and the \textbf{derivation rule}
  $$\partial(\alpha \wedge \beta) = \partial(\alpha) \wedge \beta + (-1)^{|\alpha|} \alpha \wedge \partial(\beta);$$
  \item $\Lambda$-bilinear \textbf{contraction map}
   $\invneg \ : G^{p,q} \times A^{i,j} \to A^{p + i,q + j}$
  such that 
  $$1 \ \invneg \ \alpha = \alpha \quad \mathrm{and} \quad (\theta \wedge \xi) \ \invneg \ \alpha = \theta \ \invneg \ (\xi \ \invneg \ \alpha);$$
  \item $\Lambda$-bilinear \textbf{Lie derivative} $\cL_{-}(-): G^{p,q} \times A^{i,j} \to A^{p + i + 1, q + j}$ such that 
  $$\cL_{[\theta,\xi]}(\alpha) = \cL_\theta(\cL_\xi(\alpha)) - (-1)^{(|\theta| + 1)(|\xi| + 1)}\cL_\xi(\cL_\theta(\alpha)) \quad \mathrm{and} \quad \cL_1(\alpha) = 0;$$
  \item \textbf{mixed Leibniz rule}
  \begin{align}
    \theta \ \invneg \ \cL_\xi(\alpha) &= (-1)^{|\xi| + 1}([\theta,\xi] \ \invneg \ \alpha) + (-1)^{|\theta|(|\xi| + 1)}\cL_\xi(\theta \ \invneg \ \alpha) \nonumber \\
   \Leftrightarrow \:\: \qquad [\theta,\xi] \ \invneg\ \alpha &= (-1)^{|\xi| + 1}\: \theta \ \invneg \ \cL_\xi(\alpha) - (-1)^{(|\theta| + 1)(|\xi| + 1)}\cL_\xi(\theta \ \invneg\ \alpha) \nonumber \\
   \Leftrightarrow \qquad \cL_\theta(\xi \ \invneg \ \alpha) &= (-1)^{(|\theta| + 1)|\xi|}\: \xi \ \invneg\ \cL_\theta(\alpha) + [\theta,\xi] \ \invneg \ \alpha; \nonumber
  \end{align}
  \item \textbf{Lie--Rinehart homotopy formula}
  $$(-1)^{|\theta|}\cL_\theta(\alpha) = \partial(\theta \ \invneg \ \alpha) - (-1)^{|\theta|}(\theta \ \invneg \ \partial\alpha);$$
  consequently, 
  $$\cL_{\theta \wedge \xi}(\alpha) = (-1)^{|\xi|}\cL_\theta(\xi\ \invneg\ \alpha) + \theta\ \invneg\ \cL_\xi(\alpha);$$
  \item for $\varphi$ with $|\varphi|$ even, $\cL_{\varphi^n}(\alpha) = n \varphi^{n - 1} \ \invneg\ \cL_\varphi(\alpha) + \frac{n(n-1)}{2}([\varphi,\varphi] \wedge \varphi^{n - 2}) \ \invneg\ \alpha$ for $n \geq 2$;
  \item for $\theta \in G^{0,q}$, the identity $\theta \ \invneg\ \alpha = (\theta \ \invneg \ 1) \wedge \alpha$ holds;
  \item for $\theta \in G^{-1,q}$, the identity
  $$\theta\ \invneg\ (\alpha \wedge \beta) = (\theta\ \invneg\ \alpha) \wedge \beta + (-1)^{|\alpha||\theta|}\alpha \wedge (\theta\ \invneg\ \beta)$$
  holds; consequently, we have for $\theta \in G^{-1,q}$
  $$\cL_\theta(\alpha \wedge \beta) = \cL_\theta(\alpha) \wedge \beta + (-1)^{|\alpha|(|\theta| + 1)} \alpha \wedge \cL_\theta(\beta);$$
  \item $\Lambda$-linear \textbf{predifferential}
  $\bar\partial: A^{i,j} \to A^{i,j + 1}$
  with \enspace $\bar\partial^2(\alpha) = \cL_\ell(\alpha)$ \enspace and \enspace $\partial\bar\partial + \bar\partial\partial = 0$;
  \item \textbf{derivation rules}
  $$\bar\partial(\alpha \wedge \beta) = \bar\partial\alpha \wedge\beta + (-1)^{|\alpha|}\alpha \wedge\bar\partial\beta \quad \mathrm{and} \quad \bar\partial(\theta \ \invneg\ \alpha) = (\bar\partial\theta) \ \invneg \ \alpha + (-1)^{|\theta|} \theta \ \invneg\ \bar\partial\alpha;$$
  \item $\Lambda$-linear isomorphism $\lambda: G^{0,q} \to A^{0,q}, \: \theta \mapsto \theta \ \invneg\ 1,$ with $\lambda(\theta \wedge \xi) = \lambda(\theta) \wedge \lambda(\xi)$.
 \end{itemize}

\newpage

\section*{$\Lambda$-linear curved Batalin--Vilkovisky algebras and calculi}

\paragraph{$\Lambda$-linear curved Batalin--Vilkovisky \textsl{algebras}}

 \begin{itemize}
  \item for every $\mathbf{-d \leq p \leq 0}$ and $\mathbf{q \geq 0}$, the object $G^{p,q}$ is a \textbf{flat} $\Lambda$-module which is \textbf{complete} with respect to the $\m_\Lambda$-adic topology; if $\theta \in G^{p,q}$, then its \textbf{total degree} is $|\theta| = p + q$;
  \item $\Lambda$-bilinear $\wedge$-\textbf{product} $-\wedge-: G^{p,q} \times G^{p',q'} \to G^{p + p',q + q'}$
 and $\mathbf{1} \in G^{0,0}$ such that 
  $$\theta \wedge (\xi \wedge \eta) = (\theta \wedge \xi) \wedge \eta; \qquad \theta \wedge \xi = (-1)^{|\theta||\xi|} \xi \wedge \theta; \qquad 1 \wedge \theta = \theta;$$
  unadorned powers $\theta^n := \theta \wedge ... \wedge \theta$;
 \item $\Lambda$-bilinear \textbf{Lie bracket} $[-,-]: G^{p,q} \times G^{p',q'} \to G^{p + p' + 1,q + q'}$
 such that
  $$[\theta,\xi] = - (-1)^{(|\theta| + 1)(|\xi| + 1)}[\xi,\theta]; \qquad [\theta,1] = 0;$$ \textbf{Jacobi identity}
  $$[\theta,[\xi,\eta]] = [[\theta,\xi],\eta] + (-1)^{(|\theta| + 1)(|\xi| + 1)}[\xi,[\theta,\eta]];$$
  in particular, for $\varphi$ with $|\varphi|$ even: \quad $[\varphi,[\varphi,\xi]] = [\frac{1}{2}[\varphi,\varphi],\xi]$;
  \item \textbf{odd Poisson identities} 
  $$[\theta,\xi \wedge \eta] = [\theta,\xi] \wedge \eta + (-1)^{(|\theta| + 1)|\xi|}\xi \wedge [\theta,\eta]$$
  and 
  $$[\theta \wedge \xi, \eta] = \theta \wedge [\xi,\eta] + (-1)^{(|\eta| + 1)|\xi|}[\theta,\eta] \wedge \xi; $$
  in particular, for $\varphi$ with $|\varphi|$ even, we have $[\varphi^n,\theta] = n[\varphi,\theta] \wedge \varphi^{n - 1}$ for $n \geq 1$;
  \item $\Lambda$-linear \textbf{predifferential}
  $\bar\partial: G^{p,q} \to G^{p,q + 1}$ with \quad $\bar\partial(1) = 0$ \quad and \textbf{derivation rules}
  $$\bar\partial[\theta,\xi] = [\bar\partial\theta,\xi] + (-1)^{|\theta| + 1}[\theta,\bar\partial \xi] \quad \mathrm{and} \quad \bar\partial(\theta\wedge\xi) = \bar\partial\theta \wedge \xi + (-1)^{|\theta|}\theta\wedge\bar\partial \xi;$$
  in particular, $\bar\partial(\varphi^n) = n\cdot \bar\partial\varphi \wedge \varphi^{n - 1}$ for $|\varphi|$ even;
  \item $\ell \in \m_\Lambda\cdot G^{-1,2}$\quad with \quad $\bar\partial^2(\theta) = [\ell,\theta]$;  \qquad $\bar\partial(\ell) = 0$; \qquad $\ell \wedge \ell = 0$;
  \item $\Lambda$-linear \textbf{Batalin--Vilkovisky operator} $\Delta: G^{p,q} \to G^{p + 1,q}$
  with
  $$\Delta(1) = 0; \qquad \Delta^2 = 0; \qquad \Delta[\theta,\xi] = [\Delta(\theta),\xi] + (-1)^{|\theta| + 1}[\theta,\Delta(\xi)],$$
  and the \textbf{Bogomolov--Tian--Todorov formula}
  $$(-1)^{|\theta|}[\theta,\xi] = \Delta(\theta \wedge \xi) - \Delta(\theta) \wedge \xi - (-1)^{|\theta|} \theta \wedge \Delta(\xi);$$
  for $\varphi$ with $|\varphi|$ even, we have $\Delta(\varphi^n) = n\Delta(\varphi) \wedge \varphi^{n - 1} + \frac{n(n - 1)}{2}[\varphi,\varphi] \wedge \varphi^{n - 2}$ for $n \geq 2$;
  \item $y \in \m_\Lambda \cdot G^{0,1}$\quad with \quad $\bar\partial\Delta(\theta) + \Delta(\bar\partial\theta) = [y,\theta]$ \quad and \quad $\bar\partial y + \Delta\ell = 0$ \quad as well as
  $$y \wedge y = 0; \quad y \wedge \ell + \ell \wedge y = 0; \quad [\ell,y] + [y,\ell] = 0.$$
 \end{itemize}

\newpage

\paragraph{$\Lambda$-linear curved \textsl{one-sided} Batalin--Vilkovisky calculi}\hspace{1cm}

\vspace{.2cm}

\noindent A $\Lambda$-linear curved Batalin--Vilkovisky algebra plus:

\begin{itemize}
  \item for every $\mathbf{0 \leq i \leq d}$ and $\mathbf{j \geq 0}$, the object $A^{i,j}$ is a \textbf{flat} $\Lambda$-module which is \textbf{complete} with respect to the $\m_\Lambda$-adic topology; if $\alpha \in A^{i,j}$, then its \textbf{total degree} is $|\alpha| = i + j$;
  \item $\Lambda$-bilinear $\wedge$-\textbf{product} $-\wedge - : A^{i,j} \times A^{i',j'} \to A^{i+i',j+j'}$
  and $\mathbf{1} \in A^{0,0}$ such that 
  $$(\alpha \wedge \beta) \wedge \gamma = \alpha \wedge (\beta \wedge \gamma); \quad \alpha \wedge \beta = (-1)^{|\alpha||\beta|}\beta \wedge \alpha; \quad 1 \wedge \alpha = 1; $$
  \item $\Lambda$-linear \textbf{de Rham differential} $\partial: A^{i,j} \to A^{i + 1,j}$ with $\partial^2 = 0$,\quad $\partial(1) = 0$, and the \textbf{derivation rule}
  $$\partial(\alpha \wedge \beta) = \partial(\alpha) \wedge \beta + (-1)^{|\alpha|} \alpha \wedge \partial(\beta);$$
  \item $\Lambda$-bilinear \textbf{contraction map}
   $\invneg \ : G^{p,q} \times A^{i,j} \to A^{p + i,q + j}$
  such that 
  $$1 \ \invneg \ \alpha = \alpha \quad \mathrm{and} \quad (\theta \wedge \xi) \ \invneg \ \alpha = \theta \ \invneg \ (\xi \ \invneg \ \alpha);$$
  \item $\Lambda$-bilinear \textbf{Lie derivative} $\cL_{-}(-): G^{p,q} \times A^{i,j} \to A^{p + i + 1, q + j}$ such that 
  $$\cL_{[\theta,\xi]}(\alpha) = \cL_\theta(\cL_\xi(\alpha)) - (-1)^{(|\theta| + 1)(|\xi| + 1)}\cL_\xi(\cL_\theta(\alpha)) \quad \mathrm{and} \quad \cL_1(\alpha) = 0;$$
  \item \textbf{mixed Leibniz rule}
  \begin{align}
    \theta \ \invneg \ \cL_\xi(\alpha) &= (-1)^{|\xi| + 1}([\theta,\xi] \ \invneg \ \alpha) + (-1)^{|\theta|(|\xi| + 1)}\cL_\xi(\theta \ \invneg \ \alpha) \nonumber 
  \end{align}
  \item \textbf{Lie--Rinehart homotopy formula}
  $$(-1)^{|\theta|}\cL_\theta(\alpha) = \partial(\theta \ \invneg \ \alpha) - (-1)^{|\theta|}(\theta \ \invneg \ \partial\alpha);$$
  consequently, 
  $$\cL_{\theta \wedge \xi}(\alpha) = (-1)^{|\xi|}\cL_\theta(\xi\ \invneg\ \alpha) + \theta\ \invneg\ \cL_\xi(\alpha);$$
  \item for $\varphi$ with $|\varphi|$ even, $\cL_{\varphi^n}(\alpha) = n \varphi^{n - 1} \ \invneg\ \cL_\varphi(\alpha) + \frac{n(n-1)}{2}([\varphi,\varphi] \wedge \varphi^{n - 2}) \ \invneg\ \alpha$ for $n \geq 2$;
  \item for $\theta \in G^{0,q}$, the identity $\theta \ \invneg\ \alpha = (\theta \ \invneg \ 1) \wedge \alpha$ holds;
  \item for $\theta \in G^{-1,q}$, the identity
  $$\theta\ \invneg\ (\alpha \wedge \beta) = (\theta\ \invneg\ \alpha) \wedge \beta + (-1)^{|\alpha||\theta|}\alpha \wedge (\theta\ \invneg\ \beta)$$
  holds; consequently, we have for $\theta \in G^{-1,q}$
  $$\cL_\theta(\alpha \wedge \beta) = \cL_\theta(\alpha) \wedge \beta + (-1)^{|\alpha|(|\theta| + 1)} \alpha \wedge \cL_\theta(\beta);$$
  \item $\Lambda$-linear \textbf{predifferential}
  $\bar\partial: A^{i,j} \to A^{i,j + 1}$
  with \enspace $\bar\partial^2(\alpha) = \cL_\ell(\alpha)$ \enspace and \enspace $\partial\bar\partial + \bar\partial\partial = 0$;
  \item \textbf{derivation rules}
  $$\bar\partial(\alpha \wedge \beta) = \bar\partial\alpha \wedge\beta + (-1)^{|\alpha|}\alpha \wedge\bar\partial\beta \quad \mathrm{and} \quad \bar\partial(\theta \ \invneg\ \alpha) = (\bar\partial\theta) \ \invneg \ \alpha + (-1)^{|\theta|} \theta \ \invneg\ \bar\partial\alpha;$$
  \item \textbf{volume element} $\omega \in A^{d,0}$ inducing isomorphisms of $\Lambda$-modules
  $$\kappa: G^{p,q} \to A^{p + d,q}, \quad\theta \mapsto \theta \ \invneg\ \omega,\quad \mathrm{and}\quad \upsilon: A^{0,q} \to A^{d,q},\quad \alpha \mapsto \alpha \wedge \omega;$$
  \item $\Delta(\theta) \ \invneg \ \omega = \partial(\theta \ \invneg\ \omega)$\quad
  and\quad $y \ \invneg\ \omega = \bar\partial\omega$;
  \item $\Lambda$-linear isomorphism $\lambda: G^{0,q} \to A^{0,q}, \: \theta \mapsto \theta \ \invneg\ 1,$ with $\lambda(\theta \wedge \xi) = \lambda(\theta) \wedge \lambda(\xi)$.
 \end{itemize}
A $\Lambda$-linear curved \textbf{two-sided BV calculus} is a $\Lambda$-bilinear curved one-sided BV calculus which is at the same time a $\Lambda$-linear curved two-sided Gerstenhaber calculus.



\bibliography{solv-MC-note-v2.bib}
\bibliographystyle{plain}

\newpage

\printindex

\end{document}